%
%
%
%
%
\RequirePackage{fix-cm}
\documentclass[smallextended]{svjour3}       
%

\renewcommand\qed{\hfill \ensuremath{\Box}}

\usepackage{graphicx}

\usepackage{amsmath} 
\usepackage{amsfonts}
\usepackage{amssymb} 
\usepackage{bm}           
\usepackage{mathrsfs} 
\usepackage{mathabx}
\usepackage{amsxtra}
\numberwithin{equation}{section}
\usepackage[numbers,sort&compress]{natbib} 
\usepackage[perpage,symbol*]{footmisc}  

\usepackage[titletoc,title]{appendix}

\DeclareMathOperator{\dive}{div}
\DeclareMathOperator{\supp}{supp}

\DeclareMathOperator{\RE}{Re}
\DeclareMathOperator{\IM}{Im}

\def\d{\,\mathrm{d}}
\def\p{\partial}


\newcommand*\xbar[1]{%
	\hbox{%
		\vbox{%
			\hrule height 0.5pt 
			\kern0.5ex
			\hbox{%
				\kern-0.1em
				\ensuremath{#1}%
				\kern-0.1em
			}%
		}%
	}%
}

\newcommand{\VERTiii}[1]{{\left\vert\kern-0.3ex\left\vert\kern-0.3ex\left\vert #1
		\right\vert\kern-0.3ex\right\vert\kern-0.3ex\right\vert}}
\newcommand{\VERT}{\vert\kern-0.3ex\vert\kern-0.3ex\vert}
\newcommand{\VERTl}{\left\vert\kern-0.3ex\left\vert\kern-0.3ex\left\vert}
\newcommand{\VERTr}{\right\vert\kern-0.3ex\right\vert\kern-0.3ex\right\vert}
\newcommand{\VERTbig}{\big\vert\kern-0.3ex\big\vert\kern-0.3ex\big\vert}
\newcommand{\VERTBig}{\Big\vert\kern-0.3ex\Big\vert\kern-0.3ex\Big\vert}

\makeatletter
\DeclareFontFamily{OMX}{MnSymbolE}{}
\DeclareSymbolFont{MnLargeSymbols}{OMX}{MnSymbolE}{m}{n}
\SetSymbolFont{MnLargeSymbols}{bold}{OMX}{MnSymbolE}{b}{n}
\DeclareFontShape{OMX}{MnSymbolE}{m}{n}{
	<-6>  MnSymbolE5 <6-7>  MnSymbolE6 <7-8>  MnSymbolE7 <8-9>  MnSymbolE8 <9-10> MnSymbolE9 <10-12> MnSymbolE10 <12->   MnSymbolE12
}{}
\DeclareFontShape{OMX}{MnSymbolE}{b}{n}{
	<-6>  MnSymbolE-Bold5 <6-7>  MnSymbolE-Bold6 <7-8>  MnSymbolE-Bold7 <8-9>  MnSymbolE-Bold8 <9-10> MnSymbolE-Bold9 <10-12> MnSymbolE-Bold10 <12->   MnSymbolE-Bold12
}{}

\let\llangle\@undefined
\let\rrangle\@undefined
\DeclareMathDelimiter{\llangle}{\mathopen}%
{MnLargeSymbols}{'164}{MnLargeSymbols}{'164}
\DeclareMathDelimiter{\rrangle}{\mathclose}%
{MnLargeSymbols}{'171}{MnLargeSymbols}{'171}
\makeatother

\numberwithin{equation}{section}
\numberwithin{theorem}{section}
\numberwithin{lemma}{section}
\numberwithin{proposition}{section}
\numberwithin{corollary}{section}
\numberwithin{definition}{section}
\numberwithin{remark}{section}
\numberwithin{example}{section}

%
%
%
%
%
\begin{document}
	
	\title{Nonlinear Stability of Relativistic Vortex Sheets in Three-Dimensional Minkowski Spacetime
	}
	
	\titlerunning{Relativistic Vortex Sheets}        
	
	\author{Gui-Qiang G. Chen  \and \\
		Paolo Secchi \and\\
		Tao Wang
	}
	
	\authorrunning{G.-Q.~G.~Chen \and P.~Secchi \and T.~Wang} 
	
	\institute{Gui-Qiang G. Chen  \at
		Mathematical Institute, University of Oxford, Oxford, OX2 6GG, UK\\
		AMSS \& UCAS, Chinese Academy of Sciences, Beijing 100190, China                 \\
		\email{chengq@maths.ox.ac.uk}           
		\and
		Paolo Secchi \at
		DICATAM, Mathematical Division, University of Brescia, 25133 Brescia, Italy\\
		\email{paolo.secchi@unibs.it}
		\and
		Tao Wang \at
		School of Mathematics and Statistics, Wuhan University, Wuhan 430072, China\\
		DICATAM, Mathematical Division, University of Brescia, 25133 Brescia, Italy\\
		Hubei Key Laboratory of Computational Science (Wuhan University), Wuhan 430072, China\\
		\email{tao.wang@whu.edu.cn}
	}
	
	\date{Received: July 5, 2017 / Accepted: October 5, 2018}

	\maketitle
	
	\begin{abstract}
		We are concerned with the nonlinear stability of vortex sheets for the relativistic Euler equations in three-dimensional Minkowski spacetime. This is a nonlinear hyperbolic problem with a characteristic free boundary. In this paper, we introduce a new symmetrization by choosing appropriate functions as primary unknowns. A necessary and sufficient condition for the weakly linear stability of relativistic vortex sheets is obtained by analyzing the roots of the Lopatinski\u{\i} determinant associated to the constant coefficient linearized problem. Under this stability condition, we show that the variable coefficient linearized problem obeys an energy estimate with a loss of derivatives. The construction of certain weight functions plays a crucial role in absorbing the error terms caused by microlocalization. Based on the weakly linear stability result, we establish the existence and nonlinear stability of relativistic vortex sheets under small initial perturbations by a Nash--Moser iteration scheme.
		\keywords{Relativistic fluid \and compressible vortex sheet  \and  three-dimensional Minkowski spacetime  \and
			weak stability  \and loss of derivatives  \and characteristic boundary \and Nash--Moser iteration  \and existence  \and linear stability  \and nonlinear stability}
		\subclass{35L65 \and 
			35Q35 \and 
			76E17 \and  
			76N10 \and  
			76Y05 \and 
			85A30}
	\end{abstract}
	
	\tableofcontents
\section{Introduction}
We are concerned with the nonlinear stability of relativistic vortex sheets for the Euler equations describing the evolution of a relativistic compressible fluid. Relativistic vortex sheets arise as a very important feature in several models of phenomena occurring in astrophysics, plasma physics, and nuclear physics. Vortex sheets are interfaces between two incompressible or compressible flows across which there is a discontinuity in fluid velocity. {In particular, a}cross a vortex sheet, the tangential velocity field has a jump, while the normal component of the flow velocity is continuous. The discontinuity in the tangential velocity field creates a concentration of vorticity along the interface. {Moreover,} compressible vortex sheets are characteristic discontinuities to the Euler equations for compressible fluids and as such they are fundamental waves which play an important role in the study of general entropy solutions to multidimensional hyperbolic systems of conservation laws (\emph{cf.}\;Chen--Feldman \cite{CF18}).

It was observed in \cite{M58MR0097930,FM63MR0154509}, by the normal mode analysis, that rectilinear vortex sheets for non-relativistic isentropic compressible fluids in two space dimensions are linearly stable when the Mach number $\textsf{M}>\sqrt{2}$ and are violently unstable when $\textsf{M}<\sqrt{2}$, while planar vortex sheets are always violently unstable in three space dimensions. This kind of instabilities is the analogue of the Kelvin--Helmholtz instability for incompressible fluids. Artola--Majda \cite{AM87MR914450} studied certain instabilities of two-dimensional supersonic vortex sheets by analyzing the interaction with highly oscillatory waves through geometric optics. A rigorous mathematical theory on nonlinear stability and local-in-time existence of two-dimensional non-relativistic supersonic vortex sheets was first established by Coulombel--Secchi \cite{CS08MR2423311,CS09MR2505379} based on their linear stability results in \cite{CS04MR2095445} and a Nash--Moser iteration scheme.

Motivated by the earlier results in \cite{CS04MR2095445,CS08MR2423311,CS09MR2505379}, we aim to establish the nonlinear stability of relativistic vortex sheets in three-dimensional Minkowski spacetime under the necessary condition for the linear stability on the piecewise constant background state. This problem is a nonlinear hyperbolic problem with a characteristic free boundary. The so-called Lopatinski\u{\i} condition holds only in a weak sense, which yields a loss of derivatives.

We first reformulate the relativistic Euler equations into a symmetrizable hyperbolic system by choosing appropriate functions as primary unknowns. Our symmetrization is purely algebraic and different from those obtained by Makino--Ukai in \cite{MU95IIMR1346915} and Trakhinin \cite{T09MR2560044}. As in Francheteau--M\'{e}tivier \cite{FM00MR1787068}, we straighten the unknown front by lifting functions $\Phi^{\pm}$ that satisfy the \emph{eikonal} equations \eqref{Phi.eq.a} on the whole domain. Consequently, the original problem can be transformed into a nonlinear problem in a half-space for which the boundary matrix has constant rank on the whole half-space. This constant rank property is essential to derive energy estimates for the variable coefficient linearized problem by developing the Kreiss' symmetrizers technique from \cite{K70MR0437941,C04MR2069632,CS04MR2095445}.

Then we consider the constant coefficient linearized problem around the piecewise constant background state. By computing the roots of the associated Lopatinski\u{\i} determinant, we deduce the necessary stability condition ({\it cf}.\;\eqref{H2}):
$$
\textsf{M}>\textsf{M}_c:=\frac{\sqrt{2}}{\sqrt{1+\epsilon^2\bar{c}^2}},
$$
where $\epsilon^{-1}$ is the speed of light and $\bar{c}$ is the sound speed of the background state. In the non-relativistic limit $\epsilon\to 0$, this stability condition is reduced to $\textsf{M}>\sqrt{2}$, the well-known fact studied in \cite{M58MR0097930,CS04MR2095445}. The critical Mach number $\textsf{M}_c$ of the relativistic stability condition is always strictly smaller than $\sqrt{2}$, which means that the relativistic vortex sheets are stable in a larger physical regime.
Moreover, when the sound speed $\bar{c}$ is arbitrarily close to the light speed $\epsilon^{-1}$, the critical Mach number $\textsf{M}_c$ approaches $1$ so that the stability holds precisely for supersonic relativistic  flows. The symbol associated to the unknown front is elliptic, which enables us to eliminate the front and to consider a standard boundary value problem. We prove that the constant coefficient linearized problem obeys an \emph{a priori} energy estimate, which exhibits a loss of derivatives with respect to the source terms, owing to the failure of the uniform Kreiss--Lopatinski\u{\i} condition. Since the boundary is characteristic, there exists a loss of control on the trace of the solution.

After that, we study the effective linear problem, which is deduced from the linearized problem around a perturbation of the background state by using  the ``good unknown'' of Alinhac \cite{A89MR976971} and neglecting some zero-th order terms. The dropped terms will be considered as the error terms at each Nash--Moser iteration step in the subsequent nonlinear analysis. We first prove for small perturbations that the solution satisfies the same \emph{a priori} estimate as the constant coefficient case. The energy estimate is deduced by the technique applied earlier to weakly stable shock waves in \cite{C04MR2069632} and isentropic compressible vortex sheets in \cite{CS04MR2095445}. It consists of the paralinearization of the linearized problem, analysis of the Lopatinski\u{\i} determinant, microlocalization, and construction of the Kreiss' symmetrizers. In particular, we introduce certain weight functions, vanishing \emph{only} on the bicharacteristic curves starting from the critical set, to absorb the error terms caused by microlocalization. Based on this basic energy estimate, we establish a well-posedness result for the effective linear problem in the usual Sobolev space $H^s$ with $s$ large enough. This is achieved by means of a duality argument and higher order energy estimates. Although our problem is a hyperbolic problem with characteristic boundary that yields a natural loss of normal derivatives, we manage to compensate this loss by estimating missing normal derivatives through the equations of the linearized vorticity. With the well-posedness and tame estimate for the effective linear problem in hand, we prove the local existence theorem for relativistic vortex sheets (see Theorem {\rm\ref{thm}}) by a Nash--Moser iteration scheme. We emphasize that our choice of new primary unknowns is essential for three main reasons: The system becomes symmetrizable hyperbolic; it has an appropriate form for the analysis of the Lopatinski\u{\i} determinant; and, most of all, it is suitable for getting a vorticity-type equation.

Characteristic discontinuities, especially vortex sheets, arise in a broad range of physical problems in fluid mechanics, oceanography, aerodynamics, plasma physics, astrophysics, and elastodynamics. The linear results in \cite{CS04MR2095445} have been generalized to cover the two-dimensional nonisentropic flows \cite{MT08MR2441089}, the three-dimensional compressible steady flows \cite{WY13MR3065290,WYuan15MR3327369}, and the two-dimensional two-phase flows \cite{RWWZ16MR3474128}. It is worth mentioning that a key ingredient in all of these proofs is the constant rank property of the boundary matrix. Recently, the methodology in \cite{CS04MR2095445} has been developed to deal with several constant coefficient linearized problems arising in two-dimensional compressible magnetohydrodynamics (MHD) and elastic flows; {\it cf.}\;\cite{WY13ARMAMR3035981,CDS16MR3527627,CHW17Adv}. {See also the very recent preprint \cite{CHW18} for the linear stability of elastic vortex sheets in the variable coefficient case.} For three-dimensional MHD, Chen--Wang \cite{CW08MR2372810, CW12MR3289359} and Trakhinin \cite{T09MR2481071} proved {independently} the nonlinear stability of compressible current-vortex sheets, which indicates that non-paralleled magnetic fields stabilize the motion of three-dimensional compressible vortex sheets. Moreover, the modified Nash--Moser iteration scheme developed in \cite{H76MR0602181,CS08MR2423311} {has} 
been successfully applied to the compressible liquids in vacuum \cite{T09MR2560044}, the plasma-vacuum interface problem \cite{ST14MR3151094},  MHD contact discontinuities \cite{MTT18MR3766987}, {and vortex sheets for three-dimensional steady flow \cite{WY15MR3328144} and two-dimensional two-phase flow \cite{HWY18}}.

Let us also mention some earlier works on the relativistic fluids. The global existence of discontinuous solutions to the relativistic Euler equations in one space dimension was first investigated by Smoller--Temple \cite{ST93MR1234105}. Also, Makino--Ukai \cite{MU95IIMR1346915} showed the existence of local smooth solutions in three space dimensions when the initial data is away from the vacuum. The stability of relativistic compressible flows with vacuum was addressed in \cite{T09MR2560044,JLM16MR3448785}. Moreover, the blow-up in finite time of smooth solutions for the relativistic Euler equations was shown in Pan--Smoller \cite{PS06MR2202310}. Also see Christodoulou \cite{C07,C18} for the formation and development of shocks in the multidimensional relativistic compressible fluids.

The plan of this paper is as follows: In \S\,\ref{sec.2}, after introducing the free boundary problem for relativistic vortex sheets, we reformulate the relativistic Euler equations and reduce our nonlinear problem to that in a fixed domain. Then we state the main result in this paper and introduce the weighted spaces and norms. Section \ref{sec.3} is mainly devoted to proving Theorem {\rm\ref{thm.1}}, {\it i.e.}  an energy estimate for the constant coefficient linearized problem. More precisely, after some reductions, we compute the roots of the associated Lopatinski\u{\i} determinant and deduce the criterion for weakly linear stability in \S\,\ref{sec.Lop1}. Then we adopt the argument developed recently by Chen--Hu--Wang \cite{CHW17Adv}  to prove the energy estimate for the constant coefficient case. In \S\,\ref{sec.4}, we introduce the effective linear problem and its reformulation. Section \ref{sec.5} is devoted to the proof of Theorem {\rm\ref{thm.2}}, the energy estimate for the effective linear problem. After deriving a weighted energy estimate with certain weights vanishing \emph{only} on the bicharacteristic curves starting from the critical set,  we can absorb the error terms caused by microlocalization and complete the proof of Theorem {\rm\ref{thm.2}}. In \S\,\ref{sec.6}, we prove a well-posedness result of the effective linear problem in the usual Sobolev space $H^s$ with $s$ large enough.
In \S\,\ref{secCA}, we  obtain the smooth ``approximate solution'' by imposing necessary compatible conditions on the initial data. Then the original problem \eqref{RE0} and \eqref{Phi.eq} is reduced into a nonlinear problem with zero initial data. In \S\,\ref{sec.NM}, by using a modification of the Nash--Moser iteration scheme, we show the existence of solutions to the reduced problem and conclude the proof of our main result, Theorem {\rm\ref{thm}}. Appendix \ref{AppA} concerns the motivation of introducing new primary unknowns and the derivation of the new symmetrization.

\section{Nonlinear Problems and the Main Theorem}\label{sec.2}

In this section, we first introduce the free boundary problem for relativistic vortex sheets, then reformulate the relativistic Euler equations and reduce our nonlinear problem to that in a fixed domain, and finally state the main theorem of this paper and introduce the weighted spaces and norms.

\subsection{Relativistic Vortex Sheets}\label{sec2.1}
We consider the equations of relativistic perfect fluid dynamics in the three-dimensional Minkowski spacetime $\mathbb{R}^{2+1}$, that is, the {relativistic}
Euler equations (see Lichnerowicz \cite{L67Relativistic}):
\begin{align}\label{RE1}
\partial_{\alpha}T^{\alpha\beta}=0,
\end{align}
where $T$ denotes the energy-momentum stress tensor with components
\begin{align} \notag
T^{\alpha\beta}=(p+\rho \epsilon^{-2})u^{\alpha}u^{\beta}+p g^{\alpha\beta}.
\end{align}
Here $p$ is the pressure, $\rho$ is the energy-mass density, $\epsilon^{-1}$ is the speed of light,
$g^{\alpha\beta}=\mathrm{diag}\,(-1,1,1)$ is the flat Minkowski metric,
and $\mathbf{u}=(u^0,u^1,u^2)^{\mathsf{T}}$ is the flow velocity satisfying
\begin{align}
\label{u.con}g^{\alpha\beta}u^{\alpha}u^{\beta}=-1.
\end{align}
The notation, $\partial_{\alpha}$, denotes the differentiation with respect to variable $x_{\alpha}$, and the {Greek} indices ``$\alpha$'' and ``$\beta$'' run from 0 to 2.
Throughout this paper, we use the Einstein summation convention whereby a repeated index
in a term implies the summation over all the values of that index.

We introduce the coordinate velocity $v=(v_1,v_2)^{\mathsf{T}}:=(u^1,u^2)^{\mathsf{T}}/(\epsilon u^0)$.
By virtue of \eqref{u.con}, the physical constraint is:
\begin{align} \label{v.con}
|v|<\epsilon^{-1}.
\end{align}
We also introduce the spacetime coordinates $(t,x)$ with $t:=\epsilon x_0$ and $x:=(x_1,x_2)$.
Then system \eqref{RE1} can be equivalently rewritten as
\begin{subequations} \label{RE2}
	\begin{alignat}{2}  \label{RE2.a}
	&\partial_t\left((\rho+\epsilon^2 p)\varGamma^2-\epsilon^2 p\right)
	+\nabla_x\cdot\left((\rho+\epsilon^2 p)\varGamma^2 v\right)=0,\\
	&\partial_t\left((\rho+\epsilon^2 p)\varGamma^2 v\right)
	+\nabla_x\cdot\left((\rho+\epsilon^2 p)\varGamma^2 v\otimes v\right)+\nabla_x  p=0, \label{RE2.b}
	\end{alignat}
\end{subequations}
where $\partial_t=\frac{\partial}{\partial t}$, $\nabla_x=(\partial_1,\partial_2)^{\mathsf{T}}$ with $\partial_j=\frac{\partial}{\partial x_j}$,
matrix $v\otimes v$ has $(i, j)$-entry $v_iv_j$, and
\begin{align}\label{Gamma.def}
\varGamma=\varGamma(v):=\frac{1}{\sqrt{1-\epsilon^2|v|^2}}
\end{align}
is the Lorentz factor.
The fluid is assumed to be barotropic, which means that pressure $p$ is given by an explicit function of $\rho$.
We also assume that $p=p(\rho)$ is a $C^{\infty}$ function defined on $(\rho_{*},\rho^{*})$ and satisfies
\begin{align}
\label{p.con}
{
	0<p'(\rho)<\epsilon^{-2}\qquad\textrm{for all }\rho\in(\rho_{*},\rho^{*}),
}
\end{align}
where $\rho_{*}$ and $\rho^{*}$ are some constants such that $0\leq \rho_{*}<\rho^{*}\leq \infty$.
Consequently, density $\rho$ is a strictly increasing function of $p$ defined on $(p(\rho_{*}),p(\rho^{*}))$,
and system \eqref{RE2} is closed with three unknowns $(\rho, v_1, v_2)$.
Barotropic fluids arise in many physical situations such as very cold matter, nuclear matter and ultrarelativistic fluids ({\it cf}.\;\cite[Chapter\;II]{zbMATH05040442}, \cite[Chapter\;IX]{C-B09MR2473363}, and \cite[\S\,1]{ST93MR1234105}).
%

Let $(\rho,v)(t,x)$ be smooth functions on either side of a smooth hypersurface $\Sigma(t):=\{x_2=\varphi(t,x_1)\}$.
Then $(\rho, v)$ is a weak solution of \eqref{RE2} if and only if $(\rho, v)$ is a classical solution of \eqref{RE2}
on each side of $\Sigma(t)$ and satisfies the Rankine--Hugoniot conditions at every point of $\Sigma(t)$:
\begin{align} \label{RH}
\left\{\begin{aligned}
&\partial_t\varphi\left[(\rho+\epsilon^2 p)\varGamma^2-\epsilon^2 p\right]-\left[(\rho+\epsilon^2 p)\varGamma^2 v\cdot \nu\right]=0,\\[1mm]
&\partial_t\varphi\left[(\rho+\epsilon^2 p)\varGamma^2 v\right]-\left[(\rho+\epsilon^2 p)\varGamma^2 (v\cdot\nu)v\right]-\left[p\right]\nu=0,
\end{aligned}\right.
\end{align}
where $\nu:=(-\partial_{1}\varphi,1)$ is a spatial normal vector to $\Sigma(t)$.
As usual, for any function $g$, we denote by $g^{\pm}$ the value of $g$ in ${\{\pm (x_2- \varphi(t,x_1))>0\}}$,
and $[g]:=g^+|_{\Sigma(t)}-g^-|_{\Sigma(t)}$ the jump across $\Sigma(t)$.

In this paper, we are interested in weak solutions $(\rho,v)$ of \eqref{RE2} such that the tangential velocity (with respect to $\Sigma(t)$)
is the only jump experienced by the solution $(\rho,v)$ across $\Sigma(t)$.
Then the Rankine--Hugoniot conditions \eqref{RH} are reduced to
\begin{align}\label{RH1}
\partial_t\varphi=v^+\cdot\nu=v^-\cdot\nu,
\quad p^+=p^-\qquad \mathrm{on}\ \Sigma(t).
\end{align}
A piecewise smooth weak solution $(\rho,v)$ of \eqref{RE2} with discontinuities across $\Sigma(t)$ is
called a  \emph{relativistic vortex sheet} if its trace on $\Sigma(t)$ satisfies \eqref{RH1}.

We note that system \eqref{RE2} admits trivial vortex-sheet solutions that consist of two constant states
separated by a rectilinear front:
\begin{align} \label{RVS1}
(\rho,v)(t,x_1,x_2)=\left\{\begin{aligned}
&(\bar{\rho},\bar{v},0)\quad &\mathrm{if}\ x_2>0,\\
&(\bar{\rho},-\bar{v},0)\quad &\mathrm{if}\ x_2<0,
\end{aligned}\right.
\end{align}
where $\bar{\rho}$ and $\bar{v}$ are suitable positive constants.
Every rectilinear relativistic vortex sheet is of this form by changing the observer if necessary.
In view of \eqref{v.con} and \eqref{p.con}, we may assume without loss of generality that
\begin{align} \label{H1}
\bar{\rho}\in (\rho_{*},\rho^*), \qquad  \bar{v}\in(0,\epsilon^{-1}).
\end{align}

The aim of this paper is to study the {local-in}-time existence and nonlinear stability of relativistic vortex sheets
with initial data close to the piecewise constant state \eqref{RVS1}.

\subsection{Reformulation and the Main Theorem}\label{sec2.2}
Let us first reformulate the relativistic Euler equations \eqref{RE2}
by choosing appropriate functions as primary unknowns.
To this end, we define the particle number density $N=N(\rho)$, the sound speed $c=c(\rho)$,
and $h=h(\rho)$ by
\begin{align}\label{N.c.def}
N(\rho):=\exp\left(\int_{\bar{\rho}}^{\rho}\frac{\mathrm{d}s}{s+\epsilon^2p(s)}\right),\ \
c(\rho):=\sqrt{p'(\rho)},\ \  h(\rho):=\frac{\rho+\epsilon^2 p(\rho)}{N(\rho)}.
\end{align}
We also introduce
\begin{align} \label{w.def}
w:=\varGamma v=\frac{v}{\sqrt{1-\epsilon^2|v|^2}},
\end{align}
so that
\begin{align}\label{Gamma.v.1}
\varGamma=\sqrt{1+\epsilon^2|w|^2}, \qquad v=\frac{w}{\sqrt{1+\epsilon^2|w|^2}}.
\end{align}
Then we discover that smooth solutions to system \eqref{RE2} satisfy
\begin{align} \label{hw.eq}
\varGamma (\partial_t+v\cdot\nabla_x)(hw) +N^{-1}\nabla_x p=0.
\end{align}
Let us take $U:=(p,hw_1,hw_2)^{\mathsf{T}}$ as primary unknowns and define the following matrices:
\setlength{\arraycolsep}{4pt}
\begin{align}
\label{A0.def} A_0(U)&:=\begin{pmatrix}\varGamma( 1-{\epsilon}^{4}c^2|v|^{2} ) & 2\epsilon^2 N c^2 v_1 & 2\epsilon^2 N c^2 v_2\\[0.5mm]
0 & \varGamma (1-\epsilon^2 v_1^2) & -\epsilon^2\varGamma v_1v_2\\[0.5mm]
0 &-\epsilon^2\varGamma v_1v_2 & \varGamma (1-\epsilon^2 v_2^2)
\end{pmatrix},     \\[1mm]
\label{A1.def}A_1(U)&:=\begin{pmatrix}
\varGamma v_1( 1-{\epsilon}^{4}c^2|v|^{2} )  & Nc^2(1+\epsilon^2 v_1^2) &\epsilon^2 v_1v_2 Nc^2\\[0.5mm]
N^{-1} (1-\epsilon^2 v_1^2)& \varGamma v_1(1-\epsilon^2 v_1^2)&-\epsilon^2\varGamma v_1^2v_2\\[0.5mm]
-\epsilon^2 v_1v_2 N^{-1} &-\epsilon^2\varGamma v_1^2v_2&\varGamma v_1(1-\epsilon^2 v_2^2)
\end{pmatrix},\\[1mm]
\label{A2.def}A_2(U)&:=\begin{pmatrix}
\varGamma v_2( 1-{\epsilon}^{4}c^2|v|^{2} ) & \epsilon^2 v_1v_2 Nc^2&Nc^2(1+\epsilon^2 v_2^2)\\[0.5mm]
-\epsilon^2 v_1v_2 N^{-1} & \varGamma v_2(1-\epsilon^2 v_1^2)&-\epsilon^2\varGamma v_1v_2^2\\[0.5mm]
N^{-1} (1-\epsilon^2 v_2^2)&-\epsilon^2\varGamma v_1v_2^2& \varGamma v_2(1-\epsilon^2 v_2^2)
\end{pmatrix}.
\end{align}
When the solution is in $C^1$, system \eqref{RE2} equivalently reads
\begin{align}    \label{RE4}
A_0(U)\partial_t U+A_1(U)\partial_{1} U+A_2(U)\partial_{2} U=0.
\end{align}
We postpone proving the equivalence of systems \eqref{RE2} and \eqref{RE4} to Appendix \ref{AppA}.
The choice of the new unknowns $U$ has several advantages.
First, system \eqref{RE4} is symmetrizable hyperbolic
in region $\{\rho_{*}<\rho<\rho^{*},|v|<\epsilon^{-1}\}$ (see Appendix \ref{AppA}
for the precise expression of the Friedrichs symmetrizer).
Second, we will see in the sequel that the form of \eqref{RE4} is appropriate for computing the roots of the Lopatinski\u{\i} determinant.
Third, equations \eqref{hw.eq} will enable us to obtain the linearized vorticity equation through
which the loss of derivatives can be compensated in the higher-order energy estimates.

Note that the first two identities in \eqref{RH1} are the \emph{eikonal} equations:
\begin{align}   \notag
\partial_t\varphi+\lambda_2(U^+,\partial_{1}\varphi)=0,  \qquad\,
\partial_t\varphi+\lambda_2(U^-,\partial_{1}\varphi)=0,
\end{align}
where $\lambda_2(U,\xi):=v_1\xi-v_2$, and $\xi\in\mathbb{R}$ is the second characteristic field of \eqref{RE4}
with the corresponding eigenvector:
\begin{align} \notag
r_2(U,\xi):=(0,\, 1-\epsilon^2 v_2^2+\epsilon^2 v_1v_2\xi,\,(1-\epsilon^2v_1^2)\xi+\epsilon^2v_1v_2)^{\mathsf{T}}.
\end{align}
It follows from \eqref{N.c.def} and \eqref{Gamma.v.1} that $\nabla_U \lambda_2(U,\xi) \cdot r_2(U,\xi)\equiv0$, {\it i.e.}
the characteristic field $\lambda_2$ is linearly degenerate in the sense of Lax \cite{L57MR0093653}.
As a consequence, a relativistic vortex sheet is a characteristic discontinuity.

Function $\varphi$ describing the discontinuity front is a part of the unknowns,
and thus the relativistic vortex sheet problem is a free boundary problem.
To reformulate this problem in a fixed domain, we replace unknowns $U$, which are smooth on either side of $\Sigma(t)$,
by
\begin{align} \label{transform}
U^{\pm}_{\sharp}(t,x):=U(t,x_1,\Phi^{\pm}(t,x)),
\end{align}
where $\Phi^{\pm}$ are smooth functions satisfying the constraints:
\begin{align} \notag
\Phi^{\pm}(t,x_1,0)=\varphi(t,x_1),\quad \pm\partial_2\Phi^{\pm}(t,x)\geq \kappa>0 \quad {\rm if}\ x_2\geq 0  .
\end{align}
Then the existence of relativistic vortex sheets amounts to constructing solutions $U^{\pm}_{\sharp}$,
which are smooth in the fixed domain $\{x_2>0\}$, to the following initial-boundary value problem:
\begin{subequations}   \label{RE0}
	\begin{alignat}{2} \label{RE0.a}
	&\mathbb{L}(U^{\pm},\Phi^{\pm}) =0&  \qquad &\mathrm{if}\  x_2>0,\\ \label{RE0.b}
	&\mathbb{B}(U^{+},U^{-},\varphi)=0& \qquad  &\mathrm{if}\ x_2=0,\\  \label{RE0.c}
	&(U^{\pm},\varphi)|_{t=0}=(U^{\pm}_0,\varphi_0),& \qquad &
	\end{alignat}
\end{subequations}
where index ``$\sharp$'' has been dropped for notational simplicity.
According to transformation \eqref{transform}, operators $\mathbb{L}$ and $\mathbb{B}$ take the forms:
\begin{align}
\label{L.def} &\left\{
\begin{aligned}&\mathbb{L}(U,\Phi)=L(U,\Phi)U \\
&\mbox{with}\quad
L(U,\Phi):=A_0(U)\partial_t+A_1(U)\partial_1+\widetilde{A}_2(U,\Phi)\partial_2,
\end{aligned}\right.
\\ \label{B.def}
&\mathbb{B}(U^+,U^-,\varphi):=
\begin{pmatrix}
[v_1] \partial_1\varphi-[v_2]\\[0.5mm]
\partial_t\varphi+v_1^+\partial_1\varphi-v_2^+\\[0.5mm]
p^+-p^-
\end{pmatrix},
\end{align}
where $A_j(U)$, $j=0,1,2$, are defined by \eqref{A0.def}, \eqref{A1.def}, \eqref{A2.def}, respectively, and
\begin{align}  \notag 
\widetilde{A}_2(U, \Phi):=
\frac{1}{\partial_2\Phi}\big(A_2(U)-\partial_t\Phi A_0(U)-\partial_1\Phi A_1(U)\big).
\end{align}

As in Francheteau--M\'{e}tivier \cite{FM00MR1787068}, we choose the change of variables $\Phi^{\pm}$  such that
\begin{subequations} \label{Phi.eq}
	\begin{alignat}{2}
	\label{Phi.eq.a}&\partial_t\Phi^{\pm}+v_1^{\pm}\partial_1\Phi^{\pm}-v_2^{\pm}=0\qquad &&\mathrm{if}\ x_2\geq 0,\\
	\label{Phi.eq.b}&\pm\partial_2\Phi^{\pm}\geq \kappa>0\qquad &&\mathrm{if}\ x_2\geq 0,\\
	\label{Phi.eq.c}&\Phi^{+}=\Phi^{-}=\varphi\qquad &&\mathrm{if}\ x_2= 0.
	\end{alignat}
\end{subequations}
Not only does this choice simplify much the expression of system \eqref{RE0.a},
but it also implies that the boundary matrix for problem \eqref{RE0}:
$$\mathrm{diag}\,(-\widetilde{A}_2(U^+, \Phi^+),\,-\widetilde{A}_2(U^-, \Phi^-)),$$
has constant rank on the whole closed half-space $\{x_2\geq 0\}$.
This will play a crucial role in deriving the energy estimates for the variable coefficient linearized problem
by developing further the Kreiss' symmetrizers argument from \cite{K70MR0437941,C04MR2069632,CS04MR2095445}.

In the new variables, the rectilinear vortex sheet \eqref{RVS1} corresponds to the following stationary
solution of \eqref{RE0.a}--\eqref{RE0.b} and \eqref{Phi.eq}:
\begin{align}    \label{RVS0}
\widebar{U}^{\pm}:=\big(\bar{p},\pm\bar{h}\widebar{w},0\big)^{\mathsf{T}},\quad\, \widebar{\varphi}:=0,\quad\,
\widebar{\Phi}^{\pm}:=\pm x_2,
\end{align}
where $\bar{p}:=p(\bar{\rho})$, $\bar{h}:=h(\bar{\rho})$, and $\widebar{w}:=\widebar{\varGamma} \bar{v}$ with
$\widebar{\varGamma}^{-1}:=\sqrt{1-\epsilon^2\bar{v}^2}$.

Imposing the smooth initial data $(U_0^{\pm},\varphi_0)$ close to \eqref{RVS0},
we aim to show the existence of solutions to the nonlinear problem \eqref{RE0} and \eqref{Phi.eq} under
the necessary condition for the linear stability on the background state \eqref{RVS0}.
The main result is stated as follows:

\begin{theorem}\ \label{thm}
	Let $T>0$ be any fixed constant and $\mu\in\mathbb{N}$ with $\mu\geq 13$.
	Assume that the background state \eqref{RVS0} satisfies the physical constraints \eqref{H1}
	and the necessary stability condition:
	\begin{align}\label{H2}
	\emph{$\textsf{M}$}:=\frac{|\bar{v}|}{\bar{c}}>\frac{\sqrt{2}}{\sqrt{1+\epsilon^2\bar{c}^2}}
	\qquad \mathrm{with}\quad  \bar{c}:=c(\bar{\rho}).
	\end{align}
	Assume further that the initial data $U^{\pm}_0$ and $\varphi_0$ satisfy the compatibility conditions
	up to order $\mu$ {\rm(}see {\rm\S\,\ref{secCA}}{\rm)},
	and that $(U^{\pm}_0-\widebar{U}^{\pm},\varphi_0)\in H^{\mu+1/2}(\mathbb{R}^2_+)\times H^{\mu+1}(\mathbb{R})$ has a compact support.
	Then there exists a positive constant $\varepsilon$ such that,
	if $\|U^{\pm}_0-\widebar{U}^{\pm}\|_{H^{\mu+1/2}(\mathbb{R}^2_+)}+\|\varphi_0\|_{H^{\mu+1}(\mathbb{R})}\leq \varepsilon$,
	problem \eqref{RE0} and \eqref{Phi.eq} has a solution $(U^{\pm},\Phi^{\pm},\varphi)$ on the time interval $[0,T]$
	satisfying
	$$
	(U^{\pm}-\widebar{U}^{\pm},\Phi^{\pm}-\widebar{\Phi}^{\pm})\in H^{\mu-7}((0,T)\times\mathbb{R}^2_+),\qquad
	\varphi\in H^{\mu-6}((0,T)\times\mathbb{R}).
	$$
\end{theorem}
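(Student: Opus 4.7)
The plan is to prove Theorem \ref{thm} via a modified Nash--Moser iteration scheme, using as building blocks the tame energy estimate for the effective linear problem (Theorem \ref{thm.2}) and the well-posedness result in $H^s$ established in \S\ref{sec.6}. The unavoidable loss of derivatives in the linear estimate—caused by the weak Kreiss--Lopatinski\u{\i} condition at $\mathsf{M}=\mathsf{M}_c$ and by the characteristic nature of the boundary—rules out a standard Picard or contraction-mapping argument and dictates the use of Nash--Moser, as in the non-relativistic work of Coulombel--Secchi \cite{CS08MR2423311,CS09MR2505379}.

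First I would reduce the problem to one with zero initial data by constructing a smooth \emph{approximate solution} $(U^{\pm,a},\Phi^{\pm,a},\varphi^{a})$. Using the equations \eqref{RE0.a} and \eqref{Phi.eq.a}, together with their time-differentiated versions evaluated at $t=0$, the traces $\partial_t^k(U^{\pm},\Phi^{\pm},\varphi)|_{t=0}$ for $0\le k\le \mu$ are determined algebraically from $(U_0^{\pm},\varphi_0)$; the compatibility conditions imposed at order $\mu$ ensure consistency with the boundary relations \eqref{RE0.b} and \eqref{Phi.eq.c}. These traces are then lifted to spacetime via a continuous extension operator, producing $(U^{\pm,a},\Phi^{\pm,a},\varphi^{a})$ that satisfies \eqref{RE0.b}, \eqref{RE0.c}, \eqref{Phi.eq.b}, \eqref{Phi.eq.c} exactly and \eqref{RE0.a}, \eqref{Phi.eq.a} to infinite order at $t=0$. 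Writing the true solution as $(U^{\pm,a}+V^{\pm},\Phi^{\pm,a}+\Psi^{\pm},\varphi^{a}+\psi)$ recasts \eqref{RE0}--\eqref{Phi.eq} as a nonlinear problem for $(V^{\pm},\Psi^{\pm},\psi)$ with vanishing initial data and small interior source $f^a$ that vanishes to high order at $t=0$, both controlled by $\varepsilon$.

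Next I would run the iteration in the scale of weighted spaces introduced in \S\ref{sec2.2}. Fix an increasing sequence $\theta_n\nearrow\infty$ (e.g.\ $\theta_n=(\theta_0^2+n)^{1/2}$) and smoothing operators $S_{\theta_n}$. Given $(V_n^{\pm},\Psi_n^{\pm},\psi_n)$, form smoothed iterates $(V_{n+1/2}^{\pm},\Psi_{n+1/2}^{\pm},\psi_{n+1/2})$, then solve the effective linear problem—obtained after Alinhac's good-unknown substitution around the smoothed state—with source chosen so that the accumulated residual is absorbed; this defines the increment $(\delta V_n^{\pm},\delta\Psi_n^{\pm},\delta\psi_n)$ via Theorem \ref{thm.2} and the $H^s$ well-posedness. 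The residual splits into the quadratic error (from Taylor expansion of the nonlinear operators $\mathbb{L},\mathbb{B}$), the first substitution error (from replacing $V_n$ by $V_{n+1/2}$), the second substitution error (from the smoothing of the coefficient state), and the last error (from dropping the zero-th order terms in the good-unknown reduction). With $\mu\geq 13$ guaranteeing enough regularity to exceed twice the fixed number of derivatives lost in \ref{thm.2}, the inductive hypothesis $\|(V_n,\Psi_n,\psi_n)\|_{s}\lesssim \varepsilon\,\theta_n^{s-\alpha}$ can be propagated in a band of indices $s$, and the telescoping series converges in $H^{\mu-7}$ on $[0,T]$.

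The main obstacle is verifying that all four families of errors are genuinely tame—i.e.\ satisfy Moser-type product and composition estimates linear in the highest norm with a fixed loss—so that the summation against $\theta_n^{-\alpha}$ closes. This requires a careful tame analysis of the coefficients $A_j(U)$, $\widetilde{A}_2(U,\Phi)$, and of the commutators $[L(U_{n+1/2},\Phi_{n+1/2}),S_{\theta_n}]$ in the variable-coefficient, half-space, characteristic framework, and it is here that the constant-rank property of the boundary matrix (guaranteed by the choice \eqref{Phi.eq.a} of $\Phi^{\pm}$) and the recovery of the missing normal derivatives through the linearized vorticity equation derived from \eqref{hw.eq} become decisive. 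Once these tame estimates are in place, convergence of the Nash--Moser scheme yields a solution in $H^{\mu-7}$ with front in $H^{\mu-6}$, completing the proof of Theorem \ref{thm}.
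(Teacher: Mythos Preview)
Your proposal is correct and follows essentially the same route as the paper: reduction to zero initial data via an approximate solution built from the compatibility conditions (the paper's \S\ref{secCA}), followed by a Nash--Moser iteration with smoothing operators $S_{\theta_n}$, $\theta_n=(\theta_0^2+n)^{1/2}$, Alinhac's good unknown, and the same four-way decomposition of the error (quadratic, first and second substitution, and the remainder from dropping the zero-th order term). The paper likewise closes the induction with the tame estimate of Theorem~\ref{thm.L} (which packages Theorem~\ref{thm.2} together with the vorticity-based recovery of normal derivatives), and the regularity count $\tilde\alpha=\mu-2$, $\alpha=\tilde\alpha-4\ge 7$ yields exactly the $H^{\mu-7}\times H^{\mu-6}$ conclusion you anticipate.
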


\begin{remark}\
	In the non-relativistic limit $\epsilon\to 0$, from \eqref{H2}, one obtains the classical stability
	condition $\textsf{M}>\sqrt{2}$ for compressible vortex sheets.
	The critical Mach number
	$$
	\textsf{M}_c:=\frac{\sqrt{2}}{\sqrt{1+\epsilon^2\bar{c}^2}}
	$$
	of the relativistic stability condition is always
	strictly smaller than $\sqrt{2}$, which means that the relativistic vortex sheets are stable
	in a larger physical regime of the parameters.
	When  $\bar{c}$ is arbitrarily close to the light speed $\epsilon^{-1}$,  the critical Mach number $\textsf{M}_c$ approaches $1$
	so that the stability holds precisely for supersonic relativistic flows.
\end{remark}


\subsection{Weighted Sobolev Spaces and Norms}\label{sec2.w}
We are going to introduce certain weighted Sobolev spaces in order to prove Theorem {\rm\ref{thm}}.
Let $\Omega$ denote the half-space $\{(t,x_1,x_2)\in\mathbb{R}^3:x_2>0\}$.
Boundary $\partial\Omega$ is identified to $\mathbb{R}^2$.
For all $s\in\mathbb{R}$ and $\gamma\geq 1$,
the usual Sobolev space $H^s(\mathbb{R}^2)$ is equipped with the following norm:
\begin{align*}
\|v\|_{s,\gamma}^2:=\frac{1}{(2\pi)^2} \int_{\mathbb{R}^2}\lambda^{2s,\gamma}(\xi) |\widehat{v}(\xi)|^2\d \xi,\qquad
\lambda^{s,\gamma}(\xi):=(\gamma^2+|\xi|^2)^{\frac{s}{2}},
\end{align*}
where $\widehat{v}$ is the Fourier transform of $v$.
We equip space $L^2(\mathbb{R}_+;H^{s}(\mathbb{R}^2))$ with the norm:
\begin{align*}
\VERT v \VERT_{s,\gamma}^2:=\int_{\mathbb{R}_+}\|v(\cdot,x_2)\|_{s,\gamma}^2\d x_2.
\end{align*}
We will abbreviate the usual norms of $L^2(\mathbb{R}^2)$ and $L^2(\Omega)$ as
\begin{align*}
\|\cdot\|:=\|\cdot\|_{0,\gamma}\quad\, \mathrm{and}\quad\,  \VERT\cdot\VERT  :=\VERT\cdot\VERT_{0,\gamma}.
\end{align*}
The scalar products in $L^2(\mathbb{R}^2)$  and $L^2(\Omega)$ are denoted as follows:
\begin{align*}
\langle a,b\rangle:=\int_{\mathbb{R}^2} a(x)\overline{b(y)}\d y,\qquad
\llangle a, b\rrangle:=\int_{\Omega} a(y)\overline{b(y)}\d y,
\end{align*}
where $\overline{b(y)}$ is the complex conjugation of $b(y)$.

For $s\in\mathbb{R}$ and $\gamma\geq 1$, we introduce the weighted Sobolev
space $H^{s}_{\gamma}(\mathbb{R}^2)$ as
\begin{align*}
H^{s}_{\gamma}(\mathbb{R}^2)&:=\left\{
u\in\mathcal{D}'(\mathbb{R}^2)\,:\, \mathrm{e}^{-\gamma t}u(t,x_1)\in
H^{s}(\mathbb{R}^2) \right\},
\end{align*}
and its norm $\|u\|_{H^{s}_{\gamma}(\mathbb{R}^2)}:=\|\mathrm{e}^{-\gamma t}u\|_{s,\gamma}$.
We write $L^2_{\gamma}(\mathbb{R}^2):=H^0_{\gamma}(\mathbb{R}^2)$ and $\|u\|_{L^2_{\gamma}(\mathbb{R}^2)}:=\|\mathrm{e}^{-\gamma t}u\|$.

We define $L^2(\mathbb{R}_+;H^{s}_{\gamma}(\mathbb{R}^2))$, briefly denoted by $L^2(H^s_{\gamma})$,
as the space of distributions with finite $L^2(H^s_{\gamma})$--norm, where
\begin{align*}
\|u\|_{L^2(H^s_{\gamma})}^2:=\int_{\mathbb{R}_+}\|u(\cdot,x_2)\|_{H^s_{\gamma}(\mathbb{R}^2)}^2\d x_2
=\VERT \mathrm{e}^{-\gamma t}u\VERT_{s,\gamma}^2.
\end{align*}
We set $L^2_{\gamma}(\Omega):=L^2(H^0_{\gamma})$ and $\|u\|_{L^2_{\gamma}(\Omega)}:=\VERT \mathrm{e}^{-\gamma t}u\VERT$.

For all $k\in\mathbb{N}$ and $\gamma\geq 1$, we define the weighted Sobolev space $H^k_{\gamma}(\Omega)$ as
\begin{align*}
H^{k}_{\gamma}(\Omega)&:=\left\{ u\in\mathcal{D}'(\Omega)\,:\, \mathrm{e}^{-\gamma t}u\in   H^{k}(\Omega) \right\}.
\end{align*}

Throughout the paper, we introduce the notation: $A\lesssim B$ ($B\gtrsim A$) if $A\leq CB$ holds uniformly
for some positive constant $C$ that is \emph{independent} of $\gamma$.
The notation, $A\sim B$, means that both $A\lesssim B$ and $B\lesssim A$.
Then, for $k\in\mathbb{N}$, one has
\begin{align}\label{sim}
\|u\|_{k,\gamma}\sim \sum_{|\alpha|\leq k}\gamma^{k-|\alpha|}\|\partial^{\alpha} u\|
\qquad\,\textrm{for all }u\in H^{k}(\mathbb{R}^2).
\end{align}

For any real number $T$, we introduce $\omega_T:=(-\infty,T)\times\mathbb{R}$ and $\Omega_T:=\omega_T\times\mathbb{R}_+$.
For all $k\in\mathbb{N}$ and $\gamma\geq 1$, we define the weighted space $H^k_{\gamma}(\Omega_T)$
as
$$
H^k_{\gamma}(\Omega_T):=\left\{u\in\mathcal{D}'(\Omega_T)\,:\, \mathrm{e}^{-\gamma t}u\in H^{k}(\Omega_T)\right\}.
$$
In view of relation \eqref{sim}, we introduce the norm on $H^k_{\gamma}(\Omega_T)$ as
\begin{align} \label{norm.def}
\|u\|_{H^k_{\gamma}(\Omega_T)}:=\sum_{|\alpha|\leq k}\gamma^{k-|\alpha|}\|\mathrm{e}^{-\gamma t} \partial^{\alpha}u\|_{L^2(\Omega_T)}.
\end{align}
The norm on $H^k_{\gamma}(\omega_T)$ is defined in the same way.
For all $k\in\mathbb{N}$ and $\gamma\geq 1$, we define space $L^2(\mathbb{R}_+;H^k_{\gamma}(\omega_T))$,
briefly denoted by $L^2(H^k_{\gamma}(\omega_T))$, as the space of distributions with
finite $L^2(H^k_{\gamma}(\omega_T))$--norm, where
\begin{align*}
\|u\|_{L^2(H^k_{\gamma}(\omega_T))}^2:=\,&\int_{\mathbb{R}_+}\|u(\cdot,x_2)\|_{H^k_{\gamma}(\omega_T)}^2\d x_2\\
=\,&\sum_{\alpha_0+\alpha_1\leq k}
\gamma^{k-\alpha_0-\alpha_1}\|\mathrm{e}^{-\gamma t} \partial_t^{\alpha_0}\partial_1^{\alpha_1}u\|_{L^2(\Omega_T)}.
\end{align*}
This is an anisotropic Sobolev space for measuring only the tangential
regularity (with respect to boundary $\partial\Omega$).
We write $L^2_{\gamma}(\Omega_T):=L^2(H^0_{\gamma}(\omega_T))$
and $\|u\|_{L^2_{\gamma}(\Omega_T)}:=\|u\|_{L^2(H^0_{\gamma}(\omega_T))}$.

\section{Constant Coefficient Linearized Problem}\label{sec.3}

In order to deduce the necessary condition for the linear stability of the background state \eqref{RVS0},
in this section, we consider
the following linearized problem of \eqref{RE0} and \eqref{Phi.eq} around \eqref{RVS0}:
\begin{subequations} \label{P1}
	\begin{alignat}{3} \label{P1.a}
	&\mathbb{L}_{\pm}'V^{\pm}:=\left.\frac{\mathrm{d}}{\mathrm{d}\theta}\mathbb{L}
	\big(U^{\pm}_{\theta},\Phi^{\pm}_{\theta}\big)\right|_{\theta=0} =f^{\pm}\qquad &\mathrm{if}\ x_2>0,\\
	&\mathbb{B}'(V^{+},V^-,\psi):=\left.\frac{\mathrm{d}}{\mathrm{d}\theta}\mathbb{B}(U^{+}_{\theta},U^{-}_{\theta},\varphi_{\theta})\right|_{\theta=0}
	=g\qquad &\mathrm{if}\ x_2=0,
	\label{P1.b}
	\end{alignat}
\end{subequations}
where $U^{\pm}_{\theta}:=\widebar{U}^{\pm}+\theta V^{\pm}$, $\Phi^{\pm}_{\theta}:=\widebar{\Phi}^{\pm}+\theta\Psi^{\pm}$,
and $\varphi_{\theta}$ (resp.\;$\psi$) denotes the common trace of $\Phi^{\pm}_{\theta}$ (resp.\;$\Psi^{\pm}$)
on the boundary $\{x_2=0\}$.
The differential operators $\mathbb{L}_{+}'$ and $\mathbb{L}_{-}'$ are given by
\setlength{\arraycolsep}{1pt}
\begin{align*}
&\mathbb{L}_{\pm}':=A_0\big(\widebar{U}^{\pm}\big)\partial_t+A_1\big(\widebar{U}^{\pm}\big)\partial_1 \pm    A_2\big(\widebar{U}^{\pm}\big)\partial_2,
\end{align*}
which are both constant coefficient differential operators.
It follows from \eqref{p.con} and \eqref{N.c.def} that
\begin{align}\label{H1a}
N(\bar{\rho})=1,\qquad   \bar{c}=c(\bar{\rho})\in(0,\epsilon^{-1}).
\end{align}
To derive the boundary operator $\mathbb{B}'$, we infer from \eqref{Gamma.v.1} that
\begin{align}\label{Gamma.v}
\varGamma(U)=\frac{\sqrt{h(U_1)^2+\epsilon^2 U_2^2+\epsilon^2 U_3^2}}{h(U_1)},\quad
v_j(U)=\frac{U_{j+1}}{\varGamma(U)h(U_1)},\quad j=1,2.
\end{align}
Utilizing the identity: $h'(U_1)=\epsilon^2/N(U_1)$ yields
\begin{align} \label{v.partial}
\frac{\p v_j}{\p U_1 }=-\frac{\epsilon^2 v_j}{N h\varGamma^2},\quad\,
\frac{\p v_j}{\p U_{j+1} }=\frac{1-\epsilon^2 v_j^2}{h\varGamma},\quad\,
\frac{\p v_2}{\p U_2 }=\frac{\p v_1}{\p U_3 }=-\frac{\epsilon^2 v_1 v_2}{h\varGamma},
\end{align}
for $ j=1,2.$
The second component of $\mathbb{B}(U^{+}_{\theta},U^{-}_{\theta},\varphi_{\theta})$ is
$\theta (\p_t\psi+v_{1}(U^{+}_{\theta})\p_1\psi)-v_{2}(U^{+}_{\theta})$.
Then we use  \eqref{v.partial} to obtain
\begin{align*}
\left.\frac{\d}{\d \theta}\left(\mathbb{B}(U^{+}_{\theta},U^{-}_{\theta},\varphi_{\theta})\right)_2\right|_{\theta=0}
=\p_t\psi+\bar{v}^+\p_1\psi-(\bar{h}\widebar{\varGamma})^{-1}  V_3^+.
\end{align*}
After similar argument to the other components of $\mathbb{B}(U^{+}_{\theta},U^{-}_{\theta},\varphi_{\theta})$,
we have
\setlength{\arraycolsep}{3pt}
\begin{align}\notag
\mathbb{B}'(V^{+},V^-,\psi)=\mathbb{B}'(V^{\mathrm{nc}},\psi):=
\bar{b}\nabla\psi+\hat{B} V^{\mathrm{nc}},
\end{align}
where $\nabla\psi:=(\partial_t\psi,\partial_1\psi)^{\mathsf{T}}$,
$V^{\mathrm{nc}}:=(V_1^+,V_3^+,V_1^-,V_3^-)^{\mathsf{T}}$ denotes the ``noncharacteristic part''
of $V:=(V^+,V^-)^{\mathsf{T}}$, and coefficients $\bar{b}$ and $\hat{B}$ are given by
\begin{align}\label{b.M.bar}
\bar{b}:=\begin{pmatrix}
0  &2\bar{v}\\[0.5mm]
1  &\bar{v}\\[0.5mm]
0  &0
\end{pmatrix},
\qquad
\hat{B}:=\begin{pmatrix}
0  & -\big(\bar{h}\widebar{\varGamma}\big)^{-1} &0 & \big(\bar{h}\widebar{\varGamma}\big)^{-1} \\[0.5mm]
0 & -\big(\bar{h}\widebar{\varGamma}\big)^{-1}  &0 & 0\\[0.5mm]
1  &0  &-1  &0
\end{pmatrix}.
\end{align}

We are now ready to state the main result for the constant coefficient case.

\begin{theorem}\  \label{thm.1}
	Assume that the stationary solution defined by \eqref{RVS0} satisfies \eqref{H1} and \eqref{H2}.
	Then, for all $\gamma\geq 1$ and for all $(V ,\psi)\in H^2_{\gamma}(\Omega)\times H^2_{\gamma}(\mathbb{R}^2)$,
	the following estimate holds{\rm :}
	\begin{align} \notag
	&\gamma \|V\|_{L^2_{\gamma}(\Omega)}^2+\|{V^{\mathrm{nc}}}|_{x_2=0}\|_{L^2_{\gamma}(\mathbb{R}^2)}^2
	+\|\psi\|_{H^1_{\gamma}(\mathbb{R}^2)}^2
	\\&\quad \lesssim \gamma^{-3}\|\mathbb{L}_{\pm}'V^{\pm}\|_{L^2(H_{\gamma}^1)}^2
	+\gamma^{-2}\|\mathbb{B}'({V^{\mathrm{nc}}}|_{x_2=0},\psi)\|_{H^1_{\gamma}(\mathbb{R}^2)}^2.
	\label{thm.1e}
	\end{align}
\end{theorem}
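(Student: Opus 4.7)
The plan is to follow the Kreiss framework for hyperbolic boundary value problems, adapted to the weakly stable, characteristic setting along the lines of \cite{CS04MR2095445,CHW17Adv}. Since $\mathbb{L}_\pm'$ has constant coefficients, I would perform a weighted Fourier--Laplace transform in the tangential variables, writing $\widetilde{V}^{\pm}(\delta,\eta,x_2)$ for the transform of $\mathrm{e}^{-\gamma t} V^{\pm}$ in $(t,x_1)$ with $\tau = \gamma + i\delta$. Because the eikonal constraint \eqref{Phi.eq.a} is in force on the whole half-space, each boundary matrix $\mp A_2(\widebar{U}^{\pm})$ has a one-dimensional kernel on $\{x_2\geq 0\}$, so the characteristic component of $V^{\pm}$ can be recovered algebraically from the interior equation once the noncharacteristic part $V^{\mathrm{nc}}$ is under control. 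After this reduction \eqref{P1.a} becomes a first-order ODE system $\partial_2 \widetilde{W}^{\pm} = \mathcal{A}^{\pm}(\tau,\eta)\widetilde{W}^{\pm} + \widetilde{F}^{\pm}$ in $x_2$, where $\widetilde{W}^{\pm}$ encodes the two noncharacteristic unknowns on each side.

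Next I would identify the stable subspaces $E^{s}_{\pm}(\tau,\eta)$ of $\mathcal{A}^{\pm}$. Standard hyperbolic spectral theory gives $\dim E^{s}_{\pm} = 2$ in the open frequency domain $\{\RE\tau>0\}$, with continuous extensions to $\gamma = 0$ via Kreiss' lemma. The boundary operator $\mathbb{B}' = \bar{b}\nabla\psi + \wideubar{B}\, V^{\mathrm{nc}}$ then supplies three scalar conditions coupling $\widehat{\psi}$ to the four coefficients parametrising $\widetilde{W}^{+}|_{x_2=0}\in E^{s}_+$ and $\widetilde{W}^{-}|_{x_2=0}\in E^{s}_-$. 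The first column of $\bar{b}$ is elliptic in $(\tau,\eta)$, so $\widehat{\psi}$ can be eliminated in favour of $V^{\mathrm{nc}}|_{x_2=0}$, yielding a reduced square system whose determinant is the Lopatinski\u{\i} determinant $\Delta(\tau,\eta)$. A direct algebraic computation, analogous to the non-relativistic case in \cite{CS04MR2095445} but carried out with the new relativistic flux matrices \eqref{A0.def}--\eqref{A2.def}, factorises $\Delta$ and shows that under \eqref{H1} its roots lie entirely in $\{\gamma=0\}$ precisely when \eqref{H2} holds; furthermore, away from the critical set these roots are simple, expressing the weak (non-uniform) Lopatinski\u{\i} condition.

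With this spectral picture in place, I would build Kreiss-type symmetrizers $R_\pm(\tau,\eta)$ by patching local models in three regimes: strongly stable symmetrizers in the interior hyperbolic region, elliptic-type symmetrizers at glancing and elliptic transitions, and degenerate symmetrizers in a neighbourhood of each simple root of $\Delta$ on $\gamma=0$, designed to absorb the expected one-derivative loss. The resulting frequency-space energy identity, combined with the reduced boundary relation and the characteristic mode recovery, gives after Plancherel an estimate of the form
\begin{align*}
\gamma \|\mathrm{e}^{-\gamma t}V\|_{L^2(\Omega)}^2 + \|\mathrm{e}^{-\gamma t}V^{\mathrm{nc}}|_{x_2=0}\|^2 + \|\mathrm{e}^{-\gamma t}\psi\|_{1,\gamma}^2
\lesssim \gamma^{-3}\VERT \mathrm{e}^{-\gamma t}\mathbb{L}_\pm'V^{\pm}\VERT_{1,\gamma}^2
 + \gamma^{-2}\|\mathrm{e}^{-\gamma t}\mathbb{B}'(V^{\mathrm{nc}}|_{x_2=0},\psi)\|_{1,\gamma}^2,
\end{align*}
which is precisely \eqref{thm.1e} after translating through the $H^k_\gamma$ conventions of \S\ref{sec2.w}.

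The main obstacle I anticipate is the Lopatinski\u{\i} root analysis: tracking the Lorentz factor $\widebar{\varGamma}$ and the cross terms introduced by \eqref{A0.def}--\eqref{A2.def} through the reduction and the elimination of $\widehat{\psi}$, so as to pin the critical set down to the explicit threshold $\mathsf{M} = \mathsf{M}_c = \sqrt{2}/\sqrt{1+\epsilon^2\bar{c}^2}$, and to verify that the zeros on $\{\gamma=0\}$ are simple for every $\mathsf{M}>\mathsf{M}_c$. Once the algebraic computation is settled and the simple nature of the roots is confirmed, the symmetrizer construction and the resulting loss-of-derivatives estimate follow the now standard pattern of \cite{CS04MR2095445,CHW17Adv}, with the non-relativistic limit $\epsilon\to 0$ providing a useful sanity check against the classical $\mathsf{M}>\sqrt{2}$ criterion.
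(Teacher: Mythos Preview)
Your outline is correct in spirit and would yield \eqref{thm.1e}, but it differs from the paper's route in two places worth noting.

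First, the paper does \emph{not} build Kreiss symmetrizers for the constant coefficient problem. After the Fourier--Laplace reduction and elimination of $\widehat\psi$ (which you describe correctly), it instead follows the shortcut of \cite{CHW17Adv}: once the interior source is removed by a preliminary partial homogenization (solving an auxiliary problem with maximally dissipative boundary), one can locally conjugate $\mathcal{A}(\tau,\eta)$ to upper-triangular form by a merely \emph{continuous} invertible matrix $T(\tau,\eta)$ whose first and third columns are the stable eigenvectors $E_\pm$. On the homogeneous ODE the unstable components of $T^{-1}\widehat{W^{\mathrm{nc}}}$ then vanish identically for $\RE\tau>0$, and the boundary relation reduces to $\beta(E_+\;E_-)Z=\chi_j\widehat G$. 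The simple-root factorisation of $\Delta$ gives the pointwise lower bound $|\beta(E_+\;E_-)Z|\gtrsim\gamma|Z|$ directly, without any symmetrizer patching near glancing or critical frequencies. This is considerably lighter than the full microlocal symmetrizer construction you propose (which the paper reserves for the variable coefficient problem in \S\ref{sec.5}).

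Second, you carry the interior source $\widetilde F^\pm$ through the ODE, whereas the paper disposes of it up front via the auxiliary dissipative boundary problem \eqref{aux.P}; this is what allows the clean vanishing of the outgoing modes in the step above. Your approach with source present is workable but forces you into the symmetrizer machinery. If you prefer the symmetrizer route, it is valid; just be aware that it is not what the paper does here, and that the paper's shortcut is what justifies citing \cite{CHW17Adv} rather than only \cite{CS04MR2095445} for the constant coefficient estimate.
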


\begin{remark}\ \label{rem3.1}
	In the case of $\textsf{M}<\textsf{M}_{\rm c}:=\frac{\sqrt{2}}{\sqrt{1+\epsilon^2\bar{c}^2}}$,
	the relativistic vortex sheet \eqref{RVS0} is violently unstable, \emph{i.e.}
	the Lopatinski\u{\i} determinant admits the roots in the interior of frequency space.
	On the other hand, when $\textsf{M}\ge \textsf{M}_{\rm c}$,
	all the roots of the Lopatinski\u{\i} determinant are localized on the boundary of frequency space.
	In particular, if $\textsf{M}=\textsf{M}_{\rm c}$,
	the only root of the Lopatinski\u{\i} determinant is a triple one,
	which leads to the following weaker estimate than \eqref{thm.1e}:
	\begin{align} \notag
	&\gamma \|V\|_{L^2_{\gamma}(\Omega)}^2+\|{V^{\mathrm{nc}}}|_{x_2=0}\|_{L^2_{\gamma}(\mathbb{R}^2)}^2+\|\psi\|_{H^1_{\gamma}(\mathbb{R}^2)}^2\\
	&\quad \lesssim
	\gamma^{-7}\|\mathbb{L}_{\pm}'V^{\pm}\|_{L^2(H_{\gamma}^3)}^2+\gamma^{-6}\|\mathbb{B}'({V^{\mathrm{nc}}}|_{x_2=0},\psi)\|_{H^3_{\gamma}(\mathbb{R}^2)}^2,\label{rem.e0}
	\end{align}
	for all $\gamma\geq 1$ and $(V,\psi)\in H^4_{\gamma}(\Omega)\times H^4_{\gamma}(\mathbb{R}^2)$.
	See Remarks {\rm \ref{rem3.2}}--{\rm \ref{rem3.4}} for more details.
	This latter case corresponds to a transition between a weakly stable zone and
	a violently unstable
	zone {(}{\it cf}.\;Coulombel--Secchi {\rm \cite{CS04bMR2099568}} for the non-relativistic case{)}.
\end{remark}

The rest of this section is devoted mainly to proving Theorem {\rm\ref{thm.1}}.

\subsection{Some Reductions}\label{sec3.1}

Before proving Theorem {\rm\ref{thm.1}}, we first make some reductions of problem \eqref{P1}.

\subsubsection{Reformulation of Theorem {\rm\ref{thm.1}}}
We first transform our problem \eqref{P1} into the one with diagonal boundary matrix.
For this purpose, we calculate the eigenvalues and corresponding eigenvectors of $A_2\big(\widebar{U}^{\pm}\big)$.
The eigenvalues of
\begin{align*}
A_2\big(\widebar{U}^{+}\big)=A_2\big(\widebar{U}^{-}\big)=
\begin{pmatrix}
0&0&\bar{c}^2\\
0&0&0\\
1&0&0
\end{pmatrix}
\end{align*}
are $\lambda_1=0,$ $\lambda_2=-\bar{c}$, and  $\lambda_3=\bar{c}$,
with the corresponding right eigenvectors:
\begin{align*}
r_1=(0,1,0)^{\mathsf{T}},\quad
r_2=(1,0,-\frac{1}{\bar{c}})^{\mathsf{T}},\quad
r_3=(1,0, \frac{1}{\bar{c}})^{\mathsf{T}}.
\end{align*}
Set $\widebar{R}:=(r_1\ r_2\ r_3)$. Then
$
\widebar{R}^{-1}A_2\big(\widebar{U}^{\pm}\big)\widebar{R}=\mathrm{diag}\,(0,-\bar{c},\bar{c}).
$
We thus perform the linear transformation $W^{\pm}:=\widebar{R}^{-1} V^{\pm}$ with
\begin{align*}
W_1^{\pm}:=V_2^{\pm},\quad
W_2^{\pm}:=\tfrac{1}{2}\left(V_1^{\pm}-\bar{c}V_3^{\pm}\right),\quad
W_3^{\pm}:=\tfrac{1}{2}\left(V_1^{\pm}+\bar{c}V_3^{\pm}\right).
\end{align*}
Let us multiply \eqref{P1.a} by $\widebar{S}\widebar{R}^{-1}$ with
$
\widebar{S}:=\mathrm{diag}\,(1,2/\bar{c}^{2},2/\bar{c}^{2}).
$
Then problem \eqref{P1} becomes equivalent to
\begin{align}
\label{P1a}
\left\{ \begin{aligned}
&\mathcal{L}W:=\mathcal{A}_0\partial_tW+\mathcal{A}_1\partial_1W+\mathcal{A}_2\partial_2W=f\quad &\mathrm{if}\ x_2>0,\\
&\mathcal{B}(W^{\mathrm{nc}},\psi):=\bar{b}\nabla\psi+\widebar{B} W^{\mathrm{nc}}=g\quad &\mathrm{if}\ x_2=0,
\end{aligned} \right.
\end{align}
with new $f$ and $g$,
where $W^{\mathrm{nc}}:=(W_2^+,W_3^+,W_2^-,W_3^-)^{\mathsf{T}}$ denotes
the ``noncharacteristic part'' of $W:=(W^+,W^-)^{\mathsf{T}}$.
The coefficient matrices $\mathcal{A}_j=\mathrm{diag}\,(\mathcal{A}_j^+,\mathcal{A}_j^-)$, $j=0,1,2$,
are block diagonal with
\setlength{\arraycolsep}{1pt}
\begin{align}\notag
\mathcal{A}_0^{\pm}:=\,&\widebar{S}\widebar{R}^{-1}A_0(\widebar{U}^{\pm})\widebar{R}\\
\label{A0.cal}=\,&\begin{pmatrix}
\widebar{\varGamma} (1-\epsilon^2\bar{v}^2)&0&0\\
\pm 2 \epsilon^2 \bar{v}&\dfrac{\widebar{\varGamma}(2-\epsilon^4\bar{c}^2\bar{v}^2)}{\bar{c}^2} &-\epsilon^4\widebar{\varGamma}\bar{v}^2\\
\pm 2 \epsilon^2 \bar{v} &-\epsilon^4\widebar{\varGamma}\bar{v}^2&\dfrac{\widebar{\varGamma}(2-\epsilon^4\bar{c}^2\bar{v}^2)}{\bar{c}^2}
\end{pmatrix},\\[2mm]
\notag \mathcal{A}_1^{\pm}:=\,&\widebar{S}\widebar{R}^{-1}A_1(\widebar{U}^{\pm})\widebar{R}\\
\label{A1.cal}=\,&\begin{pmatrix}
\pm\widebar{\varGamma}(1-\epsilon^2\bar{v}^2)\bar{v} &1-\epsilon^2\bar{v}^2&1-\epsilon^2\bar{v}^2\\
1+\epsilon^2\bar{v}^2&\pm\dfrac{\widebar{\varGamma}\bar{v}(2-\epsilon^4\bar{c}^2\bar{v}^2)}{\bar{c}^2}&\mp\epsilon^4\widebar{\varGamma}\bar{v}^3\\
1+\epsilon^2\bar{v}^2&\mp\epsilon^4\widebar{\varGamma}\bar{v}^3&\pm\dfrac{\widebar{\varGamma}\bar{v}(2-\epsilon^4\bar{c}^2\bar{v}^2)}{\bar{c}^2}
\end{pmatrix},
\end{align}
and
\begin{align}\label{A2.cal}
\mathcal{A}_2^{\pm}&:=\pm\widebar{S}\widebar{R}^{-1}A_2(\widebar{U}^{\pm})\widebar{R}
=\pm\mathrm{diag}(0,-\frac{2}{\bar{c}},\frac{2}{\bar{c}}).
\end{align}
We notice that \eqref{RE4} is a symmetrizable hyperbolic system
with the Friedrichs symmetrizer $S_2(U)$ defined in \eqref{Sym}.
Consequently, operator $\mathcal{L}$ is symmetrizable hyperbolic with the Friedrichs symmetrizer $S_3$ defined by
\begin{align}
S_3:=\mathrm{diag}\,(\widebar{R}^{\mathsf{T}}S_2(\widebar{U}^+)\widebar{R}\widebar{S}^{-1},\, \widebar{R}^{\mathsf{T}}S_2(\widebar{U}^-)\widebar{R}\widebar{S}^{-1}).
\label{S3}
\end{align}
Regarding the boundary coefficients, $\bar{b}$ is given in \eqref{b.M.bar},
and $\widebar{B}$ is defined by
\setlength{\arraycolsep}{3pt}
\begin{align}  \label{M.ubar}
\widebar{B}:= \begin{pmatrix}
(\widebar{\varGamma}\bar{c}\bar{h} )^{-1} &
-(\widebar{\varGamma}\bar{c}\bar{h} )^{-1} &
-(\widebar{\varGamma}\bar{c}\bar{h} )^{-1} &
(\widebar{\varGamma}\bar{c}\bar{h} )^{-1}\\[0.5mm]
(\widebar{\varGamma}\bar{c}\bar{h} )^{-1}&
-(\widebar{\varGamma}\bar{c}\bar{h})^{-1} & 0 &0\\[0.5mm]
1&1 &-1 &-1
\end{pmatrix}.
\end{align}

For $\gamma\geq 1$, we define
\begin{align*}
\mathcal{L}^{\gamma}:=\mathcal{L}+\gamma \mathcal{A}_0,\quad
\mathcal{B}^{\gamma}(W^{\mathrm{nc}},\psi):=
\bar{b}\begin{pmatrix}
\gamma\psi+\partial_t\psi \\
\partial_1\psi
\end{pmatrix}
+\widebar{B} W^{\mathrm{nc}}.
\end{align*}

It is easily shown that Theorem {\rm\ref{thm.1}} admits the following equivalent proposition.
\begin{proposition}\  \label{pro3.1}
	Assume that the stationary solution \eqref{RVS0} satisfies \eqref{H1} and \eqref{H2}.
	Then, for all $\gamma\geq 1$ and $(W,\psi)\in H^2(\Omega)\times H^2(\mathbb{R}^2)$,
	the following estimate holds{\rm :}
	\begin{align}  \notag
	\gamma\VERT W \VERT^2+\left\|W^{\mathrm{nc}}|_{x_2=0}\right\|^2+\|\psi\|_{1,\gamma}^2
	\lesssim \gamma^{-3}\VERT \mathcal{L}^{\gamma}W\VERT_{1,\gamma}^2
	+\gamma^{-2}\|\mathcal{B}^{\gamma}(W^{\mathrm{nc}}|_{x_2=0},\psi)\|_{1,\gamma}^2.
	\end{align}
\end{proposition}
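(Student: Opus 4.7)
The plan follows the Kreiss symmetrizer strategy adapted to the weakly stable setting of Coulombel--Secchi~\cite{CS04MR2095445}. After a Laplace transform in $t$ with parameter $\tau = \gamma + \mathrm{i}\delta$ and a Fourier transform in $x_1$ with dual variable $\eta$, the interior system $\mathcal{L}^{\gamma}W = f$ becomes, for each $\zeta := (\tau,\eta)$, a pair of $3\times 3$ linear ODEs in $x_2$ for $\widehat{W}^{\pm}$. Because $\mathcal{A}_{2}^{\pm}$ has a one-dimensional kernel spanned by the first coordinate direction, the explicit block structure in \eqref{A0.cal}--\eqref{A2.cal} allows me to solve algebraically for the ``characteristic'' component $\widehat{W}_{1}^{\pm}$ in terms of the others, leaving a $4\times 4$ first-order ODE $\partial_{x_2}\widehat{W}^{\mathrm{nc}} = \mathcal{M}(\zeta)\widehat{W}^{\mathrm{nc}} + F$ for the noncharacteristic unknown.

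For $\gamma>0$, $\mathcal{M}(\zeta)$ has no purely imaginary eigenvalues and its stable subspace $\mathbb{E}^{-}(\zeta)$ (modes decaying as $x_2\to +\infty$) is two-dimensional, with a continuous extension up to the boundary $\{\gamma=0\}$ off the glancing set. I would then form the Lopatinski determinant $\Delta(\zeta)$ by restricting $\mathcal{B}^{\gamma}$ to $\mathbb{E}^{-}(\zeta)$ after eliminating the front symbol $\bar b(\tau,\mathrm{i}\eta)^{\mathsf{T}}$, and verify under \eqref{H2} that $\Delta$ vanishes only at a finite set of simple roots lying on the boundary $\{\gamma=0\}$, which is exactly the weakly linear stability criterion announced for \S\,\ref{sec.Lop1}. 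In the regime $\mathsf{M}<\mathsf{M}_{\mathrm{c}}$ an interior root with $\gamma>0$ would appear, giving violent instability, while at $\mathsf{M}=\mathsf{M}_{\mathrm{c}}$ the simple roots coalesce into a triple one, which is precisely what explains the weaker estimate in Remark~\ref{rem3.1}.

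The microlocal estimate is then obtained by constructing Kreiss-type symmetrizers $r(\zeta)$ on the compactified hemisphere $\{|\zeta|=1,\ \gamma\ge 0\}$ via a partition of unity. Away from the zeros of $\Delta$ and from the glancing set, the block structure of $\mathcal{M}$ (elliptic, hyperbolic, or mixed) yields classical symmetrizers producing a lossless local estimate. Near each root of $\Delta$, following the constructions of \cite{C04MR2069632,CS04MR2095445,CHW17Adv}, I use a degenerate symmetrizer depending on a large parameter which absorbs the failure of the uniform Kreiss--Lopatinski condition at the cost of one $\gamma$-weight. Patching the pieces and applying Plancherel's theorem gives the microlocal form of the desired estimate for $\widehat{W}^{\mathrm{nc}}|_{x_2=0}$ and $\widehat{\psi}$. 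The characteristic component $\widehat{W}_{1}^{\pm}$ is then recovered from the ODE and contributes only to the interior norm $\VERT W\VERT$, not to the trace term (consistent with the characteristic nature of the boundary), after which inverse transformation produces the physical-space statement of Proposition~\ref{pro3.1}.

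The main technical obstacle is the local analysis near the roots of $\Delta$: one must expand $\Delta$ in local coordinates centered at each root, certify the exact order of vanishing under \eqref{H2}, and design a symmetrizer whose negative block matches that order of degeneracy while remaining compatible with the block structure of $\mathcal{M}$ at the same frequency. The powers $\gamma^{-3}$ on the interior source and $\gamma^{-2}$ on the boundary source in the final estimate are precisely those dictated by a simple-root Lopatinski vanishing combined with the $\|\cdot\|_{1,\gamma}$-weights appearing on the right-hand side, mirroring the bookkeeping of \cite{CS04MR2095445}; at $\mathsf{M}=\mathsf{M}_{\mathrm{c}}$, the triple root would enforce the weaker weights $\gamma^{-7}$, $\gamma^{-6}$ and the $\|\cdot\|_{3,\gamma}$-norms of Remark~\ref{rem3.1}.
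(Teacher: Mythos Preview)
Your outline is correct and would yield Proposition~\ref{pro3.1}, but it follows a different route from the paper. The paper explicitly \emph{avoids} constructing Kreiss symmetrizers for the constant coefficient problem (see the remark preceding Lemma~\ref{lem1.E2}), reserving that machinery for the variable coefficient analysis in \S\ref{sec.5}. Instead, after the same reductions you describe (Fourier--Laplace transform, algebraic elimination of the characteristic component, and elimination of the front via the ellipticity of $b(\tau,\eta)$), the paper first performs a \emph{partial homogenization} (\S 3.1.2) to reduce to the case $\mathcal{L}^{\gamma}W=0$ using an auxiliary maximally dissipative problem. Then, in place of symmetrizers, it uses a direct argument of \cite{CHW17Adv}: on each patch of a finite cover of $\Xi_1$, one chooses a continuous invertible $T(\tau,\eta)$ that upper-triangularizes $\mathcal{A}$ (Lemma~\ref{lem1.E2}); for $\gamma>0$ the unstable components satisfy $\frac{d}{dx_2}\mathsf{W}_{2,4}=-\omega_{\pm}\mathsf{W}_{2,4}$ with $\RE\omega_{\pm}<0$, so they must vanish identically in $L^2(\mathbb{R}_+)$. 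The stable components at $x_2=0$ are then controlled directly by the pointwise lower bound $|\beta(E_+\ E_-)Z|\ge c\gamma|Z|$ of Lemma~\ref{lem1.E1}, which is obtained from the simple-root factorization of $\Delta$ without any symmetrizer.

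What each approach buys: your symmetrizer route is more robust (it is what is actually needed for the variable coefficient case, where the ``unstable modes vanish'' argument is unavailable), but it requires a nontrivial construction near the glancing set where $\mathcal{A}$ is not diagonalizable. The paper's approach is shorter and more elementary for constant coefficients, since the upper-triangular reduction in Lemma~\ref{lem1.E2} remains continuous even at glancing points, and the entire estimate is reduced to the scalar inequality $|\Delta(\tau,\eta)|\ge c\gamma$ near each simple root.
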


\subsubsection{Partial homogenization}
In order to prove Proposition \ref{pro3.1}, we show that it suffices
to study the homogeneous case $\mathcal{L}^{\gamma}W\equiv 0$.
Given $(W,\psi)\in H^2(\Omega)\times H^2(\mathbb{R}^2)$, we set
\begin{align*}
f:=\mathcal{L}^{\gamma}W\in H^1(\Omega),\qquad
g:=\mathcal{B}^{\gamma}(W^{\mathrm{nc}}|_{x_2=0},\psi)\in H^1(\mathbb{R}^2),
\end{align*}
and consider the following auxiliary problem:
\begin{align} \label{aux.P}
\left\{ \begin{aligned}
&\mathcal{L}^{\gamma}W_1=f\ &\mathrm{if}\ x_2>0,\\
&B^{\mathrm{aux}}W_1^{\mathrm{nc}}=0\ &\mathrm{if}\ x_2=0,
\end{aligned} \right.
\end{align}
where
\begin{align*}
B^{\mathrm{aux}}:= \begin{pmatrix}
0 &1 &0 &0\\
0 &0 &1 &0
\end{pmatrix}.
\end{align*}
The boundary matrix for problem \eqref{aux.P} ({\it i.e.}\;$-\mathcal{A}_2$)
has two negative eigenvalues and is nonnegative on $\ker B^{\mathrm{aux}}=\{W_3^+=W_2^-=0\}$.
Thus, the boundary conditions in \eqref{aux.P} are maximally dissipative.
From Lax--Phillips \cite{LP60MR0118949}, there exists a unique solution $W_1\in L^2(\mathbb{R}_+;H^1(\mathbb{R}^2))$
to problem \eqref{aux.P} such that the trace of $W_1$ on $\{x_2=0\}$ is in $H^1(\mathbb{R}^2)$, and
\begin{align} \label{aux.e}
\gamma \VERT W_1\VERT^2\lesssim
\gamma^{-1}\VERT f\VERT^2,\qquad
\|{W_1^{\mathrm{nc}}}|_{x_2=0}\|_{1,\gamma}^2
\lesssim \gamma^{-1}\VERT f\VERT_{1,\gamma} ^2.
\end{align}
It is clear that $W_2:=W-W_1$ satisfies
\begin{subequations} \label{P1b}
	\begin{alignat}{3} \label{P1b.1}
	&\mathcal{L}^{\gamma}W_2=0\qquad &\mathrm{if}\ x_2>0,\\
	&\mathcal{B}^{\gamma}(W_2^{\mathrm{nc}},\psi)=\tilde{g}\qquad &\mathrm{if}\ x_2=0,
	\end{alignat}
\end{subequations}
where $\tilde{g}:=g-\widebar{B}W_1^{\mathrm{nc}}$.
By virtue of \eqref{aux.e}, we obtain
\begin{align} \label{P1b.e1}
\|\tilde{g}\|_{1,\gamma}^2
\lesssim \|g\|_{1,\gamma}^2+\gamma^{-1}\VERT f\VERT_{1,\gamma}^2.
\end{align}
Multiplying  \eqref{P1b.1} by the symmetrizer $S_3$ (\emph{cf.}\;\eqref{S3}), then taking the scalar
product of the resulting equations with $W_2$, and employing integration by parts yield
\begin{align} \label{P1b.e2}
\gamma\VERT W_2\VERT^2\lesssim
\|{W_2^{\mathrm{nc}}}|_{x_2=0}\|^2.
\end{align}
The next lemma follows directly from \eqref{aux.e} and \eqref{P1b.e1}--\eqref{P1b.e2}.

\begin{lemma}\  \label{lem.aux}
	If the solution of \eqref{P1b} satisfies the estimate{\rm :}
	\begin{align} \notag
	\|{W_2^{\mathrm{nc}}}|_{x_2=0}\|^2
	+\|\psi\|_{1,\gamma}^2\lesssim \gamma^{-2}\|\tilde{g}\|_{1,\gamma}^2,
	\end{align}
	then Proposition {\rm \ref{pro3.1}} holds.
\end{lemma}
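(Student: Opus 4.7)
The plan is a direct superposition argument. I would set $W=W_1+W_2$, where $W_1$ denotes the unique solution of the maximally dissipative auxiliary problem \eqref{aux.P} whose interior and trace controls are already recorded in \eqref{aux.e}, and $W_2:=W-W_1$ satisfies the reduced problem \eqref{P1b} with modified boundary datum $\tilde g=g-\widebar B W_1^{\mathrm{nc}}$. The assumption of the lemma then provides the two delicate pieces of information for $W_2$, namely the trace bound on $W_2^{\mathrm{nc}}|_{x_2=0}$ and the control of $\|\psi\|_{1,\gamma}$, each measured against $\gamma^{-2}\|\tilde g\|_{1,\gamma}^2$. The task is to transfer these bounds back to $W$ and to re-express the right-hand side in terms of the original source data $f=\mathcal L^\gamma W$ and $g=\mathcal B^\gamma(W^{\mathrm{nc}}|_{x_2=0},\psi)$.

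Next, I would combine the three quantitative ingredients already established: the auxiliary estimate \eqref{aux.e} for $W_1$, the difference identity \eqref{P1b.e1} which converts $\|\tilde g\|_{1,\gamma}$ into a combination of $\|g\|_{1,\gamma}$ and $\VERT f\VERT_{1,\gamma}$, and the energy estimate \eqref{P1b.e2} which dominates $\gamma\VERT W_2\VERT^2$ by $\|W_2^{\mathrm{nc}}|_{x_2=0}\|^2$. The work is purely bookkeeping of $\gamma$-powers, using the elementary inclusions $\VERT\cdot\VERT\lesssim \gamma^{-1}\VERT\cdot\VERT_{1,\gamma}$ on $L^2(\mathbb{R}_+;H^\bullet(\mathbb{R}^2))$ and $\|\cdot\|\lesssim \gamma^{-1}\|\cdot\|_{1,\gamma}$ on $\mathbb{R}^2$. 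For instance, \eqref{aux.e} yields at once $\gamma\VERT W_1\VERT^2\lesssim \gamma^{-3}\VERT f\VERT_{1,\gamma}^2$ and $\|W_1^{\mathrm{nc}}|_{x_2=0}\|^2\lesssim \gamma^{-3}\VERT f\VERT_{1,\gamma}^2$, while the assumption of the lemma combined with \eqref{P1b.e1} gives $\|W_2^{\mathrm{nc}}|_{x_2=0}\|^2+\|\psi\|_{1,\gamma}^2\lesssim \gamma^{-2}\|g\|_{1,\gamma}^2+\gamma^{-3}\VERT f\VERT_{1,\gamma}^2$.

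Applying the triangle inequality to $W=W_1+W_2$ both in the interior norm $\VERT\cdot\VERT$ and on the boundary trace, and invoking \eqref{P1b.e2} together with the bounds gathered above, I would conclude
\[
\gamma\VERT W\VERT^2+\|W^{\mathrm{nc}}|_{x_2=0}\|^2+\|\psi\|_{1,\gamma}^2\lesssim \gamma^{-3}\VERT f\VERT_{1,\gamma}^2+\gamma^{-2}\|g\|_{1,\gamma}^2,
\]
which is precisely the estimate of Proposition \ref{pro3.1}. There is no genuine obstacle in this argument: it is a deterministic reassembly of already-proved estimates, and the only point of care is to respect the $\gamma$-weights so that the critical coefficients $\gamma^{-3}$ on $f$ and $\gamma^{-2}$ on $g$ emerge correctly in the final inequality.
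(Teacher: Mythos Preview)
Your argument is correct and follows exactly the approach the paper intends: the paper states that the lemma ``follows directly from \eqref{aux.e} and \eqref{P1b.e1}--\eqref{P1b.e2}'', and your proposal is precisely the routine assembly of those three estimates via the decomposition $W=W_1+W_2$, with the $\gamma$-bookkeeping carried out correctly.
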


\subsubsection{Eliminating the front}
We perform the Fourier transform of problem \eqref{P1b} in $(t,x_1)$,
with dual variables denoted by $(\delta,\eta)$.
Setting $\tau=\gamma+\mathrm{i}\delta$, we have
\begin{subequations}
	\label{P1c}
	\begin{alignat}{3}
	&\left(\tau\mathcal{A}_0+\mathrm{i}\eta\mathcal{A}_1 \right) \widehat{W}
	+\mathcal{A}_2\frac{\mathrm{d}\widehat{W}}{\mathrm{d}x_2}=0\qquad &\mathrm{if}\ x_2>0,\\ \label{P1c.2}
	&b(\tau,\eta)\widehat{\psi}+\widebar{B}\widehat{W^{\mathrm{nc}}}
	=\widehat{g}\qquad &\mathrm{if}\ x_2=0,
	\end{alignat}
\end{subequations}
where we write $g$ for $\tilde{g}$ and $W$ for $W_2$ for simplicity when no confusion arises.
The coefficient:
\begin{align*}
b(\tau,\eta):=\bar{b}\left(\tau,\, \mathrm{i}\eta\right)^{\mathsf{T}}=\left(2\mathrm{i}\bar{v}\eta,
\, \tau+\mathrm{i}\bar{v}\eta, \, 0\right)^{\mathsf{T}}
\end{align*}
is homogeneous of degree $1$ in $(\tau,\eta)$.
In order to take this homogeneity into account, we define the hemisphere:
\begin{align*}
\Xi_1:=\left\{(\tau,\eta)\in \mathbb{C}\times\mathbb{R}\, :\,
|\tau|^2+\eta^2=1,\RE \tau\geq 0 \right\},
\end{align*}
and the set of ``frequencies'':
\begin{align*}
\Xi:=\left\{(\tau,\eta)\in \mathbb{C}\times\mathbb{R}\, :\,
\RE \tau\geq 0, (\tau,\eta)\ne (0,0) \right\}=(0,\infty)\cdot\Xi_1.
\end{align*}
Notice that symbol $b(\tau,\eta)$ is elliptic, {\it i.e.}
it is always different from zero on $\Xi_1$.

We set $k:=\sqrt{|\tau|^2+\eta^2}$, and define
\begin{align*}
Q(\tau,\eta):=\frac{1}{k}
\begin{pmatrix}
0&0&k\\[0.5mm]
\tau+\mathrm{i}\bar{v}\eta&-2\mathrm{i}\bar{v}\eta&0\\[0.5mm]
-2\mathrm{i}\bar{v}\eta&\bar{\tau}-\mathrm{i}\bar{v}\eta& 0
\end{pmatrix} \qquad\mbox{for $(\tau,\eta)\in \Xi$},
\end{align*}
where $\bar{\tau}$ denotes the complex conjugation of $\tau$,
so that $Q\in\mathcal{C}^\infty(\Xi,GL_3(\mathbb{C}))$ is homogeneous of degree $0$ in $(\tau,\eta)$
and satisfies
\begin{align*}
Q(\tau,\eta)b(\tau,\eta)=(0,0,\theta(\tau,\eta))^{\mathsf{T}}\qquad \mathrm{with}\
\theta(\tau,\eta) = k^{-1} |b(\tau,\eta)|^2.
\end{align*}
Since $\bar{v}\ne 0$, and $\Xi_1$ is compact,
we obtain that $\min_{(\tau,\eta)\in \Xi_1}|\theta(\tau,\eta)| >0$.
Multiplying \eqref{P1c.2} by $Q(\tau,\eta)$ yields
\begin{align} \label{P1c.2a}
\begin{pmatrix}
0 \\
0 \\
\theta(\tau,\eta)
\end{pmatrix} \widehat{\psi} (\delta,\eta)+
\begin{pmatrix}
\beta(\tau,\eta) \\[0.5mm]
\ell(\tau,\eta)
\end{pmatrix} \widehat{W^{\mathrm{nc}}} (\delta,\eta,0) = Q(\tau,\eta)\widehat{g},
\end{align}
where $\beta$ is the $2\times4$ matrix given by the first two rows of $Q(\tau,\eta)\widebar{B}$,
and $\ell$ is the last row of $Q(\tau,\eta)\widebar{B}$.
Both $\beta$ and $\ell$ are $C^{\infty}$ and  homogeneous of degree $0$ on $\Xi$.
In view of \eqref{M.ubar}, symbol $\beta$ satisfies
\begin{align}
\label{beta.C}
\beta(\tau,\eta)=\begin{pmatrix}
1&1&-1&-1\\[0.5mm]
\dfrac{\tau-\mathrm{i}\bar{v}\eta}{\widebar{\varGamma}\bar{c}\bar{h}}
& \dfrac{-\tau+\mathrm{i}\bar{v}\eta}{\widebar{\varGamma}\bar{c}\bar{h}}
& \dfrac{-\tau-\mathrm{i}\bar{v}\eta}{\widebar{\varGamma}\bar{c}\bar{h}}
& \dfrac{\tau+\mathrm{i}\bar{v}\eta}{\widebar{\varGamma}\bar{c}\bar{h}}
\end{pmatrix}\qquad\,
\mathrm{on}\ \Xi_1.
\end{align}
The last component in \eqref{P1c.2a} reads
\begin{align*}
\theta(\tau,\eta)\widehat{\psi}+\ell(\tau,\eta) \widehat{W^{\mathrm{nc}}} (\delta,\eta,0) = Q_3(\tau,\eta)\widehat{g},
\end{align*}
where $Q_3(\tau,\eta)$ is the last row of $Q(\tau,\eta)$.
Hence, it is homogeneous of degree $0$.
Thanks to the homogeneity of $\theta$ and $\ell$, we obtain
\begin{align*}
k^2|\widehat{\psi}|^2\lesssim \big|\widehat{W^{\mathrm{nc}}}|_{x_2=0}\big|^2+|\widehat{g}|^2
\qquad\, \mathrm{in}\ \Xi,
\end{align*}
from which we employ Plancherel's theorem to deduce
\begin{align}\label{front.e}
\|\psi\|_{1,\gamma}^2\lesssim \left\|W^{\mathrm{nc}}\,\!|_{x_2=0}\right\|^2 + \gamma^{-2} \|g\|_{1,\gamma}^2.
\end{align}
After eliminating the front function $\psi$, we have
\begin{subequations}
	\label{P1d}
	\begin{alignat}{3} \label{P1d.1}
	&\left(\tau\mathcal{A}_0+\mathrm{i}\eta\mathcal{A}_1 \right) \widehat{W}+\mathcal{A}_2\frac{\mathrm{d}\widehat{W}}{\mathrm{d}x_2}=0
	\qquad &\mathrm{if}\ x_2>0,\\  \label{P1d.2}
	&\beta(\tau,\eta)\widehat{W^{\mathrm{nc}}}=\widehat{G}\qquad &\mathrm{if}\ x_2=0,
	\end{alignat}
\end{subequations}
where $\widehat{G}$ consists of the first two rows of $Q(\tau,\eta)\widehat{g}$.
From \eqref{A0.cal}--\eqref{A1.cal}, we have
\begin{align}
&\tau \mathcal{A}_0^{\pm}+\mathrm{i}\eta \mathcal{A}_1^{\pm}\notag \\
&=\begin{pmatrix} \label{B.cal}
a_{\pm}\widebar{\varGamma}(1-\epsilon^2\bar{v}^2) &
\mathrm{i}\eta (1-\epsilon^2\bar{v}^2)&
\mathrm{i}\eta (1-\epsilon^2\bar{v}^2)\\[1mm]
\mathrm{i}\eta(1+\epsilon^2\bar{v}^2) \pm 2 \epsilon^2 \bar{v}\tau &
\dfrac{\widebar{\varGamma}(2-\epsilon^4\bar{c}^2\bar{v}^2)}{\bar{c}^2}a_{\pm}&
-\epsilon^4\widebar{\varGamma}\bar{v}^2a_{\pm}\\
\mathrm{i}\eta(1+\epsilon^2\bar{v}^2) \pm 2 \epsilon^2 \bar{v}\tau&
-\epsilon^4\widebar{\varGamma}\bar{v}^2 a_{\pm}
&\dfrac{\widebar{\varGamma}(2-\epsilon^4\bar{c}^2\bar{v}^2)}{\bar{c}^2}a_{\pm}
\end{pmatrix},
\end{align}
where $a_{\pm}:=\tau\pm \mathrm{i}\bar{v}\eta.$
Recalling that formula \eqref{A2.cal} defines the boundary matrix $\mathcal{A}_2^{\pm}$,
we write the first and fourth equations of \eqref{P1d.1} as
\begin{align}\label{algeb}
a_{\pm}\widebar{\varGamma} \widehat{W_1^{\pm}}+
\mathrm{i}\eta  \widehat{W_2^{\pm}}+\mathrm{i}\eta \widehat{W_3^{\pm}}=0.
\end{align}
Then we utilize \eqref{algeb} to express $\widehat{W_1^{\pm}}$ in terms of $\widehat{W_2^{\pm}}$ and $\widehat{W_3^{\pm}}$,
and plug the resulting expressions into the other four equations of \eqref{P1d.1}.
As a consequence, we obtain a system of ordinary differential equations for $\widehat{W^{\mathrm{nc}}}$
in the following form:
\begin{align}
\label{P1e}
\left\{\begin{aligned}
&\frac{\mathrm{d}}{\mathrm{d}x_2}\widehat{W^{\mathrm{nc}}}=
\mathcal{A}(\tau,\eta)\widehat{W^{\mathrm{nc}}}\qquad &\mathrm{if}\ x_2>0,\\
&\beta(\tau,\eta)\widehat{W^{\mathrm{nc}}}=\widehat{G}\qquad &\mathrm{if}\ x_2=0.
\end{aligned}
\right.
\end{align}
Here matrix $\mathcal{A}(\tau,\eta)$ is given by
\begin{align}
\label{A.cal}
\mathcal{A}(\tau,\eta):=
\begin{pmatrix}
\,\mu_{+}\, & \, -m_{+}\, & 0 \, &\, 0 \, \\
\,m_{+}\, & \, -\mu_{+}\, & 0 \, &\, 0 \, \\
\, 0 \, &\, 0 \, & -\mu_{-}\;& \, m_{-}\,  \\
\, 0 \, &\, 0 \, & -m_{-}\, & \, \mu_{-}\,
\end{pmatrix},
\end{align}
where
\begin{align*}
\mu_{\pm}:=\frac{\widebar{\varGamma}a_{\pm}}{\bar{c}}-m_{\pm},\qquad
m_{\pm}:=\frac{\epsilon^2 \bar{c}\bar{v}^2\widebar{\varGamma}^2a_{\pm}^2
	+\mathrm{i}\eta \bar{c}(\mathrm{i}\eta (1+\epsilon^2\bar{v}^2)\pm 2\epsilon^2\bar{v}\tau)}{2\widebar{\varGamma} a_{\pm}}.
\end{align*}
Using the relation, $\widebar{\varGamma}^{-2}=1-\epsilon^2\bar{v}^2$, yields
\begin{align} \label{mu.m}
\mu_{\pm}=\frac{\widebar{\varGamma} (\tau\pm \mathrm{i}\bar{v}\eta)}{\bar{c}}
-\frac{\bar{c}\widebar{\varGamma}(\mathrm{i}\eta\pm \epsilon^2 \bar{v}\tau )^2}{2(\tau\pm \mathrm{i}\bar{v}\eta)},
\qquad
m_{\pm}=\frac{\bar{c}\widebar{\varGamma}(\mathrm{i}\eta\pm \epsilon^2 \bar{v}\tau )^2}{2(\tau\pm \mathrm{i}\bar{v}\eta)}.
\end{align}
The reader may recognize the form of the symbol in \eqref{A.cal} given also by \cite[Page\;957, (4.12)]{CS04MR2095445}.
The poles of symbol $\mathcal{A}(\tau,\eta)$ on $\Xi_1$ are exactly the points: $(\tau,\eta)\in\Xi_1$ with $\tau=\mp \mathrm{i}\bar{v}\eta$,
where the coefficient of $\widehat{W_1^{+}}$ or $\widehat{W_1^{-}}$ in \eqref{algeb} vanishes.

By virtue of \eqref{front.e} and Lemma~\ref{lem.aux}, we infer that, in order to prove Proposition~\ref{pro3.1},
it suffices to study problem \eqref{P1e}.
More precisely, we have the following lemma.

\begin{lemma}\
	\label{lem1a}
	If the solution of \eqref{P1e} satisfies the estimate:
	\begin{align} \label{P1e.e0}
	\left\|W^{\mathrm{nc}}\,\!|_{x_2=0}\right\|^2
	\lesssim \gamma^{-2}\|G\|_{1,\gamma}^2,
	\end{align}
	then Proposition {\rm \ref{pro3.1}} holds.
\end{lemma}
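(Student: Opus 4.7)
My plan is simply to chain the hypothesis \eqref{P1e.e0} together with the front estimate \eqref{front.e}, which has already been established above, and then to invoke Lemma~\ref{lem.aux}. In other words, Lemma~\ref{lem1a} is a pure bookkeeping step that packages together the three reductions carried out in \S\ref{sec3.1}: partial homogenization, Fourier transform, and elimination of the front.

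First I would transfer the bound from $G$ to $\tilde{g}$. Since $Q\in C^{\infty}(\Xi,GL_3(\mathbb{C}))$ is homogeneous of degree $0$ and the hemisphere $\Xi_1$ is compact, both $Q$ and $Q^{-1}$ are uniformly bounded on $\Xi$. Because $\widehat{G}$ is made of the first two components of $Q(\tau,\eta)\widehat{g}$, this gives the pointwise inequality $|\widehat{G}(\delta,\eta)|\lesssim |\widehat{g}(\delta,\eta)|$; multiplying by $\lambda^{1,\gamma}(\delta,\eta)^{2}$, integrating over $(\delta,\eta)$, and applying Plancherel's theorem produce $\|G\|_{1,\gamma}\lesssim \|\tilde{g}\|_{1,\gamma}$ (recall that, in the formulation of \eqref{P1e}, the symbol $g$ actually denotes $\tilde{g}$). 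Plugging this into the hypothesis \eqref{P1e.e0} yields
\begin{align*}
\bigl\|W^{\mathrm{nc}}|_{x_2=0}\bigr\|^{2}\lesssim \gamma^{-2}\|\tilde{g}\|_{1,\gamma}^{2}.
\end{align*}

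Next I would feed this bound into \eqref{front.e} to control the front:
\begin{align*}
\|\psi\|_{1,\gamma}^{2}\lesssim \bigl\|W^{\mathrm{nc}}|_{x_2=0}\bigr\|^{2}+\gamma^{-2}\|\tilde{g}\|_{1,\gamma}^{2}\lesssim \gamma^{-2}\|\tilde{g}\|_{1,\gamma}^{2}.
\end{align*}
Adding the two bounds gives
\begin{align*}
\bigl\|W^{\mathrm{nc}}|_{x_2=0}\bigr\|^{2}+\|\psi\|_{1,\gamma}^{2}\lesssim \gamma^{-2}\|\tilde{g}\|_{1,\gamma}^{2},
\end{align*}
which is precisely the hypothesis of Lemma~\ref{lem.aux}. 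That lemma then delivers Proposition~\ref{pro3.1}, which is equivalent to Theorem~\ref{thm.1}.

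There is no genuine obstacle in this argument: the only analytical input is the ellipticity of the symbol $\theta$ on $\Xi_1$ (a consequence of $\bar{v}\neq 0$ via \eqref{H1}), which was already exploited to derive \eqref{front.e}, together with the boundedness of the degree-zero symbol $Q$ on $\Xi$. The real analytical work is postponed: all the difficulty of Proposition~\ref{pro3.1} is now concentrated in the boundary estimate \eqref{P1e.e0} for the ODE system \eqref{P1e}, which is what the subsequent sections must establish via the analysis of the Lopatinski\u{\i} determinant associated with the symbol $\mathcal{A}(\tau,\eta)$ and the boundary symbol $\beta(\tau,\eta)$.
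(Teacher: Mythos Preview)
Your proof is correct and matches the paper's approach exactly: the paper states Lemma~\ref{lem1a} without a separate proof, merely remarking that it follows ``by virtue of \eqref{front.e} and Lemma~\ref{lem.aux}'', and you have simply written out these two steps (together with the observation that the degree-zero symbol $Q$ is bounded, so $\|G\|_{1,\gamma}\lesssim\|\tilde g\|_{1,\gamma}$) in full.
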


\subsection{Lopatinski\u{\i} Condition}\label{sec.Lop1}
In this subsection, we show that the Kreiss--Lopatinski\u{\i} condition (or briefly the Lopatinski\u{\i} condition)
holds only in the weak form under assumption \eqref{H2} by computing the Lopatinski\u{\i} determinant
associated to problem \eqref{P1e}.

We first calculate the stable subspace of the coefficient matrix $\mathcal{A}(\tau,\eta)$, that is,
the sum of eigenspaces of $\mathcal{A}(\tau,\eta)$ corresponding to the eigenvalues of negative real parts.

\begin{lemma}\  \label{lem.eig1}
	The following properties hold{\rm :}
	
	\begin{list}{}{\setlength{\parsep}{\parskip}
			\setlength{\itemsep}{0.1em}
			\setlength{\labelwidth}{2em}
			\setlength{\labelsep}{0.4em}
			\setlength{\leftmargin}{2.2em}
			\setlength{\topsep}{1mm}
		}
		\item[\emph{(a)}] If $(\tau,\eta)\in\Xi_1$ with $\RE \tau>0$, then
		the eigenvalues of $\mathcal{A}(\tau,\eta)$ are roots $\omega$ of
		\begin{subequations}\label{eig1a}
			\begin{alignat}{3}
			\notag \omega^2=\mu_{+}^2-m_{+}^2
			=\,&\frac{\widebar{\varGamma}^2}{\bar{c}^2}(\tau+\mathrm{i}\eta\bar{v})^2
			-\widebar{\varGamma}^2(\mathrm{i}\eta+\epsilon^2\bar{v}\tau)^2\\
		\label{eig1a.r}	=\,&\widebar{C}_0^2\big(\widebar{C}_1^2(\tau+\mathrm{i}\widebar{C}_2\eta)^2+\eta^2\big),\\
			\notag 	\omega^2=\mu_{-}^2-m_{-}^2
			=\,&\frac{\widebar{\varGamma}^2}{\bar{c}^2}(\tau- \mathrm{i}\eta\bar{v})^2
			-\widebar{\varGamma}^2(\mathrm{i}\eta- \epsilon^2 \bar{v}\tau )^2\\
			=\,&\widebar{C}_0^2\big(\widebar{C}_1^2(\tau-\mathrm{i}\widebar{C}_2\eta)^2+\eta^2\big),
			\label{eig1a.l}
			\end{alignat}
		\end{subequations}
		where $\widebar{C}_j$, $j=0,1,2$, are positive constants defined by
		\begin{align}\label{C.bar}
		\widebar{C}_0:=\frac{\widebar{\varGamma}(1-\epsilon^2\bar{v}^2)}{\sqrt{1-\epsilon^4 \bar{c}^2\bar{v}^2}},\quad\,
		\widebar{C}_1:=\frac{1-\epsilon^4 \bar{c}^2\bar{v}^2}{(1-\epsilon^2\bar{v}^2)\bar{c}},\quad\,
		\widebar{C}_2:=\frac{(1-\epsilon^2\bar{c}^2)\bar{v}}{1-\epsilon^4 \bar{c}^2\bar{v}^2}.
		\end{align}
		Moreover, \eqref{eig1a.r} {(}resp.\;\eqref{eig1a.l}{)} has a unique root $\omega_{+}$ {(}resp.\;$\omega_{-}${)}
		of negative real part. The other root of \eqref{eig1a.r} {(}resp. \eqref{eig1a.l}{)} is $-\omega_{+}$ {(}resp. $-\omega_{-}${)}.
		
		\item[\emph{(b)}] If $(\tau,\eta)\in\Xi_1$ with $\RE \tau>0$,
		then the stable subspace $\mathcal{E}^-(\tau,\eta)$ of $\mathcal{A}(\tau,\eta)$ has dimension two and is spanned by
		\begin{subequations}  \label{E.eig}
			\begin{alignat}{2} \label{E.r}
			&E_+(\tau,\eta):=\left((\tau+\mathrm{i}\bar{v}\eta)m_+,(\tau+\mathrm{i}\bar{v}\eta)(\mu_+-\omega_+),0,0\right)^{\mathsf{T}},\\
			&E_-(\tau,\eta):=\left(0,0,(\tau-\mathrm{i}\bar{v}\eta)(\mu_--\omega_-),(\tau-\mathrm{i}\bar{v}\eta)m_-\right)^{\mathsf{T}}.
			\label{E.l}
			\end{alignat}
		\end{subequations}
		
		\item[\emph{(c)}]
		Both $\omega_{+}$ and $\omega_{-}$ admit a continuous extension to any point $(\tau,\eta)\in\Xi_{1}$ with $\RE\tau=0$.
		In particular, if $\tau=\mathrm{i}\delta\in\mathrm{i}\mathbb{R}$, then
		\begin{align}
	&\notag	\omega_{\pm}(\tau,\eta)\\
				\label{eig1b}&{\small  =\left\{
		\begin{aligned}
		&-\widebar{C}_0\sqrt{\eta^2-\widebar{C}_1^2(\delta\pm\widebar{C}_2\eta)^2} &\textrm{if}\ \eta^2\geq \widebar{C}_1^2\big(\delta\pm\widebar{C}_2\eta\big)^2,\\[0.5mm]
		&-\mathrm{i}\,\mathrm{sgn}(\delta\pm\widebar{C}_2\eta)\widebar{C}_0\sqrt{\widebar{C}_1^2(\delta\pm\widebar{C}_2\eta)^2-\eta^2}&\textrm{if}\ \eta^2< \widebar{C}_1^2\big(\delta\pm\widebar{C}_2\eta\big)^2.
		\end{aligned}
		\right. }
		\end{align}
		
		\item[\emph{(d)}]
		Vectors $E_{\pm}(\tau,\eta)$ do not vanish at any point in $\Xi_1$.
		Both $E_+(\tau,\eta)$ and $E_-(\tau,\eta)$ can be extended continuously to any point $(\tau,\eta)\in\Xi_{1}$ with $\RE\tau=0$.
		These two vectors are linearly independent of the whole hemisphere $\Xi_1$.
		
		\item[\emph{(e)}]
		Matrix $\mathcal{A}(\tau,\eta)$ is diagonalizable as long as eigenvalues $\omega_{\pm}$ do not vanish,
		{\it i.e.}  when
		$\tau\ne \mathrm{i}(\mp \widebar{C}_2\pm \widebar{C}_1^{-1})\eta.$
		Apart from these points, $\mathcal{A}(\tau,\eta)$ has a $C^{\infty}$ basis of eigenvectors.
	\end{list}
\end{lemma}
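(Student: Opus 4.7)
Since $\mathcal{A}(\tau,\eta)$ is block diagonal with two uncoupled $2\times 2$ blocks, its spectral analysis splits and the whole lemma reduces to the study of a single block. For part (a), the characteristic polynomial of each block gives $\omega^{2}=\mu_{\pm}^{2}-m_{\pm}^{2}$. I first exploit the identity $\mu_{\pm}+m_{\pm}=\widebar{\varGamma}(\tau\pm\mathrm{i}\bar v\eta)/\bar c$, which is immediate from \eqref{mu.m}, to factor $\mu_{\pm}^{2}-m_{\pm}^{2}=(\mu_{\pm}+m_{\pm})(\mu_{\pm}-m_{\pm})$; substituting the explicit form of $m_{\pm}$ cancels the denominator $(\tau\pm\mathrm{i}\bar v\eta)$ and yields the polynomial identity in the first line of \eqref{eig1a}. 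The compact form $\widebar{C}_{0}^{2}\bigl(\widebar{C}_{1}^{2}(\tau\pm\mathrm{i}\widebar{C}_{2}\eta)^{2}+\eta^{2}\bigr)$ then follows by matching the coefficients of $\tau^{2}$, $\tau\eta$, and $\eta^{2}$; the algebraic key is the difference-of-squares identity $\bar c^{2}(1-\epsilon^{2}\bar v^{2})^{2}-\bar v^{2}(1-\epsilon^{2}\bar c^{2})^{2}=(\bar c^{2}-\bar v^{2})(1-\epsilon^{4}\bar c^{2}\bar v^{2})$ combined with $\widebar{\varGamma}^{-2}=1-\epsilon^{2}\bar v^{2}$. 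Existence of exactly one root $\omega_{\pm}$ with $\RE\omega_{\pm}<0$ in the open half-plane $\RE\tau>0$ is a standard consequence of Hersh's lemma applied to the symmetric hyperbolic system \eqref{P1a}, which rules out purely imaginary eigenvalues of $\mathcal{A}(\tau,\eta)$.

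For part (b), solving $(\mathcal{A}_{\mathrm{block}}-\omega_{+}I)v=0$ on the first block gives $v\propto (m_{+},\mu_{+}-\omega_{+})^{\mathsf T}$, and on the second block the analogous computation gives $v\propto(\mu_{-}-\omega_{-},m_{-})^{\mathsf T}$. The scalar prefactors $(\tau\pm\mathrm{i}\bar v\eta)$ in \eqref{E.eig} are inserted precisely to clear the simple poles of $m_{\pm}$ and $\mu_{\pm}$ at $\tau=\mp\mathrm{i}\bar v\eta$ visible in \eqref{mu.m}, producing vectors that remain well defined on all of $\Xi$. Part (d) is then a short consequence: $E_{+}$ and $E_{-}$ live in disjoint coordinate blocks, so they are automatically linearly independent whenever each is non-zero, and non-vanishing on $\Xi_{1}$ is verified by checking that $(\tau\pm\mathrm{i}\bar v\eta)m_{\pm}$ and $(\tau\pm\mathrm{i}\bar v\eta)\mu_{\pm}$ cannot simultaneously vanish there.

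For part (c), substituting $\tau=\mathrm{i}\delta$ in the formula from (a) gives $\omega^{2}=\widebar{C}_{0}^{2}\bigl(\eta^{2}-\widebar{C}_{1}^{2}(\delta\pm\widebar{C}_{2}\eta)^{2}\bigr)$, which is real. Whenever this quantity is nonnegative, $\omega_{\pm}$ is the negative real root, matching the first line of \eqref{eig1b}. In the hyperbolic regime where the quantity is negative, both roots are purely imaginary and the sign must be fixed by a perturbation argument. Expanding for $\gamma=\RE\tau>0$ small, I will show
\[
\omega^{2}=\widebar{C}_{0}^{2}\bigl[\widebar{C}_{1}^{2}\gamma^{2}+\eta^{2}-\widebar{C}_{1}^{2}(\delta\pm\widebar{C}_{2}\eta)^{2}\bigr]+2\mathrm{i}\,\widebar{C}_{0}^{2}\widebar{C}_{1}^{2}\gamma\bigl(\delta\pm\widebar{C}_{2}\eta\bigr),
\]
so that $\IM(\omega^{2})$ has the sign of $\delta\pm\widebar{C}_{2}\eta$. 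Writing the stable root as $\omega_{\pm}=x+\mathrm{i}y$ with $x<0$, the identity $2xy=\IM(\omega^{2})$ then forces $\sgn y=-\sgn(\delta\pm\widebar{C}_{2}\eta)$, which produces the second line of \eqref{eig1b}.

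Finally, for part (e) each $2\times 2$ block has eigenvalues $\pm\omega_{\pm}$, which are simple exactly when $\omega_{\pm}\ne 0$; by the factorization in (a), $\omega_{\pm}=0$ occurs precisely on the four exceptional rays $\tau=\mathrm{i}(\mp\widebar{C}_{2}\pm\widebar{C}_{1}^{-1})\eta$. Away from these, each block is diagonalizable and the explicit eigenvector formulas yield a $C^{\infty}$ basis by smoothness of the roots of a non-degenerate quadratic. The only genuinely delicate point in the proof is the sign determination in part (c), which requires tracking the first-order perturbation of $\omega^{2}$ off the boundary $\RE\tau=0$; the remainder is direct linear algebra and continuity arguments.
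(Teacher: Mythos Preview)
Your plan is sound and tracks the paper's approach closely; parts (a)--(c) and (e) are handled essentially as in the paper (which treats them as ``straightforward calculations and the implicit functions theorem''). Two points deserve comment.

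For the last claim in part (a), you invoke Hersh's lemma to rule out purely imaginary eigenvalues when $\RE\tau>0$. The paper gives instead a direct three-line contradiction: if $\omega=\mathrm{i}\sigma$ solved \eqref{eig1a.r}, then $\widebar{C}_0\widebar{C}_1(\tau+\mathrm{i}\widebar{C}_2\eta)=\pm\mathrm{i}\sqrt{\sigma^2+\widebar{C}_0^2\eta^2}\in\mathrm{i}\mathbb{R}$, forcing $\RE\tau=0$. Both routes work; the explicit one is more self-contained and avoids citing an external black box.

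For part (d) your proposed check is not the right one, and this is precisely the step the paper singles out as requiring proof. The second component of $E_+$ is $(\tau+\mathrm{i}\bar v\eta)(\mu_+-\omega_+)$, not $(\tau+\mathrm{i}\bar v\eta)\mu_+$, so showing that $(\tau+\mathrm{i}\bar v\eta)m_+$ and $(\tau+\mathrm{i}\bar v\eta)\mu_+$ do not simultaneously vanish does not suffice: you must additionally rule out $\mu_+=\omega_+$. The paper's argument runs as follows. From \eqref{mu.m}, $(\tau+\mathrm{i}\bar v\eta)m_+=\tfrac{\bar c\widebar{\varGamma}}{2}(\mathrm{i}\eta+\epsilon^2\bar v\tau)^2$, so at the pole $\tau=-\mathrm{i}\bar v\eta$ this equals $-\bar c\eta^2(1-\epsilon^2\bar v^2)/(2\widebar{\varGamma})\ne0$. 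Hence if the first component vanishes then $\tau\ne-\mathrm{i}\bar v\eta$ and $m_+=0$, which gives $\mu_+=\widebar{\varGamma}(\tau+\mathrm{i}\bar v\eta)/\bar c\ne0$ and $\mu_+^2=\omega_+^2$. The key step is the sign constraint $\RE\mu_+=\widebar{\varGamma}\RE\tau/\bar c\ge0$ together with $\RE\omega_+\le0$, which forces $\mu_+=-\omega_+$ and hence $(\tau+\mathrm{i}\bar v\eta)(\mu_+-\omega_+)=2(\tau+\mathrm{i}\bar v\eta)\mu_+\ne0$. This sign argument is the actual ``delicate point'' in the lemma (rather than the one you flag in (c)) and is missing from your outline.
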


\begin{proof}\
	The relations in \eqref{eig1a} and assertions (b)--(c) and (e) can be deduced
	from straightforward calculations and the implicit functions theorem.
	
	It follows from \eqref{H1} and \eqref{H1a} that $\widebar{C}_j$ is positive.
	We now show that root $\omega$ of \eqref{eig1a.r} is not purely imaginary when $\RE \tau>0$.
	If this were not true, there would exist $\sigma\in\mathbb{R}$ such that $\mathrm{i}\sigma$ would
	be a root of \eqref{eig1a.r}. Then we would have
	\begin{align*}
	\widebar{C}_0\widebar{C}_1(\tau+ \mathrm{i}\widebar{C}_2\eta)
	=\pm \mathrm{i}\,\sqrt{\sigma^2+\widebar{C}_0^2\eta^2}\in \mathrm{i}\mathbb{R},
	\end{align*}
	which would imply $\RE \tau=0$. This concludes assertion (a).
	
	It remains to prove assertion (d).
	We see from \eqref{mu.m} that, if $\tau+\mathrm{i}\bar{v}\eta=0$,
	then $(\tau+\mathrm{i}\bar{v}\eta)m_+=-\bar{c}\eta^2(1-\epsilon^2\bar{v}^2)/(2\widebar{\varGamma})\ne 0$.
	Hence, when $(\tau+\mathrm{i}\bar{v}\eta)m_+=0$,
	$$
	\tau\ne -\mathrm{i}\bar{v}\eta,\quad
	m_+=0, \quad \mu_+=\widebar{\varGamma}(\tau+\mathrm{i}\bar{v}\eta)/\bar{c}\ne 0,\quad \mu_+^2=\omega_+^2.
	$$
	Using the relations: $\RE\mu_+=\widebar{\varGamma}\RE\tau/\bar{c}\geq 0$ and $\RE \omega_+\leq 0$, we have
	\begin{align*}
	(\tau+\mathrm{i}\bar{v}\eta)(\mu_+-\omega_+)=2
	(\tau+\mathrm{i}\bar{v}\eta)\mu_+\ne 0.
	\end{align*}
	Therefore, $E_+(\tau,\eta) $ defined by \eqref{E.r} does not vanish.
	We can also show in a similar way that $E_-(\tau,\eta)$ does not vanish.
	Assertion (d) then follows.
 \qed\end{proof}

As in Majda--Osher \cite{MO75MR0410107}, we define the Lopatinski\u{\i} determinant associated
with problem \eqref{P1e} by
\begin{align}
\label{Lop1.def}
\Delta(\tau,\eta):=\det\left[\beta(\tau,\eta)\left(E_+(\tau,\eta)\ E_-(\tau,\eta)\right)\right],
\end{align}
where $\beta$ and $E_{\pm}$ are given in \eqref{beta.C} and \eqref{E.eig}, respectively.
We say that the Lopatinski\u{\i} condition holds if $\Delta(\tau,\eta)\ne 0$ for all $(\tau,\eta)\in\Xi$ with $\RE \tau>0$.
Furthermore, if $\Delta(\tau,\eta)\ne 0$ for all $(\tau,\eta)\in\Xi$, we say that the \emph{uniform} Lopatinski\u{\i} condition holds.
To deduce the energy estimate, we need to study the zeros of $\Delta(\tau,\eta)$.
For this, we have the following lemma.

\begin{lemma}\ \label{lem.Lop}
	Assume that \eqref{H1} and \eqref{H2} hold. Then, for any $(\tau,\eta)\in\Xi_1$,
	\begin{align} \label{root}
	\Delta(\tau,\eta)=0\;\;\; \textrm{if and only if}\;\;\; \tau\in\{0,\pm\mathrm{i}z_1\eta\},
	\end{align}
	where $z_1$ is some positive constant satisfying
	\begin{align} \label{root.e}
	0<z_1<\widebar{C}_2-\widebar{C}_1^{-1}<\widebar{C}_2<\bar{v}<\widebar{C}_2+\widebar{C}_1^{-1}.
	\end{align}
	Moreover, each of these roots is simple in the sense that, if $q\in\{0,-z_1,z_1\}$,
	then there exists a neighborhood $\mathscr{V}$ of $(\mathrm{i}q\eta,\eta)$ in $\Xi_1$
	and a $C^{\infty}$--function $h_q$ defined on $\mathscr{V}$ such that
	\begin{align}
	\label{factor}
	\Delta(\tau,\eta)=(\tau-\mathrm{i}q\eta)h_q(\tau,\eta),
	\quad\, h_q(\tau,\eta)\ne 0\qquad \textrm{for all }(\tau,\eta)\in\mathscr{V}.
	\end{align}
\end{lemma}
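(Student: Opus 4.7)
The plan is to compute $\Delta(\tau,\eta)$ explicitly, factor out expressions that do not vanish on $\Xi_1$, and then analyse separately the interior $\{\Re\tau>0\}$ (no zeros) and the boundary $\{\Re\tau=0\}$ (the zeros in \eqref{root}). Substituting \eqref{beta.C} and \eqref{E.eig} into \eqref{Lop1.def} and expanding the $2\times 2$ determinant, I would factor out the common prefactor $a_+a_-/(\widebar{\varGamma}\bar c\bar h)$ with $a_\pm:=\tau\pm\mathrm{i}\bar v\eta$, and exploit the identity $(\mu_\pm-\omega_\pm)(\mu_\pm+\omega_\pm)=m_\pm^2$ coming from \eqref{eig1a}. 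After symmetric reorganization, $\Delta$ reduces, up to a factor nonzero on $\Xi_1\setminus\{a_\pm=0\}$, to an expression built from $\mu_\pm\pm\omega_\pm$, $m_\pm$, and $a_\pm$ that is well adapted to root-finding.

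For the interior $\Re\tau>0$, the choice of $\omega_\pm$ with strictly negative real part (Lemma~\ref{lem.eig1}(a)) together with a sign/real-part inspection on the reduced form of $\Delta$---modelled on the non-relativistic argument in \cite{CS04MR2095445}---rules out zeros, establishing weak linear stability. For the boundary case $\tau=\mathrm{i}\delta$, by homogeneity I fix $\eta=1$, $\delta\in\mathbb{R}$ and use \eqref{eig1b} to split into regions according to whether $\omega_+$ and $\omega_-$ are real or purely imaginary. An analogous real-part argument shows that no zero of $\Delta$ lies outside the window $\delta\in(-\widebar{C}_2+\widebar{C}_1^{-1},\widebar{C}_2-\widebar{C}_1^{-1})$, inside which both $\omega_\pm$ are purely imaginary.

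In that window I rationalize the equation $\Delta(\mathrm{i}\delta,1)=0$ by squaring through the radicals, obtaining a real polynomial equation in $\delta$. The value $\delta=0$ is a root by direct substitution (the two columns of $\beta(E_+,E_-)$ become proportional), and the reflection symmetry $\bar v\mapsto-\bar v$ of the background \eqref{RVS0} forces the nonzero roots to occur in $\pm$ pairs. Under \eqref{H2}, an intermediate-value-plus-monotonicity argument on the polynomial produces a unique positive simple root $z_1\in(0,\widebar{C}_2-\widebar{C}_1^{-1})$. Simplicity at each of $\{0,\pm z_1\}$ follows by computing $\partial_\tau\Delta$ at the root and using smoothness of $\omega_\pm$ there (guaranteed by Lemma~\ref{lem.eig1}(c) and the strict inequality $z_1<\widebar{C}_2-\widebar{C}_1^{-1}$, which keeps the discriminant in \eqref{eig1b} away from zero), so that \eqref{factor} is delivered by the implicit function theorem. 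The remaining algebraic inequalities $\widebar{C}_2>\widebar{C}_1^{-1}$ and $\widebar{C}_2<\bar v<\widebar{C}_2+\widebar{C}_1^{-1}$ are elementary consequences of $\epsilon\bar c,\epsilon\bar v\in(0,1)$ and of \eqref{H2} (which forces $\bar v>\bar c$), read directly off the definitions \eqref{C.bar}.

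The main obstacle is the root-counting step inside the window: after rationalization the Lopatinski\u{\i} equation gives a polynomial in $\delta$ whose coefficients depend intricately on $\bar v,\bar c,\epsilon$, and showing that under the sharp threshold \eqref{H2} this polynomial admits \emph{exactly one} simple positive root in $(0,\widebar{C}_2-\widebar{C}_1^{-1})$---so that the critical Mach number $\mathsf{M}_c=\sqrt{2}/\sqrt{1+\epsilon^2\bar c^2}$ is what the analysis forces, recovering the classical $\mathsf{M}>\sqrt{2}$ in the limit $\epsilon\to 0$---will require a careful monotonicity argument. I expect to reformulate the polynomial in a suitable dimensionless variable, factor out the trivial root at $\delta=0$, and then analyse the remaining factor by monotonicity in the parameters, with \eqref{H2} appearing precisely as the sign condition needed to separate the unique root in the admissible window from spurious roots outside it.
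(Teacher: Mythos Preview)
Your strategy is close to the paper's, and most of it would work, but there is one genuine gap and one point you have underestimated.

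\textbf{The gap: excluding zeros outside the window.} You claim a ``real-part argument'' will show $\Delta$ has no boundary zero outside $\delta\in(-\widebar{C}_2+\widebar{C}_1^{-1},\widebar{C}_2-\widebar{C}_1^{-1})$. This works in the mixed regions where exactly one of $\omega_\pm$ is real, but it fails in the outer regions $|\delta|>\widebar{C}_2+\widebar{C}_1^{-1}$: there both $\omega_\pm$ are again purely imaginary by \eqref{eig1b}, so the relevant factor of $\Delta$ is purely imaginary and a real-part argument gives nothing. Worse, your rationalized polynomial (the paper writes it as $P(z)=-4\bar v z(E_1z^4+E_2z^2+E_3)$) has a second positive root $z_2>\widebar{C}_2+\widebar{C}_1^{-1}$ sitting precisely in that outer region. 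One must check directly that at $z=\pm z_2$ the two terms $\omega_-(\tau+\mathrm{i}\bar v\eta)^2$ and $\omega_+(\tau-\mathrm{i}\bar v\eta)^2$ have the \emph{same} imaginary sign (because $\mathrm{sgn}(\delta+\widebar{C}_2)=\mathrm{sgn}(\delta-\widebar{C}_2)$ there), so they add rather than cancel. This sign check, together with the algebraic verification $z_2^2>(\widebar{C}_2+\widebar{C}_1^{-1})^2$, is what the paper does and is not supplied by a real-part argument.

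\textbf{The underestimated step.} You plan to ``factor out expressions that do not vanish on $\Xi_1\setminus\{a_\pm=0\}$''. After the computation (the paper's formula \eqref{Lop1b}), $\Delta$ factors as a constant times $(a_+-\bar c\omega_+/\widebar{\varGamma})(a_--\bar c\omega_-/\widebar{\varGamma})$ times the ``hard'' factor. The first two brackets are not obviously nonzero on $\Xi_1$: squaring gives $a_\pm^2\widebar{\varGamma}^2=\bar c^2\omega_\pm^2$, which by \eqref{eig1a.r}--\eqref{eig1a.l} forces $\mathrm{i}\eta\pm\epsilon^2\bar v\tau=0$, so a candidate zero exists on the boundary. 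Ruling it out requires going back to the branch choice in \eqref{eig1b} and checking signs; this is a separate short argument (Step~2 of the paper's proof), not a triviality.

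Aside from these two points, your plan---the substitution $z=\tau/(\mathrm{i}\eta)$, the biquadratic in $z^2$, the inequality $z_1<\widebar{C}_2-\widebar{C}_1^{-1}$, and the simplicity via $\partial_\tau\Delta\neq 0$ at the roots using analyticity of $\omega_\pm$ away from $\Upsilon_{nd}$---matches the paper's approach. The paper handles your ``main obstacle'' (locating $z_1$ in the window and $z_2$ outside) by direct algebraic verification of $z_1^2<(\widebar{C}_2-\widebar{C}_1^{-1})^2<(\widebar{C}_2+\widebar{C}_1^{-1})^2<z_2^2$ from the explicit formulas \eqref{zeros} and \eqref{C.bar}, rather than by monotonicity; this is laborious but avoids any subtlety.
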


\begin{proof}\  We divide the proof into seven steps.
	
	\smallskip
	\noindent
	{\bf 1.}\,\, According to \eqref{beta.C} and \eqref{E.eig}, we have
	\begin{align}\notag
	&\beta(\tau,\eta)\left(E_+(\tau,\eta)\ E_-(\tau,\eta)\right)\\
	&={\small
	\begin{pmatrix}
	(\tau+\mathrm{i}\bar{v}\eta)(m_++\mu_+-\omega_+)&
	-(\tau-\mathrm{i}\bar{v}\eta)(m_-+\mu_--\omega_-)\\[0.5mm]
	\dfrac{\tau-\mathrm{i}\bar{v}\eta}{\widebar{\varGamma}\bar{c}\bar{h}}(\tau+\mathrm{i}\bar{v}\eta)(m_+-\mu_++\omega_+) &
	\dfrac{\tau+\mathrm{i}\bar{v}\eta}{\widebar{\varGamma}\bar{c}\bar{h}}(\tau-\mathrm{i}\bar{v}\eta)(m_--\mu_-+\omega_-)
	\end{pmatrix}. } \label{Lop1a}
	\end{align}
	By using \eqref{mu.m} and Lemma \ref{lem.eig1}\,(a), we have
	\begin{align}\label{LD.p0}
	m_{\pm}+\mu_{\pm}=\widebar{\varGamma}\frac{\tau\pm\mathrm{i}\bar{v}\eta}{\bar{c}},\quad\,
	m_{\pm}-\mu_{\pm}=-\frac{\bar{c}\omega_{\pm}^2}{\widebar{\varGamma}(\tau\pm\mathrm{i}\bar{v}\eta)}.
	\end{align}
	It then follows that
	\begin{align}
	\notag \Delta(\tau,\eta)=
	\,&\frac{1}{\bar{c}^2\bar{h}}\Big\{\tau+\mathrm{i}\bar{v}\eta-\frac{\bar{c}\omega_+}{\widebar{\varGamma}}\Big\}
	\Big\{\tau-\mathrm{i}\bar{v}\eta-\frac{\bar{c}\omega_-}{\widebar{\varGamma}}\Big\}\\
	&\times\left\{\omega_-(\tau+\mathrm{i}\bar{v}\eta)^2+\omega_+(\tau-\mathrm{i}\bar{v}\eta)^2\right\}.
	\label{Lop1b}
	\end{align}
	We will check the zeros of each factors in this expression separately.
	
	\vspace*{2mm}
	\noindent\emph{\bf 2.}\,\,  We show in this step that both
	$\widebar{\varGamma}(\tau+\mathrm{i}\bar{v}\eta)-\bar{c}\omega_+$
	and $\widebar{\varGamma}(\tau-\mathrm{i}\bar{v}\eta)-\bar{c}\omega_-$
	do not vanish at any point $(\tau,\eta)\in\Xi_1$.
	By contradiction, we assume without loss of generality that there exists a point $(\tau_0,\eta_0)\in\Xi_1$ such that
	\begin{align}
	\widebar{\varGamma}(\tau_0+\mathrm{i}\bar{v}\eta_0)=\bar{c}\omega_+(\tau_0,\eta_0).
	\label{LD.p1}
	\end{align}
	From \eqref{eig1a.r}, we have
	\begin{align*}
	\widebar{\varGamma}^2(\tau_0+\mathrm{i}\bar{v}\eta_0)^2-\bar{c}^2\omega_+(\tau_0,\eta_0)^2
	=\widebar{\varGamma}^2\bar{c}^2(\mathrm{i}\eta_0+\epsilon^2\bar{v}\tau_0)^2=0,
	\end{align*}
	which implies
	$\tau_0=\mathrm{i}\delta_0\in \mathrm{i}\mathbb{R}$ with $\eta_0=-\epsilon^{2}\bar{v}\delta_0$.
	Since $(\tau_0,\eta_0)\in\Xi_1$, we see that both $\eta_0$ and $\delta_0$ are nonzero real numbers.
	If $\eta_0^2\geq \widebar{C}_1^2(\delta_0+\widebar{C}_2\eta_0)^2$,
	then $\omega_+(\tau_0,\eta_0)\in\mathbb{R}$ due to \eqref{eig1b}.
	Then $\bar{c}\;\omega_+(\tau_0,\eta_0)\ne \widebar{\varGamma}(\tau_0+\mathrm{i}\bar{v}\eta_0)$, since
	$$
	\widebar{\varGamma}(\tau_0+\mathrm{i}\bar{v}\eta_0)=
	\mathrm{i} \widebar{\varGamma}(1-\epsilon^2\bar{v}^2)\delta_0\in \mathrm{i}\mathbb{R}\setminus\{0\}.
	$$
	According to \eqref{LD.p1},
	$\eta_0^2<\widebar{C}_1^2(\delta_0+\widebar{C}_2\eta_0)^2$
	so that
	$\widebar{C}_1^2(1-\epsilon^2\widebar{C}_2\bar{v})^2>\epsilon^4\bar{v}^2.$
	It then follows from \eqref{eig1b} that
	\begin{align*}
	\omega_+(\tau_0,\eta_0)
	&=-\mathrm{i}\,\mathrm{sgn}(\delta_0)\,\mathrm{sgn}(1-\epsilon^2\widebar{C}_2\bar{v})
	\widebar{C}_0\sqrt{\big(\widebar{C}_1^2(1-\epsilon^2\widebar{C}_2\bar{v})^2-\epsilon^4\bar{v}^2\big)\delta_0^2}\\
	&=-\mathrm{i}\,\delta_0\widebar{C}_0\sqrt{\widebar{C}_1^2
		(1-\epsilon^2\widebar{C}_2\bar{v})^2-\epsilon^4\bar{v}^2},
	\end{align*}
	where we have used that
	$1-\epsilon^2\widebar{C}_2\bar{v}=
	(1-\epsilon^2\bar{v}^2)/(1-\epsilon^4\bar{c}^2\bar{v}^2)>0$
	from \eqref{H1a}. Consequently, we have
	\begin{align*}
	&\widebar{\varGamma}(\tau_0+\mathrm{i}\bar{v}\eta_0)-\bar{c}\omega_+(\tau_0,\eta_0)\\
	&\quad =\mathrm{i}\delta_0
	\left\{\widebar{\varGamma}(1-\epsilon^2\bar{v}^2)+\bar{c}\,
	\widebar{C}_0\sqrt{\widebar{C}_1^2(1-\epsilon^2\widebar{C}_2\bar{v})^2-\epsilon^4\bar{v}^2}\right\}\ne 0.
	\end{align*}
	This contradicts \eqref{LD.p1}.
	
	\vspace*{2mm}
	\noindent{\bf 3.}\,\, From the above analysis,
	we know that  $\Delta(\tau,\eta)=0$ if and only if
	factor $\omega_-(\tau+\mathrm{i}\bar{v}\eta)^2+\omega_+(\tau-\mathrm{i}\bar{v}\eta)^2$ vanishes.
	We first prove that this factor does not vanish when $\eta=0$.
	
	If $\eta=0$, then we see from \eqref{eig1a} that
	$\omega_{\pm}^2=\bar{c}^{-2}\widebar{\varGamma}^2\tau^2(1-\epsilon^4\bar{c}^2\bar{v}^2)$.
	Using \eqref{H1a} and noting $\RE\tau\geq 0$, we find that
	$\omega_{\pm}=-\bar{c}^{-1}\widebar{\varGamma}\tau \sqrt{1-\epsilon^4\bar{c}^2\bar{v}^2}$,
	which yields
	\begin{align*}
	\omega_-(\tau+\mathrm{i}\bar{v}\eta)^2+\omega_+(\tau-\mathrm{i}\bar{v}\eta)^2=
	-2\bar{c}^{-1}\widebar{\varGamma}\tau^3(1-\epsilon^4\bar{c}^2\bar{v}^2)^{1/2}\ne 0.
	\end{align*}
	
	We thus assume that $\eta\ne 0$. Introducing  $z:={\tau}/{(\mathrm{i}\eta)}$,
	we find from \eqref{eig1a} that
	\begin{align}
	&\frac{\bar{c}^2\omega_-^2(\tau+\mathrm{i}\bar{v}\eta)^4}{\widebar{\varGamma}^2(\mathrm{i}\eta)^6}
	=(z+\bar{v})^4\left\{(z-\bar{v})^2-\bar{c}^2(1-\epsilon^2\bar{v}z)^2\right\}=:P_1(z), \label{P1.def}\\
	&\frac{\bar{c}^2\omega_+^2(\tau-\mathrm{i}\bar{v}\eta)^4}{\widebar{\varGamma}^2(\mathrm{i}\eta)^6}
	=(z-\bar{v})^4\left\{(z+\bar{v})^2-\bar{c}^2(1+\epsilon^2\bar{v}z)^2\right\}=:P_2(z). \label{P2.def}
	\end{align}
	Define
	\begin{align} \label{P.def}
	P(z):=P_1(z)-P_2(z).
	\end{align}
	Then $\Delta(\tau,\eta)=0$ holds only if $\omega_-^2(\tau+\mathrm{i}\bar{v}\eta)^4=\omega_+^2(\tau-\mathrm{i}\bar{v}\eta)^4$,
	which is equivalent to $P(z)=0$.
	A straightforward calculation yields
	\begin{align*}
	P(z)=-4z\bar{v}P_0(z),\qquad
	P_0(z):=E_1z^4+E_2z^2+E_3,
	\end{align*}
	where $E_1=2\epsilon^4\bar{c}^2\bar{v}^2-\epsilon^2\bar{c}^2-1$,
	$E_2=2\epsilon^4\bar{c}^2\bar{v}^4-6\epsilon^2\bar{c}^2\bar{v}^2+2\bar{v}^2+2\bar{c}^2,$ and
	\begin{gather} \label{E3}
	E_3=2\bar{c}^2\bar{v}^2-\epsilon^2\bar{c}^2\bar{v}^4-\bar{v}^4.
	\end{gather}
	It is trivial that $z=0$ is one zero of $P(z)$.
	Function $P_0(z)$ is a polynomial one of $z^2$ with the following zeros:
	\begin{align} \label{zeros1}
	-\frac{E_2\pm \sqrt{E_2^2-4E_1E_3}}{2E_1}.
	\end{align}
	By virtue of \eqref{H1} and \eqref{H1a}, we have
	\begin{align} \label{E12}
	\left\{\begin{aligned}
	&E_1=-(1-\epsilon^4\bar{c}^2\bar{v}^2)-\epsilon^2\bar{c}^2(1-\epsilon^2\bar{v}^2)<0,\\
	&E_2=2\bar{v}^2(1-\epsilon^2\bar{c}^2)+2\bar{c}^2(1-\epsilon^2\bar{v}^2)^2>0,\\
	&E_2^2-4E_1E_3\\
	&\quad =4\bar{c}^2\left( \epsilon \bar{v}-1\right)^2\left( \epsilon \bar{v}+1\right)^2
	\left(\epsilon^4\bar{c}^4\bar{v}^4-2\epsilon^2\bar{c}^2\bar{v}^2+4\bar{v}^2+\bar{c}^2\right)>0,
	\end{aligned}\right.
	\end{align}
	which yields that the zeros in \eqref{zeros1} are real and distinct.
	If \eqref{H2} holds, then $E_3<0$, which immediately implies that the zeros in \eqref{zeros1}
	are also positive.
	Let us denote these zeros by $z_1^2$ and $z_2^2$ with $0<z_1<z_2$ so that
	\begin{align}\label{zeros}
	z_1^2=\frac{E_2-\sqrt{E_2^2-4E_1E_3}}{-2E_1},\qquad\,
	z_2^2=\frac{E_2+\sqrt{E_2^2-4E_1E_3}}{-2E_1}.
	\end{align}
	Consequently, the Lopatinski\u{\i} determinant vanishes only if $z\in\{0,\pm z_1,\pm z_2\}$.
	
	\vspace*{2mm}
	\noindent{\bf 4.}\,\, In this step, we show that the Lopatinski\u{\i} determinant
	vanishes when $z=0$, {\it i.e.}  when $\tau=0$.
	We note  that $\bar{c}<\bar{v}$  by combining \eqref{H2} and \eqref{H1a}. Then
	\begin{align} \label{LD.p2}
	\widebar{C}_2-\widebar{C}_1^{-1}
	=\frac{(\bar{v}-\bar{c})(1+\epsilon^2\bar{c}\bar{v})}{1-\epsilon^4\bar{c}^2\bar{v}^2}>0.
	\end{align}
	It then follows directly from \eqref{eig1b} that
	\begin{align*}
	\omega_{\pm}(0,\eta)=
	-\mathrm{i}\,\mathrm{sgn}(\pm\widebar{C}_2\eta)\widebar{C}_0\sqrt{\widebar{C}_1^2\widebar{C}_2^2\eta^2-\eta^2}
	=\mp \mathrm{i}\,\eta \widebar{C}_0\sqrt{\widebar{C}_1^2\widebar{C}_2^2-1}.
	\end{align*}
	Then we infer
	\begin{align*}
	\left.\left\{\omega_-(\tau+\mathrm{i}\bar{v}\eta)^2+\omega_+(\tau-\mathrm{i}\bar{v}\eta)^2\right\}\right|_{\tau=0}
	=-\bar{v}^2\eta^2\left(\omega_{+}(0,\eta)+\omega_{-}(0,\eta)\right)=0,
	\end{align*}
	and hence $\Delta(0,\eta)=0$.
	
	\vspace*{2mm}
	\noindent{\bf 5.}\,\, We prove that $\omega_-(\tau+\mathrm{i}\bar{v}\eta)^2+\omega_+(\tau-\mathrm{i}\bar{v}\eta)^2\ne 0$
	when $z=\pm z_2$, {\it i.e.}   when $\tau=\pm\mathrm{i}z_2\eta$.
	To this end, we need to show that
	\begin{align} \label{LD.p3}
	z_2+\widebar{C}_2>z_2-\widebar{C}_2>\widebar{C}_1^{-1}.
	\end{align}
	The first inequality is trivial, so it suffices to prove the second one.
	From \eqref{C.bar} and \eqref{zeros}, we have
	\begin{align*}
	&z_2^2-\big(\widebar{C}_1^{-1}+\widebar{C}_2\big)^2
	=-\frac{E_2+\sqrt{E_2^2-4E_1E_3}}{2E_1}
	-\frac{\left((1-\epsilon^2\bar{v}^2)\bar{c}+(1-\epsilon^2\bar{c}^2)\bar{v}\right)^2}{(1-\epsilon^4 \bar{c}^2\bar{v}^2)^2}
	\\&\ =-\frac{(1-\epsilon^4 \bar{c}^2\bar{v}^2)^2\big(\sqrt{E_2^2-4E_1E_3}+ E_2\big)
		+2E_1\left((1-\epsilon^2\bar{v}^2)\bar{c}+(1-\epsilon^2\bar{c}^2)\bar{v}\right)^2}{2E_1(1-\epsilon^4 \bar{c}^2\bar{v}^2)^2}
	\\&\ =-\frac{2c(1-\epsilon^2 \bar{v}^2)(1-\epsilon^2 \bar{c}\bar{v})^2(R_2-L_2)}{2E_1(1-\epsilon^4 \bar{c}^2\bar{v}^2)^2},
	\end{align*}
	where $R_2:=(1+\epsilon^2\bar{c}\bar{v})^2\sqrt{\epsilon^4\bar{c}^2\bar{v}^4-2\epsilon^2\bar{c}^2\bar{v}^2+4\bar{v}^2+\bar{c}^2}$
	and
	\begin{align*}
	L_2:=\epsilon^6\bar{c}^3\bar{v}^4+2\epsilon^4\bar{c}^2\bar{v}^3-2\epsilon^4\bar{c}^3\bar{v}^2+4\epsilon^2\bar{c}\bar{v}^2+2\bar{v}+\epsilon^2\bar{c}^3.
	\end{align*}
	Then
	we obtain from \eqref{H1a} that
	\begin{align*}
	R_2^2-L_2^2=\bar{c}^2(\epsilon\bar{c}-1)(\epsilon\bar{c}+1)(\epsilon\bar{v}-1)^2
	(\epsilon\bar{v}+1)^2(2\epsilon^4\bar{c}^2\bar{v}^2-\epsilon^2\bar{c}^2-1)>0,
	\end{align*}
	which, combined with \eqref{H1a} and \eqref{E12}, implies that $z_2^2>\big(\widebar{C}_1^{-1}+\widebar{C}_2\big)^2$.
	Then \eqref{LD.p3} follows.
	
	In view of \eqref{LD.p3}, we see from \eqref{eig1b} that, for $\tau=\mathrm{i}z_2\eta$,
	\begin{align*}
	\omega_{\pm}(\tau,\eta)
	&=-\mathrm{i}\,\mathrm{sgn}(z_2\eta\pm \widebar{C}_2\eta) \widebar{C}_0\sqrt{\widebar{C}_1^2(z_2\pm \widebar{C}_2)^2\eta^2-\eta^2}
	\\&=-\mathrm{i}\,\eta\, \widebar{C}_0\sqrt{\widebar{C}_1^2(z_2\pm \widebar{C}_2)^2-1}.
	\end{align*}
	Therefore, we obtain that
	\begin{align*}
	&\omega_-(\tau+\mathrm{i}\bar{v}\eta)^2+\omega_+(\tau-\mathrm{i}\bar{v}\eta)^2
	=-\eta^2\left(\omega_+(z_2-\bar{v})^2+\omega_-(z_2+\bar{v})^2\right)\\
	& =\mathrm{i}\,\eta^3\Big\{
	\widebar{C}_0\sqrt{\widebar{C}_1^2(z_2+ \widebar{C}_2)^2-1}\;(z_2-\bar{v})^2
	+\widebar{C}_0\sqrt{\widebar{C}_1^2(z_2- \widebar{C}_2)^2-1}\;(z_2+\bar{v})^2\Big\},
	\end{align*}
	which is away from zero.
	Applying a similar argument and using \eqref{LD.p3} imply that the Lopatinski\u{\i} determinant $\Delta$
	does not vanish either for the case: $z=-z_2$.
	
	\vspace*{2mm}
	\noindent{\bf 6.}\,\,  Let us now show
	that $\omega_-(\tau+\mathrm{i}\bar{v}\eta)^2+\omega_+(\tau-\mathrm{i}\bar{v}\eta)^2=0$ if $z=\pm z_1$,
	{\it i.e.}  if $\tau=\pm \mathrm{i}z_1\eta$.
	For this purpose, we first prove
	\begin{align} \label{LD.p4}
	z_1+\widebar{C}_2>\widebar{C}_1^{-1},\qquad\,\,
	z_1-\widebar{C}_2<-\widebar{C}_1^{-1}.
	\end{align}
	The first inequality in \eqref{LD.p4} follows from \eqref{LD.p2}.
	For the second in \eqref{LD.p4}, it suffices to derive that $z_1^2<(\widebar{C}_2-\widebar{C}_1^{-1})^2$.
	From \eqref{C.bar} and \eqref{zeros}, we have
	\begin{align*}
	&\big(\widebar{C}_2-\widebar{C}_1^{-1}\big)^2-z_1^2
	=\frac{\left((1-\epsilon^2\bar{c}^2)\bar{v}-(1-\epsilon^2\bar{v}^2)\bar{c}\right)^2}{(1-\epsilon^4 \bar{c}^2\bar{v}^2)^2}
	+\frac{E_2-\sqrt{E_2^2-4E_1E_3}}{2E_1}\\
	&=\frac{(1-\epsilon^4 \bar{c}^2\bar{v}^2)^2\big(E_2-\sqrt{E_2^2-4E_1E_3}\big)
		+2E_1\left((1-\epsilon^2\bar{c}^2)\bar{v}-(1-\epsilon^2\bar{v}^2)\bar{c}\right)^2}{2E_1(1-\epsilon^4 \bar{c}^2\bar{v}^2)^2}\\
	&=\frac{2c(1-\epsilon^2 \bar{v}^2)(1+\epsilon^2 \bar{c}\bar{v})^2(R_4+L_4)}{-2E_1(1-\epsilon^4 \bar{c}^2\bar{v}^2)^2},
	\end{align*}
	where $R_4:=(1-\epsilon^2\bar{c}\bar{v})^2\sqrt{\epsilon^4\bar{c}^2\bar{v}^4-2\epsilon^2\bar{c}^2\bar{v}^2+4\bar{v}^2+\bar{c}^2}$ and
	\begin{align*}
	L_4:=\epsilon^6\bar{c}^3\bar{v}^4-2\epsilon^4\bar{c}^2\bar{v}^3-2\epsilon^4\bar{c}^3\bar{v}^2
	+4\epsilon^2\bar{c}\bar{v}^2-2\bar{v}+\epsilon^2\bar{c}^3.
	\end{align*}
	We compute that $R_4^2-L_4^2=R_2^2-L_2^2>0$.
	Hence, we deduce the second inequality in \eqref{LD.p4}.
	
	By virtue of \eqref{eig1b} and \eqref{LD.p4}, we derive that, for $\tau=\mathrm{i}z_1\eta$,
	\begin{align*}
	\omega_{+}(\tau,\eta)
	=-\mathrm{i}\,\eta\, \widebar{C}_0\sqrt{\widebar{C}_1^2(z_1+ \widebar{C}_2)^2-1},\ \
	\omega_{-}(\tau,\eta)
	=\mathrm{i}\,\eta\, \widebar{C}_0\sqrt{\widebar{C}_1^2(z_1- \widebar{C}_2)^2-1}.
	\end{align*}
	Since $z=z_1$ solves $P(z)=0$, if $\tau=\mathrm{i}z_1\eta$,
	it follows from the definition of $P(z)$ that
	$\omega_-^2(\tau+\mathrm{i}\bar{v}\eta)^4=\omega_+^2(\tau-\mathrm{i}\bar{v}\eta)^4$.
	Hence, the Lopatinski\u{\i} determinant vanishes for $z=z_1$ ({\it i.e.}  $\tau=\mathrm{i}z_1\eta$).
	The same argument can be applied to show that the Lopatinski\u{\i} determinant $\Delta(\tau,\eta)$
	also vanishes for $z=-z_1$, {\it i.e.}  for $\tau=-\mathrm{i}z_1\eta$.
	
	\vspace*{2mm}
	\noindent{\bf 7.}\,\, We obtain from \eqref{H1a} by a direct computation that
	$
	\widebar{C}_2<\bar{v}<\widebar{C}_2+\widebar{C}_1^{-1},
	$
	which, combined with \eqref{LD.p4}, yields \eqref{root.e}.
	
	It remains to show that the roots of the Lopatinski\u{\i} determinant are simple.
	By introducing $\Omega_{\pm}:={\omega_{\pm}}/{(\mathrm{i}\eta)}$,
	we find that, for $\eta\ne0$,
	\begin{align*}
	\frac{\omega_-(\tau+\mathrm{i}\bar{v}\eta)^2}{(\mathrm{i}\eta)^3}=\Omega_-(z+\bar{v})^2=:Q_1(z),\
	\frac{\omega_+(\tau-\mathrm{i}\bar{v}\eta)^2}{(\mathrm{i}\eta)^3}=\Omega_+(z-\bar{v})^2=:Q_2(z).
	\end{align*}
	It follows from \eqref{root.e} and Lemma \ref{lem.eig1}
	that $\omega_{\pm}(\tau,\eta)\ne 0$ and $\eta\ne 0$  when $(\tau,\eta)$ are
	near any root of the Lopatinski\u{\i} determinant.
	Hence, $\Omega_{\pm}$ are analytic functions of $z$ only and satisfy
	\begin{align*}
	\Omega_{\pm}^2=\bar{c}^{-2}\widebar{\varGamma}^2\left((z\pm\bar{v})^2-\bar{c}^2(1\pm\epsilon^2\bar{v}z)^2\right).
	\end{align*}
	Since $\Delta(\tau,\eta)=0$ if and only if $\omega_-(\tau+\mathrm{i}\bar{v}\eta)^2+\omega_+(\tau-\mathrm{i}\bar{v}\eta)^2=0$,
	it suffices to prove
	\begin{align*}
	\left.\frac{\mathrm{d}\left(Q_1+Q_2\right)}{\mathrm{d}z}\right|_{z=q}\ne 0\;\qquad\textrm{for all }q\in\{0,-z_1,z_1\}.
	\end{align*}
	Using \eqref{P1.def}--\eqref{P.def} and the fact that $Q_1(q)=-Q_2(q)\ne 0$ for $q\in\{0,-z_1,z_1\}$,
	we derive that, for $q\in\{0,-z_1,z_1\}$,
	\begin{align*}
	\left.\frac{\mathrm{d}\left(Q_1+Q_2\right)}{\mathrm{d}z}\right|_{z=q}
	&=\frac{1}{2Q_1(q)}\left.\frac{\mathrm{d}\left(Q_1^2-Q_2^2\right)}{\mathrm{d}z}\right|_{z=q}
	=\frac{\widebar{\varGamma}^2}{2\bar{c}^2Q_1(q)}\left.\frac{\mathrm{d}P}{\mathrm{d}z}\right|_{z=q} \\
	&=\frac{\widebar{\varGamma}^2}{\bar{c}^2Q_1(q)}\left\{-2\bar{v}P_0(q)-4\bar{v}q^2(2E_1q^2+E_2)\right\}\ne 0.
	\end{align*}
	Using the factorization property of holomorphic functions, we obtain
	\[
	Q_1(z)+Q_2(z)=(z-q)H_q(z)\qquad\,\textrm{for all }q\in\{0,-z_1,z_1\},
	\]
	where $H_q$ is holomorphic near $q$ and $H_q(q)\ne 0$.
	This yields that the Lopatinski\u{\i} determinant $\Delta$ has a factorization:
	\begin{align*}
	\Delta(\tau,\eta)=(\tau-\mathrm{i}q\eta)h_q(\tau,\eta)\qquad\,\textrm{for all }q\in\{0,-z_1,z_1\},
	\end{align*}
	where $h_q(\tau,\eta)$ is $C^{\infty}$ and does not vanish near $(\mathrm{i}q\eta,\eta)\in\Xi_1$.
	The proof is completed.
\qed\end{proof}

\begin{remark}\ \label{rem3.2}
	If $\textsf{M}=\textsf{M}_{\rm c}$,
	then both $E_3$ and $z_1$ defined by \eqref{E3} and \eqref{zeros} vanish.
	Employing a similar argument, we can show that the Lopatinski\u{\i} determinant $\Delta(\tau,\eta)$
	has only one triple root  $\tau=0$.
	On the other hand, if $\textsf{M}<\textsf{M}_{\rm c}$,
	then $E_3>0$.  In the latter case, the Lopatinski\u{\i} determinant $\Delta(\tau,\eta)$ vanishes
	if and only if $\tau/(\mathrm{i}\eta)\in\{0,\pm z_1\}$ with nonreal number $z_1$ given by \eqref{zeros}.
	Therefore, the relativistic vortex sheet \eqref{RVS0} is violently unstable,
	which means that the Lopatinski\u{\i} condition does not hold.
\end{remark}

\subsection{Proof of Theorem {\rm\ref{thm.1}}}\label{sec3.proof}

The following lemma relies heavily on the fact that each root of the Lopatinski\u{\i} determinant
is simple (see Lemma \ref{lem.Lop}).

\begin{lemma}\ \label{lem1.E1}
	For every point $(\tau_0,\eta_0)\in\Xi_1$, there exists a neighborhood $\mathscr{V}$ of $(\tau_0,\eta_0)$
	in $\Xi_1$ and a positive constant $c$ depending on $(\tau_0,\eta_0)$ such that
	\begin{align} \label{E1.e0}
	\left|\beta(\tau,\eta)(E_+(\tau,\eta) \ E_-(\tau,\eta))Z\right|\geq c \gamma |Z|
	\qquad\textrm{for all }(\tau,\eta)\in\mathscr{V},\,Z\in\mathbb{C}^2.
	\end{align}
\end{lemma}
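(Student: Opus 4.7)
The plan is to set $B(\tau,\eta):=\beta(\tau,\eta)(E_+(\tau,\eta)\ E_-(\tau,\eta))$ and to bound its smallest singular value from below by $c\gamma$ on a neighborhood of $(\tau_0,\eta_0)$, using the elementary identity $\sigma_{\min}(B)\sigma_{\max}(B)=|\det B|$. By the smoothness of $\beta$ and the continuous extension of $E_\pm$ to the whole of $\Xi_1$ granted by Lemma \ref{lem.eig1}(c)--(d), $B$ is continuous on $\Xi_1$, and by the definition \eqref{Lop1.def} of the Lopatinski\u{\i} determinant we have $\det B=\Delta$. I split into two cases according to whether $\Delta(\tau_0,\eta_0)$ vanishes.

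If $\Delta(\tau_0,\eta_0)\ne 0$, then $B(\tau_0,\eta_0)$ is invertible, so by continuity $B$ is uniformly invertible on a small neighborhood $\mathscr{V}$ of $(\tau_0,\eta_0)$; this gives $|BZ|\ge c_0|Z|$ on $\mathscr{V}$. Since $\gamma=\RE\tau\le|\tau|\le 1$ on $\Xi_1$, the bound $|BZ|\ge c_0\gamma|Z|$ follows immediately.

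The interesting case is $\Delta(\tau_0,\eta_0)=0$. By Lemma \ref{lem.Lop}, $(\tau_0,\eta_0)=(\mathrm{i}q\eta_0,\eta_0)$ for some $q\in\{0,-z_1,z_1\}$ and $\eta_0\ne 0$, and the factorization $\Delta(\tau,\eta)=(\tau-\mathrm{i}q\eta)h_q(\tau,\eta)$ holds with $h_q$ continuous and $h_q(\tau_0,\eta_0)\ne 0$. The reality of $q$ and $\eta$ yields the key inequality
\begin{equation*}
|\tau-\mathrm{i}q\eta|\ge\RE(\tau-\mathrm{i}q\eta)=\RE\tau=\gamma,
\end{equation*}
which, combined with $|h_q|\ge\tfrac12|h_q(\tau_0,\eta_0)|$ on a small enough neighborhood $\mathscr{V}$, gives $|\det B(\tau,\eta)|\ge c_1\gamma$ there. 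Together with the uniform upper bound $\sigma_{\max}(B)\le\|B\|\le C_1$ on $\mathscr{V}$ from continuity of $B$, the singular-value identity produces
\begin{equation*}
\sigma_{\min}(B(\tau,\eta))=\frac{|\det B(\tau,\eta)|}{\sigma_{\max}(B(\tau,\eta))}\ge\frac{c_1}{C_1}\,\gamma,
\end{equation*}
whence $|B(\tau,\eta)Z|\ge\sigma_{\min}(B(\tau,\eta))|Z|\ge c\gamma|Z|$ as claimed. At points where $\sigma_{\max}(B)=0$ the identity is vacuous, but then $|\det B|=0$ forces $\gamma=0$ and the inequality is trivial.

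The only delicate input is the pair of facts supplied by Lemma \ref{lem.Lop}: that every root of $\Delta$ lies on the imaginary axis $\mathrm{i}\mathbb{R}\cdot\eta$, which is what converts the abstract vanishing of $\Delta$ into the concrete bound $|\tau-\mathrm{i}q\eta|\ge\gamma$; and that every such root is simple, which keeps the lower bound \emph{linear} in $\gamma$ rather than some higher power. Without simplicity one would only obtain $|\Delta|\gtrsim\gamma^k$ with $k\ge 2$, producing the degraded estimate flagged in Remark \ref{rem3.1} for the borderline case $\mathsf{M}=\mathsf{M}_c$. So the real work of this lemma has already been done in Lemma \ref{lem.Lop}; what remains here is essentially bookkeeping via the $\sigma_{\min}\sigma_{\max}$ identity.
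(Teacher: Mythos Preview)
Your proof is correct and takes a genuinely different, more elementary route than the paper's. Two substantive differences are worth flagging.

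First, to pass from a lower bound on $|\Delta|$ to a lower bound on $|BZ|$, the paper invokes the explicit Schur-type identity \eqref{Coulombel}, which requires first checking that the upper-left entry $\zeta_1$ does not vanish near the root (this in turn uses Step~2 of the proof of Lemma~\ref{lem.Lop}). Your use of the $2\times2$ identity $\sigma_{\min}(B)\sigma_{\max}(B)=|\det B|$ bypasses this entirely: you only need the continuity of $B$ to bound $\sigma_{\max}$ above, and the degenerate case $\sigma_{\max}=0$ is handled automatically since it forces $\Delta=0$ and hence $\gamma=0$.

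Second, and more strikingly, to prove $|\Delta|\ge c\gamma$ near a root the paper goes through a Taylor expansion of $\RE\Delta$ in $\gamma$, after first establishing (via \eqref{E1.p3}--\eqref{E1.p6}) that $h_q(\mathrm{i}\delta,\eta)$ is real and $\partial_\gamma h_q(\mathrm{i}\delta,\eta)$ is purely imaginary. Your observation that the factorization $\Delta=(\tau-\mathrm{i}q\eta)h_q$ from Lemma~\ref{lem.Lop} already gives $|\Delta|\ge\gamma\cdot|h_q|\ge c_1\gamma$ directly---simply because $\RE(\tau-\mathrm{i}q\eta)=\gamma$ when $q,\eta$ are real---makes that analysis unnecessary. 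The paper's approach appears to follow the template of \cite{C04MR2069632,CS04MR2095445}; your argument shows that once the simple factorization with real $q$ is in hand, the rest is immediate.
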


\begin{proof}\
	The proof is divided into two steps.
	
	\vspace*{2mm}
	\noindent
	{\bf 1.}\,\, Let $(\tau_0,\eta_0)\in\Xi_1$ with $\Delta(\tau_0,\eta_0)\ne 0$.
	Since the Lopatinski\u{\i} determinant $\Delta(\tau,\eta)$ is continuous in $(\tau,\eta)$,
	then there exists a neighborhood $\mathscr{V}$ of $(\tau_0,\eta_0)$ in $\Xi_1$
	such that $\Delta(\tau,\eta)\ne 0$ for all $(\tau,\eta)\in\mathscr{V}$.
	It follows from definition \eqref{Lop1.def} that
	$\beta(\tau,\eta)(E_+ \ E_-)$ is invertible in $\mathscr{V}$.
	We combine this with the fact that $\gamma\leq 1$ to obtain \eqref{E1.e0}.
	
	\smallskip
	\noindent{\bf 2.}\,\, Let $(\tau_0,\eta_0)\in\Xi_1$ such that $\Delta(\tau_0,\eta_0)=0$.
	We see from Lemma \ref{lem.Lop}  that $\tau_0=\mathrm{i}q\eta_0$ for some $q\in\{0,-z_1,z_1\}$.
	Let us write \eqref{Lop1a} as
	\begin{align*}
	\beta (E_+\ E_-)=\begin{pmatrix}
	\zeta_1 &\zeta_2\\
	\zeta_3 &\zeta_4
	\end{pmatrix},
	\end{align*}
	where the upper left corner $\zeta_1$ is given by
	\[
	\zeta_1=(\tau-\mathrm{i}\bar{v}\eta)(m_++\mu_+-\omega_+)
	=\frac{\tau+\mathrm{i}\bar{v}\eta}{\bar{c}}\big(\widebar{\varGamma}(\tau+\mathrm{i}\bar{v}\eta)
	-\bar{c}\omega_+\big).
	\]
	From \eqref{root.e} and the proof of Lemma \ref{lem.Lop} (especially, Step\;2),
	we know that $\tau\ne-\mathrm{i}\bar{v}\eta$ and
	$\widebar{\varGamma}(\tau+\mathrm{i}\bar{v}\eta)\ne \bar{c}\;\omega_+$
	when $(\tau,\eta)$ is close to $(\tau_0,\eta_0)$.
	Hence, there exists a neighborhood $\mathscr{V}$ of $(\tau_0,\eta_0)$ in $\Xi_1$
	such that $\zeta_1(\tau,\eta)\ne 0$ for all $(\tau,\eta)\in\mathscr{V}$.
	Using the identity ({\it cf.}\;\cite[Page\;439]{C04MR2069632}):
	\begin{align} \label{Coulombel}
	\begin{pmatrix}
	1/\zeta_1 & 0\\[1.5mm]
	-\zeta_3/(\zeta_1 \zeta_5) &1/\zeta_5
	\end{pmatrix}
	\beta (E_+\ E_-)
	\begin{pmatrix}
	1 & -\zeta_2\\[1.5mm]
	0  &\zeta_1
	\end{pmatrix}
	=\begin{pmatrix}
	1& 0\\[1.5mm]
	0 &(\zeta_1\zeta_4-\zeta_2\zeta_3)/\zeta_5
	\end{pmatrix}
	\end{align}
	with $\zeta_5=1$, and noting
	$\Delta(\tau,\eta)=\det\,[\beta\, (E_+\ E_-)] =\zeta_1\zeta_4-\zeta_2\zeta_3$,
	we have
	\begin{align}\label{E1.p1}
	\left|\beta(\tau,\eta) (E_+(\tau,\eta) \ E_-(\tau,\eta) )Z\right|
	\geq c \min(1,|\Delta(\tau,\eta)|)|Z|
	\end{align}
	for all 	$(\tau,\eta)\in\mathscr{V},\,Z\in\mathbb{C}^2.$
	It thus remains to show that $|\Delta(\tau,\eta)|\geq c\gamma$
	for all $(\tau,\eta)\in\mathscr{V}$.
	Employ Lemma \ref{lem.Lop} and shrink $\mathscr{V}$ if necessary to find that
	factorization \eqref{factor} holds.
	Thus, we have
	\begin{align} \label{E1.p2}
	\partial_{\gamma}\Delta(\tau,\eta)=h_q(\tau,\eta)+(\tau-\mathrm{i}q\eta)\partial_\gamma h_q(\tau,\eta)
	\qquad\textrm{for all }(\tau,\eta)\in\mathscr{V}.
	\end{align}
	Let $(\mathrm{i}\delta,\eta)\in\mathscr{V}$ so that $(\mathrm{i}\delta,\eta)\in\mathscr{V}$ is
	close to $(\mathrm{i}q\eta_0,\eta_0)$.
	It follows from \eqref{root.e} that
	$\widebar{C}_1^2\big(\delta+\widebar{C}_2\eta\big)^2>\eta^2$,
	which, combined with \eqref{eig1b}, implies
	\begin{align} \label{E1.p3}
	\omega_{\pm}(\mathrm{i}\delta,\eta)
	\in \mathrm{i}\mathbb{R}\setminus\{0\}.
	\end{align}
	Then we obtain from \eqref{factor} and expression \eqref{Lop1b} that
	\[
	h_q(\mathrm{i}\delta,\eta)\ne 0,\qquad
	\mathrm{i}(\delta-q\eta)h_q(\mathrm{i}\delta,\eta)
	=\Delta(\mathrm{i}\delta,\eta)\in \mathrm{i}\mathbb{R},
	\]
	from which we have
	\begin{align}\label{E1.p4}
	h_q(\mathrm{i}\delta,\eta)\in\mathbb{R}\setminus\{0\}.
	\end{align}
	Since $\tau_0\ne \mathrm{i}(\pm \widebar{C}_2\pm \widebar{C}_1^{-1})\eta_0$,
	eigenvalues $\omega_{\pm}$ depend analytically on $(\tau,\eta)$ in a neighborhood of $(\tau_0,\eta_0)$
	by the implicit function theorem.
	We then use \eqref{eig1a} to obtain that, for $(\tau,\eta)$ near $(\tau_0,\eta_0)$,
	\begin{align} \label{E1.p5}
	\omega_+(\tau,\eta)\partial_{\gamma} \omega_+(\tau,\eta)
	= \widebar{C}_0^2\widebar{C}_1^2(\tau+\mathrm{i}\widebar{C}_2\eta).
	\end{align}
	From \eqref{E1.p3} and \eqref{E1.p5}, we infer that the derivative, $\partial_{\gamma} \omega_+(\mathrm{i}\delta,\eta)$,
	is real by shrinking $\mathscr{V}$ if necessary.
	Employ \eqref{Lop1b} to derive
	$
	\partial_{\gamma}\Delta(\mathrm{i}\delta,\eta)\in \mathbb{R}.
	$
	We then deduce from \eqref{E1.p2} and \eqref{E1.p4} that
	\begin{align}\label{E1.p6}
	\partial_\gamma  h_q(\mathrm{i}\delta,\eta)\in\mathrm{i}\mathbb{R}.
	\end{align}
	Using \eqref{factor} and the Taylor formula for $h_q$,
	we find that, for $(\tau,\eta)\in\mathscr{V}$,
	\begin{align*}
	&\Delta(\tau,\eta)
	=\big(\gamma+\mathrm{i}(\delta-q\eta)\big)\left(h_q(\mathrm{i}\delta,\eta)+\gamma\partial_\gamma h_q(\mathrm{i}\delta,\eta)+O(\gamma^2)\right)\\
	&=\mathrm{i}(\delta-q\eta)h_q(\mathrm{i}\delta,\eta)
	+\left\{h_q(\mathrm{i}\delta,\eta)+\mathrm{i}\partial_\gamma h_q(\mathrm{i}\delta,\eta)(\delta-q\eta)\right\}\gamma+O(\gamma^2)
	 \
	(\gamma\to 0),
	\end{align*}
	where we have used the Landau symbol $f=O(g)$ $(x\to x_0)$,
	which means that there exists a constant $C$ such that $|f(x)|\leq C|g(x)|$ for all $x$ sufficiently close to $x_0$.
	We can conclude from \eqref{E1.p4} and \eqref{E1.p6} that
	$$
	\RE \Delta(\tau,\eta)=
	\left\{h_q(\mathrm{i}\delta,\eta)+\mathrm{i}\partial_\gamma h_q(\mathrm{i}\delta,\eta)(\delta-q\eta)\right\}\gamma+O(\gamma^2)
	\ \ (\gamma\to 0).
	$$
	Shrinking $\mathscr{V}$ if necessary, we derive from \eqref{E1.p4} that
	\begin{align*}
	|\Delta(\tau,\eta)| \geq |\RE \Delta(\tau,\eta)|\geq c \gamma \qquad\textrm{for all }(\tau,\eta)\in\mathscr{V}.
	\end{align*}
	Plug this into \eqref{E1.p1} to complete the proof of this lemma.
\qed\end{proof}

\begin{remark}\ \label{rem3.3}
	In the case of  $\textsf{M}=\textsf{M}_{\rm c}$,
	we know from Remark {\rm \ref{rem3.2}} that the Lopatinski\u{\i} determinant $\Delta(\tau,\eta)$ has only one triple root $\tau=0$.
	In a similar way, we can find neighborhoods $\mathscr{V}_{\pm}$ of $(0,\pm1)$ in $\Xi_1$ and a positive constant $c$
	such that
	\begin{align} \label{rem.e1}
	\left|\beta(\tau,\eta)(E_+(\tau,\eta) \ E_-(\tau,\eta))Z\right|\geq c \gamma^3 |Z|
	\quad \textrm{for all }(\tau,\eta)\in\mathscr{V}_\pm,\,Z\in\mathbb{C}^2.
	\end{align}
\end{remark}

We now adopt the argument developed recently by Chen--Hu--Wang \cite{CHW17Adv} to avoid constructing the Kreiss' symmetrizers
in the derivation of energy estimates for the constant coefficient case.
To this end, we need the following lemma.

\begin{lemma}\ \label{lem1.E2}
	For each point $(\tau_0,\eta_0)\in\Xi_1$, there exist a neighborhood $\mathscr{V}$ of $(\tau_0,\eta_0)$ in $\Xi_1$
	and a continuous invertible matrix $T(\tau,\eta)$ defined on $\mathscr{V}$ such that
	\begin{align} \label{key1}
	T^{-1}\mathcal{A}T(\tau,\eta)=
	\begin{pmatrix}
	\omega_+&z_+&0&0\\
	0&-\omega_+&0&0\\
	0&0&\omega_-&z_-\\
	0&0&0&-\omega_-
	\end{pmatrix}
	\end{align}
	 for all $(\tau,\eta)\in\mathscr{V}\setminus\{\tau=\pm\mathrm{i}\,\bar{v}\eta\}$, where $z_{\pm}=z_{\pm}(\tau,\eta)$ are complex-valued functions
	defined on $\mathscr{V}\setminus\{\tau=\pm\mathrm{i}\,\bar{v}\eta\}$.
	Moreover, the first and third columns of $T(\tau,\eta)$ are $E_+(\tau,\eta)$ and $E_-(\tau,\eta)$, respectively.
\end{lemma}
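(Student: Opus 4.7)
The plan is to exploit the block-diagonal structure of $\mathcal{A}$. Writing
$\mathcal{A} = \mathrm{diag}(\mathcal{A}_+, \mathcal{A}_-)$ with
\[
\mathcal{A}_+ := \begin{pmatrix} \mu_+ & -m_+ \\ m_+ & -\mu_+ \end{pmatrix},\qquad
\mathcal{A}_- := \begin{pmatrix} -\mu_- & m_- \\ -m_- & \mu_- \end{pmatrix},
\]
I would build $T = \mathrm{diag}(T_+, T_-)$ blockwise, with each $T_\pm$ a $2\times 2$ continuous invertible matrix on $\mathscr{V}$ that conjugates $\mathcal{A}_\pm$ to $\bigl(\begin{smallmatrix}\omega_\pm & z_\pm\\ 0 & -\omega_\pm\end{smallmatrix}\bigr)$. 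The matching with the global form \eqref{key1} is then automatic.

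The first step is to verify, by direct computation using $\omega_\pm^2 = \mu_\pm^2 - m_\pm^2$ from Lemma~\ref{lem.eig1}(a), that the pair of nonzero components of $E_+$ — namely $e_+(\tau,\eta) := ((\tau+\mathrm{i}\bar{v}\eta)m_+,\,(\tau+\mathrm{i}\bar{v}\eta)(\mu_+-\omega_+))^{\mathsf{T}}$ — is an eigenvector of $\mathcal{A}_+$ with eigenvalue $\omega_+$, and similarly for the bottom two components $e_-$ of $E_-$ with $\mathcal{A}_-$. By Lemma~\ref{lem.eig1}(d), both $e_\pm$ extend continuously to all of $\Xi_1$ and never vanish. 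I would take the first column of $T_+$ to be $e_+$, and analogously for $T_-$, which matches the column requirement in the statement.

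For the second column of $T_+$ I would simply choose a \emph{constant} vector $t_+\in\{(1,0)^{\mathsf{T}},(0,1)^{\mathsf{T}}\}$ such that $(e_+(\tau_0,\eta_0), t_+)$ is linearly independent; this is possible because $e_+(\tau_0,\eta_0)\ne 0$, and by continuity the independence persists on a small neighborhood $\mathscr{V}$ of $(\tau_0,\eta_0)$. Then $T_+ := (e_+\;|\;t_+)$ is continuous and invertible on $\mathscr{V}$, the first column of $T_+^{-1}\mathcal{A}_+T_+$ equals $(\omega_+,0)^{\mathsf{T}}$ because $\mathcal{A}_+ e_+ = \omega_+ e_+$, and the $(2,2)$ entry is forced to be $-\omega_+$ by the trace identity $\mathrm{tr}(T_+^{-1}\mathcal{A}_+T_+)=\mathrm{tr}(\mathcal{A}_+)=\mu_+-\mu_+=0$. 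The remaining entry $z_+$ is just the first coordinate of $\mathcal{A}_+ t_+$ expressed in the basis $(e_+,t_+)$, which is well-defined wherever $\mu_+, m_+$ are (i.e., away from $\tau+\mathrm{i}\bar{v}\eta=0$). An identical construction on the $-$ block with a vector $t_-$ chosen analogously from the standard basis completes the definition of $T$, and the exclusion of $\{\tau=\pm\mathrm{i}\bar{v}\eta\}$ in \eqref{key1} is exactly the pole set of $\mu_\pm, m_\pm$.

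I do not foresee any serious obstacle; the argument is structural. The conceptual point worth underlining is that I deliberately avoid trying to diagonalize $\mathcal{A}$: by Lemma~\ref{lem.eig1}(e) this would break at every point where $\omega_+$ or $\omega_-$ vanishes, so no continuous basis of eigenvectors exists near such points. Keeping instead the eigenvector $E_\pm$ in the stable-type column and letting the $(1,2)$ entry $z_\pm$ absorb the defect gives a universally valid upper-triangular form, while the trace constraint pins down the second diagonal entry without any computation. Continuity of $T$ on all of $\mathscr{V}$ (including across $\tau=\pm\mathrm{i}\bar{v}\eta$) is inherited from the continuity of $E_\pm$ on $\Xi_1$, the singular factor $\tau\pm\mathrm{i}\bar{v}\eta$ having been absorbed into the definition of $E_\pm$ in \eqref{E.eig}.
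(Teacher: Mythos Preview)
Your proposal is correct and follows essentially the same approach as the paper: the paper also takes $T=(E_+\ Y_+\ E_-\ Y_-)$ with $Y_\pm$ chosen to be the standard basis vector complementary to the nonvanishing component of $E_\pm$ at $(\tau_0,\eta_0)$, then verifies \eqref{key1} by direct computation. Your trace argument for the $(2,2)$ entry is a tidy shortcut over the paper's explicit calculation of $z_\pm$, but otherwise the constructions coincide.
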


\begin{proof}\
	We set $a_{\pm}(\tau,\eta):=\tau\pm\mathrm{i}\,\bar{v}\eta$
	and define the following vectors on a neighborhood $\mathscr{V}$ of $(\tau_0,\eta_0)$:
	\begin{align*}
	Y_+(\tau,\eta)&:=\left\{
	\begin{aligned}
	&(0,1,0,0)^{\mathsf{T}}\quad &&\mathrm{if}\ a_+m_+(\tau_0,\eta_0)\ne 0,\\
	&(1,0,0,0)^{\mathsf{T}}\quad &&\mathrm{if}\ a_+(\mu_+-\omega_+)(\tau_0,\eta_0)\ne 0,
	\end{aligned}\right.\\[1mm]
	Y_-(\tau,\eta)&:=\left\{
	\begin{aligned}
	&(0,0,1,0)^{\mathsf{T}}\quad &&\mathrm{if}\ a_-m_-(\tau_0,\eta_0)\ne 0,\\
	&(0,0,0,1)^{\mathsf{T}}\quad &&\mathrm{if}\ a_-(\mu_--\omega_-)(\tau_0,\eta_0)\ne 0.
	\end{aligned}\right.
	\end{align*}
	Recall that $E_{\pm}(\tau,\eta)$ defined by \eqref{E.eig} are continuous and never vanish on $\Xi_1$.
	Hence, one can define the following continuous and invertible matrix on $\mathscr{V}$:
	\[
	T(\tau,\eta):=(E_+(\tau,\eta)\ \ Y_+(\tau,\eta)\ \ E_-(\tau,\eta)\ \ Y_-(\tau,\eta)).
	\]
	When $\tau\ne\pm\mathrm{i}\,\bar{v}\eta$, by a direct computation and using \eqref{eig1a},
	we obtain \eqref{key1} with
	\begin{align*}
	z_{\pm}(\tau,\eta):=
	\left\{
	\begin{aligned}
	&-\frac{1}{a_{\pm}(\tau,\eta)}\quad &&\mathrm{if}\ a_{\pm}m_{\pm}(\tau_0,\eta_0)\ne0,\\[1mm]
	&-\frac{m_{\pm}}{a_{\pm}(\mu_{\pm}-\omega_{\pm})(\tau,\eta)}\quad &&\mathrm{if}\ a_{\pm}(\mu_{\pm}-\omega_{\pm})(\tau_0,\eta_0)\ne0,
	\end{aligned}
	\right.
	\end{align*}
	which are well-defined apart from the poles of $\mathcal{A}$, {\it i.e.}
	from $\tau=\pm\mathrm{i}\,\bar{v}\eta$.
\qed\end{proof}

\vspace*{3mm}
\noindent {\bf{Proof of Theorem {\rm \ref{thm.1}}}} \quad
	According to Lemma \ref{lem1a},
	it suffices to show estimate \eqref{P1e.e0} in order to prove Theorem {\rm\ref{thm.1}}.
	Using Lemmas \ref{lem1.E1}--\ref{lem1.E2}, for each point $(\tau_0,\eta_0)\in\Xi_1$,
	there exists a neighborhood $\mathscr{V}$ of $(\tau_0,\eta_0)$  in $\Xi_1$ and a continuous invertible matrix $T(\tau,\eta)$
	defined on $\mathscr{V}$ such that \eqref{E1.e0} and \eqref{key1} hold.
	Thanks to the compactness of hemisphere $\Xi_1$,
	there exists a  finite covering $\{\mathscr{V}_1,\ldots,\mathscr{V}_J\}$ of $\Xi_1$ by such neighborhoods
	with corresponding matrices $\{T_1(\tau,\eta),\ldots,T_J(\tau,\eta)\}$, and a smooth partition of
	unity $\{\chi_j(\tau,\eta)\}_{j=1}^J$ such that
	$$
	\chi_j\in C^{\infty}_c(\mathscr{V}_j), \qquad
	\sum_{j=1}^J\chi_j^2=1 \ \ \mathrm{on}\ \Xi_1.
	$$
	
	We now derive an energy estimate in $\Pi_j:=\{(\tau,\eta)\in\Xi: s\cdot(\tau,\eta)\in\mathscr{V}_j\textrm{ for some }s>0 \}$
	and then patch them together to obtain \eqref{P1e.e0}.
	We first extend $\chi_j$ and $T_j$ to the conic zone $\Pi_j$ as homogeneous mappings
	of degree $0$ with respect to $(\tau,\eta)$.
	Note that both $T_j(\tau,\eta)$ and its inverse are bounded on $\Pi_j$,
	and identity \eqref{key1} holds for all $(\tau,\eta)\in \Pi_j$ with $\tau\ne\pm\mathrm{i}\,\bar{v}\eta$.
	Define
	\begin{align*}
	\textsf{W}(\tau,\eta,x_2):=\chi_jT_j(\tau,\eta)^{-1}\widehat{W^{\mathrm{nc}}}(\tau,\eta,x_2)\qquad\textrm{for all }(\tau,\eta)\in\Pi_j.
	\end{align*}
	Assume that $(\tau,\eta)\in\Pi_j$ with $\RE\tau>0$. In light of \eqref{P1e}, we obtain that $\textsf{W}$ satisfies
	\begin{align*}
	\frac{\mathrm{d}\textsf{W}}{\mathrm{d}x_2}=T_j(\tau,\eta)^{-1}\mathcal{A}T_j(\tau,\eta)\textsf{W}.
	\end{align*}
	Since \eqref{key1} holds when $(\tau,\eta)\in \Pi_j$ with $\RE\tau>0$,
	the equations for $\textsf{W}_2$ and $\textsf{W}_4$ read
	\begin{align} \label{P1f}
	\frac{\mathrm{d}\textsf{W}_2}{\mathrm{d}x_2}=-\omega_+\textsf{W}_2,\qquad
	\frac{\mathrm{d}\textsf{W}_4}{\mathrm{d}x_2}=-\omega_-\textsf{W}_4.
	\end{align}
	Recall from Lemma \ref{lem.eig1}\,(a) that $\RE\omega_{\pm}(\tau,\eta)<0$ whenever $\RE\tau>0$.
	Integration by parts for \eqref{P1f} yields
	\[
	\|\textsf{W}_2(\tau,\eta,\cdot)\|_{L^2(\mathbb{R}_+)}=\|\textsf{W}_4(\tau,\eta,\cdot)\|_{L^2(\mathbb{R}_+)}=0,
	\]
	from which we immediately deduce
	\begin{align}
	\label{P1f.e1}
	\textsf{W}_2(\tau,\eta,x_2)=\textsf{W}_4(\tau,\eta,x_2)=0
	\end{align}
  for all $x_2\in\mathbb{R}_+$ and $(\tau,\eta)\in\Pi_j$ with $\RE\tau>0$,
	where we have used the continuity of $\textsf{W}_2$ and $\textsf{W}_4$.
	Using the boundary equations in \eqref{P1e} yields
	\begin{align} \label{P1f.e2}
	\chi_j\widehat{G}=\beta(\tau,\eta)T_j(\tau,\eta)\textsf{W}(\tau,\eta,0)=\beta(\tau,\eta)(E_+\  E_-)\begin{pmatrix}
	\textsf{W}_1(\tau,\eta,0)\\[0.5mm]
	\textsf{W}_3(\tau,\eta,0)
	\end{pmatrix}
	\end{align}
	for all $(\tau,\eta)\in\Pi_j$ with $\RE\tau>0$.
	By the homogeneity of $T_j$ and $\beta$, we obtain from \eqref{E1.e0} that
	\begin{align*}
	(|\tau|+|\eta|)|\beta(\tau,\eta)(E_+(\tau,\eta)\  E_-(\tau,\eta))Z|\geq c_j \gamma |Z|
	\quad\textrm{for all }(\tau,\eta)\in\Pi_j,\,Z\in\mathbb{C}^2.
	\end{align*}
	Combine this with \eqref{P1f.e2} to deduce
	\begin{align}\label{P1f.e3}
	\left|(\textsf{W}_1(\tau,\eta,0),\textsf{W}_3(\tau,\eta,0))\right|
	\leq \frac{|\tau|+|\eta|}{c_j\gamma}\big|\chi_j\widehat{G}(\tau,\eta)\big|
	\end{align}
	for all $(\tau,\eta)\in\Pi_j$  with $\RE\tau>0$.
	Combining \eqref{P1f.e1} and \eqref{P1f.e3} yields
	\begin{align*}
	\left|\textsf{W}(\tau,\eta,0)\right|\leq
	\frac{|\tau|+|\eta|}{c_j\gamma}\big|\chi_j\widehat{G}(\tau,\eta)\big|
	\qquad \mbox{for all $(\tau,\eta)\in\Pi_j$ with $\RE\tau>0$}.
	\end{align*}
	We then obtain from the definition of $\textsf{W}$ and boundedness of $T_j(\tau,\eta)$ that
	\begin{align*}
	\big|\chi_j\widehat{W^{\mathrm{nc}}}(\tau,\eta,0)\big|\leq
	\frac{|\tau|+|\eta|}{c_j\gamma}\big|\chi_j\widehat{G}(\tau,\eta)\big|
	\end{align*}
	for all $(\tau,\eta)\in\Pi_j$ with $\gamma=\RE\tau>0$  and new positive constants $c_j$.
	Adding the above estimates for all $j\in\{1,\ldots,J\}$ and integrating the resulting estimate over $\mathbb{R}^2$
	with respect to $(\delta,\eta)$, we can derive the desired estimate \eqref{P1e.e0} from the Plancherel theorem.
	This completes the proof of Theorem {\rm\ref{thm.1}}.
\qed

\begin{remark}\  \label{rem3.4}
	In the case of $\textsf{M}=\textsf{M}_{\rm c}$,
	we can derive the energy estimate \eqref{rem.e0} by using \eqref{rem.e1}
	and employing a completely similar argument as above.
\end{remark}

\section{Variable Coefficient Linearized Problem} \label{sec.4}

In this section, we derive the linearized problem of \eqref{RE0} around a basic
state $\big(\mathring{U}^{\pm},\mathring{\Phi}^{\pm}\big)$ that is a small perturbation of
$\big(\widebar{U}^{\pm},\widebar{\Phi}^{\pm}\big)$ given in \eqref{RVS0}.
More precisely, we assume that the perturbations:
$\mathring{V}^{\pm}:=\mathring{U}^{\pm}-\widebar{U}^{\pm}$ and $\mathring{\Psi}^{\pm}:=\mathring{\Phi}^{\pm}-\widebar{\Phi}^{\pm}$
satisfy
\begin{align}
&\mathrm{supp}\,\big(\mathring{V}^{\pm},\mathring{\Psi}^{\pm}\big)\subset \{-T\le t\le 2T,\ x_2\geq 0,\ |x|\leq R\},\label{bas.c1}\\ \label{bas.c2}
&\mathring{V}^{\pm}\in W^{2,\infty}(\Omega),\   \mathring{\Psi}^{\pm}\in W^{3,\infty}(\Omega),\
\big\|\mathring{V}^{\pm}\big\|_{W^{2,\infty}(\Omega)}+\big\|\mathring{\Psi}^{\pm}\big\|_{W^{3,\infty}(\Omega)}\leq K,
\end{align}
where $T$, $R$, and $K$ are positive constants.
Moreover, we assume that $\big(\mathring{U}^{\pm},\mathring{\Phi}^{\pm}\big)$ satisfies constraints \eqref{Phi.eq}
and the Rankine--Hugoniot conditions \eqref{RE0.b}:
\begin{subequations}\label{bas.eq}
	\begin{alignat}{2}
	\label{bas.eq.1}&\partial_t\mathring{\Phi}^{\pm}+\mathring{v}_1^{\pm}\partial_1\mathring{\Phi}^{\pm}-\mathring{v}_2^{\pm}=0&\qquad &\mathrm{if}\ x_2\geq 0,\\
	\label{bas.eq.2}&\pm\partial_2\mathring{\Phi}^{\pm}\geq \kappa_0>0&\qquad &\mathrm{if}\ x_2\geq 0,\\
	\label{bas.eq.3}&\mathring{\Phi}^{+}=\mathring{\Phi}^{-}=\mathring{\varphi}&\qquad &\mathrm{if}\ x_2= 0,\\
	\label{bas.eq.4}&\mathbb{B}\big(\mathring{U}^+,\mathring{U}^-,\mathring{\varphi}\big)=0&\qquad &\mathrm{if}\ x_2= 0,
	\end{alignat}
\end{subequations}
where $\kappa_0$ is a positive constant. We will use $\mathring{V}:=(\mathring{V}^+,\mathring{V}^-)^{\mathsf{T}}$
and $\mathring{\Psi}:=(\mathring{\Psi}^+,\mathring{\Psi}^-)^{\mathsf{T}}$ to avoid overloaded expressions.

\subsection{Linearized Problem}\label{sec4.a}
Let us consider the families, $U^{\pm}_{\theta}=\mathring{U}^{\pm}+\theta V^{\pm}$
and $\Phi^{\pm}_{\theta}=\mathring{\Phi}^{\pm}+\theta\Psi^{\pm}$, with  a small parameter $\theta$.
The linearized operators are given by
\begin{align*}
\left\{\begin{aligned}
&\mathbb{L}'\big(\mathring{U}^{\pm},\mathring{\Phi}^{\pm}\big)(V^{\pm},\Psi^{\pm})
:=\left.\frac{\mathrm{d}}{\mathrm{d}\theta}\mathbb{L}\big(U^{\pm}_{\theta}, \Phi^{\pm}_{\theta}\big)\right|_{\theta=0},\\
&\mathbb{B}'\big(\mathring{U}^{\pm},\mathring{\Phi}^{\pm}\big)(V,\psi)
:=\left.\frac{\mathrm{d}}{\mathrm{d}\theta}{\mathbb{B}(U^{+}_{\theta},U^{-}_{\theta},\varphi_{\theta})}\right|_{\theta=0},
\end{aligned}\right.
\end{align*}
where $V:=(V^+,V^-)^{\mathsf{T}}$, and $\varphi_{\theta}$ (resp.\;$\psi$) denotes the common trace
of $\Phi^{\pm}_{\theta}$ (resp.\;$\Psi^{\pm}$) on boundary  $\{x_2=0\}$.
A standard computation yields the following expression for $\mathbb{L}'$:
\begin{align} \label{L.prime}
\mathbb{L}'(U,\Phi)(V,\Psi)=L(U,\Phi)V+\mathcal{C}(U,\Phi)V -{{\frac{1}{\partial_2 \Phi}L(U,\Phi)\Psi\partial_2 U}},
\end{align}
where $\mathcal{C}(U,\Phi)$ is the zero-th order operator defined by
\begin{align}
{\mathcal{C}(U,\Phi)V:=\big(\p_{U_i}A_0(U) \partial_t U+\p_{U_i}A_1(U) \partial_1 U+\p_{U_i}\widetilde{A}_2(U,\Phi) \partial_2 U\big)V_i.}
\label{C.cal}
\end{align}
We notice that matrices $\mathcal{C}\big(\mathring{U}^{\pm},\mathring{\Phi}^{\pm}\big)$ are $C^{\infty}$--functions
of $\big(\mathring{V}^{\pm},\nabla \mathring{V}^{\pm},\nabla \mathring{\Psi}^{\pm}\big)$ {vanishing} 
at the origin.

We recall that the first component of $\mathbb{B}(U^{+}_{\theta},U^{-}_{\theta},\varphi_{\theta})$ is
$\left[v_1(U_{\theta})\right]\partial_1\varphi_{\theta}-\left[v_2(U_{\theta})\right]$.
Ignoring  indices ``$+$'' and ``$-$'' for the moment, it follows from \eqref{v.partial} and \eqref{bas.eq.1} that
\begin{align*}
&\left.\frac{\mathrm{d}}{\mathrm{d}\theta}\big(\mathbb{B}(U^{+}_{\theta},U^{-}_{\theta},\varphi_{\theta})\big)_1\right|_{\theta=0}
=\mathring{v}_1\partial_1\psi+\partial_1\mathring{\varphi}\nabla_{U} v_1(\mathring{U}) \cdot V-\nabla_{U} v_2(\mathring{U})\cdot V\\
&=\mathring{v}_1\partial_1\psi-\frac{\epsilon^2(\p_1 \mathring{\varphi}\mathring{v}_1-\mathring{v}_2)}{\mathring{N}\mathring{h}\mathring{\varGamma}^2}V_1+\frac{\p_1 \mathring\varphi(1-\epsilon^2\mathring{v}_1^2)+\epsilon^2\mathring{v}_1\mathring{v}_2}{\mathring{h}\mathring{\varGamma}}V_2\\
&\ \ \  - \frac{\p_1 \mathring\varphi\epsilon^2\mathring{v}_1\mathring{v}_2+(1-\epsilon^2\mathring{v}_2^2) }{\mathring{h}\mathring{\varGamma}}V_3 \\
&=\mathring{v}_1\partial_1\psi
+\frac{\epsilon^2\p_t \mathring\varphi  }{\mathring{N}\mathring{h}\mathring{\varGamma}^2}V_1
+\frac{\p_1 \mathring\varphi+\epsilon^2 \mathring{v}_1 \p_t\mathring\varphi }{\mathring{h}\mathring{\varGamma}}V_2
- \frac{1-\epsilon^2 \mathring{v}_2 \p_t\mathring\varphi  }{\mathring{h}\mathring{\varGamma}}V_3\quad \mathrm{on}\ \  \{x_2=0\},
\end{align*}
where $\mathring{v}_j:=v_j(\mathring{U})$, $\mathring{N}:=N(\mathring{U}_1)$, $\mathring{h}:=h(\mathring{U}_1)$,
and $\mathring{\varGamma}:=\varGamma(\mathring{U})$.
Performing a similar analysis to the other components of $\mathbb{B}(U^{+}_{\theta},U^{-}_{\theta},\varphi_{\theta})$
implies
\begin{align} \label{B'.bb}
\mathbb{B}'\big(\mathring{U}^{\pm},\mathring{\Phi}^{\pm}\big)(V,\psi):=\mathring{b}\nabla\psi+\mathring{B} V|_{x_2=0},
\end{align}
where  $\nabla\psi:=( \partial_t\psi,\partial_1\psi)^{\mathsf{T}}$.
Coefficients $\mathring{b}$ and $\mathring{B}$ are defined by
\setlength{\arraycolsep}{2pt}
\begin{align} \label{b.ring}
\mathring{b}(t,x_1)&:=
{\small  \begin{pmatrix}
0  &(\mathring{v}_1^+-\mathring{v}_1^-)|_{x_2=0}\\
1 &\mathring{v}_{1}^+|_{x_2=0}\\
0 &0
\end{pmatrix}},\\
\label{M.ring}
\mathring{B}(t,x_1)&:=
{\small  \left.\begin{pmatrix}
\dfrac{\epsilon^2\p_t \mathring\varphi}{\mathring{N}_+\mathring{h}_+\mathring{\varGamma}_+^2}
&\dfrac{\mathring{\varrho}_{+}}{\mathring{h}_+\mathring{\varGamma}_+}
& \dfrac{-\mathring{\varsigma}_{+}}{\mathring{h}_+\mathring{\varGamma}_+}
&\dfrac{-\epsilon^2\p_t \mathring\varphi}{\mathring{N}_-\mathring{h}_-\mathring{\varGamma}_-^2}
&\dfrac{-\mathring{\varrho}_{-}}{\mathring{h}_-\mathring{\varGamma}_-}
& \dfrac{\mathring{\varsigma}_{-}}{\mathring{h}_-\mathring{\varGamma}_-} \\
\dfrac{\epsilon^2\p_t \mathring\varphi}{\mathring{N}_+\mathring{h}_+\mathring{\varGamma}_+^2}
&\dfrac{\mathring{\varrho}_{+}}{\mathring{h}_+\mathring{\varGamma}_+}
&\dfrac{-\mathring{\varsigma}_{+}}{\mathring{h}_+\mathring{\varGamma}_+}
& 0&0  & 0\\[3mm]
1 &0 &  0  &-1 &0& 0
\end{pmatrix}\right|_{x_2=0}}.
\end{align}
In expression \eqref{M.ring}, we have set $\mathring{\varrho}_{\pm}:=\varrho\big(\mathring{U}^{\pm},\mathring{\Phi}^{\pm}\big)$
and $\mathring{\varsigma}_{\pm}:=\varsigma\big(\mathring{U}^{\pm},\mathring{\Phi}^{\pm}\big)$,
where
\begin{align} \label{m.n.ring}
\varrho(U,\Phi):=\partial_1\Phi+\epsilon^2 v_1\partial_t\Phi,\qquad
\varsigma(U,\Phi):=1-\epsilon^2 v_2\partial_t\Phi.
\end{align}
In particular, if $\mathring{\Psi}^{\pm}\equiv 0$, then $\mathring{\varrho}_{\pm}\equiv 0$ and $\mathring{\varsigma}_{\pm}\equiv 1$.
Moreover, $\mathring{b}$ is a $C^{\infty}$--function of $\mathring{V}|_{x_2=0}$,
and $\mathring{B}$ is a $C^{\infty}$--function of $(\mathring{V}|_{x_2=0},\nabla \mathring{\varphi})$.

We simplify expression \eqref{L.prime} as Alinhac \cite{A89MR976971} by employing the ``good unknown'':
\begin{align} \label{good}
\dot{V}^{\pm}:=V^{\pm}-\frac{\partial_2\mathring{U}^{\pm}}{\partial_2 \mathring{\Phi}^{\pm}}\Psi^{\pm}.
\end{align}
After some direct calculation, we find ({\it cf.}\;M{{\'e}}tivier \cite[Proposition\,1.3.1]{M01MR1842775}) that
\begin{align}
\notag &\mathbb{L}'(\mathring{U}^{\pm}, \mathring{\Phi}^{\pm})(V^{\pm},\Psi^{\pm})\\
\label{Alinhac}&\quad =L(\mathring{U}^{\pm}, \mathring{\Phi}^{\pm})\dot{V}^{\pm} +\mathcal{C}( \mathring{U}^{\pm},\mathring{\Phi}^{\pm})\dot{V}^{\pm}
+\frac{\Psi^{\pm}}{\partial_2\mathring{\Phi}^{\pm}}\partial_2\big(L(\mathring{U}^{\pm} ,\mathring{\Phi}^{\pm} )\mathring{U}^{\pm}\big).
\end{align}
In view of the nonlinear results obtained in \cite{A89MR976971,FM00MR1787068,CS08MR2423311}, we neglect the zero-th order
term in $\Psi^{\pm}$ and consider the following \emph{effective linear problem}:
\begin{subequations} \label{P2b}
	\begin{alignat}{3}   \label{P2b.1}
	&\mathbb{L}'_e\big(\mathring{U}^{\pm},\mathring{\Phi}^{\pm}\big)\dot{V}^{\pm}
	:=L\big(\mathring{U}^{\pm},\mathring{\Phi}^{\pm}\big)\dot{V}^{\pm}+\mathcal{C}( \mathring{U}^{\pm},\mathring{\Phi}^{\pm})\dot{V}^{\pm}=f^{\pm}\quad &\textrm{if } x_2>0,\\
	&\mathbb{B}'_e\big(\mathring{U}^{\pm},\mathring{\Phi}^{\pm}\big)(\dot{V},\psi)
	:=\mathring{b}\nabla\psi+b_{\sharp}\psi+\mathring{B} \dot{V} |_{x_2=0}=g\quad  &\textrm{if } x_2=0, \label{P2b.3}\\
	&\Psi^+=\Psi^-=\psi\quad  &\textrm{if } x_2=0, \label{P2b.2}
	\end{alignat}
\end{subequations}
where $\mathcal{C}( \mathring{U}^{\pm},\mathring{\Phi}^{\pm})$, $\mathring{b}$, and $\mathring{B}$ are defined
by \eqref{C.cal}, \eqref{b.ring}, and \eqref{M.ring} respectively,
$\dot{V}:=(\dot{V}^+,\dot{V}^-)^{\mathsf{T}}$,  and
\begin{align}\label{b.sharp}
b_{\sharp}(t,x_1):=\mathring{B}(t,x_1)
(\frac{\partial_2 \mathring{U}^+}{\partial_2 \mathring{\Phi}^+},
\frac{\partial_2 \mathring{U}^-}{\partial_2 \mathring{\Phi}^-})^{\mathsf{T}}|_{x_2=0}.
\end{align}
Note that $b_{\sharp}$ is a $C^{\infty}$--function of
$(\mathring{V}|_{x_2=0},\p_2\mathring{V}|_{x_2=0},\nabla \mathring{\varphi},\p_2\mathring{\Psi}|_{x_2=0})$
that {vanishes} at the origin.
By virtue of \eqref{bas.c2}, it follows that
$\mathcal{C}(\mathring{U}^{\pm}, \mathring{\Phi}^{\pm})\in W^{1,\infty}(\Omega)$,
and the coefficients of operators $L\big(\mathring{U}^{\pm},\mathring{\Phi}^{\pm}\big)$ are in $W^{2,\infty}(\Omega)$.
We observe that the trace of vector $\mathring{B} \dot{V} $ involved in boundary conditions \eqref{P2b.3} depends solely on the traces of
$\mathbb{P}^{+}(\mathring{\varphi})\dot{V}^+$ and
$\mathbb{P}^{-}(\mathring{\varphi})\dot{V}^-$ on $\{x_2=0\}$,
where $\mathbb{P}^{\pm}(\mathring{\varphi})$ are defined as
\begin{align} \label{P.bb}
\mathbb{P}^{\pm}(\mathring{\varphi})V
:=\left(V_1,\mathring{\varsigma}_{\pm}|_{x_2=0}V_3-\mathring{\varrho}_{\pm}|_{x_2=0}V_2\right)^{\mathsf{T}},
\end{align}
with $\mathring{\varrho}_{\pm}$ and $\mathring{\varsigma}_{\pm}$ defined by \eqref{m.n.ring}.
We will consider the dropped term in \eqref{Alinhac}
as an error term at each Nash--Moser iteration step in the subsequent nonlinear analysis.

\subsection{Reformulation}
It is more convenient to transform the linearized problem \eqref{P2b}
into a problem with a constant and diagonal boundary matrix.
This is possible because the boundary matrix for \eqref{P2b}
has constant rank on the whole closed half-space $\{x_2\geq 0\}$.

Let us calculate the eigenvalues and the corresponding eigenvectors
of the boundary matrix for \eqref{P2b}.
Using constraint \eqref{bas.eq.1} reduces the coefficient matrices
$\widetilde{A}_2\big(\mathring{U}^{\pm},\mathring{\Phi}^{\pm}\big)$ to:
\begin{align} \label{A2.tilde}
\widetilde{A}_2\big(\mathring{U}^{\pm},\mathring{\Phi}^{\pm}\big)=
\frac{1}{\partial_2\mathring{\Phi}^{\pm}}
\left(\begin{array}{ccc}
0&-\mathring{N}^{\pm}\mathring{c}_{\pm}^2\mathring{\varrho}_{\pm}& \mathring{N}^{\pm}\mathring{c}_{\pm}^2\mathring{\varsigma}_{\pm}\\
-\mathring{\varrho}_{\pm}/\mathring{N}^{\pm}&0&0\\
\mathring{\varsigma}_{\pm}/\mathring{N}^{\pm}&0&0
\end{array}\right).
\end{align}
After a direct calculation, we obtain that the eigenvalues are
\begin{align*}
\lambda_1=0,\quad
\lambda_{2}= -\frac{\mathring{c}_\pm\sqrt{\mathring{\varrho}_\pm^2+\mathring{\varsigma}_\pm^2}}{\partial_2\mathring{\Phi}^\pm},\quad
\lambda_{3}= \frac{\mathring{c}_\pm\sqrt{\mathring{\varrho}_\pm^2+\mathring{\varsigma}_\pm^2}}{\partial_2\mathring{\Phi}^\pm},
\end{align*}
with corresponding eigenvectors
\begin{align*}
r_1=\begin{pmatrix}
0\\
\mathring{\varsigma}_\pm \\
\mathring{\varrho}_\pm
\end{pmatrix}
\quad
r_2=\left(\begin{array}{c}
\sqrt{\mathring{\varrho}_\pm^2+\mathring{\varsigma}_\pm^2}\, \\
\mathring{\varrho}_\pm/(\mathring{N}^\pm\mathring{c}_\pm)\\
-\mathring{\varsigma}_\pm/(\mathring{N}^\pm\mathring{c}_\pm)
\end{array}\right),
\quad
r_3=\left(\begin{array}{c}
\sqrt{\mathring{\varrho}_\pm^2+\mathring{\varsigma}_\pm^2}\, \\
-\mathring{\varrho}_\pm/(\mathring{N}^\pm\mathring{c}_\pm)\\
\mathring{\varsigma}_\pm/(\mathring{N}^\pm\mathring{c}_\pm)
\end{array}\right).
\end{align*}
Define the matrices
\begin{align} \label{T}
R\big(\mathring{U}^\pm,\mathring{\Phi}^\pm\big):=
\begin{pmatrix}
0&\sqrt{\mathring{\varrho}_\pm^2+\mathring{\varsigma}_\pm^2}
&\sqrt{\mathring{\varrho}_\pm^2+\mathring{\varsigma}_\pm^2}\\
\mathring{\varsigma}_\pm &\mathring{\varrho}_\pm/(\mathring{N}^\pm\mathring{c}_\pm)
&-\mathring{\varrho}_\pm/(\mathring{N}^\pm\mathring{c}_\pm) \\
\mathring{\varrho}_\pm &-\mathring{\varsigma}_\pm/(\mathring{N}^\pm\mathring{c}_\pm)
&\mathring{\varsigma}_\pm/(\mathring{N}^\pm\mathring{c}_\pm)
\end{pmatrix},
\end{align}
and  $\widetilde{A}_0\big(\mathring{U}^\pm,\mathring{\Phi}^\pm\big):=\mathrm{diag}\,({1},\,\lambda_{2}^{-1},\,\lambda_3^{-1})$.
Then it follows that
\begin{align*}
\widetilde{A}_0R^{-1}\widetilde{A}_2R\big(\mathring{U}^{\pm},\mathring{\Phi}^{\pm}\big)=\bm{I}_2:=\mathrm{diag}\;(0,1,1).
\end{align*}
We thus perform the transformation:
\begin{align}\label{W.def}
W^{\pm}:=R^{-1}\big(\mathring{U}^{\pm},\mathring{\Phi}^{\pm}\big)\dot{V}^{\pm}.
\end{align}
Multiplying \eqref{P2b.1} by matrices $\widetilde{A}_0R^{-1}\big(\mathring{U}^{\pm},\mathring{\Phi}^{\pm}\big)$
yields the equivalent system of \eqref{P2b.1}:
\begin{align}\label{P2c}
\bm{A}_0^{\pm}\partial_t W^{\pm}+\bm{A}_1^{\pm}\partial_1 W^{\pm}+\bm{I}_2\partial_2 W^{\pm}+\bm{C}^{\pm} W^{\pm}=F^{\pm},
\end{align}
where $F^{\pm}:=\widetilde{A}_0R^{-1}\big(\mathring{U}^{\pm},\mathring{\Phi}^{\pm}\big)f^{\pm}$, and
\begin{align} \label{A.bm}
\bm{A}_0^{\pm}&:=\widetilde{A}_0R^{-1}A_0R\big(\mathring{U}^{\pm},\mathring{\Phi}^{\pm}\big),\quad
\bm{A}_1^{\pm}:=\widetilde{A}_0R^{-1}A_1R\big(\mathring{U}^{\pm},\mathring{\Phi}^{\pm}\big),\\
\notag\bm{C}^{\pm}&:=\widetilde{A}_0\left(R^{-1}A_0\partial_tR+R^{-1}A_1\partial_1R+R^{-1}{\widetilde{A}}_2\partial_2R+R^{-1}\mathcal{C}R\right)
\big(\mathring{U}^{\pm},\mathring{\Phi}^{\pm}\big).
\end{align}
Matrices $\bm{A}_0^{\pm}$ and $\bm{A}_1^{\pm}$ belong to $W^{2,\infty}(\Omega)$,
while matrices $\bm{C}^{\pm}$ are in $W^{1,\infty}(\Omega)$.
Moreover, $\bm{A}_0^{\pm}$ and $\bm{A}_1^{\pm}$ are $C^{\infty}$--functions of their arguments
$(\mathring{V}^{\pm},\nabla \mathring{\Psi}^{\pm})$,
and $\bm{C}^{\pm}$ are $C^{\infty}$--functions of their arguments
$(\mathring{V}^{\pm},\nabla\mathring{V}^{\pm},\nabla \mathring{\Psi}^{\pm},\nabla^2 \mathring{\Psi}^{\pm})$.
Under transformation \eqref{W.def}, the boundary conditions {\eqref{P2b.3}--\eqref{P2b.2}} become
\begin{subequations}\label{P2c.2}
	\begin{alignat}{3}
	&\mathcal{B}^{\gamma}(W,\psi):=
	\mathring{b}\nabla\psi+b_{\sharp}\psi+\bm{B} W=g\quad  &\mathrm{if}\ x_2=0, \label{P2c.2b}\\
	&\Psi^+=\Psi^-=\psi\quad  &\mathrm{if}\ x_2=0, \label{P2c.2a}
	\end{alignat}
\end{subequations}
where $\mathring{b}$ and $b_{\sharp}$ are given by \eqref{b.ring} and \eqref{b.sharp} respectively,
$W:=(W^+,W^-)^{\mathsf{T}}$,  and
\begin{align} \notag
&\bm{B}(t,x_1):=
\mathring{B}\left.\begin{pmatrix}
R(\mathring{U}^{+},\mathring{\Phi}^{+}) & 0\\
0&R(\mathring{U}^{-},\mathring{\Phi}^{-})
\end{pmatrix}\right|_{x_2=0}\\[1mm]
& =\left.\begin{pmatrix}
0& \, {m}_1^++{m}_2^+ & \, {m}_1^+-{m}_2^+& \, 0 & \, -{m}_1^--{m}_2^- & \, -{m}_1^-+{m}_2^-\\[1mm]
0& \, {m}_1^++{m}_2^+ & \, {m}_1^+-{m}_2^+ & \, 0 & \, 0  & \, 0\\[1mm]
0 & \, \sqrt{\mathring{\varrho}_{+}^2+\mathring{\varsigma}_{+}^2}
& \, \sqrt{\mathring{\varrho}_{+}^2+\mathring{\varsigma}_{+}^2}& \, 0
& \,-\sqrt{\mathring{\varrho}_{-}^2+\mathring{\varsigma}_{-}^2} & \, -\sqrt{\mathring{\varrho}_{-}^2+\mathring{\varsigma}_{-}^2}
\end{pmatrix}\right|_{x_2=0}. \label{M.bm}
\end{align}
In the last expression, for notational simplicity, we have introduced ${m}_j^{\pm}$ as
\begin{align} \label{m1m2.bm}
{m}_1^{\pm}:=\dfrac{\epsilon^2\p_t\mathring{\Phi}^{\pm}\sqrt{\mathring{\varrho}_{\pm}^2
		+\mathring{\varsigma}_{\pm}^2} }{\mathring{\varGamma}_{\pm}^2\mathring{h}_{\pm}\mathring{N}^{\pm}},\qquad
{m}_2^{\pm}:=\dfrac{\mathring{\varrho}_{\pm}^2
	+\mathring{\varsigma}_{\pm}^2 }{\mathring{\varGamma}_{\pm}\mathring{c}_{\pm}\mathring{h}_{\pm}\mathring{N}^{\pm}}.
\end{align}
It is clear that matrix $\bm{B}$ is a $C^{\infty}$--function of $(\mathring{V}|_{x_2=0},\nabla \mathring{\varphi})$.
According to \eqref{P.bb} and \eqref{W.def}, we have
\begin{align} \label{P2d.id1}
\mathbb{P}^{\pm}(\mathring{\varphi})\dot{V}^{\pm}\,\!|_{x_2=0}=\left.\begin{pmatrix}
\sqrt{\mathring{\varrho}_{\pm}^2+\mathring{\varsigma}_{\pm}^2}(W_2^{\pm}+W_3^{\pm})\\[1mm]
-\dfrac{\mathring{\varrho}_{\pm}^2+\mathring{\varsigma}_{\pm}^2}{\mathring{N}^{\pm}\mathring{c}_{\pm}}(W_2^{\pm}-W_3^{\pm})
\end{pmatrix}\right|_{x_2=0}.
\end{align}
{We find that the trace of vector $\bm{B}W$ involved in boundary conditions \eqref{P2c.2} depends only} on the traces of the noncharacteristic part of vector $W$, {\it i.e.}  sub-vector $W^{\mathrm{nc}}:=(W_2^{+},W_3^{+},W_2^{-},W_3^{-})^{\mathsf{T}}$.

\section{Basic Energy Estimate for the Linearized Problem}\label{sec.5}

In this section, we are going to prove the following theorem, which provides the basic energy estimate
for the effective linear problem \eqref{P2b}.

\begin{theorem}\ \label{thm.2}
	Assume that the stationary solution \eqref{RVS0} satisfies \eqref{H1} and \eqref{H2}.
	Assume further that the basic state $\big(\mathring{U}^{\pm},\mathring{\Phi}^{\pm}\big)$ satisfies \eqref{bas.c1}--\eqref{bas.eq}.
	Then there exist constants $K_0>0$ and $\gamma_0\geq 1$ such that,
	if $K\leq K_0$ and $\gamma\geq \gamma_0$,
	then, for all $(\dot{V},\psi)\in H^2_{\gamma}(\Omega)\times H^2_{\gamma}(\mathbb{R}^2)$, the following estimate holds{\rm :}
	\begin{align}
	&\gamma \|\dot{V}\|_{L^2_{\gamma}(\Omega)}^2+\|\mathbb{P}^{\pm}(\mathring{\varphi})\dot{V}^{\pm}\,\!|_{x_2=0}\|_{L^2_{\gamma}(\mathbb{R}^2)}^2
	+\|\psi\|_{H^1_{\gamma}(\mathbb{R}^2)}^2\notag \\
	&  \lesssim  \gamma^{-3}\big\|\mathbb{L}'_e\big(\mathring{U}^{\pm},\mathring{\Phi}^{\pm}\big)\dot{V}^{\pm}\big\|_{L^2(H_{\gamma}^1)}^2
	+\gamma^{-2}\big\|\mathbb{B}_e'\big(\mathring{U}^{\pm},\mathring{\Phi}^{\pm}\big)(\dot{V}|_{x_2=0},\psi)\big\|_{H^1_{\gamma}(\mathbb{R}^2)}^2.
	\label{thm.2e1}
	\end{align}
\end{theorem}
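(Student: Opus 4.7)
The plan is to mimic the structure of Section 3 but now in the variable coefficient setting, using paradifferential calculus to localize the analysis on the symbols already computed for the frozen coefficient problem. First I would perform a partial homogenization analogous to \S 3.1.2: by solving an auxiliary maximally dissipative boundary value problem for $\mathbb{L}'_e$ with boundary condition picking out $W_3^+ = W_2^- = 0$, I can reduce to the case $\mathbb{L}'_e \dot V^\pm = 0$ in the interior at the price of controlling the source term $f^\pm$ in $L^2(H^1_\gamma)$. Applying the change of variables \eqref{W.def} transforms the system into \eqref{P2c}--\eqref{P2c.2} with constant rank diagonal boundary matrix $\bm I_2$. Paralinearizing the operators $\bm A_0^\pm \partial_t + \bm A_1^\pm \partial_1$ (with parameter $\gamma$) and the boundary symbol $\bm B$ and $\mathring b$, I reduce \eqref{P2c} to a paradifferential equation whose principal symbol coincides, pointwise in $(t,x_1)$, with the symbol $\mathcal A(\tau,\eta)$ of \eqref{A.cal} evaluated at the frozen coefficients. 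The zero-order commutators and the symbol of $\bm C^\pm$ produce error terms in $L^2(H^0_\gamma)$, and the smallness of $K$ in \eqref{bas.c2} combined with large $\gamma$ will let us absorb them.

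Second, I would microlocalize in the cotangent variables $(\tau,\eta) = (\gamma + \mathrm i\delta,\eta)$. Using the finite covering $\{\mathscr V_1,\dots,\mathscr V_J\}$ of $\Xi_1$ furnished by Lemmas \ref{lem1.E1}--\ref{lem1.E2}, I would apply a symbolic partition of unity $\{\chi_j\}$ and construct, in each neighborhood, a paradifferential version of the block triangularization \eqref{key1}. On zones where the (frozen) Lopatinski determinant is bounded away from zero (the elliptic--interior and elliptic--boundary regions), the argument in the proof of Theorem \ref{thm.1} gives a direct estimate $\|\chi_j \dot V^{\mathrm{nc}}|_{x_2=0}\| \lesssim \gamma^{-1}\|\chi_j \widehat g\|$ at the symbolic level. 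In the glancing/hyperbolic zones near the poles $\tau = \pm\mathrm i \bar v \eta$ of $\mathcal A$, I would build a paradifferential Kreiss symmetrizer as in \cite{CS04MR2095445}, adapted to the new symbol. The loss of one derivative on $g$ and three derivatives on $f$ arises exactly because, near the critical roots $\tau = \pm \mathrm i z_1 \eta$ and $\tau = 0$, the lower bound \eqref{E1.e0} degenerates as $\gamma$, producing a factor $\gamma^{-1}$ per estimate which, when combined with the Plancherel conversion between $\gamma$ and $(\tau,\eta)$ weights, yields precisely the powers in \eqref{thm.2e1}.

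The hard part, which is also the main novelty flagged in the introduction, is absorbing the paradifferential error terms in a neighborhood of the critical set $\mathcal C := \{(\tau,\eta)\in\Xi_1 : \tau \in \{0,\pm\mathrm i z_1\eta\}\}$. Unlike in the constant coefficient case, the commutators from microlocalization produce terms of the form $[\chi_j, \text{op}(\mathcal A)] W^{\mathrm{nc}}$ which are of order zero but lose the degeneracy of the symmetrizer; a naive estimate absorbs them only if one can exploit the fact that they become small along the bicharacteristics emanating from $\mathcal C$. My plan is to construct nonnegative weight functions $\sigma_q(t,x_1,\tau,\eta)$, for $q \in \{0,\pm z_1\}$, homogeneous of degree $0$, that vanish \emph{exactly} on the bicharacteristic curves of the frozen operators issuing from the points where $\tau = \mathrm i q \eta$, and then establish a \emph{weighted} energy estimate
\begin{equation*}
\gamma\, \VERT \text{op}(\sigma_q) \mathsf W \VERT^2 + \|\text{op}(\sigma_q) \mathsf W|_{x_2=0}\|^2 \lesssim \text{(source)},
\end{equation*}
whose weight compensates the degeneracy of the symmetrizer. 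The transport equation satisfied by $\sigma_q$ along the bicharacteristic flow, together with Gårding's inequality for paradifferential operators, is what allows the commutator error terms to be swallowed by the weighted norm.

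Finally, summing the microlocal estimates over $j$ using $\sum \chi_j^2 = 1$, inverting the paradifferential change of unknown, and returning from $W^{\mathrm{nc}}$ to $\dot V^{\pm}$ via \eqref{P2d.id1}, I obtain the boundary control of $\mathbb P^\pm(\mathring\varphi)\dot V^\pm|_{x_2=0}$. The front $\psi$ is recovered from the boundary operator by ellipticity of the symbol $b(\tau,\eta)$ as in the elimination argument producing \eqref{front.e}, now carried out paradifferentially using the basic state $\mathring b$ instead of $\bar b$; the smallness of $\mathring b - \bar b$ (controlled by $K$) ensures the ellipticity persists. Combining the interior estimate from the partial homogenization with the boundary and front estimates, and choosing $K_0$ small and $\gamma_0$ large to absorb all lower order contributions, yields \eqref{thm.2e1}.
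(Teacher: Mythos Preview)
Your proposal captures the essential architecture of the paper's proof---paralinearization, microlocal decomposition into zones (away from poles and critical set, near poles, near critical set), weight functions along bicharacteristics to absorb commutator errors, and paradifferential elimination of the front via ellipticity of $\bm b$. However, two points deserve correction.

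First, the paper does \emph{not} perform a partial homogenization in the variable coefficient setting. Instead it uses the Friedrichs symmetrizer estimate (Lemma~\ref{lem.2a}) to control $\gamma\|\dot V\|_{L^2_\gamma}^2$ by the source plus the trace, and then carries the interior source term $F$ through all the microlocal estimates (see the reduced problem \eqref{id.P1}). Your homogenization route would also work, but it is not what is done.

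Second, and more substantively, your plan to export the block triangularization \eqref{key1} and the argument of \S\ref{sec3.proof} to the paradifferential setting has a gap. That constant coefficient argument relies on the \emph{exact} ODE $\tfrac{d}{dx_2}\mathsf W_2 = -\omega_+\mathsf W_2$ to conclude $\mathsf W_2\equiv 0$; after paralinearization this becomes a paradifferential equation with zero-order remainders and the conclusion fails. The paper therefore abandons the triangularization in the ``good'' zone $\{\chi_u\}$ and instead constructs genuine Kreiss symmetrizers (Lemmas~\ref{Kreiss2}--\ref{Kreiss1}), including a delicate Case~2 treatment at the non-diagonalizable points of $\Upsilon_{nd}$ where $\omega_\pm$ collide---points your triangularization cannot handle. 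The diagonalization idea \emph{is} used, but only near the poles (\S5.5) and the critical set (\S5.6), combined with direct energy estimates exploiting the sign of $\RE\omega_\pm$.

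Finally, the weight functions $\sigma_\pm^q$ in the paper are complex-valued and homogeneous of degree~$1$ (not nonnegative of degree~$0$), defined as solutions of the transport equations \eqref{sigma.eq} with initial data $\sigma_q = -\mathrm i\gamma + (\delta - \eta\mathring z_q)$ on $\{x_2=0\}$; there is one weight for each eigenvalue $\omega_+,\omega_-$. The crucial property is \eqref{key_error}: $|\sigma_\pm^q|$ is bounded below on $\{\chi_c^q<1\}\cap\mathscr V_c^q$, which is what allows the decomposition of the commutator symbol $r$ in \eqref{error.e4} and the weighted estimate of Lemma~\ref{lem.error} to close.
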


\begin{remark}\
	Since the Lopatinski\u{\i} determinant associated with problem \eqref{P2b} admits the roots on the  boundary of frequency space,
	the energy estimate \eqref{thm.2e1} has a loss of regularity of the solution with respect to the source terms.
	Furthermore, there is a loss of control on the traces of the solution in  \eqref{thm.2e1},
	which is mainly owing to the fact that \eqref{P2b} is a characteristic boundary problem.
\end{remark}

We notice that systems \eqref{P2b.1} are symmetrizable hyperbolic with the Friedrichs symmetrizers
$S_2\big(\mathring{U}^{\pm}\big)$ for operators $\mathbb{L}'_e\big(\mathring{U}^{\pm},\mathring{\Phi}^{\pm}\big)$,
where function $S_2(U)$ is defined in \eqref{Sym}.
By virtue of \eqref{bas.eq.1}, we compute
\begin{align*}
&S{_2}(\mathring{U}^{\pm})\widetilde{A}_2(\mathring{U}^{\pm},\mathring{\Phi}^{\pm})\\
&=\frac{1}{\partial_2\mathring{\Phi}^{\pm}}S{_2}(\mathring{U}^{\pm})\big(A_2(\mathring{U}^{\pm})-\partial_t\mathring{\Phi}^{\pm} A_0(\mathring{U}^{\pm})
-\partial_1\mathring{\Phi}^{\pm} A_1(\mathring{U}^{\pm})\big)\\
&=\frac{1}{\partial_2\mathring{\Phi}^{\pm}}
\begin{pmatrix}
0&-\mathring{N}^{\pm}\mathring{c}^2_{\pm}\mathring{\varrho}_{\pm}& \mathring{N}^{\pm}\mathring{c}^2_{\pm}\mathring{\varsigma}_{\pm}\\
-\mathring{N}^{\pm}\mathring{c}^2_{\pm}\mathring{\varrho}_{\pm}&0&0\\
\mathring{N}^{\pm}\mathring{c}^2_{\pm}\mathring{\varsigma}_{\pm}&0&0
\end{pmatrix},
\end{align*}
where $\mathring{\varrho}_{\pm}$ and $\mathring{\varsigma}_{\pm}$ are defined in \eqref{m.n.ring}.
Multiplying \eqref{P2b.1} by the Friedrichs symmetrizers $S{_2}(\mathring{U}^{\pm})$ and employing integration
by parts yield the following lemma:

\begin{lemma}\ \label{lem.2a}
	There exists a constant $\gamma_0\geq 1$ such that, for all $\gamma\geq \gamma_0$, the following estimate holds{\rm :}
	\begin{align*}
	\gamma\|\dot{V}^{\pm}\|_{L^2_\gamma(\Omega)}^2\lesssim
	\gamma^{-1}\big\|\mathbb{L}'_e\big(\mathring{U}^{\pm},\mathring{\Phi}^{\pm}\big)\dot{V}^{\pm}\big\|_{L^2_\gamma(\Omega)}^2
	+\|\mathbb{P}^{\pm}(\mathring{\varphi})\dot{V}^{\pm}\,\!|_{x_2=0}\|_{L^2_\gamma(\mathbb{R}^2)}^2.
	\end{align*}
\end{lemma}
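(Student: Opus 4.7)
The plan is a classical Friedrichs-type energy estimate for the symmetric hyperbolic system $\mathbb{L}'_e(\mathring{U}^\pm,\mathring{\Phi}^\pm)\dot V^\pm = f^\pm$, exploiting the explicit boundary computation displayed just before the lemma. First I left-multiply the interior equation by the Friedrichs symmetrizer $S(\mathring{U}^\pm)$, so that $SA_0$, $SA_1$, and $S\widetilde A_2$ are symmetric with $SA_0$ uniformly positive definite; by the hypothesis \eqref{bas.c2} their entries lie in $W^{1,\infty}(\Omega)$ with $L^\infty$ bounds controlled by $K$, provided $K_0$ is small so that $(\mathring{U}^\pm,\mathring{\Phi}^\pm)$ stays in the domain where $S$ is smooth and coercive. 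Pairing the symmetrized equation with $\mathrm{e}^{-2\gamma t}\dot V^\pm$ in $L^2(\Omega)$, taking real parts, and integrating by parts in $t$, $x_1$, and $x_2$ yields
\[
2\gamma\,\llangle SA_0\dot V^\pm,\dot V^\pm\rrangle_\gamma - \langle S\widetilde A_2\dot V^\pm,\dot V^\pm\rangle_\gamma\big|_{x_2=0} + \llangle \mathcal R^\pm \dot V^\pm,\dot V^\pm\rrangle_\gamma = 2\llangle Sf^\pm,\dot V^\pm\rrangle_\gamma,
\]
where $\langle\cdot,\cdot\rangle_\gamma$ denotes the $\mathrm{e}^{-2\gamma t}$-weighted scalar product and $\mathcal R^\pm$ collects $S\mathcal C(\mathring U^\pm,\mathring\Phi^\pm)$ together with the divergence of $(SA_0,SA_1,S\widetilde A_2)$, which is $L^\infty$-bounded by $K$.

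The first term on the left controls $\gamma\|\dot V^\pm\|_{L^2_\gamma(\Omega)}^2$ by coercivity of $SA_0$. The $\mathcal R^\pm$-term is bounded by $C(K)\|\dot V^\pm\|_{L^2_\gamma(\Omega)}^2$ and so is absorbed into the first once $\gamma\geq\gamma_0$ is taken large enough. The source term is handled by Young's inequality, producing the $\gamma^{-1}\|f^\pm\|_{L^2_\gamma(\Omega)}^2$ contribution on the right and an absorbable $\gamma\|\dot V^\pm\|_{L^2_\gamma(\Omega)}^2$ term. This reduces everything to the treatment of the boundary quadratic form.

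The key point is to identify that form using the formula for $S(\mathring U^\pm)\widetilde A_2(\mathring U^\pm,\mathring\Phi^\pm)$ shown just above the lemma. A direct computation gives
\[
\big(S\widetilde A_2\dot V^\pm\big)\cdot\dot V^\pm\big|_{x_2=0}
=\frac{2\mathring N^\pm(\mathring c_\pm)^2}{\partial_2\mathring\Phi^\pm}\,\dot V^\pm_1\bigl(\mathring\varsigma_\pm\dot V^\pm_3-\mathring\varrho_\pm\dot V^\pm_2\bigr)\Big|_{x_2=0}.
\]
By the very definition \eqref{P.bb}, the two factors are precisely $(\mathbb P^\pm(\mathring\varphi)\dot V^\pm)_1$ and $(\mathbb P^\pm(\mathring\varphi)\dot V^\pm)_2$, and the coefficient is $L^\infty$-bounded thanks to $\partial_2\mathring\Phi^\pm\geq\kappa_0>0$ and the uniform bounds on $\mathring N^\pm,\mathring c_\pm$. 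Hence Cauchy--Schwarz yields
\[
\bigl|\langle S\widetilde A_2\dot V^\pm,\dot V^\pm\rangle_\gamma\big|_{x_2=0}\bigr|\lesssim \bigl\|\mathbb P^\pm(\mathring\varphi)\dot V^\pm\!\mid_{x_2=0}\bigr\|_{L^2_\gamma(\mathbb R^2)}^2,
\]
which, combined with the previous paragraph, produces the stated estimate after choosing $K_0$ small and $\gamma_0$ large.

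There is no serious obstacle: the estimate works because the characteristic boundary matrix $\widetilde A_2|_{x_2=0}$ has a nontrivial kernel (spanned by the first column of $R(\mathring U^\pm,\mathring\Phi^\pm)$ in \eqref{T}), so that $(S\widetilde A_2\dot V^\pm)\cdot\dot V^\pm|_{x_2=0}$ automatically depends only on the noncharacteristic combination $\mathbb P^\pm(\mathring\varphi)\dot V^\pm$. This is precisely the manifestation of the constant-rank property of the boundary matrix emphasized in the introduction, and it is what motivates the very definition of $\mathbb P^\pm(\mathring\varphi)$ and the loss of control on a scalar trace in the right-hand side of \eqref{thm.2e1}.
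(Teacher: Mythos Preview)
Your proof is correct and follows exactly the approach the paper indicates: multiply by the Friedrichs symmetrizer $S(\mathring U^\pm)$, integrate by parts, and use the explicit form of $S\widetilde A_2$ displayed just before the lemma to recognize that the boundary quadratic form depends only on $\mathbb P^\pm(\mathring\varphi)\dot V^\pm$. The paper gives only a one-line sketch (``multiplying \ldots\ by the Friedrichs symmetrizers \ldots\ and employing integration by parts''), and you have filled in precisely the intended details.
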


To prove Theorem {\rm\ref{thm.2}}, it remains to deduce the desired energy estimate for the discontinuity front $\psi$
and the traces of $\mathbb{P}^{\pm}(\mathring{\varphi})\dot{V}^{\pm}$ on $\{x_2=0\}$
in terms of the source terms in the interior domain and on the boundary.

Introducing $\widetilde{W}^{\pm}:=\mathrm{e}^{-\gamma t}W^{\pm}$, system \eqref{P2c} equivalently reads
\begin{align}
\notag
&\mathcal{L}^{\gamma}_{\pm}\widetilde{W}^{\pm}\\
\label{P2d.1}&:=
\gamma \bm{A}_0^{\pm}\widetilde{W}^{\pm}+
\bm{A}_0^{\pm}\partial_t \widetilde{W}^{\pm}+\bm{A}_1^{\pm}\partial_1 \widetilde{W}^{\pm}
+\bm{I}_2\partial_2\widetilde{W}^{\pm}+\bm{C}^{\pm}\widetilde{W}^{\pm}
=\mathrm{e}^{-\gamma t}F^{\pm}.
\end{align}
We also introduce  $\widetilde{W}:=(\widetilde{W}^+,\widetilde{W}^-)^{\mathsf{T}}$,  $\widetilde{\Psi}^{\pm}:=\mathrm{e}^{-\gamma t}\Psi^{\pm}$,
and $\widetilde{\psi}:=\mathrm{e}^{-\gamma t}\psi$.
Then the boundary conditions \eqref{P2c.2} are equivalent to
\begin{subequations}\label{P2d.2}
	\begin{alignat}{3}
	&\mathcal{B}^{\gamma}(\widetilde{W},\widetilde{\psi}):=
	\gamma\bm{b}_0\widetilde{\psi}+\mathring{b}\nabla\widetilde\psi+b_{\sharp}\widetilde\psi+\bm{B} \widetilde{W} =\mathrm{e}^{-\gamma t}g\qquad  &\mathrm{if}\ x_2=0, \label{P2d.2b}\\
	& \widetilde\Psi^+=\widetilde\Psi^-=\widetilde\psi\quad  &\mathrm{if}\ x_2=0, \label{P2d.2a}
	\end{alignat}
\end{subequations}
where $\bm{b}_0:=(0,1,0)^{\mathsf{T}}$. In view of  \eqref{P2d.id1}, we obtain the estimate:
\begin{align}\label{P2d.e1}
\|\mathbb{P}^{\pm}(\mathring{\varphi})\dot{V}^{\pm}\,\!|_{x_2=0}\|_{L^2_\gamma(\mathbb{R}^2)}\lesssim \left\|W^{\mathrm{nc}}\,\!|_{x_2=0}\right\|_{L^2_\gamma(\mathbb{R}^2)}
\lesssim \big\|\widetilde{W}^{\mathrm{nc}}\,\!|_{x_2=0}\big\|,
\end{align}
where $\widetilde{W}^{\mathrm{nc}}:=\big(\widetilde{W}_2^{+},\widetilde{W}_3^{+},\widetilde{W}_2^{-},\widetilde{W}_3^{-}\big)^{\mathsf{T}}$.
By virtue of \eqref{P2d.e1} and Lemma \ref{lem.2a},
we obtain that Theorem {\rm\ref{thm.2}} admits the following equivalent proposition.

\begin{proposition}\  \label{pro2}
	Assume that the stationary solution \eqref{RVS0} satisfies \eqref{H1} and \eqref{H2}.
	Assume further that the basic state $\big(\mathring{U}^{\pm},\mathring{\Phi}^{\pm}\big)$
	satisfies \eqref{bas.c1}--\eqref{bas.eq}.
	Then there exist some constants $K_0>0$ and $\gamma_0\geq 1$ such that,
	if $K\leq K_0$ and $\gamma\geq \gamma_0$,
	then the following estimate holds: for all $(W,\psi)\in H^2(\Omega)\times H^2(\mathbb{R}^2)$,
	\begin{align} \label{P2d.e2}
	\left\|W^{\mathrm{nc}}\,\!|_{x_2=0}\right\|^2+\|\psi\|_{1,\gamma}^2
	\lesssim \gamma^{-3}\VERT \mathcal{L}^{\gamma}_{\pm}W^{\pm}\VERT_{1,\gamma}^2
	+\gamma^{-2}\|\mathcal{B}^{\gamma}(W^{\mathrm{nc}}\,\!|_{x_2=0},\psi)\|_{1,\gamma}^2,
	\end{align}
	where operators $\mathcal{L}^{\gamma}_{\pm}$ and $\mathcal{B}^{\gamma}$
	are given by \eqref{P2d.1} and \eqref{P2d.2b}, respectively.
\end{proposition}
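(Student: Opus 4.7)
The plan is to paralinearize the problem \eqref{P2d.1}--\eqref{P2d.2}, reduce the analysis microlocally to a family of constant-coefficient problems of the type treated in Theorem \ref{thm.1}, construct Kreiss' symmetrizers in each microlocal zone, and patch the resulting estimates. Since $\bm A_0^\pm,\bm A_1^\pm\in W^{2,\infty}$ and $\bm C^\pm\in W^{1,\infty}$ with amplitudes controlled by $K$, one first rewrites \eqref{P2d.1} modulo $L^2$--error terms of size $K\|\widetilde W\|$ (absorbable for $K$ small after multiplication by $\gamma^{-1/2}$) as
\begin{equation*}
\partial_2\widetilde W^\pm + T_{\mathcal{A}^\pm}\widetilde W^\pm = \widetilde F^\pm,\qquad
T_{\bm\beta}\widetilde W^{\mathrm{nc}}+T_{\bm b}\widetilde\psi = \widetilde g\ \text{ on }x_2=0,
\end{equation*}
where $T_\sigma$ denotes the paradifferential operator of symbol $\sigma$, $\mathcal{A}^\pm(t,x_1,\tau,\eta)$ is obtained exactly as in \S\ref{sec3.1} after eliminating the first and fourth (characteristic) components via the algebraic relation corresponding to \eqref{algeb}, and $(\bm b,\bm\beta)$ are the analogues of \eqref{b.M.bar}, \eqref{beta.C} with $(\bar v,\bar c,\bar h,\widebar\varGamma)$ replaced by their basic-state counterparts.

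At every frozen point $(t,x_1)$ the symbols $\mathcal{A}^\pm,\bm\beta,\bm b$ are small perturbations of those in \S\ref{sec3.1}. Consequently Lemmas~\ref{lem.eig1}--\ref{lem.Lop} persist locally: away from a finite set of characteristic frequencies, $\mathcal{A}^\pm$ is diagonalizable with a smooth splitting into stable/unstable eigenspaces, and the Lopatinski\u{\i} determinant $\Delta(t,x_1,\tau,\eta)$ vanishes only on a $C^\infty$--critical manifold $\mathcal{C}:=\{\tau=\mathrm{i}q(t,x_1)\eta: q\in\{0,\pm z_1(t,x_1)\}\}\subset\Xi_1$, each root being simple. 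I would cover $\Xi_1\times\mathrm{supp}(\mathring V,\mathring\Psi)$ by a finite family of conic open sets $\{\mathscr V_j\}$ of three types: (I) interior hyperbolic/mixed zones away from both $\mathcal{C}$ and the glancing set $\{\RE\omega_\pm=0,\,\RE\tau=0\}$; (II) glancing zones where $\mathcal{A}^\pm$ ceases to be diagonalizable; (III) neighborhoods of points of $\mathcal{C}$. A $C^\infty$ partition of unity $\chi_j^2$ homogeneous of degree $0$ is then used to localize.

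In zones of type (I) the uniform Kreiss--Lopatinski\u{\i} condition holds locally, so standard smooth Kreiss symmetrizers $\mathcal S_j(\tau,\eta)$ satisfying $\RE(\mathcal S_j\mathcal A^\pm)\gtrsim\gamma\mathrm{Id}$ and $\mathcal S_j-C\bm\beta^*\bm\beta\gtrsim\mathrm{Id}$ on $\ker\bm\beta$ yield an estimate with no loss. In glancing zones (II), the Jordan block structure forces a weight $\gamma^{-1/2}$ on the noncharacteristic trace; the relevant symmetrizers are already in \cite{CS04MR2095445,C04MR2069632}. The heart of the matter is type (III): by the simplicity of the roots of $\Delta$ proved in Lemma~\ref{lem.Lop} and the lower bound $|\Delta|\gtrsim\gamma$ of Lemma~\ref{lem1.E1}, one can construct \emph{degenerate} Kreiss symmetrizers producing the $\gamma^{-3/2}\|\mathcal L^\gamma_\pm W^\pm\|_{1,\gamma}+\gamma^{-1}\|\mathcal B^\gamma\|_{1,\gamma}$ bound with the loss of one derivative. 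Via G\aa rding's inequality for paradifferential operators and Plancherel, these local estimates combine to give \eqref{P2d.e2} up to commutator and remainder terms.

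The main obstacle, and the reason for the weight functions advertised in the introduction, is the control of the microlocalization commutators $[T_\chi,T_{\mathcal A^\pm}]$ and of the lower-order paradifferential remainders: these produce error terms that, near the critical manifold $\mathcal C$, cannot be absorbed by the weak symmetrizers of type (III) because the latter control only $\gamma\|W\|$ with a loss. Following the strategy of \cite{CHW17Adv} (already used in Lemma~\ref{lem1.E1}), I would introduce weight functions $\varpi_q(t,x_1,\tau,\eta)$ which are homogeneous of degree $0$, vanish exactly on the bicharacteristic curves of $\mathcal L^\gamma_\pm$ issuing from the critical set $\mathcal C$, and are elliptic elsewhere. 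Multiplying the localized system by $T_{\varpi_q}^*T_{\varpi_q}$, the key point is that the Poisson bracket $\{\varpi_q,\tau\bm A_0^\pm+\mathrm{i}\eta\bm A_1^\pm+\mathrm{i}\mathcal A^\pm\}$ vanishes on the characteristic set modulo acceptable terms, so that the commutators actually lose only what the degenerate symmetrizer can give back. Once these weighted estimates in each zone are summed with $\sum\chi_j^2=1$, the front $\psi$ is recovered by the elliptic equation on $\widetilde\psi$ coming from \eqref{front.e} transported to the paradifferential setting, and \eqref{P2d.e2} follows for $K_0$ small enough and $\gamma_0$ large enough. Theorem \ref{thm.2} is then equivalent to Proposition \ref{pro2} via Lemma~\ref{lem.2a} and \eqref{P2d.e1}.
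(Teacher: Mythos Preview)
Your outline captures the correct high-level strategy (paralinearization, microlocalization, degenerate Kreiss symmetrizers near the critical set, weight functions propagated along bicharacteristics to absorb commutator errors), and is close in spirit to the paper's \S\ref{sec.5}. But there is a genuine gap in your zone decomposition.

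You eliminate the characteristic components $W_1^\pm$ via the analogue of \eqref{algeb}, producing the reduced symbol $\mathcal A^\pm$. That elimination divides by $b_{11}^\pm=\mathbb F_1^\pm(\tau+\mathrm i\mathring v_1^\pm\eta)$, which vanishes on the \emph{pole set} $\Upsilon_p^\pm=\{\tau=-\mathrm i\mathring v_1^\pm\eta\}$; the resulting $\mathcal A^\pm$ is singular there. Your three-type covering (hyperbolic/mixed, glancing, critical) does not account for this: the poles lie away from both the critical manifold $\mathcal C$ and the glancing set $\Upsilon_{nd}$, so they fall into your type~(I), where you propose to use standard smooth Kreiss symmetrizers on $\mathcal A^\pm$. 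That is not possible, because the symbol blows up. The paper treats this as a separate zone (\S 5.5): one must go back to the \emph{unreduced} paralinearized system \eqref{p.p2.a}--\eqref{p.p2.b}, build explicit upper/lower triangularizing matrices $R_\pm,L_\pm$ (so that $L_\pm(\tau\bm A_0^\pm+\mathrm i\eta\bm A_1^\pm)R_\pm$ is block-triangular with regular entries through the pole), and derive the trace estimate directly from that form. This is not a cosmetic issue: without the pole analysis the partition of unity cannot cover $\Xi_1$, and the patched estimate \eqref{p.p2.e0} does not close.

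Two smaller points. First, your attribution of the weight-function mechanism to \cite{CHW17Adv} is off: that reference supplies the triangularization trick used in Lemma~\ref{lem1.E2} for the \emph{constant coefficient} case; the weights $\sigma_\pm^q$ solving the transport equations \eqref{sigma.eq} and the absorption argument come from \cite{C04MR2069632,CS04MR2095445}. Second, the role of the weights is not quite ``the Poisson bracket vanishes on the characteristic set'': rather, $\sigma_\pm^q$ is constant along bicharacteristics so that $\partial_2\sigma_\pm^q+\{\sigma_\pm^q,\IM\omega_\pm\}=0$ exactly, which makes the principal commutator in the weighted energy identity a genuine zero-th order term; combined with the ellipticity $|\sigma_\pm^q|\geq c$ off the bicharacteristics (your $\varpi_q$), this is what allows the decomposition of the error symbol $r$ and the absorption in \eqref{error.e4}--\eqref{error.e1}.
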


In the rest of this section, we give the proof of Proposition \ref{pro2}.

\subsection{Paralinearization}
We now perform the paralinearization of the interior equations and the boundary conditions.

\subsubsection{Some results on paradifferential calculus}
For self-containedness, we list some definitions and results about paradifferential calculus
with a parameter that will be used in this paper.
See \cite[Appendix\;C]{B-GS07MR2284507} and the references cited therein for the rigorous proofs.

\begin{definition} For any $m\in\mathbb{R}$ and $k\in\mathbb{N}$, we define the following{\rm :}
	\begin{list}{}{\setlength{\parsep}{\parskip}
			\setlength{\itemsep}{0.1em}
			\setlength{\labelwidth}{2em}
			\setlength{\labelsep}{0.4em}
			\setlength{\leftmargin}{2.2em}
			\setlength{\topsep}{1mm}
		}
		\item[\rm{(i)}] A function $a(x,\xi,\gamma):\mathbb{R}^2\times\mathbb{R}^2\times[1,\infty)\to\mathbb{C}^{N\times N}$
		is called a paradifferential symbol of degree $m$ and regularity $k$ if $a$ is $C^{\infty}$ in $\xi$ and,
		for each $\alpha\in\mathbb{N}^2$, there exists a positive constant $C_{\alpha}$ such that
		\begin{align*}
		\|\partial_{\xi}^{\alpha}a(\cdot,\xi,\gamma)\|_{W^{k,\infty}(\mathbb{R}^2)}\leq C_{\alpha}\lambda^{m-|\alpha|,\gamma }(\xi)
		\qquad\textrm{for all }(\xi,\gamma)\in \mathbb{R}^2\times[1,\infty),
		\end{align*}
		where $\lambda^{s,\gamma }(\xi):=(\gamma^2+|\xi|^2)^{s/2}$ for $s\in\mathbb{R}$.
		
		\item[\rm{(ii)}] ${\mathrm{\Gamma}^m_k}$ denotes the set of paradifferential symbols of degree $m$ and regularity $k$.
		We denote by $\alpha_m$ a generic symbol in the class ${\mathrm{\Gamma}^m_1}$.
		
		\item[\rm{(iii)}] We say that a family of operators $\{P^{\gamma}\}_{\gamma\geq 1}$ is of order $\leq m$,
		if,  for every $s\in\mathbb{R}$ and $\gamma\geq 1$, there exists a constant $C(s,m)$ independent of $\gamma$ such that
		\begin{align*}
		\|P^{\gamma}u\|_{s,\gamma}\leq C(s,m)\|u\|_{s+m,\gamma}\qquad\textrm{for all } u\in H^{s+m}.
		\end{align*}
		We use $\mathcal{R}_m$ to denote a generic family of operators of order $\leq m$.
		
		\item[\rm{(iv)}]
		For $s\in\mathbb{R}$, operator $\Lambda^{s,\gamma}$ is defined in such a way that
		\begin{align*}
		\Lambda^{s,\gamma}u(x)
		:=\frac{1}{(2\pi)^2}\int_{\mathbb{R}^2}\mathrm{e}^{\mathrm{i}x\cdot\xi}\lambda^{s,\gamma}(\xi)\widehat{u}(\xi)\mathrm{d}\xi
		\end{align*}
		for all $u$ in the Schwartz class $\mathcal{S}$.
		
		\item[\rm{(v)}] To any symbol $a\in {\mathrm{\Gamma}_0^m}$,
		we associate the family of paradifferential operators $\{T^{\gamma}_a\}_{\gamma\geq 1}$ defined in such a way that
		\begin{align*}
		T_a^{\gamma} u(x):=
		\frac{1}{(2\pi)^2}\int_{\mathbb{R}^2}\int_{\mathbb{R}^2}\mathrm{e}^{\mathrm{i}x\cdot\xi}K^{\psi}(x-y,\xi,\gamma)a(y,\xi,\gamma)
		\widehat{u}(\xi)\,\mathrm{d}y\mathrm{d}\xi
		\end{align*}
		for all $u\in\mathcal{S}$.
		In the last expression, $K^{\psi}(\cdot,\xi,\gamma)$ is the inverse Fourier transform of $\psi(\cdot,\xi,\gamma)$
		with $\psi$ given by
		\begin{align*}
		\psi(x,\xi,\gamma):=\sum_{q\in\mathbb{N}}\chi(2^{2-1}x,0)\phi(2^{-q}\xi,2^{-q}\gamma),
		\end{align*}
		where $\phi(\xi,\gamma):=\chi(2^{-1}\xi, 2^{-1}\gamma)-\chi(\xi,\gamma)$,
		and $\chi$ is a $C^{\infty}$--function on $\mathbb{R}^3$ such that
		\begin{align*}
		\chi(z)\geq \chi(z')\ \  \mathrm{if}\ |z|\leq |z'|,\qquad
		\chi(z)=\begin{cases}
		1\qquad &\mathrm{if}\ |z|\leq \frac{1}{2},\\[0.5mm]
		0\qquad &\mathrm{if}\ |z|\geq 1.
		\end{cases}
		\end{align*}
	\end{list}
\end{definition}

\begin{lemma}\ \label{lem.para1}
The following statements hold:
	\begin{list}{}{\setlength{\parsep}{\parskip}
			\setlength{\itemsep}{0.1em}
			\setlength{\labelwidth}{2em}
			\setlength{\labelsep}{0.4em}
			\setlength{\leftmargin}{2.2em}
			\setlength{\topsep}{1mm}
		}
		\item[\emph{(i)}]
		If $a\in W^{1,\infty}(\mathbb{R}^2)$, $u\in L^2(\mathbb{R}^2)$, and $\gamma\geq 1$, then
		\begin{align*}
		\gamma \|au-T_a^{\gamma}u\|+ \|a\partial_{j}u-T_{\mathrm{i}\xi_{j}a}^{\gamma}u\|+\|au-T_a^{\gamma}u\|_{1,\gamma}
		\lesssim \|a\|_{W^{1,\infty}(\mathbb{R}^2)}\|u\|.
		\end{align*}
		
		\item[\emph{(ii)}]
		If $a\in W^{2,\infty}(\mathbb{R}^2)$, $u\in L^2(\mathbb{R}^2)$, and $\gamma\geq 1$, then
		\begin{gather*}
		\gamma \|au-T_a^{\gamma}u\|_{1,\gamma}+\|a\partial_{j}u-T_{\mathrm{i}\xi_{j}a}^{\gamma}u\|_{1,\gamma}
		\lesssim \|a\|_{W^{2,\infty}(\mathbb{R}^2)}\|u\|.
		\end{gather*}
		
		\item[\emph{(iii)}]
		If $a\in {\mathrm{\Gamma}_k^m}$, then $T_a^{\gamma}$ is of order $\leq m$.
		In particular, if  $a\in L^{\infty}(\mathbb{R}^2)$ is independent of $\xi$, then
		\[
		\|T_a^{\gamma}u\|_{s,\gamma}\lesssim \|a\|_{L^{\infty}(\mathbb{R}^2)}\|u\|_{s,\gamma}
		\qquad\textrm{for all }s\in\mathbb{R},\,u\in H^{s}(\mathbb{R}^2).
		\]
		\item[\emph{(iv)}]
		If $a\in{\mathrm{\Gamma}_1^{m}}$ and $b\in{\mathrm{\Gamma}_1^{m'}}$, then product $ab\in{\mathrm{\Gamma}_1^{m+m'}}$,
		family $\{T_a^{\gamma}T_b^{\gamma}-T_{ab}^{\gamma} \}_{\gamma\geq 1}$ is of order $\leq m+m'-1$,
		and family
		$\{(T_a^{\gamma})^{*}-T_{a^*}^{\gamma}\}_{\gamma\geq 1}$ is of order $\leq m-1$.
		
		\item[\emph{(v)}]
		If $a\in{\mathrm{\Gamma}_2^{m}}$ and $b\in{\mathrm{\Gamma}_2^{m'}}$, then
		$
		\{T_a^{\gamma}T_b^{\gamma}-T_{ab}^{\gamma}-T_{-\mathrm{i}\sum_{j}\partial_{\xi_j}a\partial_{x_j}b}^{\gamma}\}_{\gamma\geq 1}
		$
		is of order $\leq m+m'-2$.
		
		\item[\emph{(vi)}]
		G{\aa}rding's inequality{\rm :}
		If $a\in{\mathrm{\Gamma}_1^{2m}}$ is a square matrix symbol that satisfies
		\[
		\RE a(x,\xi,\gamma)\geq c(\gamma^2+|\xi|^2)^m I
		\qquad\textrm{for all }(x,\xi,\gamma)\in\mathbb{R}^4\times[1,\infty)
		\]
		for some constant $c$, then there exists $\gamma_0\geq 1$ such that
		\[
		\RE\left\langle T_a^{\gamma}u,u\right\rangle \geq \frac{c}{4}\|u\|^2_{m,\gamma}
		\qquad\textrm{for all $u\in H^m(\mathbb{R}^2)$ and $\gamma\geq \gamma_0$}.
		\]
		
		\item[\emph{(vii)}]
		Microlocalized G{\aa}rding's inequality{\rm :}
		Let $a\in{\mathrm{\Gamma}_1^{2m}}$ be a square matrix symbol and $\chi\in{\mathrm{\Gamma}_1^0}$.
		If there exist a scalar real symbol $\widetilde{\chi}\in{\mathrm{\Gamma}_1^0}$ and a constant $c>0$
		such that $\widetilde{\chi}\geq 0$, $\chi\widetilde{\chi}\equiv\chi$, and
		\[
		\widetilde{\chi}^2(x,\xi,\gamma)\RE a(x,\xi,\gamma)\geq c\widetilde{\chi}^2(x,\xi,\gamma)(\gamma^2+|\xi|^2)^m I
		\]
       for all $(x,\xi,\gamma)\in\mathbb{R}^4\times[1,\infty)$, then there exist  $\gamma_0\geq 1$ and $C>0$ such that
		\[
		\RE\left\langle T_a^{\gamma}T_{\chi}^{\gamma}u,T_{\chi}^{\gamma}u\right\rangle
		\geq \frac{c}{2}\|T_{\chi}^{\gamma}u\|^2_{m,\gamma}-C\|u\|_{m-1,\gamma}^2
		\]
		  for all $u\in H^m(\mathbb{R}^2)$, $\gamma\geq \gamma_0$.
	\end{list}
Here we have used the notation, $\RE B :=(B+B^*)/2$, for any complex square matrix $B$ with $B^*$
being its conjugate transpose.
\end{lemma}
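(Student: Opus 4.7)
The seven assertions are standard paradifferential calculus with parameter $\gamma$, so the plan is to follow the treatment of \cite{B-GS07MR2284507} step by step, using the $\gamma$--dyadic Littlewood--Paley decomposition induced by the cut-offs $\phi(2^{-q}\xi,2^{-q}\gamma)$ and the characterization
\begin{align*}
\|u\|_{s,\gamma}^2\sim \sum_{q\geq 0} 2^{2qs}\|\Delta_q^\gamma u\|^2,
\end{align*}
where $\Delta_q^\gamma$ denotes the dyadic block associated with the $q$-th cut-off. The only nonstandard ingredient with respect to the classical (non-parameter) Bony theory is the careful tracking of the dependence on $\gamma$ so that all constants are uniform for $\gamma\geq 1$; this will be checked at each step.

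For (i) and (ii), I will use Bony's paraproduct decomposition $au=T_a^\gamma u+T_u^\gamma a+R^\gamma(a,u)$ and observe that, when $a\in W^{k,\infty}(\mathbb{R}^2)$ with $k=1$ or $2$, Bernstein's inequality gives $\|\Delta_q^\gamma a\|_{L^\infty}\lesssim 2^{-qk}\|a\|_{W^{k,\infty}}$ in the $\gamma$-dyadic scale. Summing the contributions of $T_u^\gamma a$ and $R^\gamma(a,u)$ against $\|u\|$ then produces the claimed gain of one factor of $\gamma^{-1}$ in (i), and the same gain at one higher regularity level in (ii). The variants involving $\partial_j u$ are handled by writing $a\partial_j u=\partial_j(au)-(\partial_j a)u$ and paralinearizing each factor. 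For (iii), the boundedness $T_a^\gamma:H^{s+m}\to H^s$ follows from the observation that $\Delta_q^\gamma(T_a^\gamma u)$ involves only finitely many blocks of $u$ concentrated in the same range of scales, with operator norm controlled by $\|\partial_\xi^\alpha a(\cdot,\xi,\gamma)\|_{L^\infty}\lesssim \lambda^{m-|\alpha|,\gamma}(\xi)$ for $|\alpha|$ large enough; the special case $a\in L^\infty$ independent of $\xi$ reduces to the classical Coifman--Meyer paraproduct bound.

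For (iv) and (v) the plan is to substitute the integral kernel representations of $T_a^\gamma$ and $T_b^\gamma$, and then expand the symbol of the composition in a Taylor series in the frequency variable around the point where the two spectral cut-offs overlap. At order zero this recovers $T_{ab}^\gamma$; at order one, the expansion yields $T_{-\mathrm{i}\sum_j \partial_{\xi_j}a\,\partial_{x_j}b}^\gamma$, which is the correction in (v); and the remainders, after one or two Taylor steps, have symbols in $\varGamma_1^{m+m'-1}$ (resp.\;$\varGamma_1^{m+m'-2}$), hence yield operators of the stated order thanks to (iii). The adjoint identity in (iv) follows by the same expansion applied to the kernel of $(T_a^\gamma)^*$, whose principal symbol is $a^*$ modulo a remainder of order $m-1$.

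For (vi), the plan is to set $q(x,\xi,\gamma):=\bigl(\RE a(x,\xi,\gamma)-(c/2)\lambda^{2m,\gamma}(\xi)I\bigr)^{1/2}$, which is a well-defined matrix symbol in $\varGamma_1^m$ on the relevant frequency region by the assumed lower bound (and extended smoothly for small frequencies), and to apply (iv) to get
\begin{align*}
\RE\langle T_a^\gamma u,u\rangle
= \|T_q^\gamma u\|^2 + \tfrac{c}{2}\,\RE\langle T_{\lambda^{2m,\gamma}}^\gamma u,u\rangle + \langle R^\gamma u,u\rangle,
\end{align*}
where $R^\gamma$ is of order $\leq 2m-1$. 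The second term is bounded from below by $(c/2-o(1))\|u\|_{m,\gamma}^2$, while the remainder is absorbed by taking $\gamma$ sufficiently large using $\|u\|_{m-1/2,\gamma}\leq \gamma^{-1/2}\|u\|_{m,\gamma}$. The microlocalized inequality (vii) follows by applying (vi) to the symbol $\widetilde\chi\, a\,\widetilde\chi$, which inherits the required positivity from the hypothesis, and then using the identity $\chi\widetilde\chi\equiv\chi$ with (iv) to rewrite $T_{\widetilde\chi}^\gamma T_\chi^\gamma = T_\chi^\gamma$ modulo an operator of order $\leq -1$, whose contribution is absorbed in the $\|u\|_{m-1,\gamma}^2$ term. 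The main technical obstacle will lie in (v), where the second-order Taylor expansion must close despite only two derivatives of regularity being available on the symbols; this is precisely what forces the $\varGamma_2^m$ hypothesis and is treated by the careful double kernel expansion carried out in \cite[Appendix\,C]{B-GS07MR2284507}.
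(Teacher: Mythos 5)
You should first note that the paper does not actually prove this lemma: it is quoted verbatim from the literature, with the reader referred to \cite[Appendix\;C]{B-GS07MR2284507} for (i)--(vi) and to \cite[Theorem\;B.18]{MZ05MR2130346} for (vii). Your outline is essentially a pr\'ecis of the arguments carried out in those references (Bony decomposition plus $\gamma$-dyadic Bernstein estimates for (i)--(iii), kernel/Taylor expansion for the symbolic calculus (iv)--(v), and the square-root trick $q=(\RE a-(c/2)\lambda^{2m,\gamma}I)^{1/2}\in\varGamma_1^m$ for (vi)), so for those parts there is nothing to compare: you are reproducing the standard route the paper delegates to.

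There is, however, a genuine flaw in your sketch of (vii). You propose to ``apply (vi) to the symbol $\widetilde{\chi}\,a\,\widetilde{\chi}$, which inherits the required positivity from the hypothesis.'' It does not: the hypothesis only gives $\widetilde{\chi}^2\RE a\geq c\,\widetilde{\chi}^2(\gamma^2+|\xi|^2)^m I$, so $\RE(\widetilde{\chi}a\widetilde{\chi})$ degenerates to $0$ wherever $\widetilde{\chi}$ vanishes and the uniform lower bound $c(\gamma^2+|\xi|^2)^m I$ required by (vi) fails there. The step as written would therefore not go through. The standard repair, which is exactly what \cite[Theorem\;B.18]{MZ05MR2130346} does, is to apply the global G{\aa}rding inequality to the corrected symbol $\widetilde{\chi}^2 a+c\,(1-\widetilde{\chi}^2)\lambda^{2m,\gamma}I$, which \emph{is} uniformly elliptic from below, and then to remove the added piece using the identity $\chi\widetilde{\chi}\equiv\chi$: since $(1-\widetilde{\chi}^2)\chi\equiv 0$, the composition $T^{\gamma}_{(1-\widetilde{\chi}^2)\lambda^{2m,\gamma}}T^{\gamma}_{\chi}$ is of order $\leq 2m-1$ by (iv), and its contribution is absorbed into the $-C\|u\|_{m-1,\gamma}^2$ term. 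Without this insertion-and-cancellation of the complementary elliptic piece, your argument for (vii) does not close.
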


The reader may find the detailed proof of Lemma \ref{lem.para1} (vii)
in M\'{e}tivier--Zumbrun \cite[Theorem\;B.18]{MZ05MR2130346}.

\subsubsection{Paralinearization of the interior equations}

In view of \eqref{bas.c2} and \eqref{bas.eq.2}, we have the following estimate
for the coefficients of $\mathcal{L}_{\pm}^{\gamma}$ given in \eqref{P2d.1}:
\begin{align*}
\|(\bm{A}_0^{\pm},\bm{A}_1^{\pm})\|_{W^{2,\infty}(\Omega)}+
\|\bm{C}^{\pm}\|_{W^{1,\infty}(\Omega)}\leq C(K,\kappa_0).
\end{align*}
It then follows from  Lemma \ref{lem.para1}\;(ii) that
\begin{align*}
\VERTbig \gamma \bm{A}_{0}^+W^+-T^{\gamma}_{\gamma \bm{A}_{0}^+} W^+\VERTbig_{1,\gamma}^2
&=\int_{0}^{\infty}\|\gamma\bm{A}_{0}^+W^+(\cdot,x_2)-T^{\gamma}_{\gamma\bm{A}_{0}^+} W^+(\cdot,x_2)\|_{1,\gamma}^2\mathrm{d}x_2\\
&\lesssim \int_{0}^{\infty} \|\bm{A}_{0}^+(\cdot,x_2)\|_{W^{2,\infty}(\mathbb{R}^2)}^2\|W^+(\cdot,x_2)\|^2\mathrm{d}x_2\\
&\leq C(K,\kappa_0)\VERT W^+ \VERT^2.
\end{align*}
Similarly, we derive from Lemma \ref{lem.para1}\;(i)--(ii) that
\begin{align*}
&\VERTbig \bm{A}_{0}^+\partial_tW^+-T^{\gamma}_{\mathrm{i}\delta \bm{A}_{0}^+} W^+\VERTbig_{1,\gamma}
\lesssim \|\bm{A}_{0}^+\|_{W^{2,\infty}(\Omega)}\VERT W^+ \VERT\leq C(K,\kappa_0)\VERT W^+ \VERT,\\
&\VERTbig  \bm{A}_{1}^+\partial_1W^+-T^{\gamma}_{\mathrm{i}\eta \bm{A}_{1}^+} W^+\VERTbig_{1,\gamma}
\lesssim \|\bm{A}_{1}^+\|_{W^{2,\infty}(\Omega)}\VERT W^+ \VERT
\leq C(K,\kappa_0)\VERT W^+ \VERT,\\
&\VERTbig  \bm{C}^+ W^+-T^{\gamma}_{  \bm{C}^+} W^+\VERTbig_{1,\gamma}
\lesssim \|\bm{C}^+\|_{W^{1,\infty}(\Omega)}\VERT W^+ \VERT
\leq C(K,\kappa_0)\VERT W^+ \VERT.
\end{align*}
Combining these estimates yields
\begin{align} \label{p.e1a}
\VERTbig  \mathcal{L}^{\gamma}_+W^+-\bm{I}_2\partial_2W^{+}-T^{\gamma}_{\tau \bm{A}_{0}^++\mathrm{i}\eta\bm{A}_1^{+}+\bm{C}^{+}} W^+\VERTbig_{1,\gamma}
\leq C(K,\kappa_0)\VERT W^+ \VERT,
\end{align}
where $\tau=\gamma+\mathrm{i}\delta$, and $\mathcal{L}^{\gamma}_+$ is the linearized operator defined by \eqref{P2d.1}.
We can also obtain the following estimate for the equations on $W^{-}$:
\begin{align}\label{p.e1b}
\VERTbig  \mathcal{L}^{\gamma}_-W^--\bm{I}_2\partial_2W^{-}-T^{\gamma}_{\tau \bm{A}_{0}^-+\mathrm{i}\eta\bm{A}_1^{-}+\bm{C}^{-}} W^-\VERTbig_{1,\gamma}
\leq C(K,\kappa_0)\VERT W^- \VERT.
\end{align}
The paralinearization for the interior equations is thus given as follows:
\begin{align} \label{p.p1a}
T^{\gamma}_{\tau \bm{A}_0^{\pm}+\mathrm{i}\eta\bm{A}_1^{\pm}+\bm{C}^{\pm}}W^{\pm}+\bm{I}_2\partial_2W^{\pm}=\widetilde{F}^{\pm}
\qquad \mathrm{if}\  x_2>0.
\end{align}
Note that the above paralinearized equations do not involve the discontinuity function $\varphi$.

\subsubsection{Paralinearization of the boundary conditions}
According to \eqref{P2d.2b}, we define
\begin{align*}
&\bm{b}_0:=(0,1,0)^{\mathsf{T}},\quad
\bm{b}_1(t,x_1):=\left((\mathring{v}_{1}^+-\mathring{v}_{1}^-)|_{x_2=0},\mathring{v}_{1}^+|_{x_2=0}, 0\right)^{\mathsf{T}},\\
&\bm{b}(t,x_1,\delta,\eta,\gamma):=\tau\bm{b}_0+\mathrm{i}\eta\bm{b}_1(t,x_1)
=
(\mathrm{i}\eta (\mathring{v}_{1}^+-\mathring{v}_{1}^-),
\tau +\mathrm{i}\mathring{v}_{1}^+\eta,
0)^{\mathsf{T}}|_{x_2=0}.
\end{align*}
Since $\bm{b}_0,\bm{b}_1\in W^{2,\infty}(\mathbb{R}^2)$,
we obtain from Lemma \ref{lem.para1} (iii) that
\begin{align} \notag
&\|\gamma\bm{b}_0\psi+\mathring{b}\nabla\psi-T^{\gamma}_{\bm{b}}\psi\|_{1,\gamma}\\
&\notag \quad=\|\gamma\bm{b}_0\psi+\bm{b}_0\partial_t\psi+\bm{b}_1\partial_1\psi-T^{\gamma}_{\bm{b}}\psi\|_{1,\gamma}\\
&\quad \lesssim
\|(\bm{b}_0,\bm{b}_1)\|_{W^{2,\infty}(\mathbb{R}^2)}\|\psi\|\leq C(K)\|\psi\|
\leq C(K)\gamma^{-1}\|\psi\|_{1,\gamma}.
\label{p.e2a}
\end{align}
It follows from \eqref{bas.c2}, \eqref{bas.eq.2}, and \eqref{b.sharp} that
$\|b_{\sharp}\|_{W^{1,\infty}(\mathbb{R}^2)}\leq C(K,\kappa_0)$.
Employing Lemma \ref{lem.para1} (ii)--(iii) yields
\begin{align} \notag
\|b_{\sharp}\psi\|_{1,\gamma}
&\leq
\|b_{\sharp}\psi-T^{\gamma}_{b_{\sharp}}\psi\|_{1,\gamma}+\|T^{\gamma}_{b_{\sharp}}\psi\|_{1,\gamma}\\
&\lesssim \|b_{\sharp}\|_{W^{1,\infty}(\mathbb{R}^2)}\|\psi\|+\|b_{\sharp}\|_{L^{\infty}(\mathbb{R}^2)}\|\psi\|_{1,\gamma}
\leq C(K,\kappa_0)\|\psi\|_{1,\gamma}.
\label{p.e2b}
\end{align}
In light of \eqref{M.bm}, we find that
$\|\bm{B}\|_{W^{2,\infty}(\mathbb{R}^2)}\leq C(K,\kappa_0)$,
and $\bm{B}$ acts only on the noncharacteristic part $W^{\mathrm{nc}}$ of vector $W$.
Hence, we deduce
\begin{align}\notag
&\|\bm{B}W|_{x_2=0}-T^{\gamma}_{\bm{B}}W|_{x_2=0}\|_{1,\gamma}\\
&\lesssim \gamma^{-1}\|\bm{B}\|_{W^{2,\infty}(\mathbb{R}^2)}
\left\|W^{\mathrm{nc}}\,\!|_{x_2=0}\right\|
\leq C(K,\kappa_0)\gamma^{-1}\left\|W^{\mathrm{nc}}\,\!|_{x_2=0}\right\|.
\label{p.e2c}
\end{align}
Combine \eqref{p.e2a}--\eqref{p.e2c} together
to find
\begin{align}
\notag &\|\mathcal{B}^{\gamma}(W|_{x_2=0},\psi)-T^{\gamma}_{\bm{b}}\psi-T^{\gamma}_{\bm{B}}W|_{x_2=0}\|_{1,\gamma}\\
&\quad \leq C(K,\kappa_0)\left(\|\psi\|_{1,\gamma}+\gamma^{-1}\left\|W^{\mathrm{nc}}\,\!|_{x_2=0}\right\|\right).
\label{p.e2}
\end{align}
The paralinearization of the boundary conditions \eqref{P2d.2b} is then given as follows:
\begin{align} \label{p.p1b}
T^{\gamma}_{\bm{b}}\psi+T^{\gamma}_{\bm{B}}W=G\qquad\mathrm{if}\ x_2=0.
\end{align}

\subsubsection{Eliminating the front}
We can eliminate front $\psi$ from the paralinearized boundary conditions \eqref{p.p1b}
as in the constant coefficient case.
For this purpose, we first  notice that symbol $\bm{b}$ is elliptic, which means that, for any $(t,x_1,\delta,\eta,\gamma)\in\mathbb{R}^4\times (0,\infty )$,
\begin{align}
|\bm{b}(t,x_1,\delta,\eta,\gamma)|^2\geq c(K)( \gamma^2+\delta^2+\eta^2).
\label{p.e3a}
\end{align}
To show this estimate, by observing that $\bm{b}$ is homogeneous of degree $1$ with respect
to $(\tau,\eta)$ and that $\Xi_1$ is compact,
we only need to prove
\begin{align*}
|\bm{b}(t,x_1,\delta,\eta,\gamma)|^2>0 \qquad \mathrm{on}\ \Xi_1.
\end{align*}
This estimate follows from the similar property for the constant coefficient case,
by taking the perturbation, $\mathring{V}$, small enough in $L^{\infty}(\Omega)$.

Using \eqref{p.e3a} and the G{\aa}rding inequality (Lemma \ref{lem.para1}\;(vi)), we have
\begin{align*}
\RE\big\langle  T^{\gamma}_{\bm{b}^{*}\bm{b}}\psi,\psi\big\rangle
\geq c(K) \|\psi\|_{1,\gamma}^2\qquad\textrm{for all }\gamma\geq \gamma_0,
\end{align*}
where $\gamma_0$ depends only on $K$.
Since $\bm{b}\in {\mathrm{\Gamma}_2^1}$, the operator:
\[
T^{\gamma}_{\bm{b}^{*}\bm{b}}-(T^{\gamma}_{\bm{b}})^{*}T^{\gamma}_{\bm{b}}
=T^{\gamma}_{\bm{b}^{*}\bm{b}}-T^{\gamma}_{\bm{b}^*}T^{\gamma}_{\bm{b}}+\left\{T^{\gamma}_{\bm{b}^*}-(T^{\gamma}_{\bm{b}})^{*}\right\}T^{\gamma}_{\bm{b}}
\]
is of order $\leq 1$. Then
\[
\|\psi\|_{1,\gamma}^2\leq C(K)\left(\|T^{\gamma}_{\bm{b}}\psi\|^2+\|\psi\|_{1,\gamma}\|\psi\|\right)
\leq C(K)\left(\|T^{\gamma}_{\bm{b}}\psi\|^2+\gamma^{-1}\|\psi\|_{1,\gamma}^2\right),
\]
from which we take $\gamma$ sufficiently large to derive
\[
\|\psi\|_{1,\gamma}\leq C(K)\|T^{\gamma}_{\bm{b}}\psi\|.
\]
Since the first and fourth columns of $\bm{B}\in W^{2,\infty}(\mathbb{R}^2)$ vanish,
we apply Lemma \ref{lem.para1} to obtain
\begin{align}\notag
\|\psi\|_{1,\gamma} &\leq C(K)\left(\|T^{\gamma}_{\bm{b}}\psi+T^{\gamma}_{\bm{B}}W|_{x_2=0}\|+\|{W^{\mathrm{nc}}}|_{x_2=0}\|\right)\\
&\leq C(K)\left(\gamma^{-1}\|T^{\gamma}_{\bm{b}}\psi+T^{\gamma}_{\bm{B}}W|_{x_2=0}\|_{1,\gamma}+\|{W^{\mathrm{nc}}}|_{x_2=0}\|\right).  \label{p.e3b}
\end{align}
Combine this estimate with \eqref{p.e2} and let $\gamma$ large enough to deduce
\begin{align} \label{p.e3c}
\|\psi\|_{1,\gamma}\leq C(K)\left(\gamma^{-1}\|\mathcal{B}^{\gamma}(W|_{x_2=0},\psi)\|_{1,\gamma}+\|{W^{\mathrm{nc}}}|_{x_2=0}\|\right).
\end{align}
This last estimate indicates that it only remains to deduce an estimate of ${W^{\mathrm{nc}}}|_{x_2=0}$ in terms of the source terms.

To eliminate $\psi$ in the boundary conditions \eqref{p.p1b}, we define the matrix:
\setlength{\arraycolsep}{2pt}
\begin{align*}
Q(t,x_1,\delta,\eta,\gamma):=\left.\begin{pmatrix}
0&0&\quad  1\\[1mm]
\tau+\mathrm{i}\eta \mathring{v}_1^+&\ \
-\mathrm{i}\eta (\mathring{v}_1^+-\mathring{v}_1^-)&\ \ 0
\end{pmatrix}\right|_{x_2=0} \quad \textrm{for all }(\tau,\eta)\in\Xi_1.
\end{align*}
Then we extend $Q$ as a homogeneous mapping of degree $0$ with respect to $(\tau,\eta)$ on $\Xi$.
It follows that $Q\in{\mathrm{\Gamma}_2^0}$ and $Q\bm{b}\equiv0$.
We define symbol $\bm\beta$ as
\[
\bm\beta(t,x_1,\delta,\eta,\gamma)
:=Q(t,x_1,\delta,\eta,\gamma)\bm{B}(t,x_1)\in{\mathrm{\Gamma}_2^0}
\]
for all $(t,x_1,\delta,\eta,\gamma)\in\mathbb{R}^4\times\mathbb{R}_+$.
After a direct calculation, we find that the first and fourth columns of $\bm\beta$ vanish,
so that we consider $\bm\beta$ as a matrix with only four columns and two rows.
More precisely, for all $(\tau,\eta)\in\Xi_1$, symbol $\bm\beta$ is given by
\setlength{\arraycolsep}{2pt}
\begin{align}\notag
&\bm\beta(t,x_1,\delta,\eta,\gamma)\\[1mm]
&{=\small  \left.\begin{pmatrix}
\sqrt{\mathring{\varrho}_+^2+\mathring{\varsigma}_+^2}&
\sqrt{\mathring{\varrho}_+^2+\mathring{\varsigma}_+^2}&
-\sqrt{\mathring{\varrho}_-^2+\mathring{\varsigma}_-^2}&
-\sqrt{\mathring{\varrho}_-^2+\mathring{\varsigma}_-^2}\\[1.5mm]
\mathring{a}_-({m}_1^++{m}_2^+)&
\mathring{a}_-({m}_1^+-{m}_2^+)&
-\mathring{a}_+({m}_1^-+{m}_2^-)&
-\mathring{a}_+({m}_1^--{m}_2^-)
\end{pmatrix}\right|_{x_2=0},}
\label{beta.bm}
\end{align}
where $\mathring{\varrho}_{\pm}$ and $\mathring{\varsigma}_{\pm}$ are given in \eqref{m.n.ring},
${m}_1^{\pm}$ and ${m}_2^{\pm}$ are given in \eqref{m1m2.bm}, and
\begin{align} \label{a.ring}
\mathring{a}_{\pm}:=\tau+\mathrm{i}\mathring{v}_1^{\pm}(t,x)\eta.
\end{align}
Since $\bm{B}\in {\mathrm{\Gamma}_2^0}$, $\bm{b}\in {\mathrm{\Gamma}_2^1}$, and $Q\bm{b}\equiv0$,  we find from \eqref{p.e2} and Lemma \ref{lem.para1}
that
\begin{align}
\notag
&\|T^{\gamma}_{\bm\beta}{W^{\mathrm{nc}}}|_{x_2=0}\|_{1,\gamma}\\ \notag
& =\|T^{\gamma}_{Q\bm{B}}W|_{x_2=0}-T^{\gamma}_{Q}T^{\gamma}_{\bm{B}}W|_{x_2=0}+T^{\gamma}_{Q}(T^{\gamma}_{\bm{B}}W|_{x_2=0}
+T^{\gamma}_{\bm{b}}\psi)-T^{\gamma}_{Q}T^{\gamma}_{\bm{b}}\psi\|_{1,\gamma}\\ \notag
&\lesssim \|{W^{\mathrm{nc}}}|_{x_2=0}\|
+\|T^{\gamma}_{\bm{B}}W|_{x_2=0}+T^{\gamma}_{\bm{b}}\psi\|_{1,\gamma}+\|T^{\gamma}_{Q}T^{\gamma}_{\bm{b}}\psi-T^{\gamma}_{Q\bm{b}}\psi\|_{1,\gamma}\\
&\lesssim \|{W^{\mathrm{nc}}}|_{x_2=0}\|
+\|\mathcal{B}^{\gamma}(W|_{x_2=0},\psi)\|_{1,\gamma}+\|\psi\|_{1,\gamma}.
\label{p.e3d}
\end{align}
In view of \eqref{p.p1a} and \eqref{p.p1b}, we obtain the following paralinearized problem with
reduced boundary conditions:
\begin{subequations}\label{p.p2}
	\begin{alignat}{3}\label{p.p2.a}
	&T^{\gamma}_{\tau \bm{A}_0^{+}+\mathrm{i}\eta\bm{A}_1^{+}+\bm{C}^{+}}W^{+}+\bm{I}_2\partial_2W^{+}=F^{+}&\qquad &\mathrm{if}\ x_2>0,\\ \label{p.p2.b}
	&T^{\gamma}_{\tau \bm{A}_0^{-}+\mathrm{i}\eta\bm{A}_1^{-}+\bm{C}^{-}}W^{-}+\bm{I}_2\partial_2W^{-}=F^{-}&\qquad&\mathrm{if}\  x_2>0,\\[0.5mm]
	\label{p.p2.c}
	&T^{\gamma}_{\bm\beta}W^{\mathrm{nc}}=G&\qquad&\mathrm{if}\  x_2=0.
	\end{alignat}
\end{subequations}
We can deduce the following proposition for problem \eqref{p.p2}
by using the error estimates \eqref{p.e1a}--\eqref{p.e1b}, \eqref{p.e2}, \eqref{p.e3b}--\eqref{p.e3c},
and \eqref{p.e3d} (see also \cite[Proposition\;5.3]{CS04MR2095445}).

\begin{proposition}\ \label{pro3}
	If there exist constants $K_0>0$ and $\gamma_0\geq 1$ such that solution $W$
	to the paralinearized problem \eqref{p.p2} satisfies
	\begin{align}\label{p.p2.e0}
	\|{W^{\mathrm{nc}}}|_{x_2=0}\|^2\lesssim \gamma^{-3}\VERT F^{\pm}\VERT_{1,\gamma}^2+\gamma^{-2}\|G\|_{1,\gamma}^2,
	\end{align}
	for $K\leq K_0$ and $\gamma\geq \gamma_0$, then Proposition {\rm \ref{pro2}} holds.
\end{proposition}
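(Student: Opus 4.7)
The plan is to derive the energy estimate \eqref{P2d.e2} from the assumed paralinearized bound \eqref{p.p2.e0} by undoing the paralinearization via the error estimates already collected, and then absorbing all resulting lower-order residuals for $\gamma$ sufficiently large.

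The first step is to observe that any admissible $(W,\psi)\in H^2(\Omega)\times H^2(\mathbb{R}^2)$ automatically solves the paralinearized problem \eqref{p.p2} with the canonical choice
\[
F^{\pm}:=T^{\gamma}_{\tau\bm{A}_0^{\pm}+\mathrm{i}\eta\bm{A}_1^{\pm}+\bm{C}^{\pm}}W^{\pm}+\bm{I}_2\partial_2 W^{\pm},\qquad G:=T^{\gamma}_{\bm\beta}W^{\mathrm{nc}}\big|_{x_2=0}.
\]
Applying the hypothesis \eqref{p.p2.e0} to this triple yields $\|W^{\mathrm{nc}}|_{x_2=0}\|^2\lesssim\gamma^{-3}\VERT F^{\pm}\VERT_{1,\gamma}^2+\gamma^{-2}\|G\|_{1,\gamma}^2$. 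The interior error bounds \eqref{p.e1a}--\eqref{p.e1b} then deliver $\VERT F^{\pm}\VERT_{1,\gamma}\lesssim\VERT\mathcal{L}^{\gamma}_{\pm}W^{\pm}\VERT_{1,\gamma}+C(K,\kappa_0)\VERT W^{\pm}\VERT$, while the reduced-boundary estimate \eqref{p.e3d} together with the front bound \eqref{p.e3c} gives $\|G\|_{1,\gamma}\lesssim\|W^{\mathrm{nc}}|_{x_2=0}\|+\|\mathcal{B}^{\gamma}(W|_{x_2=0},\psi)\|_{1,\gamma}$. Substituting produces an inequality of the form
\[
\|W^{\mathrm{nc}}|_{x_2=0}\|^2\lesssim\gamma^{-3}\VERT\mathcal{L}^{\gamma}_{\pm}W^{\pm}\VERT_{1,\gamma}^2+\gamma^{-3}\VERT W^{\pm}\VERT^2+\gamma^{-2}\|\mathcal{B}^{\gamma}\|_{1,\gamma}^2+C\gamma^{-2}\|W^{\mathrm{nc}}|_{x_2=0}\|^2,
\]
in which the last residual is absorbed into the left-hand side as soon as $\gamma\geq\gamma_0$ is large.

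To dispose of the remaining $\gamma^{-3}\VERT W^{\pm}\VERT^2$, I would transport the Friedrichs-type estimate of Lemma \ref{lem.2a} to the tilded $W$-variable via the bijective change of variables $\dot{V}^{\pm}=RW^{\pm}$ and the trace identity \eqref{P2d.id1}, obtaining $\gamma\VERT W^{\pm}\VERT^2\lesssim\gamma^{-1}\|\mathbb{L}'_e\dot{V}^{\pm}\|_{L^2_{\gamma}}^2+\|W^{\mathrm{nc}}|_{x_2=0}\|^2$. The noncharacteristic components of $R^{-1}(\mathbb{L}'_e\dot{V}^{\pm})$ coincide (up to the diagonal rescaling by $\widetilde{A}_0$) with $\mathcal{L}^{\gamma}_{\pm}W^{\pm}$; the missing characteristic first component is, by the choice of $R$ (since $r_1$ is the null eigenvector of $\widetilde{A}_2$), a purely tangential first-order expression in $\dot{V}^{\pm}$, hence controlled by tangential derivatives of $W^{\pm}$, which are in turn bounded by applying Lemma \ref{lem.2a} to the commuted systems for $\partial_t\dot{V}^{\pm}$ and $\partial_1\dot{V}^{\pm}$. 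After substituting this back, taking $\gamma\geq\gamma_0$ large finishes the bound on $\|W^{\mathrm{nc}}|_{x_2=0}\|^2$, and reinserting \eqref{p.e3c} restores $\|\psi\|_{1,\gamma}^2$ on the left-hand side, delivering \eqref{P2d.e2}.

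The main technical obstacle is the vanishing first row of $\bm{A}_0^{\pm},\bm{A}_1^{\pm},\bm{I}_2,\bm{C}^{\pm}$: it causes $\mathcal{L}^{\gamma}_{\pm}W^{\pm}$ to see only two of the three scalar equations satisfied by $\dot{V}^{\pm}$, so the Friedrichs bound on $\VERT W^{\pm}\VERT$ from Lemma \ref{lem.2a} cannot be expressed in terms of $\VERT\mathcal{L}^{\gamma}_{\pm}W^{\pm}\VERT$ alone. Closing the loop requires exploiting the tangential nature of the missing component and performing the bootstrap described above, using the smallness hypothesis $K\leq K_0$ and the largeness hypothesis $\gamma\geq\gamma_0$ to guarantee that the sum of absorbed constants produced by the error estimates \eqref{p.e1a}--\eqref{p.e3d} remains strictly below one.
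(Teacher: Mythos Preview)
Your overall strategy---apply the hypothesis \eqref{p.p2.e0} with the canonical $F^{\pm},G$, then translate back to $\mathcal{L}^{\gamma}_{\pm}W^{\pm}$ and $\mathcal{B}^{\gamma}$ via the error bounds \eqref{p.e1a}--\eqref{p.e1b}, \eqref{p.e3c}, \eqref{p.e3d}, absorb the $\gamma^{-2}\|W^{\mathrm{nc}}|_{x_2=0}\|^2$ residual, and control $\gamma^{-3}\VERT W\VERT^2$ by a Friedrichs-type estimate---is correct and is exactly what the paper has in mind (it merely cites these error estimates and the analogous Proposition~5.3 in \cite{CS04MR2095445}).

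The difficulty you identify in your last paragraph, however, is spurious. The ``vanishing first row'' of $\bm{A}_0^{\pm},\bm{A}_1^{\pm}$ is an artifact of a typo in the displayed definition of $\widetilde{A}_0$: its $(1,1)$ entry should be $1$, not $0$. This is confirmed a few pages later in \eqref{eff11}--\eqref{b11}, where the paper computes $A_{0,\pm}^{11}=\mathbb{F}_1^{\pm}$ and explicitly notes $\mathbb{F}_1^{\pm}=\widebar{\varGamma}(1-\epsilon^2\bar{v}^2)>0$ at zero perturbation; indeed $(R^{-1}A_0R)_{11}=(A_0)_{22}=\widebar{\varGamma}(1-\epsilon^2\bar{v}^2)$, so $A_{0,\pm}^{11}=(\widetilde{A}_0)_{11}\cdot(R^{-1}A_0R)_{11}$ forces $(\widetilde{A}_0)_{11}=1$. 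With this correction, $\widetilde{A}_0$ is invertible, the system \eqref{P2d.1} is genuinely three equations in three unknowns, and it is symmetrizable hyperbolic: the matrix $\Sigma:=R^{\mathsf{T}}S_2R\,\widetilde{A}_0^{-1}$ satisfies $\Sigma\bm{A}_j=R^{\mathsf{T}}(S_2A_j)R$ symmetric and $\Sigma\bm{A}_0>0$. A direct energy estimate then gives
\[
\gamma\VERT W^{\pm}\VERT^2\lesssim\gamma^{-1}\VERT\mathcal{L}^{\gamma}_{\pm}W^{\pm}\VERT^2+\|W^{\mathrm{nc}}|_{x_2=0}\|^2,
\]
exactly as in the constant-coefficient analogue \eqref{P1b.e2}. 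This absorbs $\gamma^{-3}\VERT W\VERT^2$ immediately, with no need for the detour through Lemma~\ref{lem.2a}, the $\dot{V}^{\pm}$ variables, or the commuted-system bootstrap you sketch. That bootstrap is both unnecessary and not obviously closable: applying Lemma~\ref{lem.2a} to $\partial_t\dot{V}^{\pm}$ produces a term $\|\partial_t(\mathbb{L}'_e\dot{V}^{\pm})\|$ which still contains the ``missing'' first component at one derivative higher, so the loop does not close at the same order.
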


\subsection{A Reduced Problem}\label{sec.eq}
In order to derive the energy estimate \eqref{p.p2.e0},
we now derive a problem for the noncharacteristic variables $W^{\mathrm{nc}}$ from \eqref{p.p2}.
This is possible since the coefficient matrix $\bm{I}_2=\mathrm{diag}\,(0,1,1)$ has constant rank.
For convenience, we write
\begin{align} \label{B.bf}
\tau\bm{A}_0^{\pm}+\mathrm{i}\eta\bm{A}_1^{\pm}=:(b_{ij}^{\pm})\in {\mathrm{\Gamma}_2^1},
\end{align}
where $\bm{A}_k^{\pm}=({A}_{k,\pm}^{ij})$, $k=0,1$, are defined by \eqref{A.bm}.
In particular, we compute
\begin{align}\label{eff11}
{A}_{1,\pm}^{11}=\mathring{v}_1^{\pm}{A}_{0,\pm}^{11},\qquad {A}_{0,\pm}^{11}
=\mathring{\varGamma}_{\pm}\left\{1-\frac{\epsilon^2(\mathring{\varsigma}_{\pm}\mathring{v}_1^{\pm}
	+\mathring{\varrho}_{\pm}\mathring{v}_2^{\pm})^2}{\mathring{\varsigma}_{\pm}^2+\mathring{\varrho}_{\pm}^2}\right\}
=:\mathbb{F}_1^{\pm}\in\mathbb{R},
\end{align}
from which we obtain
\begin{align}\label{b11}
b_{11}^{\pm}:=\tau{A}_{0,\pm}^{11}+\mathrm{i}\eta{A}_{1,\pm}^{11}=\mathbb{F}_1^{\pm} (\tau+\mathrm{i}\mathring{v}_1^{\pm} {\eta}).
\end{align}
In view of \eqref{m.n.ring}, $\mathbb{F}_1^{\pm} =\widebar{\varGamma}(1-\epsilon^2\bar{v}^2)>0$
when $\big(\mathring{V}^{\pm},\mathring{\Psi}^{\pm}\big)=0$ ({\it cf.}\;\eqref{B.cal}).
We use the continuity of $\mathbb{F}_1^{\pm}$ and take $K$ in \eqref{bas.c2} small enough
to derive that $\mathbb{F}_1^{\pm}>0$ for all $(t,x)\in\widebar{\Omega}$. As a consequence, we have
\begin{align} \label{pole1}
b_{11}^{\pm}=0\qquad\, \textrm{if and only if}\qquad\,
\mathring{a}_{\pm}=\tau+\mathrm{i}\mathring{v}_1^{\pm}\eta=0.
\end{align}
To represent the characteristic variables $W_1^{\pm}$ in terms of $W^{\mathrm{nc}}$,
the singular points $(t,x,\tau,\eta)$ that are given by \eqref{pole1} should be excluded.
We thus introduce two $C^{\infty}$--functions $\chi_{+}$ and $\widetilde{\chi}_+$
defined on $\widebar{\Omega}\times\Xi$ such that
\begin{list}{}{\setlength{\parsep}{\parskip}
		\setlength{\itemsep}{0.1em}
		\setlength{\labelwidth}{2em}
		\setlength{\labelsep}{0.4em}
		\setlength{\leftmargin}{2.5em}
		\setlength{\topsep}{1mm}
	}
	\item[--] Both $\chi_{+}$ and $\widetilde{\chi}_+$ are homogeneous of degree zero with respect to $(\tau,\eta)\in\Xi$;
	\item[--] For all $(t,x,\tau,\eta)\in \widebar{\Omega}\times\Xi_1$,
	\begin{gather}\label{chi1a}
	0\leq \chi_{+}(t,x,\tau,\eta)\leq \widetilde{\chi}_+(t,x,\tau,\eta)\leq 1,\\
	\label{chi1b}
	\widetilde{\chi}_+\equiv 1\quad \mathrm{on}\; \mathrm{supp}\;\chi_+,\qquad
	\mathrm{supp}\;\widetilde{\chi}_+\subset\big\{\mathring{a}_{+}(t,x,\tau,\eta)\ne 0 \big\}.
	\end{gather}
\end{list}

Since $\tau \bm{A}_0^{+}+\mathrm{i}\eta\bm{A}_1^{+}\in {\mathrm{\Gamma}_2^1}$, $\bm{C}^+\in{\mathrm{\Gamma}_1^0}$,
and $\chi_+\in{\mathrm{\Gamma}_k^0}$ for all $k\in\mathbb{N}$, we find from Lemma \ref{lem.para1} (iv)--(v) that
\begin{align*}
T^{\gamma}_{\chi_+}T^{\gamma}_{\tau \bm{A}_0^{+}+\mathrm{i}\eta\bm{A}_1^{+}+\bm{C}^+}
=T^{\gamma}_{\tau \bm{A}_0^{+}+\mathrm{i}\eta\bm{A}_1^{+}}T^{\gamma}_{\chi_+}+
T^{\gamma}_{-\mathrm{i}\left\{\chi_+,\tau \bm{A}_0^{+}+\mathrm{i}\eta\bm{A}_1^{+}\right\}}+T^{\gamma}_{\bm{C}^+}T^{\gamma}_{\chi_+}+\mathcal{R}_{-1},
\end{align*}
where $\{a, b\}$ denotes the Poisson bracket of $a$ and $b$:
\begin{align}\label{Poisson}
\{a, b\}:=\frac{\partial a}{\partial \delta }\frac{\partial b}{\partial t} +\frac{\partial a}{\partial \eta }\frac{\partial b}{\partial x_1}
-\frac{\partial a}{\partial t }\frac{\partial b}{\partial \delta}-\frac{\partial a}{\partial x_1 }\frac{\partial b}{\partial \eta}.
\end{align}
Setting $$\bm{w}^{+}:=T^{\gamma}_{\chi_+}W^+$$ and applying operator $T^{\gamma}_{\chi_+}$ to \eqref{p.p2.a}, we obtain
\begin{align} \label{id.P.1}
T^{\gamma}_{\tau \bm{A}_0^{+}+\mathrm{i}\eta\bm{A}_1^{+}}\bm{w}^++T^{\gamma}_{\bm{C}^+}\bm{w}^++\bm{I}_2\partial_2\bm{w}^+=
T^{\gamma}_{r}W^++T^{\gamma}_{\chi_+}F^++\mathcal{R}_{-1}W^+,
\end{align}
where $r=\mathrm{i}\left\{\chi_+,\tau \bm{A}_0^{+}+\mathrm{i}\eta\bm{A}_1^{+}\right\}+\partial_2\chi_+\bm{I}_2.$
We will employ letter $r_+$ to denote a generic symbol that belongs to ${\mathrm{\Gamma}_1^0}$ and vanishes on
$\{\chi_+\equiv 1\}\cup\{\chi_+\equiv 0\}$.
Since $b_{11}^{+}\ne 0$ on $\mathrm{supp}\;\widetilde{\chi}_+$,
we infer that $\frac{\widetilde{\chi}_+}{b_{11}^+}\in{\mathrm{\Gamma}_2^{-1}}$ and
\[
T^{\gamma}_{\widetilde{\chi}_+/b_{11}^+}T^{\gamma}_{b_{1j}^+}\bm{w}_j^+=T^{\gamma}_{\widetilde{\chi}_+b_{1j}^+/b_{11}^+}\bm{w}_j^+
+T^{\gamma}_{\alpha_{-1}}\bm{w}_j^{+}+\mathcal{R}_{-2}W^+.
\]
Applying operator $T^{\gamma}_{\widetilde{\chi}_+/b_{11}^+}$ to the first equation in \eqref{id.P.1} yields
\begin{align} \notag
&T^{\gamma}_{\widetilde{\chi}_+}\bm{w}_1^++T^{\gamma}_{\widetilde{\chi}_+b_{12}^+/b_{11}^+}\bm{w}_2^++T^{\gamma}_{\widetilde{\chi}_+b_{13}^+/b_{11}^+}\bm{w}_3^+\\
&=\sum_{j=1}^{3}T^{\gamma}_{\alpha_{-1}}\bm{w}_j^{+}+T^{\gamma}_{r_+\alpha_{-1}}W^++T^{\gamma}_{\alpha_{-1}}T^{\gamma}_{\chi_+}F_1^++\mathcal{R}_{-2}W^+.
\label{id.P.2}
\end{align}
By virtue of \eqref{chi1b}, we have the identities: $\widetilde{\chi}_+\chi_+\equiv \chi_+$ and
\[\frac{\partial \widetilde{\chi}_+}{\partial \delta }\frac{\partial \chi_+}{\partial t} +\frac{\partial \widetilde{\chi}_+}{\partial x_1 }\frac{\partial \chi_+}{\partial \eta}\equiv 0,
\]
which imply
\[T^{\gamma}_{\widetilde{\chi}_+}\bm{w}_1^+=
T^{\gamma}_{\widetilde{\chi}_+}T^{\gamma}_{ \chi_+}W_1^+=
T^{\gamma}_{\widetilde{\chi}_+ \chi_+}W_1^++\mathcal{R}_{-2}W_1^{+}
=\bm{w}_1^++\mathcal{R}_{-2}W_1^{+}.
\]
Plug this identity into \eqref{id.P.2} to obtain
\begin{align}
\notag
\bm{w}_1^+=&-T^{\gamma}_{\widetilde{\chi}_+b_{12}^+/b_{11}^+}\bm{w}_2^+-T^{\gamma}_{\widetilde{\chi}_+b_{13}^+/b_{11}^+}\bm{w}_3^+\\
&
+\sum_{j=1}^{3}T^{\gamma}_{\alpha_{-1}}\bm{w}_j^{+} +T^{\gamma}_{r_+\alpha_{-1}}W^+
+T^{\gamma}_{\alpha_{-1}}T^{\gamma}_{\chi_+}F_1^++\mathcal{R}_{-2}W^+.
\label{omega1}
\end{align}
The second equation of \eqref{id.P.1} reads
\[
\sum_{j=1}^{3}T^{\gamma}_{b_{2j}^+}\bm{w}^+_j+\sum_{j=1}^{3}T^{\gamma}_{\alpha_{0}}\bm{w}_j^{+}+\partial_2\bm{w}_2^+=
T_{r_+}^{\gamma}W^++T^{\gamma}_{\chi_+}F_2^++\mathcal{R}_{-1}W^+.
\]
Since $b_{j1}^+\in{\mathrm{\Gamma}_2^1}$ and $\widetilde{\chi}_+b_{1j}^+/b_{11}^+\in{\mathrm{\Gamma}_2^{0}}$,
we then apply operator $T^{\gamma}_{b_{21}^+}$ to expression \eqref{omega1} and obtain
\begin{align*}
T^{\gamma}_{b_{21}^+}\bm{w}_1^+
=&-T^{\gamma}_{\widetilde{\chi}_+b_{21}^+b_{12}^+/b_{11}^+}\bm{w}_2^+-T^{\gamma}_{\widetilde{\chi}_+b_{21}^+b_{13}^+/b_{11}^+}\bm{w}_3^+\\
&+\sum_{j=1}^{3}T^{\gamma}_{\alpha_{0}}\bm{w}_j^{+}
+T^{\gamma}_{r_+}W^++\mathcal{R}_{0}T^{\gamma}_{\chi_+}F^++\mathcal{R}_{-1}W^+.
\end{align*}
Consequently, we have
\begin{align}
\notag \partial_2\bm{w}_2^+=\,&T^{\gamma}_{\mathbb{A}_{\widetilde{\chi}_+}^{11}}\bm{w}_2^+
+T^{\gamma}_{\mathbb{A}_{\widetilde{\chi}_+}^{12}}\bm{w}_3^+
\\ & +\sum_{j=1}^{3}T^{\gamma}_{\alpha_{0}}\bm{w}_j^{+}+T_{r_+}^{\gamma}W^++\mathcal{R}_0T^{\gamma}_{\chi_+}F^++\mathcal{R}_{-1}W^+,
 \label{id.P.3}
\end{align}
where
\begin{align} \notag
\mathbb{A}_{\widetilde{\chi}_+}^{11}=-b_{22}^++\widetilde{\chi}_+b_{21}^+b_{12}^+/b_{11}^+,\qquad
\mathbb{A}_{\widetilde{\chi}_+}^{12}=-b_{23}^++\widetilde{\chi}_+b_{21}^+b_{13}^+/b_{11}^+.
\end{align}
Note that $\bm{w}_1^+$ appears in a zero-th order term in \eqref{id.P.3}.
We thus apply $T^{\gamma}_{\alpha_0}$ to expression \eqref{omega1} and deduce
\[
T^{\gamma}_{\alpha_0}\bm{w}_1^+=T^{\gamma}_{\alpha_0}\bm{w}_2^++T^{\gamma}_{\alpha_0}\bm{w}_3^+
+T^{\gamma}_{r_+\alpha_{-1}}W^++\mathcal{R}_{-1}T^{\gamma}_{\chi_+}F^++\mathcal{R}_{-1}W^+,
\]
which, together with \eqref{id.P.3}, implies the following equation for $\bm{w}_2^+$:
\begin{align}
\notag \partial_2\bm{w}_2^+=\,&T^{\gamma}_{\mathbb{A}_{\widetilde{\chi}_+}^{11}}\bm{w}_2^+
+T^{\gamma}_{\mathbb{A}_{\widetilde{\chi}_+}^{12}}\bm{w}_3^+
\\ \label{id.P.4}
&+\sum_{j=2}^{3}T^{\gamma}_{\alpha_{0}}\bm{w}_j^{+}
+T_{r_+}^{\gamma}W^++\mathcal{R}_0T^{\gamma}_{\chi_+}F^++\mathcal{R}_{-1}W^+.
\end{align}
In this equation, the first and zero-th order terms in $\bm{w}_1^+$ have been eliminated.
Performing a similar computation to the third equation of \eqref{id.P.1},
we obtain the following equations for $\bm{w}_+^{\mathrm{nc}}:=(\bm{w}^+_2,\bm{w}^+_3)^{\mathsf{T}}$:
\begin{align} \label{id.P1a}
\partial_2 \bm{w}_+^{\mathrm{nc}}=
T^{\gamma}_{\mathbb{A}_{\widetilde{\chi}_+}}\bm{w}_+^{\mathrm{nc}}
+T^{\gamma}_{\mathbb{E}^+}\bm{w}_+^{\mathrm{nc}}+
T_{r_+}^{\gamma}W^++\mathcal{R}_0T^{\gamma}_{\chi_+}F^++\mathcal{R}_{-1}W^+,
\end{align}
where $\mathbb{E}^+\in{\mathrm{\Gamma}_1^0}$, symbol $r_+\in{\mathrm{\Gamma}_1^0}$ vanishes
on region $\{\chi_+\equiv1\}\cup\{\chi_+\equiv0\}$, and
\begin{align} \notag
\mathbb{A}_{\widetilde{\chi}_+}=\begin{pmatrix}
\mathbb{A}_{\widetilde{\chi}_+}^{11}&\mathbb{A}_{\widetilde{\chi}_+}^{12}\\[1.5mm]
\mathbb{A}_{\widetilde{\chi}_+}^{21}&\mathbb{A}_{\widetilde{\chi}_+}^{22}
\end{pmatrix}\in {\mathrm{\Gamma}_2^1},\qquad
\mathbb{A}_{\widetilde{\chi}_+}^{ij}=-b_{i+1,j+1}^++\widetilde{\chi}_+b_{i+1,1}^+b_{1,j+1}^+/b_{11}^+.
\end{align}

Let us define $\chi_-$ and $\widetilde{\chi}_-$ as $\chi_+$ and $\widetilde{\chi}_+$ by changing index ``$+$'' into ``$-$''.
We set $\bm{w}^-:=T^{\gamma}_{\chi_-}W^-$ and employ a similar analysis to find that
$\bm{w}_-^{\mathrm{nc}}:=(\bm{w}_2^-,\bm{w}_3^-)^{\mathsf{T}}$ satisfies
the same system as \eqref{id.P1a} with index ``$+$'' replaced by ``$-$''.
Applying the rule of symbolic calculus (Lemma \ref{lem.para1}(iv)) to \eqref{p.p2.c}
yields the boundary condition for
$\bm{w}^{\mathrm{nc}}:=(\bm{w}_2^+,\bm{w}_3^+,\bm{w}_2^-,\bm{w}_3^-)^{\mathsf{T}}$:
\begin{align*}
T^{\gamma}_{\bm\beta}\bm{w}^{\mathrm{nc}}|_{x_2=0}=
G+\mathcal{R}_{-1}W^{\mathrm{nc}}.
\end{align*}
We combine this last relation with the systems for $\bm{w}_{\pm}^{\mathrm{nc}}$
to obtain the reduced problem:
\begin{align} \label{id.P1}
\left\{\begin{aligned}
&\partial_2 \bm{w}^{\mathrm{nc}}
=T^{\gamma}_{\mathbb{A}_r}\bm{w}^{\mathrm{nc}}+T^{\gamma}_{\mathbb{E}}\bm{w}^{\mathrm{nc}}
+T_r^{\gamma}W+\mathcal{R}_0T^{\gamma}_{\chi}F+\mathcal{R}_{-1}W & \mathrm{if}\ x_2>0,\\
&T^{\gamma}_{\bm\beta}\bm{w}^{\mathrm{nc}}=
G+\mathcal{R}_{-1}W^{\mathrm{nc}} & \mathrm{if}\ x_2=0,
\end{aligned}\right.
\end{align}
where $\bm\beta$ is given by \eqref{beta.bm} for $(\tau,\eta)\in\Xi_1$.
The symbol matrix $\mathbb{A}_r\in{\mathrm{\Gamma}_2^1}$ is given by
\setlength{\arraycolsep}{2pt}
\begin{align}
\left\{\begin{aligned}
&\mathbb{A}_r=\begin{pmatrix}
\mathbb{A}_{\widetilde{\chi}_+}&0\\[0.5mm] 0&\mathbb{A}_{\widetilde{\chi}_-}
\end{pmatrix},\quad\,
\mathbb{A}_{\widetilde{\chi}_{\pm}}=\big(\mathbb{A}_{\widetilde{\chi}_{\pm}}^{ij}\big),
\\
&\textrm{with}\ \  \mathbb{A}_{\widetilde{\chi}_{\pm}}^{ij}=-b_{i+1,j+1}^{\pm}+\widetilde{\chi}_{\pm}b_{i+1,1}^{\pm}b_{1,j+1}^{\pm}/b_{11}^{\pm}.
\end{aligned}\right.
\label{A.bb.r}
\end{align}
Matrices $\mathbb{E}$ and $r$ both belong to ${\mathrm{\Gamma}_1^0}$ and
have the same block diagonal structure as $\mathbb{A}_r$.
Moreover, symbol $r$ vanishes on
region $\{\chi_{+}=\chi_{-}\equiv 1\}\cup \{\chi_{+}=\chi_{-}\equiv 0\}$.
\subsection{Microlocalization}  \label{sec.micro}
We now construct the degenerate Kreiss' symmetrizers that are microlocal ({\it i.e.}  local in the frequency space)
in order to derive our energy estimate.
The whole space $\widebar{\Omega}\times\Xi$ will be divided into three disjoint parts according to the poles
of the ``non-cutoff'' symbol $\mathbb{A}$ and the zeros of the associated Lopatinski\u{\i} determinant, where
\begin{align} \label{A.bb}
\mathbb{A}=\begin{pmatrix}
\mathbb{A}^+&0\\[0.5mm] 0&\mathbb{A}^-
\end{pmatrix},\quad\,
\mathbb{A}^{\pm}=\big(a^{\pm}_{ij}\big),\quad\,
a^{\pm}_{ij}=-b_{i+1,j+1}^{\pm}+b_{i+1,1}^{\pm}b_{1,j+1}^{\pm}/b_{11}^{\pm}.
\end{align}
Notice that $\mathbb{A}_{\widetilde{\chi}_{\pm}}=\mathbb{A}^{\pm}$ in region $\{\widetilde{\chi}_{\pm}\equiv1\}$.
In light of \eqref{pole1}, we obtain that the poles of $\mathbb{A}$ belong to the set $\Upsilon_p:=\Upsilon_p^+\cup\Upsilon_p^-$, with
\begin{align}\notag
 \Upsilon_p^{\pm}:=\big\{(t,x,\tau,\eta)\in\widebar{\Omega}\times \Xi:
\tau=-\mathrm{i}\eta\mathring{v}_1^{\pm}(t,x,\tau,\eta)  \big\}.
\end{align}

For the eigenvalues and the stable subspace of $\mathbb{A}(t,x,\tau,\eta)$,
we have the following lemma.

\begin{lemma}\ \label{lem.eig2}
	Assume that $\big(\mathring{V},\nabla \mathring{\Psi}\big)$ is sufficiently small in $W^{2,\infty}(\Omega)$.
	\begin{list}{}{\setlength{\parsep}{\parskip}
			\setlength{\itemsep}{0.1em}
			\setlength{\labelwidth}{2em}
			\setlength{\labelsep}{0.4em}
			\setlength{\leftmargin}{2.2em}
			\setlength{\topsep}{1mm}
		}
		\item[\emph{(a)}] If $(\tau,\eta)\in\Xi_1$ with $\RE \tau>0$,
		then the eigenvalues of $\mathbb{A}^{\pm}(t,x,\tau,\eta)$ are roots $\omega$ of
		\begin{align}
		\Big(\omega-\frac{a_{11}^{\pm}+a_{22}^{\pm}}{2}\Big)^2
		=\big(\mathring{C}_0^{\pm}\big)^2\left\{\big(\mathring{C}_1^{\pm}\big)^2\big(\tau\pm\mathrm{i}\mathring{C}_2^{\pm}\eta\big)^2+\eta^2\right\},
		\label{eig2a}
		\end{align}
		where $\mathring{C}_j^{\pm}, j=0,1,2$, are positive smooth functions of $(\mathring{V}^{\pm},\nabla\mathring{\Psi}^{\pm})$ such that
		$\mathring{C}_j^{\pm}=\widebar{C}_j$ when $\big(\mathring{V}^{\pm},\mathring{\Psi}^{\pm}\big)=0$, with $\widebar{C}_j$ given by \eqref{C.bar}.
		Moreover, $\mathbb{A}^{\pm}$ has a unique eigenvalue $\omega_{\pm}$ {(}resp.\;$\omega_{\pm}'${)}
		of negative {\rm(}resp.\;positive{)} real part.
		
		\item[\emph{(b)}] If $(\tau,\eta)\in\Xi_1$ with $\RE \tau>0$, then the stable subspace $\mathcal{E}^-(t,x,\tau,\eta)$ of $\mathbb{A}(t,x,\tau,\eta)$
		has dimension two and is spanned by
		\begin{align}\label{E.eig2}
		{\small \left\{\begin{aligned}
		E_+(t,x,\tau,\eta):=\left(-(\tau+\mathrm{i}\mathring{v}_1^+\eta)a^+_{12},(\tau+\mathrm{i}\mathring{v}_1^+\eta)(a_{11}^+-\omega_+),0,0\right)^{\mathsf{T}},\\[0.5mm]
		E_-(t,x,\tau,\eta):=\left(0,0,(\tau+\mathrm{i}\mathring{v}_1^-\eta)(a_{22}^--\omega_-),-(\tau+\mathrm{i}\mathring{v}_1^-\eta)a_{21}^-\right)^{\mathsf{T}}.
		\end{aligned}\right.}
		\end{align}
		
		\item[\emph{(c)}] Both $\omega_{+}$ and $\omega_{-}$ admit a continuous extension to any point $(\tau,\eta)\in\Xi_{1}$ with $\RE\tau=0$.
		If $(\tau,\eta)\in\Xi_{1}$ with $\tau=\mathrm{i}\delta\in\mathrm{i}\mathbb{R}$, then
		\begin{align}\notag
		&\widetilde{\omega}_{\pm}(t,x,\tau,\eta):=\omega_{\pm}(t,x,\tau,\eta)-\frac{a_{11}^{\pm}+a_{22}^{\pm}}{2}(t,x,\tau,\eta)\\[1mm]
		&={\small  \left\{ \begin{aligned}
		 &-\mathring{C}^{\pm}_0\sqrt{\eta^2-(\mathring{C}^{\pm}_1)^2(\delta\pm\mathring{C}^{\pm}_2\eta)^2}
		 \quad\ \, \mathrm{if}\ \eta^2\geq (\mathring{C}^{\pm}_1)^2(\delta\pm\mathring{C}^{\pm}_2\eta)^2,\\[0.5mm]
		 &-\mathrm{i}\,\mathrm{sgn}(\delta\pm\mathring{C}^{\pm}_2\eta)\mathring{C}^{\pm}_0\sqrt{(\mathring{C}^{\pm}_1)^2(\delta\pm\mathring{C}^{\pm}_2\eta)^2-\eta^2}
		\qquad\quad \textrm{elsewise}.
		\end{aligned} \right.}
		\label{eig2b}
		\end{align}
		
		\item[\emph{(d)}]
		Both $E_+(t,x,\tau,\eta)$ and $E_-(t,x,\tau,\eta)$ can be extended continuously to any point $(\tau,\eta)\in\Xi_{1}$ with $\RE\tau=0$.
		These two vectors are linearly independent on the whole hemisphere $\Xi_1$.
		
		\item[\emph{(e)}]
		If $(t,x,\tau,\eta)\notin \Upsilon_{nd}$, where $\Upsilon_{nd}$ is  given by
		\begin{align} \label{Upsilon.nd}
		\Upsilon_{nd}
		:=\left\{\tau\in\big\{\mathrm{i}(- \mathring{C}^{+}_2\pm (\mathring{C}^{+}_1)^{-1})\eta,\,\mathrm{i}(- \mathring{C}^{-}_2\pm (\mathring{C}^{-}_1)^{-1})\eta\big\}\right\},
		\end{align}
		then matrix $\mathbb{A}(t,x,\tau,\eta)$ is diagonalizable.
	\end{list}
\end{lemma}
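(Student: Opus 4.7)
The plan is to mirror the proof strategy of Lemma \ref{lem.eig1} (the constant coefficient version), exploiting the fact that $\mathbb{A}(t,x,\tau,\eta)$ is a smooth perturbation of the matrix $\mathcal{A}(\tau,\eta)$ from \eqref{A.cal}, since the coefficients depend smoothly on the basic state and tend to the corresponding constant coefficients when $(\mathring{V},\nabla\mathring{\Psi})\to 0$. Throughout, I will use the eikonal equation \eqref{bas.eq.1} to simplify the entries $b_{ij}^\pm$ defined by \eqref{B.bf}.

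For part (a), I would first verify that the $2\times 2$ block $\mathbb{A}^\pm$ has characteristic polynomial
\begin{align*}
\Big(\omega-\frac{a_{11}^\pm+a_{22}^\pm}{2}\Big)^2=\frac{(a_{11}^\pm-a_{22}^\pm)^2}{4}+a_{12}^\pm a_{21}^\pm.
\end{align*}
The main algebraic task is then to compute the right-hand side and factor it as in \eqref{eig2a}. Using \eqref{A.bb} together with the explicit formulas for $A_0, A_1, \widetilde{A}_2$, the entries $a_{ij}^\pm$ are rational functions of $(\tau,\eta)$ with the single pole $\mathring{a}_\pm=0$. After clearing denominators, the resulting expression should be a polynomial quadratic in $(\tau\pm i\mathring{v}_1^\pm\eta)$, and inspection (matching with the constant coefficient case \eqref{eig1a}) will let me identify the smooth coefficients $\mathring{C}_0^\pm,\mathring{C}_1^\pm,\mathring{C}_2^\pm$ that reduce to $\widebar{C}_0,\widebar{C}_1,\widebar{C}_2$ from \eqref{C.bar} when $(\mathring{V}^\pm,\mathring{\Psi}^\pm)\equiv 0$. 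By continuity, smallness of $(\mathring{V},\nabla\mathring{\Psi})$ in $W^{2,\infty}$ ensures each $\mathring{C}_j^\pm$ remains strictly positive. The existence of exactly one eigenvalue $\omega_\pm$ with negative real part when $\RE\tau>0$ follows by the same contradiction argument as in Step~2 of Lemma~\ref{lem.eig1}: if $\mathrm{i}\sigma$ were a root then $\mathring{C}_0^\pm\mathring{C}_1^\pm(\tau\pm i\mathring{C}_2^\pm\eta)\in i\mathbb{R}$, forcing $\RE\tau=0$.

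For parts (b)--(d), I would directly verify that the vectors $E_\pm$ in \eqref{E.eig2} lie in $\ker(\mathbb{A}-\omega_\pm I)$ by substitution, using the relation $(\omega_\pm-a_{11}^\pm)(\omega_\pm-a_{22}^\pm)=a_{12}^\pm a_{21}^\pm$. Non-vanishing of $E_+$ proceeds as in Lemma~\ref{lem.eig1}(d): either $(\tau+i\mathring{v}_1^+\eta)a_{12}^+\ne 0$, or else $a_{12}^+=0$ forces $\omega_+^2=(a_{11}^+-\tfrac12(a_{11}^++a_{22}^+))^2$ while $\RE(a_{11}^+-\tfrac12(a_{11}^++a_{22}^+))\ge 0$ and $\RE\omega_+<0$ guarantee the second component is nonzero; an analogous argument handles $E_-$. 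The block-diagonal structure of $\mathbb{A}$ forces $\mathrm{span}\{E_+,E_-\}$ to be two-dimensional. Continuous extension to $\RE\tau=0$ follows from \eqref{eig2a}: when $\tau=i\delta$ the right-hand side is real, and choosing the nonpositive real square root (respectively, the branch selected by sign of $\delta\pm\mathring{C}_2^\pm\eta$) as in \eqref{eig1b} yields the formula \eqref{eig2b}; linear independence persists by the same block argument, provided one checks that the leading components of $E_\pm$ do not simultaneously vanish, which I would do by a continuity argument from Lemma~\ref{lem.eig1}(d) combined with smallness of the perturbation.

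Part (e) follows from (a)--(d): $\mathbb{A}$ is block diagonal, so diagonalizability fails precisely when one of $\mathbb{A}^\pm$ has a double eigenvalue, equivalently when $\widetilde{\omega}_\pm=0$. From \eqref{eig2a}, this occurs exactly on the set $\Upsilon_{nd}$ defined in \eqref{Upsilon.nd}; off this set the two eigenvalues are distinct, so each $\mathbb{A}^\pm$ has a basis of eigenvectors, and the implicit function theorem applied to \eqref{eig2a} produces a smooth basis. The main obstacle I anticipate is the explicit algebraic factorization in part (a): identifying the three coefficients $\mathring{C}_j^\pm$ as genuine smooth functions of $(\mathring{V}^\pm,\nabla\mathring{\Psi}^\pm)$ alone (independent of $(\tau,\eta)$) requires patient bookkeeping with the entries of $\widetilde{A}_2(\mathring{U}^\pm,\mathring{\Phi}^\pm)$ given by \eqref{A2.tilde}, and in particular using the eikonal constraint \eqref{bas.eq.1} to eliminate $\mathring{v}_2^\pm$ in favor of $\mathring{v}_1^\pm$ and $\nabla\mathring{\Phi}^\pm$ so that the final factorization reduces, term by term, to the constant coefficient identity \eqref{eig1a} when $(\mathring{V}^\pm,\mathring{\Psi}^\pm)=0$.
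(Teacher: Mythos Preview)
Your strategy is correct and essentially the same as the paper's, but you underestimate the algebraic obstacle you yourself flag. The paper does not simply ``clear denominators and inspect''; it introduces a specific decomposition of the discriminant,
\[
\widetilde{\omega}_+^2=\Big(\tfrac{a_{11}^{+}-a_{22}^{+}}{2}+a_{12}^{+}\Big)\Big(\tfrac{a_{11}^{+}-a_{22}^{+}}{2}-a_{12}^{+}\Big)+a_{12}^{+}(a_{12}^{+}+a_{21}^{+}),
\]
and then shows via computer-algebra-verified identities among the entries of $\bm A_0^\pm,\bm A_1^\pm$ (relations \eqref{eff.id1}--\eqref{eff.id4}) that both $a_{11}^\pm-a_{22}^\pm-2a_{12}^\pm$ and $a_{12}^\pm+a_{21}^\pm$ factor as $\mathbb F_2^\pm\mathring a_\pm$ and $\mathbb F_3^\pm\mathring a_\pm$ with $\mathbb F_2^\pm,\mathbb F_3^\pm$ smooth in $(\mathring V^\pm,\nabla\mathring\Psi^\pm)$ alone. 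This is what makes the pole cancellation manifest and yields the factorization \eqref{eig2a} with $(\tau,\eta)$-independent coefficients $\mathring C_j^\pm$. Your naive route (square $a_{11}-a_{22}$ and add $4a_{12}a_{21}$) gives an expression with an apparent double pole at $\mathring a_\pm=0$, and there is no obvious reason it should collapse to a quadratic without these structural identities.

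You also omit a point the paper treats explicitly: showing that the trace $a_{11}^\pm+a_{22}^\pm$ itself is pole-free at $\mathring a_\pm=0$ (see \eqref{eff.i3}), so that $\widetilde\omega_\pm$ in part~(c) is globally defined on $\Xi_1$. This uses the same family of identities \eqref{eff.id2}. Once these two algebraic facts are in hand, your treatment of (b)--(e) via the argument of Lemma~\ref{lem.eig1} is exactly what the paper does.
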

\begin{proof}\
	We just need to deduce that relations \eqref{eig2a} hold and that $a_{11}^{\pm}+a_{22}^{\pm}$ are well-defined
	for any point $(\tau,\eta)\in\Xi_1$, since the other assertions can be proved similarly
	to the proof of Lemma \ref{lem.eig1}.
	
	By definition, we know that the eigenvalues of $\mathbb{A}^{\pm}$ are roots $\omega$ of
	\begin{align} \label{eig2a.1}
	\omega^2-(a_{11}^{\pm}+a_{22}^{\pm})\omega+a_{11}^{\pm}a_{22}^{\pm}-a_{12}^{\pm}a_{21}^{\pm}=0,
	\end{align}
	from which we have
	\begin{subequations}
		\begin{alignat}{1} \label{eig2a.2}
		&\widetilde{\omega}_+^2=\Big(\frac{a_{11}^{+}-a_{22}^{+}}{2}+a_{12}^{+}\Big)\Big(\frac{a_{11}^{+}-a_{22}^{+}}{2}-a_{12}^{+}\Big)+a_{12}^{+}(a_{12}^{+}+a_{21}^{+}),\\ \label{eig2a.3}
		&\widetilde{\omega}_-^2=\Big(\frac{a_{22}^{-}-a_{11}^{-}}{2}+a_{21}^{-}\Big)\Big(\frac{a_{22}^{-}-a_{11}^{-}}{2}-a_{21}^{-}\Big)+a_{21}^{-}(a_{12}^{-}+a_{21}^{-}).
		\end{alignat}
	\end{subequations}
	We now deduce the expressions for $a_{11}^{\pm}-a_{22}^{\pm}-2a_{12}^{\pm}$ and $a_{12}^{\pm}+a_{21}^{\pm}$.
	Recall that $a_{ij}^\pm$ and $b_{ij}^{\pm}$ are given by \eqref{A.bb} and \eqref{B.bf}, respectively.
	Entries ${A}^{11}_{0,\pm}$ and ${A}^{11}_{1,\pm}$ are given by \eqref{eff11}.
	For notational simplicity, we ignore indices ``$\pm$'' and ``$\mathring{\ \;}$'' in the following expressions.
	We calculate coefficients $\bm{A}^{\pm}_j$ defined in \eqref{A.bm} by using the computer algebra system ``Maxima''
	to obtain the relations:
	\begin{subequations}
		\begin{alignat}{1}\label{eff.id1}
		& {A}_1^{23}=v_1{A}_0^{23},\quad
		{A}_1^{32}=v_1{A}_0^{32},\quad {A}_1^{22}-{A}_1^{33}=v_1({A}_0^{22}-{A}_0^{33}),\\ \label{eff.id2}
		&{A}_1^{12}-{A}_1^{13}=v_1({A}_0^{12}-{A}_0^{13}),\quad
		{A}_1^{21}+{A}_1^{31}=v_1({A}_0^{21}+{A}_0^{31}),\\
		\notag
		&{A}_1^{21}({A}_0^{12}-{A}_0^{13})-{A}_{1}^{13}({A}_0^{21}+{A}_0^{31})\\
\label{eff.id3}&\quad 		=
		v_1\left\{{A}_0^{21}({A}_0^{12}-{A}_0^{13})-{A}_0^{13}({A}_0^{21}+{A}_0^{31})\right\},\\ \notag
		&{A}_1^{13}({A}_0^{21}+{A}_0^{31})+{A}_{1}^{31}({A}_0^{12}-{A}_0^{13})\\
		\label{eff.id4}&\quad =
		v_1\left\{{A}_0^{13}({A}_0^{21}+{A}_0^{31})+{A}_0^{31}({A}_0^{12}-{A}_0^{13})\right\}.
		\end{alignat}
	\end{subequations}
	Then it follows from \eqref{eff.id1} that
	\begin{align*}
	-b_{22}+b_{33}+2b_{23}=(-{A}_0^{22}+{A}_0^{33}+2{A}_0^{23}) (\tau+\mathrm{i}v_1\eta).
	\end{align*}
	By virtue of \eqref{eff.id2}--\eqref{eff.id3}, we obtain
	\begin{align*}
	b_{21}b_{12}-b_{31}b_{13}-2b_{21}b_{13}&=b_{21}(b_{12}-b_{13})-b_{13}(b_{21}+b_{31})\\
	&=\left\{b_{21}({A}_0^{12}-{A}_0^{13}) -b_{13} ({A}_0^{21}+{A}_0^{31})\right\} (\tau+\mathrm{i}v_1\eta)\\
	&=\{{A}_0^{21}({A}_0^{12}-{A}_0^{13})-{A}_0^{13}({A}_0^{21}+{A}_0^{31})\} (\tau+\mathrm{i}v_1\eta)^2.
	\end{align*}
	Then
	\begin{align} \label{eff.i1}
	a_{11}-a_{22}-2a_{12}=\mathbb{F}_2 (\tau+\mathrm{i}v_1\eta),
	\end{align}
	where
	\begin{align} \notag
	\mathbb{F}_2 &={\textrm{\small  $({A}_0^{11})^{-1}\left\{{A}_0^{11}(-{A}_0^{22}+{A}_0^{33}+2{A}_0^{23})
	+{A}_0^{21}({A}_0^{12}-{A}_0^{13})-{A}_0^{13}({A}_0^{21}+{A}_0^{31})  \right\}$}  }\\
	&=\frac{2\p_2\Phi
		\left\{\varGamma (\varsigma^2+\varrho^2)^{1/2}(\epsilon^2|v|^2-1)+\epsilon^2 c (\varsigma v_2-\varrho v_1)\right\}}{c\left( \epsilon^2 (\varrho v_2
		+\varsigma v_1)^2-\varsigma^2-\varrho^2 \right)}.
	\label{F2}
	\end{align}
	In particular, $\mathbb{F}_2^{\pm}=\pm 2\widebar{\varGamma}/\bar{c}\ne 0$
	when the perturbation $\big(\mathring{V}^{\pm},\mathring{\Psi}^{\pm}\big)$ vanishes.
	As a consequence, $\mathbb{F}_2^{\pm}$ never vanish by taking $K$ in \eqref{bas.c2} small enough.
	Using \eqref{eff.id1}--\eqref{eff.id2} and \eqref{eff.id4},
	we can deduce from a similar calculation  that
	\begin{align}\label{eff.i2}
	a_{12}+a_{21}=\mathbb{F}_3 (\tau+\mathrm{i}v_1\eta),
	\end{align}
	where
	\begin{align} \notag
	\mathbb{F}_3 &=({A}_0^{11})^{-1}\left\{{A}_0^{11}(-{A}_0^{23}-{A}_0^{32})
	+{A}_0^{21}{A}_0^{13}+{A}_0^{31}{A}_0^{12}  \right\} \\
	&=-\frac{2\p_2\Phi \epsilon^2\left( \varsigma v_2-\varrho v_1  \right)}{\epsilon^2 (\varrho v_2+\varsigma v_1)^2-\varsigma^2-\varrho^2}. \label{F3}
	\end{align}
	Relations \eqref{eig2a} follows by plugging \eqref{eff.i1} and \eqref{eff.i2} into \eqref{eig2a.2}--\eqref{eig2a.3}.
	
	We now show that $a_{11}^{\pm}+a_{22}^{\pm}$ are well-defined. Use \eqref{eff.id2} to derive
	\begin{align*}
	b_{21}b_{12}+b_{31}b_{13}
	&= b_{12}(b_{21}+b_{31})+b_{31}(b_{13}-b_{12})\\
	&= \left\{b_{12} ({A}_0^{21}+{A}_0^{31})+b_{31}({A}_0^{13}-{A}_0^{12})  \right\}(\tau+\mathrm{i}v_1\eta),
	\end{align*}
	which implies
	\begin{align}  \label{eff.i3}
	a_{11} +a_{22}=-b_{22}-b_{33}+\frac{b_{21}b_{12}+b_{31}b_{13} }{\mathbb{F}_1(\tau+\mathrm{i}{v}_1\eta)}
	=\mathbb{F}_4\tau+\mathrm{i}\eta\mathbb{F}_5,
	\end{align}
	where $\mathbb{F}_4$ and $\mathbb{F}_5$ are some smooth functions of $(\mathring{V},\nabla\mathring{\Psi})$
	that vanish when $\big(\mathring{V},\mathring{\Psi}\big)=0$
	({\it cf.}\;\eqref{A.cal} with $a_{11}^{\pm}=\pm \mu_{\pm}$ and $a_{22}^{\pm}=\mp\mu_{\pm}$).
	The proof of the lemma can be completed by using the fact that $\bm{A}_0^{\pm}$ and $\bm{A}_1^{\pm}$ are smooth
	with respect to $(\mathring{V}^{\pm},\nabla\mathring{\Psi}^{\pm})$.
\qed\end{proof}

As in the constant coefficient case, we define the Lopatinski\u{\i} determinant associated
with $\mathbb{A}$ and $\bm\beta$ as
\begin{align}\label{Lopa2a}
\Delta(t,x_1,\tau,\eta):=\det\left[\bm\beta(t,x_1,\tau,\eta)\left(E_+(t,x_1,0,\tau,\eta)\;\; E_-(t,x_1,0,\tau,\eta)\right)\right],
\end{align}
where $\bm\beta$ and $E_{\pm}$ are given by \eqref{beta.bm} and \eqref{E.eig2}, respectively.
For the zeros of $\Delta(t,x_1,\tau,\eta)$, we have the following lemma.

\begin{lemma}\ \label{lem.Lopa2}
	Assume that $\big(\mathring{V},\nabla\mathring{\Psi}\big)$ is sufficiently small in $W^{2,\infty}(\Omega)$.
	Then
	\begin{align*}
	\Delta(t,x_1,\tau,\eta)=0\qquad \textrm{if and only if}\qquad (t,x_1,\tau,\eta)\in\Upsilon_c,
	\end{align*}
	where $\Upsilon_c:=\Upsilon_{c}^{-1}\cup\Upsilon_{c}^0\cup \Upsilon_{c}^1$ is called the critical set with
		\begin{align}
	\notag  \Upsilon_{c}^q:=\left\{(t,x_1,\tau,\eta)\in\mathbb{R}^2\times\Xi: \tau=\mathrm{i}\eta\mathring{z}_q(t,x_1)\right\},
	\end{align}
	where $\mathring{z}_0$ and $\mathring{z}_{\pm1}$ are real-valued functions
	of $({\mathring{V}^{\pm}}|_{x_2=0},\nabla\mathring{\varphi})$ satisfying
	\[
	\Upsilon_c\cap \left((\Upsilon_{nd}\cup\Upsilon_p)\cap\{x_2=0\}\right)=\varnothing.
	\]
	Moreover, each of these roots is simple in the sense that, if $q\in\{0,\pm1\}$,
	then there exist a neighborhood $\mathscr{V}$ of $(\mathrm{i}\mathring{z}_{q}\eta,\eta)$ in $\Xi_1$
	and a $C^{\infty}$--function $h_q$ defined on $\mathbb{R}^2\times \mathscr{V}$ such that
	\begin{align}\label{factor.v}
	\Delta(t,x_1,\tau,\eta)=(\tau-\mathrm{i}\mathring{z}_{q}\eta)h_q(t,x_1,\tau,\eta),\quad h_q(t,x_1,\tau,\eta)\ne 0
	\end{align}
	for all $(\tau,\eta)\in\mathscr{V}$.
\end{lemma}

\begin{proof}\
	Thanks to \eqref{beta.bm} and \eqref{E.eig2}, we obtain that, for $(\tau,\eta)\in\Xi_1$,
	\setlength{\arraycolsep}{3pt}
	\begin{align}
	\bm\beta(t,x_1,\tau,\eta)(E_+(t,x_1,0,\tau,\eta)\;\; E_-(t,x_1,0,\tau,\eta))
	=\left.\begin{pmatrix}
	\mathring{\zeta}_1&\mathring{\zeta}_2\\   \mathring{\zeta}_3 &\mathring{\zeta}_4
	\end{pmatrix}\right|_{x_2=0},
	\label{Lopa2b}
	\end{align}
	where
	\begin{align*}
	&\mathring{\zeta}_1:=\mathring{a}_+\sqrt{\mathring{\varrho}_+^2+\mathring{\varsigma}_+^2}(a_{11}^+-a_{12}^+-\omega_+),\ \
	\mathring{\zeta}_2:=-\mathring{a}_-\sqrt{\mathring{\varrho}_-^2+\mathring{\varsigma}_-^2}(a_{22}^--a_{21}^--\omega_-),\\
	&\mathring{\zeta}_3:=\mathring{a}_+\mathring{a}_- \left\{-a_{12}^+({m}_1^++{m}_2^+)+(a_{11}^+-\omega_+)({m}_1^+-{m}_2^+)  \right\} ,   \\
	&\mathring{\zeta}_4:=\mathring{a}_+\mathring{a}_- \left\{-(a_{22}^--\omega_-)({m}_1^-+{m}_2^-)+a_{21}^-({m}_1^--{m}_2^-)  \right\}.
	\end{align*}
	Recall that $\mathring{a}_{\pm}$ and ${m}_j^{\pm}$ are defined by \eqref{a.ring} and \eqref{m1m2.bm}, respectively.
	
	Thanks to \eqref{eff.i1}, we have
	\begin{align}
	\notag \mathring{\zeta}_1&=\mathring{a}_+\sqrt{\mathring{\varrho}_+^2+\mathring{\varsigma}_+^2}\Big(\frac{a_{11}^+-a_{22}^+-2a_{12}^+  }{2}-\widetilde{\omega}_+\Big)
	\\ \label{zeta1.ring}&=\mathring{a}_+\sqrt{\mathring{\varrho}_+^2+\mathring{\varsigma}_+^2}\Big(\frac{\mathbb{F}_2^+ \mathring{a}_+}{2}-\widetilde{\omega}_+\Big).
	\end{align}
	Using \eqref{eff.i1} and \eqref{eff.i2} yields
	\begin{align}
	\mathring{\zeta}_2=\mathring{a}_-\sqrt{\mathring{\varrho}_-^2+\mathring{\varsigma}_-^2}\Big(\frac{\mathbb{F}_2^-+2\mathbb{F}_3^-}{2}\mathring{a}_-+\widetilde{\omega}_-\Big).
	\label{zeta2.ring}
	\end{align}
	It follows from  \eqref{eig2a.2}  that
	\begin{align*}
	\frac{a_{11}^+-a_{22}^++2a_{12}^+}{2}=\frac{2\widetilde{\omega}_+^2-2a_{12}^+(a_{12}^++a_{21}^+) }{a_{11}^+-a_{22}^+-2a_{12}^+}.
	\end{align*}
	By virtue of this last identity, we obtain
	\begin{align*}
	\mathring{\zeta}_3&=\mathring{a}_+\mathring{a}_- \Big\{\Big(\widetilde{\omega}_+-\frac{a_{11}^+-a_{22}^++2a_{12}^+}{2} \Big)({m}_2^+-{m}_1^+) -2a_{12}^+{m}_1^+ \Big\} \\
	&=\mathring{a}_+\mathring{a}_-  ({m}_2^+-{m}_1^+) \widetilde{\omega}_+\Big(1-\frac{2\widetilde{\omega}_+}{a_{11}^+-a_{22}^+-2a_{12}^+} \Big)   \\
	&\quad +\frac{2\mathring{a}_+\mathring{a}_- a_{12}^+}{a_{11}^+-a_{22}^+-2a_{12}^+}
	\left\{({m}_2^+-{m}_1^+)(a_{12}^++a_{21}^+)-{m}_1^+(a_{11}^+-a_{22}^+-2a_{12}^+)\right\}.
	\end{align*}
	Use \eqref{eff.i1} and \eqref{eff.i2} to deduce
	\begin{align*}
	&({m}_2^+-{m}_1^+)(a_{12}^++a_{21}^+)-{m}_1^+(a_{11}^+-a_{22}^+-2a_{12}^+)
	\\&\quad
	=\mathring{a}_+\left({m}_2^+\mathbb{F}_3^+-{m}_1^+(\mathbb{F}_2^++\mathbb{F}_3^+)  \right)=0,
	\end{align*}
	which, combined with \eqref{eff.i1}, yields
	\begin{align} \label{zeta3.ring}
	\mathring{\zeta}_3=\mathring{a}_+\mathring{a}_-  ({m}_2^+-{m}_1^+) \widetilde{\omega}_+\Big(1-\frac{2\widetilde{\omega}_+}{\mathring{a}_+ \mathbb{F}_2^+} \Big).
	\end{align}
	Similar to the derivation of \eqref{zeta3.ring}, we can infer from \eqref{eig2a.3}, \eqref{eff.i1}, and \eqref{eff.i2} that
	\begin{align} \label{zeta4.ring}
	\mathring{\zeta}_4=\mathring{a}_+\mathring{a}_-  ({m}_2^-+{m}_1^-) \widetilde{\omega}_-
	\Big(1+\frac{2\widetilde{\omega}_-}{\mathring{a}_-( \mathbb{F}_2^-+2\mathbb{F}_3^-)} \Big).
	\end{align}
	Therefore,  we find that
	$\Delta={\Delta_1\Delta_2 \Delta_3}|_{x_2=0}$,
	where
	\begin{align*}
	&\Delta_1:=\frac{\mathbb{F}_2^+ \mathring{a}_+}{2}-\widetilde{\omega}_+,\qquad
	\Delta_2:=\frac{\mathbb{F}_2^-+2\mathbb{F}_3^-}{2}\mathring{a}_-+\widetilde{\omega}_-,
	\\
	&\Delta_3:=\det\begin{pmatrix}
	\mathring{a}_+\sqrt{\mathring{\varrho}_+^2+\mathring{\varsigma}_+^2}&\mathring{a}_-\sqrt{\mathring{\varrho}_-^2+\mathring{\varsigma}_-^2}\\
	\dfrac{2({m}_2^+-{m}_1^+)}{\mathbb{F}_2^+}\mathring{a}_-  \widetilde{\omega}_+
	&\dfrac{2 ({m}_2^-+{m}_1^-)}{\mathbb{F}_2^-+2\mathbb{F}_3^-} \mathring{a}_+\widetilde{\omega}_-
	\end{pmatrix}.
	\end{align*}
	If $\big(\mathring{V}^{\pm},\mathring{\Psi}^{\pm}\big)=0$, then $\mathbb{F}_2^{\pm}=\pm 2\widebar{\varGamma}/\bar{c}$, $\mathbb{F}_3^{\pm}=0$,
	${m}_1^{\pm}=0$, and ${m}_2^{\pm}=1/(\widebar{\varGamma}\bar{c}\bar{h})$.
	Thus,
	\begin{align*}
	&\Delta_1|_{(\mathring{V},\mathring{\Psi})
		=0}=\bar{c}^{-1}\widebar{\varGamma}(\tau+\mathrm{i}\bar{v}\eta)-\omega_+,\quad
	\Delta_2|_{(\mathring{V},\mathring{\Psi})=0}=-\bar{c}^{-1}\widebar{\varGamma}(\tau-\mathrm{i}\bar{v}\eta)+\omega_-,\\
	&\Delta_3|_{(\mathring{V},\mathring{\Psi})=0}
	=-\frac{1}{\widebar{\varGamma}^2\bar{h}}\left\{\omega_-(\tau+\mathrm{i}\bar{v}\eta)^2+\omega_+(\tau-\mathrm{i}\bar{v}\eta)^2\right\}.
	\end{align*}
	Recalling the proof of Lemma \ref{lem.Lop} and using the continuity of $\Delta_{k}$ with respect to $(\mathring{V},\nabla\mathring{\Psi})$,
	we find that, if perturbation $\big(\mathring{V},\nabla\mathring{\Psi}\big)$ is suitably small in $W^{2,\infty}(\Omega)$,
	then $\Delta_1$ and $\Delta_2$ never vanish on $\mathbb{R}^2\times\Xi$, and $\Delta_3(t,x_1,\tau,\eta)\ne 0$ for $\eta=0$.
	Consequently, $\Delta(t,x_1,\tau,\eta)=0$ if and only if $\Delta_3(t,x_1,\tau,\eta)=0$ and $\eta\ne 0$.
	
	Let $\eta\ne 0$. Setting $z:=\tau/(\mathrm{i}\eta)$, we obtain that
	$
	\Delta_3/(\mathrm{i}\eta)^3=\mathring{Q}_1(z)+\mathring{Q}_2(z),
	$
	where
	\begin{align*}
	&\mathring{Q}_1(z):=
	\dfrac{2 ({m}_2^-+{m}_1^-)}{\mathbb{F}_2^-+2\mathbb{F}_3^-}\sqrt{\mathring{\varrho}_+^2
		+\mathring{\varsigma}_+^2} \frac{\mathring{a}_+^2\widetilde{\omega}_-}{(\mathrm{i}\eta)^3},\\
	&\mathring{Q}_2(z):=\dfrac{2({m}_1^+-{m}_2^+)}{\mathbb{F}_2^+}\sqrt{\mathring{\varrho}_-^2
		+\mathring{\varsigma}_-^2}
	\frac{\mathring{a}_-^2 \widetilde{\omega}_+}{(\mathrm{i}\eta)^3}.
	\end{align*}
	As in the proof of Lemma \ref{lem.Lop}, we define
	$$
	\mathring{P}(z):=\bar{c}^2\bar{h}^2\widebar{\varGamma}^2\big(\mathring{Q}_1(z)^2-\mathring{Q}_2(z)^2\big).
	$$
	When $\big(\mathring{V},\mathring{\Psi}\big)=0$,  $\mathring{P}(z)$ is exactly a  polynomial $P(z)$ of degree $5$, given by \eqref{P.def}.
	As a consequence, if $K$ in \eqref{bas.c2} is suitably small, then $\mathring{P}(z)$ is a polynomial function with degree $5$ or $6$,
	and there are functions $\mathring{z}_{k}$, $k\in\{0,\pm1,\pm 2,\pm3\}$, of $\big(\mathring{V},\mathring{\Psi}\big)$ such that
	\begin{align*}
	\mathring{P}(z)=(\mathring{z}_{3}z+1)\mathring{P}_1(z),\qquad
	\mathring{P}_1(z)=\mathring{z}_{-3}\prod_{k\in\{0,\pm 1,\pm 2\}}(z-\mathring{z}_k),
	\end{align*}
	where $\mathring{z}_3$ and $\mathring{P}_1(z)$ satisfy that
	\begin{align*}
	\mathring{z}_3=0, \quad \mathring{P}_1(z)=P(z) \qquad\,\, \mathrm{when}\;  \big(\mathring{V},\mathring{\Psi}\big)=0.
	\end{align*}
	
	Under condition \eqref{H2}, we can compute that
	the discriminants of $\frac{\mathrm{d}^jP(z)}{\mathrm{d}z^j}$, $j\in\{0,1,2,3\}$, are all positive.
	Since the discriminant for a polynomial is continuous with respect to the coefficients of the polynomial,
	we take $K$ suitably small to conclude that the discriminants
	of $\frac{\mathrm{d}^j\mathring{P}_1(z)}{\mathrm{d}z^j}$, $j\in\{0,1,2,3\}$, are all positive.
	Consequently, roots $\mathring{z}_k$, $k\in\{0,\pm1,\pm 2\}$, of $\mathring{P}_1(z)$ are real and distinct.
	
	Noting that the coefficients of $\mathring{P}(z)$ are all real,
	we obtain that $\mathring{z}_{\pm3}$ are both real.
	Since $\mathring{z}_k$, $k\in\{0,\pm1,\pm 2\}$, are all different,
	we infer that $\mathring{z}_k$, $k\in\{0,\pm1,\pm 2,\pm3\}$,
	can be expressed as continuous functions of the coefficients of $\mathring{P}(z)$.
	Choosing $K$ in \eqref{bas.c2} sufficiently small, we see that $\mathring{z}_{-3}$ is always nonzero,
	$\mathring{z}_3$ and
	$\mathring{z}_{0}$ are in a small neighborhood of $0$,
	and $\mathring{z}_k$, $k\in\{\pm1,\pm2\}$, are respectively
	in a small neighborhood of $\pm z_{|k|}$ with $z_{1}$ and $z_2$ given by \eqref{zeros}.
	
	We then use \eqref{eig2b} and employ an entirely similar argument
	as in the proof of Lemma \ref{lem.Lop} to conclude the result as expected.
\qed\end{proof}

In view of Lemma \ref{lem.Lopa2}, we can obtain the following result by using the continuity
of $\bm{A}_k^{\pm}$ and the fact that the perturbation, $(\mathring{V},\mathring{\Psi})$, has a compact
support (see \cite[Page\;423]{C04MR2069632} for the proof of Proposition \ref{pro.micro} (c)).

\begin{proposition}\ \label{pro.micro}
	Assume that  $(\mathring{V},\mathring{\Psi})$ satisfies \eqref{bas.c1}--\eqref{bas.c2} with
	$K$ being sufficiently small. Then we can find neighborhoods $\mathscr{V}_c^q$ of $\Upsilon_c^q, q\in\{0,\pm1\}$,
	in $\widebar{\Omega}\times \Xi$ such that
	\begin{list}{}{\setlength{\parsep}{\parskip}
			\setlength{\itemsep}{0.1em}
			\setlength{\labelwidth}{2em}
			\setlength{\labelsep}{0.4em}
			\setlength{\leftmargin}{2.2em}
			\setlength{\topsep}{1mm}
		}
		\item[\emph{(a)}] $\mathscr{V}_c^q\cap (\Upsilon_p\cup \Upsilon_{nd})=\varnothing$.
		\item[\emph{(b)}] Matrix $\mathbb{A}$ defined by \eqref{A.bb} is diagonalizable on $\mathscr{V}_c^q$.
		In particular, there exist matrices $Q_0^{\pm}\in{\mathrm{\Gamma}_2^0}$ such that
		\begin{align}
		Q_0^{\pm}(z)\mathbb{A}^{\pm}(z)Q_0^{\pm}(z)^{-1}
		=\mathrm{diag}\,\big(\omega_{\pm}(z),\omega_{\pm}'(z)\big)
		=:\mathbb{D}_1^{\pm}
		\end{align}
	 for all $z=(t,x,\tau,\eta)\in\mathscr{V}_c^q$, where  $\omega_{+}(z)\ne \omega_{+}'(z)$ and $\omega_{-}(z)\ne \omega_{-}'(z)$.
		\item[\emph{(c)}] Let $\hbar$ be $\IM\omega_+$ or $\IM\omega_-$. Then the solution of the system{\rm :}
		\begin{align} \label{bi.curve}
		\left\{\begin{aligned}
		&\frac{\mathrm{d}t}{\mathrm{d}x_2}=\frac{\partial \hbar}{\partial \delta}(t,x_1,x_2,\tau,\eta),\qquad\!\!
		\frac{\mathrm{d}x_1}{\mathrm{d}x_2}=\frac{\partial \hbar}{\partial \eta}(t,x_1,x_2,\tau,\eta),\\
		&\frac{\mathrm{d}\delta}{\mathrm{d}x_2}=-\frac{\partial \hbar}{\partial t}(t,x_1,x_2,\tau,\eta),\quad
		\frac{\mathrm{d}\eta}{\mathrm{d}x_2}=-\frac{\partial \hbar}{\partial x_1}(t,x_1,x_2,\tau,\eta),\\[1mm]
		&(t,x_1,\gamma+\mathrm{i}\delta,\eta)|_{x_2=0}\in \mathscr{V}_c^q\cap\{x_2=0\}
		\end{aligned}
		\right.
		\end{align}
		defines a curve $(t,x_1,\gamma+\mathrm{i}\delta,\eta)$ for all $x_2\geq 0$,
		which remains in $\mathscr{V}_c^q$ and is called the bicharacteristic curve.
	\end{list}
\end{proposition}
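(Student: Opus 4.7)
The three assertions should be handled in order, the first two being essentially local continuity arguments and the third a Hamiltonian flow analysis exploiting the compact support of the perturbation.

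\textbf{Step 1 (assertion (a)).} The sets $\Upsilon_c^q$ are cut out by the condition $\tau=\mathrm{i}\mathring{z}_q(t,x_1)\eta$ and therefore depend only on $(t,x_1,\tau,\eta)$; when viewed inside $\widebar{\Omega}\times\Xi$ they are cylindrical in the $x_2$-direction. Since $\Upsilon_c\cap((\Upsilon_p\cup\Upsilon_{nd})\cap\{x_2=0\})=\varnothing$ by Lemma~\ref{lem.Lopa2}, and since the characteristic quantities $\mathring{v}_1^\pm$ and $\mathring{C}_j^\pm$ defining $\Upsilon_p$ and $\Upsilon_{nd}$ are continuous in $(t,x)$, the distance between $\Upsilon_c^q\cap(\widebar{\Omega}\times\Xi_1)$ and $\Upsilon_p\cup\Upsilon_{nd}$, restricted to $\{x_2=0\}\times\Xi_1$, is strictly positive. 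By the compact-support assumption \eqref{bas.c1} on $(\mathring{V},\mathring{\Psi})$, the coefficients are uniformly continuous on $\widebar{\Omega}$, so one can extend the disjointness to a full $\widebar{\Omega}\times\Xi_1$-neighborhood of each $\Upsilon_c^q$ by choosing the $x_2$-thickness small enough. Rescaling back to $\Xi$ by homogeneity yields the required conic neighborhoods $\mathscr{V}_c^q$.

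\textbf{Step 2 (assertion (b)).} On $\mathscr{V}_c^q$ we have $\tau\neq\mathrm{i}(-\mathring{C}_2^{\pm}\pm(\mathring{C}_1^{\pm})^{-1})\eta$ by Step 1, so the right-hand side of \eqref{eig2a} is nonzero and consequently the two eigenvalues $\omega_\pm$ and $\omega_\pm'$ of $\mathbb{A}^\pm$ are distinct; in particular, $\mathbb{A}^\pm$ is diagonalizable. Because Step 1 also avoids $\Upsilon_p$, entries of $\mathbb{A}^\pm$ are smooth on $\mathscr{V}_c^q$, hence so are $\omega_\pm,\omega_\pm'$ by the implicit function theorem, and the right eigenvectors (given explicitly by expressions analogous to \eqref{E.eig2} with $\omega_\pm$ replaced successively by $\omega_\pm$ and $\omega_\pm'$) never vanish and are $C^\infty$. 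Collecting these eigenvectors column by column furnishes $Q_0^{\pm}(z)^{-1}\in\varGamma_2^0$; hence $Q_0^{\pm}\in\varGamma_2^0$ and conjugates $\mathbb{A}^{\pm}$ to $\mathbb{D}_1^{\pm}=\mathrm{diag}(\omega_\pm,\omega_\pm')$, as claimed.

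\textbf{Step 3 (assertion (c)).} Once $\omega_\pm$ is smooth on $\mathscr{V}_c^q$, the Hamiltonian vector field in \eqref{bi.curve} is smooth, so the Cauchy--Lipschitz theorem yields a local bicharacteristic starting from any point of $\mathscr{V}_c^q\cap\{x_2=0\}$. The delicate point is showing that the integral curve exists for all $x_2\geq 0$ \emph{and} stays inside $\mathscr{V}_c^q$. Here I would exploit the compact support of $(\mathring{V},\mathring{\Psi})$ in \eqref{bas.c1}: outside a compact set in $(t,x)$, the coefficients reduce to their background values, so $\hbar=\IM\omega_\pm$ becomes independent of $(t,x_1)$ and equal to its constant-coefficient analogue computed from \eqref{eig1b}. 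Along the flow, this makes $\mathrm{d}\delta/\mathrm{d}x_2$ and $\mathrm{d}\eta/\mathrm{d}x_2$ vanish outside the support, and $(\mathrm{d}t/\mathrm{d}x_2,\mathrm{d}x_1/\mathrm{d}x_2)$ constant; hence the curve can exit the compact $(t,x)$-support region at most once, after which it moves linearly with frozen frequencies. No blow-up occurs, and shrinking the cone $\mathscr{V}_c^q$ around $\Upsilon_c^q$ if necessary ensures that the $(\tau,\eta)$-component, being constant in $(\delta,\eta)$ on the relevant segment, keeps the curve in $\mathscr{V}_c^q$; on the portion inside the support, continuity of the flow plus a further shrinkage of $\mathscr{V}_c^q$ in the $x_2$-direction (using the finite thickness of the support) give containment by a standard continuation argument.

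\textbf{Main obstacle.} Steps 1--2 are routine continuity/IFT arguments. The main technical point is Step 3: proving not just existence of the bicharacteristic but its confinement to $\mathscr{V}_c^q$ for all $x_2\geq 0$. The compact-support hypothesis on $(\mathring{V},\mathring{\Psi})$ is essential here, since it reduces the long-$x_2$ behaviour to the explicit constant-coefficient flow; without it, one would have to rule out the bicharacteristic drifting away from the critical set, which in general is false.
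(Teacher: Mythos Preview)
Your overall strategy matches the paper's, which itself only sketches the proof in one sentence: Lemma~\ref{lem.Lopa2} plus continuity of $\bm{A}_k^{\pm}$ for (a)--(b), and compact support of $(\mathring{V},\mathring{\Psi})$ for (c), with a reference to \cite[Page~423]{C04MR2069632} for the bicharacteristic argument. Step~2 is correct; the explicit eigenvectors are exactly $E_\pm$ and $Y_\pm$ of \eqref{E.eig2} and \eqref{Y.r}--\eqref{Y.l}, and the paper later records $(Q_0^\pm)^{-1}$ in \eqref{Q0}.

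There is, however, an internal inconsistency in how you construct $\mathscr{V}_c^q$. In Step~1 you obtain (a) ``by choosing the $x_2$-thickness small enough,'' i.e.\ by confining $\mathscr{V}_c^q$ to a thin slab near $x_2=0$. But then (c) is impossible: the bicharacteristic must remain in $\mathscr{V}_c^q$ for \emph{all} $x_2\ge0$. The same confusion recurs in Step~3 (``a further shrinkage of $\mathscr{V}_c^q$ in the $x_2$-direction''). The neighborhood must instead be a tube over all of $x_2\ge0$, narrow only in the $(t,x_1,\tau,\eta)$-directions --- indeed, your own opening remark that $\Upsilon_c^q$ is ``cylindrical in the $x_2$-direction'' already points the right way. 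The correct argument for (a) does not use continuity in $x_2$; it uses smallness of $K$ directly: since $\mathring{v}_1^\pm(t,x)$ and $\mathring{C}_j^\pm(t,x)$ are uniformly within $O(K)$ of $\pm\bar v$ and $\widebar{C}_j$ on all of $\widebar\Omega$, the sets $\Upsilon_p$ and $\Upsilon_{nd}$ at every $(t,x)$ lie uniformly close to their constant-coefficient positions, which by \eqref{root.e} are strictly separated from $\{0,\pm z_1\}$ and hence from $\{\mathring z_q\}$. For (c), the confinement argument then splits cleanly: on $x_2\in[0,R]$ (finite by \eqref{bas.c1}) use continuous dependence of the Hamiltonian flow on initial data, after shrinking the initial trace, to keep the curve inside the tube; for $x_2>R$ the coefficients are at background, the momenta $(\delta,\eta)$ are frozen, and membership in the $x_2$-cylindrical tube is automatically preserved. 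With this repair your argument goes through.
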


In order to absorb the error terms caused by microlocalization,
as in \cite{CS04MR2095445,C04MR2069632},
we will construct the weight functions that vanish on the bicharacteristic curves
originating from $\Upsilon_c$ and that are nonzero far from these curves.

We define the complex-valued functions: For all $z=(t,x_1,\tau,\eta)\in\mathbb{R}^2\times\Xi_1$ with $\tau=\gamma+\mathrm{i}\delta$,
\begin{align}\label{sigma}
\sigma_q(z):=-\mathrm{i}\gamma+\widetilde{\sigma}_q(z),\quad \widetilde{\sigma}_q(z):=\delta-\eta\mathring{z}_{q}(t,x_1),\qquad q\in\{0,\pm1\},
\end{align}
and extend $\sigma_q$ to $\mathbb{R}^2\times\Xi$ as a homogeneous mapping of degree 1 with respect to $(\tau,\eta)$.
Functions $\mathring{z}_0(t,x_1)$ and $\mathring{z}_{\pm1}(t,x_1)$ are given by Lemma \ref{lem.Lopa2}
and correspond to the points where the Lopatinski\u{\i} determinant vanishes.
Symbol $\sigma_q$ thus belongs to ${\mathrm{\Gamma}_2^1}$ such that
\begin{align*}
\Upsilon_{c}^q=\left\{z=(t,x_1,\tau,\eta)\in\mathbb{R}^2\times\Xi\; :\;\sigma_q(z)=0\right\}.
\end{align*}

In view of Proposition \ref{pro.micro}(c), we can construct solutions $\sigma_{\pm}^q$
of the linear transport equations:
\begin{align}\label{sigma.eq}
\left\{\begin{aligned}
&\partial_2\sigma_+^q+\{\sigma_+^q,\,\IM\omega_+\}=0&&\qquad \textrm{if } x_2>0,\\
&\partial_2\sigma_-^q+\{\sigma_-^q,\,\IM\omega_-\}=0&&\qquad \textrm{if }   x_2>0,\\
&\sigma_{+}^q=\sigma_{-}^q=\sigma_q&&\qquad \textrm{if }  x_2=0,
\end{aligned}\right.
\end{align}
where $\sigma_q$ is given in \eqref{sigma}, and $\{\cdot,\,\cdot\}$ is the Poisson bracket defined by \eqref{Poisson}.
Then we infer that $\sigma_+^q$ (resp.\;$\sigma_-^q$) is constant along each bicharacteristic curve
defined by \eqref{bi.curve} with $\hbar=\IM \omega_+$ (resp.\;$\hbar=\IM \omega_-$).
In particular, function $\sigma_+^q$ (resp.\;$\sigma_-^q$) vanishes \emph{only} on the bicharacteristic curves
originated from $\Upsilon_{c}^q$ with $\hbar=\IM \omega_+$ (resp.\;$\hbar=\IM \omega_-$).
By shrinking $\mathscr{V}_c^q$ if necessary, we may assume that $\sigma_{\pm}^q$ are defined
in the whole set $\mathscr{V}_c^q$.
We will see that functions $\sigma_{\pm}^q$ are appropriate to deal with the error terms
appearing in the energy estimates.

From the above analysis, the whole space $\widebar{\Omega}\times \Xi$ is naturally divided into three disjoint
subsets: $\Upsilon_p$, $\mathscr{V}_c$, and $\widebar{\Omega}\times \Xi\setminus(\Upsilon_p\cup \mathscr{V}_c)$,
where
$\mathscr{V}_c:=\mathscr{V}_c^{-1}\cup \mathscr{V}_c^0\cup \mathscr{V}_c^1$.
To derive our energy estimate \eqref{p.p2.e0}, we introduce smooth cut-off functions according to this division.
More precisely, we introduce nonnegative functions $\chi_p^{\pm}$ and $\chi_c^q$ (with values in $[0,1]$), $q\in\{0,\pm1\}$,
such that
\begin{list}{}{\setlength{\parsep}{\parskip}
		\setlength{\itemsep}{0.1em}
		\setlength{\labelwidth}{2em}
		\setlength{\labelsep}{0.4em}
		\setlength{\leftmargin}{1em}
		\setlength{\topsep}{1mm}
	}
	\item[--] $\chi_p^{\pm}$ and $\chi_c^q$ are $C^{\infty}$ and homogeneous of degree $0$ with respect
	to $(\tau,\eta)$ so that they belong to ${\mathrm{\Gamma}_k^0}$ for all integer $k$;
	\item[--] $\mathrm{supp}\,\chi_c^q\subset \mathscr{V}_c^q$ and $\chi_c^q\equiv 1$ in a neighborhood of
	the bicharacteristic curves originated from the critical set $\Upsilon_c^q$;
	\item[--] $\chi_p^{\pm}\equiv 1$ in a neighborhood of $\Upsilon_p^{\pm}$,  $\supp\chi_p^+\cap\supp\chi_p^-=\varnothing$,
	and $\mathrm{supp}\,\chi_p^{\pm}\cap \mathrm{supp}\,\chi_c^q=\varnothing$ for all $q\in\{0,\pm1\}$.
\end{list}
Since $\sigma_{+}^q$ and $\sigma_{-}^q$ vanish \emph{only} on the bicharacteristic curves originated from $\Upsilon_{c}^q$,
there exists a constant $c$ such that
\begin{align} \label{key_error}
|\sigma_{\pm}^q|\geq c>0\qquad\; \textrm{in  }  \{\chi_c^q< 1\}\cap \mathscr{V}_c^q.
\end{align}
We also define
\begin{align}\label{chi.p.c.u}
\chi_p:=\chi_p^++\chi_p^-,\qquad \chi_c:=\chi_c^{-1}+\chi_c^0+\chi_c^1,\qquad \chi_u:=1-\chi_p-\chi_c.
\end{align}
Then $\chi_u$ has support far from the poles and the bicharacteristic curves originated from $\Upsilon_c$.
We observe that the Lopatinski\u{\i} determinant does not vanish on $\mathrm{supp}\,\chi_u\cap\{x_2=0\}$.
This enables us to apply the standard Kreiss' symmetrizers to derive the energy estimate for $T_{\chi_u}^{\gamma}W^{\mathrm{nc}}$,
which will be shown in \S\,5.4.
After that, we will show how the traces of $T_{\chi_p}^{\gamma}W^{\mathrm{nc}}$ and $T_{\chi_c}^{\gamma}W^{\mathrm{nc}}$
can be estimated.
At the end of this section, we will complete the proof of Theorem {\rm\ref{thm.2}} by using a weighted energy estimate with
the weight functions $\sigma_{\pm}^q$ given by \eqref{sigma.eq}.
In particular, we will prove that the microlocalization error terms can be absorbed by such a weighted estimate.

\subsection{Estimate at Good Frequencies}
In this subsection, we show how the solutions of problem \eqref{p.p2} can be estimated
for the frequencies that are far from both the poles $\Upsilon_p$ and the critical set $\Upsilon_c$.
We define
\[
W_u^{\mathrm{nc}}
:=(T^{\gamma}_{\chi_u}W^{+}_2,T^{\gamma}_{\chi_u}W^{+}_3,T^{\gamma}_{\chi_u}W^{-}_2,T^{\gamma}_{\chi_u}W^{-}_3)^{\mathsf{T}}
\]
and introduce a smooth cut-off function $\widetilde{\chi}_{u}$ with values in $[0,1]$ such that
\[
\widetilde{\chi}_{u}\equiv1 \,\,\,\mathrm{on}\ \ \mathrm{supp}\,\chi_u,\quad
\mathrm{supp}\,\widetilde{\chi}_u\cap\Upsilon_p=\varnothing,\quad
(\mathrm{supp}\,\widetilde{\chi}_u\cap\{x_2=0\})\cap\Upsilon_c=\varnothing,
\]
where $\chi_u$ is given by \eqref{chi.p.c.u}.
Employing the same analysis as in \S\,\ref{sec.eq},
we derive that $W_u^{\mathrm{nc}}$ satisfies
\begin{align} \label{u.P}
\left\{\begin{aligned}
&\partial_2 W_u^{\mathrm{nc}}=T^{\gamma}_{\mathbb{A}_u}W_u^{\mathrm{nc}}+T^{\gamma}_{\mathbb{E}}W_u^{\mathrm{nc}}
+T_r^{\gamma}W+\mathcal{R}_0T^{\gamma}_{\chi_u}F+\mathcal{R}_{-1}W, \!\!\!&  \ x_2>0,\\
&T^{\gamma}_{\bm\beta}W_u^{\mathrm{nc}}=G+\mathcal{R}_{-1}W^{\mathrm{nc}},&  \ x_2=0,
\end{aligned}\right.
\end{align}
where $\bm\beta\in{\mathrm{\Gamma}_2^0}$ is given by \eqref{beta.bm} for $(\tau,\eta)\in\Xi_1$.
The symbol matrices $\mathbb{A}_u$ is defined as $\mathbb{A}_r$ in \eqref{A.bb.r}
with $\widetilde{\chi}_{\pm}$ replaced by $\widetilde{\chi}_u$.
Both $\mathbb{E}$ and $r$ have the same block diagonal structure as $\mathbb{A}_u$ and belong to ${\mathrm{\Gamma}_1^0}$.
We note that $\mathbb{A}_u\equiv\mathbb{A}$ on region $\{\widetilde{\chi}_u\equiv1\}$,
and $r$ is identically zero on region $\{\chi_u\equiv1\}\cup \{\chi_u\equiv0\}$.

In view of Lemma \ref{lem.Lopa2}, we find that the Lopatinski\u{\i} determinant never vanishes
on $\mathrm{supp}\,\chi_u\cap\{x_2=0\}$.
Note that the perturbation, $\big(\mathring{V},\mathring{\Psi}\big)$, is assumed in \eqref{bas.c1}
to have a compact support. In the following lemma, we construct  the Kreiss' symmetrizers that
are microlocalized at all frequencies in the compact set $\mathbb{K}$, where
\[
\mathbb{K}:= \mathrm{supp}\,\chi_u\cap\{-T\le t\le 2T, \,x_2\geq 0,\,|x|\leq R,(\tau,\eta)\in\Xi_1 \}.
\]

\begin{lemma}\ \label{Kreiss2}
	Assume that \eqref{bas.c1}--\eqref{bas.c2} hold for a sufficiently small positive constant $K$.
	Then, for each $z_0\in \mathbb{K}$, there exist a neighborhood $\mathscr{V}$ of $z_0$ in $\mathbb{K}$
	and $C^{\infty}$--mappings $r(z)$ and $T(z)$ defined on $\mathscr{V}$ such that
	\begin{list}{}{\setlength{\parsep}{\parskip}
			\setlength{\itemsep}{0.1em}
			\setlength{\labelwidth}{2em}
			\setlength{\labelsep}{0.4em}
			\setlength{\leftmargin}{2.2em}
			\setlength{\topsep}{1mm}
		}
		\item[\rm (a)] Matrix $r(z)$ is Hermitian, and $T(z)$ is invertible for all $z\in\mathscr{V}${\rm ;}
		\item[\rm (b)] There exists $c>0$ so that
		\begin{align}\label{Kreiss2a}
		\RE\left(r(z)T(z)\mathbb{A}(z)T(z)^{-1}\right)\geq c \gamma I
		\quad\textrm{for all }z\in \mathscr{V}\textrm{ with }\gamma=\RE\tau{\rm ;}
		\end{align}
		\item[\rm (c)] If $z_0\in\mathbb{K}\cap\{x_2=0\}$, then there exists a positive constant $C$ so that
		\begin{align} \label{Kreiss2b}
		r(z)+C\big(\bm\beta(z) T(z)^{-1}\big)^{*}\bm\beta(z) T(z)^{-1}\geq I
		\end{align}
		for all $z\in \mathscr{V}\cap\{x_2=0\},$
	\end{list}
	where $\mathbb{A}$ and $\bm{\beta}$ are given by \eqref{A.bb} and \eqref{beta.bm}, respectively.
\end{lemma}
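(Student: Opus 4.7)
The plan is to construct the microlocal Kreiss' symmetrizers $r(z)$ and reducer $T(z)$ by classifying each base point $z_0 \in \mathbb{K}$ according to the spectral behavior of $\mathbb{A}(z_0)$ and then applying the standard Kreiss--Majda--Osher recipe locally. Because $\mathbb{K} \subset \mathrm{supp}\,\chi_u$, by construction we simultaneously avoid the pole set $\Upsilon_p$ (so $\mathbb{A}$ is smooth near $z_0$) and a whole neighborhood of the critical set $\Upsilon_c$ (so by Lemma \ref{lem.Lopa2} the Lopatinski\u{\i} determinant $\Delta$ is uniformly bounded away from $0$ on $\mathbb{K} \cap \{x_2=0\}$). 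The block-diagonal structure $\mathbb{A} = \mathrm{diag}(\mathbb{A}^+,\mathbb{A}^-)$ allows treating each $2\times 2$ block separately, and Lemma \ref{lem.eig2} provides all the spectral information needed: the eigenvalues $\omega_\pm$ with $\RE\omega_\pm \le 0$ and their ``unstable'' counterparts, together with the normal form of $\mathbb{A}^\pm$ outside of the degenerate locus $\Upsilon_{nd}$.

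First I will handle $z_0 \in \mathbb{K}\setminus \Upsilon_{nd}$. If $\gamma_0 = \RE\tau_0 > 0$, then by Lemma \ref{lem.eig2}(a) the eigenvalues $\omega_\pm$ have strictly negative real parts while their counterparts have strictly positive real parts, so $\mathbb{A}^\pm$ are smoothly diagonalizable on a neighborhood $\mathscr{V}$ of $z_0$. Taking $T(z)$ as the corresponding eigenbasis and $r(z)=\mathrm{diag}(-\kappa,\kappa,-\kappa,\kappa)$ with $\kappa>0$, one computes $\RE(rT\mathbb{A}T^{-1}) \gtrsim \kappa\,\mathrm{diag}(-\RE\omega_+,\RE\omega_+',-\RE\omega_-,\RE\omega_-') \gtrsim \gamma I$ after shrinking $\mathscr{V}$ if necessary. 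If $\gamma_0 = 0$ with $z_0\notin \Upsilon_{nd}$, the same diagonalization is available by Lemma \ref{lem.eig2}(e), but some eigenvalues may become purely imaginary; I will then modify $r(z)$ by the classical ``K-symmetrizer'' perturbation (adding antihermitian off-diagonal terms proportional to $\gamma$), using that $\RE(T\mathbb{A}T^{-1})$ depends analytically on $\gamma$ with $\left.\partial_\gamma T\mathbb{A}T^{-1}\right|_{\gamma=0}$ spectrally positive on the unstable block and negative on the stable one, which is the content of the implicit-function computation \eqref{E1.p5} in the preceding section.

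The main obstacle is the glancing locus $\Upsilon_{nd} \cap \mathbb{K}$, where two eigenvalues of $\mathbb{A}^+$ or $\mathbb{A}^-$ coalesce at $0$ and $\mathbb{A}$ fails to be diagonalizable. Following Kreiss \cite{K70MR0437941} and its adaptation in \cite{CS04MR2095445,C04MR2069632}, I will first reduce $\mathbb{A}^\pm$ near such a point to a canonical Jordan block of the form $\bigl(\begin{smallmatrix}0 & 1 \\ q(z) & 0\end{smallmatrix}\bigr)$, where $q$ vanishes at $z_0$ but $\partial_\gamma q \ne 0$ there, and then build the corresponding symmetrizer $r$ of the form $\bigl(\begin{smallmatrix}-\alpha & \mathrm{i}\beta \\ -\mathrm{i}\beta & \alpha\end{smallmatrix}\bigr)$ with real-valued $\alpha,\beta$ chosen so that the off-diagonal coupling absorbs the Jordan block and yields $\RE(r T\mathbb{A}T^{-1}) \geq c\gamma I$. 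This is precisely the place where the simplicity of $q$ (a transversality condition at $\Upsilon_{nd}$, inherited from $\widetilde{\omega}_\pm^2$ being a nondegenerate quadratic form at the degenerate point) is used.

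Finally, assertion (c) will follow from a standard compactness argument. Since $\mathbb{K}\cap\{x_2=0\}$ is outside $\Upsilon_c$, Lemma \ref{lem.Lopa2} and the Lopatinski\u{\i} determinant definition \eqref{Lopa2a} give $|\Delta(z)|\ge c_0>0$ on $\mathscr{V}\cap\{x_2=0\}$, which means the restriction of $\bm\beta(z)T(z)^{-1}$ to the stable subspace of $T\mathbb{A}T^{-1}$ is uniformly invertible. Decomposing any vector $X \in \mathbb{C}^4$ into its stable and unstable components in the basis $T$, the stable component is controlled by $r$ (from the positive-definite block of $r$ on that subspace), while the unstable component is controlled by $(\bm\beta T^{-1})^*\bm\beta T^{-1}$ after multiplying by a sufficiently large $C$. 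The sum then dominates $I$, giving \eqref{Kreiss2b} and completing the construction.
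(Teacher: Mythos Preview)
Your overall plan matches the paper's: the same case split between diagonalizable points ($z_0\notin\Upsilon_{nd}$) and glancing points ($z_0\in\Upsilon_{nd}$), with the standard Kreiss block construction at the latter. However, two concrete points in the execution are wrong.

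First, your argument for (c) has the roles reversed. With the symmetrizer you set up in (b), $r$ must be \emph{negative} on the stable subspace (this is forced: since $\RE\omega_\pm\le 0$, you need the negative sign to make $\RE(rT\mathbb{A}T^{-1})\ge c\gamma I$). Hence in \eqref{Kreiss2b} it is the \emph{unstable} component $Z^+$ that is controlled by $r$, while the \emph{stable} component $Z^-$ must be controlled by the boundary term via the nonvanishing Lopatinski\u{\i} determinant: from $|\bm\beta(E_+\ E_-)Z^-|\gtrsim|Z^-|$ one gets $|Z^-|^2\le C_0(|Z^+|^2+|\bm\beta T^{-1}Z|^2)$, and then the positive block of $r$ must absorb the extra $C_0|Z^+|^2$. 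This is why the paper takes $r=\mathrm{diag}(-1,-1,K',K')$ with an independent large $K'$; your choice $\mathrm{diag}(-\kappa,\kappa,-\kappa,\kappa)$ with a single parameter leaves no room for that absorption, and your description of (c) (``stable component controlled by $r$, unstable by the boundary term'') would not close.

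Second, in the diagonalizable case with $\gamma_0=0$, no antihermitian ``K-symmetrizer perturbation'' is needed. Lemma~\ref{lem.eig3} (the variable-coefficient analogue of the computation you cite as \eqref{E1.p5}) already gives $\RE\omega_\pm(z)\lesssim -\gamma$ and $\RE\omega_\pm'(z)\gtrsim\gamma$ uniformly on a neighborhood of $z_0$, so the plain diagonal $r$ yields \eqref{Kreiss2a} directly. The nontrivial construction is only required at glancing points, and there the paper's recipe $s(z)=E+F(z)+\gamma G(z)$ with $e_1=(\partial_\gamma a_r^{21}(z_0))^{-1}$ is the explicit version of the Jordan-block symmetrizer you sketch; the transversality input is precisely the nonvanishing of $\partial_\gamma a_r^{21}(z_0)$, verified by a direct computation at the background state and continuity in $K$.
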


To prove Lemma \ref{Kreiss2}, we first establish the following result.

\begin{lemma}\ \label{lem.eig3}
	Let $z_0=(t_0,x_0,\tau_0,\eta_0)\in\widebar{\Omega}\times\Xi_1$ so that
	$\RE\tau_0=0$ and $\tau_0\neq \mathrm{i}\big(- \mathring{C}^{+}_2\pm (\mathring{C}^{+}_1)^{-1}\big)\eta_0$.
	Assume that $K$ given in \eqref{bas.c2} is sufficiently small.
	Then there exists a neighborhood $\mathscr{V}$ of $z_0$ in $\widebar{\Omega}\times\Xi_1$ such that
	\begin{align}\label{eig3.e2}
	\RE\omega_{+}(z)\lesssim -\gamma,\quad \RE\omega_{+}'(z)\gtrsim \gamma
	\end{align}
	for all $z=(t,x,\tau,\eta)\in\mathscr{V}\textrm{ with }\gamma=\RE\tau.$
	A similar result holds for $\omega_-$ and $\omega_{-}'$ near $z_0=(t_0,x_0,\tau_0,\eta_0)\in\widebar{\Omega}\times\Xi_1$
	so that  $\RE\tau_0=0$ and $\tau_0\neq \mathrm{i}\big(- \mathring{C}^{-}_2\pm (\mathring{C}^{-}_1)^{-1}\big)\eta_0$.
\end{lemma}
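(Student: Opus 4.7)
My plan is to use the explicit quadratic equation \eqref{eig2a} for $\widetilde{\omega}_+ := \omega_+ - (a_{11}^+ + a_{22}^+)/2$ together with the boundary formula \eqref{eig2b}. The hypothesis $\tau_0\neq \mathrm{i}(-\mathring{C}_2^+\pm(\mathring{C}_1^+)^{-1})\eta_0$ is exactly what excludes the equality case $\eta_0^2=(\mathring{C}_1^+)^2(\delta_0+\mathring{C}_2^+\eta_0)^2$ in \eqref{eig2b} and forces $\widetilde{\omega}_+(z_0)\neq 0$, so Lemma~\ref{lem.eig2}(e) guarantees that $\omega_+$ is analytic in a neighborhood of $z_0$. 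From \eqref{eff.i3} one has $a_{11}^++a_{22}^+ = \mathbb{F}_4^+\tau + \mathrm{i}\eta\mathbb{F}_5^+$ with $\mathbb{F}_4^+,\mathbb{F}_5^+$ real-valued and of order $O(K)$ (both vanishing when $(\mathring V,\mathring\Psi)=0$); therefore $\RE(a_{11}^++a_{22}^+)(z_0)=\gamma_0\mathbb{F}_4^+=0$, and $\RE\omega_+(z_0)=\widetilde{\omega}_+(z_0)$. I then split into the two branches of \eqref{eig2b} active at $z_0$.

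\textbf{Elliptic case.} If $\eta_0^2>(\mathring{C}_1^+)^2(\delta_0+\mathring{C}_2^+\eta_0)^2$, then $\widetilde{\omega}_+(z_0)$ is real and strictly negative, so $\RE\omega_+(z_0)<0$. By continuity of $\omega_+$ there is a small neighborhood $\mathscr V$ of $z_0$ on which $\RE\omega_+(z)\le -c$; since $\gamma\le 1$ on $\Xi_1$, this already gives $\RE\omega_+(z)\le -c\gamma$. Applying the same argument to $\omega_+' = (a_{11}^++a_{22}^+)-\omega_+$ yields $\RE\omega_+'(z_0)=-\widetilde{\omega}_+(z_0)>0$ and hence $\RE\omega_+'(z)\ge c\gamma$ on $\mathscr V$.

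\textbf{Hyperbolic case (main obstacle).} The difficult case is $\eta_0^2<(\mathring{C}_1^+)^2(\delta_0+\mathring{C}_2^+\eta_0)^2$, in which $\widetilde{\omega}_+(z_0)\in\mathrm{i}\mathbb{R}\setminus\{0\}$ and so $\RE\omega_+(z_0)=0$; the required bound must come entirely from the linear term in $\gamma$. Since the strict inequality persists near $z_0$, $\omega_+(t,x,\mathrm{i}\delta,\eta)$ is purely imaginary for every $(t,x,\mathrm{i}\delta,\eta)$ in $\mathscr V\cap\{\gamma=0\}$, so all tangential partials of $\RE\omega_+$ vanish at $z_0$; analyticity of $\omega_+$ then gives
\begin{align*}
\RE\omega_+(z)=\gamma\,\RE(\partial_\gamma\omega_+)(z_0)+O\!\big(\gamma^2+\gamma\,|z-z_0|\big).
\end{align*}
The crucial step is to determine the sign of $\RE(\partial_\gamma\omega_+)(z_0)$. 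Implicit differentiation of \eqref{eig2a} yields
\begin{align*}
\partial_\gamma\widetilde{\omega}_+=\frac{(\mathring{C}_0^+)^2(\mathring{C}_1^+)^2(\tau+\mathrm{i}\mathring{C}_2^+\eta)}{\widetilde{\omega}_+},
\end{align*}
and substituting the hyperbolic-branch value of $\widetilde{\omega}_+(z_0)$ from \eqref{eig2b} produces
\begin{align*}
\partial_\gamma\widetilde{\omega}_+(z_0)=-\frac{\mathring{C}_0^+(\mathring{C}_1^+)^2\,|\delta_0+\mathring{C}_2^+\eta_0|}{\sqrt{(\mathring{C}_1^+)^2(\delta_0+\mathring{C}_2^+\eta_0)^2-\eta_0^2}}<0,
\end{align*}
which is real and bounded away from $0$ (after shrinking $\mathscr V$ if necessary). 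Combined with $\partial_\gamma[(a_{11}^++a_{22}^+)/2]=\mathbb{F}_4^+/2=O(K)$, choosing $K$ small forces $\RE(\partial_\gamma\omega_+)(z_0)\le -c<0$. Shrinking $\mathscr V$ so that the $O(\gamma^2+\gamma|z-z_0|)$ remainder is absorbed then gives $\RE\omega_+(z)\le -(c/2)\gamma$. The same manipulation applied to $\omega_+'$ produces $\RE(\partial_\gamma\omega_+')(z_0)=\mathbb{F}_4^+-\RE(\partial_\gamma\omega_+)(z_0)\ge c/2$, and hence $\RE\omega_+'(z)\ge c'\gamma$ on $\mathscr V$. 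The argument for $\omega_-,\omega_-'$ near the corresponding base point is identical, with $\mathring{C}_1^-,\mathring{C}_2^-$ replacing $\mathring{C}_1^+,\mathring{C}_2^+$ and the sign in $\tau\pm\mathrm{i}\mathring{C}_2\eta$ chosen according to \eqref{eig2a}.
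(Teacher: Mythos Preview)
Your proof is correct and follows essentially the same approach as the paper's: both split according to whether $\RE\omega_+(z_0)<0$ (your elliptic case) or $\RE\omega_+(z_0)=0$ (your hyperbolic case), and in the latter case both compute $\partial_\gamma\widetilde\omega_+$ by implicit differentiation of \eqref{eig2a}, evaluate it on $\{\gamma=0\}$ via \eqref{eig2b} to get a strictly negative real number, and conclude by Taylor expansion together with the smallness $\mathbb{F}_4^+=O(K)$ from \eqref{eff.i3}. The only cosmetic differences are that the paper expands $\widetilde\omega_+$ at the nearby boundary point $(t,x,\mathrm{i}\delta,\eta)$ rather than at $z_0$ (which avoids your extra $O(\gamma|z-z_0|)$ term), and that your sentence ``$\RE\omega_+(z_0)=\widetilde\omega_+(z_0)$'' should read $\RE\omega_+(z_0)=\RE\widetilde\omega_+(z_0)$, as you implicitly use in the hyperbolic case.
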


\begin{proof}\
	This proof is divided into two steps.
	
	\smallskip
	{\bf 1.}  If $\RE\omega_{+}(z_0)<0$, then one uses the identities:
	\begin{align}\label{C2.p0}
	\widetilde{\omega}_+=\omega_+-\frac{a_{11}^++a_{22}^+}{2},\qquad
	-\widetilde{\omega}_+=\omega_+'-\frac{a_{11}^++a_{22}^+}{2},
	\end{align}
	and \eqref{eff.i3} to infer that $\RE\omega_{+}'(z_0)>0$ for sufficiently small $K$.
	Since $\gamma=\RE\tau\leq 1$ for $(\tau,\eta)\in\Xi_1$,
	estimates \eqref{eig3.e2} follow directly from the continuity
	of $\omega_+(z)$ and $\omega_+'(z)$ with respect to $z$.
	
	\smallskip
	{\bf 2.} Assume that $\RE\omega_{+}(z_0)=0$.
	It follows from $\RE\tau_0=0$ and \eqref{eff.i3} that $\RE\widetilde{\omega}_{+}(z_0)=0$.
	Thanks to \eqref{eig2b},
	$\eta_0^2<(\mathring{C}_1^+)^2(\delta_0+\mathring{C}_2^+\eta_0)^2$ for $\delta_0=\IM\tau_0$
	so that $\delta_0\ne -\mathring{C}_2^+\eta_0$.
	For all $(\mathrm{i}\delta,\eta)$ with $(\mathring{C}_1^+)^2\big(\delta+\mathring{C}_2^+\eta\big)^2>\eta^2$,
	we apply \eqref{eig2b}  again to derive that
	\begin{align}  \label{C2.p1}
	\delta\ne -\mathring{C}_2^+\eta,  \quad \widetilde{\omega}_+(t,x,\mathrm{i}\delta,\eta)
	=\textrm{\small $ -\mathrm{i}\,\mathrm{sgn}(\delta+\mathring{C}^+_2\eta)
	\mathring{C}_0^+\sqrt{(\mathring{C}_1^+)^2(\delta+\mathring{C}_2^+\eta)^2-\eta^2}.$}
	\end{align}
	Since $\tau_0\ne \mathrm{i}(-\mathring{C}_2^+\pm (\mathring{C}_1^+)^{-1})\eta_0$,
	$\widetilde{\omega}_+$ depends analytically on $(\tau,\eta)$ by applying the implicit functions
	theorem to \eqref{eig2a}.
	In particular, we obtain that, for $z$ near $z_0$,
	\begin{align} \label{C2.p2}
	\widetilde{\omega}_+(t,x,\tau,\eta)\partial_{\gamma}\widetilde{\omega}_+(t,x,\tau,\eta)
	= (\mathring{C}_0^+\mathring{C}_1^+)^2(\tau+\mathrm{i}\mathring{C}_2^+\eta).
	\end{align}
	From \eqref{C2.p1}--\eqref{C2.p2},
	$\partial_{\gamma} \widetilde{\omega}_+(t,x,\mathrm{i}\delta,\eta)$ is real and negative
	for $(t,x,\mathrm{i}\delta,\eta)$ in a suitable neighborhood $\mathscr{V}$ of $z_0$.
	Using the Taylor expansion yields that, for all $(\tau,\eta)\in\mathscr{V}$,
	\[
	\widetilde{\omega}_+(\tau,\eta)
	=\widetilde{\omega}_+(\mathrm{i}\delta,\eta)+\partial_{\gamma} \widetilde{\omega}_+(\mathrm{i}\delta,\eta)\gamma
	+O(\gamma^2)\quad\, (\gamma\to 0).
	\]
	Then we deduce that $\RE\widetilde{\omega}_+\lesssim-\gamma$, up to shrinking $\mathscr{V}$.
	In view of \eqref{eff.i3} and \eqref{C2.p0}, estimates \eqref{eig3.e2} follow by taking $K$ small enough.
\qed\end{proof}

\vspace*{3mm}
\noindent {\bf Proof of Lemma {\rm \ref{Kreiss2}}}\quad
	The proof  is divided into two cases.
	
	\smallskip
	\emph{Case 1.} Let $z_0\in\mathbb{K}\setminus \Upsilon_{\rm nd}$ with $\Upsilon_{\rm nd}$ given in \eqref{Upsilon.nd}.
	In light of Lemmas \ref{lem.Lopa2} and \ref{lem.eig3},
	we can find a neighborhood $\mathscr{V}$ of $z_0$ in $\mathbb{K}$ such that
	\begin{align}\label{C1.p1}
	\RE\omega_{\pm}(z)\lesssim -\gamma,\quad \RE\omega_{\pm}'(z)\gtrsim \gamma\qquad\textrm{for all }z\in\mathscr{V},
	\end{align}
	and
	\begin{align} \label{C1.p2}
	\Delta(z)\ne 0\qquad\textrm{for all }z\in\mathscr{V}\cap\{x_2=0\}.
	\end{align}
	According to Lemma \ref{lem.eig2}, matrix $\mathbb{A}$ is diagonalizable in $\mathscr{V}$.
	Indeed, a smooth basis of the eigenvectors is given by
	\begin{alignat}{2} \label{Y.r}
	&E_+(z),\quad
	&Y_+(z)&:=\left((\tau+\mathrm{i}\mathring{v}_1^+\eta)(-a_{22}^++\omega'_+),
	(\tau+\mathrm{i}\mathring{v}_1^+\eta)a_{21}^+,0,0\right)^{\mathsf{T}},\\[0.5mm]
	&E_-(z),\quad
	&Y_-(z)&:=\left(0,0,(\tau+\mathrm{i}\mathring{v}_1^-\eta)a_{12}^-,
	(\tau+\mathrm{i}\mathring{v}_1^-\eta)(-a_{11}^-+\omega'_-)\right)^{\mathsf{T}},
	\label{Y.l}
	\end{alignat}
	where $E_{\pm}$ are given by \eqref{E.eig2}.
	Notice that $E_{\pm}$ and $Y_{\pm}$ are linearly independent in $\mathscr{V}$.
	We can thus define the smooth and invertible matrix $T(z):=\left(E_+\ E_-\ Y_+\ Y_-\right)^{-1}$ in $\mathscr{V}$
	so that
	\[
	T(z)\mathbb{A}(z)T(z)^{-1}=\mathrm{diag}\,(\omega_+,\,\omega_-,\,\omega'_+,\,\omega'_-)
	\qquad\textrm{for all }z\in\mathscr{V}.
	\]
	Construct the symmetrizer, $r(z)$, as
	\begin{align} \label{symm1}
	r(z):=\mathrm{diag}\,(-1,-1,K',K')\qquad\textrm{for all }z\in\mathscr{V},
	\end{align}
	where $K'\geq 1$ is a constant to be chosen.
	Then we can obtain \eqref{Kreiss2a} from \eqref{C1.p1} directly.
	
	Thanks to \eqref{C1.p2}, we have
	\begin{align} \label{C1.p3}
	\left|\bm\beta(z)(E_+(z) \ E_-(z))Z^-\right|\gtrsim |Z^-|
	\ \ \textrm{for all }z\in\mathscr{V}\cap\{x_2=0\},\,Z^-\in\mathbb{C}^2.
	\end{align}
	This implies
	\[
	|Z^-|^2\leq C_0\big( |Z^+|^2+|\bm\beta(z) T(z)^{-1}Z|^2\big)
	\]
	for all $Z=(Z^-,Z^+)^{\mathsf{T}}\in\mathbb{C}^4$ with $Z^{\pm}\in\mathbb{C}^2$,
	where $C_0$ is some positive constant independent of $z\in\mathscr{V}$.
	Then we have
	\begin{align*}
	&\big\langle\big(r(z)+2C_0\big(\bm\beta(z) T(z)^{-1}\big)^{*}\bm\beta(z) T(z)^{-1}\big)Z,Z\big\rangle_{\mathbb{C}^4}
	\\[0.5mm]
	&\quad =-|Z^-|^2+K|Z^+|^2+2C_0\left|\bm\beta(z) T(z)^{-1}Z\right|^2
	\\&\quad
	\geq |Z^-|^2+(K'-2C_0)|Z^+|^2\geq |Z|^2
	\end{align*}
	by choosing $K'\geq 2C_0+1$, which implies \eqref{Kreiss2b}.
	
	\vspace{2mm}
	\emph{Case 2.} Let $z_0\in\mathbb{K}\cap\Upsilon_{\rm nd}$.
	Then symbol $\mathbb{A}$ is not diagonalizable at $z_0$.
	We consider without loss of generality
	that $z_0=(t_0,x_0,\tau_0,\eta_0)\in\mathbb{K}$ satisfies
	$\tau_0=-\mathrm{i}(\mathring{C}^+_2\pm(\mathring{C}^+_1)^{-1})\eta_0$.
	The case, $\tau_0=-\mathrm{i}(\mathring{C}^-_2\pm(\mathring{C}^-_1)^{-1})\eta_0$,
	can be dealt with in an entirely similar way.
	Using \eqref{root.e} and the continuity of $\mathring{C}_j^{\pm}$
	in $(\mathring{V}^{\pm},\nabla\mathring{\Psi}^{\pm})$,
	we take $K$ sufficiently small to find
	\begin{align*}
	\tau_0+\mathrm{i}\mathring{v}_1^+\eta_0\ne 0,\qquad
	\tau_0\ne-\mathrm{i}(\mathring{C}^-_2\pm(\mathring{C}^-_1)^{-1})\eta_0,
	\end{align*}
	which, combined with \eqref{eig2a} and \eqref{eff.i1}, imply
	\begin{align} \notag
	\left(a_{11}^+-a_{22}^+-2a_{12}^+\right)(z_0)=\mathbb{F}_2^+(z_0)a_+(z_0)\ne 0,\quad \omega_-(z_0)\ne \omega_{-}'(z_0).
	\end{align}
	Since $\widetilde{\omega}_+(z_0)=0$,
	we use \eqref{eig2a.2}, \eqref{eff.i1}, and \eqref{eff.i2} to obtain
	\begin{align} \label{C4.p1}
	\frac{(a_{11}^+-a_{22}^+)(z_0)}{2}
	=-\Big(1+\frac{2\mathbb{F}_3^+(z_0)}{\mathbb{F}_2^+(z_0)}\Big)a_{12}^+(z_0),
	\end{align}
	and hence
	\[
	(a_{11}^+-\omega_+)(z_0)=\frac{a_{11}^+-a_{22}^+}{2}(z_0)-\widetilde{\omega}_+(z_0)
	=-\Big(1+\frac{2\mathbb{F}_3^+(z_0)}{\mathbb{F}_2^+(z_0)}\Big)a_{12}^{+}(z_0)\ne 0.
	\]
	In view of \eqref{E.eig2} and \eqref{Y.r}--\eqref{Y.l}, we find that  the vectors:
	\begin{align*}
	\widetilde{E}_+:=(-a_{12}^{+},-(\mathbb{F}_2^++2\mathbb{F}_3^+)a_{12}^{+}/\mathbb{F}_2^+,0,0)^{\mathsf{T}},
	\  \widetilde{Y}_+:=(\mathrm{i},0,0,0)^{\mathsf{T}},\
	E_-(z),\   Y_-(z)
	\end{align*}
	form a smooth basis in neighborhood $\mathscr{V}$.
	Define $T(z):=\!\!\big(\widetilde{E}_+\; \widetilde{Y}_+\; E_-\; Y_-\big)^{-1}$.  Then
	\begin{align*}
	T(z)\mathbb{A}(z)T(z)^{-1}=\begin{pmatrix}
	a_r\;&\,0\;&0\\
	0\;&\,\omega_-\;&0\\
	0\;&\,0\;&\omega'_-
	\end{pmatrix}\qquad\textrm{for all }z\in\mathscr{V},
	\end{align*}
	where $a_r$ is the $2\times 2$ matrix with $(i,j)$--entry $a_r^{ij}(z)$:
	\begin{alignat*}{2}
	a_r^{11}(z)&=\frac{(2\mathbb{F}_3^++\mathbb{F}_2^+)a_{22}^++\mathbb{F}_2^+ a_{21}^+  }{2\mathbb{F}_3^++\mathbb{F}_2^+},&\qquad
	a_r^{12}(z)&=\frac{-\mathbb{F}_2^+ a_{21}^+  \mathrm{i}}{(2\mathbb{F}_3^++\mathbb{F}_2^+)a_{12}^+},\\
	a_r^{22}(z)&=\frac{(2\mathbb{F}_3^++\mathbb{F}_2^+)a_{11}^+-\mathbb{F}_2^+ a_{21}^+  }{2\mathbb{F}_3^++\mathbb{F}_2^+},&\qquad
	a_r^{21}(z)&=\frac{  a_{12}^+ \tilde{a}_r^{21}(z) }{(2\mathbb{F}_3^++\mathbb{F}_2^+)\mathbb{F}_2^+ \mathrm{i}}
	\end{alignat*}
	with
	\begin{align*}
	\tilde{a}_r^{21}(z):=\mathbb{F}_2^+ (2\mathbb{F}_3^++\mathbb{F}_2^+)(a_{22}^+-a_{11}^+-2a_{12}^+) -4(\mathbb{F}_3^+)^2 a_{12}^++(\mathbb{F}_2^+)^2(a_{12}^++a_{21}^+).
	\end{align*}
	By virtue of \eqref{eff.i1}, \eqref{eff.i2}, and \eqref{C4.p1}, we derive
	\begin{align*}
	2\mathbb{F}_2^+ a_{21}^+(z_0)&=-2\mathbb{F}_2^+a_{12}^+(z_0)+2\mathbb{F}_3^+(a_{11}^+-a_{22}^+-2a_{12}^+)(z_0)\\
	& =(2\mathbb{F}_3^++\mathbb{F}_2^+)(a_{11}^+-a_{22}^+)(z_0), \\
	\tilde{a}_r^{21}(z_0)&=4\mathbb{F}_3^+ (\mathbb{F}_3^++\mathbb{F}_2^+)a_{12}^+(z_0)+(\mathbb{F}_2^+)^2(a_{12}^++a_{21}^+)(z_0)\\
	& =4\mathbb{F}_3^+ (\mathbb{F}_3^++\mathbb{F}_2^+)a_{12}^+(z_0)+\mathbb{F}_2^+\mathbb{F}_3^+(a_{11}^+-a_{22}^+-2a_{12}^+)(z_0)\\
	& =0,
	\end{align*}
	which implies
	\begin{align} \label{C4.p2}
	a_r^{11}(z_0)-a_r^{22}(z_0)=a_r^{21}(z_0)=0.
	\end{align}
	
	We now look for a symmetrizer $r$ with the form:
	\[
	r(z)=\begin{pmatrix}
	s(z) & 0 & 0\\
	0 & -1 & 0\\
	0 & 0 & K'
	\end{pmatrix}, \]
	where $K'\geq 1$ is some real constant, and $s$ is some $2\times 2$ Hermitian matrix,
	depending smoothly on $z$.
	Both $K'$ and $s$ are to be fixed such that \eqref{Kreiss2b} holds
	for $z\in\mathscr{V}\cap\{x_2=0\}$ when $z_0\in \{x_2=0\}$
	and  \eqref{Kreiss2a} holds for all $z\in\mathscr{V}$.
	
	We recall that $\Delta(z)\ne 0$ in $\mathscr{V}\cap\{x_2=0\}$ so that \eqref{C1.p3} holds.
	Noting that the first and third columns of $T(z_0)^{-1}$ are $E_+(z_0)$ and $E_-(z_0)$,
	we can find a positive constant $C_0$ such that, if $z_0\in\mathbb{K}\cap \{x_2=0\}$, then
	\[
	|Z_1|^2+|Z_3|^2\leq C_0\big( |Z_2|^2+|Z_4|^2+|\bm{\beta}(z_0) T(z_0)^{-1}Z|^2\big)
	\]
for all $Z=(Z_1,Z_2,Z_3,Z_4)^{\mathsf{T}}\in\mathbb{C}^4$.
	Assume that the Hermitian matrix $s$ satisfies
	\begin{align}\label{s_0}
	s(z_0)=\begin{pmatrix}
	0 & e_1\\   e_1 & e_2
	\end{pmatrix}=:E,
	\end{align}
	where $e_1$ and $e_2$ are some real constants to be fixed. Then we have
	\begin{align*}
	&\big\langle\big(r(z_0)+C'C_0\big(\bm{\beta}(z_0) T(z_0)^{-1}\big)^{*}\bm{\beta}(z_0) T(z_0)^{-1}\big)Z,Z\big\rangle_{\mathbb{C}^4}
	\\& \ =2e_1\RE\langle Z_1,Z_2\rangle_{\mathbb{C}}+e_2|Z_2|^2
	-|Z_3|^2+K'|Z_4|^2+C'C_0\left|\bm{\beta}(z_0) T(z_0)^{-1}Z\right|^2\\
	&\   \geq (C'-\max\{|e_1|,1\} ) \left(|Z_1|^2 + |Z_3|^2\right) \\
	&\quad \ + (e_2-|e_1|-C'C_0)|Z_2|^2+ (K'-C'C_0)|Z_4|^2.
	\end{align*}
	We choose $C'=\max\{|e_1|,1\}+2,$ $e_2=|e_1|+C'C_0+2$, and $K'=C'C_0+2$ to obtain
	\[
	r(z_0) + C'C_0 \big(\bm{\beta}(z_0) T(z_0)^{-1}\big)^{*}\bm{\beta}(z_0) T(z_0)^{-1} \geq 2I.
	\]
	Using the continuity and shrinking $\mathscr{V}$ if necessary,
	we derive estimate \eqref{Kreiss2b} for $C=C'C_0$.
	
	It remains to choose a suitable Hermitian matrix $s(\tau,\eta)$ and $e_1\in\mathbb{R}$
	such that both  \eqref{Kreiss2a} and \eqref{s_0} hold.
	Since $\tau_0\ne -\mathrm{i}(\mathring{C}^-_2\pm(\mathring{C}^-_1)^{-1})\eta_0$,
	we find that $\RE \omega_-(z)\lesssim -\gamma $ and $\RE \omega_-'(z)\gtrsim \gamma$
	for all $z\in\mathscr{V}$ from Lemma \ref{lem.eig3}.
	Consequently, it suffices to find $e_1\in\mathbb{R}$ and a Hermitian matrix $s(z)$
	satisfying \eqref{s_0} and
	\begin{align} \label{C4.e1}
	\RE\left(s(z)a_r(z)\right)\gtrsim \gamma I\qquad \textrm{for all }z\in\mathscr{V}.
	\end{align}
	To this end, we let
	\begin{align*}
	s(z)=E+F(z)+\gamma G(z)
	\end{align*}
	for some smooth $2\times 2$ Hermitian matrices $F$ and $G$ satisfying $F(z_0)=0$,
	where $E$ is defined by \eqref{s_0}.
	In light of Taylor's formula, we may write
	\[
	a_r(z)=a_r(t,x,\gamma+\mathrm{i}\delta,\eta)
	= a_r(t,x,\mathrm{i}\delta,\eta) + \gamma \partial_\gamma a_r(t,x,\mathrm{i}\delta,\eta)
	+  \gamma^2 N_1(z)
	\]
	for a suitable continuous function $N_1$.
	Noting from \eqref{B.bf} and \eqref{A.bb} that $a^{ij}_r(t,x,\mathrm{i}\delta,\eta)$ are purely imaginary,
	we may choose
	\[
	F(z):=\mathrm{diag}\,(f(z),0)
	\]
	with
	\begin{align*}
	f(z)=\frac{e_1(a_r^{11}-a_r^{22})(t,x,\mathrm{i}\delta,\eta)
		+e_2a_r^{21}(t,x,\mathrm{i}\delta,\eta)}{a_r^{12}(t,x,\mathrm{i}\delta,\eta)},
	\end{align*}
	so that matrix $(E+F(z))a_r(t,x,\mathrm{i}\delta,\eta)$ is symmetric and purely imaginary for all $z\in\mathscr{V}$.
	It follows from \eqref{C4.p2} that $F(z_0)=0$.
	Therefore, we have
	\begin{align*}
	&\RE\left(s(z)a_r(z)\right)\\
	&=\RE\left\{\gamma(E+F(z))\partial_\gamma a_r(t,x,\mathrm{i}\delta,\eta)+\gamma G(z)a_r(t,x,\mathrm{i}\delta,\eta)+\gamma^2N_2(z)\right\}\\
	&=\gamma\RE\left\{E\partial_\gamma a_r(t,x,\mathrm{i}\delta,\eta)
	+G(z)a_r(t,x,\mathrm{i}\delta,\eta)+N_3(z)\right\}
	\end{align*}
	for some continuous functions $N_2$ and $N_3$ satisfying $N_3(z_0)=0$,
	where we have used $F(z_0)=0$.
	According to \eqref{mu.m}, we see that,
	for $\tilde{\tau}=-\mathrm{i}(\widebar{C}_2\pm \widebar{C}_1^{-1})\eta_0$,
	\begin{align*}
	&\partial_{\gamma}a_{r}^{21}(t_0,x_0,\tilde{\tau},\eta_0)_{|(\mathring{V},\mathring{\Psi})=0}
	=\partial_{\gamma}\left\{2\mathrm{i}\,m_+(m_+-\mu_+)\right\}(\tilde{\tau},\eta_0)\\
	&=\frac{2\mathrm{i}\widebar{\varGamma}^2(\mathrm{i}\eta_0+ \epsilon^2 \bar{v}\tilde{\tau})}{(\tilde{\tau}+ \mathrm{i}\bar{v}\eta_0)^3}\\
	&\ \ \ \,\times \Big\{2\epsilon^2\bar{c}^2\bar{v}(\tilde{\tau}+ \mathrm{i}\bar{v}\eta_0)(\mathrm{i}\eta_0
	+ \epsilon^2\bar{v}\tilde{\tau})^2-\bar{c}^2(\mathrm{i}\eta_0+ \epsilon^2\bar{v}\tilde{\tau})^3
	-\epsilon^2\bar{v}(\tilde{\tau}+ \mathrm{i}\bar{v}\eta_0)^3
	\Big\}
	\\&=2 \widebar{\varGamma}^2(1-\epsilon^2\bar{v}^2)
	\frac{\eta_0^2(\mathrm{i}\eta_0+ \epsilon^2 \bar{v}\tilde{\tau})}{\tilde{\tau}
		+ \mathrm{i}\bar{v}\eta_0}\in\mathbb{R}\setminus\{0\} ,
	\end{align*}
	where we have used
	$\bar{c}^2(\mathrm{i}\eta_0+ \epsilon^2 \bar{v}\tilde{\tau})^2=(\tilde{\tau}+ \mathrm{i}\bar{v}\eta_0)^2$
	and condition \eqref{H1}.
	Then $\partial_{\gamma}a_{r}^{21}(z_0)$ is always non-zero by choosing $K$ sufficiently small
	and using the continuity of $\bm{A}_j^{\pm}$ and $\mathring{C}_j^{\pm}$.
	In order to obtain \eqref{Kreiss2a}, we choose
	\begin{align*}
	e_1:=\left(\partial_{\gamma}a_{r}^{21}(z_0)\right)^{-1},\quad
	G(z):=\begin{pmatrix}
	0 & \mathrm{i}g\\[0.5mm] -\mathrm{i}g &0
	\end{pmatrix}
	\end{align*}
	for some positive constant $g$.
	This choice of $e_1$ and $G$ yields
	\begin{align*}
	\RE\left\{E\partial_\gamma a_r(z_0)+G(z)a_r(z_0)\right\}
	=\begin{pmatrix}
	1 &\star\\[0.5mm] \star & \star
	\end{pmatrix}
	+\begin{pmatrix}
	0& \mathrm{i}g a_r^{22}(z_0)\\[0.5mm] -\mathrm{i}ga_r^{11}(z_0)  &  -\mathrm{i}ga_r^{12}(z_0)
	\end{pmatrix},
	\end{align*}
	where the entries with $\star$ are the coefficients that depend only
	on $z_0$, $e_1$, and $e_2$ (which have been fixed earlier).
	Notice that, if $(\mathring{V},\mathring{\Psi})=0$, then
	\begin{align*}
	a_r^{11}(z_0)=- a_r^{22}(z_0)=(\mu_+-m_+)(\tilde{\tau},\eta_0)=0,\qquad
	a_r^{12}(z_0)=\mathrm{i}
	\end{align*}
	for $\tilde{\tau}=-\mathrm{i}(\widebar{C}_2\pm \widebar{C}_1^{-1})\eta_0$.
	Then we can take $K$ sufficiently small,
	$g$ suitably large, and shrink $\mathscr{V}$ to conclude \eqref{Kreiss2a}.
	This completes the proof.
\qed

Thanks to Lemma \ref{Kreiss2}, one can deduce the following lemma by using a partition of unity.
We refer to \cite[Theorem\;9.1]{B-GS07MR2284507} and \cite[\S\,4.7.3]{WYuan15MR3327369} for
a detailed derivation of the following ``global'' symmetrizer $\bm{S}$.

\begin{lemma}\ \label{Kreiss1}
	Assume that \eqref{bas.c1}--\eqref{bas.c2} hold for a sufficiently small positive constant $K$.
	Then there exists a mapping
	\[\bm{S}\,:\, \widebar{\Omega}\times (\mathbb{R}^2\times\mathbb{R}_+\setminus\{0\})\to \mathcal{M}_4(\mathbb{C})
	\]
	that satisfies the following properties{\rm :}
	\begin{list}{}{\setlength{\parsep}{\parskip}
			\setlength{\itemsep}{0.1em}
			\setlength{\labelwidth}{2em}
			\setlength{\labelsep}{0.4em}
			\setlength{\leftmargin}{2.2em}
			\setlength{\topsep}{1mm}
		}
		\item[\emph{(a)}] For all $z\in\widebar{\Omega}\times (\mathbb{R}^2\times\mathbb{R}_+\setminus\{0\})$,
		matrix $\bm{S}(z)$ is Hermitian and $\bm{S}\in {\mathrm{\Gamma}_2^{2}}${\rm ;}
		\item[\emph{(b)}] For all $z=(t,x_1,\delta,\eta,\gamma)\in\partial\Omega\times (\mathbb{R}^2\times\mathbb{R}_+\setminus\{0\})$,
		\begin{align}  \notag  
		\widetilde{\chi}_u(z)^2\bm{S}(z)+C\widetilde{\chi}_u(z)^2\lambda^{2,\gamma}\bm{\beta}(z)^*\bm{\beta}(z)\geq c\widetilde{\chi}_u(z)^2\lambda^{2,\gamma}I,
		\end{align}
		where $\lambda^{m,\gamma}:=(\gamma^2+\delta^2+\eta^2)^{m/2}${\rm ;}
		\item[\emph{(c)}] There exists a finite set of matrix-valued mappings $V_j$, $H_j$, and $E_j$ such that
		\begin{align} \notag
		\RE\left(\bm{S}(z)\mathbb{A}_{u}(z)\right)=\sum_j V_j(z)^*\begin{pmatrix}
		\gamma H_j(z)&0\\ 0& E_j(z)
		\end{pmatrix}V_j(z),
		\end{align}
		where $V_j$ and $E_j$ belong to ${\mathrm{\Gamma}_2^1}$, $H_j\in{\mathrm{\Gamma}_2^0}$,
		and the following estimates hold{\rm :}
		\begin{align} \notag
		\sum_jV_j(z)^*V_j(z)\geq c\lambda^{2,\gamma}\widetilde{\chi}_u(z)^2I,\qquad
		H_j(z)\geq cI,\qquad E_j(z)\geq c\lambda^{1,\gamma}I.
		\end{align}
	\end{list}
\end{lemma}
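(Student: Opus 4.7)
The plan is to construct $\bm{S}$ by patching together the local symmetrizers from Lemma \ref{Kreiss2} via a smooth partition of unity on the compact set $\mathbb{K}$, and then extending the resulting symbol by homogeneity of degree two in $(\tau,\eta)$. This is the standard Kreiss--Majda gluing strategy, as in \cite[Theorem\;9.1]{B-GS07MR2284507}.

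First I would cover $\mathbb{K}$ by finitely many open sets $\{\mathscr{V}_j\}_{j=1}^J$ on each of which Lemma \ref{Kreiss2} furnishes a Hermitian matrix $r_j(z)$ and an invertible matrix $T_j(z)$ satisfying \eqref{Kreiss2a} (and also \eqref{Kreiss2b} when $\mathscr{V}_j$ meets $\{x_2=0\}$). Compactness of $\mathbb{K}$ guarantees a finite subcover. I would then pick a smooth partition of unity $\{\varphi_j^2\}_{j=1}^J$ subordinate to $\{\mathscr{V}_j\}$, with each $\varphi_j$ homogeneous of degree zero in $(\tau,\eta)$ and $\sum_j \varphi_j^2 \equiv 1$ on a neighborhood of $\mathrm{supp}\,\widetilde{\chi}_u \cap \Xi_1$. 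Define the global symmetrizer by
\begin{align*}
\bm{S}(z) := \lambda^{2,\gamma} \sum_{j=1}^J \varphi_j(z)^2\, T_j(z)^* r_j(z)\, T_j(z),
\end{align*}
extended by homogeneity of degree two to all of $\widebar{\Omega}\times\Xi$ (with $\bm{S}\equiv 0$ where $\sum \varphi_j^2$ vanishes). Property (a) is immediate: $\bm{S}$ is Hermitian by construction and $\bm{S}\in\varGamma_2^2$ by smoothness of $T_j$, $r_j$ together with the degree-two factor $\lambda^{2,\gamma}$.

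For property (b), I would localize at a boundary point $z_0\in\mathbb{K}\cap\{x_2=0\}$. Multiplying \eqref{Kreiss2b} by $T_j^*(\cdot) T_j$ on both sides, summing in $j$ against $\varphi_j^2$, and using the uniform boundedness of $T_j$ and $T_j^{-1}$ on $\mathrm{supp}\,\varphi_j$, I obtain
\begin{align*}
\sum_j \varphi_j^2\, T_j^* r_j T_j + C \sum_j \varphi_j^2\, T_j^* \bigl(\bm{\beta} T_j^{-1}\bigr)^* \bigl(\bm{\beta} T_j^{-1}\bigr) T_j \geq c\sum_j \varphi_j^2\, T_j^* T_j \gtrsim c\, I
\end{align*}
on $\mathrm{supp}\,\widetilde{\chi}_u\cap\{x_2=0\}$; multiplying by $\lambda^{2,\gamma}\widetilde{\chi}_u^2$ yields (b). For property (c), I would start from $\RE(r_j T_j \mathbb{A} T_j^{-1})\geq c\gamma I$ from \eqref{Kreiss2a}, noting that $\mathbb{A}_u\equiv\mathbb{A}$ on $\{\widetilde{\chi}_u\equiv1\}$ and hence on $\mathrm{supp}\,\varphi_j$. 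Diagonalizing each $r_j$ (in the generic case of Lemma \ref{Kreiss2} it already has the block form $\mathrm{diag}(-I_2, K'I_2)$; in the non-diagonalizable case a smooth orthogonal change of basis realizes the decomposition because $r_j$ is Hermitian of fixed signature), I split $T_j^*\RE(r_j T_j \mathbb{A} T_j^{-1})T_j$ into a ``stable'' block which is bounded below by $c\gamma$ and an ``unstable'' block which is bounded below by $c\lambda^{1,\gamma}$. Collecting the pieces into vectors $V_j\in\varGamma_2^1$ (built from $T_j$, $\varphi_j$, and a factor $\lambda^{1,\gamma}$), $H_j\in\varGamma_2^0$, and $E_j\in\varGamma_2^1$ yields the required decomposition.

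The main obstacle I expect is the bookkeeping required for property (c) in the non-diagonalizable regime of Lemma \ref{Kreiss2}. There the Hermitian block $s(z)$ was designed so that $\RE(s\, a_r)$ picks up its positivity from a $\gamma$ times a carefully chosen real matrix; separating the genuinely ``stable'' directions (which contribute only the weak $\gamma H_j$ bound and encode the loss of derivatives) from the ``unstable'' directions (which contribute the stronger $\lambda^{1,\gamma}$ bound) must be done so that the resulting factors $V_j$, $H_j$, $E_j$ remain in the correct symbol classes. In addition, Poisson-bracket commutators produced by the symbol calculus when passing from the pointwise inequalities of Lemma \ref{Kreiss2} to the paradifferential framework must be absorbed as lower-order remainders; keeping these remainders compatible with the block decomposition, rather than spoiling it, is the delicate part of the argument.
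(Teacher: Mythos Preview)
Your proposal is correct and follows exactly the route the paper indicates: the paper does not give its own proof but simply says the global symmetrizer is obtained from Lemma~\ref{Kreiss2} by a partition of unity, referring to \cite[Theorem~9.1]{B-GS07MR2284507} and \cite[\S4.7.3]{WYuan15MR3327369} for details. One small remark: Lemma~\ref{Kreiss1} is a statement purely about symbols, so no Poisson-bracket or paradifferential commutator terms arise at this stage---those enter only later, when the symmetrizer is applied to the paralinearized problem.
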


With Lemma \ref{Kreiss1} in hand, we can choose $\bm{S}$ as a symmetrizer for problem \eqref{u.P}
to show the energy estimate as in \cite[\S\,3.5]{C04MR2069632},
for which we only give the result here for brevity.
{We just recall that the components $T_{\chi_u}^{\gamma}W_1^\pm$ are given in terms of $T_{\chi_u}^{\gamma}W_{2,3}^\pm$ by relations similar to \eqref{omega1}.}
The estimate for $T_{\chi_u}^{\gamma}W$ reads:
\begin{align}\notag
&\gamma\VERT T_{\chi_u}^{\gamma}W\VERT_{1,\gamma}^2+\|T_{\chi_u}^{\gamma}{W^{\mathrm{nc}}}|_{x_2=0}\|_{1,\gamma}^2\\
&\,\, \lesssim \|G\|_{1,\gamma}^2+\|{W^{\mathrm{nc}}}|_{x_2=0}\|^2
+\gamma^{-1}\left(\VERT F\VERT_{1,\gamma}^2+\VERT W \VERT^2+\VERT T_r^{\gamma}W\VERT_{1,\gamma}^2\right),
\label{est.u}
\end{align}
where symbol $r\in{\mathrm{\Gamma}_1^0}$ vanishes on region $\{\chi_u\equiv1\}\cup\{\chi_u\equiv0\}$.

\subsection{Estimate near the Poles}
This subsection is devoted to deriving the energy estimate near poles $\Upsilon_p=\Upsilon_p^+\cup \Upsilon_p^-$.
Matrix $\mathbb{A}$ is not defined at points in $\Upsilon_p$,
while the stable subspace $\mathcal{E}^-$ of $\mathbb{A}$ admits a continuous extension
at these points, due to Lemma \ref{lem.eig2}.
We show the estimate near $\Upsilon_p^+$ without loss of generality.
For this purpose, we define two cut-off functions $\widetilde{\chi}$ and $\widetilde{\chi}_1$
with values in $[0,1]$ that are both $C^{\infty}$ and homogeneous of degree $0$ with respect to $(\tau,\eta)$
and satisfy that
\begin{align}
\widetilde{\chi}\equiv 1 \ \ \mathrm{on}\ \mathrm{supp}\,\chi_p^+,\ \
\widetilde{\chi}_1\equiv 1 \ \ \mathrm{on}\ \supp\widetilde{\chi},\ \
\mathrm{supp}\,\widetilde{\chi}_1\cap\mathrm{supp}\,\chi_{\rm c}=\varnothing, \label{chi.p}
\end{align}
where $\chi_p^+$ and $\chi_{\rm c}$ are introduced at the end of \S\,\ref{sec.micro}.
As in \cite{CS04MR2095445}, we go back to the original problem \eqref{p.p2} and set
\begin{align}\label{pole.W}
W_{\rm p}^{+}:=T_{\chi_{\rm p}^+}^{\gamma}W^{+}, \quad   W_{\rm p}^{-}:=T_{\chi_{\rm p}^+}^{\gamma}W^{-}.
\end{align}
Then we employ the argument in the derivation of \eqref{id.P.1} to obtain
\begin{align} \label{pole1.eq}
T^{\gamma}_{\tau \bm{A}_0^{+}+\mathrm{i}\eta\bm{A}_1^{+}}W_p^++T^{\gamma}_{\bm{C}^+}W_p^++\bm{I}_2\partial_2W_p^+=
T^{\gamma}_{r}W^++T^{\gamma}_{\chi_p^+}F^++\mathcal{R}_{-1}W^+,
\end{align}
where $r=\mathrm{i}\left\{\chi_p^+,\tau \bm{A}_0^{+}+\mathrm{i}\eta\bm{A}_1^{+}\right\}+\partial_2\chi_p^+\bm{I}_2\in{\mathrm{\Gamma}_1^0}$.
The equation for $W_p^-$ is the same as \eqref{pole1.eq} with index $``+''$ replaced by $``-''$.

Let us introduce symbols $R_+$ and $L^+_1$, both belonging to ${\mathrm{\Gamma}_2^0}$, such that, for $(\tau,\eta)\in\Xi_1$,
\setlength{\arraycolsep}{5pt}
\begin{align*}
R_+:=\begin{pmatrix}
1&k_1&k_2\\ 0&-\mathring{a}_+a_{12}^+&0\\
0&\mathring{a}_+(a_{11}^+-\omega_{+})&1
\end{pmatrix},\qquad
L^+_1:=\begin{pmatrix}
1&0&0\\ 0&1&0\\ l_1&l_2&l_3
\end{pmatrix}.
\end{align*}
Recalling from \eqref{B.bf} and \eqref{b11} that $(b_{ij}^{+})=\tau\bm{A}_0^{+}+\mathrm{i}\eta\bm{A}_1^{+}$ and
$b_{11}^+=\mathbb{F}_{1}^+\mathring{a}_+$,  we choose
\begin{align*}
&k_1=(\mathbb{F}_1^{+})^{-1}\left\{b_{12}^+a_{12}^+-b_{13}^+(a_{11}^+-\omega_+) \right\},\quad
k_2=-{b_{23}^+}/{b_{21}^+},\\
&l_1=-(\mathbb{F}_1^{+})^{-1}\left\{b_{21}^+(a_{11}^+-\omega_+)+b_{31}^+a_{12}^+ \right\},\quad
l_2=\mathring{a}_+(a_{11}^+-\omega_+),\quad
l_3=\mathring{a}_+a_{12}^+,
\end{align*}
so that $L_1^+\bm{I}_2R^+=\mathrm{diag}\,(0,\,-\mathring{a}_+a_{12}^+,\,\mathring{a}_+a_{12}^+)$ and
\begin{align*}
&L_1^+(\tau\bm{A}_0^{+}+\mathrm{i}\eta\bm{A}_1^{+})R^+
=\begin{pmatrix}
b_{11}^+&0&k_2 b_{11}^++b_{13}^+\\
b_{21}^+&d_1& 0\\
0&d_3&d_2
\end{pmatrix},
\end{align*}
where
\begin{align*}
d_1&=b_{21}^+k_1-b_{22}^+\mathring{a}_+a_{12}^++b_{23}^+\mathring{a}_+(a_{11}^+-\omega_+),\quad
d_2=l_j(k_2b_{j1}^++b_{j3}^+)=l_j b_{j3}^+, \\
d_3&=l_j\left(k_1b_{j1}^+-b_{j2}^+\mathring{a}_+a_{12}^++b_{j3}^+\mathring{a}_+(a_{11}^+-\omega_+)\right)\\
&=l_j\left(-b_{j2}^+\mathring{a}_+a_{12}^++b_{j3}^+\mathring{a}_+(a_{11}^+-\omega_+)\right).
\end{align*}
We have used the relation: $l_jb_{j1}^+=0$.
From \eqref{b11} and definition \eqref{A.bb} of $\mathbb{A}$, we have
\begin{align*}
d_1&=(\mathbb{F}_1^{+})^{-1}\left\{(b_{21}^+b_{12}^+-b_{22}^+\mathbb{F}_1^{+}\mathring{a}_+)a_{12}^+
+(-b_{21}^+b_{13}^++b_{23}^+\mathbb{F}_1^{+}\mathring{a}_+)(a_{11}^+-\omega_{+})\right\}\\
&=(\mathbb{F}_1^{+})^{-1}\left\{b_{11}^+a_{11}^+a_{12}^+-b_{11}^+a_{12}^+(a_{11}^+-\omega_{+})\right\}=\mathring{a}_+a_{12}^+\omega_{+},\\[1mm]
d_2&=(\mathbb{F}_1^{+})^{-1}\left\{(a_{11}^+-\omega_{+})(-b_{21}^+b_{13}^++\mathbb{F}_1^{+}\mathring{a}_+b_{23}^+)
+a_{12}^+(-b_{31}^+b_{13}^++\mathbb{F}_1^{+}\mathring{a}_+b_{33}^+)  \right\}\\
&=-(\mathbb{F}_1^{+})^{-1}\left\{(a_{11}^+-\omega_{+})b_{11}^+a_{12}^++a_{12}^+b_{11}^+a_{22}^+\right\}
=\mathring{a}_+a_{12}^+(\omega_{+}-a_{11}^+-a_{22}^+),
\end{align*}
and
\begin{align*}
\mathbb{F}_1^{+}d_3=\;&(a_{11}^+-\omega_{+})^2(-b_{21}^+\mathring{a}_+b_{13}^++\mathbb{F}_1^{+}\mathring{a}_+^2b_{23}^+)
+(a_{12}^+)^2\mathring{a}_+(b_{12}^+b_{31}^+-\mathbb{F}_1^{+}\mathring{a}_+b_{32}^+)\\
&+(a_{11}^+-\omega_{+})a_{12}^+\mathring{a}_+(b_{12}^+b_{21}^+-b_{13}^+b_{31}^+
-\mathbb{F}_1^{+}\mathring{a}_+b_{22}^++\mathbb{F}_1^{+}\mathring{a}_+b_{33}^+)\\
=\;&-\mathring{a}_+b_{11}^+a_{12}^+\left\{(a_{11}^+-\omega_{+})^2-a_{12}^+a_{21}^+-(a_{11}^+-\omega_{+})(a_{11}^+-a_{22}^+)\right\}=0.
\end{align*}
We have used \eqref{eig2a.1}  for deriving the last identity.
We notice from \eqref{A.cal}--\eqref{mu.m} that
\begin{align*}
\mathring{a}_+a_{12}^+|_{(\mathring{V},\mathring{\Psi})=0}=-(\tau+\mathrm{i}\bar{v}\eta)m_+
={\widebar{\varGamma}\bar{c}}(\mathrm{i}\eta+\epsilon^2\bar{v}\tau)^2/2
\end{align*}
does not vanish on $\mathrm{supp}\,\widetilde{\chi}_1$ by shrinking $\mathrm{supp}\,\widetilde{\chi}_1$ if necessary.
Since $\mathring{a}_+a_{12}^+$ is smooth in $(\tau,\eta)$ and $(\mathring{V},\nabla\mathring{\Psi})$,
matrix $L_2^+:=\mathrm{diag}\,(1,\,-(\mathring{a}_+a_{12}^+)^{-1},\,(\mathring{a}_+a_{12}^+)^{-1})$
is a smooth and invertible mapping on $\mathrm{supp}\,\widetilde{\chi}_1$.
We then derive
\begin{align} \label{pole.id1a}
&L_+\bm{I}_2R_+=\bm{I}_2,\qquad
L_+:=L_2^+L_1^+,\\[1mm]
\notag
&L_+(\tau\bm{A}_0^{+}+\mathrm{i}\eta\bm{A}_1^{+})R_+\\
&\quad \label{pole.id1b} =\begin{pmatrix}
b_{11}^+&0&k_2 b_{11}^++b_{13}^+\\
-b_{21}^+/(\mathring{a}_+a_{12}^+)&-\omega_{+}& 0\\
0&0&\omega_{+}-a_{11}^+-a_{22}^+
\end{pmatrix}=:A_+^{d}.
\end{align}

We also introduce symbols $R_-$ and $L^-_1$ that belong to ${\mathrm{\Gamma}_2^0}$ and satisfy that, for $(\tau,\eta)\in\Xi_1$,
\setlength{\arraycolsep}{5pt}
\begin{align*}
R_-&:=\begin{pmatrix}
1&k_1^-&k_2^-\\ 0&\mathring{a}_-(a_{22}^--\omega_{-})&\mathring{a}_-a_{12}^-\\
0&-\mathring{a}_-a_{21}^-&\mathring{a}_- (-a_{11}^-+\omega_{-}')
\end{pmatrix},\\[1mm]
L_1^-&:=\begin{pmatrix}
1&0&0\\
l_1^-&\mathring{a}_-(a_{11}^--\omega_{-}')&\mathring{a}_-a_{12}^-\\
l_2^-&\mathring{a}_-a_{21}^-&\mathring{a}_-(a_{22}^--\omega_{-})
\end{pmatrix},
\end{align*}
with
\begin{alignat*}{3}
k_1^-&=\;&(\mathbb{F}_1^{-})^{-1}\left\{-(a_{22}^--\omega_{-})b_{12}^-+a_{21}^-b_{13}^- \right\}, \\
k_2^-&=\;&(\mathbb{F}_1^{-})^{-1}\left\{-a_{12}^-b_{12}^-+(a_{11}^--\omega_{-}')b_{13}^-\right\},\\[0.5mm]
l_1^-&=\;&(\mathbb{F}_1^{-})^{-1}\left\{-(a_{11}^--\omega_{-}')b_{21}^- -a_{12}^-b_{31}^- \right\}, \\
l_2^-&=\;&(\mathbb{F}_1^{-})^{-1}\left\{-a_{21}^-b_{21}^- - (a_{22}^--\omega_{-})b_{31}^- \right\},
\end{alignat*}
so that $L_1^-\bm{I}_2R_-=\mathrm{diag}\,(0,\,d_4,\,-d_4)$ and
\begin{align*}
L_1^-(\tau\bm{A}_0^{-}+\mathrm{i}\eta\bm{A}_1^{-})R_-=\begin{pmatrix}
b_{11}^-&0&0\\ 0& d_5&d_6\\  0& d_7&d_8
\end{pmatrix},
\end{align*}
where  $d_4=\mathring{a}_-^2\left\{(a_{11}^--\omega_-')(a_{22}^--\omega_-)-a_{12}^-a_{21}^-\right\}$ and
\begin{align*}
d_5=\;&\mathring{a}_-(a_{11}^--\omega_-')\left\{b_{21}^-k_1^-+b_{22}^-\mathring{a}_-(a_{22}^--\omega_{-})-b_{23}^-\mathring{a}_-a_{21}^- \right\}\\
&+\mathring{a}_-a_{12}^-\left\{b_{31}^-k_1^- +b_{32}^-\mathring{a}_-(a_{22}^--\omega_{-})-b_{33}^-\mathring{a}_-a_{21}^-   \right\}.
\end{align*}
We omit the expressions for $d_j$, $j\in\{6,7,8\}$,
since they are quite similar to that of $d_5$.
By virtue of the identity:
$a_{22}^--\omega_{-}=-a_{11}^-+\omega_{-}'$ and \eqref{eig2a.1}, we deduce
\begin{align}
(a_{11}^--\omega_-')(a_{22}^--\omega_-)-a_{12}^-a_{21}^-
&=-(a_{22}^--\omega_-)^2-a_{12}^-a_{21}^- \notag\\
&=(\omega_--a_{22}^-)(-2\omega_{-}+a_{11}^-+a_{22}^-),
\label{pole.p1}
\end{align}
which yields $d_4=\mathring{a}_-^2(\omega_{-}-a_{22}^-)(-2\omega_{-}+a_{11}^-+a_{22}^-)$.
Using  \eqref{B.bf}, \eqref{b11}, and the identity: $a_{22}^--\omega_{-}=-a_{11}^-+\omega_{-}'$, we compute
\begin{align*}
d_5=\mathring{a}_-^2\left\{a_{11}^-(a_{22}^--\omega_-)^2-2a_{12}^-a_{21}^-(a_{22}^--\omega_-)+a_{12}^-a_{21}^-a_{22}^-\right\},
\end{align*}
which, combined with \eqref{eig2a.1} and \eqref{pole.p1}, implies
\begin{align*}
d_5=\mathring{a}_-^2\omega_{-}(\omega_{-}-a_{22}^-)(2\omega_{-}-a_{11}^--a_{22}^-)=-\omega_{-}d_4.
\end{align*}
Performing the similar calculations to $d_j$ for $j=6,7,8$,
we can discover that $d_6=d_7=0$ and $d_8=\omega_-' d_4$ so that
$$L_1^-(\tau\bm{A}_0^{-}+\mathrm{i}\eta\bm{A}_1^{-})R_-=\mathrm{diag}\,(0,\,-\omega_{-}d_4,\,\omega_{-}'d_4).$$
Note that $d_4$ does not vanish in neighborhood $\mathrm{supp}\,\widetilde{\chi}_1$ of $\Upsilon_p^+$
up to shrinking $\mathrm{supp}\,\widetilde{\chi}_1$.
Setting $L_2^-:=\mathrm{diag}\,(1,\,d_4^{-1},\,-d_4^{-1})$ and $L_-:=L_2^-L_1^-$, we obtain
\begin{align} \label{pole.id2}
\left\{
\begin{aligned}
& L_-\bm{I}_2R_-=\bm{I}_2,\\
&L_-(\tau\bm{A}_0^{-}+\mathrm{i}\eta\bm{A}_1^{-})R_-=\mathrm{diag}\,(b_{11}^-,\,-\omega_{-},\,-\omega_{-}')=:A_-^d.
\end{aligned}
\right.
\end{align}

Let us define
\begin{align*}
Z^+:=T^{\gamma}_{\widetilde{\chi}R_+^{-1}}W_p^+,
\qquad
Z^-:=T^{\gamma}_{\widetilde{\chi}R_-^{-1}}W_p^-.
\end{align*}
Applying operator $T^{\gamma}_{\widetilde{\chi}L_+}$ to \eqref{pole1.eq} and using  \eqref{pole.id1a} yield
\begin{align*}
&T^{\gamma}_{\widetilde{\chi}L_+(\tau \bm{A}_0^{+}+\mathrm{i}\eta\bm{A}_1^{+})}W_p^+
+T^{\gamma}_{-\mathrm{i}\sum_{j=0}^{1}\partial_{\xi_j}(\widetilde{\chi}L_+)\partial_{x_j}(\tau \bm{A}_0^{+}
	+\mathrm{i}\eta\bm{A}_1^{+})}W_p^++T^{\gamma}_{\widetilde{\chi}L_+\bm{C}^+}W_p^+\\
&\quad =-T^{\gamma}_{\widetilde{\chi}L_+\bm{I}_2}\partial_2W_p^++T^{\gamma}_{\widetilde{\chi}L_+}T^{\gamma}_{r}W^+
+T^{\gamma}_{\widetilde{\chi}L_+}T^{\gamma}_{\chi_p^+}F^++\mathcal{R}_{-1}W^+\\
&\quad =-\bm{I}_2\partial_2Z^++T^{\gamma}_{\bm{I}_2\partial_2(\widetilde{\chi}R_+^{-1})}W_p^+
+T^{\gamma}_{\widetilde{\chi}L_+}T^{\gamma}_{r}W^++T^{\gamma}_{\widetilde{\chi}L_+}T^{\gamma}_{\chi_p^+}F^++\mathcal{R}_{-1}W^+,
\end{align*}
where $x_0:=t$, $\xi_0:=\delta$, and $\xi_1:=\eta$ to avoid overloaded equations.
On the other hand, it follows from \eqref{pole.W} and \eqref{pole.id1b} that
\begin{align*}
T^{\gamma}_{A^d_+}Z^+=T^{\gamma}_{\widetilde{\chi}L_+(\tau \bm{A}_0^{+}+\mathrm{i}\eta\bm{A}_1^{+})}W_p^+
+T^{\gamma}_{-\mathrm{i}\sum_{j=0}^{1}\partial_{\xi_j}A^d_+\partial_{x_j}(\widetilde{\chi}R_+^{-1})}W_p^++\mathcal{R}_{-1}W^+.
\end{align*}
Then we have
\begin{align} \label{pole2.eq}
\bm{I}_2\partial_2Z^++T^{\gamma}_{\widetilde{A}^d_+}Z^++T^{\gamma}_{\mathbb{D}_+^p}Z^+
=T^{\gamma}_{r}W^++\mathcal{R}_{0}T^{\gamma}_{\chi_p^+}F^++\mathcal{R}_{-1}W^+
\end{align}
for new $r\in{\mathrm{\Gamma}_1^0}$ vanishing on $\{\chi_p^+\equiv1 \}\cup\{\chi_p^+\equiv0 \}$,
where $\widetilde{A}^d_+$ is an extension of $A^d_+$ to the whole set $\widebar{\Omega}\times\Xi$,
and $\mathbb{D}_+^p\in{\mathrm{\Gamma}_1^0}$ is given by
\begin{align*}
\mathbb{D}_+^p:=\;&\widetilde{\chi}L_+\bm{C}^+R_+-\bm{I}_2\partial_2(\widetilde{\chi}R_+^{-1})R_+\\
&+\mathrm{i}\sum_{j=0,1}\big\{\partial_{\xi_j}\widetilde{A}^d_+\partial_{x_j}(\widetilde{\chi}R_+^{-1})
-\partial_{\xi_j}(\widetilde{\chi}L_+)\partial_{x_j}(\tau \bm{A}_0^{+}+\mathrm{i}\eta\bm{A}_1^{+}) \big\}  R_+.
\end{align*}
Similarly, we have
\begin{align} \label{pole2.eq2}
\bm{I}_2\partial_2Z^-+T^{\gamma}_{\widetilde{A}^d_-}Z^-+T^{\gamma}_{\mathbb{D}_-^p}Z^-
=T^{\gamma}_{r}W^-+\mathcal{R}_{0}T^{\gamma}_{\chi_p^+}F^-+\mathcal{R}_{-1}W^-,
\end{align}
where $r\in{\mathrm{\Gamma}_1^0}$ vanishes on $\{\chi_p^+\equiv1 \}\cup\{\chi_p^+\equiv0 \}$,
$\widetilde{A}^d_-$ is an extension to the whole set $\widebar{\Omega}\times\Xi$ of $A^d_-$,
and $\mathbb{D}_-^p\in{\mathrm{\Gamma}_1^0}$.
According  to the definitions of $R_{\pm}$,  we have
\begin{align} \label{pole.id3}
Z^{\mathrm{nc}}=T^{\gamma}_{\widetilde{\chi}\widetilde{R}^{-1}}T^{\gamma}_{\chi_p^+}W^{\mathrm{nc}},
\end{align}
where $Z^{\mathrm{nc}}:=(Z_2^+,Z_3^+,Z_2^-,Z_3^-)^{\mathsf{T}}$ and
\begin{align*}
\widetilde{R}:=\begin{pmatrix}
-\mathring{a}_+a_{12}^+&0&0&0\\
\mathring{a}_+(a_{11}^+-\omega_{+})&1&0&0\\
0&0&\mathring{a}_-(a_{22}^--\omega_{-})&\mathring{a}_-a_{12}^-\\
0&0&-\mathring{a}_-a_{21}^-&\mathring{a}_- (-a_{11}^-+\omega_{-}')
\end{pmatrix}.
\end{align*}
Note from \eqref{E.eig2} that the first and third columns of $\widetilde{R}$ are
$E_+$ and $E_-$.
By virtue of \eqref{p.p2.c}, we obtain the following boundary conditions
in terms of $Z^{\mathrm{nc}}$:
\begin{align} \label{pole.bdy}
T^{\gamma}_{\bm{\beta}(E_+\,  E_-)}\begin{pmatrix}
Z_{2}^+\\ Z_2^-
\end{pmatrix}+\mathcal{R}_0\begin{pmatrix}
Z_{3}^+\\  Z_3^-
\end{pmatrix}=\mathcal{R}_0G+\mathcal{R}_{-1}W^{\mathrm{nc}}\qquad \mathrm{if}\ x_2=0.
\end{align}

For problem \eqref{pole2.eq}--\eqref{pole2.eq2} and \eqref{pole.bdy},
we obtain the following energy estimate:

\begin{lemma}\ \label{lem.Zp}
	There exists constants $K_0\leq 1$ and $\gamma_0\geq 1$ such that,
	if $\gamma\geq \gamma_0$ and $K\leq K_0$ for $K$ given in \eqref{bas.c2}, then
	\begin{align}\notag
	&\gamma\VERT Z^{\pm}\VERT_{1,\gamma}^2+\|(Z_2^{\pm},Z_3^{\pm})|_{x_2=0}\|_{1,\gamma}^2\\
	&\,\,\, \lesssim \|G\|_{1,\gamma}^2+\|{W^{\mathrm{nc}}}|_{x_2=0}\|^2
	+\gamma^{-1}\big(\VERT T^{\gamma}_{\chi_p^+}F\VERT_{1,\gamma}^2+\VERT W\VERT^2+\VERT T^{\gamma}_{r}W\VERT_{1,\gamma}^2\big),
	\label{pole.e.1}
	\end{align}
	where symbol $r\in{\mathrm{\Gamma}_1^0}$ vanishes in region $\{\chi_p^+\equiv1\}\cup\{\chi_p^+\equiv0\}$.
\end{lemma}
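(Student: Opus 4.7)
The strategy is to exploit the block-triangular form of $A^d_\pm$ in \eqref{pole.id1b}--\eqref{pole.id2}: each $Z^\pm$ decouples into an algebraic mode $Z^\pm_1$ tied to $b_{11}^\pm$, a stable mode $Z^\pm_2$ tied to $-\omega_\pm$, and an unstable mode $Z^\pm_3$ tied to $-\omega_\pm'=\omega_\pm-a_{11}^\pm-a_{22}^\pm$. Three structural facts make the separation effective: (i) by Lemma \ref{lem.eig3}, on $\supp\widetilde\chi_1$ (which is disjoint from $\supp\chi_{\rm c}\cup\Upsilon_{nd}$) one has $\RE\omega_\pm\lesssim-\gamma$ and $\RE\omega_\pm'\gtrsim\gamma$; (ii) by \eqref{b11}, $|b_{11}^\pm|=\mathbb{F}_1^\pm|\mathring{a}_\pm|\geq\mathbb{F}_1^\pm\RE\tau\gtrsim\gamma$ everywhere, since $|\mathring{a}_\pm|\geq\RE\tau=\gamma$; (iii) by Lemma \ref{lem.Lopa2}, $\Upsilon_c\cap\Upsilon_p=\varnothing$, so the Lopatinski\u{\i} symbol $\bm{\beta}(E_+\ E_-)\in\varGamma_2^0$ is elliptic on $\supp\chi_{\rm p}^+\cap\{x_2=0\}$.

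I would first close the unstable mode. The third component of \eqref{pole2.eq} (and its $Z^-$-counterpart) reads $\partial_2 Z^\pm_3+T^\gamma_{-\omega_\pm'}Z^\pm_3+(\mathrm{l.o.t.})=(\cdots)$, so the paradifferential energy identity combined with the microlocalized G\aa rding inequality at weight $\lambda^{1,\gamma}$ (Lemma \ref{lem.para1}(vii)), using $\RE(-\omega_\pm')\lesssim-\gamma$, yields
\[
\|Z^\pm_3|_{x_2=0}\|_{1,\gamma}^2+\gamma\VERT Z^\pm_3\VERT_{1,\gamma}^2\lesssim \gamma^{-1}\big(\VERT T^\gamma_{\chi_p^+}F^\pm\VERT_{1,\gamma}^2+\VERT T^\gamma_r W\VERT_{1,\gamma}^2+\VERT W\VERT^2\big),
\]
with no boundary input needed because both the trace and bulk terms carry favorable signs. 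Next, applying $T^\gamma_{\widetilde\chi_1/b_{11}^\pm}$ (a symbol in $\varGamma_2^{-1}$ of size $O(\gamma^{-1})$ on $\supp\widetilde\chi_1$) to the first component of \eqref{pole2.eq} expresses $Z^\pm_1$ algebraically in terms of $Z^\pm_3$ and sources, producing $\gamma\VERT Z^\pm_1\VERT_{1,\gamma}^2\lesssim\gamma^{-1}(\VERT Z^\pm_3\VERT_{1,\gamma}^2+\text{sources})$ after standard paradifferential composition.

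I would then transfer the unstable-trace bound through the boundary relation \eqref{pole.bdy}. Since $\bm{\beta}(E_+\ E_-)$ is elliptic of order $0$ on $\supp\chi_{\rm p}^+\cap\{x_2=0\}$, a G\aa rding argument gives
\[
\|(Z^+_2,Z^-_2)|_{x_2=0}\|_{1,\gamma}^2\lesssim\|G\|_{1,\gamma}^2+\|(Z^+_3,Z^-_3)|_{x_2=0}\|_{1,\gamma}^2+\|W^{\mathrm{nc}}|_{x_2=0}\|^2,
\]
supplying the missing boundary data for the stable modes. Finally, the energy identity applied to the second component $\partial_2 Z^\pm_2=T^\gamma_{\omega_\pm}Z^\pm_2+(\cdots)$ with $\RE\omega_\pm\lesssim-\gamma$ yields $\gamma\VERT Z^\pm_2\VERT_{1,\gamma}^2\lesssim\|Z^\pm_2|_{x_2=0}\|_{1,\gamma}^2+\gamma^{-1}(\text{sources}+\VERT Z^\pm_1\VERT_{1,\gamma}^2)$, where the coupling $T^\gamma_{-b_{21}^\pm/(\mathring{a}_\pm a_{12}^\pm)}Z^\pm_1$ is absorbed by the algebraic estimate on $Z^\pm_1$. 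Chaining the four pieces and taking $\gamma_0$ large and $K_0$ small to absorb the lower-order remainders $\mathcal{R}_{-1}W$ into the left-hand side produces \eqref{pole.e.1}.

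The main obstacle will be the careful bookkeeping of the paradifferential errors together with the cross-coupling between $Z^+$ and $Z^-$ through \eqref{pole.bdy}: at each stage (unstable trace $\to$ boundary condition $\to$ stable trace $\to$ interior estimate) one must verify that the $\gamma^{-1}$ gain actually propagates and that the coupling does not reintroduce on the right a quantity already appearing on the left. The families $\mathcal{R}_{-1}$ and $\VERT T^\gamma_rW\VERT_{1,\gamma}^2$ arising from symbolic compositions and commutators are reabsorbed rather than estimated sharply, which is permitted by the $\gamma^{-1}$ factor multiplying the interior source terms in \eqref{pole.e.1}.
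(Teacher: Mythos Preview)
Your overall architecture—estimate the outgoing mode $Z_3^\pm$ first, recover $Z_1^\pm$ from the algebraic row, feed the $Z_3^\pm$ trace through the elliptic Lopatinski\u{\i} symbol to control the $Z_2^\pm$ trace, then close the incoming mode $Z_2^\pm$—is exactly the paper's five-step scheme, and your boundary argument matches the paper's Step~5.

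The genuine gap is your treatment of $Z_1^\pm$. You assert $\widetilde\chi_1/b_{11}^\pm\in\varGamma_2^{-1}$ and propose to invert $b_{11}^\pm$ symbolically, but this fails precisely in the pole regime that the lemma addresses. On $\supp\widetilde\chi_1$ one has only $|b_{11}^\pm|=\mathbb{F}_1^\pm|\mathring a_\pm|\gtrsim\gamma$, \emph{not} $|b_{11}^\pm|\gtrsim\lambda^{1,\gamma}$: when $\delta+\mathring v_1^+\eta\approx 0$ and $|\eta|\gg\gamma$, one gets $|b_{11}^+|\sim\gamma$ while $\lambda^{1,\gamma}\sim|\eta|$, so $|1/b_{11}^+|\sim\gamma^{-1}\gg\lambda^{-1,\gamma}$ and the symbol bounds (including $\xi$- and $x$-derivatives) are violated. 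The whole point of the pole neighborhood is that $\mathring a_+$ degenerates and $b_{11}^+$ cannot be inverted in the calculus. Consequently your intermediate estimate $\gamma\VERT Z_1^\pm\VERT_{1,\gamma}^2\lesssim\gamma^{-1}(\cdots)$ and the downstream $\gamma^{-1}\VERT Z_1^\pm\VERT_{1,\gamma}^2$ weight in the $Z_2^\pm$ step are too strong and cannot be obtained this way.

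The paper's repair (its Step~2) is an energy argument rather than a symbolic inversion: pair the algebraic equation with $\Lambda^{2,\gamma}Z_1^+$ and exploit only $\RE b_{11}^+=\mathbb{F}_1^+\gamma\gtrsim\gamma$ and $\RE(k_2 b_{11}^++b_{13}^+)=\gamma\alpha_0$. This gives the weaker but sufficient bound $\gamma\VERT Z_1^+\VERT_{1,\gamma}^2\lesssim\VERT Z^+\VERT_{1,\gamma}^2+\gamma\VERT Z_3^+\VERT_{1,\gamma}^2+\gamma^{-1}(\cdots)$, and analogously the $Z_2^+$ estimate carries $\gamma\VERT Z_1^+\VERT_{1,\gamma}^2$ rather than $\gamma^{-1}\VERT Z_1^+\VERT_{1,\gamma}^2$ on the right. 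The extra $\VERT Z^+\VERT_{1,\gamma}^2$ terms here—and the analogous ones you omit from the $Z_3^\pm$ step, which arise from the zeroth-order coupling $T^\gamma_{\mathbb D_\pm^p}Z^\pm$—are absorbed only \emph{after} summing all three component estimates and taking $\gamma$ large, which is the paper's Step~4. With this correction your chain closes and coincides with the paper's proof.
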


\begin{proof}\  We divide the proof into five steps.
	
	\smallskip
	\noindent
	{\bf 1.}\; {\it Estimate for $Z_3^+$}. \quad
	According to the form of $A_+^d$ given by \eqref{pole.id1b},
	the third equation in \eqref{pole2.eq} for $Z_3^+$ reads
	\begin{align} \label{pole2.eq3}
	\partial_2Z_3^+=T^{\gamma}_{-\omega_{+}+a_{11}^++a_{22}^+}Z_3^++T^{\gamma}_{\alpha_{0}}Z^+
	+T^{\gamma}_r W^++\mathcal{R}_0T^{\gamma}_{\chi_p^+}F^+_3+\mathcal{R}_{-1}W^+.
	\end{align}
	Take the scalar product in $L^2(\Omega)$ of \eqref{pole2.eq3} with $\Lambda^{2,\gamma}Z_3^+$
	to obtain
	\begin{align} \label{pole.e1}
	\|Z^+_{3}|_{x_2=0}\|_{1,\gamma}^2+2\RE \big\llangle \Lambda^{1,\gamma}Z_3^+,\Lambda^{1,\gamma}T^{\gamma}_{-\omega_{+}+a_{11}^++a_{22}^+}Z_3^+\big\rrangle
	=\sum_{j=1}^{4}\mathcal{H}_j,
	\end{align}
	where each term $\mathcal{H}_j$ in the decomposition is defined in the following:
	\begin{align*}
	\mathcal{H}_1&:=-2\RE\left\llangle \Lambda^{1,\gamma}Z_3^+,\Lambda^{1,\gamma}T^{\gamma}_{\alpha_{0}}Z^+\right\rrangle
	\lesssim \VERT Z^+\VERT_{1,\gamma}^2,\\[1mm]
	\mathcal{H}_2&:=-2\RE\left\llangle \Lambda^{1,\gamma}Z_3^+,\Lambda^{1,\gamma}T^{\gamma}_r W^+\right\rrangle
	\lesssim \varepsilon\gamma\VERT Z_3^+\VERT_{1,\gamma}^2+\frac{1}{\varepsilon\gamma}\VERT T^{\gamma}_r W^+\VERT_{1,\gamma}^2,\\
	\mathcal{H}_3&:=-2\RE\big\llangle \Lambda^{1,\gamma}Z_3^+,\Lambda^{1,\gamma}\mathcal{R}_0T^{\gamma}_{\chi_p^+}F^+_3\big\rrangle
	\lesssim \varepsilon\gamma\VERT Z_3^+\VERT_{1,\gamma}^2+\frac{1}{\varepsilon\gamma}\VERT T^{\gamma}_{\chi_p^+}F^+_3\VERT_{1,\gamma}^2,\\
	\mathcal{H}_4&:=-2\RE\left\llangle \Lambda^{1,\gamma}Z_3^+,\Lambda^{1,\gamma}\mathcal{R}_{-1}W^+\right\rrangle
	\lesssim \varepsilon\gamma\VERT Z_3^+\VERT_{1,\gamma}^2+\frac{1}{\varepsilon\gamma}\VERT W^+ \VERT^2.
	\end{align*}
	For the second term on the left-hand side of \eqref{pole.e1}, we employ Lemma~\ref{lem.para1} (iv) to deduce
	\begin{align*}
&	\RE\big\llangle  \Lambda^{1,\gamma}Z_3^+,\Lambda^{1,\gamma}T^{\gamma}_{-\omega_{+}+a_{11}^++a_{22}^+}Z_3^+\big\rrangle\\
	&\quad \geq \RE\big\llangle \Lambda^{1,\gamma}Z_3^+,T^{\gamma}_{-\omega_{+}+a_{11}^+
		+a_{22}^+}\Lambda^{1,\gamma}Z_3^+\big\rrangle-C\VERT Z_3^+\VERT_{1,\gamma}^2.
	\end{align*}
	Thanks to \eqref{eff.i3},  $\RE(a_{11}^++a_{22}^+)=\mathbb{F}_4^+\gamma$,
	where $\mathbb{F}_4^+$ is a smooth function of $(\mathring{V},\nabla\mathring{\Psi})$ that vanishes at the origin.
	We then employ Lemma~\ref{lem.eig3} and take $K$ in \eqref{bas.c2} sufficiently small
	to obtain that $\RE (-\omega_{+}+a_{11}^++a_{22}^+)\gtrsim \gamma$.
	Apply G{\aa}rding's inequality (Lemma~\ref{lem.para1}\;(vi)) to obtain
	\begin{align*}
	\RE\big\llangle \Lambda^{1,\gamma}Z_3^+,T^{\gamma}_{-\omega_{+}+a_{11}^++a_{22}^+}\Lambda^{1,\gamma}Z_3^+\big\rrangle\gtrsim \gamma\VERT \Lambda^{1,\gamma}Z_3^+\VERT^2\gtrsim \gamma\VERT Z_3^+\VERT_{1,\gamma}^2,
	\end{align*}
	from which we have
	\begin{align} \notag
	\RE\big\llangle \Lambda^{1,\gamma}Z_3^+,\Lambda^{1,\gamma}T^{\gamma}_{-\omega_{+}+a_{11}^+
		+a_{22}^+}Z_3^+\big\rrangle\gtrsim (\gamma-C)\VERT Z_3^+\VERT_{1,\gamma}^2.
	\end{align}
	Choosing $\varepsilon$ small and $\gamma$ large, we derive from \eqref{pole.e1} that
	\begin{align} \notag
	&\gamma\VERT Z_3^+\VERT_{1,\gamma}^2+\|Z^+_{3}|_{x_2=0}\|_{1,\gamma}^2\\
	&\,\,\, \lesssim \VERT Z^+\VERT_{1,\gamma}^2+ \gamma^{-1}\big(\VERT T^{\gamma}_r W^+\VERT_{1,\gamma}^2
	+\VERT T^{\gamma}_{\chi_p^+}F^+_3\VERT_{1,\gamma}^2+\VERT W^+ \VERT^2 \big). \label{pole.e2}
	\end{align}
	
	\vspace{1mm}
	\noindent
	{\bf 2.}\;{\it Estimate for $Z_1^+$}.\quad
	The equation for $Z_1^+$ in \eqref{pole2.eq} is as follows:
	\begin{align} \label{pole2.eq4}
	T^{\gamma}_{b_{11}^+}Z_1^++T^{\gamma}_{k_2b_{11}^++b_{13}^+}Z_3^+=T^{\gamma}_{\alpha_{0}}Z^+
	+T^{\gamma}_r W^++\mathcal{R}_0T^{\gamma}_{\chi_p^+}F^+_1+\mathcal{R}_{-1}W^+.
	\end{align}
	Recall from \eqref{b11} that $\RE b_{11}^+=\mathbb{F}_1^+\gamma$ and $\RE (k_2b_{11}^++b_{13}^+)=\gamma\alpha_0$.
	Similar to Step\;1, we take the scalar product in $L^2(\Omega)$ of \eqref{pole2.eq4} with $\Lambda^{2,\gamma}Z_1^+$
	and use \eqref{pole.e2} to obtain
	\begin{align}
	\notag &\gamma\VERT Z_1^+\VERT_{1,\gamma}^2\\
	& \lesssim \VERT Z^+\VERT_{1,\gamma}^2+\gamma \VERT Z_3^+\VERT_{1,\gamma}^2+ \gamma^{-1}\big(\VERT T^{\gamma}_r W^+\VERT_{1,\gamma}^2
	+\VERT T^{\gamma}_{\chi_p^+}F^+_1\VERT_{1,\gamma}^2+\VERT W^+ \VERT^2 \big) \notag
	\\ \label{pole.e3}
	& \lesssim \VERT Z^+\VERT_{1,\gamma}^2+ \gamma^{-1}\big(\VERT T^{\gamma}_r W^+\VERT_{1,\gamma}^2
	+\VERT T^{\gamma}_{\chi_p^+}F^+\VERT_{1,\gamma}^2+\VERT W^+ \VERT^2 \big)
	\end{align}
	for $\gamma$ sufficiently large.
	
	\vspace{2mm}
	\noindent
	{\bf 3.}\; {\it Estimate for $Z_2^+$}.\quad   The equation for $Z_2^+$ in \eqref{pole2.eq} reads
	\begin{align} \notag   
	\partial_2Z_2^+=\,&T^{\gamma}_{(\mathring{a}_+a_{12}^+)^{-1}b_{21}^+}Z_1^++T^{\gamma}_{\omega_{+}}Z_2^+
	+T^{\gamma}_{\alpha_{0}}Z^+\\
	&\notag +T^{\gamma}_r W^++\mathcal{R}_0T^{\gamma}_{\chi_p^+}F^+_2+\mathcal{R}_{-1}W^+.
	\end{align}
	We note that $\RE \omega_+\lesssim -\gamma$ and $\RE ((\mathring{a}_+a_{12}^+)^{-1}b_{21}^+)=\gamma\alpha_0$.
	Employing a similar analysis as Step 1 and using \eqref{pole.e3} yield
	\begin{align} \notag
	&\gamma\VERT Z_2^+\VERT_{1,\gamma}^2 -C \|Z^+_{2}|_{x_2=0}\|_{1,\gamma}^2\\ \notag
	&\,\, \lesssim \VERT Z^+\VERT_{1,\gamma}^2+\gamma\VERT Z_1^+\VERT_{1,\gamma}^2
	+ \gamma^{-1}\big(\VERT T^{\gamma}_r W^+\VERT_{1,\gamma}^2+\VERT T^{\gamma}_{\chi_p^+}F^+_2\VERT_{1,\gamma}^2+\VERT W^+ \VERT^2 \big)\\
	&\,\, \lesssim \VERT Z^+\VERT_{1,\gamma}^2
	+ \gamma^{-1}\big(\VERT T^{\gamma}_r W^+\VERT_{1,\gamma}^2+\VERT T^{\gamma}_{\chi_p^+}F^+\VERT_{1,\gamma}^2+\VERT W^+ \VERT^2 \big). \label{pole.e4}
	\end{align}
	
	\vspace{1mm}
	\noindent
	{\bf 4.}\;\,  Combine estimates \eqref{pole.e2} and \eqref{pole.e3}--\eqref{pole.e4},
	and take $\gamma$ suitably large to find
	\begin{align}\notag
	&\gamma\VERT Z^+\VERT_{1,\gamma}^2+\|Z^+_{3}|_{x_2=0}\|_{1,\gamma}^2\\
	&\,\,\, \lesssim \|Z^+_{2}|_{x_2=0}\|_{1,\gamma}^2+ \gamma^{-1}\big(\VERT T^{\gamma}_r W^+\VERT_{1,\gamma}^2
	+\VERT T^{\gamma}_{\chi_p^+}F^+\VERT_{1,\gamma}^2+\VERT W^+ \VERT^2 \big). \label{pole.e5}
	\end{align}
	The derivation for the estimate of $Z^-$ is entirely similar so that
	\begin{align}\notag
	&\gamma\VERT Z^{\pm}\VERT_{1,\gamma}^2+\|Z^{\pm}_{3}|_{x_2=0}\|_{1,\gamma}^2\\
	&\,\,\, \lesssim \|Z^{\pm}_{2}|_{x_2=0}\|_{1,\gamma}^2+ \gamma^{-1}\big(\VERT T^{\gamma}_r W\VERT_{1,\gamma}^2
	+\VERT T^{\gamma}_{\chi_p^+}F\VERT_{1,\gamma}^2+\VERT W \VERT^2 \big). \label{pole.e6}
	\end{align}
	
	\vspace{1mm}
	\noindent
	{\bf 5.}\;{\it Estimate on the boundary}.\ \
	It remains to make an estimate for $ \|Z^{\pm}_{2}|_{x_2=0}\|_{1,\gamma}$.
	Using the boundary conditions \eqref{pole.bdy}, we have
	\begin{align} \label{pole.bdy1}
	\|T^{\gamma}_{\widetilde{\bm{\beta}}}Z_{2}|_{x_2=0}\|_{1,\gamma}^2
	\lesssim \|Z^{\pm}_{3}|_{x_2=0}\|_{1,\gamma}^2+\|G\|_{1,\gamma}^2+\|{W^{\mathrm{nc}}}|_{x_2=0}\|^2,
	\end{align}
	where $\widetilde{\bm{\beta}}:=\bm{\beta}(E_+\ E_-)$ and $Z_2:=(Z_2^+,Z_2^-)^{\mathsf{T}}$.
	Setting $V^{\pm}:=T^{\gamma}_{\widetilde{R}^{-1}}T^{\gamma}_{\chi_p^+}W_{\pm}^{\mathrm{nc}}$,
	we see from \eqref{pole.id3} that
	\begin{align} \label{Z2.V1}
	Z_2^{\pm}=T^{\gamma}_{\widetilde{\chi}}V_1^{\pm}+\mathcal{R}_{-1}W^{\mathrm{nc}}.
	\end{align}
	Since $\widetilde{\bm{\beta}}\in{\mathrm{\Gamma}_2^0}$,
	we apply the rule of symbolic calculus (Lemma \ref{lem.para1} (iv)) to find that
	\begin{align*}
	T^{\gamma}_{\widetilde{\bm{\beta}}}\mathcal{R}_{-1}=\mathcal{R}_{-1},\quad
	(\Lambda^{1,\gamma}T^{\gamma}_{\widetilde{\bm{\beta}}})^*\Lambda^{1,\gamma}T^{\gamma}_{\widetilde{\bm{\beta}}}
	- T^{\gamma}_{\lambda^{2,\gamma}\widetilde{\bm{\beta}}^*\widetilde{\bm{\beta}}}=\mathcal{R}_{-1}.
	\end{align*}
	Thus, we have
	\begin{align*}
&	\|T^{\gamma}_{\widetilde{\bm{\beta}}}Z_{2}|_{x_2=0}\|_{1,\gamma}^2\\
&\quad 	\gtrsim  \RE \big\langle  (\Lambda^{1,\gamma}T^{\gamma}_{\widetilde{\bm{\beta}}})^*\Lambda^{1,\gamma}
	T^{\gamma}_{\widetilde{\bm{\beta}}}T^{\gamma}_{\widetilde{\chi}}V_{1}|_{x_2=0},T^{\gamma}_{\widetilde{\chi}}V_{1}|_{x_2=0}\big\rangle
	-C\|{W^{\mathrm{nc}}}|_{x_2=0}\|^2
	\\
	&\quad \gtrsim \RE \big\langle  T^{\gamma}_{\lambda^{2,\gamma}\widetilde{\bm{\beta}}^*\widetilde{\bm{\beta}}}
	T^{\gamma}_{\widetilde{\chi}}V_{1}|_{x_2=0},T^{\gamma}_{\widetilde{\chi}}V_{1}|_{x_2=0}\big\rangle\\
	&\qquad\, -C\|T^{\gamma}_{\widetilde{\chi}}V_{1}|_{x_2=0}\|_{1,\gamma}\|T^{\gamma}_{\widetilde{\chi}}V_{1}|_{x_2=0}\|-C\|{W^{\mathrm{nc}}}|_{x_2=0}\|^2
	\end{align*}
	for $V_1:=(V_1^+,V_1^-)^{\mathsf{T}}$, which, combined with \eqref{pole.bdy1}, implies
	\begin{align}\notag
	&\RE \big\langle  T^{\gamma}_{\lambda^{2,\gamma}\widetilde{\bm{\beta}}^*\widetilde{\bm{\beta}}}
	T^{\gamma}_{\widetilde{\chi}}V_{1}|_{x_2=0},T^{\gamma}_{\widetilde{\chi}}V_{1}|_{x_2=0}\big\rangle\\[1mm]
	&\,\,\, \lesssim \|Z^{\pm}_{3}|_{x_2=0}\|_{1,\gamma}^2+\|G\|_{1,\gamma}^2+\gamma^{-1}\|Z^{\pm}_{2}|_{x_2=0}\|_{1,\gamma}^2
	+\|{W^{\mathrm{nc}}}|_{x_2=0}\|^2. \label{V1.e1}
	\end{align}
	Recall from Lemma \ref{lem.Lopa2} that the Lopatinski\u{\i} determinant $\Delta$ does not vanish on $\supp\widetilde{\chi}_1$,
	owing to \eqref{chi.p}.
	It then follows from definition \eqref{Lopa2a} of $\Delta$ that
	\begin{align*}
	\widetilde{\chi}_1^2\RE(\lambda^{2,\gamma}\widetilde{\bm{\beta}}^*\widetilde{\bm{\beta}})\gtrsim \widetilde{\chi}_1^2 \lambda^{2,\gamma} I.
	\end{align*}
	Then we can employ the localized G{\aa}rding's inequality (Lemma \ref{lem.para1} (vii)) and utilize \eqref{Z2.V1}  to derive
	\begin{align}
	&\RE \big\langle  \notag  T^{\gamma}_{\lambda^{2,\gamma}\widetilde{\bm{\beta}}^*\widetilde{\bm{\beta}}}
	T^{\gamma}_{\widetilde{\chi}}V_{1}|_{x_2=0},T^{\gamma}_{\widetilde{\chi}}V_{1}|_{x_2=0}\big\rangle \\[1mm]
	&\,\,\, \gtrsim
	\|T^{\gamma}_{\widetilde{\chi}}V_{1}|_{x_2=0}\|_{1,\gamma}^2-C\|V_{1}|_{x_2=0}\|^2\gtrsim \|Z_{2}|_{x_2=0}\|_{1,\gamma}^2-C\|{W^{\mathrm{nc}}}|_{x_2=0}\|^2.
	\label{V1.e2}
	\end{align}
	Combine \eqref{V1.e1} with \eqref{V1.e2} and take $\gamma$ small to infer that
	\begin{align}\label{pole.bdy2}
	\|Z^{\pm}_{2}|_{x_2=0}\|_{1,\gamma}^2 \lesssim
	\|Z^{\pm}_{3}|_{x_2=0}\|_{1,\gamma}^2+\|G\|_{1,\gamma}^2
	+\|{W^{\mathrm{nc}}}|_{x_2=0}\|^2.
	\end{align}
	We combine \eqref{pole.bdy2} with \eqref{pole.e2} to eliminate the first term on the right hand side of \eqref{pole.bdy2},
	and then use \eqref{pole.e6} to conclude estimate \eqref{pole.e.1}.
	This completes the proof.
\qed\end{proof}

Recall that $\chi_p=\chi_p^++\chi_p^-$ and $\supp \chi_p^+\cap \supp\chi_p^-=\varnothing$.
Shrinking the support of $\chi_p$ if necessary, we obtain the following result from Lemma~\ref{lem.Zp}.
\begin{proposition}\  \label{pro.pole}
	There exist constants $K_0\leq 1$ and $\gamma_0\geq 1$ such that, if $\gamma\geq \gamma_0$ and $K\leq K_0$ for $K$ given in \eqref{bas.c2},
	then
	\begin{align}\notag
	&\gamma\VERT T^{\gamma}_{\chi_p}W\VERT_{1,\gamma}^2+\|T^{\gamma}_{\chi_p}{W^{\mathrm{nc}}}|_{x_2=0}\|_{1,\gamma}^2\\[1mm]
	& \lesssim \|G\|_{1,\gamma}^2+\|{W^{\mathrm{nc}}}|_{x_2=0}\|^2
	+\gamma^{-1}\big(\VERT T^{\gamma}_{\chi_p}F\VERT_{1,\gamma}^2+\VERT W\VERT^2+\VERT T^{\gamma}_{r}W\VERT_{1,\gamma}^2 \big),
	\label{pole.e}
	\end{align}
	where symbol $r\in{\mathrm{\Gamma}_1^0}$ vanishes in region $\{\chi_p\equiv1\}\cup\{\chi_p\equiv0\}$.
\end{proposition}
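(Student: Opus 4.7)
The plan is to deduce Proposition~\ref{pro.pole} from Lemma~\ref{lem.Zp} by two essentially mechanical steps: first, establishing the analogue of Lemma~\ref{lem.Zp} for the other pole $\Upsilon_p^-$; and second, translating the bounds obtained for the diagonalized unknowns $Z^\pm$ back into bounds for $T^\gamma_{\chi_p^\pm}W$. Since $\supp\chi_p^+\cap\supp\chi_p^-=\varnothing$, the two pieces can then be summed without interference.

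First, I would replicate the block-diagonalization carried out in \eqref{pole.id1a}--\eqref{pole.id2} and the whole proof of Lemma~\ref{lem.Zp} with the roles of $+$ and $-$ interchanged, constructing new cut-offs $\widetilde\chi,\widetilde\chi_1$ supported near $\Upsilon_p^-$ and disjoint from $\supp\chi_c$. Every algebraic identity used in the derivation (the factorizations of $L_\pm\bm{I}_2 R_\pm$ and $L_\pm(\tau\bm{A}_0^\pm+\mathrm{i}\eta\bm{A}_1^\pm)R_\pm$, together with the sign properties $\RE\omega_\mp\lesssim-\gamma$ and $\RE\omega_\mp'\gtrsim\gamma$ obtained from Lemma~\ref{lem.eig3}) is symmetric in $\pm$, so the same scalar-product-and-G{\aa}rding estimates go through verbatim and yield \eqref{pole.e.1} with $\chi_p^+$ replaced by $\chi_p^-$.

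Next, I would promote the estimate for $Z^\pm$ to one for $T^\gamma_{\chi_p^+}W$. Because $\widetilde\chi\equiv 1$ on $\supp\chi_p^+$, applying $T^\gamma_{R_\pm}$ to the definition of $Z^\pm$ and using Lemma~\ref{lem.para1}\,(iv) gives $T^\gamma_{\chi_p^+}W^\pm=T^\gamma_{R_\pm}Z^\pm+\mathcal{R}_{-1}W^\pm$, so, since $R_\pm\in\varGamma_2^0$ makes $T^\gamma_{R_\pm}$ of order zero,
\[
\VERT T^\gamma_{\chi_p^+}W^\pm\VERT_{1,\gamma}^2\lesssim \VERT Z^\pm\VERT_{1,\gamma}^2+\gamma^{-2}\VERT W^\pm\VERT^2.
\]
For the noncharacteristic trace, the block structure of $\widetilde R$ exhibited after \eqref{pole.id3} shows that $(Z_2^+,Z_3^+)$ span exactly the directions of $\mathbb{P}^+W^+$, so inverting $T^\gamma_{\widetilde\chi\widetilde R^{-1}}$ on $\supp\chi_p^+$ delivers
\[
\bigl\|T^\gamma_{\chi_p^+}W^{\mathrm{nc}}|_{x_2=0}\bigr\|_{1,\gamma}^2
\lesssim \bigl\|(Z_2^\pm,Z_3^\pm)|_{x_2=0}\bigr\|_{1,\gamma}^2+\bigl\|W^{\mathrm{nc}}|_{x_2=0}\bigr\|^2.
\]
Combined with Lemma~\ref{lem.Zp}, this yields \eqref{pole.e} with $\chi_p$ replaced by $\chi_p^+$; the argument of the previous paragraph gives the same inequality for $\chi_p^-$.

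Finally, since $\supp\chi_p^+\cap\supp\chi_p^-=\varnothing$, Lemma~\ref{lem.para1}\,(iv) produces $T^\gamma_{\chi_p^+}T^\gamma_{\chi_p^-}=\mathcal{R}_{-\infty}$ so that $\VERT T^\gamma_{\chi_p}W\VERT_{1,\gamma}^2\sim\VERT T^\gamma_{\chi_p^+}W\VERT_{1,\gamma}^2+\VERT T^\gamma_{\chi_p^-}W\VERT_{1,\gamma}^2$, and similarly on the boundary. Adding the two estimates yields \eqref{pole.e}; the error symbol $r$ is the sum of the symbols produced by the two separate constructions, which (after shrinking $\supp\chi_p$ if necessary) vanishes outside the transition zone $\{\chi_p\equiv 1\}\cup\{\chi_p\equiv 0\}$, as required. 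The only genuinely delicate point is to verify that the inversion of $R_\pm$ and $\widetilde R$ on $\supp\widetilde\chi_1$ used in the passage from $Z^\pm$ to $T^\gamma_{\chi_p^\pm}W$ does not destroy the gain of $\gamma^{-1}$ in the source-term norms; this is handled by noting that $R_\pm\in\varGamma_2^0$ has $\gamma$-independent bounds, so the estimate's scaling in $\gamma$ is preserved.
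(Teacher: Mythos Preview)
Your proposal is correct and follows the same approach as the paper, which simply states that the proposition follows from Lemma~\ref{lem.Zp} by the $\pm$ symmetry and the disjointness of $\supp\chi_p^+$ and $\supp\chi_p^-$ (after shrinking $\supp\chi_p$ if necessary). You have supplied the routine details—inverting $T^\gamma_{\widetilde\chi R_\pm^{-1}}$ and $T^\gamma_{\widetilde\chi\widetilde R^{-1}}$ via the symbolic calculus to pass from $Z^\pm$ back to $T^\gamma_{\chi_p^+}W$—that the paper leaves implicit; note only that the calculus in Lemma~\ref{lem.para1}\,(iv) gives $T^\gamma_{\chi_p^+}T^\gamma_{\chi_p^-}=\mathcal{R}_{-1}$ rather than $\mathcal{R}_{-\infty}$, but this is already sufficient (and in fact the trivial bound $\VERT T^\gamma_{\chi_p}W\VERT_{1,\gamma}^2\le 2\VERT T^\gamma_{\chi_p^+}W\VERT_{1,\gamma}^2+2\VERT T^\gamma_{\chi_p^-}W\VERT_{1,\gamma}^2$ is all that is needed).
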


\subsection{Estimate near Bad Frequencies}
We now show the energy estimate near the points
in $\Upsilon_c=\cup_{q\in\{0,\pm1\}}\Upsilon_c^q$, {\it i.e.}  near the zeros of the Lopatinski\u{\i} determinant.
We consider the case near set $\Upsilon_c^0$, without loss of generality.
To this end, we introduce two smooth cut-off functions $\chi_1$ and $\chi_2$ with values in $[0,1]$ such that
\begin{list}{}{\setlength{\parsep}{\parskip}
		\setlength{\itemsep}{0.1em}
		\setlength{\labelwidth}{2em}
		\setlength{\labelsep}{0.4em}
		\setlength{\leftmargin}{2.2em}
		\setlength{\topsep}{1mm}
	}
	\item[--] $\chi_1\equiv 1$ on the support of $\chi_c^0$, $\chi_2\equiv 1$ on the support of $\chi_c^0$, and $\supp\chi_2\subset \mathscr{V}_c^0$;
	\item[--] $\chi_1$ and $\chi_2$ are both $C^{\infty}$ and homogeneous of degree $0$ with respect to $(\tau,\eta)$,
\end{list}
where $\chi_c^0$ is given at the end of \S\,\ref{sec.micro}. Defining
\begin{align} \notag  
{\bm{w}}^{\pm}:=T^{\gamma}_{\chi_c^0}W^{\pm},\quad \bm{w}_{\pm}^{\mathrm{nc}}:=(\bm{w}^{\pm}_2,\bm{w}^{\pm}_3)^{\mathsf{T}},
\end{align}
we perform similar calculations as we have done in \S\,\ref{sec.eq} to obtain the following system:
\begin{align} \label{bad1.eq}
\partial_2 \bm{w}_{\pm}^{\mathrm{nc}}=
T^{\gamma}_{\mathbb{A}_{\chi_2}^{\pm}}\bm{w}_{\pm}^{\mathrm{nc}}
+T^{\gamma}_{\mathbb{E}^{\pm}}\bm{w}_{\pm}^{\mathrm{nc}}+
T_{r}^{\gamma}W^{\pm}+\mathcal{R}_0T^{\gamma}_{\chi_c^0}F^{\pm}+\mathcal{R}_{-1}W^{\pm},
\end{align}
where $\mathbb{E}^{\pm}\in{\mathrm{\Gamma}_1^0}$, $\mathbb{A}_{\chi_2}^{\pm}\in{\mathrm{\Gamma}_2^1}$ is given in \eqref{A.bb.r}
with $\widetilde{\chi}_{\pm}$ replaced by $\chi_2$,
and $r\in{\mathrm{\Gamma}_1^0}$ vanishes in region $\{\chi_c^0\equiv1 \}\cup\{\chi_c^0\equiv0 \}$.

Since matrix $\mathbb{A}_{\chi_2}^{\pm}\equiv \mathbb{A}_{\pm}$ in region $ \{\chi_2\equiv1 \}$,
we obtain from Proposition~\ref{pro.micro} that
\begin{align} \label{Q0.id}
Q_0^{\pm}\mathbb{A}_{\chi_2}^{\pm}=\mathbb{D}_1^{\pm}Q_0^{\pm}\qquad
\mathrm{in}\ \{\chi_2\equiv 1\}.
\end{align}
More precisely, we have
\begin{align} \label{Q0}
\begin{pmatrix}
Q_0^{+} &0\\ 0& Q_0^-
\end{pmatrix}^{-1}=(E_{+}\ Y_{+}\ E_{-}\ Y_{-}),
\end{align}
where $E_{\pm}$, $Y_{+}$, and $Y_-$ are defined by \eqref{E.eig2} and \eqref{Y.r}--\eqref{Y.l},
respectively.
Then the following lemma can be proved as in \cite[Page\;425]{C04MR2069632} by using \eqref{Q0.id}.

\begin{lemma}\  \label{lem.bad1}
	There exist symbols $Q_{-1}^{\pm}\in{\mathrm{\Gamma}_1^{-1}}$ and diagonal symbols $\mathbb{D}_0^{\pm}\in{\mathrm{\Gamma}_1^0}$,
	which are defined in region $\{\chi_2\equiv 1\}$, such that
	\begin{align*}
	&(Q_0^{\pm}+Q_{-1}^{\pm})(\mathbb{A}_{\chi_2}^{{\pm}}+\mathbb{E}^{\pm})
	-(\mathbb{D}_1^{\pm}+\mathbb{D}_0^{\pm})(Q_0^{\pm}+Q_{-1}^{\pm})+\partial_{2} Q_0^{\pm}\\
	&\,\,\, -\mathrm{i}(\partial_{\delta}Q_0^{\pm}\partial_t \mathbb{A}_{\chi_2}^{\pm}+\partial_{\eta}Q_0^{\pm}\partial_{x_1} \mathbb{A}_{\chi_2}^{\pm}-\partial_{\delta}\mathbb{D}_1^{\pm}
	\partial_t Q_0^{\pm}-\partial_{\eta}\mathbb{D}_1^{\pm}\partial_{x_1} Q_0^{\pm})\in {\mathrm{\Gamma}_1^{-1}}.
	\end{align*}
\end{lemma}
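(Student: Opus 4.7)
The plan is to determine $Q_{-1}^{\pm}$ and $\mathbb{D}_0^{\pm}$ by cancelling the principal symbol (of order $0$) of the displayed expression, exactly as in the standard block-diagonalization step of Kreiss' construction. Expanding the expression and using the identity $Q_0^{\pm}\mathbb{A}_{\chi_2}^{\pm} = \mathbb{D}_1^{\pm} Q_0^{\pm}$ from Proposition~\ref{pro.micro}\,(b) (valid on $\{\chi_2\equiv 1\}$, which contains the support of $Q_{-1}^{\pm}$ and $\mathbb{D}_0^{\pm}$), the contribution of order $1$ vanishes, and the terms $Q_{-1}^{\pm}\mathbb{E}^{\pm}$ and $\mathbb{D}_0^{\pm} Q_{-1}^{\pm}$ belong automatically to $\varGamma_1^{-1}$. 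Hence the whole expression lies in $\varGamma_1^{-1}$ if and only if one can arrange the order-$0$ remainder
\[
Q_0^{\pm}\mathbb{E}^{\pm} + Q_{-1}^{\pm}\mathbb{A}_{\chi_2}^{\pm} - \mathbb{D}_1^{\pm}Q_{-1}^{\pm} - \mathbb{D}_0^{\pm}Q_0^{\pm} + \partial_2 Q_0^{\pm} - \mathrm{i}\,\Theta^{\pm}
\]
to vanish (mod $\varGamma_1^{-1}$), where $\Theta^{\pm}$ denotes the Poisson-bracket terms appearing in the statement.

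The next step is to conjugate by $(Q_0^{\pm})^{-1}$ from the right and introduce the new unknown $R^{\pm}:=Q_{-1}^{\pm}(Q_0^{\pm})^{-1}\in\varGamma_1^{-1}$. Using $\mathbb{A}_{\chi_2}^{\pm}(Q_0^{\pm})^{-1} = (Q_0^{\pm})^{-1}\mathbb{D}_1^{\pm}$, the equation reduces to a commutator equation
\[
[R^{\pm},\mathbb{D}_1^{\pm}] + \mathbb{D}_0^{\pm} = M^{\pm}, \qquad M^{\pm}:=Q_0^{\pm}\mathbb{E}^{\pm}(Q_0^{\pm})^{-1} + \partial_2 Q_0^{\pm}\cdot(Q_0^{\pm})^{-1} - \mathrm{i}\,\Theta^{\pm}(Q_0^{\pm})^{-1},
\]
with $M^{\pm}\in\varGamma_1^0$. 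Since $\mathbb{D}_1^{\pm}=\mathrm{diag}(\omega_{\pm},\omega_{\pm}')$ is diagonal with distinct entries on $\mathscr{V}_c^0$ (Proposition~\ref{pro.micro}\,(b)), the linear map $R\mapsto[R,\mathbb{D}_1^{\pm}]$ annihilates diagonals and is invertible on off-diagonal matrices with inverse obtained by division by $\omega_{\pm}-\omega_{\pm}'$. Thus one defines $\mathbb{D}_0^{\pm}$ as the diagonal part of $M^{\pm}$ (which, as a diagonal projection of a symbol in $\varGamma_1^0$, belongs to $\varGamma_1^0$), and one defines the off-diagonal entries of $R^{\pm}$ by
\[
R^{\pm}_{12}=\frac{M^{\pm}_{12}}{\omega_{\pm}-\omega_{\pm}'},\qquad R^{\pm}_{21}=\frac{M^{\pm}_{21}}{\omega_{\pm}'-\omega_{\pm}},\qquad R^{\pm}_{ii}=0.
\]
A straightforward verification shows that $R^{\pm}\in\varGamma_1^{-1}$ since $M^{\pm}_{ij}\in\varGamma_1^0$ is divided by a nonvanishing elliptic symbol of order $1$, and consequently $Q_{-1}^{\pm}:=R^{\pm}Q_0^{\pm}\in\varGamma_1^{-1}$ as required.

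The delicate point, and the only one that really requires the geometric information built up in this section, is the nonvanishing of $\omega_{\pm}-\omega_{\pm}'$ on the support of the symbols used: this is guaranteed by Proposition~\ref{pro.micro}\,(b), which in turn rests on our choice of the neighborhoods $\mathscr{V}_c^q$ to avoid $\Upsilon_{nd}$ (Lemma~\ref{lem.eig2}\,(e)). Elsewhere the construction is algebraic, and the regularity bookkeeping is routine: the loss of one derivative in passing from $\varGamma_2^0$ to $\varGamma_1^{-1}$ comes from the presence of $\mathbb{E}^{\pm}\in\varGamma_1^0$ and from the Poisson brackets involving one $x$-derivative of coefficients of regularity class $\varGamma_2$. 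This completes the plan; the verification that all symbol classes are respected is standard symbolic calculus (Lemma~\ref{lem.para1}\,(iv)--(v)) and does not require further ideas.
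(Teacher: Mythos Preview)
Your proposal is correct and follows exactly the standard block-diagonalization step that the paper invokes by reference to \cite[Page~425]{C04MR2069632}: cancel the order-$1$ part via $Q_0^{\pm}\mathbb{A}_{\chi_2}^{\pm}=\mathbb{D}_1^{\pm}Q_0^{\pm}$, then solve the resulting order-$0$ commutator equation by taking $\mathbb{D}_0^{\pm}$ as the diagonal part and dividing the off-diagonal part by $\omega_{\pm}-\omega_{\pm}'$, which is nonvanishing on $\mathscr{V}_c^q$ by Proposition~\ref{pro.micro}\,(b). There is a harmless sign slip in your off-diagonal formulas (from $[R,\mathbb{D}_1]_{12}=R_{12}(\omega'_{\pm}-\omega_{\pm})$ one gets $R_{12}=M_{12}/(\omega'_{\pm}-\omega_{\pm})$), but this does not affect the argument.
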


We now prove the estimates for
\begin{align}\label{bad2}
Z^{\pm}:=T^{\gamma}_{\chi_1(Q_0^{\pm}+Q_{-1}^{\pm})}\bm{w}_{\pm}^{\mathrm{nc}},
\end{align}
which will be shown to satisfy the paradifferential equations
with diagonal principle symbols.

In fact, using  Lemmas~\ref{lem.para1} and \ref{lem.bad1},
we see from \eqref{bad1.eq} that
\begin{align} \label{bad2.eq}
\partial_2 Z^+=T^{\gamma}_{\widetilde{\mathbb{D}}_1^+}Z^++T^{\gamma}_{\widetilde{\mathbb{D}}_0^+}Z^+
+T^{\gamma}_{r}W^++\mathcal{R}_0T^{\gamma}_{\chi_c^0}F^{+}+\mathcal{R}_{-1}W^{+},
\end{align}
where $\widetilde{\mathbb{D}}_1^+$ (resp.\;$\widetilde{\mathbb{D}}_0^+$) is an extension of $\mathbb{D}_1^+$ (resp.\;$\mathbb{D}_0^+$)
to the whole set $\widebar{\Omega}\times \Xi$.
Thanks to Lemma \ref{lem.eig3}, these extension can be chosen such that
\setlength{\arraycolsep}{2pt}
\begin{align} \label{D1}
\widetilde{\mathbb{D}}_1^+=\begin{pmatrix}
\omega_+&0\\ 0& \omega_+'
\end{pmatrix}=\begin{pmatrix}
\gamma e_++\mathrm{i}\hbar_+&0\\
0& \gamma e_+'+\mathrm{i}\hbar_+'
\end{pmatrix},\ \
\widetilde{\mathbb{D}}_0^+=\mathrm{diag}\,(d_+,\,d_+'),
\end{align}
\setlength{\arraycolsep}{4pt}
where $e_+,e_+'\in{\mathrm{\Gamma}_2^0}$ and $\hbar_+,\hbar_+'\in{\mathrm{\Gamma}_2^1}$ are real-valued symbols,
and $d_+,d_+'\in{\mathrm{\Gamma}_1^0}$ such that
\begin{align*}
e_+\lesssim -1,\quad e_+'\gtrsim 1.
\end{align*}

We obtain the following result for functions $Z^{\pm}$ that are given in \eqref{bad2}:

\begin{lemma}\ \label{lem.bad2}
	There exist constants $K_0\leq 1$ and $\gamma_0\geq 1$ such that, if $\gamma\geq \gamma_0$ and $K\leq K_0$ for $K$ given in \eqref{bas.c2},
	then
	\begin{align}  \notag
	&\gamma^3\VERT Z_1\VERT ^2+\gamma\VERT Z_2\VERT_{1,\gamma}^2+\|Z_{2}|_{x_2=0}\|_{1,\gamma}^2
	+\gamma^2 \|Z_{1}|_{x_2=0}\|^2+\|T^{\gamma}_{\widetilde{\sigma}_0}Z_{1}|_{x_2=0}\|^2\\  \label{bad2.e0}
	&\,\,\, \lesssim \gamma^{-1}\big(\VERT T^{\gamma}_r W\VERT_{1,\gamma}^2
	+\VERT T^{\gamma}_{\chi_c^0}F\VERT_{1,\gamma}^2+\VERT W \VERT^2 \big)+\|G\|_{1,\gamma}^2+\|{W^{\mathrm{nc}}}|_{x_2=0}\|^2,
	\end{align}
	where $Z_j:=(Z_j^+,Z_j^-)^{\mathsf{T}}, j=1,2$,  $\widetilde{\sigma}_0$ is the scalar real symbol given by \eqref{sigma},
	and  $r\in{\mathrm{\Gamma}_1^0}$ vanishes in region $\{\chi_c^0\equiv1\}\cup\{\chi_c^0\equiv0\}$.
\end{lemma}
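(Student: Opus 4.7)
The plan is to exploit the block-diagonal structure of \eqref{bad2.eq}: by \eqref{D1}, $Z_1$ (resp.\ $Z_2$) satisfies a paradifferential ODE whose principal symbol has real part $\lesssim -\gamma$ (resp.\ $\gtrsim \gamma$) since $e_{\pm}\lesssim -1$ and $e_{\pm}'\gtrsim 1$. The proof will combine three ingredients: a full-regularity energy estimate for the outgoing component $Z_2$, a weaker $L^2$ estimate for the stable component $Z_1$ controlled by its boundary trace, and a microlocal boundary estimate for $Z_1|_{x_2=0}$ extracted from the simple-root factorization of the Lopatinski\u{\i} determinant in Lemma~\ref{lem.Lopa2}.

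For $Z_2$, I would take the $L^2(\Omega)$ scalar product of its equation with $\Lambda^{2,\gamma}Z_2^{\pm}$, integrate by parts in $x_2$ (yielding the favorable trace term $-\tfrac12\|Z_2|_{x_2=0}\|_{1,\gamma}^2$), and apply G{\aa}rding's inequality to the principal symbol $T^{\gamma}_{\gamma e_{\pm}'+\mathrm{i}\hbar_{\pm}'}$ using $e_{\pm}'\gtrsim 1$. After absorbing the source terms by Cauchy--Schwarz and Lemma~\ref{lem.para1}(iii)--(iv), this yields the $Z_2$-portion of \eqref{bad2.e0}, namely $\gamma\VERT Z_2\VERT_{1,\gamma}^2 + \|Z_2|_{x_2=0}\|_{1,\gamma}^2$ bounded by the right-hand side. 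The same argument applied to $Z_1$ in the unweighted $L^2(\Omega)$ inner product now has the opposite sign on the principal term, so the trace is forced to the right; multiplying the resulting inequality $\gamma\VERT Z_1\VERT^2\lesssim\|Z_1|_{x_2=0}\|^2+\gamma^{-1}(\cdots)$ by $\gamma^2$ produces the $\gamma^3\VERT Z_1\VERT^2$ term at the expense of $\gamma^2\|Z_1|_{x_2=0}\|^2$, which must then be absorbed via the Lopatinski\u{\i} analysis.

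The decisive step is the boundary estimate. By \eqref{Q0} and \eqref{bad2}, on $\supp\chi_1$ one has $T^{\gamma}_{\chi_1}W^{\mathrm{nc}}|_{x_2=0}=T^{\gamma}_{(E_+\,Y_+\,E_-\,Y_-)}Z|_{x_2=0}$ up to an $\mathcal{R}_{-1}W^{\mathrm{nc}}$ remainder, so substituting into \eqref{p.p2.c} and using the symbolic calculus rules of Lemma~\ref{lem.para1}(iv) gives
\begin{align*}
T^{\gamma}_{\bm\beta(E_+\;E_-)}Z_1|_{x_2=0}+T^{\gamma}_{\bm\beta(Y_+\;Y_-)}Z_2|_{x_2=0}=G+\mathcal{R}_{-1}W^{\mathrm{nc}}+\text{lower order.}
\end{align*}
The determinant of the $2\times 2$ symbol $\bm\beta(E_+\;E_-)$ is precisely the Lopatinski\u{\i} determinant $\Delta$, which by Lemma~\ref{lem.Lopa2} factorizes as $\Delta=(\tau-\mathrm{i}\mathring{z}_0\eta)h_0=\mathrm{i}\sigma_0 h_0$ near $\Upsilon_c^0$ with $h_0\in\varGamma_2^0$ non-vanishing. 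Applying the algebraic identity \eqref{Coulombel} (with $\zeta_5:=h_0$) to diagonalize $\bm\beta(E_+\;E_-)$ modulo smooth invertible factors, and inverting those factors in the paradifferential calculus, produces
\begin{align*}
\|T^{\gamma}_{\sigma_0}Z_1|_{x_2=0}\|^2\lesssim \|G\|_{1,\gamma}^2+\|Z_2|_{x_2=0}\|_{1,\gamma}^2+\|W^{\mathrm{nc}}|_{x_2=0}\|^2+\text{lower order.}
\end{align*}
Since $\sigma_0=-\mathrm{i}\gamma+\widetilde\sigma_0$ with $\widetilde\sigma_0$ real and $|\sigma_0|^2=\gamma^2+\widetilde\sigma_0^2$, a final application of G{\aa}rding's inequality to the symbol $\sigma_0^*\sigma_0\in\varGamma_2^2$, together with Lemma~\ref{lem.para1}(iv), gives $\gamma^2\|Z_1|_{x_2=0}\|^2+\|T^{\gamma}_{\widetilde\sigma_0}Z_1|_{x_2=0}\|^2\lesssim \|T^{\gamma}_{\sigma_0}Z_1|_{x_2=0}\|^2$ up to lower-order remainders. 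Chaining these three estimates absorbs the $\gamma^2\|Z_1|_{x_2=0}\|^2$ term left over from the $Z_1$ bound and produces \eqref{bad2.e0}.

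The main obstacle is this boundary analysis: one must verify that the Coulombel-type algebraic manipulations are carried out entirely within $\varGamma_2^0$ and $\varGamma_2^{-1}$, that $h_0$ can be inverted as a paradifferential symbol on $\supp\chi_2$ (which may require shrinking that support, justified by the non-vanishing assertion of Lemma~\ref{lem.Lopa2}), and that all commutator and remainder symbols generated along the way lie in $\varGamma_2^{-1}$, so that every error they produce can be absorbed either into $\gamma^{-1}\VERT W\VERT^2$ or into $\|W^{\mathrm{nc}}|_{x_2=0}\|^2$ on the right-hand side of \eqref{bad2.e0}.
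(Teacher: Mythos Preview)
Your proposal is correct and follows essentially the same approach as the paper: interior estimates for $Z_1$ and $Z_2$ via the signs of $\RE\omega_\pm$ and $\RE\omega_\pm'$, then a boundary analysis using the Coulombel identity \eqref{Coulombel} with $\zeta_5=h_0$ to reduce $\bm\beta(E_+\;E_-)$ to the diagonal form $\mathrm{diag}\big(1,\lambda^{-1,\gamma}(\gamma+\mathrm{i}\widetilde\sigma_0)\big)$ and exploit $|\sigma_0|^2=\gamma^2+\widetilde\sigma_0^2$. The paper carries out the boundary step through nested cut-offs $\chi_{c_1},\ldots,\chi_{c_4}$, an auxiliary variable $\upsilon:=T^\gamma_{\chi_{c_4}P_2^{-1}}Z_1|_{x_2=0}$, and repeated applications of the microlocalized G{\aa}rding inequality---exactly the technical verification you correctly flag as the main obstacle.
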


\begin{proof}\  The proof is divided into two steps.
	
	\smallskip
	{\bf 1.}\;{\it Estimate in domain $\Omega$}. \quad    The first equation in \eqref{bad2.eq} reads
	\begin{align}\label{bad2.eq1}
	\partial_2 Z^+_1=T^{\gamma}_{\omega_+}Z^+_1+T^{\gamma}_{d_+}Z^+_1+T^{\gamma}_{r}W^+
	+\mathcal{R}_0T^{\gamma}_{\chi_c^0}F^{+}+\mathcal{R}_{-1}W^{+}.
	\end{align}
	Recalling that $\RE\omega_{+}=\gamma e_+\lesssim -\gamma$, we choose the identity as a symmetrizer
	and obtain the following $L^2$ estimate:
	\begin{align}
	&\gamma\VERT Z_1^+\VERT ^2\lesssim \|Z^+_{1}|_{x_2=0}\|^2
	+ \gamma^{-1}\VERT(T^{\gamma}_r W^+,\mathcal{R}_0T^{\gamma}_{\chi_c^0}F^+,\mathcal{R}_{-1}W^+)\VERT^2\notag \\
	&\lesssim \|Z^+_{1}|_{x_2=0}\|^2
	+ \gamma^{-3}\big(\VERT T^{\gamma}_r W^+\VERT_{1,\gamma}^2+\VERT T^{\gamma}_{\chi_c^0}F^+\VERT_{1,\gamma}^2+\VERT W^+ \VERT^2 \big)
	\label{bad2.e1}
	\end{align}
	for sufficiently large $\gamma$.
	
	The second equation in \eqref{bad2.eq} reads
	\begin{align*}
	\partial_2 Z^+_2=T^{\gamma}_{\omega_+'}Z^+_2+T^{\gamma}_{d_+'}Z^+_2+T^{\gamma}_{r}W^+
	+\mathcal{R}_0T^{\gamma}_{\chi_c^0}F^{+}+\mathcal{R}_{-1}W^{+}.
	\end{align*}
	Recalling that $\RE\omega_{+}'=\gamma e_+'\gtrsim \gamma$,
	we perform a similar calculation as Step\;1 in the proof of Lemma \ref{lem.Zp} to deduce
	\begin{align}
\notag &	\gamma\VERT  Z_2^+\VERT_{1,\gamma} ^2+\|Z^+_{2}|_{x_2=0}\|_{1,\gamma}^2\\
	&\quad \lesssim  \gamma^{-1}\big(\VERT T^{\gamma}_r W^+\VERT_{1,\gamma}^2+\VERT T^{\gamma}_{\chi_c^0}F^+\VERT_{1,\gamma}^2
	+\VERT W^+ \VERT^2 \big)
	\label{bad2.e2}
	\end{align}
	for sufficiently large $\gamma$.
	
	A similar analysis enables us to deduce the energy estimates for $Z_1^-$ and $Z_2^-$
	as \eqref{bad2.e1} and \eqref{bad2.e2}. The combination of all these estimates is
	\begin{align}  \notag
	&\gamma^3\VERT Z_1\VERT^2+\gamma\VERT Z_2\VERT_{1,\gamma}^2+\|Z_{2}|_{x_2=0}\|_{1,\gamma}^2\\  \label{bad2.e3}
	&\,\,\, \lesssim\gamma^2 \|Z_{1}|_{x_2=0}\|^2+\gamma^{-1}\big(\VERT T^{\gamma}_r W\VERT_{1,\gamma}^2
	+\VERT T^{\gamma}_{\chi_c^0}F\VERT_{1,\gamma}^2+\VERT W \VERT^2 \big).
	\end{align}
	
	\vspace{1mm}
	\noindent{\bf 2.}\; {\it Estimate for the boundary terms}.\quad
	We now estimate the traces of the incoming modes $Z_1$ in terms of the outgoing models $Z_2$
	and the source term $G$.
	Using the boundary condition \eqref{p.p2.c} yields
	\begin{align*}
	T^{\gamma}_{\bm{\beta}}\bm{w}^{\mathrm{nc}}=G+\mathcal{R}_{-1}W^{\mathrm{nc}}\qquad \mathrm{if}\ x_2=0.
	\end{align*}
	From the proof of Lemma \ref{lem.Lopa2},
	we find that $\mathring{\zeta}_1\ne 0$, and
	\begin{align*}
	(\mathring{\zeta}_1 \mathring{\zeta}_4-\mathring{\zeta}_2\mathring{\zeta}_3)|_{x_2=0}=\Delta
	=(\tau-\mathrm{i}\mathring{z}_0\eta) h_0(t,x_1,\tau,\eta),\quad h_0(t,x_1,\tau,\eta)\ne 0
	\end{align*}
	in a neighborhood of $(\mathrm{i}\mathring{z}_{0}\eta,\eta)\in\Xi_1$.
	According to identity \eqref{Coulombel}, we define
	the following invertible matrices in a suitably small neighborhood of $\Upsilon_{c}^0$:
	\begin{align*}
	P_1=\begin{pmatrix}
	1/\mathring{\zeta}_1 & 0\\[0.5mm]
	-\mathring\zeta_3/(\mathring\zeta_1 \zeta_5) &1/\zeta_5
	\end{pmatrix}, \qquad
	P_2=\begin{pmatrix}
	1 & -\mathring\zeta_2 \\[0.5mm]  0  &\mathring\zeta_1
	\end{pmatrix},
	\end{align*}
	with $\zeta_5:=h_0(t,x_1,\tau,\eta)$ such that $P_1$ and $P_2$ belong to ${\mathrm{\Gamma}_2^0}$.
	Shrinking $\mathscr{V}_c^0$ if necessary, we have
	\begin{align}\label{beta.in}
	\bm{\beta}_{\mathrm{in}}:=P_1 \bm{\beta}(E_+\ E_-)P_2
	=\begin{pmatrix}
	1&0\\ 0&\lambda^{-1,\gamma}(\gamma+\mathrm{i}\widetilde{\sigma}_0)
	\end{pmatrix}
	\qquad \mathrm{in}\ \mathscr{V}_c^0,
	\end{align}
	where $\widetilde{\sigma}_0=\delta-\mathring{z}_0\eta$ is the scalar real symbol in ${\mathrm{\Gamma}_2^1}$.
	We recall from \eqref{sigma} that $\sigma_0=-\mathrm{i}\gamma+\widetilde{\sigma}_0$.
	
	We then fix the four cut-off functions $\chi_{c_1}$, $\chi_{c_2}$, $\chi_{c_3}$,
	and $\chi_{c_4}$ such that
	\begin{list}{}{\setlength{\parsep}{\parskip}
			\setlength{\itemsep}{0.1em}
			\setlength{\labelwidth}{2em}
			\setlength{\labelsep}{0.4em}
			\setlength{\leftmargin}{2.2em}
			\setlength{\topsep}{1mm}
		}
		\item[--] $\chi_{c_1}\equiv 1$ in a neighborhood of $\mathrm{supp}\,\chi_1\cap\{x_2=0\}$;
		\item[--] $\chi_{c_j}\equiv 1$ in a neighborhood of $\mathrm{supp}\,\chi_{c_{j-1}}$ for $j=2,3,4$;
		\item[--] $\mathrm{supp}\,\chi_{c_{4}}\subset \mathscr{V}_c^0\cap\{x_2=0\}$.
	\end{list}
	As in \cite[\S\,3.4.3]{C04MR2069632}, the following estimate can be obtained
	by using the localized G{\aa}rding's inequality:
	\begin{align} \notag
	&\|T^{\gamma}_{\chi_{c_2}\lambda^{1,\gamma}\bm{\beta}_{\mathrm{in}}}T^{\gamma}_{\chi_{c_1}}T^{\gamma}_{\chi_{c_4}P_2^{-1} }Z_{1}|_{x_2=0}\|
\\\label{bad2.e4} & \quad 	\lesssim
	\|G\|_{1,\gamma}+\|Z_{2}|_{x_2=0}\|_{1,\gamma}+\|{W^{\mathrm{nc}}}|_{x_2=0}\|.
	\end{align}
	Now we utilize the special structure of $\bm{\beta}_{\mathrm{in}}$ to derive a lower bound
	for the term on the left-hand side of \eqref{bad2.e4}.
	Setting
	\begin{align}\label{bad3}
	(\upsilon_1,\upsilon_2)^{\mathsf{T}}:=T^{\gamma}_{\chi_{c_4}P_2^{-1} }Z_{1}|_{x_2=0},
	\end{align}
	we obtain from \eqref{beta.in} that
	\begin{align} \notag
	&\|T^{\gamma}_{\chi_{c_2}\lambda^{1,\gamma}\bm{\beta}_{\mathrm{in}}}T^{\gamma}_{\chi_{c_1}}T^{\gamma}_{\chi_{c_4}P_2^{-1} }Z_{1}|_{x_2=0}\|^2\\
	&\quad =\|T^{\gamma}_{\chi_{c_2}\lambda^{1,\gamma}}T^{\gamma}_{\chi_{c_1}}\upsilon_1\|^2
	+\|T^{\gamma}_{\chi_{c_2}(\gamma+\mathrm{i}\widetilde{\sigma}_0)}T^{\gamma}_{\chi_{c_1}}\upsilon_2\|^2. \label{bad3.e1}
	\end{align}
	Use Lemma \ref{lem.para1}\;(iv) and apply the localized G{\aa}rding's inequality (Lemma \ref{lem.para1}\;(vii))
	to obtain
	\begin{align*}
	&\|T^{\gamma}_{\chi_{c_2}\lambda^{1,\gamma}}T^{\gamma}_{\chi_{c_1}}\upsilon_1\|^2
	=\big\langle\big(T^{\gamma}_{\chi_{c_2}\lambda^{1,\gamma}}\big)^*T^{\gamma}_{\chi_{c_2}
		\lambda^{1,\gamma}}T^{\gamma}_{\chi_{c_1}}\upsilon_1, T^{\gamma}_{\chi_{c_1}}\upsilon_1 \big\rangle\\
	&\quad \geq \RE \big\langle T^{\gamma}_{\chi_{c_2}^2\lambda^{2,\gamma}}T^{\gamma}_{\chi_{c_1}}\upsilon_1,
	T^{\gamma}_{\chi_{c_1}}\upsilon_1 \big\rangle
	-C\big\|T^{\gamma}_{\chi_{c_1}}\upsilon_1\big\|\big\|T^{\gamma}_{\chi_{c_1}}\upsilon_1\big\|_{1,\gamma}\\
	&\quad \geq
	c\|T^{\gamma}_{\chi_{c_1}}\upsilon_1\|_{1,\gamma}^2-C\|\upsilon_1\|^2-C\|T^{\gamma}_{\chi_{c_1}}\upsilon_1\|^2\\
	&\quad \gtrsim
	\|\upsilon_1\|_{1,\gamma}^2-C\|Z_{1}|_{x_2=0}\|^2
	\gtrsim \gamma^2\|\upsilon_1\|^2+\|T^{\gamma}_{\widetilde{\sigma}_0}\upsilon_1\|^2-C\|Z_{1}|_{x_2=0}\|^2
	\end{align*}
	for large enough $\gamma$.
	Similarly, we obtain that, for sufficiently large $\gamma$,
	\begin{align*}
	\|T^{\gamma}_{\chi_{c_2}(\gamma+\mathrm{i}\widetilde{\sigma}_0)}T^{\gamma}_{\chi_{c_1}}\upsilon_2\|^2\gtrsim \gamma^2\|\upsilon_2\|^2+\|T^{\gamma}_{\widetilde{\sigma}_0}\upsilon_2\|^2-C\|Z_{1}|_{x_2=0}\|^2.
	\end{align*}
	Plug the above two estimates into \eqref{bad3.e1} to infer
	\begin{align}
	\notag&\|T^{\gamma}_{\chi_{c_2}\lambda^{1,\gamma}\bm{\beta}_{\mathrm{in}}}T^{\gamma}_{\chi_{c_1}}
	T^{\gamma}_{\chi_{c_4}P_2^{-1} }Z_{1}|_{x_2=0}\|^2 \\
	&\quad  \label{bad3.e2} \gtrsim
	\gamma^2\|(\upsilon_1,\upsilon_2)\|^2+\|T^{\gamma}_{\widetilde{\sigma}_0}(\upsilon_1,\upsilon_2)\|^2
	-C\|Z_{1}|_{x_2=0}\|^2.
	\end{align}
	Since $\chi_{c_3}\chi_1\equiv\chi_1$, we see from  \eqref{bad2} and \eqref{bad3} that
	\begin{align*}
	T^{\gamma}_{\chi_{c_3}}T^{\gamma}_{\widetilde{\sigma}_0}Z_1
	=T^{\gamma}_{\widetilde{\sigma}_0}T^{\gamma}_{\chi_{c_3}}Z_1+\mathcal{R}_0Z_1
	=T^{\gamma}_{\widetilde{\sigma}_0}Z_1+\mathcal{R}_0Z_1,
	\end{align*}
	so that
	\begin{align*}
	T^{\gamma}_{\widetilde{\sigma}_0}(\upsilon_1,\upsilon_2)
	&=T^{\gamma}_{\chi_{c_4}P_2^{-1}}T^{\gamma}_{\widetilde{\sigma}_0}Z_{1}|_{x_2=0}+\mathcal{R}_0Z_{1}|_{x_2=0}\\
	&=T^{\gamma}_{\chi_{c_4}P_2^{-1}}T^{\gamma}_{\chi_{c_3}}
	T^{\gamma}_{\widetilde{\sigma}_0}Z_{1}|_{x_2=0}+\mathcal{R}_0Z_{1}|_{x_2=0}.
	\end{align*}
	Thanks to the ellipticity of $(P_2^{-1})^*P_2^{-1}$ on the support of $\chi_{c_4}$,
	we apply the localized G{\aa}rding's inequality to obtain
	\begin{align*}
	&\|T^{\gamma}_{\widetilde{\sigma}_0}(\upsilon_1,\upsilon_2)\|^2\\
	 &\geq\big \langle \big(T^{\gamma}_{\chi_{c_{4}}P_2^{-1}}\big)^* T^{\gamma}_{\chi_{c_{4}}P_2^{-1}}T^{\gamma}_{\chi_{c_3}}T^{\gamma}_{\widetilde{\sigma}_0}
	Z_{1}|_{x_2=0},T^{\gamma}_{\chi_{c_3}}T^{\gamma}_{\widetilde{\sigma}_0}Z_{1}|_{x_2=0}\big \rangle
	-C\|Z_{1}|_{x_2=0}\|^2\\
	&\gtrsim\|T^{\gamma}_{\chi_{c_3}}T^{\gamma}_{\widetilde{\sigma}_0}Z_{1}|_{x_2=0}\|^2
	-C\|T^{\gamma}_{\widetilde{\sigma}_0}Z_{1}|_{x_2=0}\|_{-1,\gamma}^2
	\\ &\quad\  -C\|T^{\gamma}_{\chi_{c_3}}T^{\gamma}_{\widetilde{\sigma}_0}Z_{1}|_{x_2=0}\|_{-1,\gamma}^2
	-C\|Z_{1}|_{x_2=0}\|^2
	\end{align*}
	for large enough $\gamma$. Then we take $\gamma$ sufficiently large to deduce
	\begin{align} \label{bad3.e3}
	\|T^{\gamma}_{\widetilde{\sigma}_0}(\upsilon_1,\upsilon_2)\|^2
	\gtrsim \|T^{\gamma}_{\widetilde{\sigma}_0}Z_{1}|_{x_2=0}\|^2-C\|Z_{1}|_{x_2=0}\|^2.
	\end{align}
	Similarly, we have
	\begin{align} \notag
	\|(\upsilon_1,\upsilon_2)\|^2
	&\gtrsim \|Z_{1}|_{x_2=0}\|^2-C\|Z_{1}|_{x_2=0}\|_{-1,\gamma}^2\\
	&\gtrsim \|Z_{1}|_{x_2=0}\|^2
	-\frac{C}{\gamma^2}\|Z_{1}|_{x_2=0}\|^2. \label{bad3.e4}
	\end{align}
	
	Combining estimates \eqref{bad2.e3}, \eqref{bad2.e4}, and \eqref{bad3.e2}--\eqref{bad3.e4},
	we take $\gamma$ large enough to derive  \eqref{bad2.e0} and conclude the proof.
\qed\end{proof}

Recall that vectors $Z^{\pm}$ are defined by \eqref{bad2}
and that matrices $Q_0^{\pm}\in{\mathrm{\Gamma}_2^0}$ are invertible in a neighborhood of
the support of $\chi_1$ and $Q_{-1}^{\pm}\in{\mathrm{\Gamma}_1^{-1}}$.
It then follows from  Lemma \ref{lem.bad2} that
\begin{align}  \notag
&\gamma^3\VERT T^{\gamma}_{\chi_c^0}W^{\mathrm{nc}}\VERT ^2
+\gamma^2 \|T^{\gamma}_{\chi_c^0}{W^{\mathrm{nc}}}|_{x_2=0}\|^2+\|T^{\gamma}_{\widetilde{\sigma}_0}
T^{\gamma}_{\chi_c^0}{W^{\mathrm{nc}}}|_{x_2=0}\|^2\\[1mm]  \label{bad.e5}
&\,\,\, \lesssim \gamma^{-1}\big(\VERT T^{\gamma}_r W\VERT_{1,\gamma}^2
+\VERT T^{\gamma}_{\chi_c^0}F\VERT_{1,\gamma}^2+\VERT W \VERT^2 \big)
+\|G\|_{1,\gamma}^2+\|{W^{\mathrm{nc}}}|_{x_2=0}\|^2.
\end{align}
Noting that components $T^{\gamma}_{\chi_c^0}W_1^{\pm}$ are given in terms
of $T^{\gamma}_{\chi_c^0}W_2^{\pm}$ and $T^{\gamma}_{\chi_c^0}W_3^{\pm}$
by relation \eqref{omega1},
we can deduce an $L^2$ estimate for $T^{\gamma}_{\chi_c^0}W_1^{\pm}$,
that is, we can add the terms, $\gamma^3\VERT T^{\gamma}_{\chi_c^0}W_1^{\pm}\VERT^2$,
on the left-hand side of \eqref{bad.e5}.

The following proposition then follows by combining the estimates for the three
cases $\Upsilon_c^q$ with $q\in\{0,\pm 1\}$.

\begin{proposition}\ \label{pro.bad}
	There exist constants $K_0\leq 1$ and $\gamma_0\geq 1$ such that,
	if $\gamma\geq \gamma_0$ and $K\leq K_0$ for $K$ given in \eqref{bas.c2}, then
	\begin{align}  \notag
	&\gamma^3\VERT T^{\gamma}_{\chi_c}W \VERT^2+\gamma^2 \|T^{\gamma}_{\chi_c}{W^{\mathrm{nc}}}|_{x_2=0}\|^2+\sum_{q\in\{0,\pm1\}}
	\|T^{\gamma}_{\widetilde{\sigma}_q}T^{\gamma}_{\chi_c}{W^{\mathrm{nc}}}|_{x_2=0}\|^2\\  \label{bad.e0}
	& \lesssim \gamma^{-1}\left(\VERT T^{\gamma}_r W\VERT_{1,\gamma}^2
	+\VERT T^{\gamma}_{\chi_c}F\VERT_{1,\gamma}^2+\VERT W \VERT^2 \right)+\|G\|_{1,\gamma}^2+\|{W^{\mathrm{nc}}}|_{x_2=0}\|^2,
	\end{align}
	where $\widetilde{\sigma}_q\in{\mathrm{\Gamma}_2^1}$ is given in \eqref{sigma} and $r\in{\mathrm{\Gamma}_1^0}$ vanishes in
	region $\{\chi_c\equiv1\}\cup\{\chi_c\equiv0\}$.
\end{proposition}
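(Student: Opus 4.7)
The plan is to repeat, for each root $q\in\{0,\pm1\}$, the construction performed in Lemma \ref{lem.bad2} for $q=0$, and then to assemble the three contributions. Fix $q\in\{0,\pm1\}$ and introduce smooth cut-offs $\chi_1^{(q)},\chi_2^{(q)}$ analogous to the $\chi_1,\chi_2$ used above, with $\chi_2^{(q)}\equiv 1$ on $\supp\chi_c^q$ and $\supp\chi_2^{(q)}\subset \mathscr{V}_c^q$. Set $\bm{w}_\pm^{\mathrm{nc},q}:=(T^\gamma_{\chi_c^q}W^\pm_2,T^\gamma_{\chi_c^q}W^\pm_3)^{\mathsf{T}}$ and define, using the diagonalizer $Q_0^\pm$ from Proposition \ref{pro.micro}\,(b) on $\mathscr{V}_c^q$, the new unknowns $Z^{\pm,q}:=T^\gamma_{\chi_1^{(q)}(Q_0^\pm+Q_{-1}^\pm)}\bm{w}_\pm^{\mathrm{nc},q}$, where the lower order corrector $Q_{-1}^\pm\in\varGamma_1^{-1}$ is produced by the very same argument as Lemma \ref{lem.bad1}. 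The resulting paradifferential system has the diagonal form \eqref{bad2.eq}--\eqref{D1}, so the interior $L^2(H^1_\gamma)$ estimate for the outgoing modes and the $L^2$ estimate for the incoming modes go through verbatim, yielding the analogue of \eqref{bad2.e3} for each $q$.

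For the boundary, the key point is that, thanks to Lemma \ref{lem.Lopa2}, the Lopatinski\u{\i} determinant $\Delta$ has near $(\mathrm{i}\mathring z_q\eta,\eta)$ the factorization $\Delta=(\tau-\mathrm{i}\mathring z_q\eta)h_q$ with $h_q$ elliptic. Using the same identity \eqref{Coulombel} with the local reducers $P_1^{(q)},P_2^{(q)}\in\varGamma_2^0$, the reduced boundary symbol takes the canonical form
\begin{equation*}
\bm\beta_{\mathrm{in}}^{(q)}=\begin{pmatrix} 1 & 0 \\ 0 & \lambda^{-1,\gamma}(\gamma+\mathrm{i}\widetilde\sigma_q)\end{pmatrix},
\end{equation*}
with $\widetilde\sigma_q=\delta-\mathring z_q\eta$ as in \eqref{sigma}. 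Applying microlocalized G\r{a}rding (Lemma \ref{lem.para1}\,(vii)) exactly as in the passage from \eqref{bad2.e4} to \eqref{bad3.e2}--\eqref{bad3.e4} then yields the trace bound
\begin{equation*}
\gamma^2\|T^\gamma_{\chi_c^q}{W^{\mathrm{nc}}}|_{x_2=0}\|^2+\|T^\gamma_{\widetilde\sigma_q}T^\gamma_{\chi_c^q}{W^{\mathrm{nc}}}|_{x_2=0}\|^2\lesssim \textrm{RHS of }\eqref{bad.e5}
\end{equation*}
for each $q\in\{0,\pm1\}$. Combined with the interior estimate, this gives the analogue of \eqref{bad.e5} with $\chi_c^0$ replaced by $\chi_c^q$ and $\widetilde\sigma_0$ by $\widetilde\sigma_q$.

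It then remains to upgrade the noncharacteristic bound on the left-hand side to a bound on the full vector $T^\gamma_{\chi_c}W$. For this I would invoke the algebraic relation \eqref{omega1}, which expresses $T^\gamma_{\chi_c^q}W_1^\pm$ as a paradifferential combination of $T^\gamma_{\chi_c^q}W_{2,3}^\pm$ plus a contribution of order $-1$ in $W$ and a source term: applying it gives an $L^2$ estimate $\gamma^3\VERT T^\gamma_{\chi_c^q}W_1^\pm\VERT^2$ controlled by the quantities already estimated. Summing the three inequalities over $q\in\{0,\pm1\}$, using $\chi_c=\chi_c^{-1}+\chi_c^0+\chi_c^1$, absorbing the resulting low-order remainder $r\in\varGamma_1^0$ (which still vanishes on $\{\chi_c\equiv 1\}\cup\{\chi_c\equiv 0\}$ thanks to the support properties of the three cut-offs) into the right-hand side, one arrives at \eqref{bad.e0}.

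The main obstacle is the handling of the degenerate boundary factor $\gamma+\mathrm{i}\widetilde\sigma_q$: since the Lopatinski\u{\i} determinant vanishes on $\widetilde\sigma_q=\gamma=0$, neither a pure $L^2$ trace bound nor a pure $H^1_\gamma$ trace bound is available for the incoming modes, and the bookkeeping of the mixed weighted norm $\|T^\gamma_{\widetilde\sigma_q}(\cdot)\|$ together with $\gamma\|\cdot\|$ must be done carefully so that the weight $\widetilde\sigma_q$ is the one that later gets matched against the transported weights $\sigma_\pm^q$ from \eqref{sigma.eq} in order to absorb the microlocalization errors when the three regions are patched together.
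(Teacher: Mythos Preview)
Your proposal is correct and follows essentially the same route as the paper: the paper proves the $q=0$ case in detail (Lemma \ref{lem.bad2} and estimate \eqref{bad.e5}), recovers the characteristic component $T^\gamma_{\chi_c^0}W_1^\pm$ via \eqref{omega1}, and then simply states that the proposition follows by repeating the argument for $q=\pm1$ and combining. Your identification of the canonical boundary form $\bm\beta_{\mathrm{in}}^{(q)}$ from the factorization \eqref{factor.v}, the use of microlocalized G{\aa}rding, and the final commentary on why the weighted trace term $\|T^\gamma_{\widetilde\sigma_q}(\cdot)\|$ is the right object to record (for later matching with the transported weights $\sigma_\pm^q$ in Lemma \ref{lem.error}) all match the paper's strategy.
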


\subsection{Proof of Theorem {\rm\ref{thm.2}}}\label{sec.thm1}
We now patch the microlocalized energy estimates \eqref{est.u}, \eqref{pole.e}, and \eqref{bad.e0}
together to deduce estimate \eqref{p.p2.e0}.
Since $\chi_u+\chi_p+\chi_c\equiv1$,
\begin{align}
\notag &\gamma^3\VERT W \VERT^2+\gamma^2\|{W^{\mathrm{nc}}}|_{x_2=0}\|^2\\
&\lesssim \gamma\VERT (T_{\chi_u}^{\gamma}W,T_{\chi_p}^{\gamma}W)\VERT_{1,\gamma}^2+\gamma^3\VERT T^{\gamma}_{\chi_c}W \VERT^2\notag \\
&\quad +\|(T_{\chi_u}^{\gamma}W^{\mathrm{nc}},T_{\chi_p}^{\gamma}W^{\mathrm{nc}})|_{x_2=0}\|_{1,\gamma}^2
+\gamma^2 \|T^{\gamma}_{\chi_c}{W^{\mathrm{nc}}}|_{x_2=0}\|^2.
\label{error.e3}
\end{align}
Adding estimates \eqref{est.u}, \eqref{pole.e}, and \eqref{bad.e0},
we use  \eqref{error.e3} and take $\gamma$ large enough to deduce
\begin{align} \notag
&\gamma\VERT (T_{\chi_u}^{\gamma}W,T_{\chi_p}^{\gamma}W)\VERT_{1,\gamma}^2
+\|(T_{\chi_u}^{\gamma}W^{\mathrm{nc}},T_{\chi_p}^{\gamma}W^{\mathrm{nc}})|_{x_2=0}\|_{1,\gamma}^2\\
&\notag  +\gamma^3\VERT T^{\gamma}_{\chi_c}W \VERT^2  +\gamma^2 \|T^{\gamma}_{\chi_c}{W^{\mathrm{nc}}}|_{x_2=0}\|^2\\
& \quad \qquad \lesssim  \|G\|_{1,\gamma}^2+\gamma^{-1}\VERT F\VERT_{1,\gamma}^2+\gamma^{-1}\VERT T^{\gamma}_r W\VERT_{1,\gamma}^2.
\label{error.e0}
\end{align}
In order to absorb the \emph{microlocalization error} term $\VERT T^{\gamma}_r W\VERT_{1,\gamma}$,
we decompose symbol $r$ in terms of $\chi_u$, $\chi_p$, and $\sigma_q$ ($q\in\{0,\pm1 \}$).
Notice that symbol $r\in{\mathrm{\Gamma}_1^0}$ vanishes in the region:
$$
\{\chi_c\equiv1 \}\cap\{\chi_p\equiv 0\}\cap \{\chi_u\equiv0 \}\equiv\{\chi_c\equiv1 \}.
$$
In region $\{\chi_c\leq 1/2\}$,  $\chi_u+\chi_p\geq \frac{1}{2}$, so that we can write
\begin{align*}
r=\alpha_u \chi_u+\alpha_p\chi_p,
\end{align*}
where matrices $\alpha_u$ and $\alpha_p$ belong to ${\mathrm{\Gamma}_1^0}$ and
have the same block diagonal structure as $\mathbb{A}$.
In region $\{\frac{1}{2}\leq \chi_c<1 \}$, we can utilize \eqref{key_error} to write
\setlength{\arraycolsep}{3pt}
\begin{align*}
r=\sum_{q\in\{0,\pm1\}}\alpha_c^q
\begin{pmatrix}
\sigma_{+}^q I_{3}&0 \\ 0 & \sigma_{-}^q I_{3}
\end{pmatrix}
\chi_c^q,
\end{align*}
where $\alpha_c^q\in{\mathrm{\Gamma}_1^{-1}}$ has the same block diagonal structure
as $\mathbb{A}$, and $\sigma_{\pm}^q$ are solutions to \eqref{sigma.eq}.
Thus we obtain
\begin{align}
\VERT T^{\gamma}_r W\VERT_{1,\gamma}
\lesssim
\VERT (T_{\chi_u}^{\gamma}W,T_{\chi_p}^{\gamma}W)\VERT_{1,\gamma}+
\sum_{q\in\{0,\pm1\}}\VERT T^{\gamma}_{\sigma_{\pm}^q}T^{\gamma}_{\chi_c^q}W \VERT.
\label{error.e4}
\end{align}
We now make the estimate for the last term in \eqref{error.e4} in the following lemma:

\begin{lemma}\ \label{lem.error}
	There exist constants $K_0\leq 1$ and $\gamma_0\geq 1$ such that,
	if $\gamma\geq \gamma_0$ and $K\leq K_0$ for $K$ given in \eqref{bas.c2},
	then, for $q\in\{0,\pm1\}$,
	\begin{align}
	\notag \gamma\VERT T^{\gamma}_{\sigma_{\pm}^q}T^{\gamma}_{\chi_c^q}W\VERT^2
	\lesssim\;&\gamma\VERT W \VERT^2
	+ \gamma^{-1}\big(\VERT T^{\gamma}_r W\VERT_{1,\gamma}^2+\VERT T^{\gamma}_{\chi_c^q}F\VERT_{1,\gamma}^2 \big)\\
	\label{error.e1}&+\|G\|_{1,\gamma}^2
	+\gamma^2\|{W^{\mathrm{nc}}}|_{x_2=0}\|^2,
	\end{align}
	where $r\in{\mathrm{\Gamma}_1^0}$ vanishes in region $\{\chi_c^q\equiv1\}$.
\end{lemma}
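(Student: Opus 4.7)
The idea is to return to the microlocally diagonalised system used in the proof of Lemma \ref{lem.bad2}, apply the weighted operator $T^\gamma_{\sigma_\pm^q}$, and exploit the transport equations \eqref{sigma.eq} to convert the principal-order commutator error into a manageable lower-order term. More precisely, for each $q\in\{0,\pm 1\}$, I would repeat the derivation of Section 5.5 with $\chi_c^0$ replaced by $\chi_c^q$ (and with neighborhoods $\mathscr{V}_c^q$ in place of $\mathscr{V}_c^0$) to obtain from \eqref{bad2.eq} the diagonalised equations
\begin{align*}
\partial_2 Z^\pm = T^\gamma_{\widetilde{\mathbb{D}}_1^\pm} Z^\pm + T^\gamma_{\widetilde{\mathbb{D}}_0^\pm} Z^\pm + T^\gamma_{r}W^\pm+\mathcal{R}_0T^\gamma_{\chi_c^q}F^\pm+\mathcal{R}_{-1}W^\pm,
\end{align*}
with $\widetilde{\mathbb{D}}_1^\pm=\mathrm{diag}\,(\omega_\pm,\omega'_\pm)=\mathrm{diag}\,(\gamma e_\pm+\mathrm{i}\hbar_\pm,\gamma e'_\pm+\mathrm{i}\hbar'_\pm)$ as in \eqref{D1}.

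The key step is to apply $T^\gamma_{\sigma_\pm^q}$ to these equations and use Lemma \ref{lem.para1}\,(v) together with the chain-rule identity $\partial_2(T^\gamma_{\sigma_\pm^q}u)=T^\gamma_{\sigma_\pm^q}\partial_2 u+T^\gamma_{\partial_2\sigma_\pm^q}u$. For the first diagonal component this yields
\begin{align*}
\partial_2(T^\gamma_{\sigma_\pm^q}Z^\pm_1)
=T^\gamma_{\omega_\pm}(T^\gamma_{\sigma_\pm^q}Z^\pm_1)
+T^\gamma_{\partial_2\sigma_\pm^q-\mathrm{i}\{\sigma_\pm^q,\,\omega_\pm\}}Z^\pm_1
+T^\gamma_{d_\pm}(T^\gamma_{\sigma_\pm^q}Z^\pm_1)+\textrm{errors},
\end{align*}
and the decisive cancellation occurs inside the commutator symbol: writing $\omega_\pm=\gamma e_\pm+\mathrm{i}\hbar_\pm$ and invoking the transport equation \eqref{sigma.eq}, one obtains
\begin{align*}
\partial_2\sigma_\pm^q-\mathrm{i}\{\sigma_\pm^q,\,\omega_\pm\}
=\partial_2\sigma_\pm^q+\{\sigma_\pm^q,\,\hbar_\pm\}-\mathrm{i}\gamma\{\sigma_\pm^q,\,e_\pm\}
=-\mathrm{i}\gamma\{\sigma_\pm^q,\,e_\pm\}.
\end{align*}
Thus the order-one residual is demoted to $\gamma$ times an order-zero operator (recall that $e_\pm\in\varGamma_2^0$ and $\sigma_\pm^q\in\varGamma_2^1$, so $\{\sigma_\pm^q,e_\pm\}\in\varGamma_1^0$). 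An entirely analogous computation applies to $Z^\pm_2$ with $\omega'_\pm$ in place of $\omega_\pm$; for this outgoing component the absence of a matching transport equation causes no harm because the positive sign of $\RE\omega'_\pm\gtrsim\gamma$ makes the trace appear with the correct sign in the $L^2$ estimate.

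With these ``good'' evolution equations in hand, the $L^2$ energy estimate from Steps 1 and 3 of the proof of Lemma \ref{lem.bad2} — using the identity as symmetriser and the sign of $\RE\omega_\pm$ — delivers
\begin{align*}
\gamma\VERT T^\gamma_{\sigma_\pm^q}Z^\pm\VERT^2
\lesssim \big\|T^\gamma_{\sigma_\pm^q}Z^\pm_1|_{x_2=0}\big\|^2
+\gamma^{-1}\big(\VERT T^\gamma_r W\VERT_{1,\gamma}^2+\VERT T^\gamma_{\chi_c^q}F\VERT_{1,\gamma}^2\big)
+\gamma\VERT W\VERT^2,
\end{align*}
where the last term accommodates the contribution of the $\gamma$-weighted order-zero operator $T^\gamma_{-\mathrm{i}\gamma\{\sigma_\pm^q,e_\pm\}}Z^\pm$. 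The boundary term is then handled by the decomposition $\sigma_\pm^q|_{x_2=0}=\sigma_q=-\mathrm{i}\gamma+\widetilde{\sigma}_q$, which gives
\begin{align*}
\big\|T^\gamma_{\sigma_\pm^q}Z^\pm_1|_{x_2=0}\big\|
\lesssim \gamma\big\|Z^\pm_1|_{x_2=0}\big\|+\big\|T^\gamma_{\widetilde{\sigma}_q}Z^\pm_1|_{x_2=0}\big\|,
\end{align*}
and both right-hand sides are already controlled by \eqref{bad2.e0} (in its variant for the critical point $q$), producing the terms $\|G\|_{1,\gamma}^2$ and $\gamma^2\|W^{\mathrm{nc}}|_{x_2=0}\|^2$. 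Finally one translates $Z^\pm$ back into $T^\gamma_{\sigma_\pm^q}T^\gamma_{\chi_c^q}W$ by inverting $Q_0^\pm+Q_{-1}^\pm$ on $\mathrm{supp}\,\chi_1$ and recovering the characteristic component $T^\gamma_{\chi_c^q}W_1^\pm$ from \eqref{omega1}.

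The main technical obstacle is a careful bookkeeping of the commutator remainders: one must verify that the transport-equation cancellation survives after the approximate symbolic calculus (up to genuine $\mathcal{R}_{-1}$ and $\mathcal{R}_0$ terms) and that every lower-order residue — in particular the order-zero operator scaled by $\gamma$ — is absorbable either by $\gamma\VERT W\VERT^2$ on the right-hand side or, via the $\gamma^{-1}$ prefactor, by $\VERT T^\gamma_r W\VERT_{1,\gamma}^2$. The transport equation \eqref{sigma.eq} has been designed precisely to make this bookkeeping close, and this is what renders the weight $\sigma_\pm^q$ effective in absorbing the microlocalisation errors when the three partial estimates \eqref{est.u}, \eqref{pole.e}, and \eqref{bad.e0} are patched together in Section 5.7.
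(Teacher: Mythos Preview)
Your proposal is correct and follows essentially the same route as the paper: commute the weight $T^\gamma_{\sigma_\pm^q}$ through the diagonalised equation for the incoming mode $Z_1^\pm$, use the transport equation \eqref{sigma.eq} to kill the order-one commutator (leaving only $-\mathrm{i}\gamma\{\sigma_\pm^q,e_\pm\}\in\gamma\cdot\varGamma_1^0$), run the $L^2$ energy estimate with the sign $e_\pm\le -c$, and control the boundary term via $\sigma_q=-\mathrm{i}\gamma+\widetilde\sigma_q$ together with \eqref{bad2.e0}. The paper phrases the same computation as a symmetriser argument, testing \eqref{bad2.eq1} against $S Z_1^+$ with $S=(T^\gamma_{\sigma_+^0})^*T^\gamma_{\sigma_+^0}$, but the substance is identical.

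One small streamlining worth noting: you propose to redo the weighted estimate for the outgoing mode $Z_2^\pm$ as well, remarking that the good sign of $\RE\omega'_\pm$ compensates for the absence of a matching transport equation. This is not needed (and your justification is a little loose, since without the transport cancellation the commutator is genuinely order one). The paper simply observes that $T^\gamma_{\sigma_\pm^q}$ is of order $1$, so $\gamma\VERT T^\gamma_{\sigma_\pm^q}Z_2^\pm\VERT^2\lesssim\gamma\VERT Z_2^\pm\VERT_{1,\gamma}^2$, and this quantity is already controlled by \eqref{bad2.e2}. Your final step---inverting $Q_0^\pm+Q_{-1}^\pm$ and recovering $W_1^\pm$ through \eqref{omega1}---matches the paper exactly.
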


\begin{proof}\
	Let us show an estimate for $T^{\gamma}_{\sigma_{+}^0}Z_1^{+}$ with $Z_1^{+}$ defined by \eqref{bad2}.
	Recall from \eqref{sigma.eq} that symbol $\sigma_{+}^0$ satisfies the transport equation:
	\begin{align}  \label{error.eq1}
	\left\{\begin{aligned}
	&\partial_2\sigma_+^0+\{\sigma_+^0,\,\IM\omega_+\}=0&&\quad \textrm{if } x_2>0,\\
	&\sigma_{+}^0=-\mathrm{i}\gamma+\widetilde{\sigma}_0&&\quad \textrm{if } x_2=0.
	\end{aligned}\right.
	\end{align}
	Setting $S:=(T^{\gamma}_{\sigma_{+}^0})^* T^{\gamma}_{\sigma_{+}^0}$,
	we take the scalar product in $L^2(\Omega)$ of \eqref{bad2.eq1} with $SZ_1^+$ and
	apply integration by parts to derive
	\begin{align} \label{error.id2}
	\big\|\big(T^{\gamma}_{\sigma_{+}^0}Z_{1}^+\big)|_{x_2=0}\big\|^2+\sum_{j=1}^6 \mathcal{I}_j=0,
	\end{align}
	where each term $\mathcal{I}_j$ in the decomposition will be defined and estimated below.
	First,
	noting that $\sigma_+^0\in{\mathrm{\Gamma}_2^1}$ and
	$T^{\gamma}_{\sigma_{+}^0} T^{\gamma}_{d_+}= T^{\gamma}_{d_+}T^{\gamma}_{\sigma_{+}^0}+\mathcal{R}_0$,
	we obtain
	\begin{align} \notag
	\mathcal{I}_1:=\;&2\RE \llangle T^{\gamma}_{\sigma_{+}^0} T^{\gamma}_{d_+} Z_1^+,T^{\gamma}_{\sigma_{+}^0}Z_1^+\rrangle \\
	\label{I1}
	\lesssim\;& \VERT T^{\gamma}_{\sigma_{+}^0}Z_1^+\VERT ^2+\VERT Z_1^+\VERT \VERT T^{\gamma}_{\sigma_{+}^0}Z_1^+\VERT
	\lesssim \VERT T^{\gamma}_{\sigma_{+}^0}Z_1^+\VERT^2+ \VERT Z_1^+\VERT ^2.
	\end{align}
	Moreover, we have
	\begin{align}
\notag
	\mathcal{I}_2:=\;&2\RE \llangle  T^{\gamma}_{\sigma_{+}^0}T^{\gamma}_{r}W^+,   T^{\gamma}_{\sigma_{+}^0}Z_1^+ \rrangle\\
	\label{I2}	\lesssim\;& {(\varepsilon\gamma)^{-1}}\VERT T^{\gamma}_r W
	\VERT_{1,\gamma}^2+\varepsilon\gamma \VERT T^{\gamma}_{\sigma_{+}^0}Z_1^+\VERT^2, \\   \notag
	\mathcal{I}_3:=\;&2\RE \llangle T^{\gamma}_{\sigma_{+}^0}
	\mathcal{R}_0T^{\gamma}_{\chi_c^0}F^{+}, T^{\gamma}_{\sigma_{+}^0}Z_1^+ \rrangle\\
\label{I3}	 \lesssim\;& {(\varepsilon\gamma)^{-1}}\VERT T^{\gamma}_{\chi_c^0}F^{+}\VERT_{1,\gamma}^2+\varepsilon\gamma \VERT  T^{\gamma}_{\sigma_{+}^0}Z_1^+ \VERT^2, \\   \notag
	\mathcal{I}_4:=\;&2\RE \llangle T^{\gamma}_{\sigma_{+}^0}\mathcal{R}_{-1}W^{+},T^{\gamma}_{\sigma_{+}^0}Z_1^+ \rrangle\\
\label{I4}	 \lesssim\;& {(\varepsilon\gamma)^{-1}}\VERT W^+\VERT^2+\varepsilon\gamma \VERT T^{\gamma}_{\sigma_{+}^0}Z_1^+ \VERT^2.
	\end{align}
	For the terms:
	\begin{align*}
	\mathcal{I}_5:=\RE\llangle (\partial_2 S)Z_1^+,Z_1^+ \rrangle, \qquad \,\,
	\mathcal{I}_6:=2\RE\llangle S T^{\gamma}_{\omega_+}Z^+_1, Z^+_1 \rrangle,
	\end{align*}
	we use the identity:
	$\partial_2 S=(T^{\gamma}_{\partial_2\sigma_{+}^0})^*T^{\gamma}_{\sigma_{+}^0}
	+(T^{\gamma}_{\sigma_{+}^0})^* T^{\gamma}_{\partial_2\sigma_{+}^0}$
	to obtain
	\begin{align} \label{error.e2}
	\mathcal{I}_5+\mathcal{I}_6
	=2\RE \llangle T^{\gamma}_{\partial_2 \sigma_{+}^0} Z_1^+
	+T^{\gamma}_{\sigma_{+}^0}T^{\gamma}_{\omega_+}Z^+_1,T^{\gamma}_{\sigma_{+}^0} Z_1^+ \rrangle.
	\end{align}
	We write $\omega_{+}=\gamma e_++\mathrm{i}\hbar_+$ with real-valued symbols $e_+\in{\mathrm{\Gamma}_2^0}$
	and $\hbar_+\in{\mathrm{\Gamma}_2^1}$ as in \eqref{D1}.
	Employing the rule of symbolic calculus (Lemma \ref{lem.para1}), we have
	\begin{align*}
	T^{\gamma}_{\sigma_{+}^0}T^{\gamma}_{\omega_+}
	=\,&\gamma T^{\gamma}_{e_+}T^{\gamma}_{\sigma_{+}^0}+\gamma T^{\gamma}_{-\mathrm{i}\{\sigma_{+}^0, e_+\}}\\
	&+\gamma \mathcal{R}_{-1} +T^{\gamma}_{\mathrm{i}\hbar_+} T^{\gamma}_{\sigma_{+}^0}+T^{\gamma}_{\{\sigma_{+}^0, \hbar_+\}}+\mathcal{R}_0.
	\end{align*}
	It follows from \eqref{error.eq1} and \eqref{error.e2} that
	\begin{align*}
	\mathcal{I}_5+\mathcal{I}_6\lesssim \;&
	\RE \big\llangle (\gamma T^{\gamma}_{e_+}T^{\gamma}_{\sigma_{+}^0}
	+\gamma T^{\gamma}_{-\mathrm{i}\{\sigma_{+}^0, e_+\}}+T^{\gamma}_{\mathrm{i}\hbar_+}T^{\gamma}_{\sigma_{+}^0})
	Z^+_1,T^{\gamma}_{\sigma_{+}^0} Z_1^+ \big\rrangle\\
	&+\VERT Z_1^+\VERT \VERT T^{\gamma}_{\sigma_{+}^0} Z_1^+ \VERT.
	\end{align*}
	Since $\sigma_{+}^0\in{\mathrm{\Gamma}_2^1}$ and $\mathrm{i}\hbar_+\in\mathrm{i}\mathbb{R}$,
	operators $T^{\gamma}_{-\mathrm{i}\{\sigma_{+}^0, e_+\}}$ and $\RE T^{\gamma}_{\mathrm{i}\hbar_+}$
	are both of order $\leq 0$.
	It then follows that
	\begin{align*}
	&\RE \big\llangle (\gamma T^{\gamma}_{-\mathrm{i}\{\sigma_{+}^0, e_+\}}
	+T^{\gamma}_{\mathrm{i}\hbar_+}T^{\gamma}_{\sigma_{+}^0}) Z^+_1,T^{\gamma}_{\sigma_{+}^0} Z_1^+ \big\rrangle\\[1mm]
	&\,\,\, \lesssim \gamma \VERT Z_1^+\VERT\VERT T^{\gamma}_{\sigma_{+}^0} Z_1^+\VERT+\VERT T^{\gamma}_{\sigma_{+}^0} Z_1^+ \VERT^2
	\lesssim {\varepsilon^{-1}\gamma }\VERT Z_1^+\VERT^2+\varepsilon \gamma \VERT T^{\gamma}_{\sigma_{+}^0} Z_1^+ \VERT^2,
	\end{align*}
	which implies
	\begin{align}  \label{I5}
	\mathcal{I}_5+\mathcal{I}_6
	\lesssim \RE \big\llangle \gamma T^{\gamma}_{e_+}T^{\gamma}_{\sigma_{+}^0} Z^+_1,T^{\gamma}_{\sigma_{+}^0} Z_1^+ \big\rrangle
	+ {\varepsilon^{-1}\gamma }\VERT Z_1^+\VERT^2+\varepsilon \gamma \VERT T^{\gamma}_{\sigma_{+}^0} Z_1^+ \VERT^2.
	\end{align}
	Since $e_+\leq -c<0$, we apply G{\aa}rding's inequality to deduce
	\begin{align}  \label{I5a}
	-\RE \big\llangle \gamma T^{\gamma}_{e_+}T^{\gamma}_{\sigma_{+}^0} Z^+_1,T^{\gamma}_{\sigma_{+}^0} Z_1^+ \big\rrangle \gtrsim
	\gamma \VERT T^{\gamma}_{\sigma_{+}^0} Z_1^+ \VERT^2
	\end{align}
	for sufficiently large $\gamma$.
	Plugging estimates \eqref{I1}--\eqref{I4} and \eqref{I5}--\eqref{I5a}
	into \eqref{error.id2}, we take $\varepsilon$ small enough to deduce
	\begin{align}
	\gamma\VERT T^{\gamma}_{\sigma_{+}^0} Z_1^+ \VERT^2\lesssim\,&
	\|(T^{\gamma}_{\sigma_{+}^0}Z_{1}^+)|_{x_2=0}\|^2\notag \\
	&\label{error.e5} +\gamma^{-1}\VERT T^{\gamma}_r W\VERT_{1,\gamma}^2
	+\gamma^{-1}\VERT T^{\gamma}_{\chi_c^0}F^{+}\VERT_{1,\gamma}^2+\gamma\VERT W^+\VERT^2.
	\end{align}
	For the first term on the right-hand side, we use the fact
	that $\sigma_{+}^0|_{x_2=0}=-\mathrm{i}\gamma+\widetilde{\sigma}_0$ to obtain
	\begin{align} \label{error.e6}
	\|(T^{\gamma}_{\sigma_{+}^0}Z_{1}^+)|_{x_2=0}\|
	\lesssim \gamma \|Z_{1}^+|_{x_2=0}\|+\|T^{\gamma}_{\widetilde{\sigma}_0}Z_{1}^+|_{x_2=0}\|.
	\end{align}
	We plug \eqref{error.e6} into \eqref{error.e5} and use \eqref{bad2.e0} to find
	\begin{align} \notag
	\gamma\VERT T^{\gamma}_{\sigma_{+}^0} Z_1^+ \VERT^2
	\lesssim \,& \gamma^{-1}\big(\VERT T^{\gamma}_r W\VERT_{1,\gamma}^2+\VERT T^{\gamma}_{\chi_c^0}F\VERT_{1,\gamma}^2\big)
\\ &	+\gamma\VERT W^+\VERT^2+\|G\|_{1,\gamma}^2+\gamma^2\left\|W^{\mathrm{nc}}\,\!|_{x_2=0}\right\|^2.
	\end{align}
	Recall that $Z_1^+$ is defined by \eqref{bad2} with $Q_0\in{\mathrm{\Gamma}_2^0}$ being invertible.
	We then use  \eqref{bad2.e2} and \eqref{omega1} to conclude \eqref{error.e1}.
	The proof is completed.
\qed\end{proof}
Combining estimates \eqref{error.e0}--\eqref{error.e4} together,
using \eqref{error.e3}, and taking $\gamma$ suitably large, we obtain \eqref{p.p2.e0}.
In view of Proposition \ref{pro3}, estimate \eqref{thm.2e1} also holds.
This completes the proof of  Theorem {\rm\ref{thm.2}}.

\section{Well-posedness for the Linearized Problem}\label{sec.6}
In this section, we establish a well-posedness result for the linearized problem \eqref{P2b}
in the usual Sobolev space $H^s$ with $s$ large enough.
The essential point is to deduce a tame estimate in $H^s$.
For a hyperbolic problem with a characteristic boundary,
there is a loss of derivatives in \emph{a priori} energy estimates.
To overcome this difficulty, it is natural to introduce Sobolev spaces with conormal regularity,
where two tangential derivatives count as one normal derivative (see Secchi \cite{S96MR1405665}
and the references therein).
However, for our problem \eqref{P2b}, we can manage to compensate the loss of derivatives
and deduce \emph{a priori} estimates in the usual Sobolev spaces.
This is achieved by employing the idea in \cite{CS08MR2423311} and estimating the missing
derivatives through the equation of the linearized vorticity.

The main result in this section is stated as follows:

\begin{theorem}\ \label{thm.L}
	Let $T>0$ and $s\in [3,\tilde{\alpha}]\cap \mathbb{N}$ with any integer $ \tilde{\alpha}\geq 3$.
	Assume that the background state \eqref{RVS0} satisfies \eqref{H1} and \eqref{H2},
	and that perturbations $(\mathring{V}^{\pm},\mathring{\Psi}^{\pm})$
	belong to $H^{s+3}_{\gamma}(\Omega_T)$ for all $\gamma\geq 1$ and satisfy \eqref{bas.c1}--\eqref{bas.eq},
	and
	\begin{align} \label{H6.1}
	\|(\mathring{V}^{\pm},\nabla\mathring{\Psi}^{\pm})\|_{H^5_{\gamma}(\Omega_T)}
	+\|(\mathring{V}^{\pm},\p_2\mathring{V}^{\pm},\nabla\mathring{\Psi}^{\pm})|_{x_2=0}\|_{H^4_{\gamma}(\omega_T)}\leq K.
	\end{align}
	Assume further that the source terms $(f,g)\in H^{s+1}(\Omega_T)\times H^{s+1}(\omega_T)$ vanish in  the past.
	Then there exists a positive constant $K_0$, which is independent of $s$ and  $T$, and there exist two constants $C>0$ and $\gamma\geq 1$,
	which depend solely on $K_0$, such that, if $K\leq K_0$, then problem \eqref{P2b}
	admits a unique solution $(\dot{V}^{\pm},\psi)\in H^{s}(\Omega_T)\times H^{s+1}(\omega_T)$ that vanishes in the past
	and obeys the following tame estimate{\rm :}
	\begin{align} \notag
	&\|\dot{V}\|_{H^{s}_{\gamma}(\Omega_T)}+\|\mathbb{P}^{\pm}(\mathring{\varphi})\dot{V}^{\pm}\,\!|_{x_2=0}
	\|_{H^{s}_{\gamma}(\omega_T)}+\|\psi\|_{H^{s+1}_{\gamma}(\omega_T)}\\
\notag 	&\leq C\big\{ \|f\|_{H^{s+1}_{\gamma}(\Omega_T)}
	+\|g\|_{H^{s+1}_{\gamma}(\omega_T)}\\
	&\qquad\ \, +	 \big(\|f\|_{H^4_{\gamma}(\Omega_T)}	+\|g\|_{H^4_{\gamma}(\omega_T)}\big)\|(\mathring{V}^{\pm},\mathring{\Psi}^{\pm})\|_{H^{s+3}_{\gamma}(\Omega_T)} \big\}.
	\label{E6.1}
	\end{align}
\end{theorem}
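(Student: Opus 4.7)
The plan is to deduce the theorem in two stages: first, produce an $L^2$-type weak solution by a duality argument; second, lift the regularity and obtain the tame bound \eqref{E6.1} by combining Theorem \ref{thm.2} with a recovery of normal derivatives through the linearized vorticity equation. For existence, I would formulate the adjoint problem of \eqref{P2b} in the standard way, check that the adjoint operator and the adjoint boundary conditions fit the structural framework of the variable coefficient problem treated in Sections \ref{sec.4}--\ref{sec.5} (in particular, the constant rank of the boundary matrix is preserved under the choice \eqref{Phi.eq}), and establish the same type of weighted estimate with loss of derivatives by the exact same microlocalization, Kreiss' symmetrizer and weight-function arguments used to prove Theorem \ref{thm.2}. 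A classical Hahn--Banach / Riesz representation argument then yields a weak solution $(\dot{V},\psi)\in L^2_\gamma(\Omega_T)\times H^1_\gamma(\omega_T)$ vanishing in the past, with the trace of $\mathbb{P}^{\pm}(\mathring{\varphi})\dot{V}^{\pm}$ on $\{x_2=0\}$ in $L^2_\gamma$.

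For tangential regularity, I would apply the purely tangential derivatives $\partial^{\alpha}_{\mathrm{tan}} := \partial_t^{\alpha_0}\partial_1^{\alpha_1}$ with $|\alpha|\le s$ to the system \eqref{P2b.1}--\eqref{P2b.3}. Commuting $\partial^{\alpha}_{\mathrm{tan}}$ with $\mathbb{L}'_e$ and $\mathbb{B}'_e$ produces commutator terms involving at most one derivative of the basic state times $\partial^{\beta}_{\mathrm{tan}} \dot{V}$ with $|\beta|\le s$. Applying Theorem \ref{thm.2} to each of these differentiated systems and summing, one obtains an anisotropic estimate
\[
\|\dot V\|_{L^2(H^s_\gamma(\omega_T))}^2+\|\mathbb{P}^{\pm}(\mathring{\varphi})\dot V^{\pm}|_{x_2=0}\|_{H^s_\gamma(\omega_T)}^2+\|\psi\|_{H^{s+1}_\gamma(\omega_T)}^2 \lesssim \|f\|_{H^{s+1}_\gamma(\Omega_T)}^2+\|g\|_{H^{s+1}_\gamma(\omega_T)}^2+\mathrm{(commutators)},
\]
where the shift by one derivative on the source terms comes from the fact that Theorem \ref{thm.2} already loses three (resp. two) derivatives on $f$ (resp. $g$) and the tangential differentiation accounts for the remaining order. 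The $K_0$ smallness in \eqref{bas.c2} and \eqref{H6.1}, together with the factor $\gamma^{-1}$ produced by Theorem \ref{thm.2}, lets me absorb the low-order commutator terms into the left-hand side for $\gamma$ large enough.

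The delicate step is to upgrade this anisotropic tangential estimate to a genuine $H^s(\Omega_T)$ estimate, since the boundary is characteristic and a direct energy argument loses one normal derivative per step. Here I would exploit the very reason behind the choice of new unknowns $U = (p, h w_1, h w_2)^{\mathsf{T}}$ explained in Section \ref{sec2.2} and in the statement around \eqref{hw.eq}: the linearization of \eqref{hw.eq} gives a transport equation for the linearized vorticity that contains $\partial_2$-derivatives of the tangential velocity components of $\dot V^{\pm}$ combined with already-controlled quantities. Solving this transport equation along the integral curves of the field $\varGamma(\partial_t + v\cdot\nabla_x)$ (which is non-characteristic with respect to $\{x_2=\mathring{\varphi}\}$ thanks to \eqref{bas.eq.1}--\eqref{bas.eq.2}), I can express $\partial_2 \dot V^{\pm}$ in terms of tangential derivatives of $\dot V^{\pm}$, derivatives of the basic state, and $f^{\pm}$, thereby trading one normal derivative for one tangential derivative. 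Iterating this trade $s$ times converts the anisotropic tangential estimate into the full $H^s$-estimate \eqref{E6.1}; the pressure component, whose equation is not characteristic, is recovered from the first equation of \eqref{P2b.1} directly.

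The main obstacle is to keep all estimates \emph{tame}, i.e., with the high-order norm of the basic state appearing only linearly and multiplied by low-order norms of $(\dot V,\psi)$ and $(f,g)$, as required by the second summand in \eqref{E6.1}. This dictates a careful use of Moser-type product and composition inequalities of the form $\|uv\|_{H^s}\lesssim \|u\|_{L^\infty}\|v\|_{H^s}+\|v\|_{L^\infty}\|u\|_{H^s}$ and $\|F(\mathring V,\nabla\mathring\Psi)\|_{H^s}\lesssim 1+\|(\mathring V,\nabla\mathring\Psi)\|_{H^s}$ when handling both the commutators in the tangential stage and the nonlinear dependence on $(\mathring V,\mathring\Psi)$ in the vorticity stage. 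The lower-order control needed to seed these Moser inequalities is precisely what \eqref{H6.1} provides, while the hypothesis $s\ge 3$ ensures that $H^s\hookrightarrow W^{1,\infty}$, so that $L^\infty$ norms of the basic state are controlled by \eqref{H6.1} and do not enter the right-hand side of \eqref{E6.1} with a high-order norm. Taking $\gamma$ large and $K_0$ small yields the final estimate \eqref{E6.1} and, together with the existence step, completes the proof.
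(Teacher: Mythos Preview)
Your overall architecture---duality for existence, tangential commutation plus Theorem \ref{thm.2} for anisotropic regularity, and the linearized vorticity to recover the missing $\partial_2$-derivative---is exactly the paper's strategy. However, your tangential step contains a real gap.

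You propose to apply $\partial_{\mathrm{tan}}^{\alpha}$ with $|\alpha|\le s$, invoke Theorem \ref{thm.2} on each resulting system, and then absorb the commutators using the smallness $K_0$ and the $\gamma^{-1}$ gain. This does not close. The leading commutator of $\partial_{\mathrm{tan}}^{\alpha}$ with $\mathbb{L}'_e$ is of the form $(\partial\bm{A}_j)\,\partial_{\mathrm{tan}}^{\alpha'}\partial_j W$ with $|\alpha'|=|\alpha|-1$, hence contains $|\alpha|$ tangential derivatives of $W$. Placed on the right-hand side of Theorem \ref{thm.2}, it is measured in $L^2(H^1_\gamma)$, i.e.\ with \emph{one more} tangential derivative than the left-hand side controls. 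No factor of $\gamma^{-1}$ or smallness of $K$ compensates for this extra derivative; the paper says so explicitly just before Lemma \ref{lem.tame1}. The fix the paper uses (following \cite{CS08MR2423311}) is to treat the full vector $W^{(k)}=(\partial_{\mathrm{tan}}^{\alpha}W)_{|\alpha|=k}$ as a single enlarged unknown: in this block system the top-order commutators become \emph{zeroth-order} couplings among the components of $W^{(k)}$, so the enlarged problem has the same principal symbol, boundary matrix, and Lopatinski\u{\i} determinant as the original one, and the entire argument of Section \ref{sec.5} applies verbatim to yield Lemma \ref{lem.tame1}.

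Two smaller points. First, after straightening, the vorticity transport field is $(\partial_t+\mathring{v}_1^{\pm}\partial_1)$, which is \emph{tangential} to $\{x_2=0\}$, not non-characteristic; you solve \eqref{xi.eq} forward in $t$ using that everything vanishes in the past, exactly as in Lemma \ref{lem.tame2}. Second, the tame structure of \eqref{E6.1} is obtained by a bootstrap: apply the a priori estimate \eqref{tame4} at $s=3$, use $H^3\hookrightarrow W^{1,\infty}$ to control $\|\dot V\|_{W^{1,\infty}}$ and the boundary traces by the data, and then feed this back into \eqref{tame4} at general $s$. Your description of the Moser-type bookkeeping is otherwise correct.
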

We consider the case where the source terms $f$ and $g$ vanish in the past,
which corresponds to the case with zero initial data.
The case of general initial data is postponed to the nonlinear analysis which involves the construction of
a so-called approximate solution.
Before estimating the higher order derivatives of solutions,
we first prove that the linearized problem \eqref{P2b} is well-posed in $L^2$.

\subsection{First Well-Posedness Result}

In this subsection, we apply the well-posedness result in $L^2$ of Coulombel \cite{C05MR2138641} to the effective linear problem \eqref{P2b}.
We recall that system \eqref{P2b.1} is symmetrizable hyperbolic and observe that the coefficients of the linearized operators
satisfy the regularity assumptions of \cite{C05MR2138641}.
We also recall that problem \eqref{P2b} satisfies the energy estimate \eqref{thm.2e1},
which exhibits a loss of one tangential derivative.
According to the result in \cite{C05MR2138641},
we only need to find a dual problem that obeys an appropriate energy estimate.

Let us define a dual problem for \eqref{P2b}.
We introduce the following matrices:
\setlength{\arraycolsep}{1pt}
\begin{align}  \label{M1.ring}
{\small \mathring{B}_1:= \left.\begin{pmatrix}
0  & 0 &  0&-\mathring{\ell}_1^- &0 &  0 \\
\mathring{\ell}_1^+& 0 &   0 & \mathring{\ell}_1^-&0  & 0\\
0 &\mathring{\ell}_2^+& \mathring{\ell}_3^+  &0 &-\mathring{\ell}_2^-& -\mathring{\ell}_3^-
\end{pmatrix}\right|_{x_2=0},\
{\mathring{D}}_1:= \left.\begin{pmatrix}
0  & 0 &  0&0&0 &  0 \\
0 & 0 &   0 & 0&0  & 0\\
0 &\mathring{\ell}_2^+& \mathring{\ell}_3^+ &0 &\mathring{\ell}_2^-& \mathring{\ell}_3^-
\end{pmatrix}\right|_{x_2=0} },
\end{align}
and \setlength{\arraycolsep}{3pt}
\begin{align}
\notag        
\mathring{D}&:={\small \left.\begin{pmatrix}
0  & \mathring{\varGamma}_+^{-1} \mathring{\varrho}_{+ } &   -\mathring{\varGamma}_+^{-1} \mathring{\varsigma}_{+ }
&0  & \mathring{\varGamma}_-^{-1} \mathring{\varrho}_{- } &  -\mathring{\varGamma}_-^{-1} \mathring{\varsigma}_{- } \\
0 & \mathring{\varGamma}_+^{-1} \mathring{\varrho}_{+ } &   -\mathring{\varGamma}_+^{-1} \mathring{\varsigma}_{+ } & 0&0  & 0\\
1 &0 &  0  &1 &0& 0
\end{pmatrix}\right|_{x_2=0}},
\end{align}
where
\begin{align*}
\mathring{\ell}_1^{\pm}:=-\frac{\mathring{N}^{\pm}\mathring{c}_{\pm}^2\mathring{\varGamma}_{\pm}}{\partial_{2}\mathring{\Phi}^{\pm}},\quad
\mathring{\ell}_2^{\pm}:=-\frac{\mathring{\varrho}_{\pm}}{2\mathring{N}^{\pm}\partial_{2}\mathring{\Phi}^{\pm}},\quad
\mathring{\ell}_3^{\pm}:=\frac{\mathring{\varsigma}_{\pm}}{2\mathring{N}^{\pm}\partial_{2}\mathring{\Phi}^{\pm}}.
\end{align*}
Thanks to \eqref{A2.tilde}, we compute that these matrices satisfy the relation:
\begin{align} \label{dual.id1}
\mathring{B}_1^{\mathsf{T}}\mathring{B}+\mathring{D}_1^{\mathsf{T}}\mathring{D}
=\mathrm{diag}\,\big(\widetilde{A}_2(\mathring{U}^{+},\mathring{\Phi}^{+}),
\,\widetilde{A}_2(\mathring{U}^{-},\mathring{\Phi}^{-})\big)
\big|_{x_2=0},
\end{align}
where $\mathring{B}$ is defined by \eqref{M.ring}.
Moreover, we infer from \eqref{bas.c2} that all matrices $\mathring{B}$,  $\mathring{B}_1$, $\mathring{D}$,
and $\mathring{D}_1$ belong to $W^{2,\infty}(\mathbb{R}^2)$.
Following \cite[\S\,3.2]{M01MR1842775}, we define a dual problem for \eqref{P2b} as:
\begin{align}\label{dual}
\left\{\begin{aligned}
&\mathbb{L}_e'\big(\mathring{U}^{\pm},\mathring{\Phi}^{\pm}\big)^*U^{\pm}=\widetilde{f}_{\pm},\quad &x_2>0,\\
&\mathring{D}_1 U=0,\quad &x_2=0,\\
&\dive(\mathring{b}^{\mathsf{T}}\mathring{B}_1 U)-b_{\sharp}^{\mathsf{T}} \mathring{B}_1 U=0,\quad &x_2=0,
\end{aligned}\right.
\end{align}
where $\mathring{b}$, $b_{\sharp}$,  $\mathring{B}_1$, and  $\mathring{D}_1$ are defined
in \eqref{b.ring}, \eqref{b.sharp}, and \eqref{M1.ring},
$\dive$ denotes the divergence operator in $\mathbb{R}^2$ with respect to $(t,x_1)$,
and the dual operators $\mathbb{L}_e'\big(\mathring{U}^{\pm},\mathring{\Phi}^{\pm}\big)^*$
are the formal adjoints of $\mathbb{L}_e'\big(\mathring{U}^{\pm},\mathring{\Phi}^{\pm}\big)$.
More precisely, we have
\begin{align*}
\mathbb{L}_e'(V,\Psi)^*U=\;&- A_0(V)^{\mathsf{T}} \partial_t U-A_1(V)^{\mathsf{T}} \partial_1 U- \widetilde{A}_2(V,\Psi)^{\mathsf{T}} \partial_2 U\\
&+\mathcal{C}(V,\Psi)^*U-\big(\partial_tA_0(V)^{\mathsf{T}}+\partial_1 A_1(V)^{\mathsf{T}}+\partial_2\widetilde{A}_2(V,\Psi)^{\mathsf{T}} \big)U,
\end{align*}
where $\mathcal{C}(V,\Psi)^*$, the adjoint of $\mathcal{C}(V,\Psi)$, is a zero-th order operator.
We refer to \cite[\S\,3.2]{M01MR1842775} for the derivation of the dual problem
by using  integration by parts and identity \eqref{dual.id1}.

Since the first two rows of matrix $\mathring{D}_1$ given in \eqref{M1.ring} are zero,
we see that the number of the boundary conditions in \eqref{dual} is exactly two.
This is compatible with the number of incoming characteristics,
that is, the number of negative eigenvalues of the boundary matrix for \eqref{dual}.
In fact, the boundary matrix of operator $\mathbb{L}_e'(V,\Psi)^*$ in the
half-space $\Omega$ is $\widetilde{A}_2(V,\Psi)^{\mathsf{T}}$.
Then we infer from  \eqref{A2.tilde} that problem \eqref{dual}
has two incoming  characteristics and two outgoing  characteristics.

We can define and analyze the Lopatinski\u{\i} determinant associated with
the boundary conditions in \eqref{dual} as we have done in \S\,\ref{sec.5}.
Then we have the following result, which is an analogue of Lemma \ref{lem.Lopa2}
by changing $\gamma$ into $-\gamma$.
\begin{lemma}\ \label{lem.dual}
	Assume that \eqref{bas.c1}--\eqref{bas.c2} hold for a sufficiently small $K>0$.
	Then the dual problem \eqref{dual} satisfies the backward Lopatinski\u{\i} condition.
	Moreover, the roots of the associated Lopatinski\u{\i} determinant are simple and
	coincide with the roots of the  Lopatinski\u{\i} determinant \eqref{Lopa2a}
	for the original problem \eqref{P2b}.
\end{lemma}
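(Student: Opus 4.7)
My plan is to mimic the analyses of \S\ref{sec.Lop1} and \S\ref{sec.micro} applied to the dual problem \eqref{dual}, exploiting the algebraic duality between the adjoint boundary conditions and the original ones to identify the two Lopatinski\u{\i} determinants up to a nonvanishing factor. First I would diagonalize the dual boundary matrix $\mathrm{diag}\,(\widetilde{A}_2(\mathring{U}^{\pm},\mathring{\Phi}^{\pm})^{\mathsf{T}})$, which shares the spectrum of the boundary matrix for \eqref{P2b}, so that by \eqref{A2.tilde} the backward problem has two incoming characteristics, matching the rank of $\mathring{D}_1$. Thanks to the constant-rank property inherited from the primal problem, this reduces \eqref{dual} to an equivalent system with a constant, diagonal boundary matrix analogous to \eqref{P2c}, together with two scalar boundary conditions on the incoming traces of $U^{\mathrm{nc}}$.

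Next I would compute, in the frequency variables $(\tau,\eta)$ with $\RE\tau<0$, the stable subspace $\mathcal{E}^-_{\mathrm{dual}}(t,x_1,\tau,\eta)$ of the frozen-coefficient symbol associated with the adjoint interior operator, after eliminating the characteristic components as in \S\ref{sec.eq}. The key algebraic observation is that if $\mathbb{A}(t,x,\tau,\eta)$ denotes the symbol in \eqref{A.bb} governing the reduced primal problem, then the corresponding adjoint symbol is similar to $-\mathbb{A}^{\mathsf{T}}(t,x,-\bar{\tau},\eta)$, whose eigenvalues are $-\bar{\omega}_{\pm}(-\bar{\tau},\eta)$ and $-\bar{\omega}'_{\pm}(-\bar{\tau},\eta)$. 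Since $\RE(-\bar{\omega}_{\pm}(-\bar{\tau},\eta))<0$ precisely when $\RE\omega_{\pm}(-\bar{\tau},\eta)>0$, the dual stable subspace at $\tau$ is spanned by the images, under this similarity, of the vectors $Y_{\pm}(t,x_1,-\bar{\tau},\eta)$ introduced in \eqref{Y.r}--\eqref{Y.l}, which extend continuously to $\{\RE\tau=0\}$ by Lemma~\ref{lem.eig2}(e).

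Then I would encode the second boundary condition $\dive(\mathring{b}^{\mathsf{T}}\mathring{B}_1 U)-b_{\sharp}^{\mathsf{T}}\mathring{B}_1 U=0$ as a first-order symbol $\bm{\beta}^{\mathrm{dual}}(\tau,\eta)$, using $\mathring{b}$ from \eqref{b.ring} and $\mathring{B}_1$ from \eqref{M1.ring}, and form
\[
\Delta^{\mathrm{dual}}(t,x_1,\tau,\eta):=\det\bigl[\bm{\beta}^{\mathrm{dual}}(t,x_1,\tau,\eta)\,\bigl(Y_+\;Y_-\bigr)\bigr].
\]
Using identity \eqref{dual.id1} together with the explicit formulas of Lemmas~\ref{lem.eig2} and~\ref{lem.Lopa2}, I expect a direct computation to yield
\[
\Delta^{\mathrm{dual}}(t,x_1,\tau,\eta)=\kappa(t,x_1,\tau,\eta)\,\overline{\Delta(t,x_1,-\bar{\tau},\eta)},
\]
where $\kappa\in C^{\infty}$ is homogeneous of degree $0$ and bounded away from zero on $\Xi_1$. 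Because Lemma~\ref{lem.Lopa2} locates the zeros of $\Delta$ exclusively on the imaginary boundary $\{\RE\tau=0\}$, this identity forces $\Delta^{\mathrm{dual}}$ to be nonzero for $\RE\tau<0$ (the backward Lopatinski\u{\i} condition), and it transfers the factorization \eqref{factor.v} to $\Delta^{\mathrm{dual}}$, giving both the simplicity and the coincidence of the two root sets.

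The main obstacle will be the bookkeeping in the second boundary condition of \eqref{dual}: because it involves the divergence $\dive(\mathring{b}^{\mathsf{T}}\mathring{B}_1 U)$ rather than a pure trace multiplier, its symbol combines $\tau\bm{b}_0+\mathrm{i}\eta\bm{b}_1$ with $b_{\sharp}$ and is moreover transposed and conjugated by the adjoint convention; tracking how these sign changes interact with the similarity sending $\mathcal{E}^-_{\mathrm{dual}}(\tau,\eta)$ to the span of $Y_{\pm}(-\bar{\tau},\eta)$ is essential to pin down $\kappa$ and to certify that it never vanishes. Once this identification is in place, the microlocal construction of \S\ref{sec.micro} applied with $\gamma$ replaced by $-\gamma$ produces the backward energy estimate required to invoke the framework of \cite{C05MR2138641}.
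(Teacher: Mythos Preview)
Your proposal is sound in outline and actually more detailed than what the paper supplies: the paper does not prove Lemma~\ref{lem.dual} but merely asserts that one may ``define and analyze the Lopatinski\u{\i} determinant associated with the boundary conditions in \eqref{dual} as we have done in \S\ref{sec.5}'', obtaining ``an analogue of Lemma~\ref{lem.Lopa2} by changing $\gamma$ into $-\gamma$''. In other words, the paper's route is to redo the explicit reduction and computation of \S\ref{sec.3}--\S\ref{sec.5} for the adjoint operator and the boundary conditions of \eqref{dual}, and to observe that the resulting determinant has the same algebraic structure with $\gamma$ replaced by $-\gamma$; the simplicity and location of the roots then follow by literally repeating the argument of Lemma~\ref{lem.Lopa2}.

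Your approach is genuinely different: instead of recomputing, you exploit the abstract duality encoded in \eqref{dual.id1} to obtain a factorization $\Delta^{\mathrm{dual}}(\tau,\eta)=\kappa\,\overline{\Delta(-\bar{\tau},\eta)}$ with $\kappa$ nonvanishing, and then read off everything from Lemma~\ref{lem.Lopa2}. This is the standard mechanism behind the equivalence of primal and dual weak Lopatinski\u{\i} conditions, and it is more conceptual: once the relation is established, the coincidence and simplicity of the roots on $\{\RE\tau=0\}$ are immediate, since $-\bar{\tau}=\tau$ there. The price, as you correctly flag, is the bookkeeping in identifying the dual stable subspace with $\mathrm{span}\{Y_{\pm}(-\bar{\tau},\eta)\}$ and in unwinding the symbol of the divergence-type boundary condition $\dive(\mathring{b}^{\mathsf{T}}\mathring{B}_1 U)-b_{\sharp}^{\mathsf{T}}\mathring{B}_1 U=0$; identity \eqref{dual.id1} is exactly the tool that makes the boundary pairing work, but the computation of $\kappa$ and the verification that it is bounded away from zero still require the explicit expressions \eqref{A2.tilde}, \eqref{M1.ring}, and \eqref{beta.bm}. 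Either route is acceptable; the paper's is shorter to state (because it defers to work already done), while yours would give a cleaner reason \emph{why} the two root sets coincide.
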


One can reproduce the same analysis as we have done in \S\,\ref{sec.5}
to show that the dual problem satisfies an {\it a priori} estimate that
is similar to \eqref{thm.2e1}.
The linearized problem \eqref{P2b} thus satisfies all the assumptions ({\it i.e.}  symmetrizability, regularity,
and weak stability) listed in \cite{C05MR2138641}.
We therefore obtain the following well-posedness result.

\begin{theorem}\ \label{thm.3}
	Let $T>0$ be any fixed constant.
	Assume that the background state \eqref{RVS0} satisfies  \eqref{H1} and \eqref{H2}.
	Assume further that the basic state $\big(\mathring{V}^{\pm},\mathring{\Psi}^{\pm}\big)$
	satisfies \eqref{bas.c1}--\eqref{bas.eq}.
	Then there exist positive constants $K_0>0$ and $\gamma_0\geq 1$,
	independent of $T$, such that, if $K\leq K_0$,
	then, for the source terms $f^{\pm}\in L^2(\mathbb{R}_+;H^1(\omega_T))$ and $g\in H^1(\omega_T)$ that vanish for $t<0$,
	the problem{\rm :}
	\begin{align*}
	\left\{\begin{aligned}
	&\mathbb{L}'_e\big(\mathring{U}^{\pm},\mathring{\Phi}^{\pm}\big)\dot{V}^{\pm}=f^{\pm} \quad &\mbox{for $t<T,\ x_2>0$},\\
	&\mathbb{B}'_e\big(\mathring{U}^{\pm},\mathring{\Phi}^{\pm}\big)(\dot{V},\psi)=g\qquad  &\mbox{for $t<T,\ x_2=0$},
	\end{aligned}\right.
	\end{align*}
	has a unique solution $(\dot{V}^+,\dot{V}^-,\psi)\in L^2(\Omega_T)\times L^2(\Omega_T)\times H^1(\omega_T)$
	that vanishes for $t<0$ and satisfies $\mathbb{P}^{\pm}(\mathring{\varphi})\dot{V}^{\pm}\,\!|_{x_2=0}\in L^2(\omega_T)$.
	Moreover, the following estimate holds for all $\gamma\geq \gamma_0$ and for all $t\in[0,T]${\rm :}
	\begin{align}
	&\gamma\|\dot{V}\|_{L^2_{\gamma}(\Omega_t)}^2
	+\|\mathbb{P}^{\pm}(\mathring{\varphi})\dot{V}^{\pm}\,\!|_{x_2=0}\|_{L^2_{\gamma}(\omega_t)}^2+\|\psi\|_{H^1_{\gamma}(\omega_t)}^2
	\notag \\
	&\,\, \lesssim \gamma^{-3}\|f^{\pm}\|_{L^2(H_{\gamma}^1(\omega_t))}^2+\gamma^{-2}\|g\|_{H^1_{\gamma}(\omega_t)}^2.   \label{thm.3e}
	\end{align}
\end{theorem}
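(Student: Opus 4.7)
The plan is to invoke the abstract $L^2$ well-posedness theorem for weakly stable symmetrizable hyperbolic boundary value problems proved by Coulombel in \cite{C05MR2138641}. That theorem requires three structural inputs from the problem at hand: a symmetrizable hyperbolic interior operator with coefficients of suitable regularity and a boundary matrix of constant rank on $\overline{\Omega}$; an \emph{a priori} estimate for the forward problem with a finite loss of tangential derivatives; and the analogous backward \emph{a priori} estimate for the appropriate dual problem. Once these inputs are verified, existence, uniqueness and the quantitative bound follow by a standard duality-plus-regularization procedure.

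First I would check the structural hypotheses. The operators $\mathbb{L}'_e(\mathring U^\pm,\mathring\Phi^\pm)$ admit the Friedrichs symmetrizers $S(\mathring U^\pm)$ given in \eqref{Sym}, and \eqref{A2.tilde} combined with the eikonal constraint \eqref{bas.eq.1} shows that the boundary matrix $\mathrm{diag}(\widetilde A_2(\mathring U^+,\mathring\Phi^+),\widetilde A_2(\mathring U^-,\mathring\Phi^-))$ has constant rank two on $\{x_2\geq 0\}$, which is crucial for applying the abstract framework. The $W^{2,\infty}$ bound \eqref{bas.c2} gives the regularity required of the interior and boundary coefficients. The forward estimate is exactly the content of Theorem \ref{thm.2}: inequality \eqref{thm.2e1}, applied on $(-\infty,t)\times\mathbb{R}_+$ for any $t\le T$ after causal truncation of the source terms, yields \eqref{thm.3e} for the would-be solution.

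The substantive task is to establish the backward estimate for the dual problem \eqref{dual}. I would proceed by mimicking Sections \ref{sec.3}--\ref{sec.5}. Since $\mathbb{L}'_e(\mathring U^\pm,\mathring\Phi^\pm)^*$ is again symmetric hyperbolic with the same boundary matrix transposed, the preliminary Friedrichs-type $L^2$ estimate ({\it cf.} Lemma~\ref{lem.2a}) produces the interior bound in terms of the boundary trace, at the cost of reversing the sign of $\gamma$. The boundary operator of \eqref{dual} now has the mixed algebraic/divergence form $\mathring D_1 U=0$ and $\operatorname{div}(\mathring b^{\mathsf T}\mathring B_1 U)-b_\sharp^{\mathsf T}\mathring B_1 U=0$; after paralinearization and elimination of the auxiliary unknown arising from the divergence constraint (analogous to eliminating the front $\psi$ in \S\ref{sec.3}), one obtains a reduced symbolic boundary value problem whose Lopatinski\u{\i} determinant is, up to the substitution $\gamma\mapsto -\gamma$, the same as \eqref{Lopa2a}. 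This is precisely the content of Lemma~\ref{lem.dual}: the roots are simple and located on $\mathrm{i}\mathbb{R}$. The microlocalization and weighted Kreiss-symmetrizer construction of \S\ref{sec.micro}--\S\ref{sec.thm1} then transfers verbatim, with weight functions satisfying \eqref{sigma.eq} along the time-reversed bicharacteristic flow, to produce the backward energy estimate
\[
\gamma\,\VERT U\VERT_{L^2_{-\gamma}(\Omega_T)}^2+\|U^{\mathrm{nc}}|_{x_2=0}\|_{L^2_{-\gamma}(\omega_T)}^2\lesssim \gamma^{-3}\|\widetilde f_\pm\|_{L^2(\mathbb{R}_+;H^1_{-\gamma}(\omega_T))}^2.
\]

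With all three inputs in place, the theorem of \cite{C05MR2138641} yields existence of a unique $(\dot V,\psi)\in L^2(\Omega_T)\times H^1(\omega_T)$ with $\mathbb{P}^\pm(\mathring\varphi)\dot V^\pm|_{x_2=0}\in L^2(\omega_T)$ and the quantitative bound \eqref{thm.3e}. Causality, i.e.\ vanishing for $t<0$, follows by applying \eqref{thm.3e} to the problem posed on $(-\infty,0)\times\mathbb{R}_+$ with zero source terms: the estimate forces $\dot V\equiv 0$ and $\psi\equiv 0$ on that slab, and uniqueness extends this conclusion to the full $\omega_T$. The main technical obstacle is the transfer of the whole Kreiss-symmetrizer machinery of \S\ref{sec.5} to the dual problem; the only genuinely new ingredient is the reformulation of the dual boundary conditions so that they take the standard algebraic form to which paradifferential calculus applies, after which the sign change $\gamma\mapsto -\gamma$ converts every step of \S\ref{sec.3} and \S\ref{sec.5} into its backward counterpart with no change to the critical set $\Upsilon_c$ or to the weight equations \eqref{sigma.eq}.
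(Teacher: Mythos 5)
Your proposal follows essentially the same route as the paper: \S\ref{sec.6} proves Theorem \ref{thm.3} precisely by invoking the abstract $L^2$ well-posedness result of \cite{C05MR2138641}, checking symmetrizability and coefficient regularity, using Theorem \ref{thm.2} as the forward estimate, and constructing the dual problem \eqref{dual} via the identity \eqref{dual.id1} so that Lemma \ref{lem.dual} and a repetition of the \S\ref{sec.5} analysis with $\gamma\mapsto-\gamma$ supply the backward estimate. The only quibble is that the full boundary matrix $\mathrm{diag}\,(\widetilde A_2(\mathring U^+,\mathring\Phi^+),\widetilde A_2(\mathring U^-,\mathring\Phi^-))$ has constant rank four (two per block), not two, but this does not affect the argument.
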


Theorem {\rm\ref{thm.3}} shows the well-posedness of problem \eqref{P2b} in $L^2$ when the source terms $(f,g)$
belong to $L^2(H^1)\times H^1$.
We now turn to the energy estimates for the higher-order derivatives of solutions.

\subsection{{\it A Priori} Tame Estimates}
To obtain the estimates for the higher-order derivatives of solutions to \eqref{P2b},
it is convenient to deal with the reformulated problem \eqref{P2c} and \eqref{P2c.2}
for the new unknowns $W$.
Until the end of this section, we always assume that $\gamma\geq \gamma_0$ and $K\leq K_0$,
where $\gamma_0$ and $K_0$ are given by Theorem {\rm\ref{thm.3}}.
Then estimate \eqref{thm.3e} can be rewritten as
\begin{align}
& \sqrt{\gamma}\|W\|_{L^2_{\gamma}(\Omega_T)}+\|W^{\mathrm{nc}}\,\!|_{x_2=0}\|_{L^2_{\gamma}(\omega_T)}
+\|\psi\|_{H^1_{\gamma}(\omega_T)}\notag\\
&\,\, \lesssim \gamma^{-{3}/{2}}\|F^{\pm}\|_{L^2(H^1_{\gamma}(\omega_T))}
+\gamma^{-1}\|g\|_{H^1_{\gamma}(\omega_T)}. \label{thm.3e2}
\end{align}

We first derive the estimate of the tangential derivatives.
Let $k\in[1,s]$ be a fixed integer.
Applying the tangential derivative $\p^{\alpha}=\p_t^{\alpha_0}\p_1^{\alpha_1}$ with $|\alpha|=k$
to system \eqref{P2c} yields the equations for $\p^{\alpha}W^{\pm}$ that involve the linear terms
of the derivatives, $\p^{\alpha-\beta}\p_t W^{\pm}$ and $\p^{\alpha-\beta}\p_1 W^{\pm}$, with $|\beta|=1$.
These terms cannot be treated simply as source terms, owing to the loss of derivatives in the
energy estimate \eqref{thm.3e2}.
To overcome this difficulty, we adopt the idea of \cite{CS08MR2423311} and
deal with a boundary value problem for all the tangential derivatives of order equal to $k$, {\it i.e.}
for $W^{(k)}:=\{ \p_t^{\alpha_0}\p_1^{\alpha_1}W^{\pm},\, \alpha_{0}+\alpha_{1}=k\}$.
Such a problem satisfies the same regularity and stability properties as the original problem \eqref{P2c} and \eqref{P2c.2}.
Repeating the derivation in \S\,\ref{sec.5}, we find that $W^{(k)}$ obeys an energy estimate similar to \eqref{thm.3e2}
with new source terms $\mathcal{F}^{(k)}$ and $\mathcal{G}^{(k)}$.
Then we can employ the Gagliardo--Nirenberg and Moser-type inequalities ({\it cf}.\;\cite[Theorems 8--10]{CS08MR2423311})
to derive the following estimate for tangential derivatives (see \cite[Proposition 1]{CS08MR2423311} for the detailed proof).

\begin{lemma}[Estimate of tangential derivatives]\label{lem.tame1}
	Assume that the hypotheses of Theorem {\rm \ref{thm.L}} hold.
	Then there exist constants $C_{s}>0$ and $\gamma_{s}\geq 1$,  independent of $T$, such that,
	for all $\gamma\geq \gamma_{s}$ and for all $(W,\psi)\in H^{s+2}_{\gamma}(\Omega_T)\times H^{s+2}_{\gamma}(\omega_T)$
	that are solutions of problem \eqref{P2c} and \eqref{P2c.2}, the following estimate holds{\rm :}
	\begin{align}
	&\notag\sqrt{\gamma}\|W\|_{L^2(H^{s}_{\gamma}(\omega_T))}+\|W^{\mathrm{nc}}\,\!|_{x_2=0}\|_{H^{s}_{\gamma}(\omega_T)}
	+\|\psi\|_{H^{s+1}_{\gamma}(\omega_T)}\\
	& \leq C_{s}\big\{\gamma^{-1}\|g\|_{H^{s+1}_{\gamma}(\omega_T)}
	+\gamma^{-{3}/{2}}\big\|F^{\pm}\big\|_{L^2(H^{s+1}_{\gamma}(\omega_T))}
	\notag \\
	&\qquad\ \ \,  +\gamma^{-1} \big(\|W^{\mathrm{nc}}\,\!|_{x_2=0}\|_{L^{\infty}(\omega_T)}
	+\|\psi\|_{W^{1,\infty}(\omega_T)}\big)\big\|\big(\mathring{V},\p_2\mathring{V},\nabla\mathring{\Psi}\big)|_{x_2=0}\big\|_{{H^{s+1}_{\gamma}(\omega_T)}}\notag \\
	&\qquad\ \ \, +\gamma^{-{3}/{2}}\|W\|_{W^{1,\infty}(\Omega_T)}\big\|\big(\mathring{V},\nabla\mathring{\Psi}\big)\big\|_{H^{s+2}_{\gamma}(\Omega_T)}
	\big\}.
	\label{tame1}
	\end{align}
\end{lemma}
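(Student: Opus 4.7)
The plan is to reduce the estimate of tangential derivatives of order $k\le s$ to an application of the basic weakly stable estimate \eqref{thm.3e2} for an augmented boundary value problem. Concretely, for each integer $k\in[1,s]$ I would introduce the vector $W^{(k)}:=(\p^{\alpha}W)_{|\alpha|=k}$ collecting all tangential derivatives of order exactly $k$, where $\p^{\alpha}=\p_t^{\alpha_0}\p_1^{\alpha_1}$. Applying $\p^{\alpha}$ to the reformulated interior equation \eqref{P2c} and to the boundary conditions \eqref{P2c.2b} and commuting the tangential derivatives past the coefficient matrices produces a boundary value problem for $W^{(k)}$ of the same structural form as the original effective problem: the boundary matrix is still $\mathrm{diag}(\bm{I}_2,\bm{I}_2)$, the principal symbol of the interior operator is block-diagonal with entries $\tau\bm{A}_0^{\pm}+\mathrm{i}\eta\bm{A}_1^{\pm}$, and the boundary operator retains the coefficients $\mathring{b}$, $b_{\sharp}$, $\bm{B}$, all acting diagonally on the components of $W^{(k)}$. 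Only the source terms change, becoming $\mathcal{F}^{(k)}=\p^{\alpha}F+[\p^{\alpha},\text{coefficients}]W$ and $\mathcal{G}^{(k)}=\p^{\alpha}g+[\p^{\alpha},\text{coefficients}](W|_{x_2=0},\psi)$.

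The first key point is that this augmented problem inherits weak stability: since the principal part acts diagonally on blocks, the associated Lopatinski\u{\i} determinant is the same as that of Lemma \ref{lem.Lopa2} modulo lower-order perturbations, and all hypotheses of Theorem \ref{thm.2} remain satisfied for the enlarged system. Applying the estimate \eqref{thm.3e2} to $W^{(k)}$ and $\p^{\alpha}\psi$ for each $|\alpha|=k$ and then summing over $k\le s$ reduces the proof to controlling $\mathcal{F}^{(k)}$ and $\mathcal{G}^{(k)}$ in the tame norms required on the right-hand side of \eqref{tame1}.

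The main technical work is the tame estimate of the commutators. The interior commutator $[\p^{\alpha},\bm{A}_j^{\pm}(\mathring{V},\nabla\mathring{\Psi})]\p_j W$ and the zero-th order commutator with $\bm{C}^{\pm}$ are expanded via the Leibniz rule, and each product is treated by the Moser-type inequality
\begin{align*}
\|uv\|_{H^s_{\gamma}(\Omega_T)}\lesssim \|u\|_{L^{\infty}}\|v\|_{H^s_{\gamma}(\Omega_T)}+\|u\|_{H^s_{\gamma}(\Omega_T)}\|v\|_{L^{\infty}},
\end{align*}
together with Gagliardo--Nirenberg interpolation and the tame composition estimate for $C^{\infty}$ functions of $(\mathring{V},\nabla\mathring{\Psi})$, as formulated in Theorems 8--10 of \cite{CS08MR2423311}. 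The boundary commutators are handled identically in $H^s_{\gamma}(\omega_T)$, using the trace of $(\mathring{V},\p_2\mathring{V},\nabla\mathring{\Psi})$ whose $H^{s+1}_{\gamma}(\omega_T)$ norm is controlled by \eqref{H6.1} and the assumption $(\mathring{V}^{\pm},\mathring{\Psi}^{\pm})\in H^{s+3}_{\gamma}(\Omega_T)$. The smallness of $K$ combined with Sobolev embedding lets one absorb low-regularity factors of the basic state into the implicit constant, while the weight $\gamma^{-3/2}$ from \eqref{thm.3e2} absorbs the loss of one derivative inherent in the weakly stable estimate. The outcome is
\begin{align*}
\gamma^{-3/2}\|\mathcal{F}^{(k)}\|_{L^2(H^1_{\gamma}(\omega_T))}&\lesssim \gamma^{-3/2}\|F\|_{L^2(H^{s+1}_{\gamma}(\omega_T))}+\gamma^{-3/2}\|W\|_{W^{1,\infty}(\Omega_T)}\|(\mathring{V},\nabla\mathring{\Psi})\|_{H^{s+2}_{\gamma}(\Omega_T)},\\
\gamma^{-1}\|\mathcal{G}^{(k)}\|_{H^1_{\gamma}(\omega_T)}&\lesssim \gamma^{-1}\|g\|_{H^{s+1}_{\gamma}(\omega_T)}+\gamma^{-1}\bigl(\|W^{\mathrm{nc}}|_{x_2=0}\|_{L^{\infty}}+\|\psi\|_{W^{1,\infty}}\bigr)\|(\mathring{V},\p_2\mathring{V},\nabla\mathring{\Psi})|_{x_2=0}\|_{H^{s+1}_{\gamma}(\omega_T)},
\end{align*}
which is precisely \eqref{tame1} after summing and adjusting $\gamma_s$ to absorb the constant from the commutators.

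The main obstacle is the bookkeeping for the commutator expansions in a genuinely \emph{tame} form: the decisive requirement is that whenever the high-regularity norm $H^{s+2}_{\gamma}$ or $H^{s+1}_{\gamma}$ of the basic state appears, it multiplies only the low-regularity quantity $\|W\|_{W^{1,\infty}}$, $\|W^{\mathrm{nc}}|_{x_2=0}\|_{L^{\infty}}$, or $\|\psi\|_{W^{1,\infty}(\omega_T)}$, and never the high-regularity norm of the solution. This dichotomy is delivered by the Moser calculus but only after carefully distributing derivatives in mixed products such as $\p^{\alpha-\beta}\bm{C}^{\pm}\cdot \p^{\beta}W$ and invoking Gagliardo--Nirenberg to trade intermediate derivatives for an $L^{\infty}$ factor on the solution side. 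This separation is exactly what makes the estimate compatible with a Nash--Moser iteration in the subsequent section.
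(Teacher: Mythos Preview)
Your overall strategy matches the paper's: form the enlarged vector $W^{(k)}$ of all order-$k$ tangential derivatives, verify that the resulting system has the same principal symbol and boundary structure so that the analysis of \S\ref{sec.5} applies to it, and control the new source terms tamely via Moser and Gagliardo--Nirenberg. However, there is a gap in your treatment of the commutators, and it is precisely the reason one introduces $W^{(k)}$ in the first place.

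You write $\mathcal{F}^{(k)}=\p^{\alpha}F+[\p^{\alpha},\text{coefficients}]W$ and then claim
\[
\gamma^{-3/2}\|\mathcal{F}^{(k)}\|_{L^2(H^1_{\gamma})}\lesssim \gamma^{-3/2}\|F\|_{L^2(H^{s+1}_{\gamma})}+\gamma^{-3/2}\|W\|_{W^{1,\infty}}\|(\mathring{V},\nabla\mathring{\Psi})\|_{H^{s+2}_{\gamma}}.
\]
This is false if the full commutator is kept in $\mathcal{F}^{(k)}$. The $|\beta|=1$ pieces $\p^{\beta}\bm{A}_j^{\pm}\cdot\p^{\alpha-\beta}\p_jW^{\pm}$ carry $k$ tangential derivatives on $W$, so their $L^2(H^1_{\gamma})$ norm produces an unavoidable contribution of size $\|\nabla\bm{A}_j^{\pm}\|_{L^{\infty}}\,\|W\|_{L^2(H^{k+1}_{\gamma})}$; at $k=s$ this lies one order beyond the left-hand side of \eqref{tame1} and cannot be absorbed. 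No Gagliardo--Nirenberg reshuffling removes this term, since it already sits at the endpoint of the interpolation (low norm on the coefficient, high norm on $W$). The paper singles out exactly this obstruction in the paragraph preceding the lemma: ``These terms cannot be treated simply as source terms, owing to the loss of derivatives in the energy estimate \eqref{thm.3e2}.''

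The fix --- and the whole reason for assembling $W^{(k)}$ as a single unknown rather than treating each $\p^{\alpha}W$ separately --- is that the $|\beta|=1$ commutator terms are of the form $(\text{bounded coefficient})\times(\text{component of }W^{(k)})$, since $\p^{\alpha-\beta}\p_jW$ with $|\alpha-\beta|=k-1$ and $j\in\{t,1\}$ is itself one of the entries of $W^{(k)}$. They are therefore \emph{zero-th order} in the enlarged system and belong in an off-diagonal block of the enlarged matrix $\bm{C}^{(k)}$, not in $\mathcal{F}^{(k)}$. The analysis of \S\ref{sec.5} already accommodates arbitrary $W^{1,\infty}$ zero-th order perturbations (this is the role of $\bm{C}^{\pm}$ in \eqref{P2c}), so once these terms are moved into the operator, the estimate \eqref{thm.3e2} applies to the enlarged problem with a source $\mathcal{F}^{(k)}$ containing only $\p^{\alpha}F$ and the $|\beta|\ge 2$ commutators. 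For that reduced source your tame bound is correct, and the rest of your argument goes through.
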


We recall that the boundary matrix for our problem \eqref{P2c} and \eqref{P2c.2} is not invertible.
Thus, there is no hope to estimate all the normal derivatives of $W$ directly from \eqref{P2c}
by employing the standard argument for noncharacteristic boundary problems as in \cite{RM74MR0340832}.
Nevertheless, for our problem \eqref{P2b}, we can obtain the estimate of the missing normal
derivatives through the equation of the ``linearized vorticity''.

In view of the original equations \eqref{hw.eq}, taking into account the change of variables and linearization,
we define  the ``linearized vorticity'' as:
\begin{align} \label{xi.def}
\dot{\xi}^{\pm}:=\Big(\p_1-\frac{\p_1\mathring{\Phi}^{\pm}}{\p_2\mathring{\Phi}^{\pm}}\p_2\Big)\dot{V}_3^{\pm}
-\frac{1}{\p_2\mathring{\Phi}^{\pm}}\p_2\dot{V}_2^{\pm},
\end{align}
where  $\dot{V}^{\pm}_2$ and $\dot{V}^{\pm}_3$ are the second and third components of the good unknown \eqref{good}, respectively.
We notice that multiplying \eqref{RE3} by $S_1(U)$ leads to system \eqref{RE4},
where $S_1(U)$ is defined by \eqref{S1}.
Thus, we multiply \eqref{P2b.1} with matrices $S_1(\mathring{U}^{\pm})^{-1}$ to obtain
\begin{align}\label{vor1}
\big(B_0(\mathring{U}^{\pm})\p_t  +B_1(\mathring{U}^{\pm})\p_1
+\widetilde{B}_2(\mathring{U}^{\pm},\mathring{\Phi}^{\pm})\p_2\big) \dot{V}^{\pm}
+ \widetilde{\mathcal{C}}(\mathring{U}^{\pm},\mathring{\Phi}^{\pm}) \dot{V}^{\pm}=\tilde{f}^{\pm},
\end{align}
where $\widetilde{\mathcal{C}}(\mathring{U}^{\pm},\mathring{\Phi}^{\pm}):=S_1(\mathring{U}^{\pm})^{-1}\mathcal{C}(\mathring{U}^{\pm},\mathring{\Phi}^{\pm})$,
$\tilde{f}^{\pm}:=S_1(\mathring{U}^{\pm})^{-1}f^{\pm}$, and matrices $B_j$ are defined by \eqref{B0}--\eqref{B12}.
In light of \eqref{bas.eq.1}, we have
\begin{align*}
\widetilde{B}_2(\mathring{U},\mathring{\Phi})
:=\,&\frac{1}{\p_2\mathring{\Phi}}\big(B_2(\mathring{U})-\p_t\mathring{\Phi} B_0(\mathring{U})-\p_2\mathring{\Phi} B_1(\mathring{U})\big)\\
=\,&\frac{1}{\p_2\mathring{\Phi}}\begin{pmatrix}
\widetilde{B}_2^{11}&\widetilde{B}_2^{12}&\widetilde{B}_2^{13}\\
-\mathring{N}^{-1}\p_1\mathring{\Phi}&0&0\\ \mathring{N}^{-1}&0&0
\end{pmatrix},
\end{align*}
where the explicit form of $\widetilde{B}_2^{1j}$ is of no interest.
Using the second and third components of \eqref{vor1} yields the following equations for $\dot{\xi}^{\pm}$:
\begin{align}
\notag &(\p_t+\mathring{v}_1^{\pm}\p_1)\dot{\xi}^{\pm}\\
&\quad \label{xi.eq} =\p_1 \mathcal{F}^{\pm}_2
-\frac{1}{\p_2 \mathring{\Phi}^{\pm}}\left(\p_1 \mathring{\Phi}^{\pm} \p_2\mathcal{F}_2^{\pm}+\p_2\mathcal{F}_1^{\pm}\right)
+\mathring{\Lambda}^{\pm}_1\cdot\p_1\dot{V}^{\pm}+\mathring{\Lambda}^{\pm}_2\cdot\p_2\dot{V}^{\pm},
\end{align}
where vectors $\mathring{\Lambda}^{\pm}_1$ and $\mathring{\Lambda}^{\pm}_2$ are $C^{\infty}$--functions
of $(\mathring{V}^{\pm},\nabla\mathring{V}^{\pm},\nabla\mathring{\Psi}^{\pm}, \nabla^2\mathring{\Psi}^{\pm})$
that vanish at the origin, and the source terms $\mathcal{F}_1^{\pm}$ and $\mathcal{F}_2^{\pm}$ are given by
\begin{align} \notag
\mathcal{F}_1^{\pm}:=\mathring{\varGamma}_{\pm}^{-1}\big(\tilde{f}^{\pm}-\widetilde{C}(\mathring{U}^{\pm},\mathring{\Phi}^{\pm})\dot{V}^{\pm}\big)_2,
\qquad \mathcal{F}_2^{\pm}:=\mathring{\varGamma}_{\pm}^{-1}\big(\tilde{f}^{\pm}
-\widetilde{C}(\mathring{U}^{\pm},\mathring{\Phi}^{\pm})\dot{V}^{\pm}\big)_3.
\end{align}
Employing a standard energy estimate to the transport equations \eqref{xi.eq},
we can apply the Gagliardo--Nirenberg and Moser-type inequalities to derive the following estimate of $\dot{\xi}^{\pm}$:

\begin{lemma}[Estimate of vorticity]\label{lem.tame2}
	Assume that the hypotheses of Theorem {\rm \ref{thm.L}} hold.
	Then there exist constants $C_{s}>0$ and $\gamma_{s}\geq 1$, independent of $T$, such that,
	for all $\gamma\geq \gamma_{s}$ and for all $(W,\psi)\in H^{s+2}_{\gamma}(\Omega_T)\times H^{s+2}_{\gamma}(\omega_T)$
	that are solutions of problem \eqref{P2c} and \eqref{P2c.2},
	functions $\dot{\xi}^{\pm}$ defined by \eqref{xi.def}
	satisfy the following estimate:
	\begin{align}
	\notag &\gamma\big\|\dot{\xi}^{\pm}\big\|_{H^{s-1}_{\gamma}(\Omega_T)}\\
	&\notag
	\leq  C_{s}\big\{ \big\|f^{\pm}\big\|_{H^{s}_{\gamma}(\Omega_T)}
	+\big\|f^{\pm}\big\|_{L^{\infty}(\Omega_T)}\big\|\big(\mathring{V},\nabla\mathring{\Psi}\big)\big\|_{H^{s}_{\gamma}(\Omega_T)}+\big\|\dot{V}^{\pm}\big\|_{H^{s}_{\gamma}(\Omega_T)}
	\\ &\qquad 	\ \ \, +\big\|\dot{V}^{\pm}\big\|_{W^{1,\infty}(\Omega_T)}\big(\big\|\mathring{V}\big\|_{H^{s+1}_{\gamma}(\Omega_T)}
	+\big\|\nabla\mathring{\Psi}\big\|_{H^{s}_{\gamma}(\Omega_T)}\big)\big\}.
	\label{tame2}
	\end{align}
\end{lemma}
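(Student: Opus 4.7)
The plan is to treat \eqref{xi.eq} as a linear weighted transport equation for $\dot{\xi}^{\pm}$, apply derivatives of total order $\leq s-1$, and close the resulting hierarchy by Moser--type tame estimates (for instance \cite[Theorems\,8--10]{CS08MR2423311}). The key structural fact that makes this work is that the principal part $\p_t+\mathring{v}_1^{\pm}\p_1$ is \emph{tangential} to the boundary $\{x_2=0\}$ by virtue of the eikonal equation \eqref{bas.eq.1}; hence no $x_2=0$ trace appears in the energy identity, and the missing normal derivatives of $\dot{V}^{\pm}$ needed in Theorem~\ref{thm.L} are recovered from $\dot{\xi}^{\pm}$ without any boundary loss of regularity.

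First I would establish the $L^2$ base case: write \eqref{xi.eq} as $(\p_t+\mathring{v}_1^{\pm}\p_1)\dot{\xi}^{\pm}=\mathcal{R}^{\pm}$, multiply by $\mathrm{e}^{-2\gamma t}\dot{\xi}^{\pm}$, and integrate over $\Omega_T$. Using that the solution vanishes in the past and that the $x_2=0$ boundary does not contribute (tangential field), integration by parts gives
$$
\gamma\,\|\dot{\xi}^{\pm}\|_{L^2_\gamma(\Omega_T)}^2
\lesssim \|\p_1\mathring{v}_1^{\pm}\|_{L^\infty}\,\|\dot{\xi}^{\pm}\|_{L^2_\gamma(\Omega_T)}^2
+\|\mathcal{R}^{\pm}\|_{L^2_\gamma(\Omega_T)}\|\dot{\xi}^{\pm}\|_{L^2_\gamma(\Omega_T)},
$$
and the first term is absorbed for $\gamma\geq \gamma_s$ thanks to the smallness in \eqref{H6.1}. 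For the full $H^{s-1}$ estimate I would apply a derivative $\p^{\alpha}$ with $|\alpha|\leq s-1$ to the rewritten equation; the only essentially new object is the commutator $[\p^{\alpha},\mathring{v}_1^{\pm}\p_1]$, which is a differential operator of order $|\alpha|$ whose coefficients are derivatives of $\mathring{v}_1^{\pm}$. The tame commutator bound
$$
\|[\p^{\alpha},\mathring{v}_1^{\pm}\p_1]\dot{\xi}^{\pm}\|_{L^2}
\lesssim \|\nabla\mathring{v}_1^{\pm}\|_{L^\infty}\,\|\dot{\xi}^{\pm}\|_{H^{s-1}_\gamma}
+\|\mathring{V}\|_{H^{s-1}_\gamma}\,\|\nabla\dot{\xi}^{\pm}\|_{L^\infty}
$$
combined with a Gagliardo--Nirenberg interpolation for $\|\nabla\dot{\xi}^{\pm}\|_{L^\infty}$ and absorption of the $\|\nabla\mathring{v}_1^{\pm}\|_{L^\infty}$ piece produces a right-hand side of the form announced in \eqref{tame2}.

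The remaining work is the tame bound for $\|\mathcal{R}^{\pm}\|_{H^{s-1}_\gamma}$. The differentiated source terms $\p_1\mathcal{F}_2^{\pm}$ and $(\p_2\mathring{\Phi}^{\pm})^{-1}(\p_1\mathring{\Phi}^{\pm}\p_2\mathcal{F}_2^{\pm}+\p_2\mathcal{F}_1^{\pm})$ cost one derivative of $\mathcal{F}_j^{\pm}$ and are handled by Moser's product inequality in $H^s$. Unpacking $\mathcal{F}_j^{\pm}=\mathring{\varGamma}_{\pm}^{-1}\big(\tilde{f}^{\pm}-\widetilde{\mathcal{C}}(\mathring{U}^{\pm},\mathring{\Phi}^{\pm})\dot{V}^{\pm}\big)_{j+1}$ and applying the tame product rule to each factor generates exactly the four contributions on the right of \eqref{tame2}: $\|f^{\pm}\|_{H^s_\gamma}$ and $\|f^{\pm}\|_{L^\infty}\|(\mathring{V},\nabla\mathring{\Psi})\|_{H^s_\gamma}$ coming from the $S_1^{-1}$ and $\mathring{\varGamma}_{\pm}^{-1}$ factors, together with $\|\dot{V}^{\pm}\|_{H^s_\gamma}$ and $\|\dot{V}^{\pm}\|_{W^{1,\infty}}\big(\|\mathring{V}\|_{H^{s+1}_\gamma}+\|\nabla\mathring{\Psi}\|_{H^s_\gamma}\big)$ coming from the $\widetilde{\mathcal{C}}\,\dot{V}^{\pm}$ factor, the $H^{s+1}$ norm of $\mathring{V}$ arising precisely because $\widetilde{\mathcal{C}}$ depends on $\nabla\mathring{V}$. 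The lower-order pieces $\mathring{\Lambda}_j^{\pm}\!\cdot\p_j\dot{V}^{\pm}$ are estimated identically, with $\|\mathring{\Lambda}_j^{\pm}\|_{H^{s-1}}$ absorbing one derivative of $\mathring{V}$ and of $\nabla\mathring{\Psi}$. The main technical point, and really the only step requiring real care, is precisely this bookkeeping: every product must be split so that the highest Sobolev norm of the basic state multiplies an $L^\infty$- or $W^{1,\infty}$-type norm of the unknown (and conversely), since otherwise the tame structure indispensable for the Nash--Moser iteration of \S\ref{sec.NM} would be lost.
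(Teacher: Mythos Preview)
Your proposal is correct and follows exactly the approach indicated by the paper, which merely records (in the sentence preceding the lemma) that the result comes from ``a standard energy estimate to the transport equations \eqref{xi.eq}'' combined with ``the Gagliardo--Nirenberg and Moser-type inequalities''. Your outline---weighted $L^2$ estimate for the tangential transport operator $\p_t+\mathring{v}_1^{\pm}\p_1$, differentiation to order $s-1$ with tame commutator bounds, and Moser-type product estimates on the source $\mathcal{R}^{\pm}$---is precisely that program made explicit, including the correct observation that the eikonal equation \eqref{bas.eq.1} is what kills the $\p_2$ contribution in the straightened material derivative and hence removes any boundary term from the energy identity.
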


We are going to make the estimate for all the normal derivatives by means of estimates \eqref{tame1} and \eqref{tame2}
for the tangential derivatives and the linearized vorticity.
To this end, we need to express the normal derivatives $\p_2W^{\pm}$ in terms of the tangential derivatives
$\p_t W^{\pm}$, $\p_1 W^{\pm}$, and vorticity $\dot{\xi}^{\pm}$.
Since $\bm{I}_2=\mathrm{diag}\,(0,1,1)$, the normal derivatives $\p_2 W_2^{\pm}$ and $\p_2 W_3^{\pm}$
are directly given by \eqref{P2c} as:
\begin{align} \label{normal1}
\bm{I}_2\partial_2 W^{\pm}
=\bm{I}_2(F^{\pm}-\bm{A}_0^{\pm}\partial_t W^{\pm}
-\bm{A}_1^{\pm}\partial_1 W^{\pm}-\bm{C}^{\pm} W^{\pm}).
\end{align}
The ``missing'' normal derivatives $\p_2 W^{\pm}_1$ can be expressed by $\dot{\xi}^{\pm}$
and equations \eqref{P2c}.
From transformation \eqref{W.def} and definition \eqref{T}, we have
\begin{align*}
&\p_2 \dot{V}_2=\mathring{\varsigma}\p_2 W_1+\frac{\mathring{\varrho}}{\mathring{N}\mathring{c}}\p_2(W_2-W_3)
+\p_2\mathring{\varsigma} W_1+\p_2\left(\frac{\mathring{\varrho}}{\mathring{N}\mathring{c}}\right)(W_2-W_3),\\
&\p_2 \dot{V}_3=\mathring{\varrho}\p_2 W_1-\frac{\mathring{\varsigma}}{\mathring{N}\mathring{c}}\p_2(W_2-W_3)
+\p_2\mathring{\varrho} W_1-\p_2\left(\frac{\mathring{\varsigma}}{\mathring{N}\mathring{c}}\right)(W_2-W_3),
\end{align*}
where we have omitted indices ``$\pm$''.
By definition \eqref{xi.def}, we obtain
\begin{align} \notag
&\big(\p_1\mathring{\Phi}\mathring{\varrho} +\mathring{\varsigma}\big) \p_2 W_1\\
&\quad =\p_2 \mathring{\Phi} \big(\p_1 \dot{V}_3-\dot{\xi}\big)
+\frac{\p_1\mathring{\Phi}\mathring{\varsigma}-\mathring{\varrho}}{\mathring{N}\mathring{c}}\p_2(W_2-W_3)
+ C(\mathring{U},\mathring{\Phi})W,
\label{normal2}
\end{align}
where $C(\mathring{U},\mathring{\Phi})$ is a $C^{\infty}$--function of
$(\mathring{V}, \nabla\mathring{V},\nabla\mathring{\Psi},\nabla^2\mathring{\Psi})$ that
vanishes at the origin. According to \eqref{m.n.ring}, we see
that $\p_1\mathring{\Phi}\mathring{\varrho} +\mathring{\varsigma}\gtrsim 1$ by taking $K_0>0$ sufficiently small.
Then we find from \eqref{normal1}--\eqref{normal2} that
\begin{align} \notag
\p_2 W^{\pm}=\,&\widetilde{\bm{A}}_3^{\pm}F^{\pm}+\widetilde{\bm{A}}_0^{\pm}\partial_t W^{\pm}
\\ \label{normal3} &
+\widetilde{\bm{A}}_1^{\pm}\partial_1 W^{\pm}+\widetilde{\bm{C}}^{\pm} W^{\pm}-\frac{\p_2 \mathring{\Phi}^{\pm}}{\p_1\mathring{\Phi}^{\pm}\mathring{\varrho}^{\pm}
	+\mathring{\varsigma}^{\pm}}\begin{pmatrix}
\dot{\xi}^{\pm}\\0\\0
\end{pmatrix},
\end{align}
where $\widetilde{\bm{A}}_{0,1,3}^{\pm}$ are $C^{\infty}$--functions of $(\mathring{V},\nabla\mathring{\Psi})$,
and $\widetilde{\bm{C}}^{\pm}$ are $C^{\infty}$--functions of $(\mathring{V}, \nabla\mathring{V},\nabla\mathring{\Psi},\nabla^2\mathring{\Psi})$
that vanish at the origin.
Although the linearized problem \eqref{P2c} and \eqref{P2c.2} is characteristic,
we manage to express all the normal derivatives $\p_2 W^{\pm}$ by \eqref{normal3}
as a linear combination of the tangential derivatives, vorticity, zero-th order terms, and source terms.
Then we can prove the following result similar to \cite[Proposition 3]{CS08MR2423311},
so we omit its proof.

\begin{lemma}[Estimate of normal derivatives] \label{lem.tame3}
	Assume that the hypotheses of Theorem {\rm \ref{thm.L}} hold.
	Then there exist constants $C_{s}>0$ and $\gamma_{s}\geq 1$,
	which are independent of $T$, such that,
	for all $\gamma\geq \gamma_{s}$ and solutions $(W,\psi)\in H^{s+2}_{\gamma}(\Omega_T)\times H^{s+2}_{\gamma}(\omega_T)$
	of problem \eqref{P2c} and \eqref{P2c.2},
	the following estimate holds for all integer $k\in[1,m]${\rm :}
	\begin{align}
	\notag \big\|\p_2^kW^{\pm}\big\|_{L^2(H^{s-k}_{\gamma}(\omega_T))}
	\leq  C_{s}\big\{& \big\|\big(F^{\pm},W^{\pm},\dot{\xi}^{\pm}\big)\big\|_{H^{s-1}_{\gamma}(\Omega_T)}
	+\big\|W^{\pm}\big\|_{L^2(H^{s}_{\gamma}(\omega_T))}
\\
	& +\big\|\dot{\xi}^{\pm}\big\|_{L^{\infty}(\Omega_T)}\big\|\big(\mathring{V},\nabla\mathring{\Psi}\big) \big\|_{H^{s-1}_{\gamma}(\Omega_T)}\notag \\
	&  +\big\|\big(F^{\pm},W^{\pm}\big)\big\|_{L^{\infty}(\Omega_T)}\big\|\big(\mathring{V},\nabla\mathring{\Psi}\big) \big\|_{H^{s}_{\gamma}(\Omega_T)}\big\}.
	\label{tame3}
	\end{align}
\end{lemma}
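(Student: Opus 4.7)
The plan is to establish \eqref{tame3} by induction on $k \in \{1,\ldots,s\}$, with identity \eqref{normal3} as the essential mechanism that converts one normal derivative into tangential plus vorticity information. For the base case $k=1$, I would apply each tangential operator $\p^\alpha = \p_t^{\alpha_0}\p_1^{\alpha_1}$ with $|\alpha|\le s-1$ to \eqref{normal3}, take $L^2(\Omega_T)$ norms of both sides, and sum. The right-hand side of \eqref{normal3} is a sum of products of smooth functions of $(\mathring V, \nabla\mathring V, \nabla\mathring \Psi, \nabla^2\mathring\Psi)$ with one of $F^\pm$, $\p_{t}W^\pm$, $\p_1 W^\pm$, $W^\pm$, or $\dot\xi^\pm$, so the Moser-type product estimate applied termwise yields, on one hand, (top Sobolev norm of $W^\pm, F^\pm, \dot\xi^\pm$) times a fixed $L^\infty$ norm, and, on the other, a symmetric term in which the basic state carries the top Sobolev norm while $W^\pm, F^\pm, \dot\xi^\pm$ sit in $L^\infty$. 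Collecting these contributions produces exactly the right-hand side of \eqref{tame3} at level $k=1$.

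For the inductive step $k\ge 2$, I would differentiate \eqref{normal3} by $\p_2^{k-1}\p^\alpha$ with $|\alpha|\le s-k$ and again take $L^2(\Omega_T)$ norms. Since tangential and normal derivatives commute, the leading contribution from $\widetilde{\bm A}_0^\pm\p_t W^\pm+\widetilde{\bm A}_1^\pm\p_1 W^\pm$ is $\widetilde{\bm A}_{0,1}^\pm \, \p_{t,1}\p_2^{k-1}\p^\alpha W^\pm$, whose $L^2$ norm is controlled by $\|\p_2^{k-1}W^\pm\|_{L^2(H^{s-k+1}_\gamma(\omega_T))}$, exactly the quantity furnished by the previous induction level. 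The source contribution $\widetilde{\bm A}_3^\pm F^\pm$ gives $\|F^\pm\|_{H^{s-1}_\gamma(\Omega_T)}$, the zero-th order term $\widetilde{\bm C}^\pm W^\pm$ gives $\|W^\pm\|_{H^{s-1}_\gamma(\Omega_T)}$ after one Moser product estimate on the coefficient, and the vorticity slot furnishes directly the $\|\dot\xi^\pm\|_{H^{s-1}_\gamma(\Omega_T)}$ appearing in \eqref{tame3}. The tangential norm $\|W^\pm\|_{L^2(H^s_\gamma(\omega_T))}$ on the right of \eqref{tame3} absorbs the leading induction term at the very end.

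The main obstacle is the tame bookkeeping of the commutators $[\p_2^{k-1}\p^\alpha,\widetilde{\bm A}_\bullet^\pm]$ and $[\p_2^{k-1}\p^\alpha,\widetilde{\bm C}^\pm]$ between the high-order derivative and the nonlinear coefficients. Each such commutator is a sum of products in which the derivatives are split partly on the coefficients (smooth functions of $(\mathring V,\nabla\mathring V,\nabla\mathring\Psi,\nabla^2\mathring\Psi)$ vanishing at the origin) and partly on $W^\pm, F^\pm, \dot\xi^\pm$, and one must arrange the split so that exactly one factor carries the top Sobolev norm while the remaining factors are placed in $L^\infty$. This is exactly the situation handled by the Moser-type inequality
\begin{equation*}
\|uv\|_{H^r_\gamma}\lesssim \|u\|_{L^\infty}\|v\|_{H^r_\gamma}+\|u\|_{H^r_\gamma}\|v\|_{L^\infty},
\end{equation*}
combined with the composition estimate applied to the smooth functions $\widetilde{\bm A}_\bullet^\pm$ and $\widetilde{\bm C}^\pm$, so that their $H^{s-1}_\gamma$ norms are dominated by $\|(\mathring V,\nabla\mathring\Psi)\|_{H^s_\gamma(\Omega_T)}$ (which is itself controlled by the bound \eqref{H6.1} on the basic state). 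The mixed term $\|\dot\xi^\pm\|_{L^\infty(\Omega_T)}\|(\mathring V,\nabla\mathring\Psi)\|_{H^{s-1}_\gamma(\Omega_T)}$ in \eqref{tame3} arises precisely from this trade-off when the vorticity factor is placed in $L^\infty$ and the regularity is absorbed by the coefficient.
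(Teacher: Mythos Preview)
Your proposal is correct and follows essentially the same approach as the paper: the paper itself omits the proof, pointing to identity \eqref{normal3} and referring the reader to \cite[Proposition~3]{CS08MR2423311}, which is precisely the induction-on-$k$ argument you outline, with the Moser-type and Gagliardo--Nirenberg inequalities handling the commutators between $\p_2^{k-1}\p^\alpha$ and the coefficient matrices.
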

In light of definition \eqref{norm.def}, we see that, for all $s\in\mathbb{N}$ and $\theta \in H^{s}_{\gamma}(\Omega_T)$,
\begin{align*}
\|\theta\|_{H^{s}_{\gamma}(\Omega_T)}=\sum_{k=0}^{s}\|\p_2^k\theta \|_{L^2(H^{s-k}_{\gamma}(\omega_T))},
\qquad \gamma \|\theta\|_{H^{s-1}_{\gamma}(\Omega_T)}\leq \|\theta\|_{H^{s}_{\gamma}(\Omega_T)}.
\end{align*}
By virtue of these identities, we combine Lemmas \ref{lem.tame1}--\ref{lem.tame3}
and employ the Moser-type inequalities to obtain the following \emph{a priori} estimates on the $H^{s}_{\gamma}$--norm
of solution $\dot{V}^{\pm}$ to the linearized problem \eqref{P2b}.

\begin{proposition}\ \label{pro.tame}
	Assume that the hypotheses of Theorem {\rm \ref{thm.L}} hold.
	Then there exists a constant $K_0>0$ {(}independent of $s$ and $T${)}
	and constants $C_{s}>0$ and $\gamma_{s}\geq 1$ {(}depending on $s$, but independent of $T${)} such that,
	if $K\leq K_0$, then, for all $\gamma\geq \gamma_{s}$ and
	solutions $(\dot{V},\psi)\in H^{s+2}_{\gamma}(\Omega_T)\times H^{s+2}_{\gamma}(\omega_T)$ of problem \eqref{P2b},
	the following estimate holds{\rm :}
	\begin{align}
	\notag &\sqrt{\gamma}\big\|\dot{V}^{\pm}\big\|_{H^{s}_{\gamma}(\Omega_T)}
	+\big\|\mathbb{P}^{\pm}(\mathring{\varphi})\dot{V}^{\pm}\,\!|_{x_2=0}\big\|_{H^{s}_{\gamma}(\omega_T)}
	+\|\psi\|_{H^{s+1}_{\gamma}(\omega_T)}\\
	\notag &\leq C_{s}\big\{\gamma^{-{1}/{2}}\left\|f^{\pm}\right\|_{H^{s}_{\gamma}(\Omega_T)}
	+\gamma^{-{3}/{2}}\left\|f^{\pm}\right\|_{L^2(H^{s+1}_{\gamma}(\omega_T))}+ \gamma^{-1}\|g\|_{H^{s+1}_{\gamma}(\omega_T)}\\
	\notag&\ \quad  +\gamma^{-1}\big(\big\|\mathbb{P}^{\pm}(\mathring{\varphi})\dot{V}^{\pm}\,\!\big\|_{L^{\infty}(\omega_T)}
	+\|\psi\|_{W^{1,\infty}(\omega_T)}\big)\|(\mathring{V},\p_2\mathring{V},\nabla\mathring{\Psi})\|_{{H^{s+1}_{\gamma}(\omega_T)}} \\
	&\ \quad  +\big(\gamma^{-{1}/{2}}\left\|f^{\pm}\right\|_{L^{\infty}(\Omega_T)} +\gamma^{-{3}/{2}}\big\|\dot{V}^{\pm}\big\|_{W^{1,\infty}(\Omega_T)}\big)\big\|\big(\mathring{V},\nabla\mathring{\Psi}\big)\big\|_{H^{s+2}_{\gamma}(\Omega_T)}\big\}.
	\label{tame4}
	\end{align}
\end{proposition}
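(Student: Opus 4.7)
The plan is to assemble the three preliminary estimates — tangential derivatives (Lemma \ref{lem.tame1}), linearized vorticity (Lemma \ref{lem.tame2}), and normal derivatives (Lemma \ref{lem.tame3}) — into a closed $H^s_\gamma$ bound, then transfer it from the symmetrized unknown $W$ back to the good unknown $\dot V$. First, I would work with $W=R^{-1}(\mathring U,\mathring\Phi)\dot V$ solving \eqref{P2c} and \eqref{P2c.2}: Lemma \ref{lem.tame1} controls the tangential $L^2(H^s_\gamma)$ norm of $W$, the boundary trace $W^{\mathrm{nc}}|_{x_2=0}$ in $H^s_\gamma(\omega_T)$, and $\psi$ in $H^{s+1}_\gamma(\omega_T)$. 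Lemma \ref{lem.tame2} provides $\gamma\|\dot\xi^\pm\|_{H^{s-1}_\gamma(\Omega_T)}$ in terms of $f^\pm$, $\dot V^\pm$, and the basic state. Feeding both into the normal derivative estimate \eqref{tame3} bounds $\|\partial_2^k W\|_{L^2(H^{s-k}_\gamma(\omega_T))}$ for every $k\in\{1,\dots,s\}$, and summing with the $k=0$ tangential piece via
\[
\|u\|_{H^s_\gamma(\Omega_T)}=\sum_{k=0}^{s}\|\partial_2^k u\|_{L^2(H^{s-k}_\gamma(\omega_T))}
\]
recovers the full $H^s_\gamma$ norm on the left.

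Next, I would close the loop: the vorticity output carries $\|\dot V^\pm\|_{H^s_\gamma}$ on its right-hand side and, through Lemma \ref{lem.tame3}, this quantity reappears in the bound for the normal derivatives. Since \eqref{tame2} is already multiplied by $\gamma$ on the left and since $R$ together with its inverse is smooth and bounded (for $K\leq K_0$ small), after division one gains a factor $\gamma^{-1}$ on these $\dot V$ contributions. Choosing $\gamma\geq\gamma_s$ sufficiently large absorbs them into the $\sqrt\gamma\,\|\dot V^\pm\|_{H^s_\gamma}$ term on the left of \eqref{tame4}. The $L^\infty$ and $W^{1,\infty}$ terms appearing in Lemmas \ref{lem.tame1}--\ref{lem.tame2} (involving $\|\dot V\|_{W^{1,\infty}}$, $\|W^{\mathrm{nc}}|_{x_2=0}\|_{L^\infty}$, $\|\psi\|_{W^{1,\infty}}$, and $\|f^\pm\|_{L^\infty}$) are retained as in \eqref{tame4}: they are left uncontrolled by the Sobolev norm side and will later, in the Nash--Moser step, be handled by a fixed (small) $L^\infty$ bound on the iterates so that only the basic state's high-order norm appears with them.

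To move from $W$ back to $\dot V$, I would use \eqref{W.def} and \eqref{T} together with the identity \eqref{P2d.id1}, both of which are $C^\infty$ in $(\mathring V,\nabla\mathring\Psi)$ and $(\mathring V|_{x_2=0},\nabla\mathring\varphi)$, respectively. Applying the standard Moser product/composition inequalities yields
\[
\|\dot V^\pm\|_{H^s_\gamma}\lesssim \|W^\pm\|_{H^s_\gamma}+\|W^\pm\|_{L^\infty}\|(\mathring V,\nabla\mathring\Psi)\|_{H^s_\gamma},
\]
together with the analogous tame trace inequality giving $\|\mathbb{P}^\pm(\mathring\varphi)\dot V^\pm|_{x_2=0}\|_{H^s_\gamma(\omega_T)}$ from $\|W^{\mathrm{nc}}|_{x_2=0}\|_{H^s_\gamma(\omega_T)}$. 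The same Moser argument is applied to the inhomogeneity $F^\pm=\widetilde A_0 R^{-1}f^\pm$ to pass from the $F^\pm$-norms in Lemmas \ref{lem.tame1} and \ref{lem.tame3} to $f^\pm$-norms, at the cost of an extra term of the form $\|f^\pm\|_{L^\infty}\|(\mathring V,\nabla\mathring\Psi)\|_{H^{s+2}_\gamma}$, precisely as it appears on the right of \eqref{tame4}.

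The main technical obstacle I expect is the coupling uncovered in the second step: the normal-derivative estimate \eqref{tame3} ties $\|\partial_2 W\|_{L^2(H^{s-1}_\gamma)}$ to $\|\dot\xi\|_{H^{s-1}_\gamma}$, which through Lemma \ref{lem.tame2} depends on the very quantity $\|\dot V\|_{H^s_\gamma}$ we are trying to control; breaking this circularity requires carefully tracking the $\gamma$-powers generated by each of \eqref{tame1}--\eqref{tame3} and verifying that, when summed over the index $k$ of $\partial_2^k$, the resulting $s$-dependent constant still leaves a net factor $\gamma^{-1/2}$ in front of the troublesome terms so that they can be absorbed for $\gamma\geq\gamma_s$. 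A secondary subtlety is ensuring that the Moser composition estimates only ever invoke the basic-state norm $\|(\mathring V,\nabla\mathring\Psi)\|_{H^{s+2}_\gamma(\Omega_T)}$ (and the corresponding boundary norm through order $s+1$), not a higher-order norm, so that the resulting bound indeed has the tame structure announced in Theorem \ref{thm.L}.
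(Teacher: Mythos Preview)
Your proposal is correct and follows essentially the same approach as the paper: combine Lemmas \ref{lem.tame1}--\ref{lem.tame3} via the decomposition $\|u\|_{H^s_\gamma(\Omega_T)}=\sum_{k=0}^{s}\|\partial_2^k u\|_{L^2(H^{s-k}_\gamma(\omega_T))}$, use the inequality $\gamma\|\theta\|_{H^{s-1}_\gamma}\le\|\theta\|_{H^s_\gamma}$ to absorb the lower-order and circular $\dot V$ contributions for $\gamma$ large, and apply Moser-type inequalities to pass between $W$, $F^\pm$ and $\dot V$, $f^\pm$. The paper's own proof is stated more tersely but contains exactly these ingredients.
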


\subsection{Proof of Theorem {\rm\ref{thm.L}}}
Theorem {\rm\ref{thm.3}} shows that the linearized problem \eqref{P2b} is well-posed for sources
terms $(f^{\pm},g)\in L^2(H^1(\omega_T))\times H^1(\omega_T)$ that vanish in the past.
Following \cite{RM74MR0340832,CP82MR678605}, we can use Proposition \ref{pro.tame}
to covert Theorem {\rm\ref{thm.3}} into a well-posedness result of \eqref{P2b} in $H^{s}$.
More precisely, we can prove that, under the assumptions of Theorem {\rm\ref{thm.L}},
if $(f^{\pm},g)\in H^{s+1}(\Omega_T)\times H^{s+1}(\omega_T)$ vanish in the past,
then there exists a unique solution $(\dot{V}^{\pm},\psi)\in H^{s}(\Omega_T)\times H^{s+1}(\omega_T)$
that vanishes in the past and satisfies \eqref{tame4} for all $\gamma\geq \gamma_{s}$.

It remains to prove the tame estimate \eqref{E6.1}.
To this end, we first fix the value of $\gamma$ such that $\gamma$ is greater than the maximum
of $\gamma_{3},\ldots,\gamma_{\tilde{\alpha}}$.
Using \eqref{tame4} with $s=3$ and \eqref{H6.1}, we have
\begin{align}
\notag &\big\|\dot{V}^{\pm}\big\|_{H^3_{\gamma}(\Omega_T)}
+\big\|\mathbb{P}^{\pm}(\mathring{\varphi})\dot{V}^{\pm}\,\!|_{x_2=0}\big\|_{H^3_{\gamma}(\omega_T)}+\|\psi\|_{H^{4}_{\gamma}(\omega_T)}\notag\\
&\lesssim
K \big(\left\|f^{\pm}\right\|_{L^{\infty}(\Omega_T)}+\big\|\dot{V}^{\pm}\big\|_{W^{1,\infty}(\Omega_T)}
+\big\|\mathbb{P}^{\pm}(\mathring{\varphi})\dot{V}^{\pm}\,\!|_{x_2=0}\big\|_{L^{\infty}(\omega_T)}+\|\psi\|_{W^{1,\infty}(\omega_T)}\big)\notag\\
&\quad\, +\left\|f^{\pm}\right\|_{H^{4}_{\gamma}(\Omega_T)}+\|g\|_{H^{4}_{\gamma}(\omega_T)}.
\label{tame.p1}
\end{align}
Note that $T>0$ and $\gamma$ have been fixed.
Thanks to the classical Sobolev inequalities that
$\|\theta\|_{L^{\infty}(\Omega_T)}\lesssim \|\theta\|_{H^2(\Omega_T)}$
and $\|\theta\|_{L^{\infty}(\omega_T)}\lesssim \|\theta\|_{H^2(\omega_T)}$,
we utilize \eqref{tame.p1} and take $K>0$ sufficiently small to obtain
that $\left\|f^{\pm}\right\|_{L^{\infty}(\Omega_T)}\lesssim  \left\|f^{\pm}\right\|_{H^{4}_{\gamma}(\Omega_T)}$ and
\begin{align*}
&\big\|\dot{V}^{\pm}\big\|_{W^{1,\infty}(\Omega_T)}
+\big\|\mathbb{P}^{\pm}(\mathring{\varphi})\dot{V}^{\pm}\,\!|_{x_2=0}\big\|_{L^{\infty}(\omega_T)}+\|\psi\|_{W^{1,\infty}(\omega_T)}
\\ &\quad \lesssim  \big\|\dot{V}^{\pm}\big\|_{H^3_{\gamma}(\Omega_T)}
+\big\|\mathbb{P}^{\pm}(\mathring{\varphi})\dot{V}^{\pm}\,\!|_{x_2=0}\big\|_{H^3_{\gamma}(\omega_T)}+\|\psi\|_{H^{4}_{\gamma}(\omega_T)}\\
&\quad \lesssim \left\|f^{\pm}\right\|_{H^{4}_{\gamma}(\Omega_T)}+\|g\|_{H^{4}_{\gamma}(\omega_T)}.
\end{align*}
Plugging these estimates into \eqref{tame4} yields \eqref{E6.1}, which completes the proof of Theorem {\rm\ref{thm.L}}.

\section{Construction of the Approximate Solution}\label{secCA}

In this section, we introduce the ``approximate'' solution $(U^a,\Phi^a)$ in order to reduce the original
problem \eqref{RE0} and \eqref{Phi.eq} into a nonlinear problem with zero initial data.
We naturally expect to solve this reformulated problem in the space of functions vanishing in the past,
so that Theorem {\rm\ref{thm.L}}, which is the well-posedness result in the same function space for the linearized problem,
can be applied.
We need to impose the necessary compatibility conditions on the initial data $(U_0^{\pm},\varphi_0)$
for the existence of smooth approximate solutions $(U^a,\Phi^a)$,
which are solutions of problem \eqref{RE0} and \eqref{Phi.eq}
in the sense of Taylor's series at time $t=0$.

Let $s\geq 3$ be an integer.
Assume that $\tilde{U}_0^{\pm}:=U_0^{\pm}-\widebar{U}^{\pm}\in H^{s+1/2}(\mathbb{R}_+^2)$
and  $\varphi_0\in H^{s+1}(\mathbb{R})$.
We also assume without loss of generality that $(\tilde{U}_0^{\pm},\varphi_0)$ has a compact support:
\begin{align} \label{CA1}
\supp \tilde{U}_0^{\pm}\subset \{x_2\geq 0,\, x_1^2+x_2^2\leq 1\},\qquad \supp \varphi_0\subset [-1,1].
\end{align}
We extend $\varphi_0$ from $\mathbb{R}$ to $\mathbb{R}_+^2$ by constructing
$\tilde{\Phi}_0^+=\tilde{\Phi}_0^-\in H^{s+3/2}(\mathbb{R}_+^2)$, which satisfies
\begin{align} \label{CA2}
\tilde{\Phi}_0^{\pm}\,\!|_{x_2=0}
=\varphi_0,\quad \supp \tilde{\Phi}_0^{\pm}\subset \{x_2\geq 0,\, x_1^2+x_2^2\leq 2\},
\end{align}
and the estimate:
\begin{align} \label{CA3}
\big\|\tilde{\Phi}_0^{\pm}\big\|_{H^{s+3/2}(\mathbb{R}_+^2)}
\leq C\|\varphi_0\|_{H^{s+1}(\mathbb{R})}.
\end{align}
By virtue of \eqref{CA3} and the Sobolev embedding theorem,
we infer that, for $\varphi_0$ small enough in $H^{s+1}(\mathbb{R})$,
the following estimates hold for $\Phi_0^{\pm}:=\tilde{\Phi}_0^{\pm}+\widebar{\Phi}_0^{\pm}$:
\begin{align} \label{CA4}
\pm \p_2\Phi_0^{\pm}\geq \frac{7}{8}\qquad\,\, \textrm{for all }x\in\mathbb{R}_+^2.
\end{align}
For problem \eqref{Phi.eq}, we prescribe the initial data:
\begin{align} \label{Phi.initial}
\Phi^{\pm}|_{t=0}=\Phi_0^{\pm}.
\end{align}

Let us denote the perturbation by
$(\tilde{U}^{\pm},\tilde{\Phi}^{\pm}):=(U^{\pm}-\widebar{U}^{\pm},\Phi^{\pm}-\widebar{\Phi}^{\pm})$,
and  the traces of the $\ell$-th order time derivatives on $\{t=0\}$  by
$$
\tilde{U}^{\pm}_{\ell}:=\p_t^{\ell}\tilde{U}^{\pm}|_{t=0},\quad
\tilde{\Phi}^{\pm}_{\ell}:=\p_t^{\ell}\tilde{\Phi}^{\pm}|_{t=0},\qquad\,\, \ell\in\mathbb{N}.
$$

To introduce the compatibility conditions, we need to determine traces $\tilde{U}^{\pm}_{\ell}$ and $\tilde{\Phi}^{\pm}_{\ell}$
in terms of the initial data $\tilde{U}^{\pm}_0$ and $\tilde{\Phi}^{\pm}_0$ through equations \eqref{RE0.a} and \eqref{Phi.eq.a}.
For this purpose, we set $\mathcal{W}^{\pm}:=(\tilde{U}^{\pm},\nabla_x\tilde{U}^{\pm},\nabla_x\tilde{\Phi}^{\pm})\in\mathbb{R}^{11}$,
and rewrite \eqref{RE0.a} and \eqref{Phi.eq.a} as
\begin{align}\label{tilde.U.Phi}
\p_t \tilde{U}^{\pm}=\mathbf{F}_1(\mathcal{W}^{\pm}),\qquad \p_t \tilde{\Phi}^{\pm}=\mathbf{F}_2(\mathcal{W}^{\pm}),
\end{align}
where $\mathbf{F}_1$ and $\mathbf{F}_2$ are suitable $C^{\infty}$--functions that vanish at the origin.
After applying operator $\p^\ell_t$ to \eqref{tilde.U.Phi}, we take the traces at time $t=0$.
One can employ the generalized Fa\`a di Bruno's formula ({\it cf}.\;\cite[Theorem 2.1]{M00MR1781515})
to derive
\begin{align} \label{tilde.U.0}
&\tilde{U}^{\pm}_{\ell+1}
=\sum_{\alpha_{i}\in\mathbb{N}^{11},|\alpha_1|+\cdots+\ell |\alpha_{\ell}|=\ell}
D^{\alpha_1+\cdots+\alpha_\ell}\mathbf{F}_1(\mathcal{W}^{\pm}_{0})\prod_{i=1}^\ell\frac{\ell!}{\alpha_{i}!}
\left(\frac{\mathcal{W}_{i}^{\pm}}{i!}\right)^{\alpha_{i}},\\ \label{tilde.Phi.0}
&\tilde{\Phi}^{\pm}_{\ell+1}
=\sum_{\alpha_{i}\in\mathbb{N}^{11},|\alpha_1|+\cdots+\ell |\alpha_{\ell}|=\ell}
D^{\alpha_1+\cdots+\alpha_\ell}\mathbf{F}_2(\mathcal{W}^{\pm}_{0})\prod_{i=1}^\ell\frac{\ell!}{\alpha_{i}!}
\left(\frac{\mathcal{W}_{i}^{\pm}}{i!}\right)^{\alpha_{i}},
\end{align}
where $\mathcal{W}_{i}^{\pm}$ denotes trace $(\tilde{U}_{i}^{\pm},\nabla_x\tilde{U}_{i}^{\pm},\nabla_x\tilde{\Phi}_{i}^{\pm})$
at $t=0$.
From \eqref{tilde.U.0}--\eqref{tilde.Phi.0}, one can determine
$(\tilde{U}^{\pm}_{\ell},\tilde{\Phi}^{\pm}_{\ell})_{\ell\geq 0}$ inductively as functions of
the initial data $\tilde{U}^{\pm}_{0}$ and $\tilde{\Phi}^{\pm}_{0}$.
Furthermore, we have the following lemma (see \cite[Lemma 4.2.1]{M01MR1842775} for the proof):

\begin{lemma}\  \label{lem.Metivier}
	Assume that \eqref{CA1}--\eqref{CA4} hold.
	Then the equations \eqref{tilde.U.0}--\eqref{tilde.Phi.0} determine
	$\tilde{U}^{\pm}_{\ell}\in H^{s+1/2-\ell}(\mathbb{R}_+^2)$ for $\ell=1,\ldots,s$,
	and $\tilde{\Phi}^{\pm}_{\ell}\in H^{s+3/2-\ell}(\mathbb{R}_+^2)$ for $\ell=1,\ldots,s+1$,
	such that
	\begin{align*}
	\supp \tilde{U}_{\ell}^{\pm}\subset \{x_2\geq 0,\, x_1^2+x_2^2\leq 1\}, \qquad
	\supp \tilde{\Phi}_{\ell}^{\pm}\subset \{x_2\geq 0,\, x_1^2+x_2^2\leq 2\}.
	\end{align*}
In addition,
	\begin{align} \notag
&	\sum_{\ell=0}^{s}\big\|\tilde{U}^{\pm}_{\ell}\big\|_{H^{s+1/2-\ell}(\mathbb{R}_+^2)}
	+\sum_{\ell=0}^{s+1}\big\|\tilde{\Phi}^{\pm}_{\ell}\big\|_{H^{s+3/2-\ell}(\mathbb{R}_+^2)}\\
&\notag \qquad 	\leq C\big(\big\|\tilde{U}^{\pm}_0\big\|_{H^{s+1/2}(\mathbb{R}_+^2)}+\|\varphi_0\|_{H^{s+1}(\mathbb{R})} \big),
	\end{align}
	where constant $C>0$ depends only on $s$
	and $\|(\tilde{U}^{\pm}_{0},\tilde{\Phi}^{\pm}_{0})\|_{W^{1,\infty}(\mathbb{R}_+^2)}$.
\end{lemma}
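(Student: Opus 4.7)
The plan is to prove the lemma by finite induction on $\ell$, using formulas \eqref{tilde.U.0}--\eqref{tilde.Phi.0} as the inductive scheme. The base case $\ell=0$ is precisely the hypothesis on $(\tilde{U}^{\pm}_0,\tilde{\Phi}^{\pm}_0)$, whose supports are contained in $\{x_1^2+x_2^2\leq 1\}$ and $\{x_1^2+x_2^2\leq 2\}$ respectively, thanks to \eqref{CA1}--\eqref{CA2}. For the inductive step, assume that the claims hold for all indices strictly less than $\ell+1$. Since each $\mathcal{W}^{\pm}_i = (\tilde{U}^{\pm}_i,\nabla_x\tilde{U}^{\pm}_i,\nabla_x\tilde{\Phi}^{\pm}_i)$ has support in $\{x_1^2+x_2^2\leq 2\}$ by the induction hypothesis, and $\mathbf{F}_1$, $\mathbf{F}_2$ vanish at the origin, every term in the Fa\`a di Bruno sums \eqref{tilde.U.0}--\eqref{tilde.Phi.0} has support contained in $\{x_1^2+x_2^2\leq 2\}$ (and, in fact, in the smaller set $\{x_1^2+x_2^2\leq 1\}$ for $\tilde{U}^{\pm}_{\ell+1}$ provided one keeps track of which factors involve $\tilde{U}^{\pm}_i$ versus $\tilde{\Phi}^{\pm}_i$).

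The Sobolev regularity is verified by a tame Moser estimate together with the standard composition result: if $\mathbf{F}\in C^{\infty}$ with $\mathbf{F}(0)=0$, and $u\in H^{r}\cap L^{\infty}$ with $r>0$, then $\mathbf{F}(u)\in H^{r}$ with $\|\mathbf{F}(u)\|_{H^r}\leq C(\|u\|_{L^{\infty}})\|u\|_{H^r}$. Thus it suffices to control each product
\begin{align*}
\prod_{i=1}^{\ell}(\mathcal{W}_i^{\pm})^{\alpha_i},\qquad |\alpha_1|+2|\alpha_2|+\cdots+\ell|\alpha_\ell|=\ell,
\end{align*}
in the target space $H^{s-1/2-\ell}(\mathbb{R}_+^2)$ for $\mathbf{F}_1$ (respectively $H^{s+1/2-\ell}$ for $\mathbf{F}_2$), using the inductive hypothesis $\mathcal{W}_i^{\pm}\in H^{s-1/2-i}(\mathbb{R}_+^2)$. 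The budget constraint $\sum i|\alpha_i|=\ell$ is exactly what the Moser-type inequality requires: if $v_j\in H^{s_j}$ with $s_j\leq s_0$ and $\sum(s_0-s_j)\leq s_0-r$ for a common $s_0>3/2$, then $\prod v_j\in H^{r}$ with tame bounds. Here $s_0=s+1/2$, and the constraint $s\geq 3$ ensures $s_0>3/2$ so the algebra property applies at each step.

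For the norm estimate, one proceeds inductively. At step $\ell+1$, the Moser/composition bounds give
\begin{align*}
\big\|\tilde{U}^{\pm}_{\ell+1}\big\|_{H^{s-1/2-\ell}}+\big\|\tilde{\Phi}^{\pm}_{\ell+1}\big\|_{H^{s+1/2-\ell}}
\leq C_\ell\Big(\sum_{j=0}^{\ell}\big\|\tilde{U}^{\pm}_j\big\|_{H^{s+1/2-j}}+\sum_{j=0}^{\ell}\big\|\tilde{\Phi}^{\pm}_j\big\|_{H^{s+3/2-j}}\Big),
\end{align*}
where $C_\ell$ depends on the $L^\infty$ norms of all lower-order traces, which in turn are controlled (via Sobolev embedding, using $s\geq 3$) by $\|(\tilde{U}^{\pm}_0,\tilde{\Phi}^{\pm}_0)\|_{W^{1,\infty}(\mathbb{R}_+^2)}$. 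Iterating and summing the resulting geometric-style bound yields the stated estimate.

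The main technical obstacle is the bookkeeping at the borderline index $\ell=s$, where $\tilde{U}^{\pm}_s$ is only claimed in $H^{1/2}(\mathbb{R}_+^2)$. In the Fa\`a di Bruno expansion for this case, one meets products where several factors are themselves only of low regularity; the tame Moser inequality must be applied with care, placing all but one factor in $L^\infty$ (via Sobolev embedding $H^{s+1/2-i}\hookrightarrow L^\infty$ for $s+1/2-i>1$, i.e., $i\leq s-1$) while the remaining factor absorbs the $H^{1/2}$ regularity. The constraint $\sum i|\alpha_i|=s$ guarantees that at most one factor has index $i=s$, making this redistribution possible and closing the induction.
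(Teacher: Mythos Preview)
The paper does not prove this lemma directly; it cites M\'etivier's notes (Lemma~4.2.1 in \cite{M01MR1842775}). Your inductive scheme via the Fa\`a di Bruno expansion, Moser-type product estimates, and the composition result is exactly the standard argument, and the treatment of the borderline case $\ell=s$ for $\tilde{U}^{\pm}_{\ell}$ is correct.

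There is, however, a genuine gap in your handling of $\tilde{\Phi}^{\pm}_{\ell+1}$. You write that the product $\prod_i(\mathcal{W}_i^{\pm})^{\alpha_i}$ must be controlled in $H^{s+1/2-\ell}$ for $\mathbf{F}_2$, but with your stated regularity $\mathcal{W}_i^{\pm}\in H^{s-1/2-i}$ and the constraint $\sum i|\alpha_i|=\ell$, the Moser estimate only places the product in $H^{s-1/2-\ell}$, one derivative short. The missing observation is structural: from the eikonal equation \eqref{Phi.eq.a} one sees that $\mathbf{F}_2(\mathcal{W})=v_2(\widebar{U}+\tilde{U})-v_1(\widebar{U}+\tilde{U})\,\partial_1(\widebar{\Phi}+\tilde{\Phi})$ depends only on the components $(\tilde{U},\nabla_x\tilde{\Phi})$ of $\mathcal{W}$, \emph{not} on $\nabla_x\tilde{U}$. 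Hence in the Fa\`a di Bruno sum for $\tilde{\Phi}^{\pm}_{\ell+1}$ only derivatives in those directions survive, and the relevant factors satisfy $(\tilde{U}_i^{\pm},\nabla_x\tilde{\Phi}_i^{\pm})\in H^{s+1/2-i}$, which closes the induction at the correct regularity. A parallel structural remark justifies the sharper support claim $\mathrm{supp}\,\tilde{U}^{\pm}_{\ell+1}\subset\{x_1^2+x_2^2\leq 1\}$: since system \eqref{RE0.a} is quasilinear, $\mathbf{F}_1$ is linear in $\nabla_x\tilde{U}$, so every nonzero term in the expansion carries a factor $\nabla_x\tilde{U}^{\pm}_i$ whose support lies in $\{x_1^2+x_2^2\leq 1\}$ by induction. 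Your parenthetical ``provided one keeps track of which factors\ldots'' gestures at this, but the mechanism needs to be made explicit.
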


To construct a \emph{smooth} approximate solution, one has to impose certain assumptions on
traces $\tilde{U}^{\pm}_{\ell}$ and $\tilde{\Phi}^{\pm}_{\ell}$.
We are now ready to introduce the following terminology.

\begin{definition}[Compatibility conditions]\quad
	Let $s\geq 3$ be an integer.
	Let $\tilde{U}^{\pm}_0:=U_0^{\pm}-\widebar{U}_0^{\pm}\in H^{s+1/2}(\mathbb{R}_+^2)$ and $\varphi_0\in H^{s+1}(\mathbb{R})$
	satisfy \eqref{CA1}.
	The initial data $U_0^{\pm}$ and $\varphi_0$ are said to be compatible up to order $s$
	if there exist functions $\tilde{\Phi}_0^{\pm}\in H^{s+3/2}(\mathbb{R}_+^2)$
	satisfying \eqref{CA2}--\eqref{CA4}
	such that
	functions $\tilde{U}^{\pm}_1,\ldots,\tilde{U}^{\pm}_{s},\tilde{\Phi}^{\pm}_1,\ldots,\tilde{\Phi}^{\pm}_{s+1}$
	determined by \eqref{tilde.U.0}--\eqref{tilde.Phi.0} satisfy:
	\begin{subequations} \label{compa1}
		\begin{alignat}{2}
		&\p_2^{j}\big(\tilde{\Phi}^{+}_{\ell}-\tilde{\Phi}^{-}_{\ell}\big)|_{x_2=0}=0
		\qquad\,\, && \textrm{for }j,\ell\in\mathbb{N}\textrm{ with } j+\ell< s+1,\\
		&\p_2^{j}\big(\tilde{p}^{+}_{\ell}-\tilde{p}^{-}_{\ell}\big)|_{x_2=0}=0
		\qquad\,\, && \textrm{for }j,\ell\in\mathbb{N} \textrm{ with } j+\ell< s,
		\end{alignat}
	\end{subequations}
	and
	\begin{subequations}\label{compa2}
		\begin{alignat}{2}
		&\int_{\mathbb{R}_+^2}\big|\p_2^{s+1-\ell}\big(\tilde{\Phi}^{+}_{\ell}-\tilde{\Phi}^{-}_{\ell}\big)\big|^2\d x_1\frac{\d x_2}{x_2}
		<\infty\qquad\,\, && \textrm{for }\ell=0,\ldots,  s+1,\\
		&\int_{\mathbb{R}_+^2}\big|\p_2^{s-\ell}\big(\tilde{p}^{+}_{\ell}-\tilde{p}^{-}_{\ell}\big)\big|^2\d x_1\frac{\d x_2}{x_2}
		<\infty\qquad\,\, && \textrm{for }\ell=0,\ldots, s.
		\end{alignat}
	\end{subequations}
\end{definition}

It follows from Lemma \ref{lem.Metivier}
that $\tilde{p}^{\pm}_{0},\ldots,\tilde{p}^{\pm}_{s-2},\tilde{\Phi}^{\pm}_{0},\ldots,
\tilde{\Phi}^{\pm}_{s-1}\in  H^{5/2}(\mathbb{R}_+^2)\subset W^{1,\infty}(\mathbb{R}_+^2)$.
Then we can take the $j$-th order derivatives of the traces in \eqref{compa1}.
In what follows, we employ $\varepsilon_0(\cdot)$ to denote a function that tends to $0$ when its argument tends to $0$.
Relations \eqref{compa1}--\eqref{compa2}  enable us to utilize the lifting result
in \cite[Theorem 2.3]{LM72MR0350178} to construct the approximate solution in the following lemma.
We refer to \cite[Lemma 3]{CS08MR2423311} for the  proof.

\begin{lemma}\  \label{lem.app}
	Let $s\geq 3$ be an integer.
	Assume that $\tilde{U}^{\pm}_0:=U_0^{\pm}-\widebar{U}_0^{\pm}\in H^{s+1/2}(\mathbb{R}_+^2)$
	and $\varphi_0\in H^{s+1}(\mathbb{R})$ satisfy \eqref{CA1},
	and that the initial data $U_0^{\pm}$ and $\varphi_0$ are compatible up to order $s$.
	If $\tilde{U}^{\pm}_0$ and $\varphi_0$ are sufficiently small,
	then there exist functions $U^{a\pm}$, $\Phi^{a\pm}$, and $\varphi^a$ such that
	$\tilde{U}^{a\pm}:=U^{a\pm}-\widebar{U}^{\pm}\in H^{s+1}(\Omega)$,
	$\tilde{\Phi}^{a\pm}:=\Phi^{a\pm}-\widebar{\Phi}^{\pm}\in H^{s+2}(\Omega)$,
	$\varphi^a\in H^{s+3/2}(\p\Omega)$, and
	\begin{subequations} \label{app}
		\begin{alignat}{2}
		\label{app.1}&\p_t\Phi^{a\pm}+v_1^{a\pm}\p_1\Phi^{a\pm}-v_2^{a\pm}=0\qquad &&\textrm{in }\Omega,\\
		\label{app.2}&\p_t^j\mathbb{L}(U^{a\pm},\Phi^{a\pm})|_{t=0}=0\qquad &&\textrm{for }j=0,\ldots,s-1,\\
		\label{app.3}&\Phi^{a+}=\Phi^{a-}=\varphi^a\qquad &&\textrm{on }\p\Omega,\\
		\label{app.4}&\mathbb{B}(U^{a+},U^{a-},\varphi^a)=0\qquad &&\textrm{on }\p\Omega.
		\end{alignat}
	\end{subequations}
	Furthermore, we have
	\begin{align}
	\label{app2}&\pm \p_2\Phi^{a\pm}\geq \frac{3}{4}\quad \textrm{for all }(t,x)\in\Omega,\\
	\label{app4}&\supp \big(\tilde{U}^{a\pm},\tilde{\Phi}^{a\pm} \big)\subset \left\{t\in[-T,T],\,x_2\geq 0,\,x_1^2+x_2^2\leq 3 \right\},
	\end{align}
	and
	\begin{align}
	&\big\|\tilde{U}^{a\pm}\big\|_{H^{s+1}(\Omega)}
	+\big\|\tilde{\Phi}^{a\pm}\big\|_{H^{s+2}(\Omega)}+\|\varphi^a\|_{H^{s+3/2}(\p\Omega)}
	\notag \\
	&\quad \leq\varepsilon_0\big(\big\|\tilde{U}^{\pm}_0\big\|_{H^{s+1/2}(\mathbb{R}_+^2)}
	+\|\varphi_0\|_{H^{s+1}(\mathbb{R})}\big). \label{app3}
	\end{align}
\end{lemma}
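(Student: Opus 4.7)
The plan is to first determine the formal Taylor expansion of $(U^{a\pm},\Phi^{a\pm})$ at $t=0$ up to order $s$ (respectively $s+1$), then lift this formal series to genuine functions on $\Omega$ via the Lions--Magenes trace-lifting theorem, and finally enforce the eikonal equation \eqref{app.1} globally by solving a linear transport equation. Lemma \ref{lem.Metivier} already supplies the traces $\tilde{U}^{\pm}_{\ell}\in H^{s+1/2-\ell}(\mathbb{R}^2_+)$ for $\ell=0,\ldots,s$ and $\tilde{\Phi}^{\pm}_{\ell}\in H^{s+3/2-\ell}(\mathbb{R}^2_+)$ for $\ell=0,\ldots,s+1$, with compactly supported representatives and the bounds stated there. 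The compatibility assumption \eqref{compa1}--\eqref{compa2} is designed exactly to match the hypotheses of Theorem~2.3 in \cite{LM72MR0350178}: \eqref{compa2} encodes the weighted $L^2$ integrability in $\mathrm{d}x_2/x_2$ needed for membership in the target Sobolev space on $\Omega$, while \eqref{compa1} guarantees that the jumps $[\tilde{\Phi}_{\ell}]$ and $[\tilde{p}_{\ell}]$ together with all their relevant $x_2$-derivatives vanish on $\{x_2=0\}$.

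I would then apply the lifting theorem to obtain first $\tilde{U}^{a\pm}\in H^{s+1}(\Omega)$ with $\partial_t^{\ell}\tilde{U}^{a\pm}|_{t=0}=\tilde{U}^{\pm}_{\ell}$, multiplied by a smooth cut-off to enforce \eqref{app4}. The jump identities in \eqref{compa1} carry over to give $p^{a+}=p^{a-}$ on $\partial\Omega$, and the quantitative bound in \eqref{app3} follows by combining the norm estimate of the lifting with the Moser-type estimates in Lemma \ref{lem.Metivier}. For sufficiently small data, \eqref{CA4} together with continuity yields $\pm\partial_2\Phi^{a\pm}\geq 3/4$, i.e.\;\eqref{app2}.

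To secure the eikonal equation globally, I would redefine $\Phi^{a\pm}$ as the unique solution of the linear transport problem
\begin{equation*}
\partial_t\Phi^{a\pm}+v_1(U^{a\pm})\partial_1\Phi^{a\pm}-v_2(U^{a\pm})=0\ \text{in }\Omega,\qquad \Phi^{a\pm}|_{t=0}=\Phi_0^{\pm}.
\end{equation*}
Since $U^{a\pm}\in H^{s+1}(\Omega)\hookrightarrow W^{1,\infty}(\Omega)$ for $s\geq 3$ and $\Phi_0^{\pm}\in H^{s+3/2}(\mathbb{R}^2_+)$, classical transport theory furnishes a unique solution $\Phi^{a\pm}\in H^{s+2}(\Omega)$ with the expected tame bound. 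Differentiating the equation in $t$ and evaluating at $t=0$ reproduces exactly the recursion \eqref{tilde.Phi.0}, so by uniqueness of that recursion the traces $\partial_t^{\ell}\Phi^{a\pm}|_{t=0}$ agree with the $\tilde{\Phi}^{\pm}_{\ell}$ constructed above. Restricting the transport equation to $\{x_2=0\}$ and using the $\ell=0$ Rankine--Hugoniot identity $v_2^+=v_2^-=\partial_t\varphi_0+v_1^{\pm}\partial_1\varphi_0$ together with $\Phi_0^+=\Phi_0^-=\varphi_0$ there shows that $\Phi^{a+}$ and $\Phi^{a-}$ satisfy the same scalar transport equation with the same initial data along the boundary, hence coincide on $\partial\Omega$; define $\varphi^a$ as this common trace to obtain \eqref{app.3}. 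The remaining boundary identities in \eqref{app.4} then reduce, via \eqref{app.1} at $x_2=0$, to the single equation $p^{a+}=p^{a-}$, already ensured by the pressure part of \eqref{compa1}.

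Finally, \eqref{app.2} follows by induction: applying $\partial_t^{j}$ to $\mathbb{L}(U^{a\pm},\Phi^{a\pm})=A_0(U^{a\pm})(\partial_tU^{a\pm}-\mathbf{F}_1(\mathcal{W}^{a\pm}))$ and evaluating at $t=0$ expresses the result as a polynomial in $\tilde{U}^{\pm}_{0},\ldots,\tilde{U}^{\pm}_{j+1}$ and $\nabla_x\tilde{\Phi}^{\pm}_{0},\ldots,\nabla_x\tilde{\Phi}^{\pm}_{j}$; by construction these traces satisfy the recursion \eqref{tilde.U.0}, which is exactly the statement that this polynomial vanishes. The main technical obstacle is coordinating the three simultaneous requirements in a single construction: matching \emph{all} prescribed time-traces, obeying the eikonal equation throughout $\Omega$ (and not only infinitesimally at $t=0$), and preserving both boundary jumps $[p^a]=0$ and $[\Phi^a]=0$. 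The compatibility conditions \eqref{compa1}--\eqref{compa2} are precisely what decouples these demands, and the transport-equation redefinition of $\Phi^{a\pm}$ is what promotes infinitesimal compatibility at $t=0$ to genuine validity of \eqref{app.1}.
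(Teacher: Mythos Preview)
Your construction has a genuine gap: lifting all of $\tilde{U}^{a\pm}\in H^{s+1}(\Omega)$ first and then solving the transport equation for $\Phi^{a\pm}$ cannot deliver $\Phi^{a\pm}\in H^{s+2}(\Omega)$. A linear transport equation with coefficient and source merely in $H^{s+1}(\Omega)$ does not gain a full isotropic derivative; for instance, with $v_1^{a}=0$, $\Phi_0=0$, and $v_2^{a}=v_2(x)\in H^{s+1}(\mathbb{R}^2_+)\setminus H^{s+2}(\mathbb{R}^2_+)$ independent of $t$, the solution $\Phi=t\,v_2(x)$ satisfies $\partial_x^{\beta}\Phi=t\,\partial_x^{\beta}v_2\notin L^2(\Omega_T)$ for $|\beta|=s+2$. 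Your argument for \eqref{app.3} fails for a related reason: the two transport equations restricted to $\{x_2=0\}$ carry \emph{different} advection speeds $v_1^{a+}\neq v_1^{a-}$ (the tangential velocity jumps across a vortex sheet), so they are not ``the same scalar transport equation.'' The difference $D:=\Phi^{a+}|_{x_2=0}-\Phi^{a-}|_{x_2=0}$ solves $\partial_tD+v_1^{a+}\partial_1D=(v_2^{a+}-v_2^{a-})-(v_1^{a+}-v_1^{a-})\partial_1\Phi^{a-}$, whose right-hand side vanishes only if the first Rankine--Hugoniot relation already holds on $\partial\Omega$ for all $t$; the $\ell=0$ identity gives this only at $t=0$, and nothing in your independent lifting of $U^{a\pm}$ propagates it.

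The construction in \cite[Lemma~3]{CS08MR2423311}, to which the paper defers, proceeds in the opposite order and thereby avoids both problems. One lifts $\tilde{\Phi}^{a\pm}\in H^{s+2}(\Omega)$ \emph{directly} from the traces $(\tilde{\Phi}^{\pm}_{\ell})_{\ell=0}^{s+1}$, using the $\Phi$-part of \eqref{compa1}--\eqref{compa2} precisely to arrange $\tilde{\Phi}^{a+}|_{x_2=0}=\tilde{\Phi}^{a-}|_{x_2=0}$; likewise one lifts $p^{a\pm}$ and $(hw_1)^{a\pm}$ into $H^{s+1}(\Omega)$ with $p^{a+}|_{x_2=0}=p^{a-}|_{x_2=0}$. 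The remaining component $(hw_2)^{a\pm}$ is then \emph{defined} algebraically by solving the eikonal constraint \eqref{app.1} for $U_3$ in terms of $(U_1,U_2,\nabla\Phi^{a\pm})$---a smooth implicit relation near the background---which lands in $H^{s+1}(\Omega)$ by the Moser-type inequalities since $\nabla\Phi^{a\pm}\in H^{s+1}$. This secures \eqref{app.1} and the eikonal components of \eqref{app.4} by construction, and the recursion \eqref{tilde.Phi.0} guarantees that the $t$-traces of the algebraically defined $(hw_2)^{a\pm}$ agree with the prescribed ones, so \eqref{app.2} follows as in your final paragraph.
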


Let us denote $U^a:=(U^{a+},U^{a-})^{\mathsf{T}}$ and  $\Phi^a:=(\Phi^{a+},\Phi^{a-})^{\mathsf{T}}$.
Vector $(U^a,\Phi^a)$ in Lemma \ref{lem.app} is called the approximate solution to problem \eqref{RE0} and \eqref{Phi.eq}.
Relations \eqref{app.3} and \eqref{app4} immediately imply that $\varphi^a$ is supported within $\{-T\le t\le T,\,x_1^2\leq 3\}$.
Since $s\geq 3$, it follows from \eqref{app3} and the Sobolev embedding theorem that
\begin{align} \notag
\big\|\tilde{U}^{a\pm}\big\|_{W^{2,\infty}(\Omega)}+\big\|\tilde{\Phi}^{a\pm}\big\|_{W^{3,\infty}(\Omega)}
\leq\varepsilon_0\big(\big\|\tilde{U}^{\pm}_0\big\|_{H^{s+1/2}(\mathbb{R}_+^2)}+\|\varphi_0\|_{H^{s+1}(\mathbb{R})}\big).
\end{align}

We are going to reformulate the original problem into that with zero initial data
by using the approximate solution $(U^{a},\Phi^{a})$. Let us introduce
\begin{align}\label{f.a}
f^{a}:=\left\{\begin{aligned}
& -\mathbb{L}(U^{a},\Phi^{a}) \qquad &\textrm{if }t>0,\\
& 0 \qquad &\textrm{if }t<0.
\end{aligned}\right.
\end{align}
Since $(\tilde{U}^{a\pm},\nabla\tilde{\Phi}^{a\pm})\in  H^{s+1}(\Omega)$,
taking into account \eqref{app.2} and \eqref{app4}, we obtain that $f^{a}\in H^{s}(\Omega)$ and
\begin{align} \notag
\supp f^{a}\subset \left\{0\le t\le T,\,x_2\geq 0,\,x_1^2+x_2^2\leq 3 \right\}.
\end{align}
Using the Moser-type inequalities and the fact that $f^{a}$ vanishes in the past,
we obtain from \eqref{app3} the estimate:
\begin{align}\label{app5}
\|f^{a}\|_{ H^{s}(\Omega)}\leq \varepsilon_0\big(\big\|\tilde{U}^{\pm}_0\big\|_{H^{s+1/2}(\mathbb{R}_+^2)}
+\|\varphi_0\|_{H^{s+1}(\mathbb{R})}\big).
\end{align}

Let $(U^a,\Phi^a)$ be the approximate solution defined in Lemma \ref{lem.app}.
By virtue of \eqref{app} and \eqref{f.a}, we see that $(U,\Phi)=(U^a,\Phi^a)+(V,\Psi)$ is a solution of
the original problem  \eqref{RE0} and \eqref{Phi.eq} on $[0,T]\times \mathbb{R}_+^2$,
if $V=(V^+,V^-)^{\mathsf{T}}$ and $\Psi=(\Psi^+,\Psi^-)^{\mathsf{T}}$ solve the following problem:
\begin{align} \label{P.new}
\left\{\begin{aligned}
&\mathcal{L}(V,\Psi):=\mathbb{L}(U^a+V,\Phi^a+\Psi)-\mathbb{L}(U^a,\Phi^a)=f^a\quad&&\textrm{in }\Omega_T,\\
&\mathcal{E}(V,\Psi):=\p_t\Psi+(v_1^a+v_1)\p_1\Psi+v_1\p_1\Phi^a-v_2=0\quad&&\textrm{in }\Omega_T,\\
&\mathcal{B}(V,\psi):=\mathbb{B}(U^a+V,\varphi^a+\psi)=0\quad&&\textrm{on }\omega_T,\\
&\Psi^+=\Psi^-=\psi\quad&&\textrm{on }\omega_T,\\
&(V,\Psi)=0,\quad&&\textrm{for }t< 0.
\end{aligned}\right.
\end{align}
The initial data \eqref{RE0.c} and \eqref{Phi.initial} have been absorbed into the interior equations.
From \eqref{app.1} and \eqref{app.4}, we observe that $(V,\Psi)=0$ satisfies \eqref{P.new} for $t<0$.
Therefore, the original nonlinear problem on $[0,T]\times \mathbb{R}_+^2$ is now reformulated
as a problem on $\Omega_T$ whose solutions vanish in the past.

\section{Nash--Moser Iteration}\label{sec.NM}

In this section, we prove the existence of solutions to problem \eqref{P.new}
by a suitable iteration scheme of Nash--Moser type ({\it cf.}\;H\"ormander \cite{H76MR0602181}).
First, we introduce the smoothing operators $S_{\theta}$ and describe the iterative scheme
for problem \eqref{P.new}.

\subsection{The Iterative Scheme}
We first state the following result from \cite[Proposition 4]{CS08MR2423311}.

\begin{proposition}\ \label{pro.smooth}
	Let $T>0$ and $\gamma\geq 1$, and let $m\geq 4$ be an integer.
	Then there exists a family $\{S_{\theta}\}_{\theta\geq 1}$ of smoothing operators{\rm :}
	\begin{align*}
	S_{\theta}:\ \mathcal{F}_{\gamma}^3(\Omega_T)\times\mathcal{F}_{\gamma}^3(\Omega_T)
	\longrightarrow \bigcap_{\beta\geq 3}\mathcal{F}_{\gamma}^\beta(\Omega_T)\times\mathcal{F}_{\gamma}^\beta(\Omega_T),
	\end{align*}
	where $\mathcal{F}_{\gamma}^\beta(\Omega_T):=\big\{u\in H^{\beta}_{\gamma}(\Omega_T):u=0\textrm{ for }t<0\big\}$
	is a closed subspace of $H^{\beta}_{\gamma}(\Omega_T)$ such that
	\begin{subequations}\label{smooth.p1}
		\begin{alignat}{2}
		\label{smooth.p1a}&\|S_{\theta} u\|_{H^{\beta}_{\gamma}(\Omega_T)}\leq C\theta^{(\beta-\alpha)_+}\|u\|_{H^{\alpha}_{\gamma}(\Omega_T)}
		&&\quad\textrm{for all }\alpha,\beta\in[1,m],\\[1.5mm]
		\label{smooth.p1b}&\|S_{\theta} u-u\|_{H^{\beta}_{\gamma}(\Omega_T)}\leq C\theta^{\beta-\alpha}\|u\|_{H^{\alpha}_{\gamma}(\Omega_T)}
		&&\quad\textrm{for all }1\leq \beta\leq \alpha\leq m,\\
		\label{smooth.p1c}&\left\|\frac{\d}{\d \theta}S_{\theta} u\right\|_{H^{\beta}_{\gamma}(\Omega_T)}
		\leq C\theta^{\beta-\alpha-1}\|u\|_{H^{\alpha}_{\gamma}(\Omega_T)}&&\quad\textrm{for all }\alpha,\beta\in[1,m],
		\end{alignat}
	\end{subequations}
	and
	\begin{align}
&\notag	\|(S_{\theta}u-S_{\theta}v)|_{x_2=0}\|_{H^{\beta}_{\gamma}(\omega_T)}\\
&\label{smooth.p2}\quad 	\leq C\theta^{(\beta+1-\alpha)_+}\|(u-v)|_{x_2=0}\|_{H^{\alpha}_{\gamma}(\omega_T)} \quad \, \textrm{for all }\alpha,\beta\in[1,m],
	\end{align}
	where $\alpha,\beta\in\mathbb{N}$, $(\beta-\alpha)_+:=\max\{0,\beta-\alpha\}$, and $C>0$ is a constant depending only on $m$.
	In particular, if $u=v$ on $\omega_T$, then $S_{\theta}u=S_{\theta}v$ on $\omega_T$.
	Furthermore, there exists another family of smoothing operators {(}still denoted by $S_{\theta}${)} acting on the functions
	defined on the boundary $\omega_T$ and satisfying the properties in \eqref{smooth.p1} with
	norms $\|\cdot\|_{H^{\alpha}_{\gamma}(\omega_T)}$.
\end{proposition}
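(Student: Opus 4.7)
The plan is to construct $S_\theta$ as a Fourier multiplier acting first in the tangential variables $(t,x_1)$ and then suitably smoothed in the normal variable $x_2$, in such a way as to preserve both the causality condition (vanishing in the past) and the boundary compatibility at $\{x_2=0\}$. Throughout, the exponential weight $e^{-\gamma t}$ is used to convert the weighted space $H^\beta_\gamma$ into unweighted $H^\beta$, so that the standard Littlewood--Paley calculus applies directly to $\tilde u := e^{-\gamma t} u$.

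First I would fix a Schwartz function $\rho$ on $\mathbb{R}^2$ whose Fourier transform is smooth, compactly supported, and identically $1$ near the origin; set $\rho_\theta(t,x_1):=\theta^2\rho(\theta t,\theta x_1)$ and define a tangential smoothing $\Sigma_\theta$ by convolution of $\tilde u(\cdot,x_2)$ with $\rho_\theta$ in the $(t,x_1)$-variables for each fixed $x_2$. The estimates \eqref{smooth.p1a}--\eqref{smooth.p1c} at this stage follow from Plancherel in $(t,x_1)$ combined with $L^2_{x_2}$: the multiplier $\hat\rho(\xi/\theta)$ is bounded by $1$, is equal to $1$ for $|\xi|\ll\theta$, and vanishes for $|\xi|\gtrsim\theta$, which gives the $\theta^{(\beta-\alpha)_+}$ growth for \eqref{smooth.p1a} and the complementary $\theta^{\beta-\alpha}$ decay for \eqref{smooth.p1b}; differentiating the multiplier in $\theta$ produces the extra factor $\theta^{-1}$ in \eqref{smooth.p1c}. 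Causality is preserved, because $\rho$ may be chosen so that $\Sigma_\theta$ acts inside the Paley--Wiener class associated with the weighted space $\mathcal{F}_\gamma^\beta$; equivalently one replaces $\rho$ by a one-sided kernel in $t$, so that convolution with $\rho_\theta$ never spreads support to $\{t<0\}$ after undoing the exponential weight.

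To upgrade $\Sigma_\theta$ into full smoothing on $\Omega_T$, I would follow the half-space approach of \citet{CS08MR2423311}: post-compose $\Sigma_\theta$ with a normal-direction mollifier $M_\theta^{x_2}$, defined via a Seeley-type extension of $\tilde u(\cdot,x_2)$ from $x_2>0$ to $x_2\in\mathbb{R}$ followed by convolution with a one-dimensional kernel of width $\theta^{-1}$ and restriction to $x_2>0$. Since the extension operator is continuous from $H^\alpha(\mathbb{R}_+)$ to $H^\alpha(\mathbb{R})$ for $1\leq\alpha\leq m$, the estimates \eqref{smooth.p1a}--\eqref{smooth.p1c} carry over to the full $S_\theta:=M_\theta^{x_2}\circ\Sigma_\theta$. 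To ensure the last two properties in the proposition -- namely that $u=v$ on $\omega_T$ implies $S_\theta u=S_\theta v$ on $\omega_T$, and the boundary tame estimate \eqref{smooth.p2} -- I would arrange the normal mollifier $M_\theta^{x_2}$ to be the identity at $\{x_2=0\}$; concretely, replace pure convolution by a variable-step construction whose kernel collapses to a Dirac mass at the boundary. Then $(S_\theta u)|_{x_2=0}=\Sigma_\theta(u|_{x_2=0})$ is a purely tangential smoothing on $\omega_T$, and \eqref{smooth.p2} follows from \eqref{smooth.p1a} applied to $u|_{x_2=0}-v|_{x_2=0}\in H^\alpha_\gamma(\omega_T)$, the extra $+1$ in the exponent being produced by one application of the trace inequality combined with the fact that the tangential symbol is sharply supported in $|\xi|\lesssim\theta$. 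Finally, a separate family of tangential smoothers on $\omega_T$ is obtained by taking $\Sigma_\theta$ alone.

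The main obstacle is harmonising all four demands simultaneously: the boundary-preservation property forces the normal mollifier to be trivial at $\{x_2=0\}$, while full control of all $H^\beta(\Omega_T)$-norms in \eqref{smooth.p1a}--\eqref{smooth.p1c} requires genuine smoothing in $x_2$ away from the boundary; at the same time, the causality condition (vanishing for $t<0$) forbids any construction that spreads $t$-support backwards. Reconciling these three constraints forces a careful choice of kernel --- one-sided in $t$, identity at $x_2=0$, and truly smoothing elsewhere --- whose estimates must be verified directly on the Fourier side. This is the only delicate point; once the construction is pinned down, all four inequalities reduce to routine symbol calculus combined with the continuity of Seeley extension.
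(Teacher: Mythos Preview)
The paper does not itself prove this proposition; it quotes \cite[Proposition~4]{CS08MR2423311} and notes only that the trace estimate \eqref{smooth.p2} relies on the lifting operator $\mathcal{R}_T$ of Lemma~\ref{lem.smooth2}. Your overall architecture --- tangential Fourier smoothing composed with normal mollification --- is the right skeleton, and your handling of causality is essentially sound once you abandon the claim that $\hat\rho$ is compactly supported: Paley--Wiener forbids this for a one-sided kernel, but a Schwartz $\hat\rho$ with $\hat\rho(0)=1$ and enough vanishing moments is adequate for \eqref{smooth.p1a}--\eqref{smooth.p1c} with $\alpha,\beta\le m$.

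The genuine gap is in the boundary compatibility. A normal mollifier ``whose kernel collapses to a Dirac mass at the boundary'' cannot map $\mathcal{F}^3_\gamma(\Omega_T)$ into $\bigcap_\beta\mathcal{F}^\beta_\gamma(\Omega_T)$: if $M_\theta^{x_2}$ degenerates to the identity at $x_2=0$, then near the boundary the normal regularity of $S_\theta u$ is no better than that of $u$. Concretely, for $M_\theta u(x_2)=\int\rho(z)\,u(x_2+\sigma(x_2)z)\,dz$ with $\sigma(0)=0$, differentiating $k$ times in $x_2$ produces a term $\int\rho(z)(1+\sigma'(x_2)z)^k\,\partial_{x_2}^k u(x_2+\sigma(x_2)z)\,dz$ plus lower-order pieces, so $\|\partial_{x_2}^k M_\theta u\|_{L^2}$ is finite only if $\partial_{x_2}^k u\in L^2$; this fails for $k>3$. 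The construction in \cite{CS08MR2423311} (as the paper indicates) proceeds differently: one first applies a \emph{genuine} smoothing $S_\theta^0$ in all variables --- which does not respect traces --- and then corrects a posteriori via $S_\theta u:=S_\theta^0 u+\mathcal{R}_T\bigl(S_\theta^\partial(u|_{x_2=0})-(S_\theta^0 u)|_{x_2=0}\bigr)$, where $S_\theta^\partial$ is the boundary smoother. This forces $(S_\theta u)|_{x_2=0}=S_\theta^\partial(u|_{x_2=0})$, giving \eqref{smooth.p2} and the trace-matching property, while $S_\theta u$ remains in every $H^\beta$ because the correction is the lift of a smooth boundary datum.
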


The proof of \eqref{smooth.p2} is based on the following lifting
operator (see \cite[Chapter 5]{FM00MR1787068} and \cite{CS08MR2423311}).

\begin{lemma}\  \label{lem.smooth2}
	Let $T>0$ and $\gamma\geq 1$, and let $m\geq 1$ be an integer.
	Then there exists an operator $\mathcal{R}_T$, which is continuous from $\mathcal{F}_{\gamma}^s(\omega_T)$
	to $\mathcal{F}_{\gamma}^{s+1/2}(\Omega_T)$ for all $s\in [1,m]$,
	such that, if $s\geq 1$ and $u\in  \mathcal{F}_{\gamma}^s(\omega_T)$,
	then $(\mathcal{R}_T u)|_{x_2=0}=u$.
\end{lemma}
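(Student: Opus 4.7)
The plan is to construct $\mathcal R_T$ as a continuous superposition of the smoothing operators $\{S_\theta\}_{\theta\ge 1}$ already provided by Proposition~\ref{pro.smooth}. The payoff of this choice is that each $S_\theta$ preserves the subspace $\mathcal F_\gamma^\beta(\omega_T)$ of functions vanishing for $t<0$, so the vanishing-in-the-past property of $\mathcal R_T u$ is inherited automatically; no separate causality argument, which would be the main headache in a naive Fourier-multiplier construction, is needed.

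Concretely, I would fix a cutoff $\phi\in C^\infty_c(\mathbb{R})$ with $\phi(0)=1$ and $\supp\phi\subset[-1,1]$, and set
\[
\mathcal R_T u(t,x_1,x_2):=\phi(x_2)\,S_1 u(t,x_1)+\int_1^{\infty}\phi(\theta x_2)\,\frac{d}{d\theta}\bigl(S_\theta u\bigr)(t,x_1)\,d\theta,
\]
with $S_\theta$ acting only in the tangential variables $(t,x_1)$. Setting $x_2=0$ and using $\phi(0)=1$ together with the telescoping identity $S_1 u + \int_1^\infty \frac{d}{d\theta}S_\theta u\,d\theta = \lim_{\theta\to\infty}S_\theta u = u$ in $H^{s'}_\gamma(\omega_T)$ for any $s'<s$, yields the trace identity $(\mathcal R_T u)|_{x_2=0}=u$ essentially for free.

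The heart of the proof is the continuity estimate $\|\mathcal R_T u\|_{H^{s+1/2}_\gamma(\Omega_T)}\lesssim \|u\|_{H^s_\gamma(\omega_T)}$. The key input is the smoothing bound \eqref{smooth.p1c}, namely $\|\frac{d}{d\theta}S_\theta u\|_{H^\beta_\gamma(\omega_T)}\lesssim \theta^{\beta-s-1}\|u\|_{H^s_\gamma(\omega_T)}$, combined with the fact that $\phi(\theta x_2)$ restricts the $\theta$-integral to $\{\theta\le 1/x_2\}$. For each integer $k\in\{0,1\}$ I would estimate the weighted square integral $\int_0^\infty\|\partial_2^k\mathcal R_T u(\cdot,\cdot,x_2)\|_{H^{s-k}_\gamma(\omega_T)}^2\,dx_2$ by inserting the bound from \eqref{smooth.p1c} and performing the change of variable $y=\theta x_2$, which reduces matters to an $x_2$-independent integral in $y$ that converges thanks to $\phi$ being compactly supported. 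This yields the integer endpoints $H^s_\gamma(\omega_T)\to H^s_\gamma(\Omega_T)$ and $H^s_\gamma(\omega_T)\to H^{s+1}_\gamma(\Omega_T)$, with constants uniform in $\gamma\ge 1$; the target $H^{s+1/2}_\gamma$ estimate then follows by real interpolation between these two endpoints.

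The main obstacle I expect is the last, interpolation step: one has to verify that the standard identity $(H^s_\gamma(\Omega_T),H^{s+1}_\gamma(\Omega_T))_{1/2,2}=H^{s+1/2}_\gamma(\Omega_T)$ holds with constants independent of $\gamma\ge 1$, and that it descends to the closed subspaces $\mathcal F_\gamma^\bullet$ defined by vanishing in the past (which is preserved under convex combinations, so the restriction of the interpolation functor is well behaved). Once that fractional bookkeeping is set up, everything else is a mechanical consequence of the smoothing estimates of Proposition~\ref{pro.smooth} and, if needed, a density argument running $s$ from $m$ down to $1$.
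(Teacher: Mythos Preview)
Your construction of $\mathcal{R}_T$ is sound and essentially the one used in the references \cite{FM00MR1787068,CS08MR2423311} cited by the paper (which gives no proof of its own). The trace identity follows as you describe. However, your argument for the continuity bound $\mathcal{R}_T:H^s_\gamma(\omega_T)\to H^{s+1/2}_\gamma(\Omega_T)$ has a genuine gap.

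The asserted endpoint $\mathcal{R}_T:H^s_\gamma(\omega_T)\to H^{s+1}_\gamma(\Omega_T)$ is impossible: by the trace theorem any $v\in H^{s+1}_\gamma(\Omega_T)$ has $v|_{x_2=0}\in H^{s+1/2}_\gamma(\omega_T)$, whereas $(\mathcal{R}_T u)|_{x_2=0}=u$ need only lie in $H^s_\gamma$. Even if both endpoints held, interpolating them would be vacuous, since $H^{s+1}_\gamma(\Omega_T)\hookrightarrow H^{s+1/2}_\gamma(\Omega_T)$ already gives the conclusion. What your $k=0,1$ computations (and their iterates $k=0,\dots,s$) actually establish is $\partial_2^k\mathcal{R}_T u\in L^2(\mathbb{R}_+;H^{s-k}_\gamma(\omega_T))$ for each $k$, i.e.\ $\mathcal{R}_T:H^s_\gamma(\omega_T)\to H^s_\gamma(\Omega_T)$ with \emph{no} gain; interpolating with the analogous statement at level $s+1$ still yields no gain.

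The missing half-derivative cannot be recovered from Minkowski-type bounds based solely on \eqref{smooth.p1c}, because those discard the near-orthogonality of the pieces $\tfrac{d}{d\theta}S_\theta u$. In the references the gain is obtained by working directly with the tangential Fourier transform: the $S_\theta$ are explicit multipliers, so $\widehat{\mathcal{R}_T u}(\xi,x_2)$ is of the form $\hat u(\xi)\,\Psi\big(x_2\lambda^{1,\gamma}(\xi)\big)$, and integrating $|\Psi|^2$ in $x_2$ produces the factor $\lambda^{-1,\gamma}(\xi)$ that exactly accounts for the $+\tfrac12$. To repair your argument you must invoke that concrete Fourier structure (or an equivalent Littlewood--Paley square-function characterization of $H^{s+1/2}$); the interpolation step as written does not do the job.
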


Now, following \cite{CS08MR2423311}, we describe the iteration scheme for problem \eqref{P.new}.

\vspace*{2mm}
\noindent{\bf Assumption\;(A-1)}: \emph{$ (V_0,\Psi_0, \psi_0)=0$ and, for $k=0,\ldots,{n}$,
	$(V_k,\Psi_k,\psi_k)$ are already given  and satisfy}
\begin{align}\label{NM.H1}
(V_k,\Psi_k,\psi_k)|_{t<0}=0,\quad \Psi_k^{+}|_{x_2=0}=\Psi_k^{-}|_{x_2=0}=\psi_k.
\end{align}

Let us consider
\begin{align}\label{NM0}
V_{{n}+1}=V_{{n}}+\delta V_{{n}},\quad \Psi_{{n}+1}=\Psi_{{n}}+\delta \Psi_{{n}},
\quad\,\, \psi_{{n}+1}=\psi_{{n}}+\delta \psi_{{n}},
\end{align}
where these differences $\delta V_{{n}}$, $\delta \Psi_{{n}}$, and $\delta \psi_{{n}}$ will be specified below.

First we are going to find $(\delta \dot{V}_{{n}},\delta\psi_{{n}})$ by solving the effective linear problem:
\begin{align} \label{effective.NM}
\left\{\begin{aligned}
&\mathbb{L}_e'(U^a+V_{{n}+1/2},\Phi^a+\Psi_{{n}+1/2})\delta \dot{V}_{{n}}=f_{{n}}
\qquad &&\textrm{in }\Omega_T,\\
& \mathbb{B}_e'(U^a+V_{{n}+1/2},\Phi^a+\Psi_{{n}+1/2})(\delta \dot{V}_{{n}},\delta\psi_{{n}})=g_{{n}}
\qquad &&\textrm{on }\omega_T,\\
& (\delta \dot{V}_{{n}},\delta\psi_{{n}})=0\qquad &&\textrm{for }t<0,
\end{aligned}\right.
\end{align}
where operators  $\mathbb{L}_e'$ and $\mathbb{B}_e'$ are defined in \eqref{P2b.1}--\eqref{P2b.3},
\begin{align} \label{good.NM}
\delta \dot{V}_{{n}}:=\delta V_{{n}}-\frac{\p_2 (U^a+V_{{n}+1/2})}{\p_2 (\Phi^a+\Psi_{{n}+1/2})}\delta\Psi_{{n}}
\end{align}
is the ``good unknown'' ({\it cf.}\;\eqref{good}), and $(V_{{n}+1/2},\Psi_{{n}+1/2})$ is a smooth modified state
such that $(U^a+V_{{n}+1/2},\Phi^a+\Psi_{{n}+1/2})$ satisfies
constraints \eqref{bas.c1}--\eqref{bas.eq}.
The source term $(f_{{n}},g_{{n}})$ will be defined through the accumulated error terms
at Step ${n}$ later on.

We define the modified state as
\begin{align} \label{modified}
\left\{\begin{aligned}
&\Psi_{{n}+1/2}^{\pm}:=S_{\theta_{{n}}}\Psi_{{n}}^{\pm},\quad v_1\big(V_{{n}+1/2}^{\pm}\big)
:=S_{\theta_{{n}}}v_1\big(V_{{n}}^{\pm}\big),\\
&p_{{n}+1/2}^{\pm}:=S_{\theta_{{n}}}p_{{n}}^{\pm}\mp \tfrac{1}{2}\mathcal{R}_T\big(S_{\theta_{{n}}}p_{{n}}^{+}|_{x_2=0}
-S_{\theta_{{n}}}p_{{n}}^{-}|_{x_2=0}  \big),\\
&v_2\big(V_{{n}+1/2}^{\pm}\big):=\p_t\Psi_{{n}+1/2}^{\pm}+\big(v_1^{a\pm}+v_1\big(V_{{n}+1/2}^{\pm}\big) \big)\p_1\Psi_{{n}+1/2}^{\pm}\\
&\qquad \qquad \qquad +v_1\big(V_{{n}+1/2}^{\pm}\big)\p_1\Phi^{a\pm},
\end{aligned}\right.
\end{align}
where $S_{\theta_{{n}}}$ are the smoothing operators defined in Proposition \ref{pro.smooth} with
sequence $\{\theta_{{n}}\}$ given by
\begin{align} \label{theta}
\theta_0\geq 1,\qquad \theta_{{n}}=\sqrt{\theta^2_0+{n}},
\end{align}
and $\mathcal{R}_T$ is the lifting operator given in Lemma \ref{lem.smooth2}.
Thanks to \eqref{NM.H1}, we have
\begin{align} \label{modified.1}
\left\{\begin{aligned}
&\Psi_{{n}+1/2}^{+}|_{x_2=0}=\Psi_{{n}+1/2}^{-}|_{x_2=0}=:\psi_{{n}+1/2},\\
&p_{{n}+1/2}^{+}|_{x_2=0}=p_{{n}+1/2}^{-}|_{x_2=0},\\
&\mathcal{E}(V_{{n}+1/2},\Psi_{{n}+1/2})=0,\\
&\big(V_{{n}+1/2},\Psi_{{n}+1/2},\psi_{{n}+1/2} \big)|_{t<0}=0.
\end{aligned}\right.
\end{align}
It then follows from \eqref{app} that $(U^a+V_{{n}+1/2},\Phi^a+\Psi_{{n}+1/2})$
satisfies \eqref{bas.eq.1} and \eqref{bas.eq.3}--\eqref{bas.eq.4}.
The additional constraint \eqref{bas.eq.2} will be obtained by taking the initial data small enough.

The error terms at Step ${n}$ are defined from the following decompositions:
\begin{align}
\notag&\mathcal{L}(V_{{n}+1},\Psi_{{n}+1})-\mathcal{L}(V_{{n}},\Psi_{{n}})\\
\notag&\quad = \mathbb{L}'(U^a+V_{{n}},\Phi^a+\Psi_{{n}})(\delta V_{{n}},\delta\Psi_{{n}})+e_{{n}}'\\
\notag&\quad = \mathbb{L}'(U^a+S_{\theta_{{n}}}V_{{n}},\Phi^a+S_{\theta_{{n}}}\Psi_{{n}})(\delta V_{{n}},\delta\Psi_{{n}})+e_{{n}}'+e_{{n}}''\\
\notag&\quad = \mathbb{L}'(U^a+V_{{n}+1/2},\Phi^a+\Psi_{{n}+1/2})(\delta V_{{n}},\delta\Psi_{{n}})+e_{{n}}'+e_{{n}}''+e_{{n}}'''\\
\label{decom1}&\quad = \mathbb{L}_e'(U^a+V_{{n}+1/2},\Phi^a+\Psi_{{n}+1/2})\delta \dot{V}_{{n}}+e_{{n}}'+e_{{n}}''+e_{{n}}'''+D_{{n}+1/2} \delta\Psi_{{n}}
\end{align}
and
\begin{align}
\notag&\mathcal{B}(V_{{n}+1}|_{x_2=0},\psi_{{n}+1})-\mathcal{B}(V_{{n}}|_{x_2=0},\psi_{{n}})\\
\notag&\quad = \mathbb{B}'(U^a+V_{{n}},\Phi^a+\Psi_{{n}})(\delta V_{{n}}|_{x_2=0},\delta\psi_{{n}})+\tilde{e}_{{n}}'\\
\notag&\quad = \mathbb{B}'(U^a+S_{\theta_{{n}}}V_{{n}},\Phi^a+S_{\theta_{{n}}}\Psi_{{n}})(\delta V_{{n}}|_{x_2=0},\delta\psi_{{n}})+\tilde{e}_{{n}}'+\tilde{e}_{{n}}''\\
\label{decom2}&\quad =\mathbb{B}_e'(U^a+V_{{n}+1/2},\Phi^a+\Psi_{{n}+1/2})(\delta \dot{V}_{{n}}|_{x_2=0},\delta\psi_{{n}})+\tilde{e}_{{n}}'+\tilde{e}_{{n}}''+\tilde{e}_{{n}}''',
\end{align}
where we have set
\begin{align}\label{error.D}
D_{{n}+1/2}:=\frac{1}{\p_2(\Phi^a+\Psi_{{n}+1/2})}\p_2\mathbb{L}(U^a+V_{{n}+1/2},\Phi^a+\Psi_{{n}+1/2}),
\end{align}
and have used \eqref{Alinhac} to obtain the last identity in \eqref{decom1}.
Let us set
\begin{align} \label{e.e.tilde}
e_{{n}}:=e_{{n}}'+e_{{n}}''+e_{{n}}'''+D_{{n}+1/2} \delta\Psi_{{n}},\qquad
\tilde{e}_{{n}}:=\tilde{e}_{{n}}'+\tilde{e}_{{n}}''+\tilde{e}_{{n}}'''.
\end{align}

\vspace*{1mm}
\noindent{\bf Assumption\;(A-2)}: \emph{$f_0:=S_{\theta_0}f^a$, $(e_0,\tilde{e}_0,g_0):=0$, and $(f_k,g_k,e_k,\tilde{e}_k)$
	are already given and vanish in the past for $k=0,\ldots,{n}-1$.}

\vspace*{2mm}
We compute the accumulated error terms at Step ${n}$,  $n\geq 1$, by
\begin{align}  \label{E.E.tilde}
E_{{n}}:=\sum_{k=0}^{{n}-1}e_{k},\quad \widetilde{E}_{{n}}:=\sum_{k=0}^{{n}-1}\tilde{e}_{k}.
\end{align}
Then we compute $f_{{n}}$ and $g_{{n}}$ for ${n}\geq 1$  from the equations:
\begin{align} \label{source}
\sum_{k=0}^{{n}} f_k+S_{\theta_{{n}}}E_{{n}}=S_{\theta_{{n}}}f^a,
\qquad \sum_{k=0}^{{n}}g_k+S_{\theta_{{n}}}\widetilde{E}_{{n}}=0.
\end{align}

Under assumptions {(A-1)}--{(A-2)},
$(V_{{n}+1/2},\Psi_{{n}+1/2})$ and $(f_{{n}},g_{{n}})$ have been specified from \eqref{modified} and \eqref{source}.
Then we can obtain $(\delta \dot{V}_{{n}},\delta\psi_{{n}})$ as the solution of
the linear problem  \eqref{effective.NM} by applying Theorem {\rm\ref{thm.L}}.

Next we need to construct $\delta\Psi_{{n}}=(\delta\Psi_{{n}}^+,\delta\Psi_{{n}}^-)^{\mathsf{T}}$
satisfying $\delta\Psi_{{n}}^{\pm}|_{x_2=0}=\delta\psi_{{n}}$.
We use the boundary conditions in \eqref{effective.NM}
({\it cf.}\;\eqref{b.ring}--\eqref{M.ring}, \eqref{b.sharp}, and \eqref{good.NM})
to derive that $\delta\psi_{{n}}$ satisfies
\begin{align}
\notag & \bigg(\dfrac{\epsilon^2\p_t \Phi_{{n}+1/2}^+}{N^+_{{n}+1/2}h^+_{{n}+1/2}(\varGamma_{{n}+1/2}^+)^2},
\;  \dfrac{\varrho^+_{{n}+1/2} }{h^+_{{n}+1/2}\varGamma_{{n}+1/2}^+},\;
\dfrac{-\varsigma^+_{{n}+1/2}}{h^+_{{n}+1/2}\varGamma_{{n}+1/2}^+}\bigg)\bigg|_{x_2=0}\\
\notag&\quad  \times\bigg( \delta \dot{V}_{{n}}^++\delta\psi_{{n}} \frac{\p_2 U^+_{{n}+1/2} }{\p_2\Phi^+_{{n}+1/2}} \bigg)\bigg|_{x_2=0}
 +\p_t(\delta\psi_{{n}})+v_{1}\big(U_{{n}+1/2}^{+}\big)\big|_{x_2=0}\p_1(\delta\psi_{{n}})\\
\notag& =g_{{n},2},\\[2.5mm]
\notag &\bigg(\dfrac{\epsilon^2\p_t \Phi_{{n}+1/2}^-}{N^-_{{n}+1/2}h^-_{{n}+1/2}(\varGamma_{{n}+1/2}^-)^2},
\;  \dfrac{\varrho^-_{{n}+1/2} }{h^-_{{n}+1/2}\varGamma_{{n}+1/2}^-},\;
\dfrac{-\varsigma^-_{{n}+1/2}}{h^-_{{n}+1/2}\varGamma_{{n}+1/2}^-}\bigg)\bigg|_{x_2=0}\\
\notag&\quad
\times \bigg(\delta \dot{V}_{{n}}^- +\delta\psi_{{n}} \frac{\p_2 U^-_{{n}+1/2} }{\p_2\Phi^-_{{n}+1/2}} \bigg)\bigg|_{x_2=0}
+\p_t(\delta\psi_{{n}})+v_{1}\big(U_{{n}+1/2}^{-}\big)\big|_{x_2=0}\p_1(\delta\psi_{{n}})\\
\notag&=g_{{n},2}-g_{{n},1},
\end{align}
where we have set $U_{{n}+1/2}^{\pm}:=U^{a\pm}+V^{\pm}_{{n}+1/2}$, $\Phi_{{n}+1/2}^{\pm}:=\Phi^{a\pm}+\Psi^{\pm}_{{n}+1/2}$, and
\begin{align*}
(N^{\pm}_{{n}+1/2},h^{\pm}_{{n}+1/2},\varGamma^{\pm}_{{n}+1/2},\varrho^{\pm}_{{n}+1/2},\varsigma^{\pm}_{{n}+1/2})
:=(N,h,\varGamma,\varrho,\varsigma)(U_{{n}+1/2}^{\pm},\Phi_{{n}+1/2}^{\pm}),
\end{align*}
with $(N, h,\varGamma,\varrho,\varsigma)$ defined in \eqref{N.c.def}, \eqref{Gamma.v}, and \eqref{m.n.ring}.
Then we define $\delta\Psi_{{n}}^+$ and $\delta\Psi_{{n}}^-$ as solutions to the following equations:
\begin{align}
\notag & \bigg(\dfrac{\epsilon^2\p_t \Phi_{{n}+1/2}^+  }{N^+_{{n}+1/2}h^+_{{n}+1/2}(\varGamma_{{n}+1/2}^+)^2},
\;  \dfrac{\varrho^+_{{n}+1/2} }{h^+_{{n}+1/2}\varGamma_{{n}+1/2}^+},\;   \dfrac{-\varsigma^+_{{n}+1/2}}{h^+_{{n}+1/2}\varGamma_{{n}+1/2}^+}\bigg)\\
&\notag \quad \times
 \bigg(\delta\Psi_{{n}}^+ \frac{\p_2 U^+_{{n}+1/2} }{\p_2\Phi^+_{{n}+1/2}}+ \delta \dot{V}_{{n}}^+\bigg)
+\p_t(\delta\Psi_{{n}}^+)+v_{1}\big(U_{{n}+1/2}^{+}\big)\p_1(\delta\Psi_{{n}}^+)\\
&\label{delta.Psi1}=\mathcal{R}_Tg_{{n},2}+G_{{n}}^+,\\
\notag& \bigg(\dfrac{\epsilon^2\p_t \Phi_{{n}+1/2}^-  }{N^-_{{n}+1/2}h^-_{{n}+1/2}(\varGamma_{{n}+1/2}^-)^2},
\;  \dfrac{\varrho^-_{{n}+1/2} }{h^-_{{n}+1/2}\varGamma_{{n}+1/2}^-},\;   \dfrac{-\varsigma^-_{{n}+1/2}}{h^-_{{n}+1/2}\varGamma_{{n}+1/2}^-}\bigg)
\\
&\notag \quad \times\bigg(\delta\Psi_{{n}}^- \frac{\p_2 U^-_{{n}+1/2} }{\p_2\Phi^-_{{n}+1/2}}  +\delta \dot{V}_{{n}}^-\bigg)
 +\p_t(\delta\Psi_{{n}}^-)+v_{1}\big(U_{{n}+1/2}^{-}\big)\p_1(\delta\Psi_{{n}}^-)\\
 &\label{delta.Psi2}=\mathcal{R}_T(g_{{n},2}-g_{{n},1})+G_{{n}}^-,
\end{align}
where the source terms $G_{{n}}^{\pm}$ will be chosen by using a decomposition for operator $\mathcal{E}$.

We define the error terms: $\hat{e}_{{n}}'$, $\hat{e}_{{n}}''$, and $\hat{e}_{{n}}'''$ by
\begin{align}
&\mathcal{E}(V_{{n}+1},\Psi_{{n}+1})-\mathcal{E}(V_{{n}},\Psi_{{n}})
= \mathcal{E}'(V_{{n}},\Psi_{{n}})(\delta V_{{n}},\delta\Psi_{{n}})+\hat{e}_{{n}}' \notag  \\
&\quad = \mathcal{E}'(S_{\theta_{{n}}}V_{{n}},S_{\theta_{{n}}}\Psi_{{n}})(\delta V_{{n}},\delta\Psi_{{n}})
+\hat{e}_{{n}}'+\hat{e}_{{n}}''\notag  \\
&\quad = \mathcal{E}'(V_{{n}+1/2},\Psi_{{n}+1/2})(\delta V_{{n}},\delta\Psi_{{n}})+\hat{e}_{{n}}'
+\hat{e}_{{n}}''+\hat{e}_{{n}}''', \label{decom3}
\end{align}
and denote
\begin{align} \label{e.hat}
\hat{e}_{{n}}:=\hat{e}_{{n}}'+\hat{e}_{{n}}''+\hat{e}_{{n}}''',\qquad \hat{E}_{{n}}:=\sum_{k=0}^{{n}-1}\hat{e}_{k}.
\end{align}
From \eqref{app.1}, we have
\begin{align*}
\mathcal{E}(V,\Psi)=\p_t(U^a+V)+v_1(U^a+V)\p_1 (\Phi^a+\Psi)-v_2(U^a+V).
\end{align*}
Similar to the derivation of \eqref{B'.bb} and \eqref{P2b.3},
we deduce that $$\mathcal{E}'(V_{{n}+1/2}^{\pm},\Psi_{{n}+1/2}^{\pm})(\delta V_{{n}}^{\pm},\delta\Psi_{{n}}^{\pm})$$
are equal to the left-hand sides of  \eqref{delta.Psi1}--\eqref{delta.Psi2}, respectively.
Then it follows from \eqref{delta.Psi1}--\eqref{decom3}
that
\begin{align}\notag 
\mathcal{E}(V_{{n}+1},\Psi_{{n}+1})-\mathcal{E}(V_{{n}},\Psi_{{n}}) =\begin{pmatrix}
\mathcal{R}_Tg_{{n},2}+G_{{n}}^++\hat{e}_{{n}}^+\\ \mathcal{R}_T(g_{{n},2}-g_{{n},1})+G_{{n}}^-+\hat{e}_{{n}}^-
\end{pmatrix}.
\end{align}
Summing these relations and using $\mathcal{E}(V_0,\Psi_0)=0$ yield
\begin{align*}
\mathcal{E}(V_{{n}+1}^-,\Psi_{{n}+1}^-)=\mathcal{R}_T\Big(\sum_{k=0}^{{n}}(g_{k,2}-g_{k,1})\Big)+\sum_{k=0}^{{n}}G_{k}^-+\hat{E}_{{n}+1}^-.
\end{align*}
On the other hand, we obtain from \eqref{effective.NM} and \eqref{decom2} that
\begin{align} \label{decom2.b}
g_{{n}}=\mathcal{B}(V_{{n}+1}|_{x_2=0},\psi_{{n}+1})-\mathcal{B}(V_{{n}}|_{x_2=0},\psi_{{n}})-\tilde{e}_{{n}}.
\end{align}
In view of \eqref{P.new} and \eqref{B.def},  one obtains the relations:
\begin{align}
&\notag \big(\mathcal{B}(V_{{n}+1}|_{x_2=0},\psi_{{n}+1})\big)_2\\
&\quad =\mathcal{E}(V_{{n}+1}^+|_{x_2=0},\psi_{{n}+1})\notag  \\
& \quad =\mathcal{E}(V_{{n}+1}^-|_{x_2=0},\psi_{{n}+1})+\big(\mathcal{B}(V_{{n}+1}|_{x_2=0},\psi_{{n}+1})\big)_1. \label{B.E.relation}
\end{align}
Summing \eqref{decom2.b} and using $\mathcal{B}(V_{0}|_{x_2=0},\psi_{0})=0$, we have
\begin{align}\notag
&\mathcal{E}(V_{{n}+1}^-,\Psi_{{n}+1}^-)\\
& \label{decom3.c1} \quad =\mathcal{R}_T\Big(\mathcal{E}\big(V_{{n}+1}^-|_{x_2=0},\psi_{{n}+1}\big)-\widetilde{E}_{{n}+1,2}+\widetilde{E}_{{n}+1,1}\Big)+\sum_{k=0}^{{n}}G_{k}^-+\hat{E}_{{n}+1}^-.
\end{align}
Similarly, we can also obtain
\begin{align} \notag
&\mathcal{E}(V_{{n}+1}^+,\Psi_{{n}+1}^+)\\ \label{decom3.c2}
&\quad =\mathcal{R}_T\Big(\mathcal{E}\big(V_{{n}+1}^+|_{x_2=0},\psi_{{n}+1}\big)
-\widetilde{E}_{{n}+1,2}\Big)+\sum_{k=0}^{{n}}G_{k}^++\hat{E}_{{n}+1}^+.
\end{align}

\vspace*{1mm}
\noindent{\bf Assumption\;(A-3)}: \emph{$(G_0^+,G_0^-,\hat{e}_0)=0$, and $(G_k^+,G_k^-,\hat{e}_k)$ are already given
	and vanish in the past for $k=0,\ldots,{n}-1$.}

\vspace*{2mm}
Under assumptions {(A-1)}--{(A-3)}, taking into account  \eqref{decom3.c1}--\eqref{decom3.c2}
and the property of $\mathcal{R}_T$,
we compute the source terms $G_{{n}}^{\pm}$ from
\begin{subequations} \label{source2}
	\begin{alignat}{1}
	&S_{\theta_{{n}}}\big(\hat{E}_{{n}}^+-\mathcal{R}_T\widetilde{E}_{{n},2}\big)+\sum_{k=0}^{{n}}G_{k}^+=0,\\
	&S_{\theta_{{n}}}\big(\hat{E}_{{n}}^--\mathcal{R}_T\widetilde{E}_{{n},2}+\mathcal{R}_T\widetilde{E}_{{n},1}\big)
	+\sum_{k=0}^{{n}}G_{k}^-=0.
	\end{alignat}
\end{subequations}

From assumption {(A-3)} and the properties of $S_{\theta}$, it is clear that $G_{{n}}^{\pm}$ vanish in the past.
As in \cite{FM00MR1787068}, one can also check that the trace of $G_{{n}}^{\pm}$ on $\omega_T$ vanishes.
Hence, we can find $\delta\Psi_{{n}}^{\pm}$, vanishing in the past and satisfying $\delta\Psi_{{n}}^{\pm}|_{x_2=0}=\delta\psi_{{n}}$,
as the unique smooth solutions to the transport equations \eqref{delta.Psi1}--\eqref{delta.Psi2}.

Once $\delta\Psi_{{n}}$ is specified, we can obtain $\delta V_{{n}}$ from \eqref{good.NM}
and $(V_{{n}+1},\Psi_{{n}+1},\psi_{{n}+1})$ from \eqref{NM0}.
The error terms: $e_{{n}}'$, $e_{{n}}''$, $e_{{n}}'''$, $\tilde{e}_{{n}}'$, $\tilde{e}_{{n}}''$,
$\tilde{e}_{{n}}'''$, $\hat{e}_{{n}}'$, $\hat{e}_{{n}}''$, and $\hat{e}_{{n}}'''$
are computed from \eqref{decom1}--\eqref{decom2} and \eqref{decom3}.
Then $e_{{n}}$, $\tilde{e}_{{n}}$, and $\hat{e}_{{n}}$ are obtained from \eqref{e.e.tilde} and \eqref{e.hat}.

Using \eqref{effective.NM} and \eqref{source},
we sum \eqref{decom1} and \eqref{decom2.b} from ${n}=0$ to $m$, respectively, to obtain
\begin{align}
\label{conv.1}&\mathcal{L}(V_{m+1},\Psi_{m+1})=\sum_{{n}=0}^{m}f_{{n}}+E_{m+1}=S_{\theta_{m}}f^a+(I-S_{\theta_{m}})E_{m}+e_m,\\
\label{conv.2}&\mathcal{B}(V_{m+1}|_{x_2=0},\psi_{m+1})=\sum_{{n}=0}^{m}g_{{n}}+\widetilde{E}_{m+1}=(I-S_{\theta_{m}})\widetilde{E}_{m}+\tilde{e}_m.
\end{align}
Plugging \eqref{source2} into \eqref{decom3.c1}--\eqref{decom3.c2}, we utilize \eqref{B.E.relation} to deduce
\begin{align}\label{conv.3}
\small \left\{\begin{aligned}
&\mathcal{E}(V_{m+1}^-,\Psi_{m+1}^-)
=&\!\!\!\!\!\!\!\!&\mathcal{R}_T\big(\big(\mathcal{B}(V_{{m}+1}|_{x_2=0},\psi_{{m}+1})\big)_2
-\big(\mathcal{B}(V_{{m}+1}|_{x_2=0},\psi_{{m}+1})\big)_1\big)\\
& &\!\!\!\!\!\!\!\!&+(I-S_{\theta_{m}})\big( \hat{E}_m^--\mathcal{R}_T\big(\widetilde{E}_{m,2}-\widetilde{E}_{m,1}\big) \big)\\
& &\!\!\!\!\!\!\!\!&+\hat{e}_{m}^--\mathcal{R}_T\big(\tilde{e}_{m,2}-\tilde{e}_{m,1}\big),\\
&\mathcal{E}(V_{m+1}^+,\Psi_{m+1}^+)=&\!\!\!\!\!\!\!\!&\mathcal{R}_T\big(\big(\mathcal{B}(V_{{n}+1}|_{x_2=0},\psi_{{n}+1})\big)_2\big)\\
&&\!\!\!\!&+(I-S_{\theta_{m}})\big( \hat{E}_m^+-\mathcal{R}_T\widetilde{E}_{m,2}\big)
+\hat{e}_{m}^+-\mathcal{R}_T\tilde{e}_{m,2}.
\end{aligned}\right.
\end{align}
Since $S_{\theta_{m}}\to I$ as $m\to \infty$, we can formally obtain the solution to problem \eqref{P.new}
from $$\mathcal{L}(V_{m+1},\Psi_{m+1})\to f^a,\quad \mathcal{B}(V_{m+1}|_{x_2=0},\psi_{m+1})\to 0,\quad \mathcal{E}(V_{m+1},\Psi_{m+1})\to 0,$$
provided that the error terms: $(e_m,\tilde{e}_m,\hat{e}_m)\to 0$.

\subsection{Inductive Hypothesis}

Given a constant $\varepsilon>0$ and an integer $\tilde{\alpha}$ that will be chosen later on,
we assume that {\bf (A-1)}--{\bf (A-3)} are satisfied and that the following estimate holds:
\begin{align} \label{small}
\big\|\tilde{U}^a\big\|_{H^{\tilde{\alpha}+3}_{\gamma}(\Omega_T)}+\big\|\tilde{\Phi}^a\big\|_{H^{\tilde{\alpha}+4}_{\gamma}(\Omega_T)}
+\big\|\varphi^a\big\|_{H^{\tilde{\alpha}+7/2}_{\gamma}(\Omega_T)}+\big\|f^a\big\|_{H^{\tilde{\alpha}+2}_{\gamma}(\Omega_T)}
\leq \varepsilon.
\end{align}
Given another integer $\alpha$, our inductive hypothesis reads:
\begin{align*}
(H_{{n}-1})\ \left\{\begin{aligned}
\textrm{(a)}\,\,  &\|(\delta V_k,\delta \Psi_k)\|_{H^{s}_{\gamma}(\Omega_T)}+\|\delta\psi_k\|_{H^{s+1}_{\gamma}(\omega_T)}\leq \varepsilon \theta_k^{s-\alpha-1}\Delta_k\\
&\quad \textrm{for all } k=0,\ldots,{n}-1\textrm{ and }s\in[3,\tilde{\alpha}]\cap\mathbb{N};\\
\textrm{(b)}\,\, &\|\mathcal{L}( V_k,  \Psi_k)-f^a\|_{H^{s}_{\gamma}(\Omega_T)}\leq 2 \varepsilon \theta_k^{s-\alpha-1}\\
&\quad \textrm{for all } k=0,\ldots,{n}-1\textrm{ and } s\in[3,\tilde{\alpha}-2]\cap\mathbb{N};\\
\textrm{(c)}\,\,  &\|\mathcal{B}( V_k|_{x_2=0},  \psi_k)\|_{H^{s}_{\gamma}(\omega_T)}\leq  \varepsilon \theta_k^{s-\alpha-1}\\
&\quad \textrm{for all } k=0,\ldots,{n}-1\textrm{ and } s\in[4,\alpha]\cap\mathbb{N};\\
\textrm{(d)}\,\,  &\|\mathcal{E}( V_k,  \Psi_k)\|_{H^{3}_{\gamma}(\Omega_T)}\leq  \varepsilon \theta_k^{2-\alpha}\quad\textrm{for all } k=0,\ldots,{n}-1,
\end{aligned}\right.
\end{align*}
where $\Delta_{k}:=\theta_{k+1}-\theta_k$ with $\theta_k$ defined by \eqref{theta}.
Notice that
\begin{align*}
\frac{1}{3\theta_k}\leq \Delta_{k}=\sqrt{\theta_k^2+1}-\theta_k\leq \frac{1}{2\theta_k}\qquad\textrm{for all }k\in\mathbb{N}.
\end{align*}
In particular, sequence $(\Delta_{k})$ is decreasing and tends to $0$.
Our goal is to show that, for a suitable choice of parameters $\theta_0\geq 1$ and $\varepsilon>0$, and for $f^a$ small enough,
($H_{{n}-1}$) implies ($H_{{n}}$) and that ($H_0$) holds.
Once this goal is achieved, we infer that ($H_{{n}}$) holds for all ${n}\in \mathbb{N}$,
which enables us to conclude the proof of Theorem {\rm\ref{thm}}.

From now on, we assume that ($H_{{n}-1}$) holds.
As in \cite{CS08MR2423311}, hypothesis ($H_{{n}-1}$) yields the following consequences:

\begin{lemma}\ \label{lem.triangle}
	If $\theta_0$ is large enough, then, for each $k=0,\ldots,{n}$ and each integer $s\in [3,\tilde{\alpha}]$,
	\begin{align}
	\label{tri1}&\|( V_k, \Psi_k)\|_{H^{s}_{\gamma}(\Omega_T)}+\|\psi_k\|_{H^{s+1}_{\gamma}(\omega_T)}
	\leq
	\left\{\begin{aligned}
	&\varepsilon \theta_k^{(s-\alpha)_+}\quad &&\textrm{if }s\neq \alpha,\\
	&\varepsilon \log \theta_k\quad &&\textrm{if }s= \alpha,
	\end{aligned}\right.\\
	\label{tri2}&\|( (I-S_{\theta_k})V_k, (I-S_{\theta_k})\Psi_k)\|_{H^{s}_{\gamma}(\Omega_T)}
	\leq C\varepsilon \theta_k^{s-\alpha}.
	\end{align}
	Furthermore, for each $k=0,\ldots,{n}$, and each integer $s\in [3,\tilde{\alpha}+4]$,
	\begin{align}
	\label{tri3}&\|( S_{\theta_k}V_k, S_{\theta_k}\Psi_k)\|_{H^{s}_{\gamma}(\Omega_T)}\leq
	\left\{\begin{aligned}
	&C\varepsilon \theta_k^{(s-\alpha)_+}\quad &&\textrm{if }s\neq \alpha,\\
	&C\varepsilon \log \theta_k\quad &&\textrm{if }s= \alpha.
	\end{aligned}\right.
	\end{align}
\end{lemma}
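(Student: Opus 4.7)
The plan is to derive all three estimates from the inductive hypothesis $(H_{n-1})(\mathrm{a})$ by telescoping, exploiting the comparison between discrete sums in $\theta_k$ and integrals, and then combining the resulting bounds on $(V_k,\Psi_k,\psi_k)$ with the smoothing estimates \eqref{smooth.p1a}--\eqref{smooth.p1b} from Proposition \ref{pro.smooth}.

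First, I would prove \eqref{tri1}. Since $(V_0,\Psi_0,\psi_0)=0$, the recursion \eqref{NM0} gives the telescoping identity
\[
V_k=\sum_{j=0}^{k-1}\delta V_j,\qquad \Psi_k=\sum_{j=0}^{k-1}\delta \Psi_j,\qquad \psi_k=\sum_{j=0}^{k-1}\delta\psi_j.
\]
Applying the triangle inequality and $(H_{n-1})(\mathrm{a})$ yields
\[
\|(V_k,\Psi_k)\|_{H^s_\gamma(\Omega_T)}+\|\psi_k\|_{H^{s+1}_\gamma(\omega_T)}
\leq \varepsilon \sum_{j=0}^{k-1}\theta_j^{s-\alpha-1}\Delta_j
\]
for $s\in[3,\tilde{\alpha}]\cap\mathbb{N}$. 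Since $\Delta_j\sim\theta_j^{-1}$ and $\theta_j$ is increasing, the sum is comparable to $\int_{\theta_0}^{\theta_k}\theta^{s-\alpha-1}\,\mathrm{d}\theta$, which is $O(\theta_k^{s-\alpha})$ if $s>\alpha$, $O(\log\theta_k)$ if $s=\alpha$, and $O(1)$ if $s<\alpha$. Choosing $\theta_0$ large enough absorbs all constants into $\varepsilon$, producing \eqref{tri1}.

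Next, for \eqref{tri2}, I would apply \eqref{smooth.p1b} with two different exponents $s\leq\beta\leq\tilde{\alpha}$: picking $\beta$ just above $\alpha$ (taking $\beta=\alpha+1$, say, when $s\leq\alpha$, and $\beta=s$ otherwise), we get
\[
\|(I-S_{\theta_k})(V_k,\Psi_k)\|_{H^s_\gamma(\Omega_T)}
\leq C\theta_k^{s-\beta}\|(V_k,\Psi_k)\|_{H^\beta_\gamma(\Omega_T)}
\leq C\varepsilon \theta_k^{s-\beta}\theta_k^{(\beta-\alpha)_+}=C\varepsilon\theta_k^{s-\alpha},
\]
where \eqref{tri1} was used in the middle step (the log case at $\beta=\alpha$ is avoided by the choice $\beta\neq\alpha$). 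Finally, \eqref{tri3} splits naturally into the range $s\leq \tilde{\alpha}$, where \eqref{smooth.p1a} combined with \eqref{tri1} gives $\|S_{\theta_k}(V_k,\Psi_k)\|_{H^s_\gamma}\leq C\|(V_k,\Psi_k)\|_{H^s_\gamma}$, and the range $\tilde{\alpha}<s\leq \tilde{\alpha}+4$, where \eqref{smooth.p1a} with exponents $(\tilde{\alpha},s)$ yields
\[
\|S_{\theta_k}(V_k,\Psi_k)\|_{H^s_\gamma(\Omega_T)}\leq C\theta_k^{s-\tilde{\alpha}}\|(V_k,\Psi_k)\|_{H^{\tilde{\alpha}}_\gamma(\Omega_T)}
\leq C\varepsilon\theta_k^{s-\tilde{\alpha}}\theta_k^{(\tilde{\alpha}-\alpha)_+}\leq C\varepsilon\theta_k^{(s-\alpha)_+},
\]
since $s-\tilde{\alpha}+(\tilde{\alpha}-\alpha)_+\leq s-\alpha$ whenever $\tilde{\alpha}\geq\alpha$ (which is part of the parameter choice to be made in the sequel).

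The only subtle point, and the main technical nuisance rather than a deep obstacle, is the careful handling of the borderline case $s=\alpha$: the discrete-to-integral comparison must be done cleanly so that the logarithmic divergence appears in the right place, and the intermediate exponent $\beta$ in the smoothing argument must be chosen off of $\alpha$ so as never to trigger the $\log$ case. All of these adjustments are routine once $\theta_0$ is taken sufficiently large to swallow the numerical constants produced by the integral comparisons.
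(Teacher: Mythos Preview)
Your proof is correct and follows the same standard route as the paper, which merely states that \eqref{tri2}--\eqref{tri3} follow directly from \eqref{smooth.p1} and \eqref{tri1}, deferring the telescoping argument for \eqref{tri1} to \cite{CS08MR2423311}. Your write-up simply fills in those details in the expected way.
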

Estimates \eqref{tri2}--\eqref{tri3} follow directly
from \eqref{smooth.p1} and \eqref{tri1}.

\subsection{Estimate of the Error Terms}
To deduce ($H_{{n}}$) from ($H_{{n}-1}$), we need to estimate the quadratic error terms $e'_{k}$,
$\tilde{e}_{k}'$, and $\hat{e}_{k}'$,
the first substitution error terms $e_{k}''$, $\tilde{e}_{k}''$, and $\hat{e}_{k}''$,
the second substitution error terms $e_{k}'''$, $\tilde{e}_{k}'''$, and $\hat{e}_{k}'''$, and the last error term $D_{k+1/2} \delta\Psi_{k}$.
Recall from \eqref{decom1} that
 \begin{align*}
e_k'=\mathcal{L}(V_{k+1},\Psi_{k+1})-\mathcal{L}(V_{k},\Psi_{k})-\mathbb{L}'(U^a+V_{k},\Phi^a+\Psi_{k})(\delta V_{k},\delta\Psi_{k}),
\end{align*}
which can be rewritten as
\begin{align}
\notag e_k'=\int_{0}^{1}\mathbb{L}''(&\,U^a+V_{k}+\tau \delta  V_{k},\\
\label{error.1a} &\Phi^a+\Psi_{k}+\tau \delta \Psi_{k})
\big((\delta V_{k},\delta\Psi_{k}),(\delta V_{k},\delta\Psi_{k})\big) (1-\tau)\d\tau,
\end{align}
where operator $\mathbb{L}''$ is defined by
\begin{align*}
\mathbb{L}''(U,\Phi)\big((V',\Psi'),(V'',\Psi'')\big)
:=\left.\frac{\d}{\d \tau} \mathbb{L}'(U+\tau V'',\Phi+\tau \Psi'')(V',\Psi')\right|_{\tau=0},
\end{align*}
with operator $\mathbb{L}'$ given in \eqref{L.prime}.
We can also obtain a similar expression for $\tilde{e}_{k}'$ (resp.\;$\hat{e}_{k}'$)
defined by \eqref{decom2} (resp.\;\eqref{decom3}) in terms of the second derivative
operator $\mathbb{B}''$ (resp.\;$\mathcal{E}''$).

To control the quadratic error terms, we need the estimates for operators $\mathbb{L}''$, $\mathbb{B}''$, and $\mathcal{E}''$ (see \eqref{error.1a}).
They can be obtained from the explicit forms of $\mathbb{L}''$, $\mathbb{B}''$, and $\mathcal{E}''$
by applying the Moser-type and Sobolev embedding inequalities.
Omitting detailed calculations, we find that the explicit forms of operators $\mathcal{E}''(U,\Phi)$ and $\mathbb{B}''(U,\Phi)$
depend on state $(U,\Phi)$, which make the following estimates for $\mathcal{E}''$ and $\mathbb{B}''$ different from
those obtained in \cite[Proposition 5]{CS08MR2423311}. This difference is caused by the introduction of new primary
unknowns $(p,hw_1, hw_2)$.

\begin{proposition}\ \label{pro.tame2}
	Let $T>0$ and $s\in\mathbb{N}$ with $s\geq3$.
	Assume that $(\tilde{U},\tilde{\Phi})\in H^{s+1}_{\gamma}(\Omega_T)$, $\tilde{\Phi}|_{x_2=0}\in H^{s+1}_{\gamma}(\omega_T)$,
	and $\|(\tilde{U},\tilde{\Phi} )\|_{H_{\gamma}^{3}(\Omega_T)}\leq \widetilde{K}$ for all $\gamma\geq1$.
	Then there exist two positive constants $\widetilde{K}_0$ and $C$, which are independent of $T$ and $\gamma$, such that,
	if $\widetilde{K}+\varepsilon \leq \widetilde{K}_0$, $\gamma\geq1$, $(V_1,\Psi_1),(V_2,\Psi_2)\in H^{s+1}_{\gamma}(\Omega_T)$,
	and $(W_1,\psi_1),(W_2,\psi_2)\in H^{s+1}_{\gamma}(\omega_T)\times H^{s+1}_{\gamma}(\omega_T)$, then
	\begin{align}
	\notag&\big\|\mathbb{L}''\big(U^a+\tilde{U},\Phi^a+\tilde{\Phi} \big)\big((V_1,\Psi_1),(V_2,\Psi_2) \big)\big\|_{H^{s}_{\gamma}(\Omega_T)}\\
	\notag &+\big\|
	\mathcal{E}''\big(\tilde{U},\tilde{\Phi} \big)\big((V_1,\Psi_1),(V_2,\Psi_2) \big)\big\|_{H^{s}_{\gamma}(\Omega_T)}\\
	\notag&\quad \leq C\Big\{ \|(V_1,\Psi_1)\|_{W^{1,\infty}(\Omega_T)}\|(V_2,\Psi_2)\|_{W^{1,\infty}(\Omega_T)}
	\big\|\big(\tilde{U}+\tilde{U}^a,\tilde{\Phi}+\tilde{\Phi}^a\big)\big\|_{H^{s+1}_{\gamma}(\Omega_T)}\\
	\notag&\quad\quad\quad\ \   +\sum_{i\neq j}\|(V_i,\Psi_i)\|_{H^{s+1}_{\gamma}(\Omega_T)} \|(V_j,\Psi_j)\|_{W^{1,\infty}(\Omega_T)} \Big\},\\
	\notag&\big\|\mathbb{B}''\big(U^a+\tilde{U},\Phi^a+\tilde{\Phi} \big)\big((W_1,\psi_1),(W_2,\psi_2) \big)\big\|_{H^{s}_{\gamma}(\omega_T)}\\
	\notag&\quad \leq C\Big\{ \|(W_1,\psi_1)\|_{W^{1,\infty}(\omega_T)}\|(W_2,\psi_2)\|_{W^{1,\infty}(\omega_T)}
	\big\|\big(\tilde{U}+\tilde{U}^a,\tilde{\Phi}+\tilde{\Phi}^a\big)\big\|_{H^{s+1}_{\gamma}(\omega_T)}\\
	\notag&\quad\quad\quad\ \  +\sum_{i\neq j}\|(W_i,\psi_i)\|_{H^{s+1}_{\gamma}(\omega_T)}
	\|(W_j,\psi_j)\|_{W^{1,\infty}(\omega_T)} \Big\}.
	\end{align}
\end{proposition}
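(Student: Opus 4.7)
The plan is to derive explicit bilinear expressions for $\mathbb{L}''$, $\mathcal{E}''$, and $\mathbb{B}''$ by differentiating \eqref{L.prime}, the definitions \eqref{Gamma.v}--\eqref{v.partial} of $v_j(U)$, and \eqref{b.ring}--\eqref{M.ring}, respectively, once more with respect to the state, and then to reduce the three claimed inequalities to a standard application of Moser-type product and composition estimates in $H^s_\gamma$.

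Carrying out the second variation produces, schematically,
\[
\mathbb{L}''(U,\Phi)\bigl((V_1,\Psi_1),(V_2,\Psi_2)\bigr) = \sum_\ell F_\ell(U,\Phi)\,Q_\ell(V_1,\Psi_1;V_2,\Psi_2)\,R_\ell(U,\Phi),
\]
where each $F_\ell$ is a smooth matrix-valued function of its arguments (rational in $\partial_2\Phi$), $Q_\ell$ is bilinear in $(V_1,\Psi_1),(V_2,\Psi_2)$ and their first derivatives, and $R_\ell$ is either $1$ or a first-order derivative of $(U,\Phi)$. I expect identical structural expressions for $\mathcal{E}''$, in which the rational dependence of $v_j(U)$ on $(U_1,U_2,U_3)$ produces factors with denominators $\varGamma(U)h(U_1)$, and for $\mathbb{B}''$, in which the coefficients $\mathring{b}$ and $\mathring{B}$ contribute analogous rational factors.

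I then apply two standard tools: the Moser product estimate $\|uv\|_{H^s_\gamma}\lesssim\|u\|_{L^\infty}\|v\|_{H^s_\gamma}+\|u\|_{H^s_\gamma}\|v\|_{L^\infty}$, and the composition estimate $\|F(U)-F(0)\|_{H^s_\gamma}\lesssim C_F(\|U\|_{L^\infty})\|U\|_{H^s_\gamma}$ for smooth $F$. Under the hypothesis $\widetilde{K}+\varepsilon\leq\widetilde{K}_0$ and the Sobolev embedding $H^3_\gamma\hookrightarrow W^{1,\infty}$, the $L^\infty$ norms of $\tilde{U}$, $\tilde{\Phi}$, and $\nabla\tilde{\Phi}$ are small, which keeps $\partial_2(\Phi^a+\tilde{\Phi})$ bounded below via \eqref{app2} and keeps $h\varGamma$ bounded away from zero, so that every $F_\ell(U^a+\tilde{U},\Phi^a+\tilde{\Phi})$ is uniformly bounded in $L^\infty$ and its $H^s_\gamma$ norm is controlled by $\|(\tilde{U}+\tilde{U}^a,\tilde{\Phi}+\tilde{\Phi}^a)\|_{H^s_\gamma}$. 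Distributing the high $H^{s+1}_\gamma$ norm onto either the coefficient factor $F_\ell R_\ell$ or one of the bilinear factors, and measuring the remaining factors in $W^{1,\infty}$, produces exactly the two types of contributions on the right-hand side of the proposition. The $\mathbb{B}''$ inequality is established by the same argument on $\omega_T$.

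The hard part will be the careful bookkeeping for $\mathbb{B}''$: the change of unknowns to $(p,hw_1,hw_2)$ makes $v_j$, $\varrho$, $\varsigma$ rational rather than polynomial in $U$, so twice-differentiating generates products of several state-dependent factors sharing denominators of the form $(h\varGamma)^k$. The composition estimates must be applied termwise while invoking the smallness of $\tilde{U}$ (from $\widetilde{K}+\varepsilon\leq\widetilde{K}_0$) to keep all such denominators bounded below; only then does the Moser estimate produce the correct $H^{s+1}_\gamma$/$W^{1,\infty}$ split, with the high norm falling either on $(\tilde{U}+\tilde{U}^a,\tilde{\Phi}+\tilde{\Phi}^a)$ or on one of $(W_i,\psi_i)$, as stated. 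Once this structural accounting is completed, the remainder is a routine application of the Moser and trace inequalities.
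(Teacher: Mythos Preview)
Your proposal is correct and matches the paper's approach exactly: the paper states that the estimates ``can be obtained from the explicit forms of $\mathbb{L}''$, $\mathbb{B}''$, and $\mathcal{E}''$ by applying the Moser-type and Sobolev embedding inequalities,'' and explicitly flags (as you do) that the state-dependence of $\mathcal{E}''$ and $\mathbb{B}''$---arising from the rational dependence of $v_j(U)$ on the new primary unknowns $(p,hw_1,hw_2)$---is the feature distinguishing this from the analogous result in Coulombel--Secchi. The paper omits the detailed calculations, so your outline is in fact more explicit than what appears there.
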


Using Proposition \ref{pro.tame2}, we obtain the following result.

\begin{lemma}[Estimate of the quadratic error terms] \label{lem.quad}
	Let $\alpha\geq4$. Then there exist $\varepsilon>0$ sufficiently small and $\theta_0\geq 1$ sufficiently large
	such that, for all $k=0,\ldots,{n}-1$, and all integers $s\in [3,\tilde{\alpha}-1]$,
	\begin{align}\notag
	\|(e_k', \hat{e}_k')\|_{H^{s}_{\gamma}(\Omega_T)}+\|\tilde{e}_k'\|_{H^{s}_{\gamma}(\omega_T)}
	\leq C\varepsilon^2 \theta_k^{L_1(s)-1}\Delta_k,
	\end{align}
	where  $L_1(s):=\max\{(s+2-\alpha)_++4-2\alpha,s+3-2\alpha \}$.
\end{lemma}

\begin{proof}\
	In light of $(H_{{n}-1})$ and \eqref{tri1}, we have
	\begin{align*}
	\sup_{0\leq \tau\leq 1}\|(V_{k}+\tau \delta  V_{k},\Psi_{k}+\tau \delta \Psi_{k})\|_{H^{3}_{\gamma}(\Omega_T)}
	\leq C\varepsilon.
	\end{align*}
	For $\varepsilon$ small enough, we can apply Proposition \ref{pro.tame2} and use the Sobolev embedding inequality, \eqref{small}, and $(H_{{n}-1})$
	to obtain
	\begin{align*}
	&\|e_k'\|_{H^{s}_{\gamma}(\Omega_T)}\\
	&\quad\leq C\left\{\varepsilon^2\theta_k^{4-2\alpha}\Delta_k^2\big(\varepsilon+\|(V_k,\Psi_k,\delta V_k,\delta\Psi_k)\|_{H^{s+1}_{\gamma}(\Omega_T)}\big)+\varepsilon^2\theta_{k}^{s+2-2\alpha}\Delta_k^2\right\}
	\end{align*}
	for $s\in [3,\tilde{\alpha}-1]$.
	If $s+1\neq \alpha$, then we obtain from \eqref{tri1} and the inequality $2\theta_k\Delta_k\leq 1$ that
	\begin{align*}
	\|e_k'\|_{H^{s}_{\gamma}(\Omega_T)}\leq C \varepsilon^2 \Delta_k^2\big(\theta_k^{(s+1-\alpha)_+
		+4-2\alpha}+\theta_k^{s+2-2\alpha} \big)\leq C \varepsilon^2 \theta_k^{L_1(s)-1 }\Delta_k.
	\end{align*}
	If $s+1= \alpha$, then using \eqref{tri1} and $\alpha\geq 4$ yields
	\begin{align*}
	\|e_k'\|_{H^{\alpha-1}_{\gamma}(\Omega_T)}
	&\leq C \varepsilon^2 \Delta_k^2\left\{(\varepsilon+\varepsilon \log\theta_k+\varepsilon\theta_k^{-1}\Delta_{k}) +\theta_k^{1-\alpha}\right\}\\
	&\leq C \varepsilon^2 \Delta_k^2 \theta_k^{1-\alpha}\leq C \varepsilon^2 \theta_k^{L_1(\alpha-1)-1 }\Delta_k.
	\end{align*}
	The estimates for $\hat{e}_k'$ and $\tilde{e}_k'$ are similar and follow by applying Proposition \ref{pro.tame2} and the trace theorem.
	This completes the proof.
\qed\end{proof}

Now we estimate the first substitution error terms $e_{k}''$, $\tilde{e}_{k}''$, and $\hat{e}_{k}''$ given in \eqref{decom1}--\eqref{decom2},
and \eqref{decom3}
by rewriting them in terms of $\mathbb{L}''$, $\mathbb{B}''$, and $\mathcal{E}''$.
For instance,  $\tilde{e}_k''$ can be rewritten as
\begin{align}\label{tilde.e''}
\notag\tilde{e}_k''=\int_{0}^{1}&\mathbb{B}''\big(U^a+S_{\theta_k}V_k+\tau(I-S_{\theta_k})V_k,\Phi^a+S_{\theta_k}\Psi_k
+\tau(I-S_{\theta_k})\Psi_k\big)\\
&\big((\delta V_k|_{x_2=0},\delta \psi_k), ((I-S_{\theta_k})V_k|_{x_2=0},(I-S_{\theta_k})\Psi_k|_{x_2=0}) \big)\d\tau.
\end{align}
Then we have the following lemma.

\begin{lemma}[Estimate of the first substitution error terms] \label{lem.1st}
	Let $\alpha\geq4$. Then there exist $\varepsilon>0$ sufficiently small and $\theta_0\geq 1$ sufficiently large such that,
	for all $k=0,\ldots,{n}-1$, and all integers $s\in [3,\tilde{\alpha}-2]$,
	\begin{align}\notag
	\|(e_k'', \hat{e}_k'')\|_{H^{s}_{\gamma}(\Omega_T)}
	+\|\tilde{e}_k''\|_{H^{s}_{\gamma}(\omega_T)}\leq C\varepsilon^2 \theta_k^{L_2(s)-1}\Delta_k,
	\end{align}
	where  $L_2(s):=\max\{(s+2-\alpha)_++6-2\alpha,s+5-2\alpha \}$.
\end{lemma}

\begin{proof}\
	It follows from \eqref{tri2} and \eqref{tri3} that
	\begin{align*}
	\sup_{0\leq \tau\leq 1}\|(S_{\theta_k}V_k+\tau(I-S_{\theta_k})V_k,S_{\theta_k}\Psi_k+\tau(I-S_{\theta_k})\Psi_k)\|_{H^3_{\gamma}(\Omega_T)}
	\leq C\varepsilon.
	\end{align*}
	For $\varepsilon$ sufficiently small, we can apply Proposition \ref{pro.tame2} to estimate $\mathbb{B}''$ in \eqref{tilde.e''}.
	Employ the trace and embedding theorems to obtain
	\begin{align*}
&	\|\tilde{e}_k''\|_{H^{s}_{\gamma}(\omega_T)}\\
 &	\lesssim \big\|\big(\delta V_k,\delta\Psi_k\big)\big\|_{H^{3}_{\gamma}(\Omega_T)}
	\big\|\big((I-S_{\theta_k})V_k,(I-S_{\theta_k})\Psi_k\big)\big\|_{H^{3}_{\gamma}(\Omega_T)}\\
	&\quad \times \big\|\big(\tilde{U}^a+S_{\theta_k}V_k+\tau(I-S_{\theta_k})V_k,\tilde{\Phi}^a
	+S_{\theta_k}\Psi_k+\tau(I-S_{\theta_k})\Psi_k\big)\big\|_{H^{s+2}_{\gamma}(\Omega_T)}\\
	&\quad +\big\|\big(\delta V_k,\delta\Psi_k\big)\big\|_{H^{s+2}_{\gamma}(\Omega_T)}
	\big\|\big((I-S_{\theta_k})V_k,(I-S_{\theta_k})\Psi_k\big)\big\|_{H^{3}_{\gamma}(\Omega_T)}\\
	&\quad   +\big\|\big((I-S_{\theta_k})V_k,(I-S_{\theta_k})\Psi_k\big)\big\|_{H^{s+2}_{\gamma}(\Omega_T)}
	\big\|\big(\delta V_k,\delta\Psi_k\big)\big\|_{H^{3}_{\gamma}(\Omega_T)}.
	\end{align*}
	Using estimates \eqref{small}, $(H_{{n}-1})$, and \eqref{tri2}--\eqref{tri3}, we obtain that, for $s+2\neq \alpha$ and $s+2\leq \tilde{\alpha}$,
	\begin{align*}
	&\|\tilde{e}_k''\|_{H^{s}_{\gamma}(\omega_T)}\\
	&\quad \leq C\Big\{\varepsilon^2 \theta_k^{5-2\alpha}\Delta_k \big(\varepsilon+\varepsilon\theta_k^{(s+2-\alpha)_+}\big)
	+\varepsilon^2\theta_{k}^{s+4-2\alpha}\Delta_k\Big\}
	\leq C\varepsilon^2 \theta_k^{L_2(s)-1}\Delta_k.
	\end{align*}
	For $s+2=\alpha$, we obtain
	\begin{align*}
	\|\tilde{e}_k''\|_{H^{s}_{\gamma}(\omega_T)}
	&\leq C\left\{\varepsilon^2 \theta_k^{5-2\alpha}\Delta_k \big(\varepsilon+\varepsilon\log\theta_k\big)+\varepsilon^2\theta_{k}^{2-\alpha}\Delta_k\right\}\\
	&\leq C\varepsilon^2\Delta_k\left( \theta_k^{5-2\alpha}\log \theta_k + \theta_k^{2-\alpha}\right) \leq C\varepsilon^2 \theta_k^{L_2(\alpha-2)-1}\Delta_k,
	\end{align*}
	owing to $\alpha\geq 4$.
	The estimate for  $e_{k}''$ and $\hat{e}_{k}''$ can be deduced in the same way.
\qed\end{proof}

Now we estimate the second substitution error terms $e_{k}'''$, $\tilde{e}_{k}'''$, and $\hat{e}_{k}'''$
given in \eqref{decom1}--\eqref{decom2} and \eqref{decom3} by rewriting them in terms of $\mathbb{L}''$, $\mathbb{B}''$, and $\mathcal{E}''$.
For instance, $\hat{e}_k'''$ can be rewritten as
 \begin{align}
\notag \hat{e}_k'''=\int_{0}^{1}\textrm{\small $\mathcal{E}''\big(V_{k+1/2}+\tau(S_{\theta_k}V_k-V_{k+1/2}),\Psi_{k+1/2}\big)
\big((\delta V_k,\delta \Psi_k), (S_{\theta_k}V_k-V_{k+1/2},0) \big)$}\d\tau.
\end{align}
Here we have used relation $\Psi_{k+1/2}=S_{\theta_{k}}\Psi_{k}$ ({\it cf.}\;\eqref{modified}).
Then we have the following result.

\begin{lemma}[Estimate of the second substitution error terms]  \label{lem.2nd}
	Let $\alpha\geq4$. Then there exist $\varepsilon>0$ sufficiently small and $\theta_0\geq 1$ sufficiently large
	such that, for all $k=0,\ldots,{n}-1$, and all integers $s\in [3,\tilde{\alpha}-1]$,
	\begin{align}\notag
	\|(e_k''', \hat{e}_k''')\|_{H^{s}_{\gamma}(\Omega_T)}
	+\|\tilde{e}_k'''\|_{H^{s}_{\gamma}(\omega_T)}\leq C\varepsilon^2 \theta_k^{L_3(s)-1}\Delta_k,
	\end{align}
	where  $L_3(s):=\max\{(s+2-\alpha)_++8-2\alpha,s+6-2\alpha \}$.
\end{lemma}

\begin{proof}\
	Omitting detailed derivation, we can use the inductive assumption $(H_{{n}-1})$, definition \eqref{modified},
	and the properties of $S_{\theta}$ and $\mathcal{R}_T$ to obtain
	\begin{align}\label{CS.e1}
	\|S_{\theta_k}V_k-V_{k+1/2}\|_{H^{s}_{\gamma}(\Omega_T)}\leq C\varepsilon \theta_k^{s+1-\alpha}
	\end{align}
	for all $k=0,\ldots,{n}-1$ and all integers $s\in[3,\tilde{\alpha}+3]$.
	We refer to \cite[Proposition 7]{CS08MR2423311} for the proof of \eqref{CS.e1}.
	It follows from \eqref{tri3} and \eqref{CS.e1} that
	\begin{align} \label{CS.e3}
	\|V_{k+1/2}\|_{H^{s}_{\gamma}(\Omega_T)}\leq C\varepsilon \theta_k^{(s-\alpha)_++1}\qquad \textrm{for }s\in[3,\tilde{\alpha}+3].
	\end{align}
	Thus, we have
	\begin{align}
\notag 	\|(\tilde{U}^a+V_{k+1/2}+\tau(S_{\theta_k}V_k-V_{k+1/2}),\tilde{\Phi}^a+\Psi_{k+1/2})\|_{H^{s+1}_{\gamma}(\Omega_T)}\quad \\
\label{CS.e2}	\leq C\varepsilon \theta_k^{(s+1-\alpha)_++1}.
	\end{align}
	For $\varepsilon$ small enough, one may apply Proposition \ref{pro.tame2}
	and use $(H_{{n}-1})$, \eqref{CS.e1}, and \eqref{CS.e2} to deduce
	\begin{align*}
	\|\hat{e}_k'''\|_{H^{s}_{\gamma}(\Omega_T)}
	\leq C\Big\{\varepsilon^2 \theta_k^{6-2\alpha}\Delta_k \varepsilon\theta_k^{(s+1-\alpha)_++1}+\varepsilon^2\theta_{k}^{s+4-2\alpha}\Delta_k\Big\}
	\leq C\varepsilon^2 \theta_k^{L_3(s)-1}\Delta_k.
	\end{align*}
	The estimate for  $e_{k}'''$ and $\tilde{e}_{k}'''$ can be deduced in a similar way by using the trace theorem.
\qed\end{proof}

We now estimate the last error term \eqref{error.D}:
\begin{align*}
D_{k+1/2}\delta\Psi_k=\frac{\delta\Psi_k}{\p_2(\Phi^a+\Psi_{k+1/2})}R_k,
\end{align*}
where $R_k:=\p_2\mathbb{L}(U^a+V_{k+1/2},\Phi^a+\Psi_{k+1/2})$.
This error term results from the introduction of the good unknown in decomposition \eqref{decom1}.
Note from \eqref{modified}, \eqref{small}, and \eqref{tri3} that
\begin{align*}
|\p_2(\Phi^a+\Psi_{k+1/2})|=\big|\p_2\widebar{\Phi}+\p_2\big(\tilde{\Phi}^a+\Psi_{k+1/2}\big)\big|
\geq \frac{1}{2},
\end{align*}
provided that $\varepsilon$ is small enough.
Then we have the following estimate.

\begin{lemma}\  \label{lem.last}
	Let $\alpha\geq 5$ and $\tilde{\alpha}\geq \alpha+2$.
	Then there exist $\varepsilon>0$ sufficiently small and $\theta_0\geq 1$ sufficiently large such that,
	for all $k=0,\ldots,{n}-1$, and for all integers $s\in [3,\tilde{\alpha}-2]$, we have
	\begin{align} \label{last.e0}
	\|D_{k+1/2}\delta\Psi_k\|_{H^{s}_{\gamma}(\Omega_T)}\leq C\varepsilon^2 \theta_k^{L(s)-1}\Delta_k,
	\end{align}
	where  $L(s):=\max\{(s+2-\alpha)_++8-2\alpha,(s+1-\alpha)_++9-2\alpha,s+6-2\alpha\}$.
\end{lemma}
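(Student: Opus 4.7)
The plan is to treat $D_{k+1/2}\delta\Psi_k$ as the triple product of three factors, namely $\delta\Psi_k$, the reciprocal factor $(\partial_2(\Phi^a+\Psi_{k+1/2}))^{-1}$, and $R_k=\partial_2\mathbb{L}(U^a+V_{k+1/2},\Phi^a+\Psi_{k+1/2})$, and to control each factor by combining the inductive hypothesis $(H_{n-1})$ with the smoothing properties already used in the proofs of Lemmas~\ref{lem.quad}--\ref{lem.2nd}. Because the boundary and the original frame both depend smoothly on the basic state, the Moser-type and Gagliardo--Nirenberg inequalities used previously give
\[
\|D_{k+1/2}\delta\Psi_k\|_{H^{s}_\gamma(\Omega_T)}
\lesssim \|\delta\Psi_k\|_{L^\infty}\,\|R_k\|_{H^{s}_\gamma(\Omega_T)}
+\|\delta\Psi_k\|_{H^{s}_\gamma(\Omega_T)}\,\|R_k\|_{L^\infty}+(\text{lower order}),
\]
the factor $(\partial_2(\Phi^a+\Psi_{k+1/2}))^{-1}$ being uniformly bounded in $W^{s,\infty}$ for $\varepsilon$ small by the lower bound $|\partial_2(\Phi^a+\Psi_{k+1/2})|\gtrsim 1$ coming from \eqref{modified}, \eqref{app2}, and \eqref{tri3}.

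The factor $\delta\Psi_k$ is controlled directly from $(H_{n-1})$(a) and the Sobolev embedding, giving $\|\delta\Psi_k\|_{H^{s}_\gamma(\Omega_T)}\leq\varepsilon\theta_k^{s-\alpha-1}\Delta_k$ and $\|\delta\Psi_k\|_{L^\infty}\lesssim\varepsilon\theta_k^{2-\alpha}\Delta_k$. The substantive step is the estimate of $R_k$. The key observation is that, by the very definition \eqref{P.new} of $\mathcal{L}$, one has $\mathbb{L}(U^a+V_k,\Phi^a+\Psi_k)=\mathcal{L}(V_k,\Psi_k)-f^a$, so that
\[
R_k=\partial_2\bigl(\mathcal{L}(V_k,\Psi_k)-f^a\bigr)
+\partial_2\!\int_0^1\!\mathbb{L}'\bigl(U^a+V_k^\tau,\Phi^a+\Psi_k^\tau\bigr)\bigl(V_{k+1/2}-V_k,\,\Psi_{k+1/2}-\Psi_k\bigr)\,d\tau,
\]
with $(V_k^\tau,\Psi_k^\tau):=(V_k,\Psi_k)+\tau(V_{k+1/2}-V_k,\Psi_{k+1/2}-\Psi_k)$. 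The first piece is bounded in $H^{s}_\gamma(\Omega_T)$ by $2\varepsilon\theta_k^{s-\alpha}$ for $s\in[3,\tilde\alpha-3]$ thanks to $(H_{n-1})$(b) applied at order $s+1$, and at $L^\infty$ by $\varepsilon\theta_k^{2-\alpha}$ via the embedding. For the second piece I would combine the explicit formula \eqref{L.prime}--\eqref{C.cal} for $\mathbb{L}'$ with the standard Moser and trace-type tame estimate (as in Proposition~\ref{pro.tame2} for $\mathbb{L}''$) together with the two key interpolation bounds
\[
\|V_{k+1/2}-V_k\|_{H^{s}_\gamma(\Omega_T)}+\|\Psi_{k+1/2}-\Psi_k\|_{H^{s}_\gamma(\Omega_T)}\leq C\varepsilon\,\theta_k^{s-\alpha}
\qquad(s\in[3,\tilde\alpha+3]),
\]
which follow from \eqref{CS.e1} and the smoothing estimate \eqref{smooth.p1b}; the associated $L^\infty$ bound is $C\varepsilon\theta_k^{3-\alpha}$. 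The coefficient state $(\tilde U^a+V_k^\tau,\tilde\Phi^a+\Psi_k^\tau)$ is bounded in $H^{s+1}_\gamma(\Omega_T)$ by $C\varepsilon\theta_k^{(s+1-\alpha)_++1}$ via \eqref{small}, \eqref{tri1}, and \eqref{CS.e3}. Putting these together yields
\[
\|R_k\|_{H^{s}_\gamma(\Omega_T)}\lesssim \varepsilon\Bigl(\theta_k^{s+2-\alpha}+\theta_k^{(s+2-\alpha)_++4-\alpha}+\theta_k^{(s+1-\alpha)_++5-\alpha}\Bigr),
\qquad
\|R_k\|_{L^\infty}\lesssim\varepsilon\theta_k^{5-\alpha}.
\]

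Combining the product decomposition with the $(H_{n-1})$(a) bounds on $\delta\Psi_k$ and the above bounds on $R_k$, and using $\Delta_k\sim\theta_k^{-1}$ to turn one loss of $\theta_k$ into a gain of $\Delta_k$, I obtain
\[
\|D_{k+1/2}\delta\Psi_k\|_{H^{s}_\gamma(\Omega_T)}\leq C\varepsilon^2\Delta_k\Bigl(\theta_k^{s+6-2\alpha-1}+\theta_k^{(s+2-\alpha)_++8-2\alpha-1}+\theta_k^{(s+1-\alpha)_++9-2\alpha-1}\Bigr),
\]
which is exactly \eqref{last.e0} with the three terms defining $L(s)$. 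The main technical obstacle I expect is the bookkeeping of the exponent arising from the difference piece in $R_k$: one has to verify that, when $s+1$ exceeds $\alpha$, the logarithmic borderline case $s+1=\alpha$ in \eqref{tri1} is harmless thanks to the factor $\Delta_k^{1-\eta}\theta_k^{-\eta}$ that the smoothing step \eqref{smooth.p1b} spares; this is the same absorption trick used in Lemmas~\ref{lem.quad}--\ref{lem.2nd}, and restricting to $\alpha\geq 5$ and $\tilde\alpha\geq\alpha+2$ guarantees that every application of $(H_{n-1})$(a)--(b) occurs inside its stated range.
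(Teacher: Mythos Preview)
Your approach is essentially the same as the paper's: split $D_{k+1/2}\delta\Psi_k$ as a product, apply a Moser-type inequality, and decompose $R_k=\partial_2(\mathcal{L}(V_k,\Psi_k)-f^a)+\partial_2\int_0^1\mathbb{L}'(\cdots)(V_{k+1/2}-V_k,\Psi_{k+1/2}-\Psi_k)\,d\tau$, controlling the first piece by $(H_{n-1})$(b) and the second by a tame estimate for $\mathbb{L}'$ combined with \eqref{CS.e1} and Lemma~\ref{lem.triangle}.

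There are, however, three bookkeeping slips to correct. First, the bound $\|V_{k+1/2}-V_k\|_{H^s_\gamma}\le C\varepsilon\theta_k^{s-\alpha}$ is one power too optimistic: \eqref{CS.e1} gives $\|S_{\theta_k}V_k-V_{k+1/2}\|_{H^s_\gamma}\le C\varepsilon\theta_k^{s+1-\alpha}$, so the correct bound is $C\varepsilon\theta_k^{s+1-\alpha}$; this shifts all your intermediate $R_k$ exponents up by one and yields exactly the paper's $\|R_k\|_{H^s_\gamma}\le C\varepsilon(\theta_k^{s+3-\alpha}+\theta_k^{(s+2-\alpha)_++5-\alpha})$ and $\|R_k\|_{L^\infty}\le C\varepsilon\theta_k^{6-\alpha}$. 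Second, the reciprocal factor $(\partial_2(\Phi^a+\Psi_{k+1/2}))^{-1}$ is \emph{not} uniformly bounded in $W^{s,\infty}$ for all $s$ in the range; you only have a uniform lower bound on $|\partial_2(\Phi^a+\Psi_{k+1/2})|$ together with tame $H^s$ control. The Moser inequality therefore produces an additional term $\|\delta\Psi_k\|_{L^\infty}\|R_k\|_{L^\infty}\|\tilde\Phi^a+\Psi_{k+1/2}\|_{H^{s+1}_\gamma}$ (cf.\ the paper's \eqref{last.p1}), and this---not a ``lower order'' contribution---is precisely the origin of the $(s+1-\alpha)_++9-2\alpha$ term in $L(s)$. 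Third, at the top endpoint $s=\tilde\alpha-2$ you cannot invoke $(H_{n-1})$(b) at order $s+1=\tilde\alpha-1$; the paper handles this case separately by estimating $\|\mathbb{L}(U^a+V_{k+1/2},\Phi^a+\Psi_{k+1/2})\|_{H^{s+1}_\gamma}$ directly via \eqref{tri3} and \eqref{CS.e3}, using $\tilde\alpha\ge\alpha+2$ so that $s\ge\alpha$ there. With these three fixes your argument becomes the paper's.
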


\begin{proof}\
	The proof follows from the arguments as in \cite{A89MR976971,CS08MR2423311}.
	Let $\Omega_T^+:=(0,T)\times\mathbb{R}^2_+$.
	Since $\delta\Psi_k$ vanishes in the past,
	using the Moser-type inequality, we obtain
	\begin{align}
	\notag& \|D_{k+1/2}\delta\Psi_k\|_{H^{s}_{\gamma}(\Omega_T)}=\|D_{k+1/2}\delta\Psi_k\|_{H^{s}_{\gamma}(\Omega_T^+)}\\
	&\leq C\big\{\|\delta\Psi_k\|_{L^{\infty}(\Omega_T^+)}\big(\|R_k\|_{H^{s}_{\gamma}(\Omega_T^+)}
	+\|R_k\|_{L^{\infty}(\Omega_T^+)}\|\tilde{\Phi}^a+\Psi_{k+1/2}\|_{H^{s+1}_{\gamma}(\Omega_T^+)}\big)\notag\\
	&\qquad\,\,\, + \|\delta\Psi_k\|_{H^{s}_{\gamma}(\Omega_T^+)}\|R_k\|_{L^{\infty}(\Omega_T^+)}\big\}.
	\label{last.p1}
	\end{align}
	To estimate $R_k$, we introduce the following decomposition for $t>0$:
	\begin{align}
	\notag&\mathbb{L}(U^a+V_{k+1/2},\Phi^a+\Psi_{k+1/2})-\mathcal{L}(V_k,\Psi_k)+f^a\\[1.5mm]
	\notag& =\mathbb{L}(U^a+V_{k+1/2},\Phi^a+\Psi_{k+1/2})-\mathbb{L}(U^a+V_{k},\Phi^a+\Psi_{k})\\
	\notag&=\int_{0}^1\mathbb{L}'\big(U^a+V_{k}+\tau(V_{k+1/2}-V_k),\\
	\label{decom4}&\qquad\qquad \Phi^a+\Psi_{k}+\tau(\Psi_{k+1/2}-\Psi_{k})\big)(V_{k+1/2}-V_k,\Psi_{k+1/2}-\Psi_{k})\d\tau.
	\end{align}
	If $s\leq \tilde{\alpha}-3$, the inductive assumption $(H_{{n}-1})$ implies
	\begin{align} \label{last.e1}
	\|\mathcal{L}(V_k,\Psi_k)-f^a\|_{H^{s+1}_{\gamma}(\Omega_T)}\leq 2\varepsilon\theta_k^{s-\alpha}.
	\end{align}
	Since we can obtain an estimate for $\mathbb{L}'$ similar to that for $\mathbb{L}''$ (see Proposition \ref{pro.tame2}),
	using Lemma \ref{lem.triangle} and \eqref{CS.e1} leads to
	\begin{align}
	\notag&\|\mathbb{L}(U^a+V_{k+1/2},\Phi^a+\Psi_{k+1/2})-\mathbb{L}(U^a+V_{k},\Phi^a+\Psi_{k})\|_{H^{s+1}_{\gamma}(\Omega_T)}\\
	\label{last.e2}&\,\leq C\varepsilon\big(\theta_k^{s+3-\alpha}+\theta_k^{(s+2-\alpha)_++5-\alpha} \big).
	\end{align}
	Plugging \eqref{last.e1}--\eqref{last.e2} into \eqref{decom4} yields
	\begin{align}\label{R.k}
	\|R_{k}\|_{H^{s}_{\gamma}(\Omega_T)} \leq C\varepsilon\big(\theta_k^{s+3-\alpha}+\theta_k^{(s+2-\alpha)_++5-\alpha} \big)
	\qquad \mbox{for $s\in[3,\tilde{\alpha}-3]$}.
	\end{align}
	If $s=\tilde{\alpha}-2\geq \alpha$, then we use \eqref{tri3} and \eqref{CS.e3} to obtain
	\begin{align*}
	\|R_{k}\|_{H^{s}_{\gamma}(\Omega_T)} &\leq\|\mathbb{L}(U^a+V_{k+1/2},\Phi^a+\Psi_{k+1/2})\|_{H^{s+1}_{\gamma}(\Omega_T)} \\
	&\leq C\|(\tilde{U}^a+V_{k+1/2},\tilde{\Phi}^a+\Psi_{k+1/2})\|_{H^{s+2}_{\gamma}(\Omega_T)}\\
	&\leq C\varepsilon \theta_k^{s+3-\alpha}.
	\end{align*}
	Thus, we obtain estimate \eqref{R.k} for $s\in [3,\tilde{\alpha}-2]$.
	Thanks to $(H_{{n}-1})$ and \eqref{R.k}, we utilize the embedding inequality to find
	\begin{align*}
	\|\delta\Psi_{k}\|_{L^{\infty}(\Omega_T)}\leq C\varepsilon\theta_k^{2-\alpha}\Delta_k,\qquad
	\|R_{k}\|_{L^{\infty}(\Omega_T)}\leq C\varepsilon\theta_k^{6-\alpha}.
	\end{align*}
	Using these bounds and plugging \eqref{R.k}, $(H_{{n}-1})$, and \eqref{tri3} into \eqref{last.p1}
	yield \eqref{last.e0}.
\qed\end{proof}

From Lemmas \ref{lem.quad}--\ref{lem.last},
we can immediately obtain the following estimate for $e_k$, $\tilde{e}_k$, and $\hat{e}_k$
defined in \eqref{e.e.tilde} and \eqref{e.hat}.

\begin{lemma}\  \label{lem.sum1}
	Let $\alpha\geq 5$. Then there exist $\varepsilon>0$ sufficiently small and $\theta_0\geq 1$ sufficiently large
	such that, for all $k=0,\ldots,{n}-1$, and for all integers $s\in [3,\tilde{\alpha}-2]$, we have
	\begin{align} \label{es.sum1}
	\|e_k\|_{H^{s}_{\gamma}(\Omega_T)}+\|\hat{e}_k\|_{H^{s}_{\gamma}(\Omega_T)}+\|\tilde{e}_k\|_{H^{s}_{\gamma}(\omega_T)}
	\leq C\varepsilon^2 \theta_k^{L(s)-1}\Delta_k,
	\end{align}
	where  $L(s)$ is defined in Lemma {\rm \ref{lem.last}}.
\end{lemma}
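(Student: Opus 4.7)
The plan is to obtain \eqref{es.sum1} as a direct consequence of the four preceding lemmas (Lemmas \ref{lem.quad}, \ref{lem.1st}, \ref{lem.2nd}, and \ref{lem.last}) combined via the triangle inequality applied to the decompositions \eqref{e.e.tilde} and \eqref{e.hat}. There are no new estimates to derive; the entire content of the statement is the choice of a single exponent $L(s)$ that dominates the three exponents $L_1(s),L_2(s),L_3(s)$ produced by the previous lemmas.

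First, I would decompose $e_k = e_k' + e_k'' + e_k''' + D_{k+1/2}\delta\Psi_k$ according to \eqref{e.e.tilde} and apply the triangle inequality in $H^s_\gamma(\Omega_T)$. Lemma \ref{lem.quad} controls $\|e_k'\|$ by $C\varepsilon^2 \theta_k^{L_1(s)-1}\Delta_k$, Lemma \ref{lem.1st} controls $\|e_k''\|$ by $C\varepsilon^2 \theta_k^{L_2(s)-1}\Delta_k$, Lemma \ref{lem.2nd} controls $\|e_k'''\|$ by $C\varepsilon^2 \theta_k^{L_3(s)-1}\Delta_k$, and Lemma \ref{lem.last} controls the last piece $\|D_{k+1/2}\delta\Psi_k\|$ by $C\varepsilon^2 \theta_k^{L(s)-1}\Delta_k$. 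Summing and pulling out the dominant exponent yields a bound by $C\varepsilon^2 \Delta_k\bigl(\theta_k^{L_1(s)-1}+\theta_k^{L_2(s)-1}+\theta_k^{L_3(s)-1}+\theta_k^{L(s)-1}\bigr)$. The same procedure, applied to $\hat e_k = \hat e_k' + \hat e_k'' + \hat e_k'''$ (in $H^s_\gamma(\Omega_T)$) and to $\tilde e_k = \tilde e_k'+\tilde e_k''+\tilde e_k'''$ (in $H^s_\gamma(\omega_T)$), gives the analogous bounds without the last term.

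It then remains to verify that $L_1(s),\,L_2(s),\,L_3(s)\le L(s)$, after which $\theta_k\ge\theta_0\ge 1$ forces $\theta_k^{L_j(s)-1}\le \theta_k^{L(s)-1}$ for $j=1,2,3$, and \eqref{es.sum1} follows with a new constant $C$. This comparison is term by term inside each maximum: for the ``$(s+2-\alpha)_+$'' parts one has
\[
(s+2-\alpha)_++4-2\alpha\;\le\;(s+2-\alpha)_++6-2\alpha\;\le\;(s+2-\alpha)_++8-2\alpha,
\]
which is exactly the first argument of $L(s)$; for the linear-in-$s$ parts one has $s+3-2\alpha\le s+5-2\alpha\le s+6-2\alpha$, which is the third argument of $L(s)$. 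The extra entry $(s+1-\alpha)_++9-2\alpha$ in the definition of $L(s)$ is harmless — it only enlarges the maximum — so indeed $L_1,L_2,L_3\le L$ pointwise in $s$.

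There is no genuine obstacle at this step: the lemma is a packaging statement whose sole purpose is to reduce the subsequent control of the accumulated errors $E_n$, $\widetilde E_n$, $\hat E_n$ in the induction step to tracking the single exponent $L(s)$. The only minor verification is the monotonicity of the maxima above, which follows from elementary arithmetic. Throughout, one keeps the smallness assumptions on $\varepsilon$ and the largeness assumption on $\theta_0$ inherited from Lemmas \ref{lem.quad}--\ref{lem.last}, together with $\alpha\ge 5$ and $\tilde\alpha\ge\alpha+2$ required by Lemma \ref{lem.last}, so that all of the previous estimates are simultaneously applicable on the common range $s\in[3,\tilde\alpha-2]$.
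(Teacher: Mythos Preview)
Your proposal is correct and matches the paper's approach exactly: the paper states that the lemma follows ``immediately'' from Lemmas \ref{lem.quad}--\ref{lem.last}, and your argument spells out precisely this, namely the triangle inequality on the decompositions \eqref{e.e.tilde} and \eqref{e.hat} together with the elementary comparison $L_1(s)\le L_2(s)\le L_3(s)\le L(s)$.
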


Lemma \ref{lem.sum1} yields the estimate of the accumulated error terms $E_k$, $\widetilde{E}_k$, and $\hat{E}_k$
that are defined in \eqref{E.E.tilde} and \eqref{e.hat}.

\begin{lemma}\ \label{lem.sum2}
	Let $\alpha\geq 7$ and $\tilde{\alpha}=\alpha+4$.
	Then there exist $\varepsilon>0$ sufficiently small and $\theta_0\geq 1$ sufficiently large such that
	\begin{align}\label{es.sum2}
	\|(E_{{n}}, \hat{E}_{{n}})\|_{H^{\alpha+2}_{\gamma}(\Omega_T)}
	+\|\widetilde{E}_{{n}}\|_{H^{\alpha+2}_{\gamma}(\omega_T)}\leq C\varepsilon^2 \theta_{{n}}.
	\end{align}
\end{lemma}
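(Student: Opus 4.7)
The plan is to use Lemma \ref{lem.sum1} as the per-step bound and then sum over $k = 0, \ldots, n-1$, using that $\Delta_k = \theta_{k+1}-\theta_k$ behaves like $1/(2\theta_k)$ so the series admits an integral comparison. The condition $\tilde\alpha = \alpha+4$ ensures that $s = \alpha+2 \le \tilde\alpha-2$, which is exactly the range in which Lemma \ref{lem.sum1} applies; the condition $\alpha \ge 7$ is what will make the resulting exponent manageable.

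First, I would substitute $s = \alpha+2$ into the definition of $L(s)$. The three candidates are
$(s+2-\alpha)_+ + 8 - 2\alpha = 12 - 2\alpha$,
$(s+1-\alpha)_+ + 9 - 2\alpha = 12 - 2\alpha$,
and $s+6-2\alpha = 8 - \alpha$.
For $\alpha \ge 4$ one has $8-\alpha \ge 12-2\alpha$, hence $L(\alpha+2) = 8-\alpha$, so Lemma \ref{lem.sum1} yields
\[
\|e_k\|_{H^{\alpha+2}_\gamma(\Omega_T)} + \|\hat e_k\|_{H^{\alpha+2}_\gamma(\Omega_T)} + \|\tilde e_k\|_{H^{\alpha+2}_\gamma(\omega_T)} \le C\varepsilon^2 \theta_k^{7-\alpha}\Delta_k.
\]
Summing over $k$ and using the triangle inequality in $H^{\alpha+2}_\gamma$, everything reduces to bounding $\sum_{k=0}^{n-1} \theta_k^{7-\alpha}\Delta_k$.

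Since $\Delta_k \le 1/(2\theta_k)$ and $\theta_k$ is increasing, the standard integral comparison gives
\[
\sum_{k=0}^{n-1} \theta_k^{7-\alpha}\Delta_k \;\lesssim\; \int_{\theta_0}^{\theta_n} \theta^{7-\alpha}\,d\theta.
\]
When $\alpha = 7$ this integral equals $\theta_n - \theta_0 \le \theta_n$; when $\alpha = 8$ it gives $\log(\theta_n/\theta_0) \le \theta_n$; and when $\alpha \ge 9$ it is bounded uniformly in $n$, hence again dominated by $\theta_n$. In every case, the sum is at most $C\theta_n$, which yields the desired estimate \eqref{es.sum2}. The argument for $\widetilde E_n$ on $\omega_T$ is identical, simply replacing the interior norm by the boundary norm.

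The only delicate point is the bookkeeping that the exponent $L(\alpha+2) - 1 = 7 - \alpha$ becomes nonpositive precisely at the threshold $\alpha = 7$, which is why the hypothesis $\alpha \ge 7$ is both natural and sharp for this lemma (a smaller $\alpha$ would force the sum to diverge faster than $\theta_n$ and destroy the closed iteration). The pairing $\tilde\alpha = \alpha+4$ is dictated by the highest regularity index appearing in the definition of $L$ through $(s+2-\alpha)_+$, so that $s = \alpha+2$ is still admissible; no other nontrivial estimate is needed, and the summation itself, while elementary, is the only substantive step.
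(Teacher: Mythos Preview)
Your proof is correct and follows essentially the same approach as the paper: apply Lemma~\ref{lem.sum1} at $s=\alpha+2$ and sum. The paper streamlines your summation step by simply observing that $L(\alpha+2)\le 1$ for $\alpha\ge 7$ gives $\theta_k^{L(\alpha+2)-1}\le 1$, so $\sum_{k=0}^{n-1}\theta_k^{L(\alpha+2)-1}\Delta_k\le \sum_{k=0}^{n-1}\Delta_k=\theta_n-\theta_0\le\theta_n$ telescopes directly, avoiding the integral comparison and case splitting.
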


\begin{proof}\
	Notice that $L(\alpha+2)\leq 1$ if $\alpha\geq 7$. From \eqref{es.sum1}, we have
	\begin{align*}
	&\|(E_{{n}}, \hat{E}_{{n}})\|_{H^{\alpha+2}_{\gamma}(\Omega_T)}
	+\|\widetilde{E}_{{n}}\|_{H^{\alpha+2}_{\gamma}(\omega_T)}\\
	&\, \leq \sum_{k=0}^{{n}-1}\big\{\|(e_{k},\hat{e}_{k})\|_{H^{\alpha+2}_{\gamma}(\Omega_T)}
	+\|\tilde{e}_{k}\|_{H^{\alpha+2}_{\gamma}(\omega_T)} \big\}\\
	&\, \leq \sum_{k=0}^{{n}-1} C\varepsilon^2 \Delta_k
	\leq C\varepsilon^2\theta_{{n}},
	\end{align*}
	provided that $\alpha\geq 7$ and $\alpha+2\in [3,\tilde{\alpha}-2]$.
	Thus, the minimal possible $\tilde{\alpha}$ is $\alpha+4$.
\qed\end{proof}

\subsection{Proof of Theorem {\rm\ref{thm}}}
To prove our main result, we first derive the estimates of the source terms $f_{{n}}$, $g_{{n}}$,
and $G_{{n}}^{\pm}$ defined in \eqref{source} and \eqref{source2}.

\begin{lemma}\  \label{lem.source}
	Let $\alpha\geq 7$ and $\tilde{\alpha}=\alpha+4$.
	Then there exist $\varepsilon>0$ sufficiently small and $\theta_0\geq 1$ sufficiently large such that,
	for all integers $s\in [3,\tilde{\alpha}+1]$,
	\begin{align}
	\label{es.fl}&\|f_{{n}}\|_{H^s_{\gamma}(\Omega_T)}
	\leq C \Delta_{{n}}\big\{\theta_{{n}}^{s-\alpha-2}(\|f^a\|_{H^{\alpha+1}_{\gamma}(\Omega_T)}
	+\varepsilon^2)+\varepsilon^2\theta_{{n}}^{L(s)-1}\big\},\\
	\label{es.gl}&\|g_{{n}}\|_{H^s_{\gamma}(\omega_T)}
	\leq C \varepsilon^2 \Delta_{{n}}\big(\theta_{{n}}^{s-\alpha-2}+\theta_{{n}}^{L(s)-1}\big),
	\end{align}
	and for all integers $s\in [3,\tilde{\alpha}]$,
	\begin{align}
	\label{es.Gl}\|G_{{n}}^{\pm}\|_{H^s_{\gamma}(\Omega_T)}\leq C\varepsilon^2 \Delta_{{n}}\big(\theta_{{n}}^{s-\alpha-2}+\theta_{{n}}^{L(s)-1}\big).
	\end{align}
\end{lemma}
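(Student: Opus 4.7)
The plan is to derive from the defining identities \eqref{source} and \eqref{source2} a telescoping representation of $f_n$, $g_n$, and $G_n^\pm$, and then to estimate each piece by combining the smoothing properties of Proposition~\ref{pro.smooth} with the error bounds of Lemmas~\ref{lem.sum1} and \ref{lem.sum2}, and the assumption \eqref{small} on $f^a$. First, subtracting the identity \eqref{source} at step $n$ from the one at step $n-1$ and using $E_n=E_{n-1}+e_{n-1}$, $\widetilde{E}_n=\widetilde{E}_{n-1}+\tilde e_{n-1}$, I obtain
\begin{align*}
f_n &= (S_{\theta_n}-S_{\theta_{n-1}})f^a\;-\;S_{\theta_n}e_{n-1}\;-\;(S_{\theta_n}-S_{\theta_{n-1}})E_{n-1},\\
g_n &= -S_{\theta_n}\tilde e_{n-1}\;-\;(S_{\theta_n}-S_{\theta_{n-1}})\widetilde{E}_{n-1}.
\end{align*}
An analogous manipulation of \eqref{source2} yields
$$
G_n^\pm = -S_{\theta_n}\bigl(\hat e_{n-1}^{\pm}-\mathcal{R}_T\,\tilde e_{n-1,\star}\bigr)
-(S_{\theta_n}-S_{\theta_{n-1}})\bigl(\hat E_{n-1}^{\pm}-\mathcal{R}_T\,\widetilde E_{n-1,\star}\bigr),
$$
where $\tilde e_{n-1,\star}$, $\widetilde E_{n-1,\star}$ stand for the appropriate linear combinations of the components of $\tilde e_{n-1}$ and $\widetilde E_{n-1}$ dictated by \eqref{source2}, and $\mathcal{R}_T$ is the lifting of Lemma~\ref{lem.smooth2}.

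Next, the differences $S_{\theta_n}-S_{\theta_{n-1}}$ will be handled through the integral representation $S_{\theta_n}-S_{\theta_{n-1}}=\int_{\theta_{n-1}}^{\theta_n}\frac{\d}{\d\theta}S_\theta\,\d\theta$ together with \eqref{smooth.p1c}, which gives
$$
\|(S_{\theta_n}-S_{\theta_{n-1}})u\|_{H^s_\gamma(\Omega_T)}\leq C\Delta_n\,\theta_n^{s-\sigma-1}\|u\|_{H^\sigma_\gamma(\Omega_T)}
$$
for any admissible $\sigma\in[1,m]$, since $\Delta_n\sim\theta_n^{-1}$ and $\theta_{n-1}\sim\theta_n$. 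Applied to $u=f^a$ with $\sigma=\alpha+1$ and to $u=E_{n-1}$ with $\sigma=\alpha+2$, together with the accumulated-error bound $\|E_{n-1}\|_{H^{\alpha+2}_\gamma}\le C\varepsilon^2\theta_n$ from Lemma~\ref{lem.sum2}, this produces exactly the contributions $C\Delta_n\theta_n^{s-\alpha-2}\|f^a\|_{H^{\alpha+1}_\gamma}$ and $C\varepsilon^2\Delta_n\theta_n^{s-\alpha-2}$ appearing on the right side of \eqref{es.fl}. The terms $S_{\theta_n}e_{n-1}$, $S_{\theta_n}\tilde e_{n-1}$, and $S_{\theta_n}(\hat e_{n-1}^{\pm}-\mathcal{R}_T\tilde e_{n-1,\star})$ are split according to whether $s\le \tilde\alpha-2=\alpha+2$: in the low-regularity range I use \eqref{smooth.p1a} with $\alpha=\beta=s$ and the direct estimate \eqref{es.sum1}, yielding the $C\varepsilon^2\Delta_n\theta_n^{L(s)-1}$ contribution; in the high-regularity range $s\in[\alpha+3,\tilde\alpha+1]$ I instead apply \eqref{smooth.p1a} with $\beta=s$, $\alpha=\alpha+2$ together with \eqref{es.sum1} at $s=\alpha+2$, exploiting that $L(\alpha+2)\le 1$ whenever $\alpha\ge 7$ (checked directly from the definition of $L$) so that the resulting bound $C\varepsilon^2\Delta_n\theta_n^{s-(\alpha+2)+L(\alpha+2)-1}$ is majorized by $C\varepsilon^2\Delta_n\theta_n^{s-\alpha-2}$ and merges with the previous contribution.

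The boundary bound \eqref{es.gl} is derived from the analogous decomposition of $g_n$ by the same two-regime argument, using \eqref{smooth.p1a} and \eqref{smooth.p1c} for the boundary family of smoothing operators and the trace-type bound for $\widetilde{E}_{n-1}$ from Lemma~\ref{lem.sum2}. For \eqref{es.Gl}, the only additional input is the continuity $\mathcal{R}_T:\mathcal{F}_\gamma^s(\omega_T)\to\mathcal{F}_\gamma^{s+1/2}(\Omega_T)$ of Lemma~\ref{lem.smooth2}; the loss of one-half tangential derivative incurred by $\mathcal{R}_T$ on $\widetilde E_{n-1}$ and $\tilde e_{n-1}$ is absorbed because Lemmas~\ref{lem.sum1}--\ref{lem.sum2} provide estimates at two higher levels $H^{s}_\gamma(\omega_T)\subset H^{s-1/2}_\gamma(\omega_T)$ than needed, and the constraint $s\le\tilde\alpha$ (rather than $\tilde\alpha+1$ for $f_n$) leaves exactly the margin required.

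The main obstacle is the careful bookkeeping at the edge $s=\tilde\alpha-2=\alpha+2$: the estimates of Lemmas~\ref{lem.sum1}--\ref{lem.sum2} are available only up to this index, so every time the argument needs regularity above $\alpha+2$ one must pay a factor $\theta_n^{s-\alpha-2}$ via \eqref{smooth.p1a} and then verify that this factor combines with the residual $\theta_n^{L(\alpha+2)-1}$ into a power no worse than $\theta_n^{s-\alpha-2}$. This is precisely what forces the choice $\alpha\ge 7$ and $\tilde\alpha=\alpha+4$ already fixed in Lemma~\ref{lem.sum2}, and the verification of $L(\alpha+2)\le 1$ under this hypothesis is the one place where the definition of $L(s)$ from Lemma~\ref{lem.last} must be invoked explicitly.
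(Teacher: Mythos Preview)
Your proposal is correct and follows essentially the same approach as the paper: the same telescoping decompositions of $f_n$, $g_n$, $G_n^\pm$ obtained by differencing \eqref{source} and \eqref{source2}, the same use of \eqref{smooth.p1c} for the $(S_{\theta_n}-S_{\theta_{n-1}})$ pieces (the paper cites \eqref{smooth.p1c} directly rather than writing the integral), and the same combination of Lemmas~\ref{lem.sum1}--\ref{lem.sum2} with \eqref{smooth.p1a}. Your explicit two-regime treatment of $S_{\theta_n}e_{n-1}$ is in fact a spelled-out version of what the paper's one-line bound hides; note that for $s\ge\alpha+2$ the exponent you obtain, $s-(\alpha+2)+L(\alpha+2)-1=s+5-2\alpha$, equals $L(s)-1$, so it can equally well be absorbed in the $\theta_n^{L(s)-1}$ term (this is how the paper writes it).

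One minor slip: $\mathcal{R}_T$ in Lemma~\ref{lem.smooth2} \emph{gains} half a derivative ($\mathcal{F}^s_\gamma(\omega_T)\to\mathcal{F}^{s+1/2}_\gamma(\Omega_T)$), it does not lose one. So $\|\mathcal{R}_T\tilde e_{n-1}\|_{H^s_\gamma(\Omega_T)}\lesssim\|\tilde e_{n-1}\|_{H^s_\gamma(\omega_T)}$ with no penalty, and the restriction $s\le\tilde\alpha$ for $G_n^\pm$ is simply what is needed downstream (cf.\ \eqref{Hl1.p2}), not a consequence of any loss through $\mathcal{R}_T$. This does not affect the validity of your argument.
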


\begin{proof}\
	It follows from \eqref{source} that
	\begin{align*}
	f_{{n}}=(S_{\theta_{{n}}}-S_{\theta_{{n}-1}})f^a-(S_{\theta_{{n}}}-S_{\theta_{{n}-1}})E_{{n}-1}-S_{\theta_{{n}}} e_{{n}-1}.
	\end{align*}
	Using \eqref{smooth.p1a}, \eqref{smooth.p1c}, \eqref{es.sum1}, and \eqref{es.sum2}, we obtain the estimates:
	\begin{align*}
	\|(S_{\theta_{{n}}}-S_{\theta_{{n}-1}})f^a\|_{H^s_{\gamma}(\Omega_T)}
	&\leq C \theta_{{n}-1}^{s-\alpha-2}\|f^a\|_{H^{\alpha+1}_{\gamma}(\Omega_T)}\Delta_{{n}-1},\\
	\|(S_{\theta_{{n}}}-S_{\theta_{{n}-1}})E_{{n}-1}\|_{H^s_{\gamma}(\Omega_T)}
&\leq C \theta_{{n}-1}^{s-\alpha-3}\|E_{{n}-1}\|_{H^{\alpha+2}_{\gamma}(\Omega_T)}\Delta_{{n}-1}
	\\&
	\leq C\varepsilon^2 \theta_{{n}-1}^{s-\alpha-2}\Delta_{{n}-1},\\
	\|S_{\theta_{{n}}}e_{{n}-1}\|_{H^s_{\gamma}(\Omega_T)} &\leq C \varepsilon^2  \theta_{{n}-1}^{L(s)-1}\Delta_{{n}-1}.
	\end{align*}
	Combining the above estimates with the inequalities: $\theta_{{n}-1}\leq \theta_{{n}}\leq \sqrt{2}\theta_{{n}-1}$
	and $\Delta_{{n}-1}\leq 3\Delta_{{n}}$, we derive \eqref{es.fl}.
	Similarly, we obtain \eqref{es.gl}.
	To prove \eqref{es.Gl}, we use \eqref{source2} to find
	\begin{align*}
	G_{{n}}^+=(S_{\theta_{{n}}}-S_{\theta_{{n}-1}})\big(\mathcal{R}_T\widetilde{E}_{{n}-1,2}-\hat{E}_{{n}-1}^+\big)
	+S_{\theta_{{n}}}\big(\mathcal{R}_T\tilde{e}_{{n}-1,2}-\hat{e}_{{n}-1}^+\big).
	\end{align*}
	Then we obtain the estimate for $G_{{n}}^+$ by using \eqref{es.sum1}--\eqref{es.sum2} as above.
	The estimate of $G_{{n}}^-$ is the same.
\qed\end{proof}

We are going to obtain the estimate of the solution to problem \eqref{effective.NM}
by employing the tame estimate \eqref{E6.1}.

\begin{lemma}\  \label{lem.Hl1}
	Let $\alpha\geq 7$. If $\varepsilon>0$ and $\|f^a\|_{H^{\alpha+1}_{\gamma}(\Omega_T)}/\varepsilon$ are sufficiently small,
	and if $\theta_0\geq1$ is sufficiently large, then, for all integers $s\in [3,\tilde{\alpha}]$,
	\begin{align} \label{Hl.a}
	\|(\delta V_{{n}},\delta\Psi_{{n}})\|_{H^{s}_{\gamma}(\Omega_T)}+\|\delta\psi_{{n}}\|_{H^{s+1}_{\gamma}(\omega_T)}
	\leq \varepsilon \theta_{{n}}^{s-\alpha-1}\Delta_{{n}}.
	\end{align}
\end{lemma}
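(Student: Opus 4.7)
The plan is to derive \eqref{Hl.a} by applying the tame well-posedness estimate of Theorem \ref{thm.L} to the linearized problem \eqref{effective.NM}, estimating the source terms via Lemma \ref{lem.source}, and then recovering $\delta V_{n}$ and $\delta\Psi_{n}$ from the good unknown $\delta\dot V_{n}$ and $\delta\psi_{n}$.

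First I would verify that the coefficients in \eqref{effective.NM}, namely the basic state $(\mathring U, \mathring\Phi) = (U^a + V_{{n}+1/2}, \Phi^a + \Psi_{{n}+1/2})$, satisfy the hypotheses of Theorem \ref{thm.L}. The relations \eqref{bas.eq.1}, \eqref{bas.eq.3}, and \eqref{bas.eq.4} follow from \eqref{modified.1} together with \eqref{app.1}, \eqref{app.3}, and \eqref{app.4}. The noncharacteristic condition \eqref{bas.eq.2} follows from \eqref{app2} together with a $W^{1,\infty}$ bound on $\Psi_{{n}+1/2}$ obtained from \eqref{tri3} with $s=3$ via Sobolev embedding. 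The key smallness assumption \eqref{H6.1} is verified by combining \eqref{small}, \eqref{tri3}, and the continuity property \eqref{smooth.p1a} of $S_{\theta_{n}}$, provided $\varepsilon$ and $\|f^a\|_{H^{\alpha+1}_\gamma(\Omega_T)}/\varepsilon$ are taken sufficiently small.

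With the hypotheses checked, Theorem \ref{thm.L} applied at regularity $s$ yields
\begin{align*}
&\|\delta\dot V_{n}\|_{H^s_\gamma(\Omega_T)} + \|\mathbb{P}^{\pm}(\mathring\varphi)\delta\dot V_{n}^{\pm}|_{x_2=0}\|_{H^s_\gamma(\omega_T)} + \|\delta\psi_{n}\|_{H^{s+1}_\gamma(\omega_T)}\\
&\quad \leq C\big\{\|f_{n}\|_{H^{s+1}_\gamma(\Omega_T)} + \|g_{n}\|_{H^{s+1}_\gamma(\omega_T)}\\
&\qquad\qquad + \big(\|f_{n}\|_{H^4_\gamma(\Omega_T)} + \|g_{n}\|_{H^4_\gamma(\omega_T)}\big)\|(V_{{n}+1/2}, \Psi_{{n}+1/2})\|_{H^{s+3}_\gamma(\Omega_T)}\big\}.
\end{align*}
Next I would plug in the source bounds \eqref{es.fl}--\eqref{es.gl} from Lemma \ref{lem.source} and bound the factor $\|(V_{{n}+1/2}, \Psi_{{n}+1/2})\|_{H^{s+3}_\gamma(\Omega_T)} \leq C\varepsilon\theta_{n}^{(s+3-\alpha)_+ + 1}$ via \eqref{CS.e3} and \eqref{tri3}. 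Then I would recover $\delta\Psi_{n}$ by applying standard energy/tame estimates to the linear transport equations \eqref{delta.Psi1}--\eqref{delta.Psi2}, whose right-hand sides involve $\mathcal R_T g_{n}$, $G_{n}^{\pm}$ (controlled by \eqref{es.gl}, \eqref{es.Gl}), the already-bounded trace $\delta\dot V_{n}|_{x_2=0}$, and zero-th order terms with smooth coefficients in $(V_{{n}+1/2}, \Psi_{{n}+1/2})$. Finally, $\delta V_{n}$ is recovered from \eqref{good.NM} using the Moser-type inequality, since $\partial_2(U^a + V_{{n}+1/2})/\partial_2(\Phi^a + \Psi_{{n}+1/2})$ is a small perturbation of a smooth background.

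The main obstacle—and the reason for the choice $\alpha \geq 7$ and $\tilde\alpha = \alpha+4$—will be the exponent bookkeeping. One needs to check that every arithmetic combination of $L(s+1) - 1$ (from Lemma \ref{lem.source}), $(s+3-\alpha)_+ + 1$ (from the high-norm factor in the tame estimate), and the loss in $\delta V_{n} = \delta\dot V_{n} + (\text{coefficient})\,\delta\Psi_{n}$ is bounded above by $s - \alpha - 1$. Explicitly, for $\alpha \geq 7$ one checks $L(s+1) - 1 \leq s - \alpha - 1$; and the quadratic-in-source term is estimated by
\begin{align*}
&\big(\|f_{n}\|_{H^4_\gamma} + \|g_{n}\|_{H^4_\gamma}\big)\|(V_{{n}+1/2}, \Psi_{{n}+1/2})\|_{H^{s+3}_\gamma(\Omega_T)}\\
&\quad \leq C\big(\|f^a\|_{H^{\alpha+1}_\gamma(\Omega_T)} + \varepsilon^2\big)\,\varepsilon\,\Delta_{n}\,\theta_{n}^{2-\alpha+(s+3-\alpha)_++1},
\end{align*}
which for $\alpha \geq 7$ is dominated by $\varepsilon\theta_{n}^{s-\alpha-1}\Delta_{n}$. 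All remaining constants $C$ are absorbed by taking $\theta_0$ sufficiently large (using $\theta_{n}^{-\delta} \to 0$), while the leading coefficient is reduced from $C\varepsilon$ to $\varepsilon$ by further requiring $\varepsilon$ and $\|f^a\|_{H^{\alpha+1}_\gamma(\Omega_T)}/\varepsilon$ small. This produces \eqref{Hl.a} as stated.
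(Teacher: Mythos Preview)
Your proposal is correct and follows essentially the same approach as the paper: verify the hypotheses of Theorem~\ref{thm.L} for the modified state, apply the tame estimate \eqref{E6.1} to bound $(\delta\dot V_{n},\delta\psi_{n})$, obtain $\delta\Psi_{n}$ from energy estimates on the transport equations \eqref{delta.Psi1}--\eqref{delta.Psi2}, recover $\delta V_{n}$ from \eqref{good.NM} via Moser, and then check the exponent inequalities using Lemma~\ref{lem.source} and \eqref{CS.e3}. One small correction: the final absorption step does not rely on taking $\theta_0$ large---the paper verifies that \emph{all} exponents are already $\leq s-\alpha-1$ for $\alpha\geq 7$, and the resulting prefactor $C(\|f^a\|_{H^{\alpha+1}_\gamma(\Omega_T)}+\varepsilon^2)$ is reduced below $\varepsilon$ purely by the smallness of $\varepsilon$ and of $\|f^a\|_{H^{\alpha+1}_\gamma(\Omega_T)}/\varepsilon$.
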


\begin{proof}\
	Let us consider problem \eqref{effective.NM}, which can be solved, since $U^a+V_{{n}+1/2}$ and $\Phi^a+\Psi_{{n}+1/2}$
	satisfy the required constraints \eqref{modified.1}.
	Constraint \eqref{bas.c1} can be obtained by truncating the coefficients, $U^a+V_{{n}+1/2}$ and $\Phi^a+\Psi_{{n}+1/2}$,
	by a suitable cut-off function,
	while \eqref{bas.eq.2} can be obtained by taking $\varepsilon>0$ small enough.
	We can consider the coefficients with a fixed compact support.
	In order to apply Theorem {\rm\ref{thm.L}}, we obtain \eqref{H6.1}, by using the classical trace estimate,
	\eqref{small}, \eqref{tri3}, \eqref{CS.e1}, and $\alpha\geq 7$.
	Thus, we can employ the tame estimate \eqref{E6.1} to obtain
	\begin{align}\label{Hl1.p0}
	&\|\delta \dot{V}_{{n}}\|_{H^s_{\gamma}(\Omega_T)}+\|\delta \psi_{{n}}\|_{H^{s+1}_{\gamma}(\omega_T)}\notag\\
	& \leq C\big\{  \big(\|f_{{n}}\|_{H^4_{\gamma}(\Omega_T)}
	+\|g_{{n}}\|_{H^4_{\gamma}(\omega_T)}\big)\|(\tilde{U}^a+V_{{n}+1/2},\tilde{\Phi}^a+\Psi_{{n}+1/2})\|_{H^{s+3}_{\gamma}(\Omega_T)}\notag \\
	&\qquad\,\ +\|f_{{n}}\|_{H^{s+1}_{\gamma}(\Omega_T)}+\|g_{{n}}\|_{H^{s+1}_{\gamma}(\omega_T)}\big\}.
	\end{align}
	The particular case $s=3$ implies
	\begin{align} \label{Hl1.p1}
	\|\delta \dot{V}_{{n}}\|_{H^3_{\gamma}(\Omega_T)}
	\leq C\big(\|f_{{n}}\|_{H^4_{\gamma}(\Omega_T)}+\|g_{{n}}\|_{H^4_{\gamma}(\omega_T)}\big).
	\end{align}
	Given $\delta\psi_{{n}}$, we can compute $\delta\Psi_{{n}}$ from equations \eqref{delta.Psi1}--\eqref{delta.Psi2}.
	Performing the energy estimates for $\delta\Psi_{{n}}$, and using Lemma \ref{lem.triangle}, \eqref{CS.e1}, and the Sobolev embedding theorem,
	we derive
	\begin{align}
&	\notag \gamma \|\delta\Psi_{{n}}\|_{H^s_{\gamma}(\Omega_T)}\\
&\notag 	\leq C\big\{ \|g_{{n}}\|_{H^s_{\gamma}(\omega_T)}+ \|G_{{n}}\|_{H^s_{\gamma}(\Omega_T)}+\|\delta\dot{V}_{{n}}\|_{H^s_{\gamma}(\Omega_T)}+\|\delta\dot{V}_{{n}}\|_{H^3_{\gamma}(\Omega_T)}\\
	&\qquad\ \  \times\|\tilde{\Phi}^a+S_{\theta_{{n}}}\Psi_{{n}} \|_{H^{s+1}_{\gamma}(\Omega_T)}
	+\varepsilon \theta_{{n}}^{(s+2-\alpha)_+}\|\delta\Psi_{{n}}\|_{H^3_{\gamma}(\Omega_T)} \big\}
	\label{Hl1.p2}
	\end{align}
	for all integers $s\in [3,\tilde{\alpha}]$ and $\varepsilon$ small enough.
	For $s=3$, using \eqref{Hl1.p1}, we have
	\begin{align}\label{Hl1.p3}
	\|\delta \Psi_{{n}}\|_{H^3_{\gamma}(\Omega_T)}\leq C\big(\|f_{{n}}\|_{H^4_{\gamma}(\Omega_T)}+\|g_{{n}}\|_{H^4_{\gamma}(\omega_T)}+\|G_{{n}}\|_{H^3_{\gamma}(\Omega_T)}\big).
	\end{align}
	In view of \eqref{good.NM}, using estimates \eqref{Hl1.p0}, \eqref{Hl1.p2}--\eqref{Hl1.p3},
	and the Moser-type inequality, we obtain
	\begin{align}
	\notag &\|(\delta V_{{n}},\delta\Psi_{{n}})\|_{H^s_{\gamma}(\Omega_T)}+\|\delta \psi_{{n}}\|_{H^{s+1}_{\gamma}(\omega_T)}\\
	\notag &\leq C\big\{ \|f_{{n}}\|_{H^{s+1}_{\gamma}(\Omega_T)}+\|g_{{n}}\|_{H^{s+1}_{\gamma}(\omega_T)}+\|G_{{n}}\|_{H^s_{\gamma}(\Omega_T)}\\
	\notag &\quad \quad\,\, +(\|f_{{n}}\|_{H^4_{\gamma}(\Omega_T)}+\|g_{{n}}\|_{H^4_{\gamma}(\omega_T)}+\|G_{{n}}\|_{H^3_{\gamma}(\Omega_T)})\\
	\label{Hl1.p4}&\quad \quad\,\, \times \big( \|(\tilde{U}^a+V_{{n}+1/2},\tilde{\Phi}^a+\Psi_{{n}+1/2})\|_{H^{s+3}_{\gamma}(\Omega_T)}
	+\varepsilon \theta_{{n}}^{(s+2-\alpha)_+}\big)\big\}
	\end{align}
	for all integers $s\in [3,\tilde{\alpha}]$.
	Using Lemma \ref{lem.source}, \eqref{tri3}, and \eqref{CS.e1}, we obtain from \eqref{Hl1.p4} that
	\begin{align}
	\notag &\|(\delta V_{{n}},\delta\Psi_{{n}})\|_{H^s_{\gamma}(\Omega_T)}
	+\|\delta \psi_{{n}}\|_{H^{s+1}_{\gamma}(\omega_T)}\\
	\notag & \leq C\Delta_{{n}}\big\{\theta_{{n}}^{2-\alpha}(\|f^a\|_{H^{\alpha+1}_{\gamma}(\Omega_T)} +\varepsilon^2)
	+\varepsilon^2\theta_{{n}}^{9-2\alpha}\big\}\big(\varepsilon \theta_{{n}}^{(s+3-\alpha)_+}+\varepsilon \theta_{{n}}^{s+4-\alpha} \big)\\
	\label{Hl1.p5} &  \quad +C\Delta_{{n}}\big\{\theta_{{n}}^{s-\alpha-1}(\|f^a\|_{H^{\alpha+1}_{\gamma}(\Omega_T)}
	+\varepsilon^2)+\varepsilon^2\theta_{{n}}^{L(s+1)-1}\big\}.
	\end{align}
	Exactly as in \cite{CS08MR2423311}, we can obtain the following inequalities:
	\begin{align*}
	\left\{\begin{aligned}
	&L(s+1)\leq s-\alpha,\\
	&(s+3-\alpha)_++2-\alpha\leq s-\alpha-1,\\
	&(s+3-\alpha)_++9-2\alpha\leq s-\alpha-1,\\
	&s+6-2\alpha\leq s-\alpha-1,\\
	&s+13-3\alpha\leq s-\alpha-1,
	\end{aligned}\right.
	\end{align*}
	for $\alpha\geq 7$ and $s\in[3,\tilde{\alpha}]$.
	Thus, \eqref{Hl1.p5} yields
	\begin{align*}
	\|(\delta V_{{n}},\delta\Psi_{{n}})\|_{H^{s}_{\gamma}(\Omega_T)}+\|\delta\psi_{{n}}\|_{H^{s+1}_{\gamma}(\omega_T)}\leq
	C\big(\|f^a\|_{H^{\alpha+1}_{\gamma}(\Omega_T)}+\varepsilon^2\big)\theta_{{n}}^{s-\alpha-1}\Delta_{{n}},
	\end{align*}
	and \eqref{Hl.a} follows by taking $\varepsilon+\|f^a\|_{H^{\alpha+1}_{\gamma}(\Omega_T)}/\varepsilon$ small enough.
\qed\end{proof}

Estimate \eqref{Hl.a} is inequality (a) of $(H_{{n}})$. We now prove the other inequalities in $(H_{{n}})$.

\begin{lemma}\ \label{lem.Hl2}
	Let $\alpha\geq 7$. If $\varepsilon>0$ and $\|f^a\|_{H^{\alpha+1}_{\gamma}(\Omega_T)}/\varepsilon$ are sufficiently small,
	and if $\theta_0\geq1$ is sufficiently large,
	then, for all integers $s\in [3,\tilde{\alpha}-2]$,
	\begin{align}\label{Hl.b}
	\|\mathcal{L}( V_{{n}},  \Psi_{{n}})-f^a\|_{H^{s}_{\gamma}(\Omega_T)}\leq 2 \varepsilon \theta_{{n}}^{s-\alpha-1}.
	\end{align}
	Moreover, for all integers $s\in [4,\alpha]$,
	\begin{align}\label{Hl.c}
	\|\mathcal{B}( V_{{n}}|_{x_2=0},  \psi_{{n}})\|_{H^{s}_{\gamma}(\omega_T)}\leq  \varepsilon \theta_{{n}}^{s-\alpha-1}
	\end{align}
	and
	\begin{align}\label{Hl.d}
	\|\mathcal{E}( V_{{n}},  \Psi_{{n}})\|_{H^{3}_{\gamma}(\Omega_T)}\leq  \varepsilon \theta_{{n}}^{2-\alpha}.
	\end{align}
\end{lemma}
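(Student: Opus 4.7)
The plan is to exploit the telescoping identities \eqref{conv.1}--\eqref{conv.3} established at the end of \S 8.1, which express $\mathcal{L}(V_n,\Psi_n)-f^a$, $\mathcal{B}(V_n|_{x_2=0},\psi_n)$, and $\mathcal{E}(V_n^{\pm},\Psi_n^{\pm})$ as algebraic combinations of smoothing operators applied to $f^a$, to the accumulated errors $E_{n-1},\widetilde E_{n-1},\hat E_{n-1}$, and to the single-step errors $e_{n-1},\tilde e_{n-1},\hat e_{n-1}$. Each such term is then controlled by the smoothing properties in Proposition \ref{pro.smooth} together with the error bounds from Lemmas \ref{lem.sum1}--\ref{lem.sum2}. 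Throughout I would use $\theta_{n-1}\sim\theta_n$ and $\Delta_{n-1}\leq 3\Delta_n$ to convert everything to powers of $\theta_n$.

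For \eqref{Hl.b}, specialize \eqref{conv.1} to $m=n-1$ so that $\mathcal L(V_n,\Psi_n)-f^a=(S_{\theta_{n-1}}-I)f^a+(I-S_{\theta_{n-1}})E_{n-1}+e_{n-1}$. On the first piece, apply \eqref{smooth.p1b} with $\beta=s\leq\alpha+1$ and $\alpha'=\alpha+1$ to get the bound $C\theta_{n-1}^{s-\alpha-1}\|f^a\|_{H^{\alpha+1}_\gamma(\Omega_T)}$; for the borderline case $s=\alpha+2=\tilde\alpha-2$ use $\|(I-S_{\theta_{n-1}})f^a\|_{H^{\alpha+2}_\gamma}\leq C\theta_{n-1}\|f^a\|_{H^{\alpha+1}_\gamma}$ from \eqref{smooth.p1a}. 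On the second piece, \eqref{smooth.p1b} and Lemma \ref{lem.sum2} yield $C\theta_{n-1}^{s-\alpha-2}\|E_{n-1}\|_{H^{\alpha+2}_\gamma}\leq C\varepsilon^2\theta_{n-1}^{s-\alpha-1}$. The third piece is bounded by Lemma \ref{lem.sum1} by $C\varepsilon^2\theta_{n-1}^{L(s)-1}\Delta_{n-1}$; for $\alpha\geq 7$ and $s\leq\alpha+2$ one checks that $L(s)-1+\log\Delta\leq s-\alpha-1$ (more precisely $L(s)\leq s-\alpha$, as recorded in the proof of Lemma \ref{lem.Hl1}). Adding the three bounds gives $C(\|f^a\|_{H^{\alpha+1}_\gamma}+\varepsilon^2)\theta_n^{s-\alpha-1}$, which is $\leq 2\varepsilon\theta_n^{s-\alpha-1}$ provided $\varepsilon$ and $\|f^a\|_{H^{\alpha+1}_\gamma}/\varepsilon$ are small enough and $\theta_0$ is large enough.

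For \eqref{Hl.c}, take $m=n-1$ in \eqref{conv.2} to obtain $\mathcal B(V_n|_{x_2=0},\psi_n)=(I-S_{\theta_{n-1}})\widetilde E_{n-1}+\tilde e_{n-1}$. Combine \eqref{smooth.p1b} and Lemma \ref{lem.sum2} on the first term (losing one power of $\theta$ relative to the $H^{\alpha+2}_\gamma$ bound on $\widetilde E_{n-1}$) and Lemma \ref{lem.sum1} on the second; for $s\in[4,\alpha]$ both powers are $\leq s-\alpha-1$ up to a factor of $\varepsilon$, giving the conclusion. For \eqref{Hl.d}, use \eqref{conv.3} with $m=n-1$: $\mathcal E(V_n^{\pm},\Psi_n^{\pm})$ is the sum of $\mathcal R_T$ applied to components of $\mathcal B(V_n|_{x_2=0},\psi_n)$, a term $(I-S_{\theta_{n-1}})(\hat E_{n-1}^{\pm}\mp\mathcal R_T\widetilde E_{n-1,2}\pm\mathcal R_T\widetilde E_{n-1,1})$, and the residual $\hat e_{n-1}^{\pm}\mp\mathcal R_T(\tilde e_{n-1,2}\mp\tilde e_{n-1,1})$. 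Using the continuity of $\mathcal R_T$ from Lemma \ref{lem.smooth2}, feed \eqref{Hl.c} at $s=4$ into the first piece, apply \eqref{smooth.p1b} with Lemma \ref{lem.sum2} (at regularity $\alpha+2$, losing $\theta^{3-(\alpha+2)}$) to the second, and Lemma \ref{lem.sum1} at $s=3$ to the third; at $\alpha\geq 7$ all these are bounded by $C\varepsilon^2\theta_n^{2-\alpha}$, hence $\leq\varepsilon\theta_n^{2-\alpha}$ for $\varepsilon$ small.

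The main technical obstacle is not any individual estimate but the simultaneous book-keeping of the exponents: one must verify for every $s$ in the allowed range that $L(s)+\log\Delta\leq s-\alpha$, that the loss $\theta_{n-1}^{s-\alpha-2}\|E_{n-1}\|_{H^{\alpha+2}_\gamma}$ from the smoothing lemma is compatible with the accumulated-error bound $\|E_{n-1}\|_{H^{\alpha+2}_\gamma}\leq C\varepsilon^2\theta_{n-1}$ (which is exactly why Lemma \ref{lem.sum2} is stated at level $\alpha+2$ and forces $\tilde\alpha=\alpha+4$), and that the endpoint $s=\tilde\alpha-2=\alpha+2$ in \eqref{Hl.b} survives the loss incurred by $(I-S_{\theta_{n-1}})f^a$. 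Once these inequalities are verified—this is a purely arithmetic task mimicking the one in \cite{CS08MR2423311}—the proof is complete.
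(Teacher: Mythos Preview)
Your proposal is correct and follows essentially the same approach as the paper: specialize \eqref{conv.1}--\eqref{conv.3} at $m=n-1$, then bound each piece using the smoothing properties \eqref{smooth.p1} together with Lemmas \ref{lem.sum1}--\ref{lem.sum2}, and finally absorb the constants by choosing $\varepsilon$ and $\|f^a\|_{H^{\alpha+1}_\gamma}/\varepsilon$ small. One minor correction: for the high-regularity range $s\in[\alpha+1,\tilde\alpha-2]$ in \eqref{Hl.b}, property \eqref{smooth.p1a} alone bounds only $S_{\theta_{n-1}}f^a$, not $(I-S_{\theta_{n-1}})f^a$; you need the triangle inequality together with \eqref{small} (which gives $\|f^a\|_{H^{s}_\gamma}\leq\varepsilon$ for $s\leq\tilde\alpha+2$) to close that term, exactly as the paper does.
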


\begin{proof}\
	From \eqref{conv.1}, we have
	\begin{align*}
&	\|\mathcal{L}( V_{{n}},  \Psi_{{n}})-f^a\|_{H^{s}_{\gamma}(\Omega_T)}
	\leq \|(I-S_{\theta_{{n}-1}})f^a\|_{H^{s}_{\gamma}(\Omega_T)}\\
	&\qquad \quad + \|(S_{\theta_{{n}-1}}-I) E_{{n}-1}\|_{H^{s}_{\gamma}(\Omega_T)}+ \|e_{{n}-1}\|_{H^{s}_{\gamma}(\Omega_T)}.
	\end{align*}
	For $s\in [\alpha+1,\tilde{\alpha}-2]$, using \eqref{smooth.p1a} and \eqref{small}, we obtain
	\begin{align*}
	\|(I-S_{\theta_{{n}-1}})f^a\|_{H^{s}_{\gamma}(\Omega_T)}
	&\leq \theta_{{n}-1}^{s-\alpha-1}(\|f^a\|_{H^s_{\gamma}(\Omega_T)}+C\|f^a\|_{H^{\alpha+1}_{\gamma}(\Omega_T)})\\
	&\leq  \varepsilon  \theta_{{n}}^{s-\alpha-1}\Big(1+\frac{C\|f^a\|_{H^{\alpha+1}_{\gamma}(\Omega_T)}}{\varepsilon}\Big),
	\end{align*}
	while, for $s\in [3,\alpha+1]$, applying \eqref{smooth.p1b}, we have
	\begin{align*}
	\|(I-S_{\theta_{{n}-1}})f^a\|_{H^{s}_{\gamma}(\Omega_T)}\leq C\theta_{{n}-1}^{s-\alpha-1}\|f^a\|_{H^{\alpha+1}_{\gamma}(\Omega_T)}
	\leq C\theta_{{n}}^{s-\alpha-1}\|f^a\|_{H^{\alpha+1}_{\gamma}(\Omega_T)}.
	\end{align*}
	Lemma \ref{lem.sum2} and \eqref{smooth.p1b} imply
	\begin{align*}
	\|(I-S_{\theta_{{n}-1}})E_{{n}-1}\|_{H^{s}_{\gamma}(\Omega_T)}
	\leq C \theta_{{n}-1}^{s-\alpha-2}\|E_{{n}-1}\|_{H^{\alpha+2}_{\gamma}(\Omega_T)}
	\leq C\varepsilon^2 \theta_{{n}}^{s-\alpha-1}
	\end{align*}
	for $3\leq s\leq \alpha+2=\tilde{\alpha}-2$. It follows from \eqref{es.sum1} that
	\begin{align*}
	\|e_{{n}-1}\|_{H^{s}_{\gamma}(\Omega_T)}\leq C \varepsilon^2\theta_{{n}-1}^{L(s)-1}\Delta_{{n}-1}
	\leq C \varepsilon^2\theta_{{n}}^{L(s)-2}\leq C \varepsilon^2\theta_{{n}}^{s-\alpha-1}.
	\end{align*}
	By virtue of the above estimates, we choose $\varepsilon$ and $\|f^a\|_{H^{\alpha+1}_{\gamma}(\Omega_T)}/\varepsilon$  sufficiently small
	to obtain \eqref{Hl.b}.
	Similarly, using decompositions \eqref{conv.2}--\eqref{conv.3},
	we can prove estimates \eqref{Hl.c}--\eqref{Hl.d}.
\qed\end{proof}

In view of Lemmas \ref{lem.Hl1}--\ref{lem.Hl2}, we have obtained  $(H_{{n}})$ from $(H_{{n}-1})$,
provided that $\alpha\geq 7$, $\tilde{\alpha}=\alpha+4$,  \eqref{small} holds,
$\varepsilon>0$ and $\|f^a\|_{H^{\alpha+1}_{\gamma}(\Omega_T)}/\varepsilon$ are sufficiently small,
and $\theta_0\geq1$ is large enough.
Fixing constants $\alpha$, $\tilde{\alpha}$, $\varepsilon>0$ and $\theta_0\geq1$,
we now prove $(H_{0})$.

\begin{lemma}\ \label{lem.H0}
	If $\|f^a\|_{H^{\alpha+1}_{\gamma}(\Omega_T)}/\varepsilon$ is sufficiently small,
	then $(H_0)$ holds.
\end{lemma}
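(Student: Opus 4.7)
The key observation is that at step $n=0$ assumption (A-1) forces $(V_0, \Psi_0, \psi_0) = 0$, so by \eqref{modified} the modified state in \eqref{effective.NM} reduces to the approximate solution $(U^a, \Phi^a)$. Moreover, (A-2) and (A-3) give $f_0 = S_{\theta_0} f^a$, $g_0 = 0$, and $G_0^\pm = 0$. The plan is to first verify statements (b)--(d) of $(H_0)$ by direct inspection, and then devote the bulk of the argument to (a) via Theorem \ref{thm.L}.

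For (c) and (d), since $\mathcal{B}(0, 0) = \mathbb{B}(U^a, \varphi^a) = 0$ by \eqref{app.4} and $\mathcal{E}(0, 0) = 0$ by \eqref{app.1}, both sides vanish and the estimates hold trivially. For (b), $\mathcal{L}(V_0, \Psi_0) - f^a = -f^a$, so it suffices to check $\|f^a\|_{H^s_\gamma(\Omega_T)} \leq 2\varepsilon \theta_0^{s - \alpha - 1}$ for $s \in [3, \alpha + 2]$. For $s \leq \alpha+1$ one uses $\|f^a\|_{H^s_\gamma} \leq \|f^a\|_{H^{\alpha+1}_\gamma}$ and absorbs the small factor $\theta_0^{s-\alpha-1}$ by imposing $\|f^a\|_{H^{\alpha+1}_\gamma}/\varepsilon$ sufficiently small depending on $\theta_0$; for $s = \alpha+2$ one invokes \eqref{small} to get $\|f^a\|_{H^{\alpha+2}_\gamma} \leq \varepsilon \leq 2\varepsilon\theta_0$.

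For statement (a), the plan is to apply the tame estimate \eqref{E6.1} to problem \eqref{effective.NM} at $n=0$, whose coefficients are those of the approximate solution. By \eqref{small} and the trace theorem the hypothesis \eqref{H6.1} holds with $K \leq C\varepsilon \leq K_0$. Since $g_0 = 0$, \eqref{E6.1} yields
\begin{equation*}
\|\delta \dot V_0\|_{H^s_\gamma(\Omega_T)} + \|\delta \psi_0\|_{H^{s+1}_\gamma(\omega_T)}
\leq C\bigl\{\|S_{\theta_0}f^a\|_{H^{s+1}_\gamma(\Omega_T)} + \|S_{\theta_0}f^a\|_{H^4_\gamma(\Omega_T)}\,\|(\tilde U^a, \tilde \Phi^a)\|_{H^{s+3}_\gamma(\Omega_T)}\bigr\}
\end{equation*}
for $s \in [3, \tilde\alpha]$. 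Using \eqref{smooth.p1a} to bound $\|S_{\theta_0}f^a\|_{H^{s+1}_\gamma} \leq C\theta_0^{(s-\alpha)_+}\|f^a\|_{H^{\alpha+1}_\gamma}$ and combining with \eqref{small}, the right-hand side is controlled by $C'\theta_0^{(s-\alpha)_+}\|f^a\|_{H^{\alpha+1}_\gamma}$. Since the target is $\varepsilon\theta_0^{s-\alpha-1}\Delta_0 \sim \varepsilon\theta_0^{s-\alpha-2}$, the desired inequality follows provided $\|f^a\|_{H^{\alpha+1}_\gamma}/\varepsilon \lesssim \theta_0^{s - \alpha - 2 - (s-\alpha)_+}$ uniformly in $s$; the most restrictive case $s=3$ yields exactly $\|f^a\|_{H^{\alpha+1}_\gamma}/\varepsilon \lesssim \theta_0^{1-\alpha}$, which is the smallness hypothesis of the lemma. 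Finally, $\delta \Psi_0^\pm$ is recovered from the transport equations \eqref{delta.Psi1}--\eqref{delta.Psi2} (with vanishing right-hand sides $\mathcal{R}_T g_{0,j} + G_0^\pm = 0$) by a standard energy estimate controlled by $\|\delta\dot V_0\|_{H^s_\gamma}$, and then $\delta V_0$ is obtained from \eqref{good.NM}.

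The main obstacle is quantitative calibration. At the smallest admissible index $s=3$ the target bound is of order $\varepsilon \theta_0^{1-\alpha}$, which for $\alpha \geq 7$ is extremely small when $\theta_0$ is taken large. The smallness of $\|f^a\|_{H^{\alpha+1}_\gamma}/\varepsilon$ must therefore be imposed \emph{after} $\theta_0$ and $\varepsilon$ have been fixed, matching the order of quantifiers in the statement. Apart from this calibration, no step is delicate, because the vanishing of $(V_0, \Psi_0, \psi_0)$ removes all the quadratic and substitution errors that drove the difficulty in the inductive passage $(H_{n-1}) \Rightarrow (H_n)$.
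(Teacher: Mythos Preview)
Your proof is correct and follows the same approach as the paper. The paper's own proof treats point (a) in essentially the same way---reducing the effective linear problem at $n=0$ to coefficients $(U^a,\Phi^a)$, applying the tame estimate of Theorem~\ref{thm.L} with $g_0=0$, bounding $\|S_{\theta_0}f^a\|_{H^{s+1}_\gamma}\leq C\theta_0^{(s-\alpha)_+}\|f^a\|_{H^{\alpha+1}_\gamma}$, recovering $\delta\Psi_0$ from the transport equations, and then $\delta V_0$ from the good-unknown relation---and then dismisses points (b)--(d) in a single sentence; your explicit verification of (b)--(d), including the use of \eqref{small} at $s=\alpha+2$, and your discussion of the calibration order (fixing $\theta_0,\varepsilon$ before imposing smallness on $\|f^a\|_{H^{\alpha+1}_\gamma}/\varepsilon$) simply make those points precise.
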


\begin{proof}\
	Recall from assumptions {(A-1)}--{(A-3)}
	that $(V_0,\Psi_0,\psi_0,g_0,G_{0}^{\pm})=0$ and $f_0=S_{\theta_0}f^a$.
	Then it follows from \eqref{modified} that $(V_{1/2},\Psi_{1/2})=0$.
	Thanks to \eqref{small} and the properties of the approximate solution in Lemma \ref{lem.app},
	we may apply Theorem {\rm\ref{thm.L}} to obtain $(\delta \dot{V}_0,\delta\psi_0)$
	as the unique solution of \eqref{effective.NM} for ${n}=0$, which satisfies
	\begin{align} \notag
	\|\delta \dot{V}_0\|_{H^{s}_{\gamma}(\Omega_T)}+\|\delta \psi_0\|_{H^{s+1}_{\gamma}(\Omega_T)}
	\leq C\|S_{\theta_0}f^a\|_{H^{s+1}_{\gamma}(\Omega_T)}.
	\end{align}
	Then we find $\delta\Psi_0^{\pm}$ from equations \eqref{delta.Psi1}--\eqref{delta.Psi2} with ${n}=0$.
	The standard energy estimates yield
	\begin{align*}
	\|\delta \Psi_0\|_{H^{s}_{\gamma}(\Omega_T)}
	\leq C\|\delta \dot{V}_0\|_{H^{s}_{\gamma}(\Omega_T)}
	\qquad \textrm{for }s\in [3,\tilde{\alpha}],
	\end{align*}
	which, combined with \eqref{good.NM} and \eqref{small}, implies
	\begin{align*}
	&\|(\delta V_0,\delta \Psi_0)\|_{H^{s}_{\gamma}(\Omega_T)}+\|\delta \psi_0\|_{H^{s+1}_{\gamma}(\Omega_T)}
	\\&\qquad \leq C\|S_{\theta_0}f^a\|_{H^{s+1}_{\gamma}(\Omega_T)}
	\leq C\theta_0^{(s-\alpha)_+}\|f^a\|_{H^{\alpha+1}_{\gamma}(\Omega_T)}.
	\end{align*}
	If $\|f^a\|_{H^{\alpha+1}_{\gamma}(\Omega_T)}/\varepsilon$ is suitably small,
	then we can obtain inequality (a) of $(H_0)$.
	The other inequalities of $(H_0)$ can be shown to hold
	by taking  $\|f^a\|_{H^{\alpha+1}_{\gamma}(\Omega_T)}$ small enough.
\qed\end{proof}

From \eqref{lem.H0} and Lemmas \ref{lem.Hl1}--\ref{lem.Hl2},
we derive that $(H_{{n}})$ holds for every ${n}\in\mathbb{N}$,
provided that the parameters are well-chosen and that $f^a$ is sufficiently small.
We are now in a position to conclude the proof of Theorem {\rm\ref{thm}}.

\vspace*{3mm}
\noindent  {\bf Proof of Theorem {\rm\ref{thm}}}\quad
	We consider the initial data $(U_0^{\pm},\varphi_0)$ satisfying all the assumptions of Theorem {\rm\ref{thm}}.
	Let $\tilde{\alpha}=\mu-2$ and $\alpha=\tilde{\alpha}-4\geq 7$.
	Then the initial data $U_0^{\pm}$ and $\varphi_0$ are compatible up to order $\mu=\tilde{\alpha}+2$.
	From \eqref{app3} and \eqref{app5}, we obtain \eqref{small} and all the requirements
	of Lemmas \ref{lem.Hl1}--\ref{lem.H0},
	provided that $(\tilde{U}_0^{\pm},\varphi_0)$ is sufficiently small
	in $H^{\mu+1/2}(\mathbb{R}^2_+)\times H^{\mu+1}(\mathbb{R})$
	with $\tilde{U}_0^{\pm}:=U_0^{\pm}-\widebar{U}^{\pm}$.
	Hence, for small initial data, property $(H_{{n}})$ holds
	for all integers ${n}$. In particular, we have
	\begin{align*}
	\sum_{k=0}^{\infty}\left(\|(\delta V_k,\delta \Psi_k)\|_{H^{s}_{\gamma}(\Omega_T)}+\|\delta\psi_k\|_{H^{s+1}_{\gamma}(\omega_T)} \right)
	\leq C\sum_{k=0}^{\infty}\theta_k^{s-\alpha-2} <\infty
	\end{align*}
	 for $s\in[3,\alpha-1]$.
	Thus, sequence $(V_{k},\Psi_{k})$ converges
	to some limit $(V,\Psi)$ in $H^{\alpha-1}_{\gamma}(\Omega_T)$,
	and sequence $\psi_{k}$ converges to some limit $\psi$ in $H^{\alpha}_{\gamma}(\Omega_T)$.
	Passing to the limit in \eqref{Hl.b}--\eqref{Hl.c} for $s=\alpha-1=\mu-7$,
	and in \eqref{Hl.d}, we obtain \eqref{P.new}.
	Therefore, $(U, \Phi)=(U^a+V, \Phi^a+\Psi)$ is a solution on $\Omega_T^+$ of the original
	problem \eqref{RE0} and \eqref{Phi.eq}. This completes the proof.
\qed

\medskip
\begin{appendices}
	\section[Symmetrization of the Relativistic Euler Equations]{Symmetrization of the Relativistic  Euler Equations} \label{AppA}

	Under assumption \eqref{p.con}, Makino--Ukai  \cite{MU95IIMR1346915} showed that there exists a strictly convex entropy function
	for the relativistic Euler equations \eqref{RE2},
	which yields a symmetrizer for \eqref{RE2} by following Godunov's symmetrization
	procedure in \cite{G61MR0131653}.
	By contrast, the symmetrizable hyperbolic system \eqref{RE4} is deduced
	by using a purely algebraic symmetrization of the relativistic Euler
	equations \eqref{RE2}; see Trakhinin \cite{T09MR2560044} for a different algebraic symmetrization.

	In order to derive \eqref{RE4}, we need to recover another conservation
	law (that is, conservation of particle number)
	from equations \eqref{RE2}.
	Denoting by $N$ the particle number density and by $e$ the specific internal energy,
	then
	\begin{align} \label{rho.N}
	\rho=N(1+\epsilon^2 e).
	\end{align}
	The particle number density $N$ was introduced by Taub \cite{T59MR0105998}.
	For a perfect fluid, $N$ and $e$ are functions of the two thermodynamic
	variables $\rho$ and $S$ (specific entropy).
	According to the first law of thermodynamics,
	the following differential relation holds:
	\begin{align} \label{Gibbs}
	T\mathrm{d}S=\mathrm{d}e+p\,\mathrm{d}N^{-1},
	\end{align}
	where $T$ is the absolute temperature.
	By virtue of \eqref{rho.N}--\eqref{Gibbs}, we have
	\begin{align}\label{Gibbs2}
	\frac{\p (\ln N)}{\p \rho}=\frac{1}{\rho+\epsilon^2 p},\qquad \frac{\p (\ln N)}{\p S}=-\epsilon^2NT.
	\end{align}
	In the case of barotropic fluids
	where pressure $p$ depends solely on $\rho$,
	it is natural to introduce the ``mathematical'' particle number density $N$
	as a function of $\rho$ only such that the first relation in \eqref{Gibbs2} holds.
	This motivates us to define $N=N(\rho)$ as \eqref{N.c.def}.
	
	Let $(\rho,v)$ be a  $C^1$--solution to \eqref{RE2}.
	It follows from \eqref{RE2.a} and $h=(\rho+\epsilon^2 p)/N$ that
	\begin{align*}
	-h\varGamma\left\{\partial_t(N\varGamma)+\partial_k(N\varGamma v_k)\right\}
	= N\varGamma\left\{\varGamma(\partial_t+v_k\partial_k)h+h(\partial_t+v_k\partial_k)\varGamma\right\}-\epsilon^2 \partial_tp.
	\end{align*}
	In view of \eqref{Gamma.v.1} and \eqref{RE2.b}, we obtain
	\begin{align*}
	&Nh\varGamma(\partial_t+v_k\partial_k)\varGamma
	=Nh\varGamma\epsilon^2v_j(\partial_t+v_k\partial_k)w_j\\
&	=-\epsilon^2N|w|^2(\partial_t+v_k\partial_k)h-\epsilon^2 h\varGamma^{-1}|w|^2\left\{\partial_t(N\varGamma)
	+\partial_k(N\varGamma v_k)\right\}-\epsilon^2v_j\partial_jp,
	\end{align*}
	which implies
	\begin{align*}
	&
	h \varGamma^{-1}(\epsilon^2|w|^2-\varGamma^2)\left\{\partial_t(N\varGamma)+\partial_k(N\varGamma v_k)
	\right\}
	\\&\quad
	=N(\varGamma^2-\epsilon^2|w|^2)(\partial_t+v_k\partial_k)h-\epsilon^2(\partial_t+v_k\partial_k)p.
	\end{align*}
	Thanks to \eqref{N.c.def} and \eqref{Gamma.v.1},
	$N(\rho)h'(\rho)=\epsilon^2p'(\rho)$ and $\varGamma^2-\epsilon^2|w|^2=1$.
	Then we obtain the conservation of particle number:
	\begin{align} \label{N.eq}
	\partial_t(N\varGamma)+\partial_k(N\varGamma v_k)=0.
	\end{align}
	Equations \eqref{hw.eq} then follow from \eqref{RE2.b} and \eqref{N.eq}.
	Using the identities: $\varGamma_t =\epsilon^2v\cdot\partial_t w$ and
	$N'(p)=\frac{N'(\rho)}{p'(\rho)}=\frac{1}{h(\rho)c^{2}(\rho)}$,
	we see from \eqref{N.eq} that
	\begin{align} \notag
	{\varGamma}(\partial_t+v\cdot\nabla_x)p+Nhc^2\left(\epsilon^2 v\cdot\partial_t w+\nabla_x\cdot w\right)=0.
	\end{align}
	We use the relations: $h'(p)=\epsilon^2/ N$ and $w=\varGamma v$
	to deduce
	\begin{align}
&\notag	\varGamma(1-\epsilon^4 c^2 |v|^2)\p_t p+\varGamma (1- \epsilon^2 c^2  )v\cdot \nabla_x p \\
 \label{RE3.b}&\quad 	+
	Nc^2\left(\epsilon^2 v\cdot\partial_t(hw)+\nabla_x\cdot (hw)\right)=0.
	\end{align}
	Set $U:=(p,hw_1,hw_2)^{\mathsf{T}}$.
	Then equations \eqref{hw.eq} and \eqref{RE3.b} can be written as
	\begin{align} \label{RE3}
	B_0(U)\p_t U+B_1(U)\p_1 U+B_2(U)\p_2 U=0,
	\end{align}
	where  the coefficient matrices are given by
	\begin{align} \label{B0}
	B_0(U)
	&:=\begin{pmatrix}
	\varGamma(1-\epsilon^4 c^2 |v|^2)& \epsilon^2 c^2 N v^{\mathsf{T}} \\
	0 & \varGamma I_2
	\end{pmatrix},\\[2mm]
	\label{B12}
	B_j(U)&:=\begin{pmatrix}
	\varGamma v_j(1-\epsilon^2 c^2 )& Nc^2 e_j^{\mathsf{T}}\\  N^{-1} e_j & \varGamma v_j I_2
	\end{pmatrix},\qquad  j=1,2,
	\end{align}
	Here we have set $e_j:=(\delta_{1j},\delta_{2j})^{\mathsf{T}}$ and $I_2:=(\delta_{ij})_{2\times 2}$
	with  $\delta_{ij}$ being the Kronecker symbol.
	Let us define
	\begin{align} \label{S1}
	S_1(U):=\begin{pmatrix}1 & \epsilon^2 N c^2\varGamma^{-1} v^{\mathsf{T}} \\
	0 & I_2-\epsilon^2 v\otimes v
	\end{pmatrix}.
	\end{align}
	Multiplying \eqref{RE3} by $S_1(U)$
	and using the identity: $\varGamma^2-\epsilon^2 c^2\varGamma^2+\epsilon^2 c^2=\varGamma^2(1-\epsilon^4c^2|v|^2)$,
	we obtain system \eqref{RE4}.
	Conversely, we can also deduce system \eqref{RE2} from \eqref{RE4} so that
	we derive the equivalence of these two systems in the region
	where the solutions are in $C^1$.

	It remains to show that system \eqref{RE4} is symmetrizable hyperbolic
	in region $\{\rho_{*}<\rho<\rho^{*},|v|<\epsilon^{-1}\}$.
	Let us set the \emph{Friedrichs symmetrizer}:
	\begin{align} \label{Sym}
	S_2(U):=\begin{pmatrix}1 & -2\epsilon^2 N c^2  \varGamma v^{\mathsf{T}} \\
	0 &  N^2c^2 I_2\end{pmatrix}.
	\end{align}
	After straightforward calculations,
	we derive that all matrices $S_2(U)A_j(U)$ are symmetric, and
	the eigenvalues of $S_2(U)A_0(U)$ are
	\begin{align*}
	\lambda_1=\varGamma(1-\epsilon^4 c^2 |v|^2),\qquad
	\lambda_2=\varGamma N^2 c^2,\qquad
	\lambda_3=\varGamma N^2 c^2 (1-\epsilon^2|v|^2).
	\end{align*}
	Assumption \eqref{p.con} yields that  $\lambda_1$,  $\lambda_2$,
	and $\lambda_3$  are all positive.
	Consequently, $S_2(U)A_0(U)$ is positive definite
	and system \eqref{RE4} is symmetrizable hyperbolic.
	
\end{appendices}

\begin{acknowledgements}
	\quad
	The research of Gui-Qiang G. Chen was supported in part by the UK Engineering and Physical Sciences Research Council Award EP/E035027/1 and EP/L015811/1, and the Royal Society--Wolfson Research Merit Award (UK).
	The research of Paolo Secchi was supported in part by the Italian research projects PRIN 2012L5WXHJ-004 and PRIN 2015YCJY3A-004.
	The research of Tao Wang was supported in part by NSFC grants \#11601398 and \#11731008, and the Italian research project PRIN 2012L5WXHJ-004.
	Tao Wang warmly thanks Prof.\;Alessandro Morando, Prof.\;Paolo Secchi, and Prof.\;Paola Trebeschi for support and hospitality during his postdoctoral stay at University of Brescia, and also expresses much gratitude to Prof.\;Huijiang Zhao for his continuous encouragement and constant support.
\end{acknowledgements}

\medskip


\end{document}